
\documentclass[draft,final]{istaustriathesis}

\setlicense{cc-by-nc-sa} 
\usepackage[type={CC},modifier={by-nc-sa},version={4.0},hyperxmp=false]{doclicense} 

\begingroup
\xdef\doclicenseLongTextForHyperrefFixed{\doclicenseLongTextForHyperref} 
\endgroup

\newcommand{\authorname}{Kamil Rychlewicz} 
\newcommand{\thesistitle}{Equivariant cohomology and rings of functions} 
\newcommand{\issn}{2663-337X} 
\newcommand{\isbn}{}

\begin{filecontents*}[overwrite]{\jobname.xmpdata}
\Author{\authorname}                                    
\Title{\thesistitle}                                    
\Language{en-GB}                                        
\Keywords{equivariant cohomology, zero schemes, algebraic groups, Lie algebras}               
\Publisher{Institute of Science and Technology Austria} 
\Subject{Doctoral Thesis}                               
\Copyright{\doclicenseLongTextForHyperrefFixed}         
\CopyrightURL{\doclicenseURL}                           
\Copyrighted{True}
\ISBN{\isbn}                                            
\Journalnumber{\issn}                                   
\end{filecontents*}

\usepackage[english]{babel} 
\usepackage{amsmath}    
\usepackage{amssymb}    
\usepackage{mathtools}  
\usepackage{microtype}  
\usepackage[inline]{enumitem} 
\usepackage{multirow}   
\usepackage{booktabs}   
\usepackage{subcaption} 
\usepackage[ruled,linesnumbered,algochapter]{algorithm2e} 
\usepackage[dvipsnames,table]{xcolor} 
\usepackage{nag}        
\usepackage{todonotes}  
\usepackage{morewrites} 

\usepackage[T1]{fontenc}
\usepackage[utf8]{inputenc}
\usepackage{amsmath,amsfonts,amsthm,amssymb}
\usepackage{bm}
\usepackage{mathrsfs}
\usepackage{tikz}
\usepackage{tikz-cd} 
\usepackage{graphicx}
\usepackage[]{algorithm2e}
\usepackage{wrapfig}
\usepackage{xcolor}
\usepackage{comment}
\usepackage{enumerate}
\usepackage{relsize}
\usepackage{xfrac}
\usepackage{faktor}
\usepackage{ifpdf}
\ifpdf
\DeclareGraphicsRule{*}{mps}{*}{}
\fi
\raggedbottom

\usepackage[a-2b,mathxmp]{pdfx}  
\usepackage{hyperref}

\hypersetup{
	pdfpagelayout   = TwoPageRight,            
	linkbordercolor = {Melon},                 
}

\setpnumwidth{2.5em}        
\setsecnumdepth{subsection} 
\settocdepth{paragraph}

\nonzeroparskip             
\setlength{\parindent}{0pt} 


\makeindex      


\makeevenfoot{plain}{}{\thepage}{} 
\makeoddfoot{plain}{}{\thepage}{} 
\makeevenfoot{Ruled}{}{\thepage}{} 
\makeoddfoot{Ruled}{}{\thepage}{} 

\settitle{\thesistitle}
\setsubtitle{} 
\setdate{2024}{06}{10} 
\setissn{\issn}

\newcommand{\istaustriaaffiliation}{ISTA, Klosterneuburg, Austria}

\setauthor{}{\authorname}{}{\istaustriaaffiliation}
\setsupervisor{}{Tam\'{a}s Hausel}{}{\istaustriaaffiliation}
\setcommitteememberI{}{Anton Mellit}{}{University of Vienna, Vienna, Austria}
\setcommitteememberII{}{Andr\'{a}s Szenes}{}{University of Geneva, Geneva, Switzerland}
\setdefensechair{}{Carl-Philipp Heisenberg}{}{\istaustriaaffiliation}


\newtheorem{theoremc}{Theorem}[chapter]

\newtheorem{theorem}{Theorem}[section]
\newtheorem{corollary}[theorem]{Corollary}
\newtheorem{lemma}[theorem]{Lemma}
\newtheorem{proposition}[theorem]{Proposition}
\newtheorem{problem}[theorem]{Problem}

\newtheorem{conjecture}[theorem]{Conjecture}

\theoremstyle{definition}
\newtheorem{definition}[theorem]{Definition}
\newtheorem{definitionc}[theoremc]{Definition}

\theoremstyle{remark}
\newtheorem{remark}[theorem]{Remark}
\newtheorem{example}[theorem]{Example}

\newcommand{\C}{\mathbb C}
\newcommand{\Z}{\mathbb Z}
\newcommand{\R}{\mathbb R}
\newcommand{\PP}{\mathbb P}

\newcommand{\Q}{\mathbb Q}
\newcommand{\F}{\mathcal{F}}
\newcommand{\OO}{\mathcal{O}}

\newcommand{\Ee}{\mathcal{E}}

\newcommand{\ZZ}{\mathcal{Z}}
\newcommand{\ttt}{\mathfrak{t}}

\newcommand{\V}{\mathcal V}
\newcommand{\w}{w}
\newcommand{\gl}{\mathfrak{gl}}
\newcommand{\ssl}{\mathfrak{sl}}

\newcommand{\he}{\mathfrak{h}}
\newcommand{\geg}{\mathfrak{g}}
\newcommand{\kek}{\mathfrak{k}}
\newcommand{\p}{\mathfrak{p}}
\newcommand{\nen}{\mathfrak{n}}
\newcommand{\lel}{\mathfrak{l}}
\newcommand{\Geg}{\mathcal{G}}
\newcommand{\ov}{\overline}
\newcommand{\W}{\mathrm W}

\newcommand{\eps}{\varepsilon}
\newcommand{\m}{\mathfrak{m}}

\newcommand{\bb}{\mathfrak b}
\newcommand{\uu}{\mathfrak u}
\newcommand{\I}{\mathcal I}
\newcommand{\Ss}{\mathcal S}
\newcommand{\into}{\hookrightarrow}
\newcommand{\Aa}{\mathcal A}
\newcommand{\Hs}{\mathrm H}
\newcommand{\Gs}{\mathrm G}
\newcommand{\Ts}{\mathrm T}
\newcommand{\Ks}{\mathrm K}
\newcommand{\Bs}{\mathrm B}
\newcommand{\Ps}{\mathrm P}
\newcommand{\Ws}{\mathrm W}
\newcommand{\Ls}{\mathrm L}
\newcommand{\Ns}{\mathrm N}
\newcommand{\Qs}{\mathrm Q}
\newcommand{\U}{\mathrm U}
\newcommand{\rk}{\operatorname{rk}}
\newcommand{\Cs}{\mathbb C^\times}
\newcommand{\Vect}{\mathrm{Vect}}
\renewcommand{\bf}{\bfseries}

\newcommand{\D}{\mathrm{D}}
\newcommand{\Tr}{\operatorname{Tr}}
\newcommand{\diag}{\operatorname{diag}}
\newcommand{\im}{\operatorname{im}}

\newcommand{\Hom}{\operatorname{Hom}}
\newcommand{\Spec}{\operatorname{Spec}}

\newcommand{\pt}{\operatorname{pt}}
\newcommand{\ch}{\operatorname{ch}}
\newcommand{\td}{\operatorname{td}}

\newcommand{\id}{\operatorname{id}}
\newcommand{\ad}{\operatorname{ad}}
\newcommand{\Ad}{\operatorname{Ad}}
\newcommand{\Lie}{\operatorname{Lie}}
\newcommand{\tr}{\operatorname{tr}}
\newcommand{\End}{\operatorname{End}}

\newcommand{\GL}{\operatorname{GL}}
\newcommand{\SL}{\operatorname{SL}}
\newcommand{\PSL}{\operatorname{PSL}}
\newcommand{\PGL}{\operatorname{PGL}}
\newcommand{\Sp}{\operatorname{Sp}}
\newcommand{\SO}{\operatorname{SO}}

\newcommand{\Span}{\operatorname{span}}

\newcommand{\coker}{\operatorname{coker}}
\newcommand{\Tor}{\operatorname{Tor}}

\newcommand{\Stab}{\operatorname{Stab}}
\newcommand{\reg}{\mathrm{reg}}
\newcommand{\tot}{\mathrm{tot}}
\newcommand{\Gr}{\operatorname{Gr}}
\newcommand{\Sym}{\operatorname{Sym}}
\newcommand{\Fix}{\operatorname{Fix}}
\newcommand{\depth}{\operatorname{depth}}
\newcommand{\red}{\operatorname{red}}

\usepackage{nomencl}
\makenomenclature

\begin{document}

\frontmatter 
\pagestyle{plain} 

\addtitlepage
\addcommitteepage
\addstatementpage

\begin{abstract}
This dissertation is the summary of the author's work, concerning the relations between cohomology rings of algebraic varieties and rings of functions on zero schemes and fixed point schemes. For most of the thesis, the focus is on smooth complex varieties with an action of a principally paired group, e.g. a parabolic subgroup of a reductive group. The fundamental theorem \ref{general} from co-authored article \cite{HR} says that if the principal nilpotent has a unique zero, then the zero scheme over the Kostant section is isomorphic to the spectrum of the equivariant cohomology ring, remembering the grading in terms of a $\Cs$ action. A similar statement is proved also for the $\Gs$-invariant functions on the total zero scheme over the whole Lie algebra. Additionally, we are able to prove an analogous result for the GKM spaces, which poses the question on a joint generalisation.

We also tackle the situation of a singular variety. As long as it is embedded in a smooth variety with regular action, we are able to study its cohomology as well by means of the zero scheme. In case of e.g. Schubert varieties this determines the cohomology ring completely. In largest generality, this allows us to see a significant part of the cohomology ring. 

We also show (Theorem \ref{nonreg}) that the cohomology ring of spherical varieties appears as the ring of functions on the zero scheme. The computational aspect is not easy, but one can hope that this can bring some concrete information about such cohomology rings. Lastly, the K-theory conjecture \ref{kthe} is studied, with some results attained for GKM spaces.

The thesis includes also an introduction to group actions on algebraic varieties. In particular, the vector fields associated to the actions are extensively studied. We also provide a version of the Kostant section for arbitrary principally paired group, which parametrises the regular orbits in the Lie algebra of an algebraic group. Before proving the main theorem, we also include a historical overview of the field. In particular we bring together the results of Akyildiz, Carrell and Lieberman on non-equivariant cohomology rings.

\end{abstract}

\begin{acknowledgements}
	
	First of all, I would like to thank my PhD supervisor Tam\'{a}s Hausel for his continuous help and invaluable assistance throughout the last years. I am grateful for the availability and for all the time he has dedicated to my PhD project. I cannot thank Tam\'{a}s enough for the rich literature suggestions and plenty of ideas for research directions. The constant support and encouragement I have received does not go unnoticed. I am also grateful for all the valuable comments and remarks on the content of this thesis.
	
	Second, I will be forever thankful to my friend, colleague and office mate Anna Sis\'{a}k, who has been an unwavering source of support, both mathematical and emotional, for the last five years. I will definitely miss all the good moments we have shared during our joint PhD journey.
	
	I would also like to express my gratitude to all the friends from ISTA 2019 PhD cohort, to everyone who has been a member of the Hausel group in the last years, and to the other members of ISTA community who made my PhD years better and more interesting. Thanks to all of you, working in ISTA and living in Austria has been a great adventure that I will keep in my memory.

	I am grateful to all my friends and family members whom I left in Poland or elsewhere, for the valuable advice, emotional support and assistance I have received in the last years. Due to the distance, staying in touch has not always been easy, which makes me appreciate all the efforts much more. In particular, I want to thank my parents, who have assisted me with many aspects of my life abroad and made it easier by always being there for me.
	
	I would additionally like to thank Michel Brion, Jim Carrell, Mischa Elkner, Jan Friedmann, Daniel Holmes, Quoc Ho, Jakub Koncki, Andreas Kretschmer, Jakub L\"{o}wit, Anton Mellit, Alexandre Minets, Leonid Monin, Anne Moreau, Shon Ngo, Rich\'{a}rd Rim\'{a}nyi, Shiyu Shen, Tanya Kaushal Srivastava, Andr\'{a}s Szenes, Gianluca Tasinato, Michael Thaddeus, Andrzej Weber and Jarosław Wiśniewski for their useful comments, discussions, mathematical support and teaching efforts. I am also grateful to the reviewers of \cite{HR} for their numerous valuable comments and remarks. Special thanks go to Mischa Elkner for a careful reading of this thesis and providing me with a long list of typos and language errors.
	
	The author was supported by the DOC Fellowship of the Austrian Academy of Sciences \emph{Topology of open smooth varieties with a torus action}.
\end{acknowledgements}

\begin{aboutauthor}

Kamil Rychlewicz completed BSc (licencjat) and MSc (magister) degrees in Mathematics at the University of Warsaw before joining ISTA in September 2019. His main research interests include group actions and equivariant cohomology theories in algebraic geometry, geometric representation theory and generalised cohomology theories. He has published his BSc thesis \cite{lic} on toric ideals in a post-conference volume and the MSc thesis \cite{mgr} on equivariant characteristic classes in Bulletin of London Mathematical Society. During the PhD, under the supervision of Tam\'{a}s Hausel, Kamil has worked on the relations between cohomology theories and rings of functions on fixed point and zero schemes. In September 2024, he will start a postdoctoral position at École Polytechnique Fédérale de Lausanne, under the mentorship of Dimitri Wyss.

Apart from the research work, he has been actively engaged in various endeavours related to mathematical olympiads. Since 2014 he has been a member of the Problem Selection Committee of Polish Juniors Mathematical Olympiad, and has been a team leader or a coordinator at Middle European Mathematical Olympiad and ``Baltic Way'' Mathematical Competition of Baltic Countries. Together with his colleagues, he has co-organised the Czech-Austrian-Polish-Slovak Mathematical Competition (CAPS) in Institute of Science and Technology Austria.

\end{aboutauthor}

\begin{publicationlist}

Significant parts of this thesis, mostly Chapter \ref{chapkos} and Chapter \ref{chapequivzero}, come from the co-authored article

\cite{HR} Tam\'{a}s Hausel and Kamil Rychlewicz, \emph{Spectrum of equivariant cohomology as a fixed point scheme}, arXiv:2212.11836,

currently submitted and in a review process.

Apart from this contribution, no other part of this thesis has been yet published nor included in a preprint.

\end{publicationlist}

\cleardoublepage
\renewcommand{\contentsname}{Table of Contents} 
\tableofcontents
\listoffigures

\begingroup 
\let\newpage\relax
\let\clearpage\relax
\let\cleardoublepage\relax

\endgroup 

\mainmatter 
\pagestyle{Ruled} 

\chapter{Introduction}
There is a long history of results which connect global topological invariants of manifold with some local data. One famous theorem of this kind is the Poincar\'{e}--Hopf theorem, which relates the Euler characteristic of the manifold to the indices of zeros of a vector field. Another important field following this idea is Morse theory, where the Betti numbers are determined from the critical points of a Morse function. Some more refined invariants can be determined this way if one restricts their attention to a particular subfamily of manifolds. For complex analytic manifolds, Bott \cite{bott} proves his residue formula, which computes the Chern numbers. If we restrict further, to algebraic varieties, Białynicki-Birula \cite{BB1,BB2} shows the decomposition into affine bundles, which also allows to find the integral homology groups of the variety, and hence also the additive structure of cohomology.

A natural question one could ask is whether in addition to such quantitative invariants, one can determine the ring structure of cohomology. This was first done by Carrell and Lieberman in their seminal paper \cite{CL}. They show that if a holomorphic vector field on a compact K\"{a}hler manifold $X$ has isolated zeros, then:
\begin{enumerate}
\item the manifold is actually algebraic;
\item its odd cohomology vanishes;
\item the coordinate ring of the zero scheme of the vector field has a filtration whose associated graded ring is $H^*(X,\C)$.
\end{enumerate}

In \cite{AC} Akyildiz and Carrell further show that if the Borel of $\SL_2$ acts on $X$ and the vector field is the one defined by
$$e = \begin{pmatrix}
0 & 1 \\
0 & 0
\end{pmatrix},$$ 
then the filtration is determined by the torus action and moreover, it actually comes from a grading, so that we see $H^*(X,\C)$ as the coordinate ring of the zero scheme. This allows one to see geometrically the cohomology ring of flag varieties or smooth Schubert varieties. Later, Akyildiz, Carrell and Lieberman managed to produce results that apply to a limited class of singular varieties, which includes in particular all Schubert varieties. This required considering a deformed zero scheme, which then in \cite{BC} was proven to be the spectrum of $\Cs$-equivariant cohomology. This is defined over the base $e + \ttt$, consisting of all the matrices
$$\begin{pmatrix}
t & 1 \\
0 & -t
\end{pmatrix}$$
in $\SL_2$. In fact, one also notices, as in Theorem \ref{etkost}, that the line $e+\ttt$ has the property analogous to the Kostant section in semisimple groups \cite{Kostsec}, i.e. every regular element of the Lie algebra $\Bs(\SL_2)$ is conjugate to a unique element of $e+\ttt$. Here an element of the Lie algebra is called regular when its centraliser is of minimal dimension, equal to the dimension of the maximal torus.

In the meantime, in the recent work \cite{hausel-hmsec} a certain infinitesimal fixed point scheme for the action of $\GL_n$ on ${\mathrm{Gr}}(k,n)$ -- the Grassmannian of $k$-planes in $\C^n$ -- was used to model the Hitchin map on a particular minuscule upward flow in the $\GL_n$-Higgs moduli space. In turn, it was noticed that this fixed point scheme is isomorphic to the spectrum of equivariant cohomology of ${\mathrm{Gr}}(k,n)$, and thus the Hitchin system on these minuscule upward flows can be modelled as the spectrum of equivariant cohomology of Grassmannians. Motivated by that, we have shown in \cite{HR} that the appearance of the spectrum of equivariant cohomology as a fixed point scheme is not a coincidence, and holds in more general situations. This provides a generalisation of the result of Brion and Carrell. The one-dimensional torus $\Cs$ is replaced by an arbitrary \emph{principally paired} group. This includes in particular all the reductive groups and their parabolic subgroups. Hence, one can work with the groups of higher rank, but also not necessarily tori.

\begin{definitionc}
A complex linear algebraic group $\Hs$ is {\em principally paired} if it contains a pair $\{e,h\}\subset \he$ in its Lie algebra, such that $[h,e]=2e$ and $e$ is a regular nilpotent, and an algebraic group homomorphism $\Bs(\SL_2)\to \Hs$ integrates the Lie subalgebra of $\he$ generated by $e$ and $h$.
\end{definitionc}

By Jacobson--Morozov theorem, reductive groups satisfy the condition, and it also stays true if we restrict to a Borel subgroup. Then it also holds for all parabolic subgroups.

We then prove in Corollary \ref{kostprincpair} that in this generality one can also construct the equivalent $\Ss$ of the Kostant section. This is an affine subspace consisting of regular elements of the Lie algebra $\he$. It has the property that every regular element of $\he$ is conjugate to exactly one element of of $\Ss$. We prove it by first tackling the solvable group case and then considering the Weyl group action for a general principally paired group.

Moreover, the restriction map $\C[\he]^\Hs \to \C[\Ss]$ is an isomorphism, and by Chevalley's restriction theorem $\C[\he]^\Hs = \C[\ttt]^\Ws$ is the $\Hs$-equivariant cohomology ring of the point. Therefore the spectrum of any $\Hs$-equivariant cohomology ring will be a scheme over $\Ss$. This suggests that in order to see $\Spec H^*_\Hs(X,\C)$ as a zero scheme, we might want to consider a zero scheme parametrised by $\Ss$. As a subscheme of $\Ss\times X$, it will be projective over $\Ss$, and hence to ensure that it is affine, we need it to be finite over $\Ss$.

\begin{definitionc}
An action of a principally paired group $\Hs$ on a smooth projective variety $X$ is \emph{regular} when a regular nilpotent element $e\in\he$ has finitely many fixed points. 
\end{definitionc}

In fact a unipotent element always has a connected fixed point set \cite{Horrocks}, so for a regular action we have $X^u=\{ o\}$ for some $o\in X$. 

Examples of $\Hs$-regular varieties include for $\Hs=\Gs$ reductive the partial flag varieties $\Gs/\Ps$ considered above (see \cite{Akyildiz}). Smooth Schubert varieties are regular when $\Hs=\Bs\subset \Gs$ is a Borel subgroup and Bott--Samelson resolutions will be examples for parabolic subgroups $\Hs=\Ps\subset\Gs$ of reductive groups.

We then construct in Section \ref{secgrpac} a vector field $V_\he$ on $\he\times X$ such that for any $y\in \he$ its restriction 
$$(V_\he)_y\in H^0(X;T_X)$$
to $\{y\}\times X$ is the infinitesimal vector field on $X$ generated by $y$. We restrict this vector field to a vector field $V_\Ss$ on $\Ss\times X$ and consider its zero scheme $\ZZ_\Ss\subset \Ss\times X$. The embedding of the Borel subgroup of $\SL_2$ provides a particular one-dimensional torus in the maximal torus $\Ts$ of $\Hs$. It will then act on $\ZZ_\Ss$ and thus make the ring of functions on $\ZZ_\Ss$ into a graded ring, by the weights of the induced action. We then prove the main result

\begin{theoremc}\label{main} Suppose a principally paired group $\Hs$ acts regularly on a smooth projective complex variety $X$. Then the zero scheme $\ZZ_\Ss\subset \Ss\times X$ of the vector field $V_\Ss$ is reduced and affine and its coordinate ring, graded by the $\Cs$-action, is isomorphic as a graded ring
$$
\begin{tikzcd}
  \C[\ZZ_{\Ss}]\arrow{r}{\cong} &
  H_\Hs^*(X;\C)   \\
   \C[\Ss]\arrow{r}{\cong} \arrow{u}{\pi^*}&
  H^*_\Hs \arrow{u}
\end{tikzcd}
$$
to the $\Hs$-equivariant cohomology of $X$, such that the structure map $H^*_\Hs\to H_\Hs^*(X;\C)$ agrees with the pullback map $H^*_\Hs\cong \C[\Ss]\to \C[\ZZ_{\Ss}]$ of the natural projection $\pi:\ZZ_\Ss \to \Ss$. In particular, $$ \begin{tikzcd}
 \ZZ_\Ss  \arrow{d}{\pi}  &
  \Spec(H_\Hs^*(X;\C)) \arrow{d} \arrow{l}{\cong} \\
   \Ss&
  \Spec(H^*_\Hs) \arrow{l}{\cong},
\end{tikzcd}$$
i.e. the spectrum of equivariant cohomology of $X$ is $\Cs$-equivariantly isomorphic to the zero scheme $\ZZ_\Ss\subset \Ss\times X$ over $\Ss\cong \Spec(H^*_\Hs)$.
\end{theoremc}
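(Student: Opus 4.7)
The plan is to adapt the Brion--Carrell argument for $\Hs=\Bs(\SL_2)$ to an arbitrary principally paired group $\Hs$ via a Koszul-complex model for $\ZZ_\Ss$ coupled with a Cartan-type model computing $\Hs$-equivariant cohomology.

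First, I would prove that $\ZZ_\Ss$ is reduced and that $\pi\colon\ZZ_\Ss\to\Ss$ is finite and flat, so in particular $\ZZ_\Ss$ is affine. Since $\Ss$ is a cross-section of the regular orbits in $\he$, each fibre of $\pi$ is the zero scheme of the vector field on $X$ cut out by a regular element, hence zero-dimensional; combined with properness of $\Ss\times X\to\Ss$ this yields finiteness. The length of every fibre equals $\chi(X)=\dim_\C H^*(X;\C)$: over $e\in\Ss$ this is the Akyildiz--Carrell--Lieberman computation (where $V_e$ has the single zero $o$), and constancy of length then implies flatness by miracle flatness. Reducedness can be verified at a generic point and propagated to all of $\ZZ_\Ss$ via the $\Cs$-action coming from the principal $\SL_2$.

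Next, on $\Ss\times X$ the vector field $V_\Ss$ is a regular section of $\pi_X^*T_X$, so contraction produces the Koszul resolution
\[
0\to \pi_X^*\Omega^{\dim X}_X\xrightarrow{\iota_{V_\Ss}}\cdots\xrightarrow{\iota_{V_\Ss}}\pi_X^*\Omega^1_X\xrightarrow{\iota_{V_\Ss}}\OO_{\Ss\times X}\to \OO_{\ZZ_\Ss}\to 0.
\]
Pushing forward to $\Ss$ gives a complex of free $\C[\Ss]$-modules whose cohomology is $\pi_*\OO_{\ZZ_\Ss}=\C[\ZZ_\Ss]$. The same complex, equipped with the total differential $d+\iota_{V_\Ss}$, is a Cartan-type model computing $\Hs$-equivariant cohomology: after identifying $\C[\Ss]\cong \C[\he]^\Hs \cong \C[\ttt]^\Ws \cong H^*_\Hs$ via Chevalley's restriction theorem, its hypercohomology over $X$ is $H^*_\Hs(X;\C)$. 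A relative Hodge-to-de Rham degeneration (Deligne) for the smooth projective $X$ collapses both spectral sequences of this double complex and matches $\C[\ZZ_\Ss]$ with $H^*_\Hs(X;\C)$ as $\C[\Ss]$-algebras; identification of the structure map with $\pi^*$ is then automatic by naturality. The $\Cs$-action coming from the principal $\SL_2$ acts compatibly on every object, yielding the claimed grading in which cohomological degree corresponds to twice the $\Cs$-weight.

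The main obstacle is the degeneration step in the family over $\Ss$, together with the precise comparison with $H^*_\Hs(X;\C)$ through the Kostant section built in Corollary \ref{kostprincpair}. In the $\Cs$-case of Brion--Carrell the principal grading alone controls these issues; for a general principally paired $\Hs$ one must further accommodate the Weyl group action on $\ttt$ and confirm that the resulting filtration on equivariant cohomology splits, which should follow from the equivariant formality of $X$ guaranteed by the isolated $\Cs$-fixed point under the regularity assumption.
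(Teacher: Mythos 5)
Your structural analysis of $\pi\colon\ZZ_\Ss\to\Ss$ is broadly in line with what the paper does (finite fibres from regularity plus the contracting $\Cs$-action, a Koszul resolution of $\OO_{\ZZ_\Ss}$, a length count against $\chi(X)$), but two steps conceal genuine gaps. First, reducedness: ``verified at a generic point and propagated via the $\Cs$-action'' is not an argument --- a generically reduced $\Cs$-invariant scheme can carry embedded components at the fixed locus of the contraction, and that is precisely where the danger lies here (the fibre of $\pi$ over $e$ is itself a fat point). What the paper actually uses is that $\ZZ$ is a complete intersection: in graded coordinates on $\ttt\times X_o$ the $n$ components of the vector field together with the $r$ coordinates of $\ttt$ are homogeneous functions with a single common zero, hence a regular sequence; Cohen--Macaulayness plus generic reducedness over $\ttt^\reg$ then gives reducedness by Serre's criterion. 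You need this $S_1$ input; the $\Cs$-action alone does not supply it.

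Second, and more seriously, the comparison step. You propose to obtain the ring isomorphism $\C[\ZZ_\Ss]\cong H^*_\Hs(X;\C)$ by viewing the Koszul complex with total differential $d+\iota_{V_\Ss}$ as a Cartan model and invoking a relative Hodge-to-de Rham degeneration. This does not work as stated. The Cartan model computes $H^*_\Hs(X)$ from $(\C[\he]\otimes\Omega^*(X))^{\Hs}$ over all of $\he$, with smooth forms and an invariance condition; it does not restrict to an algebraic complex over the slice $\Ss$ merely because $\C[\he]^\Hs\to\C[\Ss]$ is an isomorphism, and for non-reductive $\Hs$ the model is not even available in the form you need. Moreover the degeneration required is not Hodge-to-de Rham (which concerns the $d$-direction) but degeneration of the contraction spectral sequence $E_1^{pq}=H^q(X,\Omega^{-p})\otimes\C[\Ss]$, which in the paper --- following Carrell--Lieberman --- is proved by the hard Lefschetz/primitive decomposition argument with the K\"ahler class. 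Even granting degeneration on both sides, agreement of associated graded objects only gives an equality of Poincar\'e series, not a $\C[\Ss]$-algebra isomorphism. The paper closes exactly this gap by constructing an explicit map $\rho\colon H^*_\Ts(X)\to\C[\ZZ_\Bs]$ by localisation to torus-fixed points, $\rho(c)(\w,M_\w\zeta_i)=c|_{\zeta_i}(\w)$, showing it lands in regular functions because $H^*_\Ts(X)$ is generated by equivariant Chern classes of $\Hs$-linearised bundles (whose images are explicit trace functions on $\ZZ$), deducing injectivity from the localisation theorem for equivariantly formal spaces, and only then using the Poincar\'e series count for surjectivity; the non-solvable case follows by realising the Weyl group action geometrically on $\ZZ_\Bs$ and passing to the quotient over $\ttt/\!\!/\Ws\cong\Ss$. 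Some substitute for this explicit map is indispensable in your approach as well.
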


Again, we first study the case of solvable principally paired groups. Then the general case is reduced to the Borel subgroup, and we realise the Weyl group action on the zero scheme geometrically.

There is another version of Theorem~\ref{main} where we do not restrict to the Kostant section $\Ss$. Namely, if a reductive group $\Gs$ acts regularly on $X$ and we denote by $\ZZ_{\geg}\subset \geg\times X$ the zero scheme of $V_\geg$, then the $\Gs$-action on $\geg\times X$ leaves $\ZZ_{\geg}$ invariant.
We have the following:
\begin{theoremc}
\label{maintot}
Suppose a complex reductive group $\Gs$ acts regularly on a smooth projective complex variety $X$. Then the $\Gs$-invariant part of the algebra of the global functions on the total zero scheme $\ZZ_\geg$ $$ \begin{tikzcd}
  \C[\ZZ_{\geg}]^\Gs \arrow{r}{\cong} &
  H_\Gs^*(X;\C)  \\
  \C[\geg]^\Gs \arrow{r}{\cong} \arrow{u}&
  H^*_\Gs \arrow{u}
\end{tikzcd}$$ is graded isomorphic with the equivariant cohomology of $X$ over $\C[\geg]^\Gs\cong H^*_\Gs$. The gradings on $\C[\geg]^\Gs$ and $\C[\ZZ_{\geg}]^\Gs$ are induced from the weight $-2$ action of $\Cs$ on $\geg$ and the trivial action on $X$. 
 \end{theoremc}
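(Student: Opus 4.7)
The plan is to deduce Theorem~\ref{maintot} from Theorem~\ref{main} applied to $\Gs$ by showing that restriction to the Kostant section $\Ss = \Ss_\Gs$ provides a graded $\C[\geg]^\Gs$-algebra isomorphism
\begin{equation*}
r: \C[\ZZ_\geg]^\Gs \xrightarrow{\;\sim\;} \C[\ZZ_{\Ss}].
\end{equation*}
Composing with Theorem~\ref{main} then delivers the desired isomorphism with $H^*_\Gs(X;\C)$. On the base, Chevalley's restriction theorem together with Kostant's description of $\Ss$ as a section of the adjoint quotient identifies $\C[\geg]^\Gs \cong \C[\Ss] \cong H^*_\Gs$ as graded algebras (the weight $-2$ scaling $\Cs$-action on $\geg$ preserves $\Ss$), so the bottom row of the diagram is already handled.

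To build $r$ I would introduce the $\Gs$-equivariant morphism
\begin{equation*}
\tilde\alpha: \Gs \times \Ss \times X \longrightarrow \geg \times X, \qquad (g,s,x) \mapsto (\Ad_g s,\ g\cdot x),
\end{equation*}
with $\Gs$ acting on the source by $h\cdot(g,s,x)=(hg,s,x)$ freely on the first factor, and on the target diagonally. The $\Gs$-equivariance of the infinitesimal action, $(\Ad_g s)\cdot (g\cdot x) = g_*(s\cdot x)$, yields the key identity $\tilde\alpha^{-1}(\ZZ_\geg) = \Gs \times \ZZ_{\Ss}$. Taking $\Gs$-invariants and using $\C[\Gs \times \ZZ_{\Ss}]^\Gs = \C[\ZZ_{\Ss}]$ then produces the homomorphism $r$, which coincides with plain restriction along $\Ss \hookrightarrow \geg$.

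For bijectivity I would argue as follows. Injectivity comes from density: since $\Gs\cdot\Ss = \geg^\reg$ is open in $\geg$ with complement of codimension $\geq 3$, any $\Gs$-invariant function vanishing on $\ZZ_{\Ss}$ vanishes on the open subscheme $\pi^{-1}(\geg^\reg) \subset \ZZ_\geg$, hence identically once one verifies that $\ZZ_\geg$ has no associated components over the non-regular locus. For surjectivity I would use Hilbert series: by Theorem~\ref{main} together with the equivariant formality of $X$ (a consequence of the regular action via the vanishing of odd cohomology that Theorem~\ref{main} also provides), $\C[\ZZ_{\Ss}] \cong H^*_\Gs(X;\C)$ is a free graded $\C[\Ss]$-module of rank $\dim_\C H^*(X;\C)$. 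One then computes the Hilbert series of $\C[\ZZ_\geg]^\Gs$ fiber by fiber over $\geg/\!/\Gs \cong \Ss$, using that for regular semisimple $s$ the centralizer $Z_\Gs(s) = T_s$ acts trivially on the finite scheme $\ZZ_{\Ss}|_s \subseteq X^{T_s}$, and extends by density over $\Ss$ to conclude that $r$ is also a surjection.

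The main obstacle I expect is controlling the scheme-theoretic behaviour of $\ZZ_\geg$ over the non-regular locus: $\pi: \ZZ_\geg \to \geg$ is not finite, since fibers over non-regular elements (e.g.\ over $0 \in \geg$, where the fiber is all of $X$) can be positive-dimensional. The delicate point is verifying that this pathology produces no extra $\Gs$-invariant functions beyond those pulled back from $\C[\ZZ_{\Ss}]$, which is precisely what the codimension argument for injectivity and the Hilbert-series comparison for surjectivity are designed to rule out.
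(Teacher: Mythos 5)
Your overall strategy --- reduce to Theorem~\ref{main} by showing that restriction along $\Ss\hookrightarrow\geg$ induces an isomorphism $\C[\ZZ_\geg]^\Gs\to\C[\ZZ_\Ss]$ --- is exactly the paper's, and your construction of $r$ and identification of the bottom row are fine. But the injectivity step has a genuine gap. You want to deduce it from density of $\pi^{-1}(\geg^\reg)$ in $\ZZ_\geg$, justified by the codimension $\geq 3$ of the irregular locus together with the claim that $\ZZ_\geg$ has no components over it. That claim is false in general: the codimension of the irregular locus in $\geg$ does not control the fibre dimension of $\ZZ_\geg\to\geg$ over it, and the fibre over a non-regular element can be as large as $\dim X$. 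Already for $\Gs=\SL_2$ acting on $\PP^3$ via $\Sym^3$ of the standard representation (Example~\ref{exsl2}), the fibre over $0$ is all of $\PP^3$, of dimension $3=\dim\ssl_2$; since the isotropy algebra $\geg^\reg\cap\geg_p$ of a point is scaling-invariant, the image of $\ZZ_\reg$ in $X$ has dimension at most $2$, so a generic $p\in\PP^3$ gives $(0,p)\notin\overline{\ZZ_\reg}$, and $\{0\}\times\PP^3$ is an irreducible component of $\ZZ_\geg$ lying entirely over the irregular locus. The theorem survives only because invariant (indeed, arbitrary regular) functions are constant on such projective pieces; this is precisely how the paper argues (Lemmas~\ref{projinj}, \ref{grpzer} and \ref{inj2}): every $(v,p)\in\ZZ_\geg$ lies on a projective subvariety $\{v\}\times P$ with $P$ a component of the zero set of $V_v$, any regular function is constant on it, and $P$ contains a common zero of a Cartan subalgebra through the semisimple part of $v$, which does lie in $\overline{\ZZ_\reg}$. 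Your step from $\C[\pi^{-1}(\geg^\reg)]^\Gs$ to $\C[\ZZ_\Ss]$ (every regular orbit meets $\Ss$) is fine and is the paper's Lemma~\ref{inj1}; it is the passage from $\ZZ_\geg$ to its regular part that needs the projective-fibre mechanism rather than a dimension count.

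The surjectivity step is also not yet an argument: computing the Hilbert series of $\C[\ZZ_\geg]^\Gs$ ``fibre by fibre'' over $\geg/\!\!/\Gs$ presupposes base-change and flatness statements for the invariants that are at least as hard as the claim itself, and the extra components exhibited above make them non-obvious. The paper proves surjectivity directly (Lemma~\ref{surjtot}): by Lemma~\ref{genprinc} and Remark~\ref{chernred}, $\C[\ZZ_\Ss]$ is generated over $\C[\Ss]$ by the functions $(v,x)\mapsto\Tr_{\Lambda^k\Ee_x}(\Lambda^k v_x)$ attached to equivariant Chern classes of $\Gs$-linearised bundles $\Ee$, and each of these extends verbatim to a $\Gs$-invariant regular function on all of $\ZZ_\geg$.
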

 
Note that for partial flag varieties $X=\Gs/\Ps$ the total zero scheme $\ZZ_{\geg}\cong \tilde{\geg}_\Ps\to \geg$ is just the Grothendieck--Springer resolution. However, the total zero scheme is no longer affine. On the other hand this version also holds for GKM spaces, including toric varieties. Recall \cite{GKM} that a smooth projective variety $X$ with an action of a torus $\Ts$ is a GKM space if the number of both the zero- and one-dimensional orbits is finite. We can form the total zero scheme $\ZZ_\ttt\subset \ttt\times X$ as the zero scheme of the vector field $V_\ttt$ generated by the $\Ts$-action, as before. We prove that the ring of functions on such a scheme is again the $\Ts$-equivariant cohomology of $X$.
 
The proof is straightforward, using the explicit description of $ H_\Ts^*(X;\C)$ from \cite{GKM}. We expect this version to hold for an even larger class of group actions, including spherical varieties. However, in this thesis we concentrate on the more restrictive class of regular group actions. In that case, as in Theorem~\ref{main}, we can find an affine zero scheme $\ZZ_\Ss \subset \Ss\times X$, which is precisely the spectrum of equivariant cohomology of $X$.
 
The proofs of the main results are broadly following the approach of the proof in \cite{BC}. For this, we have to first study the structure of $\ZZ$, i.e. we prove it is a reduced Cohen--Macaulay scheme whose points can be described via the torus-fixed points. Then the isomorphism $H^*_\Gs(X)\to \C[\ZZ]$ is defined by localisation to the fixed points. Using similar methods, we are also able to prove that even under milder assumptions, we can see the cohomology ring as the ring of functions.

\begin{theoremc}\label{mainnonreg}
Assume that a principally paired group $\Hs$ acts on a smooth projective variety $X$. Let $\ZZ \subset \Ss\times X$ be the zero scheme as above. Assume that $\dim \ZZ = \dim \Ss$. Then there is an isomorphism
$$ H^*_\Hs(X,\C) \to \C[\ZZ] $$
of graded $\C[\Ss]\simeq H^*_\Hs(\pt,\C)$-algebras. Moreover, $H^i(\ZZ,\OO_\ZZ) = 0$ for any $i>0$. 
\end{theoremc}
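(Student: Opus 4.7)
I would adapt the proof of Theorem~\ref{main}, substituting the complete-intersection input from the dimension hypothesis for the finiteness of $\pi:\ZZ\to\Ss$ (which there relied on regularity). Since $\ZZ$ is the zero scheme of the section $V_\Ss$ of the rank $\dim X$ bundle $\pi_2^*T_X$ on $\Ss\times X$, the equality $\dim\ZZ=\dim\Ss=\dim(\Ss\times X)-\dim X$ forces $V_\Ss$ to cut out $\ZZ$ in the expected codimension, i.e.\ $V_\Ss$ is a regular section. Hence $\ZZ$ is a local complete intersection---in particular Cohen--Macaulay and equidimensional of dimension $\dim\Ss$---and the Koszul complex
\[
\mathcal{K}^\bullet:\quad \pi_2^*\Omega^{\dim X}_X\xrightarrow{\iota_{V_\Ss}}\cdots\xrightarrow{\iota_{V_\Ss}}\pi_2^*\Omega^1_X\xrightarrow{\iota_{V_\Ss}}\OO_{\Ss\times X}
\]
is a locally free resolution of $\OO_\ZZ$.

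From here I would compute $H^*(\ZZ,\OO_\ZZ)$ via the hypercohomology of $\mathcal{K}^\bullet$. Since $\Ss$ is affine, the Künneth formula gives the $E_1$ page
\[
E_1^{-j,q}=H^q(X,\Omega^j_X)\otimes_\C\C[\Ss]\ \Longrightarrow\ H^{q-j}(\ZZ,\OO_\ZZ),
\]
with differentials induced by contraction with $V_\Ss$. Using the $\Cs$-action from the principal $\SL_2\hookrightarrow\Hs$ as the cohomological grading together with the Chevalley-type identification $\C[\he]^\Hs\cong\C[\Ss]$ from Corollary~\ref{kostprincpair}, this spectral sequence is matched with the Cartan-model spectral sequence of the restricted complex $(\C[\Ss]\otimes\Omega^*(X),\,d-\iota_{V_\Ss})$ computing $H^*_\Hs(X,\C)$. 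A weight/degree comparison would then show that both sequences collapse onto total degree $0$, giving simultaneously the vanishing $H^i(\ZZ,\OO_\ZZ)=0$ for $i>0$ and the desired isomorphism $\C[\ZZ]\cong H^*_\Hs(X,\C)$ of graded $\C[\Ss]$-algebras, the algebra structure on the left being the tautological one on functions on $\ZZ$ and the map from $H^*_\Hs$ being constructed as in Theorem~\ref{main} (extending by torsion-freeness and Cohen--Macaulayness over the dense regular locus $\Ss^{\reg}\subset\Ss$ where the main theorem applies verbatim).

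\textbf{Main obstacle.} The delicate point is the collapse of the spectral sequence under the weaker hypothesis. In the regular setting of Theorem~\ref{main} one reduces to localisation at isolated fixed points of the regular nilpotent, but now $\ZZ$ may have components which do not dominate $\Ss$---corresponding to positive-dimensional zero loci of non-regular elements of $\he$---and $\pi:\ZZ\to\Ss$ need not be finite. Controlling the contribution of these components (so that they do not create spurious global sections nor higher direct images $R^i\pi_*\OO_\ZZ$) is the main work, and must be extracted purely from the weight structure provided by the principal $\Cs$-grading together with the compatibility of $\iota_{V_\Ss}$ with the Cartan differential.
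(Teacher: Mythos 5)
Your setup is right — the dimension hypothesis does make $V_\Ss$ a regular section of the vertical tangent bundle, the Koszul complex does resolve $\OO_\ZZ$, and the hypercohomology spectral sequence with $E_1^{-p,q}=H^q(X,\Omega^p_X)\otimes_\C\C[\Ss]$ is exactly the engine the paper uses. But the step you flag as the ``main obstacle'' is where your argument stops, and the paper resolves it by an ingredient you have not identified. The collapse does not come from a weight comparison with a Cartan model, nor is it threatened by components of $\ZZ$ failing to dominate $\Ss$: it comes from the observation that $\dim\ZZ=\dim\Ss$ forces the \emph{generic} fibre of $\pi:\ZZ\to\Ss$ to be finite, so some regular semisimple element of $\he$ (Corollary \ref{corre} produces semisimple elements densely in $\Ss$) has isolated zeros on $X$; by the Carrell--Lieberman vanishing recalled after Theorem \ref{clthm}, this gives $H^q(X,\Omega^p)=0$ for $p\neq q$. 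Hence the entire $E_1$ page sits in total degree $0$, all differentials vanish for trivial reasons, $H^i(\ZZ,\OO_\ZZ)=0$ for $i>0$, and the positive-dimensional special fibres are simply irrelevant to this part of the argument. Without this input your spectral sequence has no reason to degenerate, and the proposal does not supply a substitute.

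The second gap is the identification $H^0(\ZZ,\OO_\ZZ)\cong H^*_\Hs(X,\C)$ as graded algebras. Your suggestion to match against a Cartan-model complex, or to ``extend from $\Ss^{\reg}$ where the main theorem applies verbatim,'' does not work as stated: Theorem \ref{main} requires the action to be regular (a unique zero of $e$), which is precisely what is being dropped here, so there is no locus of $\Ss$ over which it applies verbatim. What the paper actually does is (i) use the degenerate spectral sequence together with the $\Cs$-equivariant structure on the Koszul complex (as in Theorem \ref{thmgrad}) to compute $P_{\C[\ZZ]}(t)=P_{H^*_\Ts(X)}(t)$; (ii) build an explicit graded injection $\rho:H^*_\Ts(X)\to\C[\ZZ]^{\red}$ by localisation, which requires generation of $H^*_\Ts(X)$ by equivariant Chern classes (Lemma \ref{chern1}), the Jordan decomposition in $\he$, and the new Lemma \ref{grpzer} guaranteeing that every projective irreducible component of the zero set of $w+n$ contains a $\Ts$-fixed point — this is exactly how the positive-dimensional fibres you worry about are tamed; and (iii) conclude from the Poincar\'e series equality that $\C[\ZZ]$ is reduced and $\rho$ is an isomorphism. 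Reducedness of $\ZZ$ is not automatic here (unlike in the regular case) and your proposal does not address it. So while your first paragraph is essentially the paper's argument, the two decisive steps — Tate vanishing from the dimension hypothesis, and the localisation map with its component-wise fixed-point lemma — are missing rather than merely deferred.
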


This clearly covers the case of a regular action, although the conclusion is weaker: the scheme $\ZZ$ is not affine anymore. This in principle makes computations less feasible. However, the theorem covers many interesting examples, including all varieties with finitely many orbits. A particularly important class of such consists of \emph{spherical varieties} \cite{BrionSph}. Their cohomology is not very well understood and one could hope that this approach would allow for a better insight.

One should also mention that there are other papers in the literature which study the spectrum of the equivariant cohomology geometrically, see e.g. \cite{goresky-macpherson} and the references therein. A more recent example is \cite{hikita}, where the spectrum of equivariant cohomology of certain varieties also appears as a fixed point scheme, albeit of another -- 3D-mirror -- variety. This is also a common theme when one studies elliptic cohomology -- i.e. one sees a whole elliptic curve instead of the functions on it, and only recovers the cohomology ring by considering sections of line bundles. We would hope that our results can be translated to elliptic cohomology as well. It is however not obvious, what the correct analog of the zero scheme should be. For now, we are able to formulate the analogous conjecture \ref{kthe} in K-theory, where we replace the zero schemes of vector fields with the fixed point schemes of the group action. We also provide some evidence for the conjecture

In addition, we extend the results of \cite{BC} for singular varieties. It turns out that one can see either the whole cohomology ring, or the subalgebra generated by equivariant Chern classes, via the rings of functions on zero schemes.

This thesis is organised as follows. Chapter 2 describes the necessary background material from homological algebra, algebraic geometry and algebraic topology. It starts with the Koszul resolutions and their applications in geometry. Then we describe the theory of algebraic groups and group actions. Due to the character of the results, the emphasis is put on the Lie algebra and the vector fields defined by the Lie algebra elements. To the author's knowledge, there is no comprehensive exposition of the topic in the existing literature. We also recall the definition of equivariant cohomology and its basic properties, and the relation with linearised vector bundles.

Chapter 3 is based on Section 2 of \cite{HR}. We introduce the notion of a principally paired group and develop the theory of Kostant sections in this generality. We also prove that the regular action implies finite zero sets for any regular element of the Lie algebra. Chapter 4 is a historical interlude, based on the results of Akyildiz, Brion, Carrell, Lieberman, Sommese \cite{AC, BC, CL}. We review there the results which allow one to see the ordinary (i.e. non-equivariant) cohomology of a smooth projective variety in terms of zero schemes of vector fields.

Then, in Chapter 5 we include the main results of \cite{HR}, i.e. the Sections 3, 4, 5 ibid. In particular, we prove Theorem \ref{main}. We start from solvable groups and afterwards prove it in the generality of principally paired groups. Then the result is extended to the singular varieties, in the spirit of \cite{BC}. We also prove the version for total zero scheme, i.e. Theorem \ref{maintot} and a surprising analogous result for GKM spaces.

In Chapter 6, we then describe additional results of the author that extend the results of \cite{HR}. First, we tackle the case of wilder singularities than the results of \cite{HR} allow. Second, we relax the assumptions on regularity of the action, as in Theorem \ref{mainnonreg}, and open the field for further exploration, to determine the appropriate conditions under which the results hold. In particular, we show that the equivariant cohomology ring of spherical varieties shows up as the ring of functions on the zero scheme. Then we state the K-theory conjecture and  prove that it holds for GKM varieties.

Throughout this thesis, we adopt the convention where we try to stay in the category of algebraic schemes whenever possible, i.e. not using the topological and analytic tools if they are not necessary. Therefore many results, in particular on Lie algebras of algebraic groups and associated vector fields, might have slightly longer proofs than one would expect. However, as the literature in the field is not very extensive, we find it instructive to prove as much as possible without resorting to non-algebraic methods. This requires translation of cohomology, which is a topologically defined notion, into the algebraic world. At the heart of this lies the result of Carrell--Lieberman, i.e. Theorem \ref{clthm}, which uses non-algebraic methods.

\chapter{Background material}

Throughout this thesis, we are working with algebraic groups acting on algebraic varieties, and with cohomology theories. We therefore need to recall many results we are going to use. We first explain the role of Koszul complexes in algebraic geometry and their cohomological properties. We recall the basic facts about vector fields in algebraic setting, and then we couple that with algebraic groups and Lie algebras, and their actions on algebraic varieties. As a particular case we have the actions of an algebraic torus, which yield the Białynicki-Birula decomposition of the variety. Then we recall the definition of equivariant cohomology and the equivariant formality as defined in \cite{GKM}. We finish with an overview of equivariant vector bundles and their Chern classes. For completeness, we attach the proof of the graded Nakayama lemma, which, though very simple, is not easily found in literature.

\section{Notation}

An algebraic variety is an integral (i.e. irreducible and reduced) separated scheme of finite type over a field.

Unless otherwise specified, all the groups and varieties will be assumed to be defined over $\C$. All rings are by definition assumed to be commutative and with unity. For a scheme $X$ defined over a field $k$, by $k[X] = \OO_X(X)$ we denote the algebra of regular functions on $X$. All the cohomology groups will be understood to have complex coefficients, unless otherwise specified. For a Lie algebra $\geg$ and a subset $V\subset \geg$ we denote by $C_\geg(V)$, $N_\geg(V)$ the centraliser and normaliser of $V$ in $\geg$, respectively. If $V = \{v\}$, then we also write $C_\geg(v)$, $N_\geg(v)$. Analogously by $C_\Gs(V)$, $N_\Gs(V)$ we denote a centraliser and normaliser of a subset $V$ in a group $\Gs$. For any $\Z_{\ge 0}$-graded vector space $R = \bigoplus_{n=0}^\infty R_n$ over a field $k$, with $\dim_k(R_n)$ finite for every $n$, we denote by $P_R(t)$ its Poincar\'{e} series, i.e.
$$P_R(t) = \sum_{n=0}^\infty \dim_k(R_n) t^n.$$
For any algebra $R$ with a filtration $F_\bullet$, by $Gr_F(R)$ we denote the associated graded algebra.
Let $\diag(v_1,v_2,\dots,v_n)$ be the diagonal $n\times n$ matrix with diagonal entries $v_1$, $v_2$, \dots, $v_n$. We will denote by $I_n = \diag(1,1,\dots,1)$ the $n\times n$ identity matrix. 
Whenever we write $x\in X$ for $X$ being a variety, or $g\in \Gs$ for $\Gs$ being an algebraic group, we mean $x$ or $g$ being a closed point.

\section{Regular sequences, Cohen--Macaulay rings and Koszul complexes}
The most important sources for this section are \cite{BrHer} for Cohen--Macaulay rings, as well as \cite{24h} for Koszul complexes and their applications in local cohomology. Throughout this section, $R$ will denote a commutative ring. We recall a few definitions.

\subsection{Cohen--Macaulay and complete intersection rings} \label{seccm}

\begin{definition}
A sequence $(r_1,r_2,\dots,r_k)$ of elements of $R$ is called \emph{regular} if the following two conditions hold:
\begin{enumerate}
\item for $i=1,2,\dots,k$, the image of $r_i$ in $R/(r_1,r_2,\dots,r_{i-1})$ is not a zero divisor;
\item the ring $R/(r_1,r_2,\dots,r_k)$ is nonzero.
\end{enumerate}
\end{definition}

Geometrically, one should think of regular sequence as a sequence of functions cutting out a subscheme of codimension equal to its length. Indeed, by Krull's Hauptidealsatz \cite[Corollary 11.17]{AtMac} any component of a scheme cut out by a regular non-unit is of codimension one. Locally, if $R$ is a local Cohen--Macaulay ring (see Definition \ref{defCM}), then a sequence $(r_1,r_2,\dots,r_k)$ is regular if and only if $\dim R/(r_1,r_2,\dots,r_k) = \dim R - k$ \cite[Theorem 2.1.2(c)]{BrHer}.

Obviously, by definition, any prefix of a regular sequence is regular. However, in this generality, the regularity of a sequence depends on its order. A typical example that illustrates this phenomenon is the sequence $(x-1,xy,xz)$ in $R = \C[x,y,z]$. It is clearly regular, however its permutation $(xy,xz,x-1)$ is not regular, as $xz$ is a zero divisor in $R/(xy)$. Regularity does not depend on the order for a Noetherian local ring \cite[Proposition 1.1.6]{BrHer}. We will use below (Lemma \ref{lemcm}) that also in some graded situations the regularity does not depend on the ordering. We first note the following property of Poincar\'{e} series with respect to quotients by regular series.

\begin{proposition}\label{poinreg}
Assume that $R = \bigoplus_{i=0}^\infty R_n$ is a graded ring over a field $k$, and $\dim_k R_n < \infty$ for every $n\ge 0$. Let $r_1$, $r_2$, \dots, $r_k$ be a regular sequence of homogeneous elements of degrees $d_1$, $d_2$, \dots, $d_k$, respectively. Then
$$ P_{R/(r_1,r_2,\dots,r_k)}(t)  = \prod_{i=1}^k (1-t^{d_i}) \cdot P_R(t).$$
\end{proposition}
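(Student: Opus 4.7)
The plan is to proceed by induction on $k$, with the base case $k=0$ being trivial. The key reduction is that for the inductive step, it suffices to treat $k=1$: if $r \in R$ is a homogeneous non-zero-divisor of degree $d$, I want to show $P_{R/(r)}(t) = (1-t^d) P_R(t)$.

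For the $k=1$ case, I would introduce the shifted graded ring $R(-d)$, defined by $R(-d)_n = R_{n-d}$, so that $P_{R(-d)}(t) = t^d P_R(t)$. Multiplication by $r$ then defines a degree-zero map of graded $R$-modules $\cdot r : R(-d) \to R$, and the assumption that $r$ is a non-zero-divisor makes this map injective. The cokernel is $R/(r)$ with its natural grading, giving the short exact sequence of graded $R$-modules
$$ 0 \longrightarrow R(-d) \xrightarrow{\;\cdot r\;} R \longrightarrow R/(r) \longrightarrow 0. $$
Since all graded components are finite-dimensional, taking Poincaré series is additive on short exact sequences, yielding
$$ P_{R/(r)}(t) = P_R(t) - t^d P_R(t) = (1-t^d) P_R(t). $$

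For the inductive step, observe that by definition of a regular sequence, the image of $(r_2, \dots, r_k)$ in $\bar R := R/(r_1)$ is a regular sequence of homogeneous elements of the same degrees $d_2, \dots, d_k$, and $\bar R$ remains a nonnegatively graded $k$-algebra with finite-dimensional components. Applying the inductive hypothesis to $\bar R$ and composing with the $k=1$ case gives
$$ P_{R/(r_1,\dots,r_k)}(t) = \prod_{i=2}^k (1-t^{d_i}) \cdot P_{R/(r_1)}(t) = \prod_{i=1}^k (1-t^{d_i}) \cdot P_R(t). $$

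There is no real obstacle here: the only mildly delicate point is making sure the grading conventions are set up so that $\cdot r$ is a degree-zero map (hence the shift $R(-d)$), and that Poincaré series are well-defined and additive on short exact sequences of graded modules with finite-dimensional components, both of which are standard.
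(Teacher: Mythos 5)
Your proof is correct and follows essentially the same route as the paper: reduce by induction to the case of a single homogeneous non-zero-divisor $r$ of degree $d$, identify $rR$ with a degree-shifted copy of $R$ (your $R(-d)$), and use additivity of Poincar\'{e} series on the resulting short exact sequence to get the factor $(1-t^d)$.
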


\begin{proof}
By induction it is enough to prove the statement for $k=1$. Assume then that $r$ is a regular element, i.e. not a zero divisor, of $R$, homogeneous of degree $d$. Then the graded $R$-module $r R$ is isomorphic to $R$ as a module, but its grading is shifted by $d$. Therefore $P_{r R}(t) = t^d P_R(t)$. Hence 
\[P_{R/rR} (t) = P_R(t) - P_{rR}(t) = (1-t^d) P_R(t). \qedhere \]
\end{proof}

\begin{definition}\label{defCM}
A Noetherian local ring $(R,\m)$ is called \emph{Cohen--Macaulay} if its depth $\depth R$, i.e. the maximal length of a regular sequence, equals its dimension $\dim R$, i.e. the maximal length of a chain of prime ideals.
\end{definition}

Note that for any Noetherian local ring $R$ we have $\depth R \le \dim R$ \cite[Proposition 1.2.12]{BrHer}. Any regular local ring is Cohen--Macaulay, but the converse is not true, e.g. the ring $\C[x]/(x^2)$ is Cohen--Macaulay, but not regular.

\begin{definition}
An arbitrary Noetherian ring $R$ is called \emph{Cohen--Macaulay} if all its localisations to maximal ideals are Cohen--Macaulay.

Similarly, a locally Noetherian scheme is called \emph{Cohen--Macaulay} if all its localisations are Cohen--Macaulay.
\end{definition}
By \cite[Theorem 2.1.3]{BrHer} all localisations of a Cohen--Macaulay ring are Cohen--Macaulay, hence both cases of above definition agree for affine schemes, i.e. $\Spec R$ is a Cohen--Macaulay scheme if and only if $R$ is a Cohen--Macaulay ring. One also easily notices that if $R$ is Cohen--Macaulay and $(r_1,r_2,\dots,r_k)$ is a regular sequence in $R$, then $R/(r_1,r_2,\dots,r_k)$ is Cohen--Macaulay.

We also mention a more general notion of a \emph{complete intersection ring}.

\begin{definition}
A Noetherian local ring $(R,\m)$ is called a \emph{complete intersection ring} if its completion $\hat{R}$ in $\m$ is a quotient of a regular local ring by an ideal generated by a regular sequence.

We call any Noetherian ring \emph{locally complete intersection} if all of its localisations are complete intersection rings. 
\end{definition}
A complete intersection ring, and more generally a locally complete intersection ring is always Cohen--Macaulay \cite[2.3]{BrHer}. We say that a finite type $k$-algebra $R$ is \emph{complete intersection} if there is a presentation $R \simeq k[x_1,x_2,\dots,x_n]/(f_1,f_2,\dots,f_r)$ where $\dim R = n-c$. Any complete intersection algebra is a local complete intersection, and hence also Cohen--Macaulay. The complete intersection condition is easier to picture geometrically -- it means that one is able to cut out the spectrum of given algebra from an affine space with the minimal possible number of equations.

\subsection{Koszul complexes in rings}
Let $R$ be a Noetherian ring and $r_1$, $r_2$, \dots, $r_n$ a sequence of elements of $R$. For $i = 1,2,\dots,n$ we consider the chain complex
$$0\to R \xrightarrow{r_i} R \to 0.$$
Note that if $r_i$ is a regular element, then this is a free resolution of the cyclic $R$-module $R/(r_i)$. Consider the tensor product of all such complexes for $i=1,2,\dots, n$. Explicitly, it is the complex

$$0\to \bigwedge^n R^n \xrightarrow{\phi_n} \bigwedge^{n-1} R^n \xrightarrow{\phi_{n-1}} \bigwedge^{n-2} R^n \xrightarrow{\phi_{n-2}} \dots \xrightarrow{\phi_3} \bigwedge^2 R^n \xrightarrow{\phi_2} \bigwedge^1 R^n \xrightarrow{\phi_1} \bigwedge^0 R^n\to 0,$$
where the map $\phi_k:\bigwedge^k R^n \to \bigwedge^{k-1} R^n$ is defined on the basis as
$$\phi_k(e_{i_1}\wedge e_{i_2}\wedge\dots\wedge e_{i_k})
 = \sum_{j=1}^k (-1)^{j+1} r_j \cdot e_{i_1}\wedge\dots\wedge \hat{e}_{i_j}\wedge\dots\wedge e_{i_k}. $$
\begin{definition}
We call this complex the \emph{Koszul complex} associated to the sequence $(r_1,r_2,\dots,r_n)$, and we denote it by $K^*_{r_1,\dots,r_n}$.
\end{definition}
When speaking of Koszul complexes, we will consider them with negative cohomological grading. In other words, we let $K^{-k}_{r_1,\dots,r_n} = \bigwedge^k R^n$ to be of the degree $-k$, so that the differential increases the grading by $1$. Note that the $0$-th cohomology of the complex is the quotient $R/(r_1,r_2,\dots,r_n)$.

\begin{proposition}
\label{koszul}
If $(r_1,r_2,\dots,r_n)$ is a regular sequence of regular elements in $R$, then the Koszul complex $K^*_{r_1,r_2,\dots,r_n}$ is a free resolution of $R/(r_1,r_2,\dots,r_n)$.
\end{proposition}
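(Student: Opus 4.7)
The plan is to proceed by induction on $n$. For $n=1$, the assertion is immediate: the two-term complex $0 \to R \xrightarrow{r_1} R \to 0$ is a resolution of $R/(r_1)$ precisely because $r_1$ is a non-zero-divisor.

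For the inductive step I would exploit the fact that $K^*_{r_1,\dots,r_n}$ is the tensor product $C \otimes_R K^*_{r_n}$, where $C := K^*_{r_1,\dots,r_{n-1}}$ and $K^*_{r_n}$ is the two-term complex $R \xrightarrow{r_n} R$ placed in degrees $-1$ and $0$. This realises $K^*_{r_1,\dots,r_n}$ as a mapping cone of multiplication by $r_n$ on $C$ and produces a short exact sequence
$$0 \to C \to K^*_{r_1,\dots,r_n} \to C[1] \to 0$$
of complexes of $R$-modules, where $(C[1])^i = C^{i+1}$. The central point is to recognise that the connecting homomorphism in the associated long exact sequence is, up to sign, multiplication by $r_n$.

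Granting this identification, the long exact sequence reads
$$\cdots \to H^i(C) \xrightarrow{r_n} H^i(C) \to H^i(K^*_{r_1,\dots,r_n}) \to H^{i+1}(C) \xrightarrow{r_n} H^{i+1}(C) \to \cdots.$$
By the inductive hypothesis $H^i(C)$ vanishes for $i\neq 0$ and equals $R/(r_1,\dots,r_{n-1})$ at $i=0$. Substituting this in immediately gives $H^i(K^*_{r_1,\dots,r_n}) = 0$ for $i \notin \{-1,0\}$. The vanishing at $i=-1$ reduces to the injectivity of multiplication by $r_n$ on $R/(r_1,\dots,r_{n-1})$, which holds by the defining property of a regular sequence; at $i=0$ the cohomology identifies with the cokernel of the same multiplication, namely $R/(r_1,\dots,r_n)$, as required.

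The principal technical obstacle is the bookkeeping needed to pin down the connecting map as $r_n$: one has to fix sign conventions for tensor products of complexes and trace through the snake-lemma diagram. Once that is done, the rest of the argument is a purely formal consequence of the long exact sequence together with the inductive hypothesis and the definition of a regular sequence.
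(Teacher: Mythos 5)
Your proof is correct, and it follows the same inductive skeleton as the paper --- both arguments peel off one element at a time using the factorisation $K^*_{r_1,\dots,r_n} = K^*_{r_1,\dots,r_{n-1}} \otimes K^*_{r_n}$ --- but the engine driving the inductive step is genuinely different. The paper invokes the K\"{u}nneth spectral sequence for the tensor product of complexes, observes that by induction only the $(0,0)$-entries survive, and so identifies $H^q(K^*_{r_1,\dots,r_{k+1}})$ with $\Tor_{-q}(R/(r_1,\dots,r_k),\,R/r_{k+1})$, which it then computes from the explicit two-term resolution of $R/r_{k+1}$. You instead realise $K^*_{r_1,\dots,r_n}$ as the mapping cone of multiplication by $r_n$ on $C = K^*_{r_1,\dots,r_{n-1}}$, and read everything off the long exact sequence of $0 \to C \to K^*_{r_1,\dots,r_n} \to C[1] \to 0$ once the connecting map is identified with $\pm r_n$. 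Your route is the more elementary and self-contained one: it needs only the snake lemma rather than a spectral sequence, at the cost of the sign bookkeeping you flag. The paper's route buys the conceptual identification of the Koszul cohomology with Tor groups, which makes the vanishing of higher cohomology transparent as $\Tor_{>0}(R/(r_1,\dots,r_k), R/r_{k+1}) = 0$. Both ultimately hinge on the same input --- that $r_n$ is a non-zero-divisor on $R/(r_1,\dots,r_{n-1})$ --- so the two proofs are equally faithful to the definition of a regular sequence, and yours would serve as a valid drop-in replacement.
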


\begin{remark}
 As reordering gives an isomorphic Koszul complex, it will be a free resolution if we only assume that $(r_1,r_2,\dots,r_n)$ is a permutation of a regular sequence. Thus there is no implication in the other direction.
 
For simplicity we assume that all the elements $r_i$ are regular in $R$, as that is the only case we need. However this assumption is not necessary, see e.g. \cite[Corollary 1.6.14]{BrHer}.
\end{remark}

\begin{proof}
As noticed above, the chain complex
$$0\to R \xrightarrow{r_i} R \to 0$$
is a resolution of $R/(r_i)$ for $i=1,2,\dots,n$. We just have to prove that the cohomology of their tensor product (for $i=1,2,\dots,n$) is the tensor product of cohomology.
 
We proceed by induction and prove that for each $k$ the Koszul complex $K^*_{r_1,\dots,r_k}$ has only cohomology $R/(r_1,r_2,\dots,r_k)$ in degree $0$. It is obvious for $k=0$ or $k=1$. Let us now assume that it holds for some $k\in\{1,2,\dots,n-1\}$. As
$$K^*_{r_1,r_2,\dots,r_{k+1}} = K^*_{r_1,r_2,\dots,r_{k}} \otimes K^*_{r_{k+1}},$$
there is a K\"{u}nneth spectral sequence \cite[Theorem 10.90]{Rotman}
$$E_{pq}^2 = \bigoplus_{p_1+p_2=p} \Tor_{-q}(H^{p_1}(K^*_{r_1,r_2,\dots,r_{k}}),H^{p_2}(K^*_{r_{k+1}}))\implies H^{p+q}(K^*_{r_1,r_2,\dots,r_{k+1}}).$$
By the inductive assumption, the entries may be nonzero only for $p_1=p_2=0$ only, which gives 
 $$H^{q}(K^*_{r_1,r_2,\dots,r_{k+1}})=\Tor_{-q}(R/(r_1,\dots,r_k),R/r_{k+1}).$$
But we know the (two-term) resolution of $R/r_{k+1}$ explicitly. Therefore, as $r_{k+1}$ is regular in $R/(r_1,\dots,r_k)$, we see after tensoring with $R/(r_1,\dots,r_k)$ that the cohomology of
$$0\to R/(r_1,\dots,r_k) \xrightarrow{r_i} R/(r_1,\dots,r_k) \to 0$$
is only $R/(r_1,\dots,r_{k+1})$ in degree 0, hence the highers $\Tor$'s vanish.
\end{proof}

\subsection{Koszul resolutions on algebraic varieties}
As we have seen above, the Koszul complex of a regular sequence is a free resolution of the quotient module $R/(r_1,r_2,\dots,r_n)$. It can be seen as a complex of free sheaves over $\Spec R$. It then resolves the zero scheme of the global functions on $\Spec R$ defined by $r_1$, $r_2$, \dots, $r_n$. Equivalently, it is the zero scheme of the corresponding section of a trivial rank $n$ bundle. As acyclicity of a complex is a local property, we can generalise this to obtain resolutions of zeros of arbitrary vector bundles.

\begin{theorem} \label{thmkosz}
 Let $X$ be a Cohen--Macaulay locally Noetherian scheme and let $\Ee$ be a vector bundle on $X$ of rank $n$. Assume that $V$ is a global section of $\Ee$ whose zero scheme $\ZZ$ is of pure codimension $n$ in $X$. Then the Koszul complex
 $$0 \to \Lambda^n \Ee^* \to \Lambda^{n-1} \Ee^* \to \dots \to \Ee^* \to \OO_X \to 0$$
defined by the contraction along $V$ is a locally free resolution of $\OO_{\ZZ}$.
\end{theorem}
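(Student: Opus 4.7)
The plan is to reduce the global statement to the local algebraic statement already established in Proposition \ref{koszul}. Since exactness of a complex of sheaves can be checked stalk by stalk (indeed on each stalk, or on a small affine neighbourhood), and since both the formation of the Koszul complex and the contraction with a section commute with localisation and with trivialisations of $\Ee$, it suffices to verify the following local claim: if $(R,\m)$ is a Cohen--Macaulay local ring, $r_1,\dots,r_n \in R$ is any sequence of elements such that $\dim R/(r_1,\dots,r_n) = \dim R - n$, then the Koszul complex $K^*_{r_1,\dots,r_n}$ is a free resolution of $R/(r_1,\dots,r_n)$.

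To set this up, I would cover $X$ by affine opens $U = \Spec R$ on which $\Ee$ is trivialised by a frame $e_1,\dots,e_n$. Writing $V|_U = \sum r_i e_i$ for $r_i \in R$, the global Koszul complex restricts on $U$ to the algebraic Koszul complex $K^*_{r_1,\dots,r_n}$ attached to the sequence $(r_1,\dots,r_n)$ (this is essentially the content of the definition of contraction along $V$, and is functorial in $\Ee$ so independent of the choice of trivialisation). The zero scheme $\ZZ \cap U$ equals $\Spec R/(r_1,\dots,r_n)$. Now take any closed point $p \in U$ and pass to the local ring $R_p$. Two cases arise: if $p \notin \ZZ$, some $r_i$ is a unit in $R_p$, and then the Koszul complex is well known to be contractible (one checks directly that multiplication by $r_i^{-1}$ together with the corresponding wedge operation gives a chain homotopy to zero), so its cohomology is concentrated in degree zero and equals $R_p/(r_1,\dots,r_n) = 0 = (\OO_\ZZ)_p$. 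If $p \in \ZZ$, then the pure codimension $n$ hypothesis on $\ZZ$ forces $\dim R_p/(r_1,\dots,r_n) = \dim R_p - n$.

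The key input now is the characterisation of regular sequences in Cohen--Macaulay local rings recalled after Definition \ref{defCM} (\cite[Theorem 2.1.2(c)]{BrHer}): in such a ring, a sequence of length $n$ cutting down the dimension by exactly $n$ is automatically a regular sequence. Applied to our situation, $(r_1,\dots,r_n)$ is a regular sequence in $R_p$, and by Proposition \ref{koszul} the Koszul complex $K^*_{r_1,\dots,r_n}$ over $R_p$ is a free resolution of $R_p/(r_1,\dots,r_n) = (\OO_\ZZ)_p$. Since the Koszul complex is a bounded complex of free (hence locally free) sheaves on $X$, and its stalks compute $(\OO_\ZZ)_p$ in degree zero and vanish in other degrees at every point, it is a locally free resolution of $\OO_\ZZ$.

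The main obstacle is the step invoking the Cohen--Macaulay characterisation of regular sequences, i.e. checking that the codimension hypothesis really does force regularity locally; without it one only gets that the Koszul complex computes $\Tor_*^R(R/(r_1),\dots,R/(r_n))$ and there could be higher cohomology. Once the regularity is in hand the rest is bookkeeping. One minor point to be careful about is that our notion of ``pure codimension $n$'' must be interpreted scheme-theoretically (so that the statement $\dim \OO_{X,p}/(r_1,\dots,r_n) = \dim \OO_{X,p} - n$ holds for every closed point $p \in \ZZ$, not just at generic points of components); this is the correct reading and is automatic from the hypotheses.
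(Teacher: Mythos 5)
Your proposal is correct and follows essentially the same route as the paper's proof: localise at closed points, trivialise $\Ee$ so the complex becomes the algebraic Koszul complex of a sequence $(r_1,\dots,r_n)$, and split into the two cases $p\in\ZZ$ (where the Cohen--Macaulay characterisation \cite[Theorem 2.1.2(c)]{BrHer} upgrades the codimension hypothesis to regularity of the sequence) and $p\notin\ZZ$ (where the ideal is the unit ideal and the complex is acyclic). The only cosmetic difference is that you handle the second case by an explicit contracting homotopy, whereas the paper cites \cite[Theorem 1.6.17]{BrHer}.
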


Note that one can define the zero scheme of a section of a vector bundle by locally trivializing the bundle and considering the zero scheme of the ideal generated by the coordinates  (as we do in Lemma \ref{lemcm}). One then needs to prove that the definition is independent of the trivialization.

Another, coordinate-free way to define the zero scheme is the following.

\begin{definition}\label{defzeros}
 Let $V$ be a global section of a vector bundle $\Ee$ of rank $n$ on a scheme $X$. Then we define the zero scheme $\ZZ$ of $V$ as the closed subscheme defined by the image of the contraction $\Ee^* \xrightarrow{\iota_V} \OO_X.$
\end{definition}

One then sees immediately that, by definition, the $0$-th cohomology of the Koszul complex is $\OO_\ZZ$, regardless of the codimension of $\ZZ$. It is only the acyclicity in other degrees which we need to check.

\begin{proof}[Proof of Theorem \ref{thmkosz}]

It is enough to prove the statement upon localisation to any closed point of $X$ by \cite[Chapter 1, Lemma 2.12]{Liu}. Let then $x\in X$ be a closed point. Let us fix an isomorphism $\Ee|_{X,x} \simeq \OO_{X,x}^n$, where by $\Ee|_{X,x}$ we mean the localisation to $\Spec \OO_{X,x}$. Then it also gives an isomorphism $\Ee^*|_{X,x}\simeq \OO_{X,x}^n$ and the contraction $\Ee^*|_{X,x} \to \OO_{X,x}$ is then a map of free $\OO_{X,x}$-modules defined by $n$ elements $r_1, r_2, \dots, r_n\in \OO_{X,x}$. The complex above, upon localisation to $\Spec \OO_{X,x}$, becomes the Koszul complex of the sequence $(r_1,r_2,\dots,r_n)$ in $\OO_{X,x}$.

We consider two cases: $x\in \ZZ$ and $x\not\in \ZZ$. Assume first that $x\in \ZZ$. This means that $\OO_{X,x}/(r_1,r_2,\dots,r_n) = \OO_{\ZZ,x}$ is of codimension $n$ in $\OO_{X,x}$. Therefore by \cite[Theorem 2.1.2(c)]{BrHer} the sequence $r_1$, $r_2$, \dots, $r_n$ is regular in $\OO_{X,x}$. This means that the Koszul complex is a resolution of the quotient.

Now assume that $x\not\in\ZZ$. This means that $(r_1,r_2,\dots,r_n) = \OO_{X,x}$ as an ideal in $\OO_{X,x}$. Then by \cite[Theorem 1.6.17]{BrHer}, the local Koszul complex is acyclic at all entries. 
\end{proof}

If the assumptions of Theorem \ref{thmkosz} are satisfied, we are therefore able to provide resolutions of $\OO_Z$, which in some cases might be a useful tool for computing derived functors. We will see below (Theorem \ref{clthm}) how this method arises in the work of Carrell and Lieberman \cite{CL0}. We later reuse it in the proof of Theorem \ref{nonreg}. In particular, the following spectral sequence will be useful.

\begin{corollary}\label{corkosz}
Under assumptions of Theorem \ref{thmkosz}, there is a (cohomological) spectral sequence with the first page
 $$
	E_1^{pq} = H^q(X,\Lambda^{-p}\Ee^*)
 $$
convergent to $H^{p+q}(X,\OO_\ZZ)$.
\end{corollary}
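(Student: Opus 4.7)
The plan is to exhibit the spectral sequence as the standard hypercohomology spectral sequence attached to the Koszul complex. By Theorem \ref{thmkosz}, the Koszul complex
$$K^\bullet : 0 \to \Lambda^n \Ee^* \to \Lambda^{n-1}\Ee^* \to \dots \to \Ee^* \to \OO_X \to 0,$$
with $K^p = \Lambda^{-p}\Ee^*$ placed in cohomological degree $p\le 0$, is a locally free resolution of $\OO_\ZZ$ regarded as a sheaf on $X$. In particular the augmentation $K^\bullet \to \OO_\ZZ$ is a quasi-isomorphism of bounded complexes of $\OO_X$-modules.

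First I would take a Cartan--Eilenberg resolution $K^\bullet \to I^{\bullet,\bullet}$ by injective $\OO_X$-modules, or equivalently (since we are working with locally free sheaves on a separated scheme) I would use a Čech-type resolution with respect to an affine open cover. Filtering the associated double complex $\Gamma(X, I^{\bullet,\bullet})$ by columns yields the first-page spectral sequence
$$E_1^{pq} = H^q(X, K^p) = H^q(X, \Lambda^{-p}\Ee^*)$$
converging to the hypercohomology $\mathbb{H}^{p+q}(X, K^\bullet)$. Since $K^\bullet$ is quasi-isomorphic to $\OO_\ZZ$ (in degree $0$), this hypercohomology coincides with $H^{p+q}(X, \OO_\ZZ)$, which is exactly the stated abutment.

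The proof is essentially an invocation of the standard machinery, and there is no real obstacle: the only input required beyond the existence of hyperderived functors for a bounded complex of quasi-coherent sheaves on a Noetherian (in fact Cohen--Macaulay) scheme is the quasi-isomorphism provided by Theorem \ref{thmkosz}. One small point worth stating explicitly is that the spectral sequence lives in the fourth quadrant $p\in\{-n,\dots,0\}$, $q\ge 0$, and is therefore bounded, so convergence is automatic and no finiteness hypothesis on $X$ beyond what is inherited from Theorem \ref{thmkosz} is needed.
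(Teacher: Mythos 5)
Your proof is correct and follows essentially the same route as the paper, which likewise invokes the standard hypercohomology spectral sequence of the Koszul complex and uses the quasi-isomorphism with $\OO_\ZZ$ supplied by Theorem \ref{thmkosz}. Your added remark on boundedness and automatic convergence is a harmless (and welcome) elaboration of what the paper leaves implicit.
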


\begin{proof}
This is a particular case of a spectral sequence from \cite[Remark 2.67]{Huy}, here used to compute the hypercohomology $H^q = R\Gamma$ of the Koszul complex. As the Koszul complex resolves $\OO_\ZZ$, it is isomorphic to it in the derived category. Hence we de facto compute cohomologies of $\OO_\ZZ$.
\end{proof}

\section{Vector fields and derivations}

A \emph{vector field} on a smooth algebraic variety $X$ is a derivation on the sheaf of regular functions on $X$. This means that for any Zariski-open subset $U\subset X$ we are given a $\C$-linear derivation $\OO_X(U) \to \OO_X(U)$ and it is natural with respect to $U$. If we restrict to an affine neighbourhood $\Spec R$, this gives a derivation $R\to R$, which then restricted to any maximal ideal $\m\triangleleft R$ yields a $\C$-linear map $\m \to R/\m \simeq \C$ vanishing on $\m^2$. Hence if $x$ is the closed point corresponding to $\m$, it defines a tangent vector $V_x\in T_{x,X}$.

\begin{definition} Let $V$ be a vector field on a smooth variety $X$. For each open set $U\subset X$ it defines a derivation $D_V^U: \OO_X(U) \to \OO_X(U)$. Let us consider the ideal sheaf generated by the image $D_V(\OO_X) \subset \OO_X$. This is the defining ideal of the \emph{zero scheme} $\ZZ_V$ of $V$ on $X$.
\end{definition}

By the definition of the cotangent bundle, the ideal sheaf generated by $D_V(\OO_X)$ is the same as the image of the contraction map $\Omega^1_X\xrightarrow{\iota_V} \OO_X$. Hence our definition of the zero scheme agrees with the previous Definition \ref{defzeros} of the zero scheme of an arbitrary vector bundle.

\begin{remark}\label{remtan}
One can also view vector fields on smooth varieties as sections of the tangent bundle. As the tangent bundle is a locally free sheaf, we can define the zero scheme of the vector field by considering it locally as a tuple of regular functions (see Lemma \ref{lemcm}). In other words, if the tangent bundle is free over an open subset $U \subset X$, after choosing a trivialisation, its section $V$ is defined by $n$-tuple of regular functions $f_1$, $f_2$,\dots, $f_n$. Then the zero scheme of $V$ on $U$ is the zero scheme of the ideal $(f_1,f_2,\dots,f_n)\in \OO_X(U)$.
\end{remark}

For any two vector fields $V$, $W$ their Lie bracket $[V,W]$ is a vector field, as the Lie bracket of two derivations is again a derivation. We denote by $\Vect(X)$ the Lie algebra of all the global vector fields on a smooth variety $X$. The technical lemmas below will be of use when we consider the vector fields defined by elements of Lie algebras.

\begin{lemma} \label{algder}
Let $A$ be a commutative $\C$-algebra. Let $D_Y:A\to A$ be a $\C$-linear derivation and $\V$ a $\C$-vector space of $\C$-derivations $A\to A$ normalised by $D_Y$, i.e. for any $D_W\in \V$ we have $[D_W,D_Y]\in \V$. Denote by $\I = (\im D_W)_{D_W\in \V}$ the ideal generated by all the images of the derivations from $\V$. Then $D_Y (\sqrt{\I}) \subset \sqrt{\I}$.
\end{lemma}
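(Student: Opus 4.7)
The plan is to prove the statement in two stages. First I would establish the stronger intermediate claim that $D_Y$ preserves $\I$ itself, and then deduce the radical version by passing to the quotient $A/\I$, where the image of $\sqrt{\I}$ is the nilradical, and invoking the general fact that any $\C$-derivation of a commutative ring carries nilpotents to nilpotents.

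For the first stage, a typical element of $\I$ is a finite sum $r = \sum_i b_i\, D_{W_i}(a_i)$ with $b_i, a_i \in A$ and $D_{W_i} \in \V$. Applying the Leibniz rule summand by summand, $D_Y(b_i\, D_{W_i}(a_i)) = D_Y(b_i)\, D_{W_i}(a_i) + b_i\, D_Y(D_{W_i}(a_i))$; the first term already lies in $\I$ because $D_{W_i}(a_i) \in \I$. For the second, the commutator identity $D_Y D_{W_i} = [D_Y, D_{W_i}] + D_{W_i} D_Y$ rewrites $D_Y(D_{W_i}(a_i))$ as $[D_Y, D_{W_i}](a_i) + D_{W_i}(D_Y(a_i))$. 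The normalisation hypothesis gives $[D_{W_i}, D_Y] \in \V$, hence also $[D_Y, D_{W_i}] \in \V$, so the first piece lies in the image of a derivation in $\V$ and therefore in $\I$; the second piece is in $\im D_{W_i} \subseteq \I$. Combining, $D_Y(\I) \subseteq \I$.

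Since $D_Y$ now descends to a $\C$-derivation $\bar D_Y$ of $A/\I$, and the image of $\sqrt{\I}$ in $A/\I$ is exactly the nilradical, it suffices to show that every $\C$-derivation $D$ of a commutative $\C$-algebra sends nilpotents to nilpotents. My proof of this auxiliary claim would proceed as follows: assuming $x^n = 0$, differentiating gives $nx^{n-1}D(x) = 0$, and characteristic zero yields $x^{n-1}D(x) = 0$. A short induction on $k$, obtained by multiplying by $D(x)$ and re-applying $D$, gives $x^{n-1}D(x)^k = 0$ for every $k \geq 1$. Then I would induct on $a$ to show $x^{n-a}D(x)^k = 0$ for $k$ sufficiently large (say $k \geq 2a-1$): differentiate the previous relation, multiply by $x$, and use $x^n = 0$ together with the earlier vanishings to kill the unwanted mixed term involving $D^2(x)$. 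Taking $a = n$ finally gives $D(x)^{2n-1} = 0$, so $D(x)$ is nilpotent.

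The main obstacle is the combinatorial bookkeeping in this last inductive argument: each differentiation introduces a new mixed term in higher derivatives of $x$ that must be eliminated using the vanishing relations accumulated so far, and characteristic zero is used at every step to divide by the integers produced by differentiation. In contrast, both the reduction to the nilradical statement and the verification that $D_Y(\I) \subseteq \I$ are purely formal consequences of the Leibniz rule together with the normalisation hypothesis on $\V$.
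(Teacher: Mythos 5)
Your proof is correct, and its first half --- the verification that $D_Y(\I)\subset\I$ via the Leibniz rule and the commutator identity $D_YD_W=[D_Y,D_W]+D_WD_Y$ --- coincides with the paper's argument. Where you diverge is in passing from $\I$ to $\sqrt{\I}$. The paper stays inside $A$: for $f$ with $f^k\in\I$ it applies $D_Y^k$ to $f^k$, notes $D_Y^k(f^k)\in\I$ by the first half, and observes that every term of the Leibniz expansion other than $(D_Yf)^k$ retains a factor of $f\in\sqrt{\I}$, so $(D_Yf)^k\in\sqrt{\I}$ and hence $D_Yf\in\sqrt{\I}$ --- a one-step extraction (which, like yours, silently divides by $k!$, harmless over $\C$). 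You instead descend to $A/\I$, identify the image of $\sqrt{\I}$ with the nilradical, and invoke the general fact that a derivation of a commutative $\Q$-algebra preserves the nilradical, which you prove by the double induction giving $x^{n-a}D(x)^k=0$ for $k\ge 2a-1$ and finally $D(x)^{2n-1}=0$. Your route is longer in the bookkeeping but isolates a clean, reusable lemma and makes transparent exactly where characteristic zero enters (dividing by $n-a$ at each step); the paper's route is shorter but bundles the same characteristic-zero dependence into the multinomial coefficients without comment. Both arguments are sound, and your inductive scheme checks out: the base case $x^{n-1}D(x)=0$ follows from differentiating $x^n=0$, and at each step the mixed term involving $D^2(x)$ is killed by the inductive hypothesis at exponent $k-1$.
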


\begin{proof}
We first prove that $D_Y(\I) \subset \I$. As $\I$ is generated by the images of derivations from $\V$ and $D_Y$ is $\C$-linear, it is enough to prove that $D_Y(f\cdot D_W(g)) \in \I$ for any $f,g\in A$ and $D_W\in \V$. By the Leibniz rule
$$D_Y(f\cdot D_W(g)) = D_Y(f) \cdot D_W(g) + f\cdot D_Y D_W(g)$$
and $D_W(g)\in \I$, hence it is enough to prove that $D_Y D_W(g) \in \I$. But by assumption we have $D_Z = D_Y D_W - D_W D_Y \in \V$. Hence
$$D_Y D_W (g) = D_Z (g) + D_W D_Y(g.)$$
Thus the images of both $D_Z$ and $D_W$ are in $\I$, hence $D_Y D_W(g)\in \I$.

Now assume that $f\in \sqrt{\I}$ so that $f^k \subset \I$. Using $D_Y(\I)\subset \I$, we get $D_Y^k(\I) \subset \I$. Hence $D_Y^k(f^k)\in \I\subset \sqrt{I}$. By the Leibniz rule $D_Y^k (f^k)$ is the sum of terms of the form
$$\prod_{i=1}^k (D_Y^{\alpha_i} f)$$
for nonnegative integers $\alpha_1,\alpha_2\dots,\alpha_k$ such that $\alpha_1+\alpha_2+\dots+\alpha_k = k$. Note that for all the terms except for $(D_Y f)^k$, at least one of $\alpha_1$, $\alpha_2$, \dots, $\alpha_k$ is zero, and all those terms belong to $\sqrt{I}$, as $f\in\sqrt{\I}$. Therefore we get $(D_Y f)^k\in \sqrt{\I}$, hence $D_Y f\in \sqrt{\I}$.
\end{proof}

As a geometric counterpart, we get the following lemma, which will prove very useful in our proofs.

\begin{lemma} \label{lemfix}
 Let $Y$ be a vector field on a smooth variety $X$. Assume that $\V$ is a $\C$-linear subspace of $\Vect(X)$. If $Y$ normalises $\V$, i.e. $[Y,\V]\subset \V$, then $Y$ is tangent to the reduction of the zero scheme of $\V$. 
 
In particular, if a subspace $\V\subset \Vect(X)$ has isolated (simultaneous) zeros, then they are fixed by the normaliser $N_{\Vect(X)}(\V)$ of $\V$ in $\Vect(X)$. 
\end{lemma}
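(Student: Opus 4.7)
The plan is to reduce the geometric statement to the algebraic Lemma \ref{algder} by working on affine charts. First, I would cover $X$ by affine open subsets of the form $U = \Spec A$. On such a chart every global vector field $W$ restricts to a $\C$-linear derivation $D_W\colon A\to A$, and (as recalled in Remark \ref{remtan} and Definition \ref{defzeros}) the restriction of the zero scheme $\ZZ_\V$ of $\V$ to $U$ is cut out by the ideal
\[
\I \;=\; \bigl(\im D_W\bigr)_{W\in\V}\;\subset\; A.
\]
The assumption $[Y,\V]\subset\V$ is an identity of global vector fields, hence it descends to derivations on $A$: the $\C$-linear subspace $\{D_W\mid W\in\V\}$ is normalised by $D_Y$. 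Lemma \ref{algder} then gives $D_Y(\sqrt{\I})\subset\sqrt{\I}$.

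Next, I would translate this back into geometry. The radical $\sqrt{\I}$ defines the reduced subscheme $(\ZZ_\V)_{\red}\cap U$, and the condition that the derivation $D_Y$ preserves this ideal is, by definition, exactly the statement that the vector field $Y$ is tangent to $(\ZZ_\V)_{\red}$ on $U$. Since the property of being tangent to a closed subscheme is local and the conclusion holds on every member of our affine cover, the local statements glue to the global claim that $Y$ is tangent to $(\ZZ_\V)_{\red}$.

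For the second assertion, assume that the simultaneous zero set of $\V$ is isolated. Then $(\ZZ_\V)_{\red}$ is a finite disjoint union of reduced closed points $\{p_1,\dots,p_k\}$. Being tangent at such a reduced point $p_i$ means that $D_Y$ preserves the maximal ideal $\m_{p_i}\subset\OO_{X,p_i}$; but since $Y$ is a derivation, the induced map $\m_{p_i}/\m_{p_i}^2\to \OO_{X,p_i}/\m_{p_i}\cong\C$ is precisely the tangent vector $Y_{p_i}$, and preservation of $\m_{p_i}$ forces $Y_{p_i}=0$. Hence every $Y\in N_{\Vect(X)}(\V)$ vanishes at each $p_i$, i.e. the common zeros of $\V$ are fixed (in the sense of being zeros) by the whole normaliser.

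There is no serious obstacle here; the only point requiring some care is the bookkeeping translating the algebraic conclusion $D_Y(\sqrt{\I})\subset\sqrt{\I}$ into the geometric notion of tangency to a (possibly non-reduced) zero scheme, and verifying that this is compatible with passage from affine charts to the whole variety $X$. Both follow from the local nature of the ideal sheaf of $(\ZZ_\V)_{\red}$ and the functoriality of restriction of vector fields to affine opens.
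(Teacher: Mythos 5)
Your proposal is correct and follows essentially the same route as the paper: reduce to an affine chart $\Spec A$, identify the reduced zero scheme with $\sqrt{(\im D_W)_{D_W\in\V}}$, apply Lemma \ref{algder} to conclude $D_Y$ preserves this radical ideal, and read that off as tangency. The extra detail you give for the ``in particular'' clause (preservation of $\m_{p_i}$ forcing $Y_{p_i}=0$ at a reduced isolated zero) is a correct spelling-out of what the paper leaves implicit.
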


\begin{remark} From now on, we will keep the reduction of the zero scheme simply \emph{the reduced zero scheme}.
\end{remark}

Note that even the reduced zero scheme of $\V$ may be singular. A vector from the tangent space to $X$ at $x$ is considered tangent to a reduced subscheme $Z \ni x$ if it is in the image of the tangent space to $Z$ at $x$. Equivalently, in a local affine neighbourhood it annihilates all the functions that vanish on $Z$, i.e. those from the defining ideal of $Z$. A vector field is tangent to a reduced subscheme $Z$ if it is tangent to $Z$ at all the closed points of $Z$. Equivalently, the associated derivation maps the defining ideal of $Z$ to itself.

\begin{proof}
As the statement is local, we can assume that $X = \Spec A$ is affine. The space $\V$ gives rise to a vector space of $\C$-derivations $A\to A$, and $Y$ to a single derivation $D_Y:A\to A$. The reduced zero scheme of $\V$ is defined by the ideal $\mathcal J = \sqrt{(\im D_W)_{D_W\in \V}}$. By Lemma \ref{algder} we then get $D_Y \mathcal J \subset \mathcal J$. This means that $Y$ is tangent to the scheme defined by $\mathcal J$.
\end{proof}

\begin{remark}
 There is an alternate, analytic proof, which works under the assumption of $\V$ being finite-dimensional -- which will always be the case for us. It is non-algebraic and hence also non-translatable to other fields, but one could argue it is less technically demanding. Moreover it can be applied to general differentiable manifolds, without algebraicity assumption. Hence we present it here as well. In fact, the finite dimensionality assumption can also be dropped, if we use the fact that the functions we deal with are all analytic, hence they vanish locally if all the derivatives in a point vanish -- this approach mimics the algebraic proof.

 Let $\phi = [Y,-]\big|_{\V}$ be the commutator map $\V\to \V$ induced by $Y$.
 Let $x\in X$ be fixed by $\V$ and let us consider local one-parameter subgroup $\Psi_t$ around $x$ defined by the vector field $Y$.
 For any vector field $W$ we have
 $$[Y,W]_x = \frac{d}{dt}\left( (D_x \Psi_t)^{-1} W_{\Psi_t(x)}\right)\big|_{t=0}$$
 and analogously
 $$[Y,W]_{\Psi_t(x)} = \frac{d}{du}\left( (D_x \Psi_u)^{-1} W_{\Psi_{t+u}(x)}\right)\big|_{u=0}.$$
 Composing this with the linear map $(D_x\Psi_t)^{-1}$ we get, for $W\in \V$, the following:
 $$(D_x\Psi_t)^{-1}\phi(W)_{\Psi_t(x)} = \frac{d}{du}\left( (D_x \Psi_u)^{-1} W_{\Psi_{u}(x)}\right)\big|_{u=t}.$$
 Hence if we consider the map $\tau:(-\eps,\eps)\to \Hom(\V,T_x X)$ defined as
 $$\tau(t)(W) = (D_x \Psi_t)^{-1} W_{\Psi_{t}(x)} $$
 we get
 $$\frac{d}{dt} \tau(t) = \phi^* \tau(t).$$
 
We obtain a linear differential equation, and in particular as $\tau(0)$ vanishes (because $\V$ vanishes at $x$), we get that $\tau$ vanishes also around 0, hence $\tau$ moves along zeros of $\V$.
\end{remark}

\section{Algebraic groups and their Lie algebras}
Throughout the thesis, we will be concerned with actions of linear algebraic groups on algebraic varieties. This section serves as a reminder of the most important notions and properties of algebraic groups that we use. We only discuss groups and varieties over $\C$.
The classical source on the topic is Borel's monograph \cite{Borel}. A more modern and more comprehensive overview, with a stronger emphasis on generality is \cite{Milne}. An older, but very influential source, especially on the topic of algebraic Lie algebras is the monograph by Chevalley \cite{Chev2}, \cite{Chev3}.

\subsection{General algebraic groups}
\begin{definition}
 An algebraic group $\Gs$ is a scheme of finite type over $\C$ with a structure of a group object in the category of schemes over $\C$. This means that the morphisms $\mu:\Gs \times \Gs \to \Gs$ (multiplication map), $\Gs\to \Gs$ (inverse map) and $\Spec \C \to \Gs$ (neutral element) are given, which satisfy the standard axioms of associativity, neutral element and inverse element. The morphisms of algebraic groups are algebraic morphisms that preserve those structures.
\end{definition}

Algebraic groups in positive characteristic might be non-reduced, however as we are working with groups over $\C$, all the groups are even smooth \cite[Corollary 8.39]{Milne}. Any algebraic group is separated \cite[Proposition 1.22]{Milne}, and therefore as long as it is irreducible, it is a variety. The connected component of identity is therefore always a variety. The two most basic groups for us are the multiplicative group $\Gs_m$ and the additive group $\Gs_a$. On the level of $\C$-points those are the multiplicative and additive group of $\C$, respectively. As a scheme $\Gs_m = \Spec \C[t,t^{-1}]$ and $\Gs_a = \Spec \C[x]$. The coaction on the ring level takes $t$ to $t\otimes t$ and $x$ to $x\otimes 1 + 1\otimes x$. We also denote $\Gs_m$ as $\Cs$.

As an important example, generalising $\Cs$, we also have the general linear group $\GL_n(\C)$ of invertible $n\times n$ complex matrices. It is an open subscheme of the affine space of all $n\times n$ matrices, defined by non-vanishing of one polynomial, i.e. the determinant. Thence it is also an affine scheme. In fact, in a sense it is a universal example of an affine algebraic group.

\begin{proposition}[Proposition 1.10, \cite{Borel}] An algebraic group is affine if and only if it is a closed subgroup of $\GL_n(\C)$ for some positive integer $n$.
\end{proposition}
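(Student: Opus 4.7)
The plan is to handle the two implications separately. The backward direction is immediate: if $\Gs \subset \GL_n(\C)$ is a closed subgroup, then since $\GL_n(\C) = \Spec \C[x_{ij}, \det^{-1}]$ is affine, any closed subscheme is affine, so $\Gs$ is affine. The substance of the proposition is the forward direction, for which I would follow the classical construction realising $\Gs$ inside $\GL(V)$ for a carefully chosen representation $V$.

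The main ingredient is to study the right regular representation of $\Gs$ on its coordinate ring $\C[\Gs]$, where $g \in \Gs$ acts by $(\rho(g) f)(x) = f(xg)$. The first key step is a finiteness lemma: any finite-dimensional subspace $W \subset \C[\Gs]$ is contained in a finite-dimensional $\Gs$-stable subspace. To prove this, given $f \in \C[\Gs]$, use the comultiplication $\mu^*: \C[\Gs] \to \C[\Gs] \otimes \C[\Gs]$ to write $\mu^* f = \sum_{i=1}^m f_i \otimes h_i$ with $f_i, h_i \in \C[\Gs]$ (a finite sum since $\mu^* f$ lies in the tensor product). Then $\rho(g) f = \sum_i h_i(g) f_i$, so the $\Gs$-translates of $f$ lie in the finite-dimensional span of $f_1, \ldots, f_m$. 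Taking the $\Gs$-span of a basis of $W$ then yields the desired finite-dimensional stable envelope.

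Next, because $\Gs$ is affine of finite type, $\C[\Gs]$ is a finitely generated $\C$-algebra. Pick algebra generators $f_1, \ldots, f_r$ and let $V \subset \C[\Gs]$ be a finite-dimensional $\Gs$-stable subspace containing all of them (by the lemma above). Choose a basis $v_1, \ldots, v_n$ of $V$, which gives an algebraic group homomorphism $\rho: \Gs \to \GL(V) \cong \GL_n(\C)$. I would then show that $\rho$ is a closed immersion. For this, the critical observation is that each $v_j$ can be recovered as a matrix coefficient of $\rho$: writing $\rho(g) v_j = \sum_i a_{ij}(g) v_i$, the functions $a_{ij} \in \C[\Gs]$ and $\rho^*$ sends the coordinate function on $\GL_n$ in position $(i,j)$ to $a_{ij}$. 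Applying the identity element to $v_j = \sum_i \delta_{ij} v_i$ and using $\rho(g)v_j(1) = v_j(g)$, we recover $v_j$ itself as a $\C$-linear combination of the $a_{ij}$. Hence the image of $\rho^*$ contains every generator $f_j$ of $\C[\Gs]$, so $\rho^*$ is surjective, which is exactly the condition for $\rho$ to be a closed immersion.

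The main obstacle, as is typical for this result, lies in the finiteness lemma: one must take the purely algebraic coproduct definition of the $\Gs$-action seriously, rather than naively manipulating translates, to conclude that the orbit of a single element lies in a finite-dimensional subspace. Once this is established, the remaining steps are formal: finite generation of $\C[\Gs]$ reduces us to a finite-dimensional stable $V$, and the matrix coefficient computation shows the induced map to $\GL(V)$ has surjective comorphism and is therefore a closed embedding.
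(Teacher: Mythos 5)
Your proof is correct and is exactly the classical argument from the cited reference: the paper itself does not prove this proposition but quotes it from Borel's book, where the forward direction is established precisely via local finiteness of the right regular representation on $\C[\Gs]$, a finite-dimensional stable subspace containing algebra generators, and the matrix-coefficient computation showing the comorphism of $\Gs\to\GL(V)$ is surjective. All the steps you outline, including the use of the comultiplication to bound the span of translates and the evaluation at the identity to recover $v_j$ from the $a_{ij}$, are sound.
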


For this reason, we refer to the affine algebraic groups as \emph{linear algebraic groups}. In this thesis, we do not consider other algebraic groups. One should however note that by Chevalley's structure theorem \cite[Theorem 8.26]{Milne}, every connected algebraic group is an extension of a complete algebraic group by a linear algebraic group. Moreover, a complete algebraic group is projective \cite[Theorem 8.45]{Milne} and abelian \cite[8.20]{Milne}. Complete algebraic groups are also called \emph{abelian varieties}.

\begin{definition}
 Any group morphism from an algebraic group $\Gs$ to $\GL_n(\C)$ is called a \emph{representation} of $\Gs$.
\end{definition}

As we saw above, any affine algebraic group admits a faithful (i.e. injective) representation. The group $\GL_n(\C)$ has its \emph{standard representation} given by the identity $\GL_n(\C)\to\GL_n(\C)$. Any algebraic group admits the adjoint representation, as explained in the next section.

\subsection{Lie algebras of algebraic groups}
As a complex algebraic group is always smooth, it can be actually viewed as a Lie group. The classical Lie theory analyses the structure of Lie groups with use of their Lie algebras, and for algebraic groups, their Lie algebras can also be defined algebraically. The Lie algebras and their actions will be central to our main results, e.g. Theorem \ref{general}.

\begin{definition}
The Lie algebra of an algebraic group $\Gs$ is the vector space $\geg = \Lie(\Gs)$ of the derivations on $\Gs$ invariant under the left multiplication by any element $g\in\Gs$. The Lie bracket of vector fields induces the Lie algebra structure on $\geg$.
\end{definition}

A left-invariant vector field on $\Gs$ is defined uniquely by its value at $1\in\Gs$, and this provides an isomorphism of vector spaces $\geg \simeq T_{1}\Gs$.

Although the definition of the Lie algebra agrees with the one known from analysis, the fundamental theorems of the Lie algebra do not hold in the algebraic category. The reason for that is that there are much fewer algebraic groups and morphisms between them than in the Lie group case. The assignment $\Lie:\Gs\mapsto \geg = \Lie(\Gs)$ satisfies the following properties.
\begin{enumerate}
\item It is a functor, i.e. a group homomorphism $\phi:\Gs\to \Hs$ induces naturally a morphism of Lie algebras $d\phi:\geg\to\he$;
\item it maps abelian groups to abelian Lie algebras. Moreover a connected algebraic group with an abelian Lie algebra is itself abelian (one needs to work in characteristic 0 here \cite[10.35]{Milne});
\item if $\Gs<\Hs$ are connected algebraic groups and the corresponding map $\Lie(G) \to \Lie(\Hs)$ is an isomorphism, then $\Gs \simeq \Hs$.
\end{enumerate}

However, the following do not agree with classical Lie group theory:
\begin{enumerate}
\item Not every complex Lie algebra is a Lie algebra of an algebraic group. We call a Lie algebra \emph{algebraic} if it is a Lie algebra of an algebraic group.
\item Moreover, if $\Hs$ is an algebraic group and $\geg\subset\he$ is an algebraic Lie algebra by itself, its inclusion in $\he$ might not come from an inclusion of a subgroup $\Gs\to \Hs$. Consider as an example an irrational subalgebra in $\Lie(\Gs_m^2)$.
\item If $\Gs$ and $\Hs$ are algebraic groups and $\Gs$ is simply connected, then a Lie algebra morphism $\geg\to\he$ does not have to lift to a group morphism $\Gs\to\Hs$. Consider for example $\Gs  = \Gs_a$ and $\Hs = \Gs_m$.
\end{enumerate}

As in the analytic case, we can talk about the adjoint representation of a Lie group and Lie algebra. Let $\Hs$ be an algebraic group. Then we are given the morphism $\Hs\times\Hs \to \Hs$ given by conjugation $(g,h)\mapsto (ghg^{-1})$. The derivative at $(g,1)$ is a linear map $\he\times\he\to\he$ which vanishes on the first factor. The map $\he\to\he$ on the second factor is called $\Ad_g$. So defined $\Ad: g\mapsto \Ad_g$ defines a representation of the group $\Hs$ \cite[3.13]{Borel} on the vector space $\he$, which we call the \emph{adjoint representation}. The image of $\Hs$ under $\Ad:\Hs\to\GL(\he)$ is a closed algebraic subgroup and it is called the \emph{adjoint group} of $\Hs$. It is the quotient of $\Hs$ by its centre.

Any representation $\rho:\Gs\to \GL(V)$ of an algebraic group defines the corresponding map $d\rho:\geg\to\gl(V) = \End(V)$ on the level of Lie algebras. It is a representation of the Lie algebra, i.e. for any $X,Y\in\geg$ we have $[d\rho(X),d\rho(Y)] = d\rho([X,Y])$, where on the left we commute operators in $\End(V)$. In particular, this yields the adjoint representation of the Lie algebra, $\ad:\geg\to\gl(\geg)$. It is simply given by $\ad_X(Y) = [X,Y]$.

\subsection{Properties of linear algebraic groups} \label{secjor}
Here we define and describe the properties of particular classes of linear groups that we will be using: solvable, unipotent, semisimple and reductive. First, we need to discuss the Jordan decomposition.

The classical Jordan decomposition for an endomorphism $A\in M_{n\times n}(\C)$ provides a basis of $\C^n$ in which the matrix is of Jordan form, i.e. $A = MJM^{-1}$ for $M\in\GL_n(\C)$ and $J$ in Jordan form. Then $J$ is in particular an upper triangular matrix and is a sum of its diagonal part $J_s$ and the nilpotent part $J_n$, consisting of the entries above the diagonal. This gives the \emph{additive Jordan decomposition} of $A$ into \emph{semisimple} (i.e. diagonalisable) and \emph{nilpotent parts} $A_s = MJ_sM^{-1}$ and $A_n = MJ_nM^{-1}$ of $A$. Those are commuting matrices and they do not depend on the choice of $M$. In fact, it is the unique pair of commuting semisimple matrix and nilpotent matrix, which sum up to $A$ \cite[Proposition 4.2]{Borel}. Now if $A$ is invertible, then $A_s$ is, and taking $A_u = I_n + A_s^{-1} A_n$ yields the \emph{multiplicative Jordan decomposition} $A = A_s A_u$. Here $A_s$ and $A_u$ commute, $A_s$ is semisimple and $A_u$ is unipotent -- the latter is called the \emph{unipotent part} of $A$.

Now let $\Gs$ be an arbitrary linear algebraic group. Then we have the following theorem-definition \cite[4.4]{Borel}.
\begin{theorem} \label{defjord}
Let $g\in \Gs$ and $X\in \geg$. Then there exist unique $g_s,g_u\in \Gs$ and $X_s,X_n\in\geg$ such that for any representation $\rho:\Gs\to \GL_n(\C)$, $\rho(g_s) \cdot \rho(g_u)$ is the multiplicative Jordan decomposition of $\rho(g)$ and $d\rho(X_s) + d\rho(X_n)$ is the additive Jordan decomposition of $d\rho(V)$. We also call $g_s\cdot g_u = g$ and $X_s+X_n = X$ respectively the multiplicative Jordan decomposition and the additive Jordan decomposition.
\end{theorem}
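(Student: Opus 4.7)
The plan is to embed $\Gs$ as a closed subgroup of $\GL_n(\C)$ via a faithful representation and transfer the classical Jordan decomposition back to $\Gs$. Fix such a faithful $\iota\colon \Gs\hookrightarrow \GL_n(\C)$. The classical results give decompositions $\iota(g) = \iota(g)_s\cdot\iota(g)_u$ and $d\iota(X) = d\iota(X)_s + d\iota(X)_n$ inside $\GL_n(\C)$ and $\gl_n(\C)$. The substance of the theorem is that these components actually lie in $\iota(\Gs)$ and $d\iota(\geg)$, and that the resulting elements $g_s,g_u,X_s,X_n$ behave correctly under every other representation.

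To prove containment I would work with the regular representation of $\Gs$ on $\C[\Gs]$. Right translation by $g$ defines a $\C$-algebra automorphism $\tau_g$, and $\C[\Gs] = \bigcup_i V_i$ is a union of finite-dimensional $\tau_g$-stable subspaces. Applying the classical Jordan decomposition on each $V_i$ and checking compatibility as $i$ grows assembles two commuting $\C$-linear endomorphisms $s,u\colon \C[\Gs]\to \C[\Gs]$, locally semisimple and locally unipotent respectively, with $su = \tau_g$. The main obstacle, and the key lemma, is that $s$ and $u$ are again algebra automorphisms of $\C[\Gs]$; I would establish this using that the multiplication map $\mu\colon \C[\Gs]\otimes \C[\Gs]\to \C[\Gs]$ intertwines $\tau_g\otimes \tau_g$ with $\tau_g$, combined with the elementary fact that the Jordan components of $A\otimes A$ are $A_s\otimes A_s$ and $A_u\otimes A_u$. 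Since $\Gs$ is recovered as the group of Hopf-algebra automorphisms of $\C[\Gs]$, one then obtains unique $g_s,g_u\in\Gs$ with $\tau_{g_s}=s$ and $\tau_{g_u}=u$. The Lie algebra case is parallel: $X\in\geg$ acts as a locally finite derivation of $\C[\Gs]$, the classical semisimple and nilpotent parts of a derivation on a finite-dimensional algebra are again derivations (a standard characteristic-zero fact, provable by the same tensor-compatibility argument applied to $\mu$ viewed as intertwining the derivation on $\C[\Gs]\otimes \C[\Gs]$), and these assemble into derivations of $\C[\Gs]$ coming from unique $X_s, X_n\in\geg$.

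Uniqueness is inherited directly from $\GL_n(\C)$: a second decomposition $g = g_s'g_u'$ with commuting factors of the right type maps under $\iota$ to a second classical Jordan decomposition of $\iota(g)$, forcing $\iota(g_s) = \iota(g_s')$ and $\iota(g_u) = \iota(g_u')$, and faithfulness finishes the job. For the naturality in an arbitrary representation $\rho\colon \Gs \to \GL_m(\C)$, I would rerun the regular-representation construction inside $\rho(\Gs)$: this shows that $\rho(g_s)$ is semisimple, $\rho(g_u)$ is unipotent, they commute, and their product is $\rho(g)$; uniqueness of the classical multiplicative Jordan decomposition in $\GL_m(\C)$ then forces $\rho(g_s) = \rho(g)_s$ and $\rho(g_u) = \rho(g)_u$. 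Exactly the same argument applied to $d\rho$ proves the additive statement on $\geg$, completing the proof.
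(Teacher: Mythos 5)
The paper offers no proof of this statement: it is quoted as a theorem--definition directly from Borel (\cite[4.4]{Borel}), so there is nothing internal to compare your argument against. What you have written is a correct outline of the standard proof found in that reference (and in Springer and Humphreys): pass to the locally finite right-translation action on $\C[\Gs]$, take Jordan components there, show they are again algebra automorphisms (resp.\ derivations) via compatibility with the multiplication map, and descend to group elements (resp.\ Lie algebra elements). Two points deserve tightening. First, $\Gs$ is not the group of Hopf-algebra automorphisms of $\C[\Gs]$ (that is $\operatorname{Aut}(\Gs)$); the correct identification is that right translations are exactly the algebra automorphisms of $\C[\Gs]$ commuting with all left translations, so you must also verify that $s$ and $u$ commute with left translations --- which follows since left translations commute with $\tau_g$ and conjugation by an automorphism preserves the locally finite Jordan components. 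Second, the naturality step as phrased risks circularity: ``rerunning the construction inside $\rho(\Gs)$'' produces \emph{a} Jordan decomposition of $\rho(g)$ in $\rho(\Gs)$, but to identify it with $(\rho(g_s),\rho(g_u))$ you need the comorphism $\C[\rho(\Gs)]\hookrightarrow\C[\Gs]$ to intertwine the two right-translation actions, so that the Jordan components of $\tau_g$ restrict to those of $\tau_{\rho(g)}$ on the invariant subalgebra; combined with the fact that every finite-dimensional representation embeds in a sum of copies of the regular representation, this gives the statement for arbitrary $\rho$. Both fixes are standard, and with them your argument is complete.
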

Directly from the properties we get that the notion of Jordan decomposition agrees with the above for $\Gs = \GL_n(\C)$. Moreover any morphism of algebraic groups $\phi:\Gs\to\Hs$ preserves both kinds of Jordan decompositions.

We say that an element of the group is semisimple/unipotent if its unipotent/semisimple part vanishes. Similarly, an element of the Lie algebra is called semisimple/nilpotent if its nilpotent/semisimple part vanishes. One should note that the Jordan decomposition in the Lie algebra is not its intrinsic property -- it depends heavily on the group. The Lie algebras of $\Gs_m^r$ and $\Gs_a^r$ are isomorphic (and abelian), but in the former each element is semisimple, and in the latter each element is nilpotent. By functoriality of the Jordan decomposition, this provides obstructions to lifting Lie algebra morphisms to Lie group morphisms. In particular, a Lie algebra of an algebraic subgroup of $\GL_n(\C)$ needs to contain the semisimple and nilpotent part of any of its elements \cite[\S 14, Proposition 3]{Chev2}. There is a five-dimensional solvable Lie algebra for which it fails, see \cite[Chapter 1, Exercise 5.6]{BouLie13}, \cite[Chapter 7, Exercise 5.1]{BouLie79}.

\subsubsection{Algebraic tori}
For any nonnegative integer $r$, we define the \emph{torus of rank $r$} to be an algebraic group isomorphic to $(\Cs)^r$. It is by definition an abelian linear group. Equivalently a torus is a connected algebraic group consisting of semisimple elements. If we choose the isomorphism with $(\Cs)^r$, then a rank $r$ torus $\Ts$ can be viewed as the subgroup of diagonal matrices within $\GL_r(\C)$. In fact, for any representation $(\Cs)^r\to \GL_n(\C)$, its image is conjugate to a subgroup of the diagonal matrices. This means that there is a basis $(v_1,\dots,v_n)$ of $\C^n$ such that each $\Span(v_i)$ is $\Ts$-invariant, and the corresponding representations are then morphisms $\Ts\to\Cs$, which we call the \emph{weights} of the representation. 

Tori are the basic groups from the point of view of representation theory, but also for group actions and equivariant cohomology. In the next paragraphs we see the important role maximal tori play in linear groups in general.

\subsubsection{Unipotent and solvable groups}

\begin{definition}
We call a linear algebraic group \emph{unipotent} if all of its elements are unipotent.
\end{definition}
A standard example of a unipotent group is the group $\U_n$ of upper-triangular matrices within $\GL_n(\C)$. In fact, every unipotent group is a closed subgroup of $\U_n$ for some $n$ \cite[4.8]{Borel}. By definition, a unipotent group does not contain $\Gs_m$. However, in general it does contain many copies of $\Gs_a$. In fact, any element of the Lie algebra integrates to an additive subgroup:

\begin{theorem}[Proposition 14.32 in \cite{Milne}, Proposition V.3.15 in \cite{Chev3}]\label{algexp}
Assume that $\U$ is a unipotent group. Then there exists an \emph{algebraic exponential} isomorphism of schemes $\exp:\uu\to\U$. It is the unique map with the property that for any representation $\rho:\U\to \GL_n(\C)$ and any $v\in \uu$, the matrix $\rho(\exp(v))$ is equal to the exponential $\exp(d\rho(v))$.
\end{theorem}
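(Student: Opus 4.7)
The plan is to bootstrap from the ambient case $\U_n$ and then transport to an arbitrary unipotent $\U$ via the embedding $\U \hookrightarrow \U_n$.

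First I would treat $\U_n$ directly. Since every $X$ in $\uu_n$ is strictly upper triangular, $X^n=0$, so the truncated series
$$\exp(X)=\sum_{k=0}^{n-1}\frac{X^k}{k!}$$
is a morphism of schemes $\uu_n\to\U_n$ (the image is unitriangular because $X^k$ is strictly upper triangular for $k\ge 1$). The analogous truncated logarithm
$$\log(g)=\sum_{k=1}^{n-1}\frac{(-1)^{k+1}(g-I)^k}{k}$$
is also polynomial, and the classical formal identities $\log\exp=\id$, $\exp\log=\id$ involve only finitely many nonzero terms in this nilpotent setting, giving a scheme-theoretic isomorphism $\exp_n:\uu_n\xrightarrow{\sim}\U_n$.

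Next, I would pass to a general closed unipotent $\U$. By the embedding into some $\U_n$ we may view $\uu=\Lie(\U)$ as a Lie subalgebra of $\uu_n$, and the goal is to show that the ambient $\exp_n$ restricts to a scheme isomorphism $\uu\to\U$. The cleanest route is induction on $\dim\U$ using the structure of unipotent groups in characteristic zero: the descending central series produces a normal subgroup $\U'\triangleleft\U$ with $\U/\U'\cong\Gs_a$, for which both $\uu'\to\U'$ (by induction) and $\C=\Lie(\Gs_a)\to\Gs_a$ (tautologically) are exponentials. Choosing an algebraic section $\Gs_a\to\U$ of the quotient and multiplying by $\exp_n$ on $\uu'$ gives a morphism $\uu\to\U$ which, after checking compatibility with $\exp_n$ inside $\U_n$, must coincide with the restriction of $\exp_n$ and is therefore a scheme isomorphism. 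The inverse is supplied by the restriction of $\log$ to $\U$: one verifies that $\log(g)\in\uu$ for $g\in\U$ by differentiating the one-parameter subgroup $t\mapsto g^t:=\exp_n(t\log g)$, whose tangent vector at $t=0$ lies in $\uu$ because $\U$ is closed in $\U_n$.

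For the universal property, let $\rho:\U\to\GL_m(\C)$ be any representation. Since $\rho$ preserves Jordan decompositions (Theorem \ref{defjord}), $\rho(\U)$ consists of unipotent matrices and is thus conjugate into $\U_m$, so $d\rho$ maps $\uu$ into $\uu_m$. The identity $\rho(\exp_n(v))=\exp_m(d\rho(v))$ is then an equality of polynomial maps $\uu\to\GL_m(\C)$, and can be verified by reduction to the tautological representation of $\U_n$ where both sides are literally the same truncated exponential series. Uniqueness of the map $\exp:\uu\to\U$ with this property follows by applying it to any faithful representation, for instance the embedding $\U\hookrightarrow\U_n$.

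The main obstacle is the middle step: showing that the ambient $\exp_n$ actually restricts to a bijection onto the subscheme $\U$. The induction through $\Gs_a$-quotients is conceptually clean but relies on the structure theorem for unipotent groups and on the nontriviality of lifting the section through the exponential compatibly; an equivalent, more hands-on approach is to verify directly via Baker--Campbell--Hausdorff (which terminates in the nilpotent setting) that $\uu$ is stable under the group-theoretic operations transported through $\exp_n$, so that $\exp_n(\uu)$ is a closed subgroup of $\U_n$ with the same Lie algebra as $\U$ and hence equal to $\U$ by the connectedness-plus-Lie-algebra uniqueness principle recalled earlier.
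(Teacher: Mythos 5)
The paper does not prove this statement at all: it is quoted verbatim from the literature (Milne, Proposition 14.32; Chevalley, Proposition V.3.15), so there is no in-text argument to compare yours against. Judged on its own, your proposal is a correct reconstruction of the standard proof, and you have correctly identified where the real content lies. The treatment of $\U_n$ via truncated $\exp$ and $\log$ is fine, and of your two routes for the descent to a closed $\U\subset\U_n$, the Baker--Campbell--Hausdorff one is the one that actually closes the argument: since $\uu$ is a Lie subalgebra and BCH terminates, $\exp_n(\uu)$ is a closed connected subgroup of $\U_n$ with Lie algebra $\uu$, and in characteristic zero two connected subgroups with the same Lie algebra coincide (the principle recalled earlier in the thesis), so $\exp_n(\uu)=\U$. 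The induction-through-$\Gs_a$-quotients route, as literally written, does not deliver what you need: the map obtained by multiplying a section of $\U\to\Gs_a$ against $\exp_n|_{\uu'}$ has no reason to coincide with the restriction of $\exp_n$ (since $\exp(a+b)\neq\exp(a)\exp(b)$ for noncommuting arguments), and coincidence with $\exp_n$ is exactly what the universal property forces, so you would still need the BCH comparison. One further imprecision: the identity $\rho(\exp(v))=\exp(d\rho(v))$ for an arbitrary representation $\rho$ of $\U$ does not ``reduce to the tautological representation of $\U_n$'', because $\rho$ need not factor through it; the clean argument is that $t\mapsto\rho(\exp(tv))$ and $t\mapsto\exp(t\,d\rho(v))$ are both algebraic homomorphisms $\Gs_a\to\GL_m(\C)$ with the same differential at the identity, hence equal. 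With those two repairs your outline matches the standard proof found in the cited sources.
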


Note that the power series for $\exp(d\rho(v))$ eventually vanishes, as $\rho(v)$ is nilpotent. Moreover $\Gs_a$ is the ``building block'' of unipotent groups:

\begin{theorem}[\cite{Milne}, Proposition 14.21] \label{solvsplit}
Every unipotent group admits a central series whose quotients are isomorphic to $\Gs_a$.
\end{theorem}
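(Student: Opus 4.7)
The plan is to induct on $\dim \U$, at each step peeling off a central subgroup isomorphic to $\Gs_a$ and invoking the inductive hypothesis on the quotient. The base case $\dim \U = 0$ is vacuous. For the inductive step, the crucial task is to produce a closed central subgroup $A \subset Z(\U)$ with $A \cong \Gs_a$; once this is done, $\U/A$ is again a unipotent group of strictly smaller dimension, so the inductive hypothesis yields a central series $\U/A = K_0 \supset K_1 \supset \dots \supset K_m = \{1\}$ with $K_i/K_{i+1} \cong \Gs_a$. Pulling this back along $\pi \colon \U \to \U/A$ produces a chain $\U = \pi^{-1}(K_0) \supset \pi^{-1}(K_1) \supset \dots \supset \pi^{-1}(K_m) = A \supset \{1\}$ whose successive quotients are each isomorphic to $\Gs_a$. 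The fact that each $\pi^{-1}(K_i)/\pi^{-1}(K_{i+1})$ sits centrally in $\U/\pi^{-1}(K_{i+1})$ is automatic from the identification $\U/\pi^{-1}(K_{i+1}) \cong (\U/A)/K_{i+1}$, and the last layer $A$ is central by construction.

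To construct $A$, I would work on the Lie algebra side. Since $\U$ embeds as a closed subgroup of some $\U_n$ by the fact cited just before the theorem, its Lie algebra $\uu$ sits inside the strictly upper triangular matrices, which form a nilpotent Lie algebra; hence $\uu$ is itself nilpotent, and therefore its center $\mathfrak{z}(\uu)$ is non-zero. Picking any $0 \neq v \in \mathfrak{z}(\uu)$, set $A := \exp(\C v) \subset \U$ via the algebraic exponential of Theorem~\ref{algexp}. Because $\C v$ is a one-dimensional abelian Lie subalgebra of $\uu$, the restriction of $\exp$ gives an isomorphism of algebraic groups $\Gs_a \cong \C v \xrightarrow{\;\sim\;} A$.

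The one substantive point is verifying that $A \subset Z(\U)$. By Theorem~\ref{algexp} the exponential is a scheme isomorphism $\uu \to \U$, so every element of $\U$ has the form $\exp(w)$ with $w \in \uu$. The defining property of $\exp$ applied to the adjoint representation gives $\Ad_{\exp w} = \exp(\ad_w)$, which is a well-defined finite sum because $\ad_w$ is nilpotent. Since $v$ is Lie-algebra central we have $\ad_w(v) = [w,v] = 0$, whence $\Ad_{\exp w}(tv) = tv$ for every $t \in \C$. Combined with the conjugation identity $\exp(w)\exp(tv)\exp(-w) = \exp(\Ad_{\exp w}(tv))$, this yields $\exp(w)\exp(tv)\exp(-w) = \exp(tv)$, i.e.\ $A$ commutes with all of $\U$. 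I do not anticipate any serious obstacle: once the embedding into $\U_n$ delivers Lie-algebra nilpotency and the algebraic exponential is in hand, the central-$\Gs_a$ step is formal, and the induction carries the rest.
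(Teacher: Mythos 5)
The paper offers no proof of this statement at all: it is quoted verbatim from Milne (Proposition 14.21), so there is nothing internal to compare against. Your argument is correct and complete as a characteristic-zero proof. The induction scaffolding is sound (centrality of the pulled-back layers is indeed automatic from $\U/\pi^{-1}(K_{i+1})\cong(\U/A)/K_{i+1}$), and the substantive step --- producing a central $\Gs_a$ --- is handled properly: $\uu$ is nilpotent as a subalgebra of the strictly upper-triangular matrices, so $\mathfrak{z}(\uu)\neq 0$; the identities $\Ad_{\exp w}=\exp(\ad_w)$ and $\exp(w)\exp(tv)\exp(-w)=\exp(\Ad_{\exp w}(tv))$ follow from the defining property of the algebraic exponential applied to a faithful representation, and they give $A=\exp(\C v)\subset Z(\U)$; the same device shows $\exp|_{\C v}$ is a closed immersion of groups $\Gs_a\hookrightarrow\U$. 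Two small implicit uses of characteristic zero are worth flagging: that $\dim\U=0$ forces $\U=\{1\}$ (so the induction terminates), and the availability of Theorem~\ref{algexp} itself. This is where your route genuinely differs from the source the paper cites: Milne's proof intersects $\U$ with the standard descending central series of the ambient $\U_n$ and refines, which avoids the exponential entirely and works in positive characteristic (where one only gets quotients embedding into $\Gs_a$), whereas your Lie-algebra argument is shorter but is tied to characteristic zero. Since the paper works over $\C$ throughout and states Theorem~\ref{algexp} beforehand, your proof is perfectly adequate in context.
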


Now we switch our attention to the solvable groups. An algebraic group is called solvable if the group of its $\C$-points is solvable. A typical example is the Borel subgroup $\Bs_n$ of upper-triangular matrices in $\GL_n(\C)$. In fact, by the Lie--Kolchin theorem \cite[Corollary 10.5]{Borel}, every solvable group is a closed subgroup of some $\Bs_n$.

We recall (a part of) the theorem of Borel on solvable groups (\cite[Theorem 10.6]{Borel}, see also \cite[Theorem 16.33]{Milne}) that we will often tacitly use throughout.

\begin{theorem}\label{solv}
Let $\Hs$ be a connected solvable group with the Lie algebra $\he$ and $\Hs_u$ its set of unipotent elements. Then
\begin{enumerate}
\item $\Hs_u$ is a connected normal closed, unipotent subgroup of $\Hs$ containing $[\Hs,\Hs]$. 
\item The maximal tori in $\Hs$ are all conjugate. If $\Ts$ is a maximal torus, then $\Hs = \Hs_u \rtimes \Ts$. The Lie algebra $\he_n$ of $\Hs_u$ consists of all nilpotent elements of $\he$.
\item If $\Ts$ is a maximal torus, then any semisimple element of $\Hs$ is conjugate to a unique element of $\Ts$.
\end{enumerate}
\end{theorem}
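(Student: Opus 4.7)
The plan is to reduce everything to the matrix picture afforded by the Lie--Kolchin theorem, which embeds the connected solvable group $\Hs$ as a closed subgroup of $\Bs_n$ for some $n$. Under this embedding the set $\Hs_u$ is identified with $\Hs \cap \U_n$, which makes most of part~(1) immediate: it is closed as an intersection of closed subgroups, normal in $\Hs$ because $\U_n$ is normal in $\Bs_n$, and unipotent by construction. The inclusion $[\Hs,\Hs] \subset \Hs_u$ follows because the quotient $\Bs_n/\U_n$ is the diagonal torus, which is abelian. For the Lie algebra statement, Theorem~\ref{defjord} forces the semisimple and nilpotent parts of every $X \in \he$ to lie in $\he$, and the nilpotent elements of $\Lie(\Bs_n)$ are precisely those in $\Lie(\U_n)$; hence $\he_n = \he \cap \Lie(\U_n)$ is exactly the set of nilpotent elements of $\he$.

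For connectedness of $\Hs_u$, I would observe that the quotient $\Hs/\Hs_u$ embeds into the diagonal torus $\Bs_n/\U_n$, so it is an abelian diagonalizable group and, being a connected quotient of $\Hs$, is a torus. Consequently $(\Hs_u)^\circ$ is normal in $\Hs$ and the finite group $\Hs_u/(\Hs_u)^\circ$ is unipotent; since in characteristic zero the exponential of Theorem~\ref{algexp} is a scheme isomorphism, any finite unipotent group is trivial, forcing $\Hs_u$ to be connected.

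Part~(2) I would prove by induction on $\dim \Hs$. The base case of an abelian connected solvable group reduces to the fact that such a group decomposes as the direct product of its set of semisimple elements (which is a torus) with its set of unipotent elements. For the inductive step, I pick a minimal connected normal subgroup $\Ns \subset \Hs_u$ isomorphic to $\Gs_a$, available via the central series of Theorem~\ref{solvsplit}, apply the inductive hypothesis to $\Hs/\Ns$ to obtain conjugacy of maximal tori and a semidirect decomposition there, and then lift the torus using the vanishing of the first cohomology of a torus with values in the additive group $\Ns$, which provides a section $\Ts \hookrightarrow \Hs$ avoiding $\Ns$. The decomposition $\Hs = \Hs_u \rtimes \Ts$ then follows from dimension count once one notes that $\Ts \cap \Hs_u = \{1\}$, since a group element cannot simultaneously be semisimple and unipotent unless it is trivial.

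The main obstacle, and in my view the most conceptually satisfying step, is the uniqueness claim in~(3). Given $h \in \Hs$ with $h t h^{-1} = t'$ for $t, t' \in \Ts$, I would write $h = u s$ with $u \in \Hs_u$ and $s \in \Ts$ using the semidirect decomposition from~(2). Since $\Ts$ is abelian, conjugation by $s$ fixes $t$, so $u t u^{-1} = t'$; equivalently, $t' t^{-1} = u t u^{-1} t^{-1} \in [\Hs,\Hs] \subset \Hs_u$. But $t' t^{-1}$ also lies in $\Ts$, and $\Ts \cap \Hs_u = \{1\}$ as noted above. Hence $t = t'$, completing the proof.
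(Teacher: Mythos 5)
First, a remark on the comparison itself: the paper does not prove this statement. It is quoted as background, with the proof delegated to \cite[Theorem 10.6]{Borel} and \cite[Theorem 16.33]{Milne}. So your proposal can only be measured against the standard argument, whose outline it follows (Lie--Kolchin embedding into $\Bs_n$, induction on dimension through a one-dimensional normal unipotent subgroup, lifting of the torus by a cohomological splitting).

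There is, however, a genuine gap in part (3): you prove only the \emph{uniqueness} half. The statement also asserts \emph{existence} --- that every semisimple element of $\Hs$ is conjugate to \emph{some} element of $\Ts$ --- and this is the substantive half. The semidirect decomposition writes a semisimple $s$ as $s=ut$ with $u\in\Hs_u$ and $t\in\Ts$, but since $u$ and $t$ need not commute this is not the Jordan decomposition of $s$, and it does not follow that $s$ is conjugate to $t$; an argument is required. The standard one runs the same induction through the normal $\Gs_a$: one reduces to $s=nt$ with $n\in\Ns\cong\Gs_a$ and shows, using semisimplicity of $s$ and the weight of the $\Ts$-action on $\Ns$, that $n$ can be conjugated away by an element of $\Ns$. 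Two smaller imprecisions are worth fixing as well. The central series of Theorem \ref{solvsplit} is a series of subgroups of $\Hs_u$ whose terms are not automatically normal in $\Hs$; to obtain an $\Hs$-normal copy of $\Gs_a$ you should pass to a characteristic subgroup of $\Hs_u$ (for instance the last nonzero term of its descending central series, which is a vector group) and then pick an $\Hs$-stable line in it, using Lie--Kolchin for the conjugation action of $\Hs$. Finally, the \emph{existence} of a torus lifting $\Ts\hookrightarrow\Hs$ over $\bar\Ts\subset\Hs/\Ns$ is governed by the vanishing of the second cohomology of $\bar\Ts$ with values in $\Ns$, while the first cohomology controls conjugacy of the liftings; both vanish for a linearly reductive group acting on a vector group, but you invoke only the first.
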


\begin{remark}
\label{nilalg}
Let $\he_n$ be the set of nilpotent elements of $\he$. It follows from above that $\he_n$ is a Lie subalgebra of $\he$. As it consists of nilpotent elements, hence acting nilpotently by the adjoint action, by Engel's theorem it is nilpotent itself. Moreover it contains $[\he,\he]$. In addition, from the second statement we get that $\he = \he_n \oplus \ttt$ for $\ttt = \Lie(\Ts)$.
\end{remark}

\subsubsection{Reductive and semisimple groups}
Of particular importance for representation theory and equivariant topology are the reductive groups, which generalise algebraic tori. As mentioned above, any torus representation splits into a direct sum of one-dimensional representations. A representation of reductive group also splits into a direct sum of \emph{irreducible representations}, i.e. representations without proper nontrivial subrepresentations. Those can however be quite complicated themselves.

\begin{definition}
A linear algebraic group is called \emph{linearly reductive} if its every finite-dimensional representation is a direct sum of irreducible representations.
\end{definition}

We can also characterise reductivity internally. To this end, we need to define the radical and the unipotent radical (\cite[6.44 and 6.66]{Milne}).

\begin{definition}
Let $\Gs$ be a connected linear algebraic group. It contains the maximal connected solvable normal closed subgroup, called the \emph{radical} of $\Gs$, as well as the maximal connected unipotent normal closed subgroup, called the \emph{unipotent radical} of $\Gs$. We call $\Gs$ \emph{semisimple} if its radical is trivial, and \emph{reductive} if its unipotent radical is trivial.
\end{definition}

Note that any unipotent group is nilpotent and hence solvable, and therefore the unipotent radical is contained in the radical. Thus semisimplicity is a stronger property than reductivity.

As we consider all the groups in characteristic $0$, a linear algebraic group is linearly reductive if and only if its identity component is reductive \cite[22.43]{Milne}. A reductive group is semisimple if and only if it has a finite centre. In general, the adjoint group of a reductive group is always semisimple, and the identity component of the centre is an algebraic torus \cite[Proposition 19.12]{Milne}. Reductive groups can be also characterised by the fixed points of their actions on algebraic varieties (see Section \ref{grpact}). The standard example of a reductive group is $\GL_n(\C)$. This is however not a semisimple group, as its centre, and at the same time radical, consists of all scalar matrices. However, its subgroup $\SL_n(\C)$ and quotient $\PGL_n(\C)$ are both semisimple. In fact, the latter is the adjoint group of both $\GL_n(\C)$ and $\SL_n(\C)$.

The semisimple groups are constructed from simple groups. Simply connected simple groups are then completely classified by their Dynkin diagrams. The classification is out of scope of this thesis, for more details we refer the reader to \cite{Knapp}. The simple groups, up to a quotient by finite subgroup, are $\SL_{n+1}$, $\SO(n)$, $\Sp(2n)$ and five exceptional groups. 

We only recall the basic notions of the theory. Let $\Gs$ be a connected reductive group and $\Ts\subset \Gs$ its maximal torus. Then any representation $\Gs$ is also a representation of $\Ts$, and such split into direct sums of one-dimensional representations. In particular, that holds for the adjoint representation of $\Gs$. As $\Ts$ is commutative, it acts trivially on $\ttt$, and in fact $\ttt = \geg^\Ts$ is the fixed point set of the adjoint action of $\Ts$ on $\ttt$. We therefore get a decomposition
$$\geg  = \ttt \oplus \bigoplus_{\alpha\in \ttt^*} \geg_\alpha,$$
where $\ttt$ acts on $\geg_\alpha$ with the weight $\alpha$. The spaces $\geg_\alpha$ are nonzero only for finitely many choices of $\alpha\in\ttt^*$, which we call the \emph{roots} of $\geg$. All of those spaces turn out to be one-dimensional, and moreover $\alpha$ is a root if and only if $-\alpha$ is a root. The roots not only lie in $\ttt^*$, but in fact in a discrete lattice $\Lambda = \Hom(\Ts,\Cs)$. One can then view them as elements of a real vector space. A sufficiently general functional from that space to $\R$ will then provide an ordering of the roots. This then splits the set of roots $\Phi$ into the \emph{positive roots} $\Phi^+$ and negative roots $\Phi^-$ such that $\Phi^+ = -\Phi^-$. The subalgebras 
$$\bb = \ttt \oplus \bigoplus_{\alpha\in \Phi^+} \geg_\alpha, \qquad \bb^- = \ttt \oplus \bigoplus_{\alpha\in \Phi^-} \geg_\alpha$$
of $\geg$ are solvable subalgebras, and in fact Borel subalgebras (cf. Section \ref{secbor}). They integrate to Borel subgroups $\Bs$ and $\Bs^-$, whose unipotent radicals $U$ and $U^-$ have the Lie algebras
$$\uu = \bigoplus_{\alpha\in \Phi^+} \geg_\alpha, \qquad \uu^- = \bigoplus_{\alpha\in \Phi^-} \geg_\alpha.$$
A positive root which is not a sum of other positive roots is called a \emph{simple root}. By definition, every positive root is a sum of the simple roots, and in fact it is so uniquely, as the simple roots turn out to be linearly independent. If $\Gs$ is semisimple, they constitute a basis of $\ttt^*$.

\subsection{Levi decomposition}
We saw above that the affine and projective groups are building blocks of all algebraic groups. Reductive and solvable groups play a similar role for linear groups. Indeed, let $\Hs$ be an arbitrary linear group. The quotient $\Hs/R_u(\Hs)$ of $\Hs$ by its unipotent radical is clearly reductive. Therefore we see that $\Hs$ is an extension of a reductive group by a unipotent group -- its unipotent radical. In case $\Hs$ is solvable, the unipotent radical is just the group of all unipotent elements and then $\Hs/R_u(\Hs)$ can be lifted to a maximal torus, see Theorem \ref{solv}. It turns out that this is not specific to solvable groups.

\begin{theorem}[\cite{Mostow}] \label{thmlevi}
 Let $\Hs$ be a linear algebraic group and $\Ns$ its unipotent radical. Then the exact sequence
 $$1\to \Ns\to \Hs\to \Hs/\Ns \to 1$$
 splits, i.e. there is a subgroup $\Ls$ in $\Hs$ which is mapped isomorphically to $\Hs/\Ns$ in the projection. Therefore $\Hs = \Ns \rtimes \Ls$ for $\Ns$ unipotent and $\Ls$ reductive. The subgroup $\Ls$ is called a \emph{Levi factor}. Its choice is not canonical, but any two Levi factors are conjugate. We can take for $\Ls$ any maximal reductive subgroup of $\Hs$.
\end{theorem}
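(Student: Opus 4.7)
The plan is to proceed by induction on $\dim\Ns$, reducing to the case of an abelian unipotent kernel where the splitting obstruction lives in a vanishing cohomology group. For the base case, suppose $\Ns$ is abelian, so by Theorem \ref{algexp} it is isomorphic as an algebraic group to a vector group $\Gs_a^k$, on which the reductive quotient $\Gs:=\Hs/\Ns$ acts rationally via conjugation. Isomorphism classes of extensions of $\Gs$ by the rational $\Gs$-module $\Ns$ are classified by the rational (Hochschild) cohomology $H^2(\Gs,\Ns)$, and conjugacy classes of sections by $H^1(\Gs,\Ns)$. Since $\Gs$ is linearly reductive in characteristic zero, the invariants functor on rational $\Gs$-modules is exact, so $H^i(\Gs,V)=0$ for all $i\ge 1$ and all finite-dimensional rational $\Gs$-modules $V$. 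Both existence of a section $\Ls\hookrightarrow\Hs$ and its uniqueness up to conjugation by $\Ns$ follow immediately.

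For the inductive step, I would choose a connected, $\Hs$-invariant, positive-dimensional closed subgroup $\Ks$ of the centre of $\Ns$ that is isomorphic to some $\Gs_a^r$; such a $\Ks$ exists because $\Ns$ is nilpotent with nontrivial centre, Theorem \ref{solvsplit} produces a central series with $\Gs_a$ quotients, and complete reducibility of $\Hs/\Ns$ acting on $\Lie(\Ns)$ lets the series be refined so as to be $\Hs$-invariant. The group $\Hs/\Ks$ has unipotent radical $\Ns/\Ks$ of strictly smaller dimension, and hence admits a Levi factor $\bar\Ls\subset\Hs/\Ks$ by the inductive hypothesis. Its preimage $\Hs'\subset\Hs$ is then an extension of the reductive group $\bar\Ls$ by the vector group $\Ks$, to which the abelian base case applies to yield a section $\Ls\hookrightarrow\Hs'\subset\Hs$ with $\Hs=\Ns\rtimes\Ls$. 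Conjugacy of any two Levi factors proceeds by the same two-step descent: their images modulo $\Ks$ are conjugate by induction, after which the base case conjugates them inside $\Hs'$.

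For the final assertion, any Levi factor $\Ls$ is maximal reductive, since it is reductive (isomorphic to $\Hs/\Ns$) while $\Ls\cap\Ns=\{1\}$ -- a subgroup of a unipotent group is itself unipotent, and the only reductive unipotent group is trivial. Conversely, a maximal reductive $\Ls'\subset\Hs$ projects to a reductive subgroup of $\Hs/\Ns$, and pulling this image back and comparing $\Ls'$ with a Levi factor of the preimage (constructed above) forces $\Ls'$, by the conjugacy already established, to be a section of $\Hs\to\Hs/\Ns$. The chief obstacle is the vanishing of rational cohomology in the category of affine group schemes: one must work with Hochschild rather than abstract group cohomology and invoke exactness of the invariants functor for linearly reductive groups in characteristic zero. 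Given this ingredient together with the $\Hs$-equivariant refinement of Theorem \ref{solvsplit}, the entire induction proceeds formally.
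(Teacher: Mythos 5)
The paper offers no proof of this statement; it is quoted as background with a citation to Mostow, so there is nothing in the text to compare your argument against. What you have written is the standard characteristic-zero proof (the one found in Milne's book and going back to Hochschild): induct on $\dim\Ns$, reduce to a vector-group kernel, and kill both the obstruction to splitting and the ambiguity of the splitting by the vanishing of rational Hochschild cohomology $H^{i}(\Gs,V)$, $i\ge 1$, for a linearly reductive $\Gs$ acting on a finite-dimensional rational module — a consequence of exactness of the invariants functor. The outline is sound, but three points deserve to be made explicit. First, for $H^2(\Gs,\Ns)$ to classify \emph{all} extensions with abelian unipotent kernel you need every such extension to admit a scheme-theoretic section; this holds because $\Hs\to\Gs$ is a torsor under the vector group $\Ns\cong\Gs_a^k$ over the affine base $\Gs$, and such torsors are trivial since $H^1(\Gs,\OO_\Gs^k)=0$. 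Relatedly, you should note that $\exp:\nen\to\Ns$ is a $\Gs$-equivariant isomorphism of algebraic groups (functoriality of Theorem \ref{algexp} together with commutativity of $\Ns$), so that the conjugation action on $\Ns$ really is the rational module $\nen$. Second, your construction of $\Ks$ is more elaborate than necessary: in characteristic zero the centre of $\Ns$ is already a nontrivial connected vector group and is characteristic in $\Ns$, hence normal in $\Hs$, so no $\Hs$-equivariant refinement of the central series of Theorem \ref{solvsplit} is needed. Third, the equivalence with maximal reductive subgroups is only gestured at in your last paragraph; the missing step is that if the image $M$ of a maximal reductive $\Ls'$ in $\Hs/\Ns$ were a proper subgroup, then $\Ls'$ would be a Levi factor of the preimage $P$ of $M$, hence conjugate to $\Ls\cap P$ for a Levi factor $\Ls$ of $\Hs$, and thus conjugate into a proper subgroup of the reductive group $\Ls$, contradicting maximality. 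None of these is a fatal gap, but they are the places a referee would press.
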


Typical examples of groups which are neither solvable nor reductive are parabolic subgroups within reductive groups.

\subsection{Parabolic subgroups and homogeneous spaces}\label{secbor}
We will now discuss the projective homogeneous spaces of algebraic groups. Those are defined by parabolic groups.

\begin{definition}
A \emph{Borel subalgebra} of a Lie algebra $\geg$ is a maximal solvable subalgebra $\bb$. A \emph{Borel subgroup} of an algebraic group $\Gs$ is a maximal connected closed solvable subgroup $\Bs$.
\end{definition}
In $\ssl_n(\C)$, an example of a Borel subalgebra is the subalgebra $\bb_n$ of the upper triangular matrices. It integrates to the Borel subgroup $\Bs_n$ of upper triangular matrices. In general, for any reductive group the algebra generated by a maximal torus and positive roots is a Borel subalgebra.

\begin{proposition}
A Lie algebra of a Borel subgroup of $\Gs$ is a Borel subalgebra of $\geg$. Conversely, any Borel subalgebra of a Lie algebra $\geg$ of an algebraic group $\Gs$ is a Lie algebra of a Borel subgroup in $\Gs$. The quotient $\Gs/\Bs$ is projective for any Borel subgroup $\Bs < \Gs$. The maximal tori of Borel subgroups of $\Gs$ coincide with the maximal tori of $\Gs$.
\end{proposition}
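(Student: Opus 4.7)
The plan is to tackle the four assertions in turn. For the correspondence between Borel subalgebras and Borel subgroups I rely on the order-preserving bijection between connected closed subgroups of $\Gs$ and algebraic Lie subalgebras of $\geg$, together with Chevalley's theorem that in characteristic zero the algebraic hull of a solvable subalgebra is again solvable. If $\Bs$ is Borel then $\bb = \Lie(\Bs)$ is solvable; any strictly larger solvable subalgebra $\bb' \supsetneq \bb$ would, passing to its algebraic hull, integrate to a connected closed solvable subgroup strictly containing $\Bs$, contradicting maximality of $\Bs$. Conversely, given a Borel subalgebra $\bb$, its algebraic hull is a solvable subalgebra containing $\bb$, hence equals $\bb$; so $\bb = \Lie(\Hs)$ for some connected closed solvable $\Hs$, and any strictly larger such subgroup would give a strictly larger solvable subalgebra, contradicting maximality of $\bb$.

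For the projectivity of $\Gs/\Bs$ I would invoke the Borel fixed-point theorem: any connected solvable group acting on a nonempty complete variety has a fixed point. Embed $\Gs \hookrightarrow \GL(V)$ faithfully and let $\mathcal{F}$ be the (projective) complete-flag variety of $V$, acted on by $\Gs$. Take a $\Gs$-orbit of minimal dimension inside $\mathcal{F}$; it is closed, hence projective. The Borel fixed-point theorem applied to $\Bs$ on this orbit yields a fixed flag $F_0$. Its $\Gs$-stabiliser $\Hs_0$ is upper-triangular in a basis adapted to $F_0$, hence solvable, so $\Hs_0^0 = \Bs$ by maximality of $\Bs$. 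The induced morphism $\Gs/\Bs \to \Gs \cdot F_0 = \Gs/\Hs_0$ has finite fibers $\Hs_0/\Bs$ with projective target; hence $\Gs/\Bs$ is proper, and as a quasi-projective homogeneous space it is in fact projective.

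The maximal tori assertion follows once one extracts from the previous step the conjugacy of all Borel subgroups of $\Gs$: given two Borels $\Bs$, $\Bs'$, apply the Borel fixed-point theorem to $\Bs'$ acting on the projective quotient $\Gs/\Bs$; a fixed point $g\Bs$ yields $g^{-1}\Bs' g \subseteq \Bs$, and equality follows from maximality of $\Bs'$. Now fix a maximal torus $\Ts$ of $\Gs$; being connected solvable, it is contained in some Borel $\Bs_0$, where it is automatically maximal. For any other Borel $\Bs$, a conjugating element sends $\Ts$ into $\Bs$ as a maximal torus of $\Bs$ that remains maximal in $\Gs$. Since all maximal tori of $\Bs$ are conjugate inside $\Bs$ by Theorem \ref{solv}, every maximal torus of $\Bs$ is maximal in $\Gs$, and the two families coincide.

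I expect the main obstacle to be the algebraic hull step in the first part, where one genuinely needs characteristic-zero input to pass from a solvable Lie subalgebra to a connected closed solvable subgroup; this is the point where the algebraic Lie theory diverges from the analytic one and requires care, because subalgebras of $\geg$ need not be algebraic in general. The projectivity of $\Gs/\Bs$ and the subsequent conjugacy of Borels, while nontrivial, are classical and proceed cleanly through the Borel fixed-point theorem once the latter is in hand.
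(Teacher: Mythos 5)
Your argument is correct. Note that the thesis does not actually prove this proposition: it is stated in the background chapter as a classical fact, with the general references to Borel's and Milne's monographs given at the start of the section on algebraic groups, so there is no in-paper proof to compare against. What you wrote is the standard textbook derivation, and all four assertions are handled adequately. Two points deserve explicit care. First, in the algebraic-hull step the precise characteristic-zero input is Chevalley's theorem that the derived algebra of the hull equals the derived algebra of the original subalgebra, $[\overline{\he},\overline{\he}]=[\he,\he]$; this simultaneously gives that the hull of a solvable subalgebra is solvable and that a connected group with solvable Lie algebra is solvable (needed when you integrate the hull back to a subgroup), and it is also what guarantees that an inclusion of Lie algebras of connected closed subgroups forces an inclusion of the subgroups. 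Second, in the projectivity step you should apply the Borel fixed-point theorem (Theorem \ref{borelfix}) only after passing to a closed $\Gs$-orbit, exactly as you do — applying it to the full flag variety first would not guarantee that the resulting orbit $\Gs\cdot F_0$ is complete — and the finite fibration $\Gs/\Bs\to\Gs/\Hs_0$ then yields completeness, with projectivity following since the homogeneous space is quasi-projective. The remaining assertions (conjugacy of Borels via the fixed-point theorem on $\Gs/\Bs$, and the transfer of maximal tori using Theorem \ref{solv}) are argued correctly.
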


\begin{definition}
A closed subgroup $\Ps$ of a connected linear group is called \emph{parabolic} if $\Gs/\Ps$ is a projective algebraic variety. 
\end{definition}

As an easy application of the Lie--Kolchin theorem we have

\begin{proposition}
For any Borel subgroup $\Bs$ and a parabolic subgroup $\Ps$ of a semisimple group $\Gs$ there exists $x\in \Gs$ such that
$$\Bs\subset x\Ps x^{-1}.$$
\end{proposition}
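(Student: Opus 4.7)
The plan is to deduce this from the Borel fixed point theorem, which is the standard upgrade of Lie--Kolchin from projective spaces to arbitrary complete varieties. First, since $\Ps$ is parabolic, the homogeneous space $Y=\Gs/\Ps$ is projective by definition. The Borel subgroup $\Bs$ acts on $Y$ by left multiplication, and this action is algebraic. Because $\Bs$ is connected and solvable and $Y$ is a nonempty projective variety over $\C$, the Borel fixed point theorem guarantees the existence of a fixed point $x\Ps\in Y$.

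To get this from Lie--Kolchin proper, I would either embed $Y$ equivariantly into some $\PP^N$ via a very ample line bundle and apply Lie--Kolchin to the induced representation on $H^0(Y,\mathcal L)^*$ to obtain a fixed line lying in the image of $Y$, or else run an induction on $\dim \Bs$: the derived subgroup $[\Bs,\Bs]$ is strictly smaller and solvable, so by induction its fixed locus $Y^{[\Bs,\Bs]}\subset Y$ is a nonempty closed $\Bs$-stable subvariety on which $\Bs/[\Bs,\Bs]$ acts, and the abelian base case follows from Lie--Kolchin in its usual form.

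Once a fixed point $x\Ps$ is in hand, the argument is one line. The condition $\Bs\cdot x\Ps=x\Ps$ in $\Gs/\Ps$ reads $\Bs x\subset x\Ps$, i.e. $x^{-1}\Bs x\subset \Ps$, whence $\Bs\subset x\Ps x^{-1}$ as required. The only genuine obstacle is the passage from Lie--Kolchin on $\PP^N$ to the fixed-point statement on the arbitrary projective variety $Y$; the rest is essentially translation between coset notation and conjugation. Note that semisimplicity of $\Gs$ is not actually used in this proof, which works for any connected linear algebraic group with a Borel subgroup.
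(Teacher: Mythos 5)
Your proof is correct and follows the same route the paper intends: the paper derives this proposition as "an easy application of the Lie--Kolchin theorem," i.e.\ via the Borel fixed point theorem (its Theorem on fixed points of solvable groups acting on complete varieties) applied to the projective homogeneous space $\Gs/\Ps$, exactly as you do. Your closing remark that semisimplicity of $\Gs$ is not needed is also accurate.
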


\begin{corollary} \label{borconj}
All Borel subgroups are conjugate. A subgroup is parabolic if and only if it contains a Borel subgroup. 
\end{corollary}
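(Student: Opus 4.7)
The plan is to deduce both assertions directly from the preceding proposition, with the only subtlety being to check that Borel subgroups themselves are parabolic, so that the proposition can be applied with $\Ps$ a Borel subgroup.

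First I would establish conjugacy of Borel subgroups. Let $\Bs_1$ and $\Bs_2$ be two Borel subgroups of $\Gs$. Since $\Gs/\Bs_2$ is projective by the earlier proposition, $\Bs_2$ is itself parabolic. Applying the proposition with $\Bs = \Bs_1$ and $\Ps = \Bs_2$ produces an element $x \in \Gs$ such that $\Bs_1 \subset x\Bs_2 x^{-1}$. Now $x\Bs_2 x^{-1}$ is again connected, closed and solvable (conjugation is a group automorphism), so it is again a Borel subgroup, and the maximality of $\Bs_1$ among connected closed solvable subgroups forces $\Bs_1 = x\Bs_2 x^{-1}$.

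Next I would handle the characterisation of parabolic subgroups. For the forward direction, if $\Ps$ is parabolic, apply the proposition to any Borel $\Bs$ to obtain $x\in \Gs$ with $\Bs \subset x\Ps x^{-1}$; then $x^{-1}\Bs x \subset \Ps$ is a Borel subgroup contained in $\Ps$. For the converse direction, suppose $\Ps$ contains some Borel subgroup $\Bs$. The inclusion $\Bs \subset \Ps$ induces a surjective $\Gs$-equivariant morphism $\Gs/\Bs \twoheadrightarrow \Gs/\Ps$. Since $\Gs/\Bs$ is projective, hence complete, its image under any morphism is complete; thus $\Gs/\Ps$ is a complete quasi-projective variety, i.e.\ projective, so $\Ps$ is parabolic.

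There is no real obstacle here; the proof is a formal consequence of the preceding proposition together with the facts that Borel subgroups are parabolic and that conjugates and surjective images behave well. The only mildly delicate point is the completeness argument in the converse of the second statement, which I would justify by noting that $\Gs/\Bs$ is complete and the morphism to $\Gs/\Ps$ is surjective, so $\Gs/\Ps$ is a complete homogeneous space of $\Gs$ and is therefore projective.
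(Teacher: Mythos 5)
Your proof is correct and follows exactly the route the paper intends: the corollary is stated as an immediate consequence of the preceding proposition (together with the fact that $\Gs/\Bs$ is projective), and you supply precisely the standard details — maximality forcing equality after conjugation, and completeness of the surjective image $\Gs/\Bs \twoheadrightarrow \Gs/\Ps$ for the converse. No gaps.
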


Any Borel subgroup of a group $\Hs$ clearly contains the radical $R(\Hs)$ of $\Hs$. Hence the parabolic subgroups of $\Hs$ are in fact determined by the parabolic subgroups in its Levi factor. Then in reductive groups the parabolic subgroups are determined by subsets of simple roots, see e.g. \cite[23.3]{FulHar}. For a reductive group $\Gs$, the projective homogeneous spaces $\Gs/\Ps$ for $\Ps$ parabolic are called \emph{flag varieties}. This reflects the fact that for $\Gs = \GL_n(\C)$, those parametrise (full or partial) flags in $\C^n$. The conjugacy type of $\Ps$ determines the dimensions of subspaces making up the flags. The compact homogeneous spaces will be important examples for us later.

\section{Group actions and vector fields}\label{grpact}

\subsection{Group actions} \label{secgrpac}

We say that an algebraic group $\Hs$ acts on a variety $X$ if a map $\rho: \Hs\times X\to X$ is given such that the diagram
$$
\begin{tikzcd}
\Hs\times\Hs\times X \arrow[d, "\id_\Hs\times \rho"] \arrow[r, "\mu\times \id_X"] & \Hs\times X \arrow[d, "\rho"]\\
\Hs\times X\arrow[r, "\rho"] & X
\end{tikzcd}
$$
commutes. In such a situation, we can pull back the global functions along the action. Explicitly, for $h\in \Hs$ we have the pullback map $h^*: \OO_X\to \OO_X$, where on any open $U\subset X$, this maps $\OO_X(hU) \to \OO_X(U)$ by composing the functions on $hU$ with the multiplication map $U\to hU$. As this map is contravariant, i.e. $g^*h^* = (hg)^*$, we need to invert to get a left action on $\OO_X$. Hence when we talk about the action of $\Hs$ on the global functions, we mean the action defined by $h\mapsto (h^{-1})^*$. This in particular gives a representation of $\Hs$ on $\OO_X(X)$, and in general on $\OO_X(U)$ for any $G$-invariant open subset $U\subset X$.

Whenever an algebraic group $\Hs$ acts on a variety $X$, it yields a Lie algebra homomorphism $\phi:\he\to\Vect(X)$ from $\he = \Lie(\Hs)$ to vector fields on $X$, see \cite{CoDr}. On any fixed $v\in \he$, this gives the vector field $V_v$. For any $x\in X$, its value $V_v|_x$ at $x$ can be recovered by considering the derivative at $1_{\Hs}$ of the map $\Hs\to X$ defined as $g\mapsto g\cdot x$, and evaluating it on $x$.

We will call such a homomorphism $\he\to\Vect(X)$ an \emph{action of the Lie algebra} $\he$ on $X$. We will want to define the total vector field on $\he\times X$. As it is a local problem on $X$, we can restrict to an affine open set $U$. Then 
\begin{align}
\label{ohtu}
\C[\he\times U] = \C[\he] \otimes_\C \C[U]
\end{align}
and we need to define a derivation on this $\C$-algebra. We can view $\phi|_U$ as an element of $\he^*\otimes_\C \Vect(U)$. As $\C[\he] = S^*(\he^*)$, we have a multiplication map $\he^* \otimes \C[\he] \to \C[\he]$. Additionally, $\Vect(U)$ are by definition the derivations on $\C[U]$, which gives a $\C$-bilinear $\Vect(U) \otimes \C[U] \to \C[U]$. Those two maps together with \eqref{ohtu} lead to a $\C$-bilinear map
$$(\he^* \otimes \Vect(U)) \otimes \C[\he\times U] \to \C[\he\times U]. $$
Fixing $\phi|_U\in (\he^* \otimes \Vect(U))$ gives a derivation $\C[\he\times U] \to \C[\he\times U]$.
\begin{definition}\label{totvec}
The vector field defined by this derivation will be called the \emph{total vector field} of the $\Hs$-action on $X$.
\end{definition}
Explicitly, let $\phi = \sum \psi_i \otimes D_i$ for $\psi_i\in \he^*$, $D_i\in\Vect(U)$. Then the defined derivation on $f \otimes g \in \C[\he]\otimes \C[U]$ takes value
\begin{align}
\label{derim}
    \sum (\psi_i \cdot f) \otimes D_i(g) \in \C[\he]\otimes \C[U].
\end{align}
This gives the total vector field on $\he\times X$. One can note that the vector field is tangent to $\{y\}\times X$ for any $y\in \he$, i.e. as a derivation it preserves the set of functions vanishing on $\{y\}\times X$. Indeed, locally such functions are sums of $f \otimes g \in \C[\he]\otimes \C[U]$ such that $f(y) = 0$, and in such case the image of the derivation \eqref{derim} also vanishes at $\{y\}\times X$. The vector field restricted to $\{y\} \times X$ is precisely $\phi(y)$ and for any $y\in\he$ with $\Hs$ acting on $X$ we will denote this vector field by $V_y$. Later we will consider restrictions of the total zero schemes to bigger subsets of $\he$.

As the vector field vanishes in the $\he$ direction, it is not only a section of the tangent bundle, but a section of the pullback $\pi_2^* T_X$ of the tangent bundle of $X$ via the projection $\pi_2:\he\times X\to X$.

One can also think of vector fields via the total spaces of tangent bundles. If we denote the total vector bundle of a variety $Y$ by $TY$, then the action $\rho:\Hs\times X\to X$ defines the map $T\rho:T\Hs\times TX\to TX$ and its restriction to $T_{1}\Hs\times X \simeq \he\times X$ gives a section $\he\times X \to TX$ of the vertical tangent bundle, identical with the vector field on $\he\times X$ constructed above.

\begin{lemma} \label{lemad}
 Let an algebraic group $\Hs$ act on a variety $X$. Then for any $g\in \Hs$, $y\in\he = \Lie(\Hs)$ and $x\in X$ we have
 $${V_{\Ad_g(y)}}|_{gx} = \D g({V_y}|_{x}).$$
\end{lemma}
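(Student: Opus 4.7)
The plan is to unwind both sides in terms of derivatives of explicit morphisms at the identity of $\Hs$ and then apply the chain rule. The two key morphisms I will use are, for a fixed $x\in X$, the orbit map $\sigma_x:\Hs\to X$ given by $\sigma_x(h) = h\cdot x$, whose differential at $1\in\Hs$ is by construction the map $\he \to T_{x}X$, $v \mapsto V_v|_x$, and, for a fixed $g\in\Hs$, the inner automorphism $c_g:\Hs\to\Hs$, $c_g(h) = ghg^{-1}$, whose differential at $1$ is $\Ad_g:\he\to\he$.

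The heart of the argument is the elementary identity of morphisms $\Hs \to X$
\[
\sigma_{gx}\circ c_g \;=\; L_g \circ \sigma_x,
\]
where $L_g:X\to X$ denotes the action map $z\mapsto g\cdot z$. This is simply the calculation $\sigma_{gx}(c_g(h)) = ghg^{-1}\cdot(gx) = gh\cdot x = L_g(\sigma_x(h))$, which uses only the associativity axiom of the action. All three morphisms involved are algebraic, so the identity holds as an identity of morphisms of schemes and not merely of $\C$-points.

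Differentiating this identity at $1\in\Hs$ and applying the chain rule, together with the definitions of $V_y|_x$ and $\Ad_g$ recalled above, gives
\[
D_{1}\sigma_{gx}\circ \Ad_g \;=\; D_x L_g \circ D_{1}\sigma_x.
\]
Evaluating both sides on $y\in\he$ yields exactly $V_{\Ad_g(y)}|_{gx} = \D g\bigl(V_y|_x\bigr)$, as required.

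The only minor obstacle is ensuring that the chain rule is being applied in the algebraic (not merely analytic) setting; but since we are differentiating morphisms of schemes at closed points and passing to Zariski tangent spaces, this is the standard functoriality of $D$ on morphisms of smooth varieties, so no extra work is needed. An alternative, purely algebraic way to organise the same computation is to verify the equality at the level of derivations on $\OO_X$ by pulling back along $L_g$ and using the compatibility of $\Ad$ with the conjugation action of $\Hs$ on $\he$, but the chain-rule route above is shorter and avoids any choice of affine chart.
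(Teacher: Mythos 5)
Your proof is correct and is essentially the same argument as the paper's: both differentiate the associativity identity $(ghg^{-1})\cdot(gx)=g\cdot(hx)$ at $h=1$ in the direction $y$, the paper packaging it as a commutative diagram on $\Hs\times\Hs\times\Hs\times X$ and you as the cleaner identity $\sigma_{gx}\circ c_g=L_g\circ\sigma_x$ of orbit maps. No gaps.
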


\begin{proof}
Let $\mu:\Hs\times \Hs\to \Hs$ denote the multiplication map and $\rho:\Hs\times X\to X$ denote the action of $\Hs$ on $X$. Consider the following commutative diagram.
$$
\begin{tikzcd}
& \Hs\times \Hs\times \Hs\times X \arrow[ld, "\mu\times\id_\Hs\times\id_X"] \arrow[rd, "\id_\Hs\times\id_\Hs\times\rho"]
\\
\Hs\times \Hs\times X\arrow[dd, "\mu\times\id_X"] & & \Hs\times \Hs\times X\arrow[dd, "\id_\Hs\times\rho"]
\\ \\
\Hs\times X\arrow[dr,"\rho"] && \Hs\times X\arrow[dl,"\rho"]
\\
& X
\end{tikzcd}.
$$
If we fix a point on the top, it yields an analogous commutative diagram of differential maps. Take $(g,1,g^{-1},gx) \in \Hs\times \Hs\times \Hs\times X$ and $(0,y,0,0)$ in its tangent space. Going through the left branch, it is mapped to ${V_{\Ad_g(y)}}|_{gx}$ and going through the right one, it is mapped to $\D g({V_y}|_{x})$.
\end{proof}

The next Lemma will be used to show that zeros of generalised Jordan matrices are zeros of the torus.
\begin{lemma} \label{lemzer}
Let a Lie algebra $\he$ act on a smooth variety $X$. Let $d,n \in \he$ commute and assume that the Lie subalgebra generated by $[\he,\he]$ and $n$ is nilpotent. Let $x\in X$ be an isolated zero of the vector field $V_j$ associated to $j=d+n$. Then $x$ is also a simultaneous zero of $C_{\he}(d)$. In particular, $x$ is a zero of any abelian subalgebra of $\he$ containing $d$.
\end{lemma}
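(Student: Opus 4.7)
I would apply Lemma~\ref{lemfix} to the subspace $\V=\C V_j+\phi(\mathfrak{m})\subseteq\Vect(X)$, where $\mathfrak{m}$ is the Lie subalgebra of $\he$ generated by $[\he,\he]$ and $n$. Since $[\he,\he]$ is an ideal of $\he$ already closed under brackets, one has $\mathfrak{m}=[\he,\he]+\C n$ as a vector space, and $\mathfrak{m}$ is itself an ideal of $\he$; by hypothesis it is nilpotent.

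The easy part first: from $[j,n]=[d+n,n]=0$ we get $[V_j,V_n]=V_{[j,n]}=0$, so $V_n$ normalises $\C V_j$; since $x$ is an isolated zero of $V_j$, Lemma~\ref{lemfix} yields $V_n|_x=0$, whence $V_d|_x=V_j|_x-V_n|_x=0$. For any $c\in C_\he(d)$, using $[c,d]=0$ one computes $[V_c,V_j]=V_{[c,n]}\in\phi([\he,\he])\subseteq\phi(\mathfrak{m})$ and $[V_c,\phi(\mathfrak{m})]\subseteq\phi([\he,\mathfrak{m}])\subseteq\phi(\mathfrak{m})$, so $V_c$ normalises $\V$. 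Consequently, once we verify that $x$ is an isolated simultaneous zero of $\V$, Lemma~\ref{lemfix} will deliver $V_c|_x=0$, which is the desired conclusion.

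Isolation is automatic because $V_j\in\V$ already has $x$ as an isolated zero, so the substantive content is the vanishing $V_m|_x=0$ for every $m\in\mathfrak{m}$, which I expect to be the main obstacle. I would argue this by contradiction, exploiting the nilpotency of $\mathfrak{m}$: assuming $V_{m_0}|_x\ne 0$ for some $m_0\in\mathfrak{m}$, I would invoke a formal-flow analogue of Lemma~\ref{lemad} to write $V_j|_{e^{tm_0}x}=\D e^{tm_0}(V_{\Ad(e^{-tm_0})j}|_x)$ with the expansion $\Ad(e^{-tm_0})j=\sum_{k\ge 0}\tfrac{(-t)^k}{k!}\ad_{m_0}^k j$, which is a \emph{finite} sum because $\ad_{m_0}^k j$ descends through the lower central series of $\mathfrak{m}$ and eventually vanishes by nilpotency. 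Choosing $m_0$ of maximal depth in that filtration reduces the expansion essentially to the linear term $-t[m_0,d]$ (the $[m_0,n]$-contribution lies one step deeper in the central series and higher iterates vanish). Crucially, the set $M:=\{m\in\mathfrak{m}:V_m|_x=0\}$ is $\ad_d$-invariant: indeed $V_d|_x=0$ combined with $V_m|_x=0$ gives $V_{[m,d]}|_x=[V_m,V_d]|_x=0$. A descent argument along the generalised $\ad_d$-eigenspace decomposition of $\mathfrak{m}_r$ then forces $V_{[m_0,d]}|_x=0$, at which point $t\mapsto e^{tm_0}x$ becomes a nontrivial one-parameter family of zeros of $V_j$ through $x$, contradicting isolation and closing the argument.
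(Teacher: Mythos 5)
Your reduction to the vanishing of $V_m|_x$ for every $m\in\mathfrak{m}$ is where the argument breaks: that intermediate claim is false in general, so no descent argument can establish it. Take $\he=\bb(\ssl_2)$ acting standardly on $X=\PP^1$, with $j=d=h=\diag(1,-1)$ and $n=0$; then $\mathfrak{m}=[\he,\he]=\C e$ is abelian, hence nilpotent, and $x=[0:1]$ is an isolated zero of $V_h$, yet $V_e|_x\neq 0$. (The conclusion of the lemma is harmless here, since $C_\he(h)=\C h$, but your $\V=\C V_j+\phi(\mathfrak{m})$ does not vanish at $x$, so Lemma~\ref{lemfix} cannot be applied to it.) Concretely, the step that fails in your sketch is the claim that $\ad_d$-invariance of $M=\{m\in\mathfrak{m}: V_m|_x=0\}$ forces $V_{[m_0,d]}|_x=0$ for an $m_0\notin M$: invariance of $M$ says nothing about elements outside $M$, and in the example $M=0$ while $[e,h]=-2e\neq 0$; indeed $\Ad(e^{-te})h=h+2te$ gives $V_{\Ad(e^{-te})h}|_x=2tV_e|_x\neq 0$ for $t\neq 0$, so the flow produces no curve of zeros and no contradiction.

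The paper's proof steers around exactly this obstacle by never leaving the centraliser of $d$. After obtaining $V_d|_x=V_n|_x=0$ as you do, it works inside $C'(d)=C_\he(d)\cap\kek$ (with $\kek$ your $\mathfrak{m}$), which is nilpotent, and climbs the chain of iterated normalisers $N^i_{C'(d)}(\C\cdot n)$: at each stage the new elements commute with $d$ and normalise the span of $d$ together with the subspace already known to vanish at $x$, so Lemma~\ref{lemfix} applies, and nilpotency of $C'(d)$ guarantees the chain terminates at all of $C'(d)$. The final step uses $[C_\he(d),C_\he(d)]\subset C_\he(d)\cap[\he,\he]\subset C'(d)$ to see that $C_\he(d)$ normalises $\C d+C'(d)$, whose zero locus is isolated at $x$ because it contains both $d$ and $n$. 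If you want to keep your one-shot application of Lemma~\ref{lemfix}, the correct subspace is $\C d+C'(d)$, not $\C V_j+\phi(\mathfrak{m})$ --- but proving that this smaller subspace vanishes at $x$ is precisely the iterated-normaliser induction, not a flow computation.
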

\begin{proof}
Let $\kek$ be the Lie subalgebra generated by $[\he,\he]$ and $n$. By Lemma \ref{lemfix} we first get that $x$ is a zero of $d$ and $n$, as they commute with $j$.

We will first prove that $x$ is a zero of $C'(d) = C_{\he}(d)\cap \kek$. As $\kek$ is nilpotent by assumption, its subalgebra $C'(d)$ is nilpotent as well.

By definition $d$ is in the centre of $C_{\he}(d)$, in particular it commutes with $C'(d)$. Hence from Lemma \ref{lemfix} -- for $\mathcal V$ spanned by $d$ and $n$ -- we have that $x$ is a zero of $N_{C'(d)}(\C\cdot n)$. It is therefore an isolated simultaneous zero of $d$ and $N_{C'(d)}(\C\cdot n)$ and we can apply the same argument repeatedly to get that for $i=1,2,\dots$ it is a zero of $N_{C'(d)}^i(\C\cdot n)$.

The sequence $\left(N_{C'(d)}^i(\C\cdot n)\right)_{n=1}^\infty$ has to stabilise at a Lie subalgebra of $C'(d)$ which is its own normaliser in $C'(d)$. As $C'(d)$ is nilpotent, it then has to be equal to whole $C'(d)$, see \cite[Proposition 3 in Chapter 1, \S 4.1]{BouLie13}. Therefore $d$ and $C'(d)$ vanish at $x$. But $[C_{\he}(d),C_{\he}(d)] \subset C_{\he}(d)\cap [\he,\he] \subset C_{\he}(d)\cap \kek = C'(d)$, hence $C'(d)$ is normalised by whole $C_{\he}(d)$. Therefore by Lemma \ref{lemfix} the whole $C_{\he}(d)$ vanishes at $x$.
\end{proof}

From Remark \ref{nilalg} the assumptions about $d$ and $n$ hold whenever $\he$ is solvable, $[d,n]=0$ and $n\in \he_n$ (as $\he_n$ is nilpotent and contains $[\he,\he]$ as well as $n$).

\subsection{Fixed point schemes of group actions}

Actions of algebraic groups and their fixed points can be quite difficult to study in full generality. There are however some known results in particular cases. First, the Lie--Kolchin theorem together with linearisation of the action (Section \ref{linsec}) imply the following.

\begin{theorem}[Borel fixed point theorem]\label{borelfix}
If a solvable group $\Bs$ acts on a complete nonzero variety $X$, then its fixed-point scheme $X^\Bs$ is nonempty.
\end{theorem}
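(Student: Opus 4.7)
The plan is to proceed by induction on $\dim \Bs$, handling one-dimensional groups directly and otherwise reducing the dimension via a normal subgroup. A preliminary reduction to the case $\Bs$ connected goes as follows: apply the theorem first to $\Bs^\circ$, obtaining a nonempty closed $\Bs$-invariant subvariety $X^{\Bs^\circ}$ (invariant because $\Bs^\circ \triangleleft \Bs$), and then the finite solvable quotient $\Bs/\Bs^\circ$ acts on the complete variety $X^{\Bs^\circ}$. I treat the connected case below and leave the finite quotient step, which is elementary, as a separate matter; for the body of the argument $\Bs$ is connected solvable.

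\textbf{Base case} ($\dim \Bs = 1$): in characteristic zero $\Bs$ is either $\Gs_a$ or $\Gs_m$. For $x \in X$, consider the orbit morphism $\phi_x \colon \Bs \to X$, $g \mapsto g\cdot x$. Viewing $\Bs$ as an open subset of $\PP^1$ (namely $\A^1$ for $\Gs_a$, or $\PP^1 \setminus \{0,\infty\}$ for $\Gs_m$), completeness of $X$ together with the valuative criterion of properness applied to the regular curve $\PP^1$ yields a unique extension $\bar\phi_x \colon \PP^1 \to X$. Pick a boundary point $p \in \PP^1 \setminus \Bs$ which is fixed by the natural $\Bs$-action on $\PP^1$ (e.g.\ $p = \infty$ for $\Gs_a$, or $p = 0$ for $\Gs_m$). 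For any $s \in \Bs$, the two morphisms $t \mapsto \bar\phi_x(s\cdot t)$ and $t \mapsto s\cdot \bar\phi_x(t)$ from $\PP^1$ to $X$ agree on the dense open $\Bs$ by equivariance of $\phi_x$, hence agree everywhere by separation of $X$. Evaluating at $p$ gives $\bar\phi_x(p) = s\cdot \bar\phi_x(p)$, so $\bar\phi_x(p)$ is a $\Bs$-fixed point.

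\textbf{Inductive step} ($\dim \Bs \geq 2$): choose a proper positive-dimensional closed connected normal subgroup $\Bs' \triangleleft \Bs$. Concretely, if the commutator $[\Bs, \Bs]$ is nontrivial take $\Bs' = [\Bs,\Bs]$, which is a proper closed connected normal subgroup of strictly smaller dimension since $\Bs$ is solvable; otherwise $\Bs$ is abelian and any one-dimensional closed connected subgroup is automatically normal and works. By the inductive hypothesis applied to $\Bs'$ acting on $X$, the fixed locus $X^{\Bs'}$ is nonempty. It is closed in $X$ (a standard property of fixed point schemes), hence complete, and it is $\Bs$-invariant because $\Bs'$ is normal in $\Bs$. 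The connected solvable quotient $\Bs/\Bs'$ has strictly smaller dimension than $\Bs$ and acts on $X^{\Bs'}$, so induction produces a fixed point, which is a $\Bs$-fixed point in $X$.

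The main obstacle is the one-dimensional base case: one has to extend the orbit map across the boundary of $\Bs$ in $\PP^1$ via the valuative criterion, and then propagate equivariance from the dense open $\Bs \subset \PP^1$ to the boundary point using uniqueness of morphisms from $\PP^1$ into the separated variety $X$. The rest of the proof is a routine descent along normal subgroups using the dimension of $\Bs$.
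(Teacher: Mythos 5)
Your argument for the connected case is correct and is a genuinely different route from the one the paper indicates. The paper derives Theorem \ref{borelfix} from linearisation of the action (an equivariant embedding $X\hookrightarrow\PP(V)$ with $\Bs$ acting linearly, Section \ref{linsec}) combined with the Lie--Kolchin theorem, i.e.\ it trades the geometry of the action for the representation theory of connected solvable groups. You instead induct on $\dim\Bs$, peeling off a closed connected normal subgroup (the derived subgroup, or any one-dimensional subgroup in the abelian case), and settle the one-dimensional base case by completing the orbit map $\Bs\to X$ to a morphism $\PP^1\to X$ via the valuative criterion and propagating equivariance to the fixed boundary point by separatedness of $X$. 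This is the classical ``one-parameter subgroup'' proof; it avoids linearisation entirely, so it does not need $X$ quasi-projective or normal, at the modest cost of descending the action on $X^{\Bs'}$ to the quotient $\Bs/\Bs'$, which is routine over $\C$. Both routes are standard, and yours is self-contained given the material already in the paper.

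The one genuine problem is your preliminary reduction to the non-connected case. The step you defer as ``elementary'' --- that a finite solvable group acting on a complete variety has a fixed point --- is false: translation by a nonzero $2$-torsion point is a free action of $\Z/2\Z$ on an elliptic curve, a complete nonzero variety with empty fixed locus. The Borel fixed point theorem genuinely requires $\Bs$ connected; the statement as printed omits this hypothesis, but it is needed for the Lie--Kolchin route as well and is part of the standard formulation. You should therefore assume $\Bs$ connected from the outset rather than claim a reduction that cannot be carried out.
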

We talk about the fixed point scheme, which one defines e.g. by its functor of points \cite[7.B]{Milne}. However, in the complex case this just means that there is a closed point in $X$ which is fixed by the whole group $\Bs$.

\begin{theorem}[Horrocks, \cite{Horrocks}]\label{horrfix}
If a unipotent group $\U$ acts on a variety $X$, then its fixed point scheme $X^\U$ is connected.
\end{theorem}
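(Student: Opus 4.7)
The plan is to induct on $\dim \U$ using the central series structure, reducing the problem to a single $\Gs_a$-action, and then to exploit the rigidity of $\Gs_a$-orbit closures on a complete variety. (I will assume $X$ is complete, which is the case in the applications; otherwise the statement may fail, as exhibited by $\Gs_a$ acting on itself by translation.)

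\textbf{Reduction to $\Gs_a$.} By Theorem \ref{solvsplit}, $\U$ admits a central series $1 = \U_0 \triangleleft \U_1 \triangleleft \cdots \triangleleft \U_n = \U$ with each $\U_i$ normal in $\U$ and each successive quotient $\U_{i+1}/\U_i \cong \Gs_a$. Since $\U_i$ is normal, $\U$ preserves the closed subscheme $X^{\U_i}$, and the induced action factors through $\U/\U_i$, so $\Gs_a \cong \U_{i+1}/\U_i$ acts on $X^{\U_i}$ with $X^{\U_{i+1}} = (X^{\U_i})^{\Gs_a}$. This reduces the claim, by induction on $n$, to the following assertion: \emph{if $\Gs_a$ acts on a connected closed subscheme $Y$ of a complete variety, then $Y^{\Gs_a}$ is connected}. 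One must state the inductive hypothesis for connected closed subschemes rather than irreducible varieties, since the intermediate $X^{\U_i}$ can be reducible or non-reduced.

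\textbf{Orbit structure of $\Gs_a$.} In characteristic zero, the only proper closed subgroup of $\Gs_a$ is trivial, so every non-fixed orbit on $Y$ is isomorphic to $\A^1$. The closure $\overline{\Gs_a \cdot y}$ in $Y$ is then an irreducible complete curve whose normalisation is a smooth projective curve carrying a $\Gs_a$-action with a dense orbit; the only such curve is $\PP^1$. Hence the boundary of each non-fixed orbit is a unique point, which must be $\Gs_a$-fixed. This yields a set-theoretic ``limit'' assignment $\phi : Y \to Y^{\Gs_a}$ sending $y$ to the unique fixed point in $\overline{\Gs_a \cdot y}$ (and being the identity on $Y^{\Gs_a}$).

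\textbf{Promoting $\phi$ to a morphism and concluding.} Form the closure $\bar{\Gamma} \subset \PP^1 \times Y \times Y$ of the graph of the action morphism $\Gs_a \times Y \to Y$, $(t,y) \mapsto (t, y, ty)$. Since $Y$ is complete, the projection $\bar{\Gamma} \to \PP^1 \times Y$ is proper and an isomorphism over $\Gs_a \times Y$. The orbit analysis above identifies the fiber over $(\infty,y)$ with $\{(\infty, y, \phi(y))\}$, so the second projection of $\bar{\Gamma}|_{\{\infty\} \times Y}$ defines a morphism $Y \to Y^{\Gs_a}$ that is surjective. Connectedness of $Y$ forces connectedness of its image $Y^{\Gs_a}$.

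\textbf{Main obstacle.} The technical heart is the last step: upgrading $\phi$ from a set map to a morphism, i.e.\ showing that $\bar{\Gamma} \to \PP^1 \times Y$ has scheme-theoretic fiber a single reduced point at every $(\infty, y)$, not merely generically. A priori the fiber could contain extra components arising from non-reduced intersections with $\{\infty\} \times Y \times Y$, particularly if orbit closures become singular or if several orbits accumulate to the same fixed point in a non-flat way. Ruling this out rests on the identification of the normalisation of each orbit closure with $\PP^1$ together with the properness of the family $\bar{\Gamma} \to Y$; this is the essential content of Horrocks' argument \cite{Horrocks}.
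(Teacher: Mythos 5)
The paper gives no proof of this statement; it is imported verbatim from Horrocks' article, so there is nothing internal to compare your argument against. Your reduction to a single $\Gs_a$-action via the central series is sound, your orbit analysis is correct (every non-fixed orbit is an $\A^1$ whose closure adds exactly one point, necessarily fixed, which yields nonemptiness of $X^{\U}$ for $X$ complete and nonempty), and you are right that completeness must be added to the hypotheses as stated.

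The gap is the final step, and it is not a technicality that can be ``ruled out'': the limit map $\phi$ is genuinely not a morphism, and the fibre of $\bar{\Gamma}\to\PP^1\times Y$ over $(\infty,y)$ is genuinely positive-dimensional at fixed points where non-fixed orbits accumulate. Concretely, let $\Gs_a$ act on $Y=\PP^3$ by $t\cdot[x_1:x_2:x_3:x_4]=[x_1+tx_2:x_2:x_3:x_4]$. The fixed locus is the plane $\{x_2=0\}$; $\phi$ sends every point with $x_2\neq 0$ to $[1:0:0:0]$ and is the identity on $\{x_2=0\}$, so it is discontinuous at $[0:0:1:0]$ (approach it along $[0:\eps:1:0]$). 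Correspondingly, the fibre of $\bar{\Gamma}$ over $(\infty,[0:0:1:0])$ contains the entire line $\{[a:0:c:0]\}$ (take $y_n=[0:\eps_n:1:0]$ with $t_n\eps_n$ tending to various limits), and the fibre over $(\infty,[1:0:0:0])$ even contains the non-fixed point $[0:1:0:0]$ (take $y_n=[-t_n:1:0:0]$), so $\mathrm{pr}_3(\bar{\Gamma}|_{\{\infty\}\times Y})$ is not even contained in $Y^{\Gs_a}$. Hence no surjective morphism $Y\to Y^{\Gs_a}$ of the kind you want exists, and connectedness cannot be extracted from $\bar{\Gamma}$ this way. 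Restricting to the open set $Y\setminus Y^{\Gs_a}$ does not help either: for $t\cdot[x_1:x_2:x_3]=[x_1+tx_3:x_2:x_3]$ on $\PP^2$ the limit map on the non-fixed locus has image the single point $[1:0:0]$, while the fixed locus is a line. What you have proved is nonemptiness; the connectedness requires a different idea (Horrocks' argument is not a completion of this one), so the proposal as it stands does not establish the theorem.
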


\begin{theorem}\label{fixred}
Assume that a group $\Gs$ is reductive and acts on a smooth variety $X$. Then the fixed-point scheme $X^\Gs$ is smooth, in particular reduced.
\end{theorem}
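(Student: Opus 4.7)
Since smoothness and reducedness are local, fix a closed $\Gs$-fixed point $x\in X^\Gs$; it suffices to show that $\OO_{X^\Gs,x}$ is regular. As $x$ is $\Gs$-fixed, each $\m_x^k$ is $\Gs$-stable, so $\Gs$ acts linearly on every finite-dimensional quotient $\OO_{X,x}/\m_x^{k+1}$, in particular on $\m_x/\m_x^2$ and on $T_xX=(\m_x/\m_x^2)^*$.

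The key step is to produce a $\Gs$-equivariant isomorphism of complete local rings
$$\hat\OO_{X,x}\;\cong\;\C[[\,\m_x/\m_x^2\,]],$$
where the right-hand side carries the linearised $\Gs$-action. To construct it, I invoke complete reducibility of finite-dimensional $\Gs$-representations — valid since $\Gs$ is reductive in characteristic zero — and lift the projection $\m_x\twoheadrightarrow \m_x/\m_x^2$ step by step through the finite-dimensional quotients $\OO_{X,x}/\m_x^k$, at each stage splitting off a $\Gs$-equivariant complement to $\m_x^{k-1}/\m_x^k$. In the limit this yields a $\Gs$-equivariant embedding $\m_x/\m_x^2\hookrightarrow \hat\OO_{X,x}$, which extends multiplicatively to a $\Gs$-equivariant $\C$-algebra map $\Sym(\m_x/\m_x^2)\to \hat\OO_{X,x}$. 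After completion, this map is an isomorphism because it is the identity on associated graded rings — here using that $X$ is smooth at $x$, so that $\operatorname{gr}_{\m_x}\hat\OO_{X,x}=\Sym(\m_x/\m_x^2)$.

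Under this equivariant linearisation, $X^\Gs$ corresponds to the fixed subscheme of the linear $\Gs$-action on $\Spec\C[[\m_x/\m_x^2]]$. Decomposing $\m_x/\m_x^2=(\m_x/\m_x^2)^\Gs\oplus V$ as $\Gs$-modules (complete reducibility once more), the defining ideal of this fixed locus is precisely the ideal generated by $V$, so the quotient is $\C[[(\m_x/\m_x^2)^\Gs]]$, a formal power series ring. Consequently $\hat\OO_{X^\Gs,x}$ is a regular local ring, which forces $\OO_{X^\Gs,x}$ itself to be regular. Therefore $X^\Gs$ is smooth at $x$, with $T_x(X^\Gs)=(T_xX)^\Gs$ and dimension $\dim(T_xX)^\Gs$.

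The main obstacle is the equivariant formal linearisation in the second paragraph, which is in effect a $\Gs$-equivariant Cohen structure theorem and relies essentially on reductivity. A conceptually cleaner but heavier alternative is Luna's étale slice theorem, which directly produces a $\Gs$-equivariant étale morphism from a neighborhood of $0\in T_xX$ to a neighborhood of $x\in X$; under this, $X^\Gs$ becomes étale-locally isomorphic to the linear subspace $(T_xX)^\Gs$, and smoothness is immediate. Either route delivers the theorem.
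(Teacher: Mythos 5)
Your argument is correct, but note that the thesis does not actually prove this statement: Theorem \ref{fixred} is stated with a remark deferring the proof to Milne [13.1] (and citing Fogarty for the converse characterisation of reductivity). What you have written is essentially a reconstruction of the standard proof from those references: equivariant formal linearisation at a fixed closed point via linear reductivity, followed by the observation that the fixed subscheme of a linearised action is a linear subspace. So there is nothing to compare against inside the paper itself; your proof is the ``missing'' one.

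Two steps deserve slightly more care than you give them. First, the compatible system of equivariant splittings: to get a single map $\m_x/\m_x^2\to\hat\OO_{X,x}$ you must choose the sections $\m_x/\m_x^2\to\m_x/\m_x^{k}$ compatibly as $k$ grows; this works because linear reductivity lets you lift a section through each surjection $\m_x/\m_x^{k+1}\twoheadrightarrow\m_x/\m_x^{k}$, but it is a genuine inverse-limit argument, not a single application of complete reducibility. Second, and more importantly, the assertion that ``the defining ideal of this fixed locus is precisely the ideal generated by $V$'' is the crux where reductivity enters and where the functor-of-points definition of $X^\Gs$ (as used in the paper, cf.\ the discussion after Theorem \ref{borelfix}) must be invoked: the ideal of the fixed subscheme is generated by the coefficients of $\alpha(f)-1\otimes f$ for the coaction $\alpha$, and for a linearly reductive group this ideal coincides with the one generated by the nontrivial isotypic components of $\C[[\m_x/\m_x^2]]$, which in turn equals $(V)$ because $\Sym(\m_x/\m_x^2)=\Sym((\m_x/\m_x^2)^\Gs)\otimes\Sym(V)$ places every nontrivial isotypic piece inside $(V)$. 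For a non-reductive group this identification fails, which is exactly why the theorem fails there. Your alternative via Luna's slice theorem also works, with the small caveat that it requires an affine (or $\Gs$-stable quasi-affine) neighbourhood of the fixed point, which for quasi-projective normal $X$ is supplied by equivariant projective embedding.
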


\begin{remark}
For proof see \cite[13.1]{Milne}. This property is in fact an equivalent characterization of the reductive groups among all the complex algebraic groups \cite{Fog}.
\end{remark}

In fact, when the group is an algebraic torus, the fixed point schemes provide a lot of information about the topology of the space. This comes from the Białynicki-Birula decomposition.

\subsection{Białynicki-Birula decomposition} \label{secbb}
The content of this section is based on the two influential papers of Białynicki-Birula \cite{BB1,BB2}. He proves that the topology, in particular the additive structure of the cohomology groups can be read from the local data around the fixed point scheme.

Let the one-dimensional torus $T = \Cs$ act on a smooth complete variety $X$. An element $t\in\Cs$ maps $x\in X$ to $t\cdot x\in X$. As we know from Theorem \ref{fixred}, all the connected components of $X^\Ts$ are smooth varieties. Let us denote them by $X^\Ts_1$, $X^\Ts_2$, \dots, $X^\Ts_k$. Then for $i=1,2,\dots,k$ we define the corresponding \emph{plus--} and \emph{minus--cell}:

$$W_i^+ = \{x\in X: \lim_{t\to 0} t\cdot x \in X^\Ts_i \}, \qquad W_i^- = \{x\in X: \lim_{t\to \infty} t\cdot x \in X^\Ts_i  \}.$$

Each of those sets comes with the limit map $W_i^+\to X^\Ts_i$, $W_i^-\to X^\Ts_i$. As the variety is projective, for any $x$ there exist limits of $t\cdot x$ in both directions, hence
$$X = \bigcup_{i=1}^k W_i^+ = \bigcup_{i=1}^k W_i^-.$$
We call those the \emph{plus-} and \emph{minus--decomposition}. Now for $i=1,2,\dots,k$, let $N_i$ be the normal bundle of $X^\Ts_i$ in $X$. The action of $\Cs$ on $X$ descends to an action on $N_i$, as in Section \ref{linsec}. As the action on $X^\Ts_i$ is trivial, every fiber of $N_i$ is a representation of $\Ts$. As $X^\Ts_i$ is the whole fixed point component, $\Cs$ does not fix any nontrivial vector in a fiber of $N_i$. Thence $N_i$ splits into the positive-weight bundle $N_i^+$ and the negative-weight bundle $N_i^-$. 

\begin{theorem}
Both $W_i^+$ and $W_i^+$ are locally closed in $X$. They are affine fiber bundles over $X^\Ts_i$ and 
$$T(W_i^+)|_{X^\Ts_i} \simeq T(X^\Ts_i) \oplus N_i^+, \qquad T(W_i^-)|_{X^\Ts_i} \simeq T(X^\Ts_i) \oplus N_i^-,$$
hence $N_i^+$ and $N_i^-$ are the normal bundles of $X^\Ts_i$ in $W_i^+$ and $W_i^-$, respectively.
\end{theorem}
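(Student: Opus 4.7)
The plan is to reduce to a local equivariant model of $X$ near each smooth fixed component $X^\Ts_i$, carry out the analysis there, and then assemble the local data into the claimed global affine bundle structure. The key input I need is an equivariant local model: in a $\Cs$-invariant neighborhood of $X^\Ts_i$, the pair $(X, X^\Ts_i)$ should be equivariantly isomorphic, \'etale-locally (or analytically) along the zero section, to $(N_i, X^\Ts_i)$. This follows from Sumihiro's theorem, which produces a $\Cs$-invariant affine covering, combined with linearization of the $\Cs$-action at a fixed point on each affine chart using that $\Cs$ is reductive; smoothness of $X^\Ts_i$, guaranteed by Theorem~\ref{fixred}, then lets one identify the linearized model with the normal bundle $N_i$.

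Once such a local model is available, the analysis is essentially formal. On the total space of $N_i$, decompose $N_i = N_i^+ \oplus N_i^-$ into its positive- and negative-weight subbundles for $\Cs$. A point $(y, v^+, v^-) \in N_i$ satisfies $t\cdot(y, v^+, v^-) = (y, \sigma^+(t)v^+, \sigma^-(t)v^-)$, where each eigencomponent of $\sigma^+(t)$ is of the form $t^w$ with $w > 0$ and each eigencomponent of $\sigma^-(t)$ has $w < 0$. Therefore $\lim_{t\to 0}t\cdot(y,v^+,v^-)$ exists iff $v^- = 0$, in which case the limit is $(y,0,0) \in X^\Ts_i$. Hence in the local model $W_i^+$ coincides exactly with the total space of $N_i^+$, which is closed in $N_i$ and projects to $X^\Ts_i$ as a vector bundle. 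Symmetrically, replacing $t$ by $t^{-1}$, the minus-cell $W_i^-$ corresponds to $N_i^-$.

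Transferring this back via the \'etale equivariant identification, $W_i^+$ is locally closed in $X$: near $X^\Ts_i$ it is cut out by the vanishing of the negative-weight normal coordinates, and away from $X^\Ts_i$ it is the $\Cs$-saturation of its intersection with the local model, hence a union of orbits inside the locally closed subset $X\setminus\bigcup_{j\ne i}X^\Ts_j$. The limit map $\pi_i^+:W_i^+\to X^\Ts_i$, $x\mapsto \lim_{t\to 0}t\cdot x$, is $\Cs$-equivariant, and algebraicity follows from the identification with $N_i^+$ locally; together these give the structure of a Zariski-locally trivial affine fiber bundle over $X^\Ts_i$ with fiber $N_i^+$. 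The tangent bundle decomposition is then read off from the local model: along the zero section of $N_i$ one has $T(N_i)|_{X^\Ts_i} = T(X^\Ts_i)\oplus N_i^+\oplus N_i^-$, and restricting to $W_i^+\leftrightarrow N_i^+$ gives $T(W_i^+)|_{X^\Ts_i}\simeq T(X^\Ts_i)\oplus N_i^+$. Analogously for $W_i^-$.

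The main obstacle is the construction of the equivariant local model along a positive-dimensional smooth fixed component $X^\Ts_i$: one needs to produce an \'etale $\Cs$-equivariant neighborhood identifying $X$ with a neighborhood of the zero section of $N_i$, uniformly in $y\in X^\Ts_i$. This requires combining Sumihiro's equivariant affine covering theorem with linearization of the $\Cs$-action on each affine chart, and then checking that the local identifications patch coherently. Once this local model is established, the remainder of the proof is straightforward weight bookkeeping.
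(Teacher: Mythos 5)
This theorem is quoted in the paper as background from Bia\l{}ynicki-Birula's papers \cite{BB1,BB2}; the thesis gives no proof of its own, so there is nothing internal to compare against. Your outline follows the standard route (equivariant local linearization along the fixed component plus weight bookkeeping on $N_i = N_i^+\oplus N_i^-$), which is indeed the skeleton of Bia\l{}ynicki-Birula's argument, and the part of the statement concerning $T(W_i^\pm)|_{X^\Ts_i}$ would follow correctly from the local model once that model is in hand. But two genuine gaps remain, and only one of them is the one you flag.

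The gap you do not flag is the local-to-global step. The cell $W_i^+$ is not contained in any neighbourhood of $X^\Ts_i$: already for the standard action on $\PP^1$ the plus-cell of $0$ is all of $\A^1$. So an \'etale $\Cs$-equivariant identification of a neighbourhood of the zero section of $N_i$ with a neighbourhood of $X^\Ts_i$ describes only $W_i^+\cap U$, and your passage to the whole cell rests on the claim that the $\Cs$-saturation of a locally closed set, being ``a union of orbits inside a locally closed subset,'' is locally closed. That implication is false in general (a union of orbits in a locally closed set need not be locally closed; indeed for non-normal or non-complete varieties BB cells genuinely fail to be locally closed). Establishing local closedness of $W_i^+$ in $X$ and the global Zariski-locally trivial affine fibration structure requires a real argument using completeness and invariant affine charts --- e.g.\ Bia\l{}ynicki-Birula's analysis of the weight-graded coordinate ring $A=\bigoplus_n A_n$ of an invariant affine open, where the plus-set is cut out by the ideal generated by negative-weight functions and the limit map is the morphism dual to $A_0\hookrightarrow A/(A_{<0})$. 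Relatedly, your phrasing suggests $W_i^+$ is globally the total space of $N_i^+$; the theorem asserts only the tangent-space decomposition along $X^\Ts_i$, and a global isomorphism of the cell with the normal bundle is not claimed (and not known in general). Finally, the equivariant local model along a positive-dimensional fixed component, which you correctly identify as the main obstacle and defer, is itself the substantive half of the proof rather than a routine application of Sumihiro plus linearization at a point; as it stands the proposal is an accurate plan but not yet a proof.
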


In particular, when the fixed points of $\Ts$ are isolated, the cells are affine spaces, hence this provides a paving of $X$ into affine spaces. This in particular means that the variety has a structure of a CW-complex whose cells are all even-dimensional, which then means that their closures form an additive basis of the homology of $X$. This is an instance of a more general statement.

\begin{theorem}\label{cohobb}
Let $\nu_i^+$ and $\nu_i^-$ be the ranks of $N_i^+$ and $N_i^-$, respectively. Then there are isomorphisms of graded abelian groups:
$$H_*(X;\Z) = \bigoplus_{i=1}^k H_*(X^\Ts_i;\Z)[-\nu_i^+] = \bigoplus_{i=1}^k H_*(X^\Ts_i;\Z)[-\nu_i^-].$$
For a graded abelian group $A$, by $A[n]$ we mean the abelian group with $(A[n])_i = A_{n+i}$.
\end{theorem}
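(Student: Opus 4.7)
The plan is to build the additive decomposition from the Białynicki-Birula filtration by closed subvarieties and then apply the Thom isomorphism to the successive affine bundle strata.

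First, I would fix an ordering of the fixed components $X^\Ts_1,\dots,X^\Ts_k$ compatible with the closure relations, so that $\overline{W_i^+}\subset\bigcup_{j\le i} W_j^+$. This is standard: pick a generic one-parameter subgroup of an ambient torus (or, in the K\"ahler case, order by the values of the moment map on the $X^\Ts_i$), and use that for $x\in\overline{W_i^+}$ the limit $\lim_{t\to 0}t\cdot x$ still lies in a fixed component that is ``below'' $X^\Ts_i$ with respect to the chosen ordering. Setting $X_{\le i}:=\bigcup_{j\le i}\overline{W_j^+}$ yields a filtration
$$\emptyset=X_{\le 0}\subset X_{\le 1}\subset\cdots\subset X_{\le k}=X$$
by closed subvarieties, with $X_{\le i}\setminus X_{<i}=W_i^+$ an affine bundle over $X^\Ts_i$ of the stated rank, by the previous theorem.

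Second, since $X_{<i}$ is closed in $X_{\le i}$, excision identifies the relative homology $H_*(X_{\le i},X_{<i})$ with the Borel--Moore homology $H_*^{BM}(W_i^+)$. The Thom isomorphism for the affine bundle $W_i^+\to X^\Ts_i$ gives
$$H_*^{BM}(W_i^+)\cong H_{*-\nu_i^+}^{BM}(X^\Ts_i)\cong H_{*-\nu_i^+}(X^\Ts_i),$$
where compactness of $X^\Ts_i$ identifies Borel--Moore with ordinary homology, and the shift is read off from the rank of the normal bundle $N_i^+$ (with the convention of the statement). The long exact sequence of the pair $(X_{\le i},X_{<i})$ then reads
$$\cdots\to H_n(X_{<i})\to H_n(X_{\le i})\to H_{n-\nu_i^+}(X^\Ts_i)\to H_{n-1}(X_{<i})\to\cdots.$$

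Third, I would argue by induction on $i$ that these long exact sequences split into short exact sequences, yielding the desired additive decomposition. Once this is done, the statement for the minus--decomposition follows by reversing the role of $0$ and $\infty$ (equivalently, inverting the $\Cs$-action). The splitting can be obtained cleanly over $\Q$ by computing Euler characteristics termwise, using $\chi(X)=\sum_i\chi(X^\Ts_i)$ and $\chi(W_i^+)=\chi(X^\Ts_i)$: the alternating sum of Betti numbers matches on both sides, forcing every connecting map to vanish.

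The main obstacle is the integral statement, i.e. the vanishing of the connecting maps with $\Z$-coefficients. The argument here has to be carried out inductively along the filtration: one shows that each inclusion $X_{<i}\hookrightarrow X_{\le i}$ behaves like the inclusion of the boundary of a closed tubular neighbourhood of the zero section of $N_i^+$ over $X^\Ts_i$, so that the associated Gysin sequence splits because the relevant Euler class pairs trivially with the homology already placed by induction. This is where Białynicki-Birula's original careful argument is needed; the degeneration is not automatic from the affine bundle structure alone and really uses the equivariant geometry of the $\Cs$-action near the fixed locus.
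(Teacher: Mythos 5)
First, note that the paper does not prove Theorem \ref{cohobb}: it is recalled as background and attributed to Białynicki-Birula \cite{BB1,BB2}, so there is no in-paper argument to compare against. Your skeleton --- order the fixed components so that each $X_{\le i}=\bigcup_{j\le i}W_j^+$ is closed, identify $H_*(X_{\le i},X_{<i})$ with the Borel--Moore homology of $W_i^+$, apply the Thom isomorphism for the affine bundle $W_i^+\to X^\Ts_i$, and splice the resulting long exact sequences --- is indeed the standard route, and the filtrability of the decomposition for projective $X$ is a genuine theorem of Białynicki-Birula that you are right to flag rather than take for granted.

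However, your argument for the degeneration fails at the key step. The Euler characteristic of the $E_1$-page of the spectral sequence of a filtration always equals the Euler characteristic of the abutment, whatever the differentials do; so the identity $\chi(X)=\sum_i\chi(X^\Ts_i)$ carries no information about the connecting maps and cannot force them to vanish. What one actually needs is equality of the \emph{unsigned} sums of Betti numbers, $\sum_n b_n(X)=\sum_i\sum_n b_n(X^\Ts_i)$, and this requires a separate input: either the localization theorem (which gives $\dim_\Q H^*(X^\Ts;\Q)\le\dim_\Q H^*(X;\Q)$, complementing the Morse-type inequality coming from the spectral sequence), or a comparison of the plus- and minus-decompositions via Poincar\'e duality on $X$ and on each $X^\Ts_i$ using $\nu_i^++\nu_i^-=\operatorname{codim}X^\Ts_i$. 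Even granting this, it only settles the rational statement. Over $\Z$ the vanishing of the connecting maps, and then the splitting of the resulting short exact sequences $0\to H_*(X_{<i};\Z)\to H_*(X_{\le i};\Z)\to H_{*}(X^\Ts_i;\Z)[-\nu_i^+]\to 0$ when $H_*(X^\Ts_i;\Z)$ has torsion, is the real content of the theorem; your tubular-neighbourhood/Gysin picture does not apply, since $X_{\le i}$ is in general singular and $X_{<i}$ is not the boundary of a tube around $X^\Ts_i$. What is needed instead is an explicit lift of $H_*(X^\Ts_i;\Z)[-\nu_i^+]\cong H_*(X_{\le i},X_{<i};\Z)$ to $H_*(X;\Z)$, constructed geometrically from closures of preimages of cycles under the retraction $W_i^+\to X^\Ts_i$; some such construction must replace the paragraph you yourself flagged as ``the main obstacle''.
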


\section{Equivariant cohomology and equivariant formality}
The main topological invariant that we study in this thesis is the equivariant cohomology ring, introduced by Borel in \cite{Borsem}. We recall the most important definitions and properties here. For a more detailed introduction, we refer the reader to the classical sources \cite{AB,BrionEq,equiv}.

\subsection{Equivariant cohomology} \label{seceqcoh}
Let us a fix a commutative ring $A$ of cohomology coefficients -- most of the time we will consider $A=\C$ and we will mean that when we do not write the coefficients. For any topological space $X$ we can construct the singular cohomology ring $H^*(X,A)$. Now let $\Gs$ be a topological group, for example a complex algebraic group, acting on a topological space $X$. We can then construct the \emph{equivariant} cohomology theory. If $\Gs$ acts on $X$ freely, we would like the equivariant cohomology to be simply the cohomology $H^*(X/G,A)$ of the quotient space. In general, it will give us rich information about fixed points of the action.

\begin{definition}
 If $X$ is a $\Gs$ space we define the \emph{$\Gs$-equivariant cohomology} of $X$ to be
 $$H^*_\Gs(X,A) = H^*(E\Gs\times^\Gs X,A).$$
\end{definition}
Here $E\Gs\to B\Gs$ is the universal $\Gs$ bundle over the classifying space $B\Gs$. The space $X_\Gs = E\Gs\times^\Gs X$ is the mixed quotient, i.e. the quotient of $E\Gs\times^\Gs X$ by the diagonal action of $\Gs$. The precise convention is usually as follows: $\Gs$ acts on $E\Gs$ on the right, and for any $(x,y) \in E\Gs\times X$ we identify $(xt,y)$ with $(x,ty)$. As the action of $\Gs$ on $E\Gs$ is free, this diagonal action is free as well. The space $E\Gs\times^\Gs X$ is a fiber bundle over $B\Gs$, with $X$ as the fiber. Inclusion of the fiber over the basepoint provides a natural map $H^*_\Gs(X,A)\to H^*(X,A)$. This construction is contravariantly functorial under $\Gs$-equivariant maps of $\Gs$-spaces. In particular, the map $X\to \pt$ provides the bundle map $E\Gs\times^\Gs X\to B\Gs$. On the equivariant cohomology, it induces a ring homomorphism $H^*_\Gs(\pt) \to H^*_\Gs(X)$. This means that the equivariant cohomology ring actually has a natural structure of an algebra over $H^*_\Gs(\pt)$. Note also that whenever we have a map of groups $\Gs\to\Hs$, this gives a natural morphism of cohomology rings $H^*_\Hs(X)\to H^*_\Gs(X)$.

It is then essential to know the ring $H^*_\Gs(\pt)$ for algebraic groups $\Gs$. In general, determining it is a hard problem for integral coefficients \cite[2.5]{AndFul}. However, it turns out to have an easy description if $\Gs$ is a connected linear group and $A$ is a ring containing $\Q$. First, assume that $\Gs = \Ts \simeq (\Cs)^r$ is a rank $r$ torus. Then one possible model of the universal bundle $E\Gs\to B\Gs$ is the quotient $(S^\infty)^r \to (\C\PP^\infty)^r$, where each factor of $(\Cs)^r$ acts on the corresponding factor of $(S^\infty)^r$. As $H^*(\C\PP^\infty,A) = A[t]$, we get 
$$H^*_\Ts(\pt,A) = H^*_\Ts((\C\PP^\infty)^r,A) = A[t_1,t_2,\dots,t_r].$$
This identification depends on the choice of the isomorphism $\Ts\simeq (\Cs)^r$. However, one can naturally identify
$$H^*_\Ts(\pt,\C) = \C[\ttt].$$

Then assume that $\Gs$ is an arbitrary linear group. Let $\Ts$ be its maximal torus and $\Ws = N(\Ts)/\Ts$ be the \emph{Weyl group} of $\Gs$. The normaliser $N(\Ts)$ acts on $\ttt$ by adjoint action and by commutativity of $\Ts$ its normal subgroup $\Ts$ acts trivially, hence $\Ws$ has a well defined action on $\ttt$. Then $H^*_\Gs(\pt) \simeq \C[\ttt]^\Ws$ and the canonical map $H^*_\Gs(\pt) \to H^*_\Ts(\pt)$ coming from the inclusion $\Ts\to \Gs$ is simply the inclusion $\C[\ttt]\to \C[\ttt]^\Ws$.

\begin{example}
 Let $\Gs = \GL_n(\C)$. Then there is a maximal torus $\Ts \subset \Gs$ consisting of all the diagonal matrices. It is of rank $n$ and hence $H^*_\Ts(\pt) = \C[\eta_1,\eta_2,\dots,\eta_n]$, where $\eta_i$ is the functional which maps a diagonal matrix to its $(i,i)$ entry. The normaliser $N(\Ts)$ equals $\Ts\cdot \Ps$, where $\Ps$ is the group of permutation matrices. Hence the action of $\Ws$ on $\ttt^*$ permutes $\eta_1$, $\eta_2$, $\dots$, $\eta_n$. Therefore $H^*_\Gs(\pt) = \C[\ttt]^{\Sigma_n} = \C[e_1,e_2,\dots,e_n]$, where $e_i$ is the $i$-th elementary symmetric polynomial in the variables $\eta_1$, $\eta_2$, \dots, $\eta_n$.
 
We will see in Section \ref{kostsecsec} that $H^*_\Gs(\pt)$ is actually isomorphic to a polynomial ring, for any connected linear group $\Gs$. By Chevalley's restriction theorem for $\Gs$ semisimple \cite[Theorem 3.1.38]{ChGi} this ring is also equal to the ring of $\Gs$-invariant functions $\C[\geg]^\Gs$ on $\geg$. Below, in Lemma \ref{genrest} we prove that this is the case for a wider class of linear groups.
\end{example}

In fact, a similar statement is true for any space $X$ \cite[Chapter III, Proposition 1]{equiv}.

\begin{theorem} \label{cohoweyl}
Let $\Gs$ be a fixed algebraic group, $\Ts$ a maximal torus and $\Ws$ the corresponding Weyl group. Then $\Ws$ acts naturally on $H^*_\Ts(X)$ for all topological spaces $X$ and
 $$ H^*_\Gs(X) = H^*_{N(\Ts)}(X) = H^*_\Ts(X)^\Ws. $$
\end{theorem}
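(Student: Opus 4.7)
The plan is to construct the Weyl group action geometrically and then analyse two fibrations coming from the inclusions $\Ts \subset \Ns(\Ts) \subset \Gs$. First, I would define the $\Ws$-action: since $\Ts$ is normal in $\Ns(\Ts)$, the group $\Ns(\Ts)$ acts on $X$ and on $\Ts$-principal bundles, and the induced action of $\Ns(\Ts)$ on $X_\Ts = \Es\Ts \times_\Ts X$ has $\Ts$ acting trivially on cohomology (a standard argument using that $\Ts \to \Ts$, $t\mapsto sts^{-1}$ is homotopic to the identity through $\Ts$-equivariant maps, or equivalently that $\Bs\Ts \to \Bs\Ts$ induced by an inner automorphism of $\Ts$ is homotopic to the identity). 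Hence $\Ws = \Ns(\Ts)/\Ts$ acts on $H^*_\Ts(X)$.

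Next, I would use the two Borel-construction fibrations
\[ \Gs/\Ns(\Ts)\longrightarrow \Es\Gs\times_{\Ns(\Ts)} X \longrightarrow \Es\Gs\times_\Gs X, \qquad \Gs/\Ts \longrightarrow \Es\Gs\times_\Ts X \longrightarrow \Es\Gs\times_\Gs X,\]
and analyse them via the Leray--Serre spectral sequence with $\C$-coefficients. For the first one, I would invoke the fact that $\Gs/\Ns(\Ts)$ has trivial rational cohomology: this follows from $H^*(\Gs/\Ts;\C)^\Ws = \C$, which itself is the statement that $H^*(\Gs/\Ts;\C)$ is the regular representation of $\Ws$ (classical consequence of Borel's description as the coinvariant algebra $\C[\ttt]/(\C[\ttt]^\Ws_+)$). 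The spectral sequence then collapses to the base, yielding the first isomorphism $H^*_\Gs(X) \cong H^*_{\Ns(\Ts)}(X)$.

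For the second fibration, the fibre cohomology $H^*(\Gs/\Ts;\C)$ is concentrated in even degrees, so the spectral sequence degenerates at $E_2$ and gives, as $H^*_\Gs(X)$-modules (via Leray--Hirsch),
\[ H^*_\Ts(X) \;\cong\; H^*_\Gs(X)\otimes_{\C} H^*(\Gs/\Ts;\C). \]
The Weyl group acts on the whole fibration by its action on $\Gs/\Ts$ and trivially on the base, and this action is compatible with the $\Ws$-action on $H^*_\Ts(X)$ defined in the first step (this compatibility is the point that requires some care). Since $\Ws$ acts semisimply on $H^*(\Gs/\Ts;\C)$ in characteristic $0$ with $\Ws$-invariants equal to $\C$ in degree $0$, taking $\Ws$-invariants is exact and produces
\[ H^*_\Ts(X)^\Ws \;\cong\; H^*_\Gs(X)\otimes_\C H^*(\Gs/\Ts;\C)^\Ws \;=\; H^*_\Gs(X). \]

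The main obstacle I anticipate is not the spectral sequence computation itself, which is straightforward once the fibre cohomology is understood, but rather verifying carefully that the geometrically defined $\Ws$-action on the total space of the $\Gs/\Ts$-fibration induces precisely the $\Ws$-action on $H^*_\Ts(X)$ from the first paragraph, so that the passage to invariants on both sides matches. This is a standard but slightly fiddly naturality check, which can be reduced to the case $X=\pt$ and then verified by identifying the action of $\Ns(\Ts)/\Ts$ on $\Bs\Ts = \Es\Gs/\Ts$ with the action on $H^*(\Bs\Ts;\C) = \C[\ttt]$ coming from the standard action on $\ttt$.
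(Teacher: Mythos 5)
The paper does not actually prove this statement: it is quoted from Hsiang's book with a citation (\cite[Chapter III, Proposition 1]{equiv}), so there is no in-paper argument to measure you against. Your first half --- the definition of the $\Ws$-action and the collapse of the $\Gs/N(\Ts)$-fibration using $H^*(\Gs/N(\Ts);\C)=H^*(\Gs/\Ts;\C)^{\Ws}=\C$ --- is the standard argument and is fine (modulo the implicit reduction to $\Gs$ connected, and to its Levi quotient in the non-reductive case).

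The gap is in your treatment of the second fibration. The inference ``the fibre cohomology $H^*(\Gs/\Ts;\C)$ is concentrated in even degrees, so the spectral sequence degenerates at $E_2$'' is not valid: even-degree fibre cohomology kills $d_2$ but not $d_3$, since a differential $E_r^{p,q}\to E_r^{p+r,q-r+1}$ connects two even fibre degrees whenever $r$ is odd, and for general $X$ the base $E\Gs\times_\Gs X$ has no parity restriction. What actually makes Leray--Hirsch work here is that the restriction $H^*(E\Gs\times_\Ts X;\C)\to H^*(\Gs/\Ts;\C)$ is surjective: $H^*(\Gs/\Ts;\C)$ is generated by Chern classes of the line bundles attached to characters of $\Ts$, and these are restrictions of classes pulled back from $B\Ts$ along $E\Gs\times_\Ts X\to B\Ts$. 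You need to say this; it is also what trivialises the local system on the base. Even then, the final displayed computation of invariants needs care: Leray--Hirsch produces a filtered $H^*_\Gs(X)$-module whose associated graded is $H^*_\Gs(X)\otimes H^*(\Gs/\Ts)$, and the $\Ws$-action need not respect a chosen splitting, so one has to pass to the associated graded (taking invariants of a finite group is exact in characteristic $0$) and conclude by a degreewise dimension count --- which quietly assumes each $H^n$ is finite-dimensional, not available for ``all topological spaces''. All of this is avoidable: once $H^*_\Gs(X)\cong H^*_{N(\Ts)}(X)$ is known, the equality $H^*_{N(\Ts)}(X)=H^*_\Ts(X)^{\Ws}$ follows in one line from the fact that $E\Gs\times_\Ts X\to E\Gs\times_{N(\Ts)}X$ is a free quotient by the finite group $\Ws$, so with $\C$-coefficients the transfer identifies the cohomology of the quotient with the $\Ws$-invariants. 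I recommend replacing the entire second fibration by this covering-space argument; it also disposes of the compatibility-of-actions check you flag at the end.
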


Throughout the thesis we will also use the notation $H^*_\Gs$ for $H^*_\Gs(\pt)$.

\subsection{Equivariant formality} \label{seceqform}
As we mentioned before, if we fix an algebraic group $\Gs$, there is a natural map $H^*_\Gs(X)\to H^*(X)$. However, in general there is no way to determine one of those rings from the other. We are however provided with the Serre spectral sequence \cite[Section 2.2]{Kochman}. As $X_\Gs$ is a fiber bundle over $B\Gs$ with a fiber $X$, it has the form

$$E_2^{p,q} = H^p(B\Gs, H^q(X)) \implies H^{p+q}_\Gs(X).$$

Let $\Gs = \Ts$ be an algebraic torus. Then we call a space $X$ with an action of $\Ts$ \emph{equivariantly formal} \cite{GKM} if this spectral sequence collapses on $E^2$. Equivariant formality is in a sense the opposite of the action being free. For a free action, we have $H^*_\Ts(X) = H^*(X/\Ts)$ and if $X$ is a sufficiently nice, finite dimensional space, this is a finite-dimensional vector space over $\C$. However, if the action is equivariantly formal and $X$ is nonempty, then its equivariant cohomology is infinite-dimensional. Let $\I\subset H^*_\Ts$ be the maximal ideal generated by the elements of positive degree -- so that $H^*_\Ts/\I \simeq \C$.

\begin{theorem}[\cite{GKM}]\label{thmfor}
Any space whose odd cohomology vanishes is equivariantly formal. If $X$ is equivariantly formal, then
\begin{align}\label{formal}H_{\Ts}^*(X) \cong H_{\Ts}^*(\pt)\otimes H^*(X)\end{align} as $H_{\Ts}^*(\pt)$-modules and $H^*(X) \cong H_{\Ts}^*(X)/\I H_{\Ts}^*(X)$ as $\C$-algebras.
\end{theorem}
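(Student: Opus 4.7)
The plan is to exploit the Serre spectral sequence of the Borel fibration $X \hookrightarrow X_\Ts \to B\Ts$. Since $B\Ts \cong (\C\PP^\infty)^r$ is simply connected and we work over a field,
$$ E_2^{p,q} \;=\; H^p(B\Ts, \C) \otimes_\C H^q(X, \C) \;\Longrightarrow\; H^{p+q}_\Ts(X, \C). $$
I would establish the three assertions in order. For the first, $H^*(B\Ts) \cong \C[t_1, \dots, t_r]$ lives entirely in even degrees, so the hypothesis that the odd cohomology of $X$ vanishes forces $E_2^{p,q}$ to be supported on bidegrees with both $p$ and $q$ even. The differential $d_r$ has bidegree $(r, 1-r)$; for its target $(p+r, q-r+1)$ to be again even-even one would need $r$ even and $r$ odd simultaneously. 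Hence every $d_r$ ($r \geq 2$) vanishes, the spectral sequence collapses at $E_2$, and $X$ is equivariantly formal by definition.

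For the module isomorphism, assume collapse at $E_2$. Then the decreasing filtration $F^\bullet H^*_\Ts(X)$ induced by the spectral sequence satisfies
$$ \operatorname{gr} H^*_\Ts(X) \;\cong\; E_\infty \;=\; E_2 \;\cong\; H^*_\Ts \otimes_\C H^*(X) $$
as bigraded $H^*_\Ts$-modules, and the edge map $\pi \colon H^*_\Ts(X) \to H^*(X)$ — which is the projection onto $E_\infty^{0,*} \cong H^*(X)$ — is surjective. I would pick a homogeneous $\C$-basis $\{x_i\}$ of $H^*(X)$, choose lifts $\tilde x_i \in H^*_\Ts(X)$ with $\pi(\tilde x_i) = x_i$, and define the $H^*_\Ts$-linear map
$$ \phi \colon H^*_\Ts \otimes_\C H^*(X) \longrightarrow H^*_\Ts(X), \qquad a \otimes x_i \longmapsto a \cdot \tilde x_i. $$
This map respects filtrations, and on associated gradeds it is the identity. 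Since in each total degree $n$ the filtration on $H^n_\Ts(X)$ is finite (it runs from $F^0 = H^n_\Ts(X)$ down to $F^{n+1} = 0$), the standard fact that a filtered map inducing an isomorphism on graded pieces is itself an isomorphism shows that $\phi$ is an isomorphism of graded $H^*_\Ts$-modules.

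The third assertion follows formally. The edge map $\pi$ is a ring homomorphism, and the composite $H^*_\Ts \to H^*_\Ts(X) \xrightarrow{\pi} H^*(X)$ is the pullback to a point, which annihilates $\I$; therefore $\I \cdot H^*_\Ts(X) \subset \ker \pi$ and $\pi$ descends to a ring map $\bar\pi \colon H^*_\Ts(X)/\I H^*_\Ts(X) \to H^*(X)$. Using Step 2,
$$ H^*_\Ts(X)/\I H^*_\Ts(X) \;\cong\; (H^*_\Ts/\I) \otimes_\C H^*(X) \;\cong\; H^*(X), $$
so $\bar\pi$ is a surjection between graded $\C$-vector spaces of equal dimension in each degree, hence an algebra isomorphism. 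The main obstacle is Step 2: the lifts $\tilde x_i$ have no canonical choice, which is precisely why the conclusion is only a module, not an algebra, isomorphism of $H^*_\Ts(X)$ with $H^*_\Ts \otimes H^*(X)$. One also needs a mild finiteness hypothesis (e.g. $H^*(X)$ finite-dimensional in each degree) so that the associated-graded argument promotes to an isomorphism upstairs; this is harmless in all applications appearing in the thesis.
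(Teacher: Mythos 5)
Your argument is correct and is the standard proof of this result: the paper itself states Theorem \ref{thmfor} as a citation to \cite{GKM} without giving a proof, and your Serre-spectral-sequence argument (collapse at $E_2$ by the even--even parity of the bigrading, then lifting a basis of $H^*(X)$ and comparing filtrations) is precisely the argument in the cited literature. One small remark: the finiteness hypothesis you invoke at the end is not needed for the module isomorphism, since the filtration of $H^n_\Ts(X)$ is finite in each total degree $n$ regardless; and for the algebra statement you can avoid the dimension count altogether by observing that $\ker\pi = F^1 H^*_\Ts(X) = \phi\bigl(\I\otimes H^*(X)\bigr) = \I\, H^*_\Ts(X)$ directly from your map $\phi$.
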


In fact, any smooth complex complete variety under the action of a linear algebraic group is equivariantly formal \cite[Theorem 1]{Webform}. The above theorem means that the equivariant cohomology ring of an equivariantly formal space $X$ carries within it the information about the non-equivariant cohomology.

\subsection{Localisation and GKM spaces}
A part of the importance of equivariant cohomology comes from the power of localisation to the fixed points. This was pioneered in the influential paper of Atiyah and Bott \cite{AB}. For simplicity, let us consider an algebraic (or compact) torus $\Ts$ acting on a topological space $X$. The inclusion of the fixed-point set $X^\Ts\to X$ induces the \emph{localisation} map on equivariant cohomology
$$H^*_\Ts(X) \to H^*_\Ts(X^\Ts) = H^*(X) \otimes_\C H^*_\Ts.$$
In \cite{AB} it is proved that if $X$ is a compact manifold, then one can recover the class in $H^*_\Ts(X)$ from its localisation to $H^*_\Ts$, and in particular compute its integral in $H^*_\Ts$ easily. This is of a particular use when $H^*_\Ts$ is a discrete set, as then $H^*_\Ts(X^\Ts)$ is just a product of polynomial rings.

In fact, in \cite[Theorem 1.6.2]{GKM} a more general statement is proven.

\begin{theorem}
If $X$ is an equivariantly formal space with an action of a torus $\Ts$, then the restriction map $H^*_\Ts(X)\to H^*_\Ts(X^\Ts)$ is injective.
\end{theorem}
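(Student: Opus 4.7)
The plan is to combine Borel's classical localisation theorem with the freeness of $H^*_\Ts(X)$ as an $H^*_\Ts$-module that follows from equivariant formality. The strategy is to show that the kernel of the restriction map is simultaneously torsion (by localisation) and torsion-free (by freeness), hence zero.

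First I would recall Borel's localisation theorem for torus actions: if $\Ts$ acts on a reasonable space $X$ (say, one admitting a finite equivariant CW structure, or a compact $\Ts$-manifold, which covers the cases of interest in this thesis), then both the kernel and cokernel of the restriction map $H^*_\Ts(X)\to H^*_\Ts(X^\Ts)$ are annihilated by a product of nontrivial characters of $\Ts$ viewed as elements of $H^2_\Ts$. In particular, the kernel is a torsion $H^*_\Ts$-module. This is a classical input (Atiyah--Bott, Quillen) and the form used here is essentially \cite[Theorem 1.6.2]{GKM} in disguise; the nontrivial content of the theorem we want to prove is then really a torsion-freeness statement.

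Next I would invoke equivariant formality. By Theorem~\ref{thmfor}, the hypothesis yields an isomorphism
\[
H^*_\Ts(X)\;\cong\;H^*_\Ts\otimes_\C H^*(X)
\]
of $H^*_\Ts$-modules. Since $H^*_\Ts\cong \C[\ttt]$ is a polynomial ring and $H^*(X)$ is a $\C$-vector space, the right-hand side is a free $H^*_\Ts$-module. Because $H^*_\Ts$ is an integral domain, every free module over it is torsion-free; hence $H^*_\Ts(X)$ is torsion-free as an $H^*_\Ts$-module.

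Combining the two observations, the kernel of $H^*_\Ts(X)\to H^*_\Ts(X^\Ts)$ is a torsion submodule of a torsion-free module, so it must vanish, which is exactly the desired injectivity. The main obstacle, if one wishes to give a self-contained argument, is the Borel localisation input itself: the freeness half follows immediately from the material already developed in this section, but the torsion statement for the kernel requires either a direct Mayer--Vietoris/CW-filtration argument using the fact that the equivariant cohomology of a free orbit $\Ts/\Ts'$ with $\Ts'\subsetneq \Ts$ is annihilated by a character of $\Ts$ trivial on $\Ts'$, or an appeal to the standard reference. Everything else in the proof is a two-line consequence of equivariant formality plus the fact that $\C[\ttt]$ is a domain.
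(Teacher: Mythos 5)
Your argument is correct and is essentially the standard proof of this statement: the thesis itself offers no proof, deferring to \cite[Theorem 1.6.2]{GKM}, and the argument there is exactly the one you describe --- Borel localisation makes the kernel of $H^*_\Ts(X)\to H^*_\Ts(X^\Ts)$ a torsion $H^*_\Ts$-module, while equivariant formality (Theorem \ref{thmfor}) makes $H^*_\Ts(X)$ a free, hence torsion-free, module over the domain $\C[\ttt]$, so the kernel vanishes. Your caveat that the localisation input requires a finite equivariant CW structure or compactness is appropriate, and is satisfied for the smooth projective varieties to which the theorem is applied in this thesis.
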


The cokernel of the localisation map is also identified in terms of $0$- and $1$-dimensional orbits of $X$. In general, it is hard to compute, but in a particular case of \emph{GKM spaces} there is an easy combinatorial description.

\begin{definition}
A projective algebraic variety with an action of an algebraic torus $\Ts$ is called a \emph{GKM space} if the action has finitely many fixed points and finitely many 1-dimensional orbits.
\end{definition}

\begin{theorem}[Goresky--Kottwitz--MacPherson, \cite{GKM}, 1.2.2]\label{gkmres}
Assume that a torus $\Ts$ acts on a smooth GKM space $X$. Denote the $\Ts$-fixed points by $\zeta_1$, $\zeta_2$, \dots, $\zeta_s$ and the one-dimensional orbits by $E_1$, $E_2$, \dots, $E_{\ell}$. The closure of any $E_i$ is an embedding of $\PP^1$ and contains two fixed points $\zeta_{i_0}$ and $\zeta_{i_\infty}$, which for any $x\in E_i$ are equal to the limits $\lim_{t\to 0} t x$ and $\lim_{t\to\infty} tx$. The action of $\Ts$ on $E_i$ has a kernel of codimension $1$, which is uniquely determined by its Lie algebra $\kek_i$.

Then the restriction $H_\Ts^*(X,\C)\to H_\Ts^*(X^T,\C) \cong \C[\ttt]^s$ is injective and its image is
$$
H = \left\{
(f_1,f_2,\dots,f_s)\in \C[\ttt]^s  \, \bigg| \, f_{i_0}|_{\kek_i} = f_{i_\infty}|_{\kek_i} \text{ for } i=1,2,\dots,\ell
\right\}.
$$
\end{theorem}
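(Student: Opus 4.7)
The plan is to prove the theorem in three steps: injectivity, containment of the image in $H$, and surjectivity onto $H$. Throughout, I would fix a generic one-parameter subgroup $\Cs \hookrightarrow \Ts$ whose fixed point set equals $X^\Ts$ (this exists because the one-dimensional orbits are finite in number, and thus only finitely many subtori can appear as identity components of stabilisers). By the Białynicki-Birula decomposition (Theorem~\ref{cohobb}), $X$ then admits a paving by affine cells indexed by the $\Ts$-fixed points, so $H^{odd}(X,\C) = 0$. Theorem~\ref{thmfor} then gives equivariant formality, and the localisation theorem cited just before the GKM theorem yields injectivity of the restriction $H^*_\Ts(X,\C) \to H^*_\Ts(X^\Ts,\C) \cong \C[\ttt]^s$.

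For the containment in $H$, I would restrict to each one-dimensional orbit closure. The closure $\ov{E}_i$ is a smooth connected projective $\Ts$-curve with exactly two fixed points and a generic stabiliser of codimension one; by the Białynicki-Birula theorem applied to $\ov{E}_i$ the curve is isomorphic to $\PP^1$, and the $\Ts$-action factors through a character $\alpha_i \in \ttt^*$ whose kernel is precisely $\kek_i$, acting on the tangent space at $\zeta_{i_0}$ with weight $\alpha_i$ (and at $\zeta_{i_\infty}$ with weight $-\alpha_i$). A direct computation (Mayer--Vietoris or the Euler class formula for $\PP^1$) shows that $H^*_\Ts(\PP^1,\C)$ embeds into $\C[\ttt] \oplus \C[\ttt]$ with image $\{(f,g) : f - g \in \alpha_i \C[\ttt]\}$, which is precisely the set of pairs with $f|_{\kek_i} = g|_{\kek_i}$. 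Functoriality of the restriction $H^*_\Ts(X) \to H^*_\Ts(\ov{E}_i) \to H^*_\Ts(\{\zeta_{i_0}, \zeta_{i_\infty}\})$ then shows the image lies in $H$.

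The hardest step is the reverse containment $H \subset \mathrm{image}$, and for this I would invoke the Chang--Skjelbred lemma, which for an equivariantly formal $\Ts$-space identifies $H^*_\Ts(X)$ inside $H^*_\Ts(X^\Ts)$ as the intersection, over all codimension-one subtori $\Ks \subset \Ts$, of the images of $H^*_\Ts(X^\Ks)$. For a codimension-one $\Ks$ with $\Lie \Ks = \kek$, only the one-dimensional orbits $E_i$ with $\kek_i = \kek$ contribute beyond $X^\Ts$ in $X^\Ks$ (all other one-dimensional orbits have distinct stabiliser Lie algebra, hence no $\Ks$-fixed points outside $X^\Ts$); thus each connected component of $X^\Ks$ is either a $\Ts$-fixed point or a chain of $\PP^1$s glued at $\Ts$-fixed points, all with the same character $\alpha_i$ governing the tangent action. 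Applying the $\PP^1$-computation of Step 2 component by component (and Mayer--Vietoris for chains), one sees that the image of $H^*_\Ts(X^\Ks)$ in $\C[\ttt]^s$ imposes exactly the GKM relations $f_{i_0}|_{\kek_i} = f_{i_\infty}|_{\kek_i}$ for the $i$ with $\kek_i = \kek$, and no conditions at fixed points not lying on such edges. Intersecting over all $\Ks$ ranges over all edges once, yielding $H$, which finishes the proof.

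The main obstacle is really the surjectivity in Step 3: without Chang--Skjelbred one would need to run a careful dimension count using equivariant formality and the BB decomposition, matching the rank of $H$ as a free $\C[\ttt]$-module against $\sum_i \dim H^*(X,\C)$. Both routes rely crucially on equivariant formality established in Step 1, and on the explicit computation of $H^*_\Ts(\PP^1)$ from Step 2.
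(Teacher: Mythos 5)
Your proposal is a correct outline of the standard proof of the GKM localisation theorem, but note that the thesis itself does not prove this statement at all: it is quoted verbatim from Goresky--Kottwitz--MacPherson \cite{GKM} (Theorem 1.2.2 there) and used as a black box, e.g.\ in the proofs of Theorems \ref{finsolv} and \ref{gkmthm}. So there is no internal proof to compare against; what you have reconstructed is essentially the argument of the original reference. Your three steps are the right ones: equivariant formality via the Bia{\l}ynicki-Birula paving and Theorem \ref{thmfor} gives injectivity of localisation; the rank-one computation $H^*_\Ts(\PP^1)\cong\{(f,g): f\equiv g \bmod \alpha_i\}$ together with functoriality gives containment of the image in $H$; and the Chang--Skjelbred lemma reduces the reverse containment to the codimension-one subtori, where the GKM hypothesis forces $X^\Ks$ to be at most one-dimensional.

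Two small points deserve more care if you were to write this out. First, the existence of a one-parameter subgroup $\Cs\subset\Ts$ with $X^{\Cs}=X^\Ts$ does not follow merely from the finiteness of the set of one-dimensional orbits: one must know that only finitely many subtori occur as identity components of stabilisers of \emph{arbitrary} points, which is obtained by equivariantly embedding $X$ in a projective space with a linear $\Ts$-action (as the thesis does in the proof of Theorem \ref{gkmthm}, citing \cite[Theorem 7.3]{Dolgachev}). Second, the positive-dimensional components of $X^\Ks$ need not be \emph{chains} of $\PP^1$'s; they are connected configurations of orbit closures that may have several components through a single fixed point. This does not affect the conclusion --- a Mayer--Vietoris induction over the irreducible components, or equivalently the observation that a class on such a configuration is determined by compatible classes on the individual $\PP^1$'s, still yields exactly the stated edge relations --- but the word ``chain'' understates the combinatorial generality. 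With these caveats the argument is sound.
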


The assumptions are satisfied for many classical varieties one encounters in algebraic geometry.

\begin{example}
 Let $X$ be a smooth projective toric variety \cite{Fultor}. Then an algebraic torus $\Ts$ acts on $X$ with a dense open orbit, and there is finitely many orbits of $\Ts$ in $X$. In particular, this means that $X$ is a GKM space. The variety is fully described by its fan, which partitions into cones the real vector space $M_\R = \Hom(\Ts,\Cs)\otimes_\Z \R$ spanned by the characters of $\Ts$. The top-dimensional cones correspond to fixed points, and the 1-dimensional orbits correspond to codimension 1 fans. From the description above it follows that $H_\Ts^*(X,\C)$ can be identified with the (complex) piecewise polynomial functions on the fan; for details see \cite[section 2.2]{Briontor}.
\end{example}

\begin{example}\label{exflagv}
For any linear algebraic group $\Gs$, its projective homogeneous spaces $\Gs/\Ps$ are all GKM spaces. A description of the GKM data can be found in \cite{Guill}. The equivariant cohomology of the homogeneous spaces can be however also computed differently. Indeed, we have
$$(\Gs/\Ps)_\Gs = E\Gs\times^\Gs (\Gs/\Ps) = (E\Gs\times \Gs)/(\Gs\times \Ps)$$
where $\Gs$ acts diagonally with the left translation action on $\Gs$, and $\Ps$ acts on $\Gs$ on the right. We can quotient first by $\Gs$ and then we are left with $E\Gs$ with the right action of $\Ps$. As $E\Gs$ can be viewed as a model for $E\Ps$, we get $(\Gs/\Ps)_\Gs \simeq B\Ps$, hence
$$H^*_\Gs(\Gs/\Ps) = H^*_\Ps.$$
The structure of $H^*_\Gs$-algebra on the right comes simply from the quotient map $B\Ps\simeq E\Gs/\Ps \to E\Gs/\Gs \simeq B\Gs$, hence on the algebra level it is an inclusion $\C[\ttt]^{\Ws}\to \C[\ttt]^{\Ws_\Ps}$. Therefore abstractly the equivariant cohomology is just an affine space, but it has a nontrivial structure of an algebra over another polynomial ring, the equivariant cohomology of the point.

If $\Gs = \GL_n$, there is a very elegant description of the equivariant cohomology of $\Gs/\Ps$ in terms of generators and relations \cite[Corollary 5.4]{AndFul}. Such a flag variety, depending on the choice of $\Ps$, parametrises the flags $0 = V_{0} \subset V_{1} \subset V_{2} \subset\dots \subset V_{k} \subset V_{k+1} \subset \C^n$, where $\dim V_j = i_j$ for some fixed subsequence $(i_1,i_2,\dots,i_k)$ of $(1,2,\dots,n-1)$. The algebra $H^*_\Gs(\Gs/\Ps)$ over $\C[e_1,e_2,\dots,e_n]$ is generated by the Chern classes of the quotient bundles with fibers $V_{j}/V_{j-1}$ (see Example \ref{exflag2}). If for simplicity we denote $d_j = i_j - i_{j-1}$, then this gives Chern classes $c_1^j, \dots, c_{d_j}^j$ for $j=1,2,\dots,k+1$. The relations are the ones coming from comparing both sides of the polynomial equation
$$ \prod_{j=1}^{k+1}  (1 + c_1^j x + c_2^j x^2 + \dots + c_{d_j}^j x^{d_j}) = 1+e_1 x + e_2 x^2 + \dots e_n x^n.$$  
\end{example}

\begin{remark}
It is worth mentioning another, modern approach to equivariant cohomology, which involves quotient stacks, i.e. we take $H^*_\Gs(X) = H^*[X/\Gs]$. We do not use this approach in this thesis at all. An interested reader is referred to \cite{Behrend} for details.

Another classical definition of equivariant cohomology, if $X$ is a $\Gs$-manifold, is the equivariant de Rham complex. This is analogous to de Rham construction of ordinary cohomology. A detailed construction can be found in the classical monograph \cite{deRham}.
\end{remark}

\section{Linearised vector bundles and linearisation of the action} \label{linsec}

\subsection{Equivariant sheaves and bundles}

Once an algebraic group $\Gs$ acts on an algebraic variety $X$, we can define a notion of a $\Gs$-linearised, or equivariant, vector bundle on $X$, and more generally of a $\Gs$-equivariant coherent sheaf  \cite[2.2.1]{Merkur}. Recall that a vector bundle on $X$ is a locally free sheaf of $\OO_X$-modules.

\begin{definition}
 Let $\Ee$ be a coherent sheaf on $X$. Consider two maps $\rho,\pi_2:\Gs\times X\to X$ being the action of $\Gs$ and the projection on the second factor, respectively.
 
 We say that $\Ee$ is a \emph{$\Gs$-equivariant sheaf}, or shortly a \emph{$\Gs$-sheaf}, if we are given an isomorphism $\phi$ of $\OO_{\Gs\times X}$-modules
 $$\phi: \pi_2^* \Ee \to \rho^* \Ee,$$
 for which the following agreement of maps of $\OO_{\Gs\times\Gs\times X}$-modules holds:
 $$(\mu \times \id_X)^* \phi = (\id_\Gs\times \rho)^*\phi \circ \pi_{23}^*\phi.$$ 
 Here $\mu:\Gs\times \Gs\to \Gs$ is the multiplication, $\pi_{23}:\Gs\times\Gs\times X\to \Gs\times X$ is the projection to the product of the second and the third factor.
\end{definition}

\begin{remark}
In case $\Ee$ is a $\Gs$-equivariant vector bundle, we also call it \emph{$\Gs$-linearised}. For a vector bundle, one can equivalently work with the total space $E$ and require that $\Gs$ acts on $E$ so that the projection map $E\to X$ is equivariant, and for any $g\in \Gs$ and $x\in X$ the corresponding map on the fibers $E_x\to E_{gx}$ is linear \cite[3.2]{Brionlin}.
 
If $\Ee$ is a vector bundle, one should think of $\phi$ in the following way. The sheaf $\pi_2^*\Ee$ is a locally free sheaf on $\Gs\times X$, whose fiber over $(g,x)$ is $\Ee_{x}$. The sheaf $\rho^*\Ee$ has $\Ee_{gx}$ as a fiber over $(g,x)$. The isomorphism $\phi$ gives then an isomorphism $\Ee_{x}\to \Ee_{gx}$ on the fibers, depending in an algebraic way on $g$ and $x$. The condition from the definition ensures that this gives an action of $\Gs$ on the total space, i.e. for any $(g,h,x)\in\Gs\times\Gs\times X$ the action of $gh$ on $\Ee_{x}$ yields the same map as the composite of the action $h$ on $\Ee_{x}$ and of $g$ on $\Ee_{hx}$.
\end{remark}

Note that if $X$ is a point, then the $\Gs$-linearised bundles over $X$ are precisely the representations of $X$. For any smooth $\Gs$-variety, its tangent bundle has the canonical structure of a $\Gs$-bundle. The map $\phi$ on the fibers is defined via the differential of the action of $\phi$. A direct sum, a tensor product of $\Gs$-bundles and an exterior or symmetric power of a $\Gs$-bundle is a $\Gs$-bundle, and so is the quotient of a $\Gs$-bundle by its subbundle which is also a $\Gs$-bundle. This means that if $Y\subset X$ is an inclusion of smooth $\Gs$-varieties, then the normal bundle of $Y$ is a $\Gs$-vector bundle on $Y$.

\subsection{Linearisation of the action}

Assume that a $\Gs$-linearised vector bundle $\Ee$ on a $\Gs$-variety $X$ is very ample. On the space of global sections $V = \Gamma(X,\Ee)$ we have the representation $\rho:\Gs\to \GL(V)$. Then $\Ee$ being very ample implies that the map $X\to \PP(V^*)$ is a $\Gs$-invariant embedding. Therefore we are able to embed $X$ in a projective space with a linear action of $\Gs$. It turns out one can always find a suitable vector bundle under mild assumptions \cite[Theorem 7.3]{Dolgachev}.

\begin{theorem}
Assume that $X$ is a quasi-projective normal algebraic variety and a connected algebraic group $\Gs$ acts on $X$. Then there exists a $\Gs$-equivariant embedding $X\hookrightarrow \PP^n$ for some integer $n$, where $\Gs$ acts on $\PP^n$ by a linear representation $\Gs\to \GL_{n+1}$.
\end{theorem}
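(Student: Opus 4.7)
The plan is to reduce the existence of the equivariant embedding to the existence of a $\Gs$-linearised very ample line bundle on $X$, and then use its space of global sections as the ambient representation. First, since $X$ is quasi-projective, pick any ample line bundle $L$ on $X$. I would prove, as an intermediate step, that some tensor power $L^{\otimes m}$ carries a $\Gs$-linearisation. Granting this, once $m$ is also chosen large enough that $L^{\otimes m}$ is very ample, the finite-dimensional space $V = H^0(X, L^{\otimes m})$ inherits a linear $\Gs$-representation (the action on sections is $(g\cdot s)(x) = \phi_g(s(g^{-1}x))$ using the linearisation isomorphism), and the standard very-ample embedding
\[
X \hookrightarrow \PP(V^{*})
\]
is automatically $\Gs$-equivariant for this linear action. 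Choosing a basis of $V$ identifies $\PP(V^{*})$ with $\PP^{n}$ with $n = \dim V - 1$ and the action with a homomorphism $\Gs \to \GL_{n+1}$, giving the desired conclusion.

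The main obstacle is the linearisability statement for a power of $L$. I would approach it as follows. Consider the two pullbacks $\rho^{*}L$ and $\pi_{2}^{*}L$ on $\Gs \times X$; both are line bundles, and one wants to compare them. Restricting to $\{1\} \times X$ one recovers $L$ in both cases, so the line bundle $M = \rho^{*}L \otimes \pi_{2}^{*}L^{-1}$ becomes trivial on $\{1\} \times X$. Using that $\Gs$ is a connected algebraic group, hence in particular geometrically integral, and that $X$ is normal, one has a seesaw-type description $\mathrm{Pic}(\Gs \times X) \cong \mathrm{Pic}(\Gs) \oplus \mathrm{Pic}(X)$ after possibly replacing $\Gs$ by a finite cover, so $M$ is pulled back from $\Gs$. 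Since $\mathrm{Pic}(\Gs)$ is a finitely generated abelian group for a linear algebraic group, some power of $M$ becomes trivial, i.e. there is a global isomorphism
\[
\phi : \pi_{2}^{*} L^{\otimes m} \xrightarrow{\;\cong\;} \rho^{*} L^{\otimes m}
\]
for some $m \geq 1$. The final step is to rigidify $\phi$ so that the cocycle condition on $\Gs \times \Gs \times X$ is satisfied; here one uses that the automorphism group of a line bundle on a complete (or connected) base is just the units of the base, and that any ambiguity is encoded by a character of $\Gs$. Replacing $\phi$ by $\phi$ twisted by a suitable character (absorbed by further raising $m$) produces an honest $\Gs$-linearisation of $L^{\otimes m}$.

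The hard part is indeed this intermediate linearisation statement: the cocycle condition is a genuine cohomological constraint, and its proof relies on controlling $\mathrm{Pic}(\Gs)$, the normality of $X$, and the connectedness of $\Gs$ (without which one cannot pass between $\rho^{*}L$ and $\pi_{2}^{*}L$ after taking a power). Once that step is in hand, the passage to a projective embedding is completely standard: enlarge $m$ further to ensure very ampleness, take global sections, and the resulting map to $\PP^{n}$ is equivariant by construction of the $\Gs$-action on $V$. This recovers the statement cited from \cite{Dolgachev}.
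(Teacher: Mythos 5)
Your strategy is the standard one, and it is exactly the argument behind the reference the thesis cites here: the thesis does not prove this theorem but quotes it from Dolgachev, after spelling out the same reduction you use (a very ample $\Gs$-linearised line bundle, its space of sections, and the resulting equivariant embedding into $\PP(V^*)$). So the real question is whether your sketch of the linearisation step and the embedding step is sound, and there are two concrete gaps. The first is in the linearisation step: you conclude that some power of $M=\rho^{*}L\otimes\pi_{2}^{*}L^{-1}$ is trivial ``since $\operatorname{Pic}(\Gs)$ is a finitely generated abelian group''. Finite generation does not imply that $M$ is torsion in $\operatorname{Pic}(\Gs)$ --- a finitely generated group can be torsion-free --- and torsion is what you need. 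The correct input is that $\operatorname{Pic}(\Gs)$ is \emph{finite} for a connected \emph{linear} algebraic group (zero for tori and unipotent groups, the fundamental group for semisimple groups). This is also where linearity of $\Gs$ enters irreplaceably: for an abelian variety acting on itself by translation the theorem is false (the image of an abelian variety in $\GL_{n+1}$ is trivial), and it is only the thesis's standing convention that all groups are linear that makes the statement as printed correct.

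The second gap is in the final step: you set $V=H^{0}(X,L^{\otimes m})$ and call it finite-dimensional. For $X$ merely quasi-projective this fails --- already $X=\A^{1}$ with $L=\OO_X$ has an infinite-dimensional space of global sections. The standard repair is to fix a finite-dimensional subspace $W\subset H^{0}(X,L^{\otimes m})$ that already defines an embedding (for instance restrictions of sections from a projective closure), and to use that the $\Gs$-representation on $H^{0}(X,L^{\otimes m})$ induced by the linearisation is locally finite, so that the span $V$ of $\Gs\cdot W$ is a finite-dimensional $\Gs$-stable subspace; then $X\to\PP(V^{*})$ is the desired equivariant embedding. Finally, your cocycle discussion conflates two separate points: the $\widehat{\Gs}$-torsor structure on the set of linearisations governs uniqueness, while the \emph{existence} obstruction (the failure of your chosen isomorphism $\phi$ to satisfy the cocycle identity) is an invertible function on $\Gs\times\Gs\times X$ that one kills using Rosenlicht's unit theorem. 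That last point is a matter of precision rather than a wrong turn, but the two gaps above need to be filled for the argument to stand.
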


\subsection{Equivariant K-theory} \label{seceqk}
Let $\Gs$ be a linear group and let $X$ be a $\Gs$-variety. One can then define the equivariant algebraic K-theory groups of $X$, i.e. $K_i^\Gs(X)$, defined by the $\Gs$-sheaves, and $K^i_\Gs(X)$, defined by the $\Gs$-linearised vector bundles. In this thesis we are only concerned with $K^0$ and $K_0$. A reader interested in higher K-theory might want to consult sources like \cite{Merkur}, \cite{Thomason}. 

\begin{definition}
We define the equivariant K-theory of coherent sheaves $K_0^\Gs(X)$ as the Grothendieck group of the abelian category of coherent $\Gs$-sheaves on $X$. Similarly, we define the equivariant K-theory of vector bundles $K^0_\Gs(X)$ as the Grothendieck group of the exact category of $\Gs$-linearised vector bundles on $X$.
\end{definition}
We recall that the Grothendieck group $K(\mathcal C)$ of an exact category $\mathcal C$ is the free abelian group generated by the objects of $\mathcal C$, divided by the subgroup generated by the expressions $[A] - [B] + [C]$ for all short exact sequences $0\to A\to B\to C\to 0$.

Note that $K^0_\Gs(X)$ is a contravariant functor on the category on $\Gs$-varieties. Any $\Gs$-equivariant morphism $X\to Y$ gives rise to the pullback map between the categories of $\Gs$-linearised vector bundles. This gives a ring morphism $K^0_\Gs(Y) \to K^0_\Gs(X)$, where the ring structure is given by the tensor product of the vector bundles. In particular, for any $\Gs$-variety $X$, the ring $K^0_\Gs(X)$ is an algebra over $K^0_\Gs(\pt)$, which is the representation ring $R(\Gs)$ of the group $\Gs$ \cite[23.2]{FulHar}. Note also that if $X$ is an $\Hs$-variety and we are given a morphism of algebraic groups $\Gs\to \Hs$, this makes $X$ into a $\Gs$-variety and yields a ring morphism $K^0_\Hs(X)\to K^0_\Gs(X)$. In particular, for $\Gs$ being trivial, it gives the restriction map $K^0_\Hs(X)\to K^0(X)$ to ordinary K-theory.

\begin{example}
If we take $\Gs = \Cs$, the representation ring is $\Z[t,t^{-1}]$, where $t$ corresponds to one-dimensional representation of the weight 1. Now consider the standard $\Cs$ action on $\PP^1$, i.e. $t\cdot[x,y] = [tx,y]$. Then $K_0^{\Cs}(\PP^1) = K_{\Cs}^0(\PP^1) = \Z[t,t^{-1}][x]/(x-1)(x-t)$, where $x$ is the class of the tautological bundle. This is a special case of the projective bundle theorem for equivariant K-theory, \cite[Theorem 3.1]{Thomason}.
\end{example}

\begin{example}
For any reductive group $\Gs$ and a parabolic subgroup $\Ps$ we have 
$$K_0^\Gs(\Gs/\Ps) = K^0_\Gs(\Gs/\Ps) = R(\Ps) = R(\Ls),$$
where $\Ls$ is a Levi subgroup of $R(\Ls)$.
\end{example}
In fact, the equality of $K_0$ and $K^0$ is not an accident in those cases, and this follows from a general theorem \cite[Corollary 5.8(5)]{Thomason}.

\begin{theorem}\label{ksmcoh}
 Let $\Gs$ be a linear group acting on a smooth variety $X$. Then the inclusion of the category of vector bundles in the category of coherent sheaves yields the isomorphism $K^0_\Gs(X)\to K_0^\Gs(X)$.
\end{theorem}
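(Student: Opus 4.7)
The plan is to construct the inverse map by producing a finite $\Gs$-equivariant locally free resolution of each $\Gs$-equivariant coherent sheaf. The natural forward map $\phi: K^0_\Gs(X)\to K_0^\Gs(X)$ sends the class of a $\Gs$-linearised vector bundle to its class as a coherent $\Gs$-sheaf, and this is well-defined because any short exact sequence of vector bundles is in particular a short exact sequence of coherent sheaves, so the defining relations lift. The heart of the matter is defining an inverse $\psi$ by sending $[\F]\mapsto \sum_i(-1)^i[\Ee_i]$ for a chosen finite resolution $0\to \Ee_n\to \Ee_{n-1}\to\dots\to \Ee_0\to \F\to 0$ by $\Gs$-linearised vector bundles. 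So the two things to establish are: existence of such resolutions, and independence of the class $\sum(-1)^i[\Ee_i]$ from the choice of resolution.

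For existence, I would first establish the equivariant \emph{resolution property}, namely that every $\Gs$-equivariant coherent sheaf on $X$ is a quotient of a $\Gs$-equivariant vector bundle. Using the equivariant embedding theorem stated earlier in the excerpt, we may assume (at least in the quasi-projective normal case, which covers the geometric situations we care about) that $X$ sits $\Gs$-equivariantly inside a projective space $\PP^n$ with a linear $\Gs$-action. On such $\PP^n$ the twists $\OO(-d)$ carry canonical $\Gs$-linearisations, and for $d$ large enough any $\Gs$-equivariant coherent sheaf is a quotient of a direct sum of copies of $\OO(-d)$ tensored with a representation of $\Gs$, by the usual Serre vanishing argument adapted equivariantly. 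Iterating this produces an infinite $\Gs$-equivariant locally free resolution, and since $X$ is smooth we may truncate it at step $n=\dim X$ and the kernel $\Ee_n$ is automatically locally free, because every coherent sheaf on a regular Noetherian scheme has finite projective dimension bounded by its Krull dimension.

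To see that $\psi([\F])$ is independent of the resolution, I would use the standard comparison lemma: any two $\Gs$-equivariant locally free resolutions $\Ee_\bullet\to\F$ and $\Ee'_\bullet\to\F$ admit a common third resolution dominating both (built via the mapping cone or by the usual inductive lifting of the identity on $\F$). An Euler-characteristic calculation on the third resolution then shows that $\sum(-1)^i[\Ee_i]=\sum(-1)^i[\Ee'_i]$ in $K^0_\Gs(X)$. A nearly identical argument checks that $\psi$ respects short exact sequences of coherent sheaves: given $0\to\F'\to\F\to\F''\to 0$, lift it to a short exact sequence of $\Gs$-equivariant locally free resolutions by the horseshoe lemma applied in the equivariant setting, and take alternating sums. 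Thus $\psi$ descends to a well-defined group homomorphism $K_0^\Gs(X)\to K^0_\Gs(X)$, and by construction $\psi\circ\phi=\id$ (resolve a vector bundle by itself) and $\phi\circ\psi=\id$ (unwind the definition on a resolution).

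The main obstacle is the first step: securing the equivariant resolution property. Outside the equivariantly quasi-projective setting one needs Thomason's more subtle argument, but for the classes of $\Gs$-varieties treated elsewhere in this thesis (normal, quasi-projective, with a linear $\Gs$-action) the equivariant embedding into $\PP^n$ suffices. The rest of the argument is a clean transcription of the non-equivariant proof, once one checks that the horseshoe lemma and the finite projective dimension statement both go through with $\Gs$-linearisations in place.
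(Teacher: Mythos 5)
The thesis does not actually prove this statement: it is quoted verbatim from Thomason \cite[Corollary 5.8(5)]{Thomason}, so there is no internal argument to compare yours against. What you have written is essentially the proof that underlies Thomason's result -- the Poincar\'{e} duality map $K^0_\Gs\to K_0^\Gs$ is inverted via finite equivariant locally free resolutions, with well-definedness supplied by the standard comparison of resolutions and the horseshoe lemma -- and the skeleton is sound: the truncation step (the $n$-th syzygy of a coherent sheaf on a regular scheme is locally free) and the Euler-characteristic bookkeeping all transcribe to the equivariant setting once the resolution property is in hand. That first step is the only place where your argument falls short of the statement as given. The theorem is asserted for an arbitrary smooth variety, and smooth varieties need not be quasi-projective, so the equivariant embedding into $\PP^n$ (which moreover requires normality and, in Sumihiro's theorem, connectedness of $\Gs$) is not always available; in the general case one needs Thomason's equivariant resolution lemma, which works because a smooth separated variety is divisorial, i.e.\ carries an ample family of line bundles. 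You flag this explicitly, so it is an acknowledged restriction rather than a hidden gap, and for every variety appearing in this thesis the quasi-projective argument suffices. One small point you should make explicit in the quasi-projective case: $H^0(X,\F(d))$ is typically infinite-dimensional only as a limit of finite-dimensional $\Gs$-subrepresentations (local finiteness of rational representations), and one must choose a finite-dimensional generating subrepresentation $V$ to obtain the equivariant surjection $V\otimes\OO(-d)\twoheadrightarrow\F$; also what you call ``Serre vanishing'' is really Serre's global generation theorem. Neither affects correctness.
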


Moreover, if $X$ is a smooth projective variety, then the torus-equivariant K-theory turns out to contain almost all the information about potential equivariant K-theory rings.

\begin{theorem}\cite[Corollary 6.7]{HLS}
Assume that $\Gs$ is a connected reductive group, $\Ts \subset \Gs$ is its maximal torus and let $X$ be a smooth projective $\Gs$-variety. Then the Weyl group $\Ws$ of $\Gs$ acts on $K^0_\Ts(X)$, and the canonical map $K^0_\Gs(X)\to K^0_\Ts(X)$ yields the isomorphism $K^0_\Gs(X) \simeq K^0_\Ts(X)^\Ws$.
\end{theorem}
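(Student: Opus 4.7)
The plan is to mirror the proof of Theorem \ref{cohoweyl} for equivariant cohomology, factoring the canonical map through the normaliser $N = N_\Gs(\Ts)$, which sits in the extension $1 \to \Ts \to N \to \Ws \to 1$.

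First I construct the $\Ws$-action on $K^0_\Ts(X)$. Given $w \in \Ws$ with a lift $n \in N$, the conjugation $c_n : \Ts \to \Ts$ and the automorphism $L_n : X \to X$ of the $\Gs$-action are compatible in the sense that $L_n(tx) = c_n(t) L_n(x)$. Hence pulling back a $\Ts$-linearised vector bundle along $L_n^{-1}$ and regrading its linearisation via $c_n$ produces another $\Ts$-linearised bundle. A different lift $n' = tn$ with $t \in \Ts$ gives a naturally isomorphic auto-equivalence by using the $\Ts$-linearisation itself to translate by $t$; consequently the induced automorphism on $K^0_\Ts(X)$ depends only on $w$, yielding the desired $\Ws$-action. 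For a $\Gs$-linearised bundle $\Ee$ the full $\Gs$-linearisation supplies a canonical isomorphism $L_n^{*}\Ee \cong \Ee$ which intertwines the regraded and original $\Ts$-structures, so the restriction $K^0_\Gs(X) \to K^0_\Ts(X)$ lands in $K^0_\Ts(X)^\Ws$.

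Next I factor this map through $K^0_N(X)$. The identification $K^0_N(X) \cong K^0_\Ts(X)^\Ws$ follows by averaging: the forgetful map $K^0_N(X) \to K^0_\Ts(X)$ lands in the invariants, and since $|\Ws|$ is invertible in $\C$, a $\Ws$-invariant $\Ts$-linearised class admits an $N$-descent via the classical averaging construction (one averages the $\Ts$-linearisation over coset representatives of $\Ws$ and uses $\Ws$-invariance to produce a well-defined $N$-linearisation), giving an inverse. This is K-theoretic descent for the finite-index inclusion $\Ts \subset N$, valid in characteristic zero.

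The hard part is the remaining isomorphism $K^0_\Gs(X) \cong K^0_N(X)$. Using the change-of-groups identification $K^0_N(X) \cong K^0_\Gs(\Gs/N \times X)$, this reduces to showing that the flat projection $p : \Gs/N \times X \to X$ induces an isomorphism on $\Gs$-equivariant K-theory. For $X = \pt$ this is the classical Chevalley-type statement $R(\Gs) \cong R(\Ts)^\Ws$. To extend it to a general smooth projective $\Gs$-variety, one approach is to exploit a $\Ts$-equivariant Białynicki-Birula decomposition of $X$, reducing by localisation and induction on the cells to the fixed-point components (where the statement becomes a projective bundle / Morita-type computation); alternatively, one applies Thomason's K-theoretic descent to the finite étale $\Ws$-torsor $\Gs/\Ts \to \Gs/N$, combined with the Morita equivalence $K^0_\Gs(\Gs/\Ts \times X) \cong K^0_\Ts(X)$, which after taking $\Ws$-invariants collapses to the desired isomorphism. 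The main obstacle in either route is controlling the K-theory of the non-proper homogeneous space $\Gs/N$; the hypotheses that $X$ is smooth and projective and that we work in characteristic zero are needed precisely to guarantee that the relevant descent or localisation spectral sequence collapses and that the $\Ws$-averaging is legitimate.
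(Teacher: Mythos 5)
First, a point of comparison: the thesis does not prove this statement at all --- it is quoted from \cite[Corollary 6.7]{HLS} as background material, so there is no internal proof to measure your argument against. Judged on its own terms, your proposal correctly sets up the $\Ws$-action on $K^0_\Ts(X)$ and the fact that restriction from $K^0_\Gs(X)$ lands in the invariants, but the two steps that carry all the content of the theorem are not actually established.

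The first gap is the claim $K^0_N(X)\cong K^0_\Ts(X)^{\Ws}$ by ``averaging the $\Ts$-linearisation over coset representatives''. The statement is about the integral Grothendieck groups (elsewhere the thesis writes $K^0_\Gs(X)\otimes\C$ when it wants complex coefficients), so ``$|\Ws|$ is invertible in $\C$'' is not available; and even rationally one cannot average linearisations --- an $N$-equivariant structure on a $\Ts$-linearised bundle is a descent datum along the $\Ws$-quotient, whose existence is obstructed (the average of the structure isomorphisms $w^*\Ee\cong\Ee$ need not satisfy the cocycle condition). What the transfer argument genuinely gives is that $\operatorname{Ind}_\Ts^N\operatorname{Res}^N_\Ts$ is multiplication by the class of $\C[\Ws]$, whence split injectivity of $K^0_N(X)\to K^0_\Ts(X)$ after inverting $|\Ws|$; surjectivity onto the invariants is a separate matter. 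The second and larger gap is the isomorphism $K^0_\Gs(X)\cong K^0_N(X)$, which you explicitly defer. Galois/\'{e}tale descent along the $\Ws$-torsor $\Gs/\Ts\to\Gs/N$ fails for algebraic K-theory in the form you need ($K^0$ of a quotient by a free finite action is not the $\Ws$-invariants of $K^0$, integrally or rationally in general), and the Bia{\l}ynicki-Birula route is not spelled out. The mechanism actually behind the cited result is different and essential: one uses the \emph{proper} flag bundle $\Gs/\Bs\times X\to X$, the identification $K^0_\Gs(\Gs/\Bs\times X)\cong K^0_\Bs(X)\cong K^0_\Ts(X)$, the projection formula together with $\pi_*(1)=1$ (the Weyl character formula) to split $K^0_\Gs(X)\to K^0_\Ts(X)$, and then divided difference operators and the Steinberg basis of $R(\Ts)$ over $R(\Gs)$ to identify the image with exactly $K^0_\Ts(X)^\Ws$. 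None of this machinery appears in your sketch, so the proof as written does not close.
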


\begin{theorem} \label{thmlevik}
Assume that a complex linear algebraic group $\Hs$ is acting on a complex variety $X$. Let $\Ls \subset \Hs$ be a Levi subgroup of $\Hs$. Then the restriction map
$$K_\Hs^0(X) \to K_\Ls^0(X)$$
is an isomorphism.
\end{theorem}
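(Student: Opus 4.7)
The plan is to exploit the Levi decomposition $\Hs = \Ns \rtimes \Ls$ provided by Theorem~\ref{thmlevi}, and realize the restriction map geometrically as a pullback along an affine-space fibration. The key intermediary is the mixed quotient $\Hs \times^\Ls X$, equipped with its natural $\Hs$-equivariant multiplication map $\mu: \Hs \times^\Ls X \to X$, $[h,x] \mapsto hx$. I would first invoke the standard Morita-type induction isomorphism $K^0_\Ls(X) \xrightarrow{\sim} K^0_\Hs(\Hs \times^\Ls X)$, which identifies an $\Ls$-linearized bundle $\Ee$ on $X$ with the $\Hs$-pushout $\Hs \times^\Ls \Ee$ on $\Hs \times^\Ls X$.

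The next step is to identify the restriction $\iota^*: K^0_\Hs(X) \to K^0_\Ls(X)$ with the pullback $\mu^*: K^0_\Hs(X) \to K^0_\Hs(\Hs \times^\Ls X)$ via this induction isomorphism. This is a direct unwinding of the functors: the inverse of induction restricts an $\Hs$-linearized bundle on $\Hs \times^\Ls X$ to the $\Ls$-invariant fiber over the identity coset, and applied to $\mu^*\Ee$ this simply forgets the $\Ns$-part of the linearization and returns $\iota^*\Ee$.

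Third, I would present $\mu$ as a trivial affine-space fibration. The assignment $[h,x] \mapsto (h\Ls, hx)$ gives an $\Hs$-equivariant isomorphism $\Hs \times^\Ls X \cong \Hs/\Ls \times X$, with $\Hs$ acting diagonally on the target, under which $\mu$ becomes the projection onto the second factor. By Theorem~\ref{algexp} together with the Levi decomposition, $\Hs/\Ls \cong \Ns$ is isomorphic as a scheme to the affine space $\nen = \Lie(\Ns)$, so $\mu$ is an $\Hs$-equivariant torsor over $X$ for the trivial $\Hs$-linearized vector bundle $\nen \times X \to X$, where $\Ls$ acts on $\nen$ through the adjoint representation and $\Ns$ acts trivially; the translation action of $\Ns$ on itself is encoded as the torsor structure.

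Finally, the equivariant homotopy invariance of algebraic $K$-theory (Thomason) asserts that pullback along such an $\Hs$-equivariant torsor for an equivariant vector bundle induces an isomorphism on $K^0_\Hs$, whence $\mu^*$, and therefore $\iota^*$, is an isomorphism. The main technical obstacle is precisely this last invocation, since the fiberwise action of $\Ns$ is affine (translation) rather than linear, so one needs the torsor-for-vector-bundle version of homotopy invariance rather than the purely linear one. If a more elementary route is preferred, Theorem~\ref{solvsplit} supplies a composition series of $\Ns$ by characteristic subgroups (which are automatically $\Hs$-normal) with successive quotients isomorphic to $\Gs_a$, and one can then proceed by d\'evissage, reducing the whole statement to the one-dimensional case $\Ns = \Gs_a$ where $\A^1$-homotopy invariance of equivariant $K$-theory is classical.
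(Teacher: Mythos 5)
Your skeleton is the same as the paper's: both proofs run the restriction through the Morita/induction isomorphism $K^0_\Hs(\Hs\times^\Ls X)\xrightarrow{\sim} K^0_\Ls(X)$ of [Thomason, 6.2], identify $\iota^*$ with pullback along the multiplication map $\mu:\Hs\times^\Ls X\to X$, and finish by equivariant homotopy invariance. The difference is in how the last step is linearised, and it is exactly at the point you flag as "the main technical obstacle." Your trivialisation $[h,x]\mapsto(h\Ls,hx)$ turns $\mu$ into a projection at the cost of making $\Ns$ act on the fibre $\Hs/\Ls\cong\Ns\cong\nen$ by translation, so you are left needing either a torsor-under-a-vector-bundle version of homotopy invariance or a d\'evissage through a central series of $\Ns$ (where, for the equivariance, you would also have to check the terms of the series from Theorem~\ref{solvsplit} are normal in all of $\Hs$, not just in $\Ns$). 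The paper instead trivialises by $[ul,x]\mapsto(u,lx)$ for $u\in\Ns$, $l\in\Ls$, under which the full group $\Hs$ acts on the $\Ns$-factor by \emph{conjugation}; via $\exp:\nen\to\Ns$ this is the adjoint representation, hence genuinely linear. Under this identification $\mu$ is not the projection but $(u,x)\mapsto ux$, and the paper corrects for this with the shearing automorphism $(u,x)\mapsto(u,ux)$, which one checks is $\Hs$-equivariant for the conjugation-plus-action structure. After the shear, $\mu$ becomes the projection of an honest $\Hs$-linearised (trivial) vector bundle $\nen\times X\to X$, and the plain vector-bundle case of Thomason's homotopy invariance [Thomason, 4.1] applies directly. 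So your argument is viable provided you can cite the torsor form of homotopy invariance (or carry out the d\'evissage carefully), but the paper's shear removes the obstacle entirely and keeps the proof within the linear statement.
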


\begin{proof}
By \cite[6.2]{Thomason}, the restriction along $X\to \Hs\times^\Ls X$ induces an isomorphism
$$K_\Hs^0(\Hs\times^\Ls X)\to K_\Ls^0(X).$$
Now $\Hs\times^\Ls X$ maps $\Hs$-equivariantly to $X$ (simply by $[(h,x)] \mapsto hx$) and we will show that this map induces an isomorphism on $K_\Hs^0$. Let $\Ns$ be the unipotent radical of $\Hs$, so that $\Hs = \Ns \rtimes \Ls$. Then we have the $\Hs$-equivariant isomorphism
$$\Hs\times^\Ls X \simeq \Ns \times X,$$
where $\Hs$ acts on $\Ns\times X$ diagonally by conjugation and action.

Indeed, every element of $\Hs$ is uniquely decomposed as $ul$ for $u\in \Ns$, $l\in \Ls$. This means that $\Hs\times^\Ls X \simeq \Ns\times X$. Now we need to see how the $\Hs$ acts on this product. Note that in $\Hs\times^\Ls X$ we have
$$h\cdot[(u,x)] = [(hu,x)] = [(huh^{-1},hx)],$$
and as $huh^{-1}\in \Ns$, upon identification with $\Ns\times X$ we have $h\cdot(u,x) = (huh^{-1},hx)$.

We want to prove that the map $\Hs\times^\Ls X\to X$ induces an isomorphism on $K_\Hs^0$. But we have proved that in fact it is a map $\Ns\times X\to X$. Note that it is not the projection, but the action of $N$ on $X$. However, we can split it into the isomorphism $N\times X\to N\times X$ given by $(u,x)\mapsto (u,ux)$, and the projection. Note that this isomorphism is in fact $\Hs$-invariant, as 
$$h\cdot (u,ux) = (huh^{-1}, hux) = (huh^{-1}, huh^{-1} hx).$$
Therefore we have to show that the projection $\Ns\times X \to X$ yields an isomorphism on $K_\Hs^0$.

Now note that by Theorem \ref{algexp} the exponential map $\exp:\nen\to \Ns$ is an isomorphism of schemes, so in fact $\Ns\times X \simeq \nen\times X$ has a structure of a (trivial) vector bundle over $X$. Note that $\Hs$ acts on it linearly. Indeed, we have $h\exp(v) h^{-1} = \exp(h v h^{-1})$ and the adjoint representation of $\Hs$ on $\nen$ is linear. Then by \cite[4.1]{Thomason} the projection $\Ns\times X\to X$ gives an isomorphism on $K_\Hs^0$.
\footnote{The author would like to thank Andrzej Weber for the idea of this proof.}
\end{proof}

\subsection{Equivariant Chern classes}\label{seceqchern}
We assume the reader is familiar with the classical theory of characteristic classes, in particular Chern classes, as described e.g. in the classical book \cite{CharCl}. We want to sketch the equivariant part of the story here. We only work with algebraic equivariant vector bundles, as introduced earlier in this section -- however the Chern classes in cohomology can be equally well defined for topological bundles. This means that in fact we study the composition
$$ K^0_\Hs(X)^{alg} \to K^0_\Hs(X)^{top} \to H^*_\Hs(X).$$

One can consider the Chern classes purely algebraically and see them in the Chow ring of the variety $X$. However, here we only discuss the Chern classes in equivariant cohomology. Let $\Ee$ be a $\Gs$-linearised rank $n$ vector bundle on a $\Gs$-variety $X$ with the total space $E$. As $\Gs$ acts on $E$ and the projection $E\to X$ is $\Gs$-equivariant, this gives a continuous map $E_\Gs \to X_\Gs$, which in fact is also a rank $n$ (complex) vector bundle. Then we can consider its Chern classes in $H^*(X_\Gs) = H^*_\Gs(X)$, we use the notation $c_i^\Gs(\Ee) = c_i(E_\Gs)$ and we call these classes the \emph{equivariant Chern classes} of the bundle $\Ee$. Their sum $c^\Gs(\Ee) = \sum_{i=0}^\infty c_i^\Gs(\Ee)$ is the \emph{total equivariant Chern class} of $\Ee$.

Straight from the definition we see that the equivariant Chern classes satisfy the conditions analogous to those for ordinary Chern classes:
\begin{enumerate}
\item If $\Ee$ is of rank $n$, then $c_i^\Gs(\Ee)$ vanishes for $i>n$. Moreover $c_0^\Gs(\Ee) = 1$ for any $\Gs$-linearised bundle $\Ee$. For any $i\ge 0$ we have $c_i^\Gs(\Ee) \in H^{2i}_\Gs(\Ee)$;
\item If $f:X\to Y$ is a $\Gs$-equivariant map of $\Gs$-varieties and $\Ee$ is a $\Gs$-linearised vector bundle on $Y$, then 
$$ c^\Gs(f^*\Ee) = f^*c^\Gs(\Ee).$$
On the left hand-side $f^*\Ee$ means the pullback of $\Ee$ to $Y$ along $f$, and on the right $f^*$ is the pullback map on equivariant cohomology;
\item If
$$0\to \Ee \to \F \to \Geg \to 0 $$
is an exact sequence of $\Gs$-linearised vector bundles on a $\Gs$-variety $X$, then 
$$c^\Gs(\F) = c^\Gs(\Ee) \cdot c^\Gs(\Geg).$$
\end{enumerate}

Moreover, for any $\Gs$-linearised vector bundle $\Ee$ on $X$, its ordinary Chern classes $c_i\in H^*(X,\C)$ are the restrictions of the equivariant ones under the canonical map $H^*_\Gs(X,\C)\to H^*(X,\C)$.

\begin{example}\label{exflag2}
Let $\Gs = \GL_n$. Consider $X = \Gs/\Ps$, a flag variety of flags of type $(i_1,i_2,\dots,i_k)$ in $\C^n$. The trivial rank $n$ bundle $\F \simeq X\times \C^n$ on $X$, with the standard representation of $\GL_n$ on $\C^n$, has a sequence of $\Gs$-equivariant subbundles $\Ee_0 \hookrightarrow \Ee_1 \hookrightarrow \Ee_2 \hookrightarrow \dots \hookrightarrow \Ee_k \hookrightarrow \Ee_{k+1}$, with $\Ee_j$ of rank $i_j$ (we let $i_0 = 0$). A fiber of $\Ee_j$ over the flag $0 = V_0 \subset V_{1} \subset V_{2} \subset\dots\subset V_{k}\subset V_{k+1} = \C^n$ is the vector space $V_j$. 

As in Example \ref{exflagv}, we denote by $c_1^j, \dots, c_{d_j}^j$ the Chern classes of $\Ee_j/\Ee_{j-1}$. By the properties described above, we have 
$$ \prod_{j=1}^{k+1}  (1 + c_1^j + c_2^j  + \dots + c_{d_j}^j) = c(\F).$$
One then checks that $c(\F) = 1 + e_1 + e_2 + \dots e_n $ and this means that the Chern classes satisfy the relation from Example \ref{exflagv}.  
\end{example}

Now that we have defined equivariant Chern classes, we can also define the Chern characters in the same way as for ordinary Chern classes \cite[\S 10]{Hirzebruch}. In other words, the Chern character of a $\Gs$-linearised vector bundle $\Ee$ of rank $n$ is equal to the formal sum $\ch^\Gs(\Ee) = \sum_{i=1}^n \exp(\gamma_i)$, where $\gamma_i\in H^2_\Gs(X)$ are the Chern roots of $\Ee$ so that $c(\Ee) = \prod_{i=1}^n (1+\gamma_i)$. For each $i$, the $2i$-th degree component $\ch^\Gs_i(\Ee)$ is in fact a polynomial in $c_1^\Gs(\Ee)$, $c_2^\Gs(\Ee)$, \dots, $c_i^\Gs(\Ee)$, dependent only on $i$ and $n$ (not on $\Ee$). One has to be careful, as the equivariant cohomology is usually nontrivial in arbitrarily high degrees; therefore $\ch^\Gs(\Ee)$ does not lie in $H^*_\Gs(X) = \bigoplus_{i=0}^\infty H^i_\Gs(X)$, but in the completion $\hat{H}^*_\Gs(X) = \prod_{i=0}^\infty H^i_\Gs(X)$. Note also that in this step it is crucial that the cohomology coefficients contain $\Q$.

So defined equivariant Chern character satisfies the following conditions:
\begin{itemize}
\item If $0\to \Ee \to \F \to \Geg \to 0$ is an exact sequence of $\Gs$-linearised vector bundles, then $\ch^\Gs(\F) = \ch^\Gs(\Ee) + \ch^\Gs(\Geg)$;
\item For any two $\Gs$-linearised vector bundles $\Ee$, $\F$, we have $\ch^\Gs(\Ee\otimes \Geg) = \ch^\Gs(\Ee)\cdot \ch^\Gs(\Geg)$.
\end{itemize}

Therefore, the Chern character is in fact a ring homomorphism
$$\ch^\Gs: K_\Gs^0(X) \to \hat{H}^*_\Gs(X).$$
If $X$ is smooth, then by Theorem \ref{ksmcoh} $K_\Gs^0(X) \simeq K^\Gs_0(X)$, which means we have a well-defined Chern character for any coherent $\Gs$-sheaf. Going backwards, we can produce this way the Chern classes of any sheaf. To make sure that they terminate, one can resolve the sheaf by vector bundles, and then its Chern classes must come from an alternating product of the Chern classes for the resolving bundles. By Theorem \ref{ksmcoh}, the result will not depend on the choice of the resolution.

\section{Graded Nakayama lemma}
For the sake of completeness we provide here the proof of the version of the graded Nakayama Lemma that we will need (see also \cite[Corollary 4.8, Exercise 4.6]{Eis}).

\medskip
\noindent Let $R$ be a $\Z_{\ge 0}$-graded ring $R = \bigoplus_{n\ge 0} R_n$ and $I = \bigoplus_{n > 0} R_n$ the ideal generated by elements of positive degree.

\begin{lemma}
 If a $\Z_{\ge 0}$-graded $R$-module $M$ satisfies $M = IM$, then $M = 0$.
\end{lemma}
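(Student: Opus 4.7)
The plan is to argue by contradiction using the minimal degree in which $M$ is nonzero, exploiting the fact that $I$ has no elements of degree zero. First I would suppose for contradiction that $M \neq 0$, and consider the set $S = \{n \ge 0 : M_n \ne 0\}$, which is a nonempty subset of $\Z_{\ge 0}$. Let $d = \min S$, and pick a nonzero homogeneous element $m \in M_d$.

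Next, using the hypothesis $M = IM$, I would write $m$ as a finite sum $m = \sum_i r_i m_i$ with $r_i \in I$ and $m_i \in M$. Since $I$ and $M$ are graded, by decomposing each $r_i$ and $m_i$ into homogeneous components and collecting those whose product lands in degree $d$, I may assume without loss of generality that each $r_i$ and $m_i$ is homogeneous with $\deg r_i + \deg m_i = d$. Because $r_i \in I = \bigoplus_{n>0} R_n$, we have $\deg r_i \ge 1$, hence $\deg m_i \le d - 1 < d$.

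Finally, by the minimality of $d$, we have $M_{\deg m_i} = 0$ for each $i$, so $m_i = 0$ and therefore $m = 0$, contradicting the choice of $m$. Hence $S = \emptyset$ and $M = 0$.

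The argument is routine; the only mild subtlety is the decomposition step, where one must use that both $R$ and $M$ are $\Z_{\ge 0}$-graded so that the finite-sum expression can be replaced by one with homogeneous terms of matching total degree. The essential input that makes this work (and that fails in the ungraded Nakayama setting without a finiteness hypothesis) is that $I$ is concentrated in strictly positive degrees and $M$ is bounded below in degree, guaranteeing the existence of the minimal nonzero degree $d$.
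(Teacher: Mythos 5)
Your proof is correct and follows essentially the same route as the paper: both arguments pick a nonzero homogeneous element of minimal degree $d$, expand it via $M = IM$, and observe that since $I$ sits in strictly positive degrees every term of the expansion has vanishing component in degree $d$, giving a contradiction. Your explicit reduction to homogeneous terms with matching total degree is just a slightly more detailed phrasing of the paper's degree-counting step.
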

\begin{proof}
 Suppose on contrary that $a\in M$ is a nonzero homogeneous element of minimal degree $d\in\Z_{\ge 0}$ present in $M$. By assumption $M = IM$ we have that
 $$a = \sum_{i=1}^k r_i a_i$$
 for some $r_i\in I$, $a_i\in M$. But as $r_i\in I$, the minimal degree present in $r_i$ is at least $1$. As $a_i\in M$, the minimal degree present in $a_i$ is at least $d$. Therefore the elements $r_ia_i$ have zero part in degrees less than $d+1$. In particular, we cannot get $a$ as a sum of them, as it has nonzero part in degree $d$.
\end{proof}

\begin{corollary}\label{cornak}
Let $M$ be a $\Z_{\ge 0}$-graded $R$-module $M$. Suppose that elements $(a_j)_{j\in J}$ of $M$ generate the $R/I$-module $M/IM$. Then they generate $M$ as $R$-module.
\end{corollary}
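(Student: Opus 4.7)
The plan is to deduce the corollary from the preceding lemma by passing to the quotient module generated by the $a_j$'s. The key observation is that the hypothesis that $(a_j)_{j\in J}$ generate $M/IM$ as an $R/I$-module is exactly the statement that
\[
  M = N + IM,
\]
where $N \subset M$ denotes the $R$-submodule generated by the $a_j$'s. One then wants to apply the previous lemma to $M/N$ to conclude that $M/N = 0$, i.e.\ $N = M$.

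To make this rigorous, I would first reduce to the case where the generators $a_j$ are homogeneous. Since $M$ is $\Z_{\ge 0}$-graded and $IM$ is a graded submodule, any element whose class generates $M/IM$ can be replaced by its homogeneous components without losing the generation property; hence we may assume each $a_j$ is homogeneous. This guarantees that $N$ is a graded submodule of $M$, so that $M/N$ inherits a $\Z_{\ge 0}$-grading.

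With this setup, from $M = N + IM$ one obtains immediately
\[
  M/N = I(M/N),
\]
since the image of $IM$ in $M/N$ equals $I \cdot (M/N)$. The previous lemma, applied to the graded $R$-module $M/N$, then forces $M/N = 0$, so that $M = N$ is generated by $(a_j)_{j\in J}$ as an $R$-module.

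There is no real obstacle here: the only delicate point is ensuring that we have a \emph{graded} quotient to which the lemma applies, which is handled by the reduction to homogeneous generators. Everything else is a direct unwinding of definitions.
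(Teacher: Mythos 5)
Your overall strategy is the same as the paper's: the paper maps $R^J\to M$ and applies the lemma to $\coker\phi$, which is exactly your $M/N$, so the two arguments coincide up to packaging. You also correctly put your finger on the one delicate point, namely that the lemma only applies to a \emph{graded} module, which the paper passes over in silence when it asserts that ``$\coker\phi$ satisfies the conditions of the lemma.''

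However, your fix for that point contains a genuine gap. Replacing each $a_j$ by its homogeneous components does preserve the hypothesis (the components still generate $M/IM$), but the conclusion you then obtain is that the \emph{components} generate $M$; this does not imply that the original elements $a_j$ do, since the submodule they generate is in general strictly smaller than the one generated by their components. In fact the statement for non-homogeneous $a_j$ is simply false: take $R=\C[x]$ with $\deg x=1$, so $I=(x)$, let $M=R^2$ with both basis vectors in degree $0$, and set $a_1=(1,x)$, $a_2=(x,1)$. Their classes are the standard basis of $M/IM\cong\C^2$, yet the submodule $N=\langle a_1,a_2\rangle$ is the image of the matrix
$$\begin{pmatrix}1 & x\\ x & 1\end{pmatrix},$$
whose determinant $1-x^2$ is not a unit, so $N\subsetneq M$ (here $M/N\neq 0$ satisfies $M/N=I(M/N)$ but is not graded, which is exactly where the lemma breaks down). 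The honest repair is to add the hypothesis that the $a_j$ are homogeneous — which is what your reduction silently does, which the paper's proof also tacitly assumes, and which holds in every application in the text, since the generators there are (equivariant) Chern classes. With that hypothesis your argument is complete and correct.
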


\begin{proof}
Let us consider the map of $R$-modules $\phi:R^J\to M$ defined by the elements $a_j$. We have the exact sequence
$$R^J\xrightarrow{\phi} M \to \coker \phi \to 0.$$
As tensor product is right exact, by tensoring with $R/I$ we get an exact sequence of $R/I$-modules:
$$(R/I)^J\to M/IM \to (\coker \phi)\otimes_R R/I \to 0.$$
By assumption the first map is an epimorphism, hence $(\coker \phi)\otimes_R R/I =0$. In other words, $\coker \phi$ satisfies the conditions of lemma. Therefore $\coker\phi = 0$, hence $\phi$ is surjective.
\end{proof}

\chapter{Regular elements and Kostant sections}\label{chapkos}
As we mentioned in Section \ref{seceqcoh}, for a connected linear group $\Gs$ all the equivariant cohomology rings $H_\Gs^*(X)$ are algebras over the coordinate ring $H_\Gs^* \simeq \C[\ttt]^\Ws$, where $\ttt$ is the Lie algebra of a maximal torus $\Ts\subset \Gs$ and $\Ws = N(\Ts)/\Ts$ is the corresponding Weyl group. We also refer to Chevalley's restriction theorem which claims that the restriction $\C[\geg]^\Gs \to \C[\ttt]^\Ws$ is an isomorphism. The ring $H_\Gs^*$ turns out to always be a polynomial ring. Moreover, Kostant shows in \cite{Kostsec} that one can explicitly define an affine space $\Ss\subset \geg$ such that the restriction $\C[\geg]^\Gs \to \C[\Ss]$ is an isomorphism. The space $\Ss$ is referred to as the \emph{Kostant section}. On the level of spectra, this means that the map $\Ss \to \geg/\!\!/\Gs$ is an isomorphism. In particular, every conjugacy orbit in $\geg$ corresponds to some element in $\Ss$. However, in the GIT quotient $\geg/\!\!/\Gs$ some of the conjugacy orbits get identified, and one cannot expect an arbitrary orbit to contain an element of $\Ss$. It is however true if we restrict to \emph{regular} orbits, i.e. to elements with the minimal possible centraliser. In case of $\Gs = \GL_{n+1}$, this means that all the eigenspaces are 1-dimensional, and hence equivalently the corresponding vector field on $\PP^n$ has finitely many zeros.

In this section, we recall the notion of regular elements and the results of Kostant. We generalise them to a wider class of \emph{principally paired} groups which contains all the parabolic subgroups of reductive groups. We conclude the chapter by defining \emph{regular actions} and showing how the regular elements behave in such a setting.

\section{Regular elements} \label{secreg}
Let $\Hs$ be an algebraic group and $\Ts\subset \Hs$ be a maximal torus, of dimension $r$. We will call an element $v\in \he = \Lie(\Hs)$ \emph{regular} if $\dim C_\he(v) = r$. 
This is stronger than the usual notion of a regular element in the literature (see e.g. \cite{Chev3}) -- an element whose centraliser has minimal possible dimension. All the centralisers have dimension not smaller than $r$, but it is possible that no regular element exists. For example for $\Hs = \Gs_m \times \Gs_a$ -- the product of the multiplicative and the additive group -- all centralisers are $2$-dimensional, but the maximal torus is of dimension $1$.

Nilpotent regular elements are also sometimes refered to as \emph{principal nilpotents}.

\begin{example}
 For $\Hs = \GL_n(\C)$ or $\Hs = \SL_n(\C)$, a regular element of $\he$ is a matrix with all eigenspaces of dimension $1$. For example, among the following matrices in $\gl_4(\C)$, the first two are regular, the third is not:
 $$
 \begin{pmatrix}
 2 & 0 & 0 & 0 \\
 0 & 1 & 0 & 0 \\
 0 & 0 & 3 & 0 \\
 0 & 0 & 0 & 7
 \end{pmatrix}, \qquad
 \begin{pmatrix}
 0 & 0 & 0 & 0 \\
 0 & 2 & 0 & 0 \\
 0 & 0 & 1 & 0 \\
 0 & 0 & 1 & 1 
 \end{pmatrix}, \qquad
 \begin{pmatrix}
 0 & 1 & 0 & 0 \\
 0 & 0 & 2 & 0 \\
 0 & 0 & 0 & 0 \\
 0 & 0 & 0 & 0 
 \end{pmatrix}.
 $$
\end{example}

\begin{example}\label{exregnilp}
More generally any reductive group $\Gs$ contains regular elements in its Lie algebra, in particular a regular nilpotent element. Indeed, once we choose a maximal torus $\Ts\subset \Gs$ and positive roots, we can take $e = x_1 + x_2 + \dots + x_s$, where $x_1$, $x_2$, \dots, $x_s$ are the root vectors of $\geg$ corresponding to the positive simple roots ($s=r - \dim Z(\Gs)$). Then $e$ is a regular nilpotent in $\Gs$ (see \cite[Section 4, Theorem 4]{Kostsec}).
\end{example}

 The condition $\dim C_{\he}(w) > r$ is a Zariski-closed condition on $w$ -- as it means that $[w,-]$ has sufficiently small rank, which amounts to vanishing of some minors of a matrix. Therefore, if $\Hs$ admits a regular element in its Lie algebra, the subset of regular elements $\he^\reg\subset \he$ is open and dense.

Note that if $\Hs$ is solvable, then by Theorem \ref{solv} we have $[\he,\he]\subset \he_n$. This means that for any $v\in\he$ we have $[v,\he]\subset \he_n$. As the codimension of $\he_n$ is exactly $r = \dim \Ts$, the dimension of maximal torus, $v$ being regular is equivalent to $[v,\he] = \he_n$.

Note also that if $\Hs'\subset \Hs$ is a subgroup which contains a maximal torus $\Ts$ of $\Hs$, then any regular $v\in \he$ contained in $\he'$ is also regular in $\he'$. This means in particular that the centraliser $C_\he(v)$ is contained in $\he'$.

\section{\texorpdfstring{$\ssl_2$-triples and $\bb(\ssl_2)$-pairs}{sl2-triples and b(sl2)-pairs}}
\label{sl2p}
The classical version of Carrell--Lieberman theorem \cite[Main Theorem and Remark 2.7]{CL} deals with an arbitrary vector field $V$ on a smooth projective variety $X$, which vanishes in a discrete, nonempty set. They prove the following
 \begin{theorem}
  Let $X$ be a smooth projective complex variety and $V$ a vector field with finitely many zeros and denote its zero scheme by $Z$. Then there exists an increasing filtration $F_\bullet$ on $\C[Z]$ such that
  $$H^*(X) \simeq Gr_F(\C[Z]).$$
  The degree on the left is multiplied by two, in particular $X$ only has even cohomology.
 \end{theorem}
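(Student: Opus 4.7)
The plan is to combine the Koszul resolution of $\OO_Z$ with the hypercohomology spectral sequence from Corollary~\ref{corkosz}, and then invoke Hodge theory on $X$ to identify the resulting abutment filtration on $\C[Z]$ with the de Rham cohomology.

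First, since $V$ has only isolated zeros, the zero scheme $Z$ has pure codimension $n = \dim X$, matching the rank of $T_X$. Applying Theorem~\ref{thmkosz} to $V$ viewed as a section of $T_X$ yields the Koszul resolution
\[
K^\bullet:\quad 0 \to \Omega^n_X \xrightarrow{\iota_V} \Omega^{n-1}_X \xrightarrow{\iota_V} \cdots \xrightarrow{\iota_V} \Omega^1_X \xrightarrow{\iota_V} \OO_X \to 0
\]
of $\OO_Z$, with $\Omega^k_X$ placed in cohomological degree $-k$. Since $Z$ is zero-dimensional, hence affine, one has $\mathbb{H}^*(X, K^\bullet) = H^*(X, \OO_Z) = \C[Z]$ concentrated in total degree $0$. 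Corollary~\ref{corkosz} then produces the spectral sequence
\[
E_1^{-p, q} = H^q(X, \Omega^p_X) = H^{p, q}(X) \;\Longrightarrow\; \mathbb{H}^{q-p}(X, K^\bullet),
\]
and the column filtration on the bicomplex computing $\mathbb{H}^*$ gives an increasing filtration $F_\bullet$ on $\C[Z]$ with graded pieces $F_p/F_{p-1} \cong E_\infty^{-p, p}$, which are subquotients of the Hodge groups $H^{p, p}(X)$.

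What remains, and where the argument becomes global, is to prove that this spectral sequence degenerates at $E_1$. Once that is in hand, the convergence of the abutment to $\C[Z]$ in total degree $0$ alone forces $H^{p, q}(X) = 0$ for $p \neq q$; by the Hodge decomposition available on the projective Kähler variety $X$, this kills all odd Betti numbers and identifies $H^{2p}(X, \C) \cong H^{p, p}(X) \cong F_p/F_{p-1}$, yielding exactly the claimed isomorphism $Gr_F \C[Z] \cong H^*(X)$ with the degree doubled. A dimension sanity check is provided by the Poincar\'e--Hopf identity $\dim_\C \C[Z] = \chi(X) = \sum_p h^{p,p}(X)$, which is forced once the off-diagonal Hodge numbers vanish.

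The hard part will be proving the $E_1$-degeneration; this is the non-algebraic ingredient flagged in the excerpt. The $d_1$-differential on $E_1^{-p, q} = H^{p, q}(X)$ is induced by the holomorphic contraction $\iota_V \colon \Omega^p_X \to \Omega^{p-1}_X$ at the level of Dolbeault cohomology. The approach is to choose harmonic representatives with respect to a Kähler metric on $X$ and exploit the Cartan-type identity $L_V = d\iota_V + \iota_V d$: since the holomorphic Lie derivative $L_V$ acts as zero on de Rham cohomology (its time-$t$ flow is homotopic to the identity), a careful bigraded harmonic analysis of this relation forces the induced map of $\iota_V$ on $H^{p, q}(X)$ to vanish as well. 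It is precisely this harmonic-theoretic input that prevents a purely algebraic proof in this generality.
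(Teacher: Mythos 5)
Your setup (Koszul resolution of $\OO_Z$, the hypercohomology spectral sequence $E_1^{-p,q}=H^q(X,\Omega^p)$ abutting to $\C[Z]$ in total degree $0$, and the observation that degeneration at $E_1$ forces $H^{p,q}=0$ for $p\neq q$ and produces the filtration) is exactly the paper's skeleton and is correct. The gap is in the degeneration step. Your proposed mechanism --- harmonic representatives plus the Cartan identity $L_V=\partial\iota_V+\iota_V\partial$ and the fact that the flow of $V$ is homotopic to the identity --- nowhere uses the hypothesis that the zero set of $V$ is \emph{nonempty}, and the statement you would be proving (that $\iota_V$ kills Dolbeault cohomology for an arbitrary holomorphic vector field) is false. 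On an elliptic curve with $V=\partial/\partial z$ one has $\iota_V(dz)=1$, a nonzero class in $H^0(\OO_X)$, the spectral sequence does not degenerate, and the curve has odd cohomology; so any correct argument must consume the hypothesis $Z\neq\emptyset$. Concretely, if $\alpha$ is harmonic then $L_V\alpha=\partial(\iota_V\alpha)$ is automatically $\partial$-exact, so the vanishing of $L_V$ on de Rham cohomology gives you no information about the $\bar\partial$-class of $\iota_V\alpha$, which is what $d_1$ computes.

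The paper's proof feeds in $Z\neq\emptyset$ at precisely one point: for $\eta\in H^0(X,\Omega^1)$, the contraction $\iota_V(\eta)$ is a global holomorphic function vanishing on $Z$, hence identically zero. Combined with Serre duality this shows $d_1(\omega)=0$ for the K\"ahler class $\omega$, and then the Lefschetz primitive decomposition together with hard Lefschetz ($\omega^{n-p-q+2r-1}\cdot$ is injective on the relevant group) propagates the vanishing to $d_r(\alpha)$ for every primitive $\alpha$ and every $r\geq 1$. Note also that even if your harmonic argument did establish $d_1=0$, you would still owe an argument for the higher differentials $d_r$, $r\geq 2$, which are not induced by $\iota_V$ on cohomology; the paper's multiplicative/primitive-decomposition argument handles all pages uniformly, which is why it is structured the way it is.
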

 The theorem therefore gives some information on cohomology, but this depends on determining the filtration $F_\bullet$. This can be hard in general. Only if $V$ comes with a $\Cs$-action which satisfies $t_*(V) = t^k V$ for some nonzero integer $k$, we get $H^*(X) \cong \C[Z(V)]$ (\cite{ACLS}, \cite[Theorem 1.1]{AC}). We sketch the details in Section \ref{seccarli}. In our situation, we will consider the vector fields as coming from an action of a Lie group. Hence the following definition.

\begin{definition}
 For any complex Lie algebra $\he$, by \emph{$\bb(\ssl_2)$-pair} in $\he$ we mean a pair $(e,h)$ of elements of $\he$ that satisfy the condition $[h,e] = 2e$. By \emph{$\ssl_2$-triple} in $\he$ we mean a triple $(e,f,h)$ of elements of $\he$ such that $[h,e]=2e$, $[h,f] = -2f$, $[e,f] = h$.
\end{definition}

If $\Gs$ is a semisimple group, then by the Jacobson--Morozov theorem (see e.g. \cite[Theorem 3.7.1]{ChGi}) for any nilpotent element $e\in\geg$ there exists an $\ssl_2$-triple $(e,f,h)$ in $\geg$ such that $f$ is nilpotent and $h$ is semisimple. The same is then true for any reductive Lie group $\Gs$, as a reductive Lie algebra is a direct sum of its centre and a semisimple ideal (\cite[Theorem II.11]{Jac}).

Let us consider the connected subgroup $\Ks\subset \Gs$ whose Lie algebra $\mathfrak k$ is the smallest one which contains $e$, $f$, $h$ (see \cite[7.1]{Borel}). Then the Lie algebra of $[\Ks,\Ks]$ is equal to $[\mathfrak k,\mathfrak k]$ (see \cite[Proposition 7.8]{Borel}). However, by \cite[Corollary 7.9]{Borel} we have $[\mathfrak k,\mathfrak k] = [\Span(e,f,h),\Span(e,f,h)] = \Span(e,f,h)$. Hence we get an algebraic subgroup $[\Ks,\Ks]$ (contained in $\Ks$, hence equal to $\Ks$) of $\Gs$ whose Lie algebra is $\Span(e,f,h)$. As its Lie algebra is semisimple, the group itself is semisimple. By \cite[Theorem 20.33]{Milne}, if it is nontrivial, it has to be either $\SL_2(\C)$ or $\PSL_2(\C)$. In either case, there is a covering map \begin{align}\label{phi}\phi:\SL_2(\C)\to \Ks.\end{align} As any automorphism of $\ssl_2(\C)$ lifts to an automorphism of $\SL_2(\C)$, we can assume that the canonical basis $e_0$, $f_0$, $f_0$ of $\ssl_2$ maps to $e$, $f$, $h$, respectively. Hence we get the following.

\begin{proposition}\label{reductive}
 For any nilpotent element $e$ in the Lie algebra $\geg$ of an algebraic reductive group, there exists an $\ssl_2$-triple $(e,f,h)$ within $\geg$ with $f$ nilpotent and $h$ semisimple. If $e\neq 0$, the element $h$ integrates to a map $\Cs\to \Gs$ with discrete kernel, whose differential is $h$.
\end{proposition}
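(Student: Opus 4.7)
The plan is to combine the Jacobson--Morozov theorem with an integration argument that realises the $\ssl_2$-triple as the differential of a group homomorphism from $\SL_2$. The statement naturally splits into two parts: the existence of the triple (purely Lie-algebraic), and the integrability of the semisimple element $h$ (algebraic-group theoretic).

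For the first part, I would first reduce to the semisimple case. Since $\geg$ is reductive, we may write $\geg = \mathfrak z(\geg) \oplus \geg_{ss}$, where $\geg_{ss} = [\geg,\geg]$ is semisimple and $\mathfrak z(\geg)$ is the centre. Any nilpotent element $e \in \geg$ must lie in $\geg_{ss}$, since the projection onto $\mathfrak z(\geg)$ is a central (hence semisimple) element, and being both nilpotent and semisimple forces it to vanish. The Jacobson--Morozov theorem (as cited, e.g.\ from \cite[Theorem 3.7.1]{ChGi}) then produces an $\ssl_2$-triple $(e,f,h)$ inside $\geg_{ss}$, with $f$ nilpotent and $h$ semisimple by construction.

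For the second part, assume $e \neq 0$, so the triple spans a copy of $\ssl_2$ inside $\geg$. Let $\Ks \subset \Gs$ be the connected algebraic subgroup whose Lie algebra $\kek$ is the smallest containing $e$, $f$, $h$; this exists as a closed connected subgroup by \cite[7.1]{Borel}. Using \cite[Proposition 7.8 and Corollary 7.9]{Borel}, I would identify $\Lie([\Ks,\Ks]) = [\kek,\kek] = \Span(e,f,h) \cong \ssl_2$, so $[\Ks,\Ks] = \Ks$ is a connected semisimple algebraic group with Lie algebra $\ssl_2$. By the classification of connected semisimple groups with a given Lie algebra \cite[Theorem 20.33]{Milne}, $\Ks$ is isomorphic to either $\SL_2(\C)$ or $\PSL_2(\C)$, and in either case we obtain a surjective covering $\phi : \SL_2(\C) \to \Ks$ whose differential is an isomorphism of Lie algebras taking the standard basis $(e_0, f_0, h_0)$ of $\ssl_2$ to $(e, f, h)$. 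Composing the standard diagonal one-parameter subgroup $\Cs \to \SL_2(\C)$, $t \mapsto \diag(t, t^{-1})$, with $\phi$ and then including into $\Gs$ gives the desired map $\Cs \to \Gs$ whose differential is $h$; its kernel is discrete because the differential is nonzero (as $h \neq 0$, since otherwise $e = \tfrac{1}{2}[h,e] = 0$).

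The only mildly delicate point is ensuring that the composite $\SL_2 \to \Ks \hookrightarrow \Gs$ is actually a morphism of algebraic groups (not just an abstract or analytic one) and that the standard basis of $\ssl_2$ can be assumed to match $(e,f,h)$. The first is automatic since $\Ks$ is obtained as a closed algebraic subgroup and the covering $\SL_2 \to \Ks$ in the semisimple classification is algebraic. For the second, I would invoke the fact that any Lie algebra automorphism of $\ssl_2$ lifts to an algebraic automorphism of $\SL_2$, allowing us to precompose $\phi$ with an automorphism so that the triples match up. Everything else is a matter of assembling the cited results.
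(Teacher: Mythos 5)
Your proposal is correct and follows essentially the same route as the paper: Jacobson--Morozov after reducing to the semisimple part, then the smallest connected algebraic subgroup $\Ks$ with $\Lie(\Ks)\supset\Span(e,f,h)$, the identification $[\kek,\kek]=\Span(e,f,h)$ via \cite[Propositions 7.8, Corollary 7.9]{Borel}, the classification forcing $\Ks\cong\SL_2(\C)$ or $\PSL_2(\C)$, and the lifting of an $\ssl_2$-automorphism so that the standard triple maps to $(e,f,h)$. The only differences are that you spell out slightly more explicitly why a nilpotent element lies in the semisimple ideal and why the kernel of the resulting one-parameter subgroup is discrete, both of which the paper leaves implicit.
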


\begin{remark}\label{princesl2}
As we saw in Example \ref{exregnilp}, if $\Gs$ is reductive, then there exists a principal nilpotent $e\in \geg$. By the proposition, this means that there is a an $\ssl_2$ triple $(e,f,h)$ with $e$ principal nilpotent, $f$ nilpotent and $h$ semisimple. By the general theory of representations of $\ssl_2$, the ranks of the operators $[e,-]$ and $[f,-]$ are equal, hence $f$ is also regular. This motivates the following definition.
\end{remark}

\begin{definition}
 An $\ssl_2$-triple $(e,f,h)$ will be called \emph{principal} if $e$ and $f$ are regular nilpotents.
\end{definition}

\begin{definition}
 For a linear algebraic group $\Hs$, an \emph{integrable $\bb(\ssl_2)$-pair} in $\he = \Lie(\Hs)$ is a $\bb(\ssl_2)$-pair $(e,h)$ in $\he$ which consists of a nilpotent element $e$ and a semisimple element $h$ which is tangent to some one-parameter subgroup $H:\Cs\to \Hs$, i.e. $h = dH(1)$. This means that $(e,h)$ comes from an algebraic group morphism $\Bs_2 = \Bs(\SL_2)\to \Hs$. We call an integrable $\bb(\ssl_2)$-pair \emph{principal} if $e$ is a regular element of $\he$.
\end{definition}

\begin{remark}
 Note that, unlike an $\ssl_2$-triple, a $\bb(\ssl_2)$-pair does not have to be integrable. As an easy counterexample, we may take
 $$h = 
 \begin{pmatrix}
  \pi & 0 & 0\\
  0 & \pi-2 & 0 \\
  0 & 0 & 2-2\pi
 \end{pmatrix},
 \qquad
 e = 
 \begin{pmatrix}
  0 & 1 & 0\\
  0 & 0 & 0 \\
  0 & 0 & 0
 \end{pmatrix}
 $$
 for $\Hs = \SL_3(\C)$. Then $[h,e] = 2e$, but $h$ is not tangent to a one-dimensional torus (we can replace $\pi$ with any irrational number).
\end{remark}
\begin{definition} We call a connected linear algebraic group $H$ {\em principally paired} if it contains a principal integrable $\bb(\ssl_2)$-pair. 
\end{definition}

For example a reductive group is principally paired because of Proposition~\ref{reductive}. More generally we have the following

\begin{lemma} \label{parabolic} Let $\Gs$ be a reductive group. Then any parabolic subgroup $\Ps\subset \Gs$ is principally paired.
 \end{lemma}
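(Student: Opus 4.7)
The plan is to reduce to the Borel subgroup case and then transfer the principal pair into $\Ps$. By Corollary \ref{borconj} any parabolic $\Ps$ contains a Borel subgroup $\Bs$ of $\Gs$, and a maximal torus $\Ts$ of $\Bs$ is simultaneously maximal in $\Bs$, $\Ps$, and $\Gs$; in particular the three groups share the same rank $r$. It therefore suffices to produce a principal integrable $\bb(\ssl_2)$-pair sitting inside $\bb \subset \p$.

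First I would pick positive roots so that $\Bs$ is the positive Borel and let $e = x_1 + \dots + x_s$ be the sum of simple root vectors, a principal nilpotent of $\geg$ by Example \ref{exregnilp}; by construction $e \in \bb \subset \p$. Proposition \ref{reductive} then provides an $\ssl_2$-triple $(e,f,h)$ with $h$ semisimple, integrated by some $\phi:\SL_2(\C)\to\Gs$. The standard choice for the principal $\ssl_2$ places $h$ in $\ttt$ (one takes the element with $\alpha(h)=2$ for every simple root $\alpha$), so I may assume $h\in\ttt\subset\bb$.

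Next I would check that $\phi$ restricts to a morphism $\Bs_2 = \Bs(\SL_2)\to \Bs \subset \Ps$. Every element of $\Bs_2$ factors as a product of an element of the diagonal torus and an upper unipotent element. The diagonal torus maps to the one-parameter subgroup of $\Gs$ tangent to $h$; since $h\in\ttt$ this subgroup lies inside $\Ts\subset \Bs$. The upper unipotent subgroup maps to the algebraic exponential (Theorem \ref{algexp}) of $\C\cdot e$, which lies in the unipotent radical of $\Bs$ because $e$ is a sum of positive root vectors. Hence $\phi(\Bs_2)\subset \Bs \subset \Ps$, giving integrability of the pair $(e,h)$ via a homomorphism $\Bs_2\to\Ps$.

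The last step is to confirm that $e$ is regular as an element of $\p$, not merely of $\geg$. One has $C_\p(e)\subset C_\geg(e)$, and the latter has dimension exactly $r$ since $e$ is regular in $\geg$. Conversely, every centraliser in $\p$ has dimension at least $r=\rk\Ps$ by the general lower bound recalled in Section \ref{secreg}, so the two agree and $e$ is regular in $\p$. Combined with the previous step this makes $(e,h)$ a principal integrable $\bb(\ssl_2)$-pair in $\p$. The main technical point, as I see it, is the verification that $\phi(\Bs_2)\subset\Ps$, which hinges on the careful choice of $h$ inside $\ttt$ ensuring that the entire Lie subalgebra spanned by $e$ and $h$ lies in $\bb$.
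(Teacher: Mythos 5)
Your proof is correct and follows essentially the same strategy as the paper: reduce to the Borel subgroup via Corollary \ref{borconj} and invoke the principal $\ssl_2$-triple of Proposition \ref{reductive}. The only difference is in the finishing step — the paper observes that $\phi(\Bs_2)$ is a connected solvable subgroup, hence lies in \emph{some} Borel, and concludes by conjugacy of Borel subgroups, whereas you verify directly that the standard choice $h=2\rho^\vee\in\ttt$ forces $\phi(\Bs_2)$ into the chosen Borel, and you also make explicit the regularity of $e$ in $\p$ (via the rank lower bound of Section \ref{secreg}), a point the paper leaves implicit.
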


\begin{proof}
Because there is a Borel subgroup $\Bs\subset \Ps$, it is enough to prove the result for $\Bs=\Ps$. Note that if $\Bs = \Bs_2$ is the Borel subgroup of $\SL_2(\C)$, then the image $\phi(\Bs_2)$ of $\Bs_2$ in the map \eqref{phi} is a solvable connected subgroup of $\Gs$, hence it is contained in a Borel subgroup of $\Gs$. All Borel subgroups of $\Gs$ are conjugate by Corollary \ref{borconj}, hence they are all principally paired.
\end{proof}

\section{Kostant section and generalisations}\label{kostsecsec}
The seminal work of Kostant shows the following theorem (\cite[Theorem 0.10]{Kostsec}).

\begin{theorem}\label{kostant2}
Assume that $\Gs$ is a reductive group and $(e,f,h)$ is a principal $\ssl_2$-triple. 
Then every regular element of $\geg = \Lie(\Gs)$ is conjugate to exactly one element of $\Ss = e + C_\geg(f)$. Moreover, the restriction $\C[\geg]^\Gs \to \C[\Ss]$ is an isomorphism.
\end{theorem}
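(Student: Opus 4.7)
The plan is to exploit the $\ssl_2$-module structure on $\geg$ given by $\ad$ of the principal triple $(e,f,h)$. Decomposing $\geg = \bigoplus V_{n_i}$ into irreducible $\ssl_2$-modules, each summand $V_n$ satisfies $V_n = \ker(\ad f|_{V_n}) \oplus \im(\ad e|_{V_n})$ (the lowest-weight line plus the image of raising). Summing yields the key decomposition $\geg = C_\geg(f) \oplus [\geg, e]$. Since $e$ is regular, $\dim C_\geg(f) = \dim C_\geg(e) = r$, so $\Ss$ is an affine subspace of dimension $r$. As $\Gs$ is reductive, $h$ integrates by Proposition~\ref{reductive} to a one-parameter subgroup $\phi:\Cs\to\Gs$, and I form the modified cocharacter action $\rho(t) := t^{-2}\Ad(\phi(t))$ on $\geg$. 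Because $[h,e]=2e$ and $C_\geg(f)$ is spanned by $h$-weight vectors of weights $-n_i \le 0$, $\rho(t)$ fixes $e$ and scales the weight-$(-n_i)$ component of $C_\geg(f)$ by $t^{-n_i-2}$. In particular $\rho$ preserves $\Ss$ and contracts it to $\{e\}$ as $t\to\infty$; moreover $\rho$ induces a positive $\Z$-grading on $\C[\Ss]$.

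Next I study the adjoint orbit map $\alpha:\Gs\times\Ss\to\geg$, $(g,s)\mapsto \Ad(g)s$. Its differential at $(1,e)$ sends $(X,y)$ to $[X,e]+y$, with image $[\geg,e]+C_\geg(f)=\geg$ by the decomposition above, so $\alpha$ is smooth at $(1,e)$. The map $\alpha$ is equivariant for $\Cs$ acting on $\Gs$ by conjugation through $\phi$ and on $\Ss$ and $\geg$ by $\rho$; combined with the contraction of $\Ss$ to $\{e\}$, smoothness of $\alpha$ propagates to a neighborhood of $\{1\}\times\Ss$. By dimension count, every $s\in\Ss$ satisfies $\dim C_\geg(s)\le r$ (via upper semi-continuity), hence $s$ is regular, and $\alpha$ is in fact étale over $\Gs\cdot\Ss$, which is therefore open in $\geg$.

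The most delicate step is showing that each regular orbit meets $\Ss$ in exactly one point. For existence, since $\Gs\cdot\Ss$ is open and $\Cs$-stable (as $\rho(t)\Ad(g)s=\Ad(\phi(t)g\phi(t)^{-1})\rho(t)s\in\Gs\cdot\Ss$), and since its complement in $\geg^\reg$ would also be $\Cs$-stable, the contraction of $\Ss$ to $e$ inside the orbit closure of $e$ forces $\Gs\cdot\Ss$ to exhaust $\geg^\reg$ once one invokes irreducibility of $\geg^\reg$ and the fact that $e\in \overline{\Gs\cdot s}$ for no regular $s\ne e$. For uniqueness, if $s_0,s_1\in\Ss$ satisfy $s_1=\Ad(g)s_0$, then the one-parameter families $\rho(t)s_0$ and $\rho(t)s_1$ in $\Ss$ remain conjugate under $\Ad(\phi(t)g\phi(t)^{-1})$; étaleness of $\alpha$ over the regular locus forces the family of pairs to be locally constant, and contracting $t\to\infty$ sends both $s_0$ and $s_1$ to $e$, so $s_0=s_1$.

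Finally, restriction $\C[\geg]^\Gs\to\C[\Ss]$ is injective since $\Gs\cdot\Ss$ is dense in $\geg$. For surjectivity I compare Poincaré series with respect to the $\rho$-grading: by Chevalley--Shephard--Todd $\C[\geg]^\Gs\cong\C[\ttt]^\Ws$ is polynomial on $r$ generators of degrees $d_1,\ldots,d_r$ (the fundamental degrees of $\Ws$), while $\C[\Ss]=\C[C_\geg(f)]$ is polynomial on $r$ generators whose $\rho$-weights are $n_i+2$. The classical identification of principal $\ssl_2$-weights on $\geg$ with twice the Weyl exponents gives $n_i+2=2d_i$, so after accounting for the doubling between cohomological and $\rho$-gradings the Poincaré series agree. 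Injectivity between graded pieces of equal finite dimension then forces surjectivity, giving the ring isomorphism. The main obstacle throughout is coordinating the $\Cs$-action $\rho$, the étaleness of $\alpha$, and the dimension count so that uniqueness in the third step actually follows; this is the heart of Kostant's original insight.
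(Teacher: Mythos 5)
The paper does not prove this statement at all: it is quoted directly from Kostant, citing \cite[Theorem 0.10]{Kostsec} (with the isomorphism $\U^-\times\Ss\to e+\bb^-$ later cited as \cite[Theorem 1.2]{Kost}); the only related argument the paper actually carries out is the generalisation to principally paired groups (Theorem \ref{kostarb} and Theorem \ref{restkos}), which goes through Borel subalgebras: every regular element lands in some Borel, is conjugated into $e+\ttt$ by the solvable-group result (Lemma \ref{etkost}), and is then moved into $\Ss$ by the isomorphism of Lemma \ref{lembal}. Your route -- the $\ssl_2$-decomposition $\geg=C_\geg(f)\oplus[\geg,e]$, the contracting action $\rho(t)=t^{-2}\Ad(\phi(t))$, smoothness of $\alpha:\Gs\times\Ss\to\geg$ propagated by the contraction, and the Poincar\'{e} series comparison via the identity $n_i+2=2d_i$ -- is a legitimate and recognisably standard modern proof scheme, and its correct parts (regularity of all elements of $\Ss$, openness and density of $\Gs\cdot\Ss$, injectivity of $\C[\geg]^\Gs\to\C[\Ss]$, and surjectivity by comparing graded dimensions) are sound, granted Chevalley's theorem and Kostant's identification of the principal $\ssl_2$-weights with twice the Weyl exponents.

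However, two steps have genuine gaps. First, the existence claim (every regular orbit meets $\Ss$) does not follow from what you wrote: openness of $\Gs\cdot\Ss$ inside the irreducible set $\geg^\reg$ only gives density, and an irreducible space is perfectly happy to be the union of a nonempty open set and a nonempty closed one, so ``irreducibility plus $\Cs$-stability of the complement'' proves nothing; moreover $\rho(t)x$ does not converge for a general regular $x$, so the contraction gives you no handle on the complement. The missing input is either Kostant's analysis of the fibres of $\geg\to\geg/\!\!/\Gs$ (each fibre contains a unique regular orbit, open and dense in it) or the Borel-subalgebra reduction the paper itself uses. Second, your uniqueness argument is broken: $\alpha:\Gs\times\Ss\to\geg$ has source of dimension $\dim\Gs+r>\dim\geg$, so it cannot be \'{e}tale, and the assertion that the pair $\left(\rho(t)s_0,\rho(t)s_1\right)$ is ``locally constant'' is not an argument -- both families visibly vary and both limits are $e$, which yields no contradiction. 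The good news is that uniqueness is an immediate corollary of your own final step: once $\C[\geg]^\Gs\to\C[\Ss]$ is surjective, conjugate elements $s_0,s_1\in\Ss$ are not separated by invariants, hence not by any polynomial on the affine space $\Ss$, hence equal. I would restructure the proof to establish the ring isomorphism first and deduce uniqueness from it, and to replace the existence paragraph with an actual argument along one of the two lines above.
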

The affine plane $\Ss$ is called the \emph{Kostant section}. We will provide in Theorems \ref{kostarb} and \ref{restkos} a version that works for arbitrary principally paired groups.

\subsection{Solvable groups}
Assume first that $\Hs$ is a solvable group. Let $\Ts$ be its maximal torus and $\he_n$ be the nilpotent part of $\he = \Lie(\Hs)$. Assume that $e\in\he_n, h\in\ttt$ are such that $(e,h)$ is a principal integrable $\bb(\ssl_2)$-pair. Let $\{H^t\}_{t\in\Cs}$ be the one-parameter subgroup in $\Hs$ to which $h\in\he$ integrates. Recall from Remark \ref{nilalg} that $\he = \ttt\oplus \he_n$.

\begin{lemma}\label{etreg}
 All elements of $e+\ttt$ are regular and not conjugate to one another.
\end{lemma}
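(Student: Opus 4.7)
My plan is to handle the two assertions separately. For the regularity statement, I will use the criterion already noted in Section~\ref{secreg}: in the solvable setting an element $v\in\he$ is regular iff $[v,\he]=\he_n$, i.e.\ iff $\ad_v:\he\to\he_n$ is surjective. Thus it suffices to show that $\ad_{e+s}$ is surjective onto $\he_n$ for every $s\in\ttt$.

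The strategy is to exploit the weight grading coming from the one-parameter subgroup $\{H^t\}$ integrating $h$. I will write $\he=\bigoplus_k \he^{(k)}$ for the $\ad_h$-eigenspace decomposition, so that $\ttt\subseteq \he^{(0)}$, $e\in \he^{(2)}$, and $\he_n$ inherits a grading $\he_n=\bigoplus_k (\he_n)^{(k)}$. Then $\ad_e$ is weight-homogeneous of degree $+2$ and surjective onto $\he_n$, so it surjects $\he^{(k-2)}$ onto $(\he_n)^{(k)}$ for each $k$. For $s\in\ttt$, the operator $\ad_s$ commutes with $\ad_h$ by abelianness of $\ttt$, hence is weight-homogeneous of degree $0$. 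Filtering by $F^{\leq j}\he = \bigoplus_{k\leq j}\he^{(k)}$ and $F^{\leq j}\he_n=\bigoplus_{k\leq j}(\he_n)^{(k)}$, the map $\ad_{e+s}$ carries $F^{\leq j}\he$ into $F^{\leq j+2}\he_n$, and on the associated graded $\he^{(j)}\to (\he_n)^{(j+2)}$ it reduces to $\ad_e$ (the $\ad_s$-contribution lands in strictly lower filtration on the target). Since this associated graded is surjective in each degree, the standard lifting lemma for filtered maps of finite-dimensional vector spaces yields surjectivity of $\ad_{e+s}$ itself, so $e+s$ is regular.

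For non-conjugacy, the plan is to exhibit an $\Hs$-invariant map that sends $e+s\mapsto s$ and thereby separates the elements of $e+\ttt$. The natural candidate is the composition $\pi:\he \twoheadrightarrow \he/\he_n \xrightarrow{\sim} \ttt$, where the isomorphism uses the vector-space splitting $\he=\ttt\oplus\he_n$. It then suffices to verify that the adjoint $\Hs$-action descends trivially to $\he/\he_n$: the torus $\Ts$ acts trivially on $\ttt$ by commutativity and preserves $\he_n$, while for a unipotent $u=\exp(n)$ with $n\in\he_n$ we have $\Ad_u-\id = \ad_n + \tfrac12 \ad_n^2 + \dots$, whose image lies in $\he_n$ since $\he_n$ is an ideal. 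Thus $\pi$ is $\Hs$-invariant, and if $\Ad_g(e+s)=e+s'$ then $s=\pi(e+s)=\pi(e+s')=s'$.

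The only step I expect to require real care is the filtered-to-graded argument for regularity: one must align the source and target filtrations correctly, with a shift of $2$, so that the associated graded of $\ad_{e+s}$ genuinely identifies with $\ad_e$ and so that graded surjectivity lifts to surjectivity of $\ad_{e+s}$. Everything else should follow cleanly from the decomposition $\he=\ttt\oplus\he_n$ provided by Theorem~\ref{solv} and the weight structure of $\he$.
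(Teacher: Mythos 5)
Your proof is correct, but the regularity half takes a genuinely different route from the paper. For non-conjugacy you and the paper argue identically: $\Ad_g(x)-x\in[\he,\he]\subset\he_n$, so the projection $\he\to\he/\he_n\cong\ttt$ is conjugation-invariant and separates the points of $e+\ttt$ (the paper simply cites Borel for this containment, whereas you verify it by hand via $\Ad_{\exp(n)}=\exp(\ad_n)$ and the triviality of the $\Ts$-action on $\ttt$; both are fine). For regularity, however, the paper does not touch the weight filtration at all: it observes that $\Ad_{H^t}(e+v)=t^2e+v$, so $\dim C_\he(e+v/t^2)=\dim C_\he(e+v)$ for all $t$, and then lets $t\to\infty$ and uses that the non-regular locus is Zariski-closed to conclude that non-regularity of $e+v$ would force non-regularity of $e$. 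Your argument instead grades $\he$ by $\ad_h$, notes that $\ad_e$ is homogeneous of degree $+2$ and surjective onto $\he_n$ in each degree while $\ad_s$ is homogeneous of degree $0$ with image in $\he_n$, and lifts graded surjectivity through the (finite, hence bounded) filtration. Both are sound: the paper's degeneration argument is shorter and reuses the contracting $\Cs$-action that recurs throughout the chapter, while yours is purely Lie-algebraic linear algebra --- it never invokes the one-parameter subgroup $H^t$ or closedness of the non-regular locus, only the semisimplicity of $\ad_h$ and the criterion $[v,\he]=\he_n$ --- and it gives the slightly sharper explicit statement that the associated graded of $\ad_{e+s}$ is $\ad_e$. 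The one step you flagged as delicate, the shift-by-$2$ alignment of the source and target filtrations, is handled correctly: the $\ad_s$-contribution lands in strictly lower filtration on the target, and finiteness of the weight set guarantees the lifting terminates.
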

\begin{proof}
 Assume that for some $v\in \ttt$ the element $e+v$ is not regular. This means that $\dim C_\he(v) \ge r+1$. As $\Ad_{H^t}(e+v) = t^2 e + v$, for any $t\in \Cs$ we have
 $$\dim C_\he\left(e + v/t^2\right) = \dim C_\he\left(t^2e + v\right) = \dim C_\he (e+v) \ge r+1.$$
 As the set of nonregular elements is closed in $\he$, we get $\dim C_\he(e) \ge r+1$. This contradicts the regularity assumption.
 
 For any $x \in \he$ and $M\in \Hs$ we have $\Ad_M(x) - x \in [\he,\he] \subset \he_n$ by \cite[Propositions 3.17, 7.8]{Borel}. Therefore no two distinct elements from $e+\ttt$ can be conjugate to one another, as they differ on the $\ttt$ component. 
\end{proof}

\begin{lemma}\label{etkost}
\footnote{This is based on an argument provided  by Anne Moreau.}
 Every regular element of $\he$ is conjugate to a unique element of $e+\ttt$.
\end{lemma}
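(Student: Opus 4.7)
The uniqueness statement is already contained in Lemma \ref{etreg}, so only existence requires attention. The plan is to exploit the fact that the $\ttt$-component of a Lie algebra element is invariant under the adjoint action (since $[\he,\he]\subset\he_n$), combined with an irreducibility argument inside each affine fibre of the projection $\he\to\ttt$.

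Given a regular $x\in\he$, let $v\in\ttt$ be the $\ttt$-component of $x$ under the decomposition $\he=\ttt\oplus\he_n$ from Remark \ref{nilalg}, so that $x\in v+\he_n$. The proof of Lemma \ref{etreg} already records that $\Ad_g(y)-y\in[\he,\he]\subset\he_n$ for every $g\in\Hs$ and $y\in\he$, so the affine subspace $v+\he_n\subset\he$ is $\Ad(\Hs)$-stable. It contains both $x$ and $e+v$, and $e+v$ is itself regular by Lemma \ref{etreg}.

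For any regular $y\in v+\he_n$, the centraliser $C_\he(y)$ has dimension $r$, and in characteristic zero $C_\Hs(y)$ is a smooth algebraic subgroup of the same dimension. Hence the orbit $\Ad(\Hs)\cdot y$ has dimension $\dim\Hs-r=\dim\he_n$, equal to the dimension of $v+\he_n$. Since this orbit is a locally closed subvariety of the irreducible affine space $v+\he_n$ of the same dimension, its closure must equal $v+\he_n$, and the orbit is open in $v+\he_n$. Two distinct regular orbits would then be disjoint nonempty open subsets of an irreducible variety, which is impossible. Thus all regular elements of $v+\he_n$ lie in a single $\Ad(\Hs)$-orbit, which contains both $x$ and $e+v$; therefore $x$ is conjugate to $e+v\in e+\ttt$, proving existence. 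The only delicate point is the verification that the orbit of a regular element is open in $v+\he_n$, which reduces to the dimension count above together with the standard local closedness of adjoint orbits.
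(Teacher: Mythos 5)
Your proof is correct and follows essentially the same strategy as the paper's: both arguments restrict to the $\Ad(\Hs)$-stable affine fibre $v+\he_n$ and show that the orbit of any regular element is dense there, so that the orbits of $x$ and $e+v$ must meet and hence coincide, with uniqueness supplied by Lemma \ref{etreg}. The only difference is technical: the paper establishes density by computing the differential of the orbit map (using $[x,\he]=\he_n$) and invoking dominance together with Chevalley's theorem on images containing dense opens, whereas you deduce openness of the orbit from the dimension count $\dim\Hs-\dim C_\Hs(y)=\dim\he_n$ combined with local closedness of orbits and smoothness of centralisers in characteristic zero -- both routes are standard and equally valid.
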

\begin{proof}
 We know that $\he = \ttt \oplus \he_n$. Assume that $x = v + n$, where $v\in\ttt$ and $n\in\he_n$, is regular. This means that $[x,\he] = \he_n$ (see Section \ref{secreg}). Let us consider the map \begin{align} \label{ad} \Ad_{-}(x):\Hs \to \he.\end{align} As in the proof of the previous lemma, we see that the image is actually contained in $v + \he_n$.
 
 Note that the image of the derivative of \eqref{ad} at $1$ is $[x,\he] = \he_n = T_x(v+\he_n)$. Therefore by \cite[Theorem 4.3.6]{Springer} the morphism $\Ad_{-}(x):\Hs \to v+\he_n$ is dominant. Analogously, the morphism $\Ad_{-}(e+v):\Hs \to v+\he_n$ is dominant, as $e+v$ is regular from the previous lemma. Therefore the images of $\Ad_{-}(x)$ and $\Ad_{-}(e+v)$ are both dense in $v+\he_n$. By \cite[Theorem 1.9.5]{Springer} they both contain open dense subsets of $v+\he_n$ and hence they intersect, which means that $x$ and $e+v$ are conjugate.
 
 Uniqueness follows from the previous lemma.
\end{proof}

Now we will also provide an equivalent of the classical Jordan form, for arbitrary solvable groups. Recall that by Remark \ref{nilalg}
every $x\in\he$ is of the form $x = \w + n$, where $\w\in\ttt$ and $n\in\he_n$.
\begin{theorem}\label{jordan}
 For any $x = \w + n\in\he$ with $\w\in\ttt$, $n\in\he_n$, there exists $M\in \Hs$ such that $x = \Ad_M(\w+n')$ with $[\w,n']=0$ and $n'\in\he_n$.
\end{theorem}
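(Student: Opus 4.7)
I would argue by a descending iteration along the lower central series of $\he_n$, using at each step a conjugation by an algebraic exponential of an element of $\he_n$. Recall from Remark~\ref{nilalg} and Theorem~\ref{solv} that $\he = \ttt \oplus \he_n$, $\Lie(\Hs_u) = \he_n$ is a nilpotent ideal, and by Theorem~\ref{algexp} the algebraic exponential $\exp\colon \he_n \to \Hs_u$ is an isomorphism satisfying $\Ad_{\exp(v)} = \exp(\ad_v)$ as a finite sum (note $\ad_v$ is nilpotent on all of $\he$: it lands in $\he_n$ after one bracket, and $\he_n$ is nilpotent). Set $\he_n^{(1)} = \he_n$, $\he_n^{(k+1)} = [\he_n, \he_n^{(k)}]$, so that $\he_n^{(N)} = 0$ for some $N$.

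The key observation is that since $\w$ is semisimple, $\ad_\w$ acts semisimply on each $\ad_\w$-invariant subspace $\he_n^{(k)}$, yielding
$$\he_n^{(k)} = \bigl(\he_n^{(k)} \cap \he_n^\w\bigr) \oplus [\w, \he_n^{(k)}],$$
where $\he_n^\w := \ker(\ad_\w|_{\he_n})$. In particular, any element of $[\w,\he_n] \cap \he_n^{(k)}$ already equals $[\w,v]$ for some $v \in \he_n^{(k)}$.

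I would then inductively construct $M_k \in \Hs_u$ together with a decomposition
$$\Ad_{M_k^{-1}}(\w + n) \;=\; \w + a_k + r_k, \qquad a_k \in \he_n^\w,\ \ r_k \in \he_n^{(k)},$$
starting from $M_1 = 1$, $a_1 = 0$, $r_1 = n$. At step $k$, split $r_k = a'_k + b_k$ with $a'_k \in \he_n^{(k)}\cap\he_n^\w$ and $b_k \in [\w, \he_n^{(k)}]$, pick $v_k \in \he_n^{(k)}$ with $[\w, v_k] = b_k$, and set $M_{k+1} = M_k \exp(v_k)$. Expanding $\Ad_{\exp(v_k)} = \exp(\ad_{v_k})$, the term $\ad_{v_k}(\w) = -b_k$ cancels $b_k$, while every remaining term is either of the form $[v_k, \cdot]$ with the second entry in $\he_n$, hence in $[\he_n^{(k)}, \he_n] = \he_n^{(k+1)}$, or a higher iterate $\ad_{v_k}^j$ landing even deeper. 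This yields $\Ad_{M_{k+1}^{-1}}(\w+n) = \w + (a_k + a'_k) + r_{k+1}$ with $r_{k+1} \in \he_n^{(k+1)}$, completing the induction. After $N$ steps $r_N = 0$, so $\Ad_{M_N^{-1}}(\w + n) = \w + n'$ with $n' := a_N \in \he_n^\w$, and $M := M_N$, $n'$ satisfy the conclusion.

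The main technical obstacle is the bookkeeping in the inductive step: one must verify that the image part of $r_k$ under $\ad_\w$ lies in $[\w, \he_n^{(k)}]$ rather than merely in $[\w,\he_n] \cap \he_n^{(k)}$ (which is handled by semisimplicity of $\ad_\w$ restricted to $\he_n^{(k)}$), and that all correction terms from the finite exponential expansion genuinely drop to $\he_n^{(k+1)}$, using the ideal property of the lower central series.
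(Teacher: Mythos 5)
Your argument is correct, up to one harmless sign: with $M_{k+1}=M_k\exp(v_k)$ you should expand $\Ad_{M_{k+1}^{-1}}=\Ad_{\exp(-v_k)}\Ad_{M_k^{-1}}$, whose linear term contributes $-\ad_{v_k}(\w)=+b_k$ rather than $-b_k$; choosing $[\w,v_k]=-b_k$ (or setting $M_{k+1}=M_k\exp(-v_k)$) fixes this. The route, however, is genuinely different from the paper's. The paper takes the abstract Jordan decomposition $x=x_s+x_n$ from Theorem~\ref{defjord}, uses Theorem~\ref{solv} to conjugate $x_s$ into $\ttt$, observes that $\Ad_{M^{-1}}(x_s)$ and $\w$ both lie in $\ttt$ and differ by an element of $[\he,\he]+\he_n\subset\he_n$, hence coincide, and then simply sets $n'=\Ad_{M^{-1}}(x_n)$; this is a three-line argument but leans on the existence, uniqueness and functoriality of the Jordan decomposition in algebraic groups and on the conjugacy statement for semisimple elements of solvable groups. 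Your proof is instead a self-contained successive-approximation construction along the lower central series of $\he_n$, using only the weight-space decomposition of $\he_n$ under $\ttt$ (which gives the semisimplicity of $\ad_\w$ and the splitting $\he_n^{(k)}=(\he_n^{(k)}\cap\ker\ad_\w)\oplus\ad_\w(\he_n^{(k)})$), the ideal property $[\he_n^{(k)},\he_n]\subset\he_n^{(k+1)}$, and the algebraic exponential of Theorem~\ref{algexp}. What you gain is an explicit $M$ as a finite product of exponentials, with no appeal to the general Jordan machinery; what you pay is the bookkeeping you describe, and note that your $n'$ is produced in $\ker(\ad_\w|_{\he_n})$ directly, whereas the paper additionally records $n'$ as the conjugate of the intrinsic nilpotent part $x_n$ (which it uses implicitly later, e.g.\ in Lemma~\ref{nicefun}, where $\w+n'$ being the actual Jordan decomposition matters); in your construction that extra property still holds because $\w$ is semisimple, $n'$ is nilpotent and $[\w,n']=0$, so $\w+n'$ is the Jordan decomposition of $\Ad_{M^{-1}}(x)$ by uniqueness, but it is worth stating explicitly.
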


\begin{proof}
We have the Jordan decomposition (see Section Remark \ref{nilalg}) $x = x_s + x_n$, where $x_s$ is semisimple, $x_n$ is nilpotent and $[x_s,x_n]=0$. Then by Theorem \ref{solv} the element $x_s$ is conjugate to an element of $\ttt$. Hence there exists $M\in \Hs$ such that $\Ad_{M^{-1}}(x_s) \in \ttt$. Note that \begin{align*}\Ad_{M^{-1}}(x_s) - x_s\in [\he,\he]\end{align*} as in the proof of Lemma \ref{etreg}. Moreover \begin{align*} x_s-\w = (x-x_n) - (x-n) = n-x_n\in \he_n.\end{align*} As $[\he,\he]\subset \he_n$ by Theorem \ref{solv}, we get $\Ad_M^{-1}(x_s) - \w \in \he_n$. As both $\Ad_{M^{-1}}(x_s)$ and $\w$ lie in $\ttt$, we get that they are equal. Therefore putting $n' = \Ad_{M^{-1}}x_n$ we get
$$x = x_s + x_n = \Ad_M(\w) + \Ad_M(n') = \Ad_M(\w + n')$$
and the conditions are satisfied.
\end{proof}

\noindent
Note that if $\w\in\ttt^\reg:=\ttt\cap \he^\reg$ is a regular element in $\ttt$, then the only nilpotent $n'$ commuting with $\w$ is $0$. Therefore we get

\begin{corollary}\label{corre}
For every $\w\in\ttt^\reg$ and $n\in\he_n$, the elements $\w$ and $n+\w$ are conjugate.
\end{corollary}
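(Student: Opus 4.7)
The plan is to apply Theorem~\ref{jordan} directly to the element $x = \w + n$. This yields some $M \in \Hs$ together with a nilpotent $n' \in \he_n$ satisfying $[\w, n'] = 0$ and
\[
\w + n = \Ad_M(\w + n').
\]
So it suffices to show that $n' = 0$, because then $\w + n = \Ad_M(\w)$ gives the conjugacy claim immediately.

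The key step is to exploit the regularity assumption $\w \in \ttt^{\reg}$. By definition this means $\dim C_\he(\w) = r = \dim \ttt$. Since $\ttt$ is abelian, we have $\ttt \subset C_\he(\w)$, and a dimension comparison forces $C_\he(\w) = \ttt$. Now $n'$ commutes with $\w$, hence $n' \in C_\he(\w) = \ttt$. On the other hand, by construction $n' \in \he_n$. Using the direct sum decomposition $\he = \ttt \oplus \he_n$ from Remark~\ref{nilalg}, we get $\ttt \cap \he_n = 0$, and therefore $n' = 0$.

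I do not expect a serious obstacle here: all the real work has already been carried out in Theorem~\ref{jordan} (the analogue of Jordan decomposition in the solvable setting) and the only additional input is the elementary observation that a regular element in $\ttt$ has $\ttt$ itself as its centraliser in $\he$. The short argument is essentially a one-line consequence of Theorem~\ref{jordan} together with the characterisation of regularity in solvable Lie algebras recorded in Section~\ref{secreg}.
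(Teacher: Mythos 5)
Your argument is correct and is exactly the paper's route: the paper derives the corollary from Theorem~\ref{jordan} by observing that for $\w\in\ttt^{\reg}$ the only nilpotent commuting with $\w$ is $0$, which is precisely your computation $n'\in C_\he(\w)=\ttt$ together with $\ttt\cap\he_n=0$. No gaps; you have merely spelled out the one-line justification the paper leaves implicit.
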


\begin{example}\label{exjor} Let us see two examples for $\Hs = \Bs_3$, the Borel subgroup (of upper triangular matrices) of $\SL_3(\C)$. Let the principal nilpotent element $e$ be of the form
$$e = \begin{pmatrix}
 0 & 1 & 0\\
 0 & 0 & 1\\
 0 & 0 & 0
\end{pmatrix}.$$
\begin{enumerate}
\item Let $\w\in\ttt$ be of the form $\w = \diag(0,v_1,v_2) - \frac{v_1+v_2}{3}I_3$ with $v_1\neq 0$, $v_2\neq 0$, $v_1\neq v_2$. Then note that the matrix $e+\w$ is diagonalisable in the basis defined by the matrix
$$
M_{\w} =
\begin{pmatrix}
 1 & \frac{1}{v_1} & \frac{1}{v_2(v_2-v_1)} \\
 0 & 1 & \frac{1}{v_2-v_1} \\
 0 & 0 & 1 
\end{pmatrix},
$$
i.e.
$
 e + \w = 
M_{\w} \w M_{\w}^{-1}
$.

\item Consider the matrix $e+\w$, where $\w\in\ttt$ is of the form $\w = \diag(0,v_1,0) - \frac{v_1}{3}I_3$ with $v_1\neq 0$.
If we take
$$M_{\w} =
\begin{pmatrix}
 1 & \frac{1}{v_1} & 0 \\
 0 & 1 & 1 \\
 0 & 0 & -v_1 
\end{pmatrix},$$
then 
$$
\begin{pmatrix}
 0 & 1 & 0 \\
 0 & v_1 & 1 \\
 0 & 0 & 0
\end{pmatrix} = 
M_{\w}
\begin{pmatrix}
 0 & 0 & 1 \\
 0 & v_1 & 0 \\
 0 & 0 & 0
\end{pmatrix}
M_{\w}^{-1}.
$$
Therefore for $e+\w = \begin{pmatrix}
 0 & 1 & 0 \\
 0 & v_1 & 1 \\
 0 & 0 & 0
\end{pmatrix} - \frac{v_1}{3}I_3$ we get 
$$(e+\w) = M_{\w}
\begin{pmatrix}
 -v_1/3 & 0 & 1 \\
 0 & 2v_1/3 & 0 \\
 0 & 0 & -v_1/3
\end{pmatrix} M_{\w}^{-1}.$$
The matrix $M_{\w}$ used here does not have determinant one. We can however multiply it by any cubic root of $v_1^{-1}$ to get a matrix from $\Bs_3$.
\end{enumerate}
\end{example}

\begin{remark}
 Even for $\Hs = \Bs_m$, the Borel subgroup of $\SL_m$, we cannot require $\w+n'$ from Theorem \ref{jordan} to be the classical Jordan form, under no additional assumption on $x$. Even for $\w = 0$, there is an infinite number of nilpotent orbits of adjoint action of $\Bs_m$ on $\bb_m$ for $m\ge 6$, see \cite{DjoMal}.
 One can prove that if $x$ is a regular matrix, then we can actually find $n'$ which is a nilpotent Jordan matrix.
\end{remark}

\subsection{Reductive groups}
Assume that $\Gs$ is a reductive group. Let $\Ts$ be its maximal torus, $\Bs$ a Borel subgroup containing $\Ts$, $\Bs^-$ the opposite Borel, $\U$ and $\U^-$ the respective unipotent subgroups. Let $\geg$, $\ttt$, $\bb$, $\bb^-$, $\uu$, $\uu^-$ be the corresponding Lie algebras.  Let $(e,f,h)$ be a principal $\ssl_2$-triple in $\geg$, such that $e\in \uu$, $f\in\uu^-$, $h\in \ttt$. Let $$\Ss = e + C_\geg(f)$$ be the Kostant section.
 
\begin{lemma}\label{lembal}
 Under the assumptions above 
 $$ \Ad_{-}(-): \U^- \times \Ss \to e+\bb^-$$
 is an isomorphism.
\end{lemma}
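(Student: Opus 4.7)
The strategy is to exploit the principal $\ssl_2$-triple $(e,f,h)$ to equip both $\U^-\times\Ss$ and $e+\bb^-$ with compatible contracting $\Cs$-actions, reduce the statement to an isomorphism of tangent spaces at the unique common fixed point, and then promote it to a global isomorphism by a graded Nakayama argument. The regular semisimple $h$ induces an $\ad$-weight decomposition $\geg = \bigoplus_k \geg_k$ which refines the root decomposition since $h$ is regular in $\ttt$: we have $\uu = \bigoplus_{k\ge 1}\geg_k$, $\uu^- = \bigoplus_{k\le -1}\geg_k$ and $\bb^- = \bigoplus_{k\le 0}\geg_k$. Viewing $\geg$ as an $\ssl_2$-module via the triple, $C_\geg(f)$ is spanned by lowest-weight vectors and therefore has only non-positive $h$-weights, so $C_\geg(f)\subset \bb^-$ and in particular $\Ss\subset e+\bb^-$. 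Well-definedness of $\phi:(u,s)\mapsto \Ad_u(s)$ then reduces to $\Ad_u(e)-e\in\bb^-$ for $u\in\U^-$, which one checks from $u=\exp(X)$ with $X\in\uu^-$ and the inclusion $[\uu^-,e]\subset\bb^-$: for any positive root $\beta$ and any simple root $\alpha_i$ appearing in $e$, the combination $\alpha_i-\beta$ is never a positive root, so $[e_{-\beta},e_{\alpha_i}]$ lies in $\ttt\oplus\uu^-$, and iteration via $[\uu^-,\bb^-]\subset\bb^-$ finishes.

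The core computation is the differential of $\phi$ at the fixed point $(1,e)$, which is the map $(X,c)\mapsto [X,e]+c$ from $\uu^-\oplus C_\geg(f)$ to $\bb^-$. I claim this is an isomorphism, equivalently that $\bb^- = [\uu^-,e]\oplus C_\geg(f)$. First, $\ad(e)\colon\uu^-\to\bb^-$ is injective because $C_\geg(e)$ consists of highest-weight vectors (non-negative $h$-weights) whereas $\uu^-$ has strictly negative $h$-weights. From the $\ssl_2$-decomposition $\geg = C_\geg(f)\oplus\im\ad(e)$ (lowest-weight vs.\ non-lowest-weight parts) we get $[\uu^-,e]\cap C_\geg(f)\subset \im\ad(e)\cap C_\geg(f) = 0$. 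The dimensions of the two summands sum to $\dim\uu^-+\rank\geg=\dim\bb^-$, using that $f$ is regular, forcing the direct sum to exhaust $\bb^-$.

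To globalise, let $H^t$ be the one-parameter subgroup of $\Hs$ integrating $h$, and define a $\Cs$-action on $\geg$ by $t\cdot y := t^2\Ad_{H^{1/t}}(y)$; it scales $\geg_k$ by $t^{2-k}$, fixes $e$, preserves both $\Ss$ and $e+\bb^-$, and acts with strictly positive weights on the affine spaces $\Ss\cong C_\geg(f)$ and $e+\bb^-\cong \bb^-$. Conjugation $u\mapsto H^{1/t}uH^t$ gives a positive-weight contracting $\Cs$-action on $\U^-\cong \uu^-$ (via the algebraic exponential of Theorem~\ref{algexp}). A direct computation shows that $\phi$ is $\Cs$-equivariant. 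Both source and target are then affine spaces of the same dimension with contracting positive-weight $\Cs$-actions, and $\phi$ is equivariant with isomorphism differential at the fixed point, so graded Nakayama (Corollary~\ref{cornak}) implies $\phi^*$ is surjective on coordinate rings; since both rings are polynomial rings of equal Krull dimension, surjectivity forces $\phi^*$ to be an isomorphism, whence $\phi$ is an isomorphism. The main technical obstacle is the direct-sum decomposition of the previous paragraph, $\bb^- = [\uu^-,e]\oplus C_\geg(f)$, which relies essentially on the principal $\ssl_2$-structure of $\geg$ and is the one place where the principality of the triple is indispensable.
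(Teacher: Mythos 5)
Your proof is correct, but it takes a genuinely different route from the paper. The paper simply quotes Kostant's theorem (and the Ginzburg reference) for the semisimple case and then spends its effort on the reduction from a general reductive $\Gs$ to its adjoint group, splitting off the centre via $\geg = Z(\geg)\oplus\geg^{\ad}$ and checking that the peripheral vertical arrows in a product diagram are isomorphisms. You instead give a self-contained proof in the style of Gan--Ginzburg: the transversality statement $\bb^- = [\uu^-,e]\oplus C_\geg(f)$ identifies the differential of $\Ad_-(-)$ at $(1,e)$ as an isomorphism, and the contracting $\Cs$-action $t\cdot y = t^2\Ad_{H^{1/t}}(y)$ (weight $t^{2-k}$ on $\geg_k$, so strictly positive on $\uu^-$, $C_\geg(f)$ and $\bb^-$, fixing $e$) upgrades this to a global isomorphism by a graded argument. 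All the ingredients check out: $C_\geg(f)\subset\bb^-$ and the injectivity of $\ad(e)|_{\uu^-}$ follow from the $h$-weight bounds on lowest- and highest-weight vectors (the paper records the same weight positivity in Lemma~\ref{posint}), the splitting $\geg = C_\geg(f)\oplus\im\ad(e)$ is standard $\ssl_2$-theory, and the dimension count uses the regularity of $f$ from Remark~\ref{princesl2}. Your approach buys two things: it avoids any reduction to the semisimple case, since the centre sits inside $C_\geg(f)$ as trivial $\ssl_2$-summands and the dimension count absorbs it automatically; and it makes visible exactly where principality is used, namely in $\dim C_\geg(f) = \rk\geg$. The paper's approach buys brevity at the cost of outsourcing the real content to the literature. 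One small point of hygiene: Corollary~\ref{cornak} as stated is about module generation, whereas your final step needs the algebra version (homogeneous elements of $\phi^*(\m_A)$ spanning $\m_B/\m_B^2$ generate $B$ as a $\C$-algebra); this follows by the same induction on degree, but it is worth saying so rather than citing the module statement directly.
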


\begin{proof}
If $\Gs$ is semisimple, then the map
 $$ \Ad_{-}(-): \U^- \times \Ss \to e+\bb^-$$
is an isomorphism (\cite[Theorem 1.2]{Kost}, see also another proof in \cite[Theorem 7.5]{Ginz}).

Now if $\Gs$ is an arbitrary reductive group, let $\Gs^{\ad}$ be its adjoint group and let $\pi:\Gs\to \Gs^{\ad}$ be the quotient map. From \cite[Proposition 17.20]{Milne} we have that $\pi(\Bs)$ and $\pi(\U^-)$ are Borel and maximal unipotent in $\Gs^{\ad}$, respectively. Note that $\ker \pi = Z(\Gs)$ and the identity component of $Z(\Gs)$ is a torus (\cite[Proposition 19.12]{Milne}). As a torus contains no nontrivial unipotent elements, we have $\ker \pi\cap U^- = \{1\}$. Therefore $\pi|_{\U^-}$ is an isomorphism $\U^-\cong \pi(\U^-)$. We then know from above that
$$\Ad_{-}(-): \pi(\U^-) \times \Ss_{\Gs^{\ad}} \to e+\bb^-_{\Gs^{\ad}}$$
is an isomorphism. From \cite[Theorem II.11]{Jac} we can identify $\geg^{\ad}$ with an ideal inside $\geg$ such that $\geg = Z(\geg) \oplus \geg^{\ad}$. Then we have 
$$\pi(\U^-) \times \Ss_{\Gs} \cong (\pi(\U^-) \times \Ss_{G^{\ad}}) \times Z(\geg)$$
and 
$$\bb^-_{\Gs} = \bb^-_{\Gs^{\ad}}\times Z(\geg).$$
As the adjoint representation is trivial on the centre of a Lie algebra, we have the following diagram, where the middle column is the product of the left and right and the horizontal arrows are the projections.

$$
\begin{tikzcd}
\pi(\U^-) \times \Ss_{\Gs^{\ad}} \arrow[dd, "\Ad_{-}(-)"]
& \arrow[l, twoheadrightarrow] \U^- \times \Ss_{\Gs} \arrow[dd, "\Ad_{-}(-)"] \arrow[r, twoheadrightarrow] &
Z(\geg) \arrow[dd, "="] 
\\ \\
e+\bb^-_{\Gs^{\ad}} &
\arrow[l, twoheadrightarrow] e+\bb^-_{\Gs} \arrow[r, twoheadrightarrow] 
&
Z(\geg)
\end{tikzcd}
$$
As the peripheral vertical arrows are isomorphisms, we get that also for $\Gs$ the map
$$ \Ad_{-}(-): \U^- \times \Ss \to e+\bb^-$$
is an isomorphism.
\end{proof}
\noindent
Let us now consider the preimage of $e+\ttt$ and for any $\w\in\ttt$ denote by $A(\w)\in \U^-$, $\chi(\w)\in \Ss$ the elements such that
 \begin{equation} \label{adchi}
 \Ad_{A(\w)}(e+\w) = \chi(\w).
 \end{equation}
 Note that we have two inclusions of affine spaces $\Ss\into \geg$ and $e+\ttt \into \geg$. The former induces the isomorphism $\Ss\cong \geg/\!\!/\Gs$, i.e. $\C[\geg]^\Gs\xrightarrow{\cong} \C[\Ss]$ by \cite[Section 4.7, Theorem 7]{Kostsec}. The latter induces a map $\C[\geg]^\Gs\to \C[e+\ttt]$. However, a regular element $\w\in\ttt$ is conjugate to $\w+e$ (see Corollary \ref{corre}). Let us then consider the composition $\C[\geg]^\Gs\to \C[e+\ttt]\to \C[\ttt]$, where the last map comes from translation by $e$. It is equal to the map $\C[\geg]^\Gs\to \C[\ttt]$ coming from inclusion $\ttt\to\geg$ -- as the dual maps of schemes agree on a dense subset of $\ttt$, i.e. on $\ttt^\reg$. 
 
 Note that if we compose $\chi^*: \C[\Ss]\to \C[\ttt]$ with the isomorphism $\C[\geg]^\Gs\to \C[\Ss]$ described above, then we get the composite map above $\C[\geg]^\Gs\to \C[\ttt]$, which now we know is induced by inclusion $\ttt\to\geg$. By Chevalley's restriction theorem (cf. \cite[Theorem 3.1.38]{ChGi}) this map is an inclusion whose image is $\C[\ttt]^\W$.
 
\begin{remark}\label{chevrem}
Chevalley's theorem is originally formulated for semisimple groups. However, if we again consider $\geg^{\ad}$ as an ideal of $\geg$ such that $\geg = \geg^{\ad}\oplus Z(\geg)$, we have
 $$ \C[\geg]^\Gs = \C[\geg^{\ad}\oplus Z(\geg)]^{\Gs^{\ad}} = \C[\geg^{\ad}]^{\Gs^{\ad}} \oplus Z(\geg) = \C[\ttt\cap\geg^{\ad}]^\Ws \oplus Z(\geg) = \C[\ttt]^\Ws,$$
 where the third equality follows from original Chevalley's theorem for $\Gs^{\ad}$.
\end{remark}

Therefore we get
 \begin{proposition}\label{isoquot}
 \label{kostiso}
  The map $\chi:\ttt\to\Ss$ defined by property \eqref{adchi} induces an isomorphism $\ttt/\!\!/\W\to \Ss$.
 \end{proposition}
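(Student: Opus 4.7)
The proof is essentially assembled from the identifications made immediately before the statement, and my plan is simply to collect them. By Theorem \ref{kostant2} (originally Kostant's result, recalled above), the inclusion $\Ss\hookrightarrow \geg$ induces a ring isomorphism
$$\C[\geg]^\Gs \xrightarrow{\cong} \C[\Ss].$$
By Chevalley's restriction theorem (in the version extended to arbitrary reductive $\Gs$ in Remark \ref{chevrem}), the restriction along the inclusion $\ttt\hookrightarrow \geg$ induces an isomorphism
$$\C[\geg]^\Gs \xrightarrow{\cong} \C[\ttt]^\W.$$

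The key observation, already carried out in the paragraph preceding the statement, is that $\chi^*:\C[\Ss]\to \C[\ttt]$ is exactly the composite of the two displayed isomorphisms with the inclusion $\C[\ttt]^\W\hookrightarrow \C[\ttt]$: indeed, on the dense open $\ttt^{\reg}\subset \ttt$, every $\w$ is conjugate to $e+\w=\Ad_{A(\w)^{-1}}\chi(\w)$ by Corollary \ref{corre}, so any $\Gs$-invariant function on $\geg$ takes the same value at $\w$ and at $\chi(\w)$, and this identity between morphisms of schemes extends from the dense subset $\ttt^{\reg}$ to all of $\ttt$.

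It therefore follows that $\chi^*$ is injective with image exactly $\C[\ttt]^\W = \C[\ttt/\!\!/\W]$, so $\chi$ factors through a morphism $\bar\chi:\ttt/\!\!/\W\to \Ss$ whose comorphism is an isomorphism of $\C$-algebras. Since both $\Ss$ and $\ttt/\!\!/\W$ are affine schemes (of finite type over $\C$), this gives the desired isomorphism $\bar\chi:\ttt/\!\!/\W\xrightarrow{\cong}\Ss$. There is really no obstacle here: the content of the proposition is entirely contained in Kostant's theorem together with Chevalley's restriction theorem, and all that remains is to observe that $\chi^*$ matches up with the composition of those two isomorphisms, which was already argued using Corollary \ref{corre}.
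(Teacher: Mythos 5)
Your proof is correct and follows essentially the same route as the paper: both arguments identify $\chi^*$ with the composition of Kostant's isomorphism $\C[\geg]^\Gs\cong\C[\Ss]$ and the Chevalley restriction $\C[\geg]^\Gs\cong\C[\ttt]^\W$, using Corollary \ref{corre} and density of $\ttt^{\reg}$ to check that the two morphisms of schemes agree.
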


\subsection{Principally paired groups}\label{kostgensec}
Let now $\Hs$ be any principally paired group. Let $\Ns$ be the unipotent radical of $\Hs$. Then $\Ns$ is a normal subgroup of $\Hs$ and $\Hs/\Ns$ is reductive. Let $\Ls\subset \Hs$ be any Levi subgroup, i.e. a section of $\Hs\to \Hs/\Ns$, cf. Theorem \ref{thmlevi}. We have $\Hs = \Ns\rtimes \Ls$ and hence $\he = \nen \oplus \lel$, where $\he$, $\nen$, $\lel$ are the Lie algebras of $\Hs$, $\Ns$, $\Ls$, respectively. Let $r$ be the dimension of any maximal torus in $\Hs$.

Assume that $(e,h)$ is an integrable principal $\bb(\ssl_2)$-pair within $\he$ and let $\{H^t\}_{t\in\Cs}$ be the embedding of $\Cs$ to which $h$ integrates. We can choose $\Ls$ such that $h\in\lel$, hence we will assume this inclusion from now on. We then have $e = e_n + e_l$, where $e_n\in\nen$, $e_l\in\lel$. Let us consider, by the Jacobson--Morozov Theorem (cf. Section \ref{sl2p}), the $\ssl_2$-triple $(e_l,f_l,h_l)$ within $\lel$. 

\begin{lemma}\label{redpartreg}
 For $\Hs$ and $(e,h)$ as above, $e_l$ is a regular element of $\lel$.
\end{lemma}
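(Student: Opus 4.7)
The plan is to compare the $\Hs$-conjugacy class of $e$ with the $\Ls$-conjugacy class of $e_l$ via the projection $\pi : \he = \nen \oplus \lel \to \lel$, and then conclude by a dimension count. The general bound $\dim C_\lel(e_l) \ge r$ will provide one inequality, while the other will come from $e$ being regular in $\he$.

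First I would verify that $e_l$ is a nilpotent element of $\lel$. Since $\Ns$ is normal in $\Hs$, the subalgebra $\nen$ is an ideal and in particular $\ad h$-stable; comparing the $\nen$- and $\lel$-components on both sides of $[h, e_n + e_l] = 2(e_n + e_l)$ therefore gives $[h, e_l] = 2 e_l$. Because $h$ is semisimple in $\he$, its adjoint action on the invariant subspace $\lel$ is semisimple, and a nonzero weight vector for a semisimple element of a reductive Lie algebra is always nilpotent, so $e_l$ is nilpotent in $\lel$. Since $\lel$ is reductive of rank $r$, this provides the lower bound $\dim C_\lel(e_l) \ge r$.

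The crux of the argument is computing $\pi(\Ad_g(e))$ for $g = ul$ with $u \in \Ns$, $l \in \Ls$. Writing $u = \exp(v)$ with $v \in \nen$ (using Theorem \ref{algexp}) and invoking the ideal property of $\nen$, the expansion $\Ad_u(x) = x + [v,x] + \tfrac{1}{2}[v,[v,x]] + \dots$ shows that $\Ad_u(x) - x \in \nen$ for every $x \in \he$, while $\Ad_u$ preserves $\nen$. Combined with the fact that $\Ls$ preserves both summands of the decomposition $\he = \nen \oplus \lel$, this yields the clean identity $\pi(\Ad_g(e)) = \Ad_l(e_l)$. In particular, $\pi$ carries the orbit $\Hs \cdot e$ into the orbit $\Ls \cdot e_l$, so $\Hs \cdot e \subseteq \pi^{-1}(\Ls \cdot e_l)$.

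To finish, I would use regularity of $e$ to compute $\dim (\Hs \cdot e) = \dim \Hs - r$, and use the fact that $\pi$ has $\nen$-fibres to obtain $\dim \pi^{-1}(\Ls \cdot e_l) = \dim \nen + \dim \Ls - \dim C_\lel(e_l) = \dim \Hs - \dim C_\lel(e_l)$. The containment above then forces $\dim C_\lel(e_l) \le r$, and combined with the lower bound from the first step gives equality, so $e_l$ is regular in $\lel$. The main subtlety lies in the projection identity $\pi(\Ad_g(e)) = \Ad_l(e_l)$, but as sketched this reduces to the standard observation that $\Ad(\Ns)$ acts trivially on the quotient $\he/\nen$.
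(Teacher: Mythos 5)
Your argument is correct, but it runs one level up from the paper's. The paper stays entirely at the tangent-space level: since $\nen$ is an ideal, $[e,\he]\subset \nen\oplus[e_l,\lel]$, and comparing codimensions ($[e,\he]$ has codimension $r$ in $\he$ by regularity of $e$) immediately gives $\dim C_\lel(e_l)\le r$, whence equality by the general lower bound. Your containment $\Hs\cdot e\subseteq \pi^{-1}(\Ls\cdot e_l)$ is precisely the integrated form of that inclusion, and your dimension count of orbits is the integrated form of the paper's count of images of adjoint operators. What your route costs is the extra (though standard in characteristic zero) inputs that $\dim C_\Hs(e)=\dim C_\he(e)$ and $\dim(\Ls\cdot e_l)=\dim\Ls-\dim C_\lel(e_l)$, i.e.\ smoothness of centralisers, plus the identity $\pi(\Ad_{ul}(e))=\Ad_l(e_l)$, which the paper never needs; what it buys is a cleaner geometric picture of how the $\Hs$-orbit of $e$ sits over the $\Ls$-orbit of $e_l$. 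Two small remarks: the lower bound $\dim C_\lel(e_l)\ge r$ holds for an arbitrary element of $\lel$ and does not require your first paragraph establishing nilpotency of $e_l$ (though that observation is true and is used elsewhere in the paper, e.g.\ when invoking Jacobson--Morozov for $e_l$); and in the projection identity you should be slightly careful that you have fixed $\Ls$ with $h\in\lel$ and are using that $\Ad_l$ preserves both summands of $\he=\nen\oplus\lel$ while $\Ad_u-\mathrm{id}$ lands in $\nen$, exactly as you sketched.
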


\begin{proof}
We know that $e$ is a regular element of $\he$. This means that $[e,\he]$ is of codimension $r$ in $\he$. But note that $[e,\he] \subset \nen \oplus [e_l,\lel]$ as $\nen$ is an ideal. Therefore $[e_l,\lel]$ is of codimension at most $r$ in $\lel$. Therefore $\dim C_\lel(e_l) \le r$, hence actually $\dim C_\lel(e_l) = r$ and $e_l$ is regular in $\lel$.
\end{proof}

Now, let $\Bs_l$ be a Borel subgroup of $\Ls$ whose Lie algebra contains $e_l$ and $h$ and inside it let $\Ts$ be a torus whose Lie algebra contains $h$. Let $\Bs = \Ns\rtimes \Bs_l \subset \Hs$ -- it is easy to see that $\Bs$ is then a Borel subgroup of $\Hs$. Let $\U$ be its subgroup of unipotent elements. Given $\Bs_l$ and $\Ts$, let $\Bs_l^-$ be the opposite Borel subgroup of $\Ls$ and $\U_l$, $\U_l^-$ the unipotent subgroups of $\Bs_l$ and $\Bs_l^-$. By $\bb$, $\bb_l$, $\ttt$, $\bb_l^-$, $\uu$, $\uu_l$, $\uu_l^-$ we denote the corresponding Lie algebras. Let $\Ws$ be the Weyl group of $\Hs$ (equal to the Weyl group of $\Ls$).

\begin{lemma}\label{posint}
 All the weights of $\{H^t\}$-action on $\uu$ are positive even integers.
\end{lemma}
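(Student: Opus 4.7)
The plan is to reduce the statement to a short weight-argument inside the Borel subalgebra $\bb$ itself. First I would verify that $(e,h)$ is already a principal integrable $\bb(\ssl_2)$-pair for $\Bs$. For integrability, $\phi(\Bs_2)$ is a closed connected subgroup of $\Hs$ whose Lie algebra $d\phi(\bb(\ssl_2))$ is contained in $\Span(e,h)\subseteq\bb$, so by the Lie correspondence for connected linear algebraic groups in characteristic zero it sits inside $\Bs$, and $\phi$ factors through $\Bs_2\to \Bs$. For regularity of $e$ in $\bb$, solvability of $\bb$ gives $[v,\bb]\subseteq[\bb,\bb]\subseteq \uu$ for every $v\in\bb$, so $\dim C_\bb(v)\geq\dim\bb-\dim\uu=r$; combined with $C_\bb(e)\subseteq C_\he(e)$ and $\dim C_\he(e)=r$, this forces $\dim C_\bb(e)=r$.

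Inside the solvable $\bb=\ttt\oplus\uu$ I would then analyse the $\ad(h)$-weight decomposition $\bb=\bigoplus_k\bb_k$, $\uu=\bigoplus_k\uu_k$. Since $\ttt$ sits in weight $0$ and $\ttt\cap\uu=0$, we have $\bb_k=\uu_k$ for $k\neq 0$ and $\bb_0=\ttt\oplus\uu_0$. The relation $[h,e]=2e$ places $e\in\uu_2$, and regularity of $e$ in $\bb$ upgrades the obvious inclusion $[e,\bb]\subseteq\uu$ to equality by dimension count; respecting the grading, this yields surjections $\ad(e)\colon\bb_{k-2}\twoheadrightarrow \uu_k$ for every $k\in\Z$.

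From surjectivity together with finite-dimensionality of $\uu$ I would conclude by two telescoping arguments. If $\uu_k\neq 0$ for some $k\leq 0$, then $\bb_{k-2}=\uu_{k-2}$ (as $k-2<0$) is also non-zero, and iterating produces non-zero $\uu_{k-2i}$ for all $i\geq 0$, contradicting $\dim\uu<\infty$. Picking the smallest odd positive $k$ with $\uu_k\neq 0$, oddness of $k$ forces $k-2\neq 0$ so that $\bb_{k-2}=\uu_{k-2}$, and by minimality (together with the non-positive case) this space is $0$, contradicting surjectivity onto the non-zero $\uu_k$. Hence every $\ad(h)$-weight on $\uu$, equivalently every weight of the $\{H^t\}$-action, is a positive even integer. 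The main obstacle I anticipate is the first reduction; once $(e,h)$ is recognised as principal integrable for the solvable Borel $\bb$, the remainder is a purely formal weight calculation that does not see the Levi or unipotent-radical decomposition of $\Hs$.
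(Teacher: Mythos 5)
Your proof is correct and takes essentially the same route as the paper's: both rest on the observation that regularity of $e$ in $\bb$ gives $[e,\bb]=\uu$, that $\ad(e)$ shifts $\ad(h)$-weights up by $2$ while $\ttt$ sits in weight $0$, and that finite-dimensionality of $\bb$ then forbids any weight on $\uu$ that is not a positive even integer. The only cosmetic difference is that the paper runs the descent on a single infinite sequence of weight vectors (and defers the regularity of $e$ in $\bb$ to the remark in Section \ref{secreg}), whereas you phrase it as surjections $\ad(e)\colon\bb_{k-2}\twoheadrightarrow\uu_k$ between weight spaces and split into the non-positive and odd-positive cases.
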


\begin{proof}
As $e$ is regular in $\Hs$, it has to be regular in $\Bs$ as well. Therefore $[e,\bb] = \uu$ (cf. Section \ref{secreg}).

We can choose a basis of $\bb$ which consists of eigenvectors of $[h,-]$. We then choose from it a subset $\{v_1,v_2,\dots,v_k\}$ such that $\{[e,v_i]\}_{i=1}^k$ forms a basis of $\uu$. Then $[e,-]$ is an isomorphism $\Span(v_1,\dots,v_k) \to \uu$. Let $\phi$ denote this restricted commutator operator $[e,-]$. For any $v\in\bb$ we have
$$[h,[e,v]] = [[h,e],v] + [e,[h,v]] = 2[e,v] + [e,[h,v]],$$
hence if $[h,v] = \lambda v$, we get $[h,[e,v]] = (\lambda+2)[e,v]$. Therefore for an $h$-weight vector $v$, $\phi$ satisfies the condition
$$[h,v] = \lambda v \iff [h,\phi(v)] = (\lambda+2)\phi(v).$$
Let us consider a weight vector $w\in \uu$ such that $[h,w] = \lambda w$ and assume that $\lambda$ is not a positive even integer. We now know that $w = \phi(w_1)$ for some $w_1\in\bb$ with $[h,w_1] = (\lambda-2) w_1$. As $\lambda-2 \neq 0$, we have $w_1\in \uu$ (as $\ttt$ has only zero weights of $H^t$-action). Then analogously $w_1 = \phi(w_2)$ for $w_2\in\bb$ of weight $\lambda-4$. As again $\lambda-4\neq 0$, we get $w_2 = \phi(w_3)$, and we continue this procedure to get an infinite sequence $w=w_0$, $w_1$, $w_2$, \dots, such that $w_i$ is a weight vector of weight $w_i - 2i$. However, $\bb$ is finite-dimensional, so we get a contradiction.
\end{proof}

\noindent
For our principally paired $\Hs$ the role of the Kostant section will be played by 
\begin{align}\label{ppS} \Ss := e + C_{\lel}(f_l)\subset \he.\end{align}

\begin{lemma}
\label{congen}
The conjugation map
$$
\Ad_{-}(-): \U_l^- \times \Ss \to e + \bb_l^-
$$
is an isomorphism.
\end{lemma}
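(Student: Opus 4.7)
The plan is to reduce the lemma to Lemma~\ref{lembal} applied to the reductive Levi $\Ls$, by showing that conjugation by $\U_l^-$ fixes the $\nen$-component $e_n$ of $e$ and acts on the $\lel$-component exactly as in the reductive setting. Since $\U_l^- \subset \Ls$ and $\Hs = \Ns \rtimes \Ls$, the adjoint action of $\U_l^-$ preserves the decomposition $\he = \nen \oplus \lel$, so for $u \in \U_l^-$ and $x \in C_\lel(f_l)$ I would write
$$\Ad_u(e + x) \;=\; \Ad_u(e_n) \;+\; \Ad_u(e_l + x),$$
with the two summands lying in $\nen$ and $\lel$ respectively.

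The heart of the argument---and the step I expect to be the main obstacle---is to verify that $\Ad_u(e_n) = e_n$ for every $u \in \U_l^-$. Because $\U_l^-$ is unipotent, this is equivalent to $[\uu_l^-, e_n] = 0$, which I plan to establish purely by $\ad(h)$-weight analysis. First, projecting the relation $[h,e] = 2e$ to $\nen$ (using $h \in \lel$ and the fact that $\nen$ is $\lel$-stable) yields $[h, e_n] = 2 e_n$, so $e_n$ has $\ad(h)$-weight $2$. Next, Lemma~\ref{posint} gives that $\ad(h)$ acts on $\uu \supset \uu_l$ with strictly positive even integer weights, and hence on $\uu_l^-$ (whose $\ttt$-root spaces are the negatives of those of $\uu_l$) with strictly negative even integer weights. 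Consequently $[\uu_l^-, e_n] \subset \nen$ has $\ad(h)$-weights $\leq 0$; but Lemma~\ref{posint} also tells us that $\ad(h)$ on $\nen \subset \uu$ has only strictly positive weights, forcing $[\uu_l^-, e_n] = 0$, as desired.

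With the invariance $\Ad_u(e_n) = e_n$ in hand, the lemma follows by factoring the conjugation map as a composition of three isomorphisms
$$\U_l^- \times \Ss \;\longrightarrow\; \U_l^- \times (e_l + C_\lel(f_l)) \;\longrightarrow\; e_l + \bb_l^- \;\longrightarrow\; e + \bb_l^-,$$
where the first map translates by $-e_n$ on the second factor, the second is the conjugation map of Lemma~\ref{lembal} applied to the reductive group $\Ls$, and the third is translation by $e_n$. This reduction is legal because $(e_l, f_l, h_l)$ is a principal $\ssl_2$-triple in $\lel$---$e_l$ is regular nilpotent in $\lel$ by Lemma~\ref{redpartreg}, and the regularity of $f_l$ follows as in Remark~\ref{princesl2}. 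The outer two maps are affine isomorphisms, the middle one is an isomorphism by Lemma~\ref{lembal}, and the composition is manifestly the conjugation map $\Ad_{-}(-)$ of the lemma.
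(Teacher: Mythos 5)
Your proposal is correct and follows essentially the same route as the paper: reduce to Lemma~\ref{lembal} for the reductive Levi $\Ls$ (using Lemma~\ref{redpartreg} for regularity of $e_l$), and show $[\uu_l^-,e_n]=0$ by the weight argument that $e_n$ has $\ad(h)$-weight $2$ while $\uu_l^-$ has strictly negative weights, so the bracket would land in $\nen$ with weights below the minimum allowed by Lemma~\ref{posint}. The paper then concludes by ``adding $e_n$ to both sides,'' which is exactly your factorisation into translations by $\pm e_n$ around the reductive isomorphism.
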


\begin{proof}
With Lemma \ref{redpartreg} we know that the conjugation map
\begin{equation}
\label{ident}
\Ad_{-}(-): \U_l^- \times (e_l + C_\lel(f_l)) \to e_l + \bb_l^-
\end{equation}
is an isomorphism. But note that the weights of the $\Ts$-action on $\uu_l^-$ are exactly the negatives of the weights on $\uu_l$. Hence by Lemma \ref{posint}, evaluated on $h$ they are all negative even integers. As $\nen$ is an ideal in $\he$, we have
$$[\uu_l^-,e_n] \subset \nen.$$
However, we know (again from Lemma \ref{posint}) that the $h$-weight of $e_n$ (equal to 2) is the lowest possible among the weights in $\nen$. 
However, all the $h$-weights in $[\uu_l^-,e_n]$ would be lower, as the weights on $\uu_l^-$ are negative. Therefore in fact $[\uu_l^-,e_n] = 0$. Hence $\U_l^-$ commutes with $e_n$.

Then we get the conclusion simply by adding $e_n$ to both sides of \eqref{ident}.
\end{proof}
Now note that we are given two one-parameter subgroups: $H^t$ and $H_l^t$ generated by $h$ and $h_l$, respectively. We show that they are actually equal up to the centre of $\Ls$.
\begin{lemma}\label{semcen}
 Let $\Gs$ be a reductive group and $e$ a regular nilpotent element in $\geg = \Lie(\Gs)$. Then the only semisimple elements in its centraliser $C_\geg(e)$ are the ones in the centre $Z(\geg)$.
\end{lemma}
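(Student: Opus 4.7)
The plan is to first reduce to the case where $\geg$ is semisimple, and then use the structure of a principal $\ssl_2$-triple containing $e$ to control the centraliser. For the reduction, I would decompose $\geg = Z(\geg) \oplus [\geg,\geg]$ with $[\geg,\geg]$ semisimple; since $e$ is nilpotent it lies in $[\geg,\geg]$, and a count using $\dim C_\geg(e) = \rk \geg$ and $\rk[\geg,\geg] = \rk \geg - \dim Z(\geg)$ shows that $e$ is still a regular nilpotent in $[\geg,\geg]$. Any $y \in C_\geg(e)$ decomposes uniquely as $y = z + x$ with $z \in Z(\geg)$ and $x \in C_{[\geg,\geg]}(e)$; since $z$ and $x$ commute and $z$ is semisimple (the connected centre of a reductive group is a torus), additivity of the Jordan decomposition gives $y_s = z + x_s$ and $y_n = x_n$. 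Hence $y$ is semisimple if and only if $x$ is, so it suffices to prove that $C_\geg(e)$ contains no nonzero semisimple element when $\geg$ is semisimple.

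Assuming $\geg$ semisimple, I would use Proposition \ref{reductive} to embed $e$ in a principal $\ssl_2$-triple $(e,f,h)$ and consider the grading $\geg = \bigoplus_k \geg_k$ by $\ad(h)$. The key claim will be that $C_\geg(e) \subset \bigoplus_{k > 0} \geg_k$. Granted this, every $x \in C_\geg(e)$ is a sum of strictly positive $h$-weight vectors, so $\ad(x)$ strictly raises the (bounded) $h$-grading and is therefore nilpotent as an operator on $\geg$. Since $\ad$ is faithful on the semisimple $\geg$, a semisimple $x \in C_\geg(e)$ would make $\ad(x)$ simultaneously semisimple and nilpotent, forcing $\ad(x) = 0$ and thus $x = 0$.

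The main obstacle is the positivity claim on $h$-weights of $C_\geg(e)$. Since $\alpha(h) = 2$ on every simple root for a principal $h$, the element $h$ is regular semisimple and $\geg_0 = C_\geg(h) = \ttt$ is a Cartan subalgebra. After an inner automorphism I may assume $e \in \bigoplus_{\alpha > 0} \geg_\alpha$, and regularity of $e$ forces each simple-root coefficient of $e$ to be nonzero (the standard characterisation of principal nilpotents); then $[x,e] = 0$ for $x \in \ttt$ yields $\alpha_i(x) = 0$ for every simple root $\alpha_i$, hence $x = 0$ by semisimplicity. This rules out $\geg_0 \cap C_\geg(e)$. Negative $h$-weights in $C_\geg(e) = \ker \ad(e)$ are then excluded by $\ssl_2$-representation theory, because highest-weight vectors span $\ker \ad(e)$ within each irreducible summand and carry non-negative $\ad(h)$-eigenvalue, while the vanishing just established rules out trivial $\ssl_2$-summands. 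Putting everything together yields the lemma.
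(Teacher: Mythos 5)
Your proof is correct, but it follows a genuinely different route from the one in the paper. The paper's argument is direct and stays inside the solvable machinery of Section \ref{secreg}: given a semisimple $v$ with $[v,e]=0$, it places $e$ and $v$ in a common Borel subalgebra $\bb$ with $v\in\ttt$, observes that $\nen=[e,\bb]$ (regularity of $e$ in $\bb$) forces $\bb$ to be generated as a Lie algebra by $e$ and $\ttt$, and concludes that $v$ centralises $\bb$ and hence all of $\geg$. You instead first split off $Z(\geg)$ to reduce to the semisimple case (this step is fine: the Jordan-decomposition bookkeeping and the count showing $e$ stays regular in $[\geg,\geg]$ both check out), and then use the principal $\ssl_2$-grading to show $C_\geg(e)\subset\bigoplus_{k>0}\geg_k$, so every element of the centraliser is $\ad$-nilpotent. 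Your route proves more than is asked --- namely that $C_\geg(e)$ consists entirely of nilpotent elements, a classical fact about regular nilpotent centralisers --- and makes the role of the principal grading transparent; the price is that you must invoke the characterisation of regular nilpotents in $\geg_2$ as those with all simple-root coefficients nonzero (Kostant), whereas the paper only ever uses the easier converse direction of that statement and otherwise needs nothing beyond $[e,\bb]=\nen$. Both arguments are complete; yours is heavier on standard structure theory, the paper's is shorter and more self-contained relative to what has already been established in the text.
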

\begin{proof}
 Assume that $v\in\geg$ is a semisimple element such that $[v,e] = 0$. Choose a Borel subgroup $\Bs\subset \Gs$ whose Borel subalgebra $\bb\subset \geg$ contains $e$ and $v$ and let $\nen$ be the nilpotent part of $\bb$. We can choose a maximal torus $\Ts$ within $\Bs$ whose Lie algebra contains $v$. Let $r = \dim \Ts = \dim C_{\geg}(e)$.
 
 As $e$ is regular in $\Hs$, it is also regular in $\bb$ and $\nen = [\bb, e]$ (cf. Section \ref{secreg}). However, $\bb = \ttt \oplus \nen$, so by iterating we easily see that as a Lie algebra $\bb$ is generated by $e$ and $\ttt$. Then as $[v,e]=0$, this easily leads to $[v,\bb] = 0$. As $\bb$ was a Borel subalgebra and $v$ is semisimple, from this $[v,\geg] = 0$ follows.
\end{proof}
From this lemma, as $[h,e] = [h_l,e] = 2e$, we infer $h-h_l\in Z(\lel)$.
In the map $\Ad_{-}(-)$ from Lemma \ref{congen}, let us consider the preimage of $e+\ttt$ and for any $w\in\ttt$ denote by $A(\w)\in \U^-$, $\chi(\w)\in \Ss$ the elements such that
 \begin{equation} 
 \Ad_{A(\w)}(e+\w) = \chi(\w).
 \end{equation}

We will now want to generalise Kostant's Theorem~\ref{kostant2}. First, we find the contracting $\Cs$-action on $\Ss = e + C_{\lel}(f_l)$. Note that as $e_l$ is regular in $\Ls$, also $f_l$ is regular in $\Ls$ (see Remark \ref{princesl2}). Moreover, as all the weights of the ${H^t}$-action on $\uu_l$ are positive integers, on $\uu_l^-$ they are all negative integers. As the weight of the action on $f_l$ is $-2$ (note that we use Lemma \ref{semcen} to switch between the actions of $h_l$ and $h$), $f_l$ must lie in $\uu_l^-$. In particular $f_l \in \bb_l^-$, and as $\bb_l^-$ contains the Lie algebra of the maximal torus of $\Ls$, we have that $f_l$ is regular in $\bb_l^-$. This means that $C_{\he}(f_l)\subset \bb_l^-$ (cf. Section \ref{secreg}). In particular, all the weights of the ${H^t}$-action on $C_{\he}(f_l)$ are nonpositive integers. Therefore, for any $x\in C_{\he}(f_l)$, we have
$$\Ad_{H^t}(x+e) = \Ad_{H^t}(x) + t^2 e = t^2 \left( \Ad_{H^t}(x)/t^2 + e\right)$$
and
$$\lim_{t\to\infty} \Ad_{H^t}(x)/t^2 = 0.$$

Therefore if we define the action of $\Cs$ on $\Hs$ by
$$t \cdot v = t^{-2} \Ad_{H^t}(v),$$
then it preserves $\Ss$ and for any $v\in \Ss$ we have 
$$\lim_{t\to\infty} t\cdot v = e.$$
\begin{theorem}\label{kostarb}
 Every element of $\Ss$ is regular in $\he$. Moreover, every regular orbit of the adjoint action of $\Hs$ on $\he$ meets $\Ss$.
\end{theorem}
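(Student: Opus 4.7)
The first assertion uses the $\Cs$-action $t\cdot v := t^{-2}\Ad_{H^t}(v)$ constructed immediately before the theorem, which preserves $\Ss$ and contracts every $v\in\Ss$ to $e$ as $t\to\infty$. Regularity is invariant under both conjugation and nonzero scaling, since $C_\he(\lambda v)=C_\he(v)$ for $\lambda\neq 0$. Because $e$ is regular by hypothesis and $\he^\reg\subset\he$ is Zariski open, $\Ss\cap\he^\reg$ is a nonempty open neighbourhood of $e$ in $\Ss$. Hence for any $v\in\Ss$ there exists some $t\in\Cs$ with $t\cdot v\in\he^\reg$, forcing $v\in\he^\reg$.

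For the second assertion I propose to conjugate an arbitrary regular $x\in\he$ into $\Ss$ in three stages, mirroring the solvable case of Lemma~\ref{etkost} and the reductive Kostant section argument. \emph{Stage (i):} I claim $x$ is $\Hs$-conjugate into $\bb$. Since $\nen$ is a nilpotent ideal of $\he$, it lies in every maximal solvable subalgebra of $\he$, so the Borel subalgebras of $\he$ are exactly the subspaces $\nen+\bb_l'$ with $\bb_l'\subset\lel$ Borel, and these are all $\Hs$-conjugate because Borel subalgebras of the reductive Lie algebra $\lel$ are $\Ls$-conjugate. Writing $x=x_n+x_l\in\nen\oplus\lel$, the classical fact that every element of a reductive Lie algebra lies in some Borel subalgebra (via the Borel fixed point theorem applied to the closure of the connected abelian algebraic subgroup generated by the Jordan components of $x_l$ acting on the flag variety of $\Ls$) places $x_l$ in some Borel of $\lel$; thus $x$ lies in a Borel of $\he$, and after conjugation we may assume $x\in\bb$. \emph{Stage (ii):} the pair $(e,h)$ is a principal integrable $\bb(\ssl_2)$-pair in $\bb$, because $e$ is nilpotent, $h\in\ttt$, and $e$ is regular in $\bb$ by the containment $C_\he(e)\subset\bb$ valid whenever $\bb\supset\Ts$ (Section~\ref{secreg}), giving $\dim C_\bb(e)=r$; by the same principle, regularity of $x$ in $\he$ descends to regularity in $\bb$. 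Lemma~\ref{etkost} applied to the solvable group $\Bs$ then produces $\w\in\ttt$ and $b\in\Bs$ with $\Ad_b(x)=e+\w$. \emph{Stage (iii):} by the definition of $\chi$ as the preimage of $e+\ttt$ under the isomorphism of Lemma~\ref{congen}, there exists $A(\w)\in\U_l^-$ with $\Ad_{A(\w)}(e+\w)=\chi(\w)\in\Ss$. Chaining these three conjugations shows $x$ is $\Hs$-conjugate to $\chi(\w)\in\Ss$.

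The main obstacle is Stage (i): while Borel subalgebras behave transparently in the reductive setting, in the principally paired case one must combine the ideal structure of $\nen$ with the classical result for reductive Lie algebras to both classify all Borel subalgebras of $\he$ and to verify that they are all $\Hs$-conjugate. Once Stage (i) is in place, all remaining steps flow from results already proved in this chapter --- Lemma~\ref{etkost} furnishes the solvable-case reduction from $\bb$ to $e+\ttt$, and Lemma~\ref{congen} encodes the reductive Kostant slice at the level of the Levi factor, transporting $e+\ttt$ into $\Ss$.
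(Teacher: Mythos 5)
Your proposal is correct and follows essentially the same route as the paper: the first assertion via the contracting action $t\cdot v=t^{-2}\Ad_{H^t}(v)$ together with openness of the regular locus (the paper phrases this contrapositively, using closedness of the non-regular locus and the limit $t\to\infty$), and the second via conjugating a regular element into $\bb$, invoking Lemma~\ref{etkost} to reach $e+\ttt$, and then applying $\chi$ to land in $\Ss$. The only difference is that your Stage~(i) re-derives the classification and conjugacy of Borel subalgebras of $\he$, whereas the paper simply cites the general facts about Borel subgroups and subalgebras already recalled in Section~\ref{secbor} (Corollary~\ref{borconj}); your extra argument is sound but not needed.
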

\begin{proof}
 For the first part, we proceed as in the proof of Lemma \ref{etreg}. Assuming that for some $x\in C_{\he}(f_l)$ the element $x+e$ is not regular, we get that $\Ad_{H^t}(x)/t^2+e$ is not regular for any $t$ and from continuity ($t\to \infty$) we get that $e$ is not regular.
 
 Now assume that some $y\in \he$ is regular. It lies in a Borel subalgebra and as all Borel subalgebras are conjugate, we can assume $y\in \bb$. As $\Bs$ contains a maximal torus of $\Hs$, we have that $y$ is regular in $\bb$ as well. Therefore by Lemma \ref{etkost} it is conjugate to an element of the form $e + v$ for $v\in\ttt$. It is then conjugate to $\chi(v) \in \Ss$.
\end{proof}

To finish the proof of $\C[\he]^\Hs = \C[\Ss]$ we need the following lemma, known for reductive groups already.
\begin{lemma}\label{genrest}
 $\C[\he]^\Hs = \C[\lel]^\Ls = \C[\ttt]^\Ws$.
\end{lemma}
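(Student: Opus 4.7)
My plan is to prove the two equalities separately, reducing the second to Chevalley's restriction theorem (as already noted in Remark \ref{chevrem} for reductive groups, applied to $\Ls$), and establishing the first by showing that the restriction map $r: \C[\he]^\Hs \to \C[\lel]^\Ls$ induced by the inclusion $\lel\hookrightarrow \he$ is a bijection.

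For injectivity of $r$, the strategy is to use Theorem \ref{kostarb}: every regular element of $\he$ is $\Hs$-conjugate to some element of $\Ss = e + C_\lel(f_l) \subset \lel$. Since $\Hs$ is principally paired, regular elements exist, so $\he^\reg$ is a nonempty Zariski-open (hence dense) subset of $\he$. Therefore the saturation $\Hs\cdot \lel$ contains $\Hs\cdot \Ss \supset \he^\reg$ and is dense in $\he$. Any $\Hs$-invariant function that restricts to zero on $\lel$ must then vanish on the dense set $\Hs\cdot \lel$, hence identically.

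For surjectivity, the idea is to use the vector space decomposition $\he = \nen\oplus \lel$ (Theorem \ref{thmlevi}) and the associated linear projection $\pi:\he\to \lel$. Given $g\in \C[\lel]^\Ls$, I will show that $\tilde g := g\circ \pi$ is $\Hs$-invariant and restricts to $g$. The restriction is tautological. For $\Hs$-invariance, since $\Hs = \Ns\rtimes \Ls$ and $\Ls$ preserves both $\nen$ and $\lel$, the projection $\pi$ is $\Ls$-equivariant, so $\tilde g$ is $\Ls$-invariant. For $\Ns$-invariance, the key observation is that $\nen$ is an ideal in $\he$: for $n\in \Ns$ with $\log n\in \nen$, for $x\in \nen$ we have $\Ad_n(x)\in \nen$, while for $y\in \lel$, expanding $\Ad_n(y) = \exp(\ad_{\log n})(y)$ gives $\Ad_n(y) - y \in [\nen,\lel] + [\nen,[\nen,\lel]] + \dots \subset \nen$. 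Hence $\pi(\Ad_n(x+y)) = y = \pi(x+y)$, so $\pi$ is $\Ns$-invariant and therefore $\tilde g$ is $\Ns$-invariant.

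The main subtlety I anticipate is the surjectivity argument: one has to be careful that the algebraic exponential from Theorem \ref{algexp} is well-defined on $\Ns$ and that the series for $\Ad_n$ terminates because $\ad_{\log n}$ acts nilpotently, so the formula $\Ad_n(y) - y \in \nen$ is a genuine algebraic identity rather than just a formal one. Granting this, combining the two displayed equalities — $\C[\he]^\Hs \cong \C[\lel]^\Ls$ from $r$, and $\C[\lel]^\Ls \cong \C[\ttt]^\Ws$ from Chevalley applied to the reductive group $\Ls$ (whose maximal torus is $\Ts$ and whose Weyl group is $\Ws$, both inherited from $\Hs$) — yields the desired chain of isomorphisms.
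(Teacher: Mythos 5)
Your surjectivity argument and your treatment of the second equality are essentially the paper's: the paper also pulls back an $\Ls$-invariant function on $\lel$ along the projection killing $\nen$ (phrased there via the group quotient $\Hs\to\Hs/\Ns\cong\Ls$, which makes $\Hs$-invariance immediate, whereas you verify $\Ns$-invariance by hand from $\nen$ being an ideal — both are fine), and the equality $\C[\lel]^\Ls=\C[\ttt]^\Ws$ is Chevalley for the reductive Levi in both cases.

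The injectivity argument, however, has a genuine gap. The Kostant-type section of Theorem \ref{kostarb} is $\Ss=e+C_\lel(f_l)$ with $e=e_n+e_l$ and $e_n\in\nen$ generally nonzero, so $\Ss\not\subset\lel$; consequently the chain $\Hs\cdot\lel\supset\Hs\cdot\Ss\supset\he^\reg$ breaks at its first link. Worse, the intermediate claim $\he^\reg\subset\Hs\cdot\lel$ is actually false: for $\Hs=\Bs(\SL_2)$ one has $\lel=\ttt$, which contains no nonzero nilpotents, while the regular nilpotent $e$ cannot be conjugate to a semisimple element (conjugation preserves the Jordan type), so $e\in\he^\reg\setminus\Hs\cdot\lel$. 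The conclusion you are after — that $\Hs\cdot\lel$ is dense in $\he$ — is nevertheless true, but it needs a different justification, e.g. that $\Hs\cdot\ttt^\reg$ is already dense (within a Borel subalgebra $\bb=\ttt\oplus\uu$, every $\w+n$ with $\w\in\ttt^\reg$ is conjugate to $\w$ by Corollary \ref{corre}, and every element of $\he$ lies in some Borel subalgebra), or a dominance computation for $\Hs\times\lel\to\he$ at a point of $\ttt^\reg$. For comparison, the paper avoids density altogether: it notes that an invariant function is determined by its restriction to $\bb=\ttt\oplus\uu$, and since the $H^t$-weights are $0$ on $\ttt$ and strictly positive on $\uu$ (Lemma \ref{posint}), an invariant polynomial on $\bb$ can involve only the $\ttt$-variables, hence is determined by its values on $\ttt$.
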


\begin{proof}
 The latter equation is just Chevalley's restriction theorem (\cite[Theorem 3.1.38]{ChGi} and Remark \ref{chevrem}). We need to prove that the restriction map $\C[\he]^\Hs\to \C[\lel]^\Ls$ is an isomorphism.
 
 Let us first prove that it is surjective. We have the projection map $\pi:\Hs\to \Ls \simeq \Hs/\Ns$ and we can use it to pull back any $\Ls$-invariant function on $\lel$. If $f$ is such a function, its pullback is $f\circ \pi_*$ and for any $g\in \Hs$ and $v\in\he$ we have
 $$(f\circ\pi_*)(\Ad_g(v)) = f(\Ad_{\pi(g)}(\pi_*(v))) = f(\pi_*(v)) = (f\circ\pi_*)(v),$$
 hence $f\circ \pi_*$ is $\Hs$-invariant. It also obviously restricts to $f$ on $\lel$, hence we proved that the restriction $\C[\he]^\Hs \to \C[\lel]^\Ls$ is surjective.
 
 Now we prove injectivity. As every element of $\Hs$ is contained in a Lie algebra of a Borel subgroup, and they are all conjugate, a function from $\C[\he]^\Hs$ is fully determined by its values on $\bb$. We know that $\bb = \ttt \oplus \uu$ and the weights of the $H^t$-action on $\ttt$ are all 0, and on $\uu$ they are all positive. 
 
 Therefore any polynomial on $\bb$ which is invariant under this action, can only contain the $\ttt$-variables. Hence it is uniquely determined by its values on $\ttt$.
 \end{proof}
 
 From the proof of Lemma \ref{congen} and from Proposition \ref{isoquot} the map $\chi$ defines an isomorphism $\C[\Ss]\to \C[\ttt]^\Ws$ and when composed with the restriction from $\C[\he]^\Hs$, it clearly gives the restriction $\C[\he]^\Hs \to \C[\ttt]^\Ws$ (note that $x$ and $\chi(x)$ are always conjugate). Then from Lemma \ref{genrest} we get
 
 \begin{theorem}\label{restkos}
  The restriction map $\C[\he]^\Hs\to \C[\Ss]$ is an isomorphism.
 \end{theorem}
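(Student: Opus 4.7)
The plan is to factor the restriction map through $\C[\ttt]^\Ws$ by combining Lemma \ref{genrest} with the pullback $\chi^*$ induced by $\chi:\ttt\to\Ss$. Concretely, I would consider the composite
\[
\C[\he]^\Hs \xrightarrow{\,\mathrm{res}\,} \C[\Ss] \xrightarrow{\,\chi^*\,} \C[\ttt],
\]
identify it with the restriction $\C[\he]^\Hs\to\C[\ttt]$ from Lemma \ref{genrest} (which is an isomorphism onto $\C[\ttt]^\Ws$), and independently check that $\chi^*$ itself is an isomorphism $\C[\Ss]\cong\C[\ttt]^\Ws$. Since the composite and $\chi^*$ are both isomorphisms, the restriction $\C[\he]^\Hs\to\C[\Ss]$ is forced to be an isomorphism as well.

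For the identification of the composite with the $\ttt$-restriction, the key claim is that for every regular $\w\in\ttt$ the elements $\chi(\w)$ and $\w$ are $\Hs$-conjugate. By construction $\chi(\w)=\Ad_{A(\w)}(e+\w)$, so it suffices to conjugate $e+\w$ to $\w$. Here I would work inside the Borel $\Bs=\Ns\rtimes\Bs_l\subset\Hs$: Lemma \ref{posint} forces $e_l\in\uu_l$, so $e=e_n+e_l$ lies in the nilpotent radical $\bb_n=\nen\oplus\uu_l$ of $\bb$, and Corollary \ref{corre} applied to the solvable group $\Bs$ then produces the required element of $\Bs$ conjugating $\w+e$ to $\w$. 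Invariance of any $f\in\C[\he]^\Hs$ yields $f(\chi(\w))=f(e+\w)=f(\w)$ on the dense open subset $\ttt^\reg\subset\ttt$, hence on all of $\ttt$ by continuity, so $\chi^*\circ\mathrm{res}$ coincides with the restriction along $\ttt\hookrightarrow\he$.

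To see that $\chi^*:\C[\Ss]\to\C[\ttt]^\Ws$ is itself an isomorphism, I would compare with the classical Kostant section $\Ss_\Ls=e_l+C_\lel(f_l)$ of the reductive group $\Ls$. As noted in the proof of Lemma \ref{congen}, $\U_l^-$ centralises $e_n$, so for $A'(\w)\in\U_l^-$ with $\Ad_{A'(\w)}(e_l+\w)=\chi_\Ls(\w)$ one also has $\Ad_{A'(\w)}(e+\w)=e_n+\chi_\Ls(\w)$; hence $A(\w)=A'(\w)$ and $\chi_\Hs(\w)=e_n+\chi_\Ls(\w)$. Thus $\chi_\Hs$ is the composition of $\chi_\Ls$ with the affine translation $\Ss_\Ls\to\Ss$ by $e_n$, and the claim reduces to Proposition \ref{isoquot} for $\Ls$.

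I expect the only subtle point to be the $\Bs$-conjugacy of $e+\w$ and $\w$, since everything else is a formal assembly of results already established. This subtlety rests squarely on Lemma \ref{posint}: without the positivity of the $H^t$-weights on $\uu$, one could not guarantee $e_l\in\uu_l$ and thus could not place $e$ in the nilpotent radical of $\bb$, which is exactly what Corollary \ref{corre} requires to apply.
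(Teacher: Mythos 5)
Your proposal is correct and follows essentially the same route as the paper: the paper also factors the restriction through $\chi^*:\C[\Ss]\to\C[\ttt]^\Ws$ (an isomorphism by the proof of Lemma \ref{congen} together with Proposition \ref{isoquot}), identifies the composite with the restriction along $\ttt\hookrightarrow\he$ using the conjugacy of $\w$ and $e+\w$ on the dense regular locus, and concludes via Lemma \ref{genrest}. Your write-up is in fact slightly more careful than the paper's one-line justification (which loosely asserts that $\w$ and $\chi(\w)$ are "always" conjugate, when the honest argument is conjugacy for regular $\w$ plus density), but this is a refinement of the same proof, not a different one.
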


 In particular, this means that no elements of $\Ss$ are conjugate to each other. Together with Theorem \ref{kostarb} this gives
 \begin{corollary} \label{kostprincpair}
  Every regular orbit of the adjoint action of $\Hs$ on $\he$ meets $\Ss$ exactly once.
 \end{corollary}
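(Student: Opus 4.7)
The plan is to combine the existence statement from Theorem~\ref{kostarb} with the isomorphism from Theorem~\ref{restkos} to get uniqueness. Existence is already in place: Theorem~\ref{kostarb} says that every regular orbit of the $\Hs$-action on $\he$ contains at least one point of $\Ss$. So the only thing left is to show that two distinct points of $\Ss$ cannot lie in the same adjoint orbit.

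For this, I would argue as follows. Suppose $x, y \in \Ss$ and $y = \Ad_g(x)$ for some $g \in \Hs$. Then for every $\Hs$-invariant polynomial $F \in \C[\he]^\Hs$ one has $F(x) = F(y)$. By Theorem~\ref{restkos}, the restriction map $\C[\he]^\Hs \to \C[\Ss]$ is an isomorphism of algebras, so the restricted functions $F|_\Ss$ exhaust all of $\C[\Ss]$. Hence every regular function on the affine space $\Ss$ agrees on $x$ and $y$; since $\C[\Ss]$ separates points of $\Ss$, this forces $x = y$.

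Combining this with the existence half of Theorem~\ref{kostarb} yields that each regular adjoint orbit in $\he$ meets $\Ss$ in exactly one point, which is the content of the corollary. The argument is essentially formal once Theorems~\ref{kostarb} and~\ref{restkos} are in hand; there is no real obstacle, which is why this statement is packaged as a corollary. The only thing to be a bit careful about is that $\Ss$ is affine and reduced, so that $\C[\Ss]$ genuinely separates closed points, but this is immediate from the definition $\Ss = e + C_\lel(f_l)$ as an affine subspace of $\he$.
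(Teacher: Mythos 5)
Your argument is exactly the one the paper intends: existence comes from Theorem~\ref{kostarb}, and uniqueness follows because Theorem~\ref{restkos} makes every regular function on the affine space $\Ss$ the restriction of an $\Hs$-invariant function, so conjugate points of $\Ss$ cannot be separated and must coincide. The paper compresses this into the remark that ``no elements of $\Ss$ are conjugate to each other,'' but the underlying reasoning is identical to yours.
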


\section{Regular actions and fixed point sets}
\label{secregact}

\begin{definition}\label{defreg}
Assume we are given a principally paired $\Hs$ with $(e,h)$ being the integrable principal $\bb(\ssl_2)$-pair in $\he$. If $\Hs$ acts on a smooth projective variety $X$, we say that it acts \emph{regularly} if $e$ has a unique zero $o\in X$. 
\end{definition}

\begin{remark}
The choice of integrable principal pair $(e,h)$ in $\he$ is not unique. However, we will see below in  Lemma~\ref{isoreg} that the property of the action being regular does not depend on the choice. 

Note that as $e$ is nilpotent, it generates an additive subgroup of $\Hs$ (see Theorem \ref{algexp}) and hence by Theorem \ref{horrfix} the zero scheme $X^e$ of $V_e$ is connected. It is therefore enough to assume that the zeros of $e$ are isolated. We will in fact prove in Lemma~\ref{isoreg} that all the regular elements of $\he$ have isolated zeros on $X$.
\end{remark}

\begin{example}\label{exgr2} This example is from the PhD thesis of Ersan Akyildiz \cite{Akyphd}, see also \cite{Akyildiz}.
Consider a complex reductive group $\Gs$, with the choice of $e$ as in Example \ref{exregnilp}. By the discussion in Section \ref{sl2p} there exists $h\in\geg$ which makes $\Gs$ principally paired. Let $X= \Gs/\Bs$ be the full flag variety of $\Gs$. Then for any $x=g\Bs\in X$, from Lemma \ref{lemad}
 $$V_e|_{x} = \D g(V_{\Ad_{g^{-1}}(e)}|_{[1]}).$$
Therefore $V_e$ vanishes at $x$ if and only if $\Ad_{g^{-1}}(e)$ vanishes at $[1] = \Bs$. This means that $\Ad_{g^{-1}}(e) \in \bb = \Lie(\Bs)$, or in other words, $e\in\Lie(g\Bs g^{-1})$. The subgroup $g\Bs g^{-1}$ is of course a Borel subgroup of $\Gs$. By \cite[Proposition 3.2.14]{ChGi} the group $\Bs$ is the unique Borel subgroup of $\Gs$ whose Lie algebra contains $e$. Therefore $e\in\Lie(g\Bs g^{-1})$ only if $g\Bs g^{-1} = \Bs$. By \cite[11.16]{Borel} this is true only for $g \in \Bs$, i.e. $x = [1]$. Therefore $\Gs$ acts on the full flag variety $\Gs/\Bs$ regularly.
 
 Hence it also acts regularly on all the partial flag varieties $\Gs/\Ps$. Indeed, assume that $x\in \Gs/\Ps$ is a zero of the vector field given by $e$. If we denote by $\pi_\Ps$ the projection $\pi_\Ps:\Gs/\Bs\to \Gs/\Ps$, then $\pi_\Ps^{-1}(x)$ is a closed subvariety of $\Gs/\Bs$, closed under the action of $\Gs_a$ generated by $e$. Hence by the Borel fixed point theorem (Theorem \ref{borelfix}), it contains a fixed point of $\Gs_a$, which is unique on $\Gs/\Bs$. Therefore $x$ must be the image of the only fixed point on $\Gs/\Bs$. 
\end{example}

\begin{example}[see {\cite[Section 6]{BC}}]\label{exsl2}
 Let $\Hs = \SL_2(\C)$ and consider the irreducible representation $W$ of $\SL_2(\C)$ of dimension $n+1$. In particular, the regular nilpotent
 $$e = \begin{pmatrix} 0 & 1 \\ 0 & 0\end{pmatrix}$$ acts on $W$ with the matrix
 $$\begin{pmatrix}
       0 & 1 & 0 & 0 & \dots & 0\\
       0 & 0 & 1 & 0 & \dots & 0\\
       0 & 0 & 0 & 1 & \dots & 0\\
       \vdots & \vdots & \vdots & \vdots & \ddots & \vdots \\
       0 & 0 & 0 & 0 & \dots & 1 \\
       0 & 0 & 0 & 0 & \dots & 0
      \end{pmatrix}.
 $$
 If we consider $X = \PP(W)$, the action is clearly regular and the only fixed point of $e$ corresponds to the vector of highest weight in $V$.
\end{example}

\subsection{Solvable groups}
\begin{lemma}\label{regzer}
Let $\Hs$ be a solvable group. Let $\Ts$ be its maximal torus and $\he_n$ be the nilpotent part of $\he = \Lie(\Hs)$. Assume that $e\in\he_n, h\in\ttt$ are such that $(e,h)$ is an integrable $\bb(\ssl_2)$-pair and that $\Hs$ acts regularly on a smooth projective variety $X$. Then any element of $e+\ttt$ has isolated zeros on $X$.
\end{lemma}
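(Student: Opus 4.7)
The plan is to reduce to the hypothesis that $e$ has isolated zeros by a limiting argument using the $\Cs$-action $\{H^t\}$ generated by $h$, combined with semi-continuity of fibre dimension.

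\textbf{Step 1 (Openness).} I would first show that the set
\[
U = \{ v \in \he \mid V_v \text{ has 0-dimensional zero locus on } X \}
\]
is open in $\he$. Form the total zero scheme $\ZZ_\he \subset \he \times X$ of the total vector field $V_\he$ of Definition \ref{totvec}. Since $X$ is projective, the projection $\pi:\ZZ_\he \to \he$ is proper, and the fibre $\pi^{-1}(v)$ is precisely the zero scheme of $V_v$. Upper semi-continuity of fibre dimension (Chevalley) then implies that the locus of $v$ with $\dim \pi^{-1}(v) \leq 0$ is open. By hypothesis, $e \in U$.

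\textbf{Step 2 (Action of $h$ on $e+\w$).} Since $h\in\ttt$ and $\ttt$ is abelian, $[h,\w]=0$ for every $\w\in\ttt$; together with $[h,e]=2e$ this gives
\[
\Ad_{H^t}(e+\w) = t^2 e + \w \qquad \text{for every } t\in\Cs.
\]
By Lemma \ref{lemad}, translation by $H^t$ induces an isomorphism between the zero scheme of $V_{e+\w}$ and that of $V_{t^2 e + \w}$; in particular these two schemes have the same dimension.

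\textbf{Step 3 (Rescaling).} Because $v \mapsto V_v$ is $\C$-linear, $V_{t^{-2}(t^2 e + \w)} = t^{-2} V_{t^2 e + \w}$, so the zero schemes of $V_{t^2 e + \w}$ and of $V_{e + \w/t^2}$ coincide for $t\neq 0$. Combining with Step 2, the zero scheme of $V_{e+\w/t^2}$ is isomorphic to that of $V_{e+\w}$ for every $t\in\Cs$, and in particular they have the same dimension.

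\textbf{Step 4 (Limit).} As $t\to\infty$, $e + \w/t^2 \to e$ in $\he$. Since $e \in U$ and $U$ is open (Step 1), we have $e + \w/t^2 \in U$ for all sufficiently large $t$, so $V_{e+\w/t^2}$ has finitely many zeros on $X$. By Step 3, $V_{e+\w}$ then also has finitely many zeros, proving the lemma.

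The only delicate point is the openness in Step 1, which rests on properness of $\pi:\ZZ_\he\to\he$; properness follows from $X$ being projective, so there is no real obstacle. The rest of the argument is a direct use of the $\Cs$-rescaling symmetry $\Ad_{H^t}(e+\w)=t^2e+\w$, which is available precisely because $(e,h)$ is an integrable $\bb(\ssl_2)$-pair with $h\in\ttt$.
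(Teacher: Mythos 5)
Your proof is correct and follows essentially the same route as the paper: both arguments rest on Chevalley semicontinuity of fibre dimension for the proper projection of the zero scheme, combined with the rescaling identity $\Ad_{H^t}(e+\w)=t^2e+\w$ to drive $e+\w$ to $e$, where the regularity hypothesis gives a zero-dimensional fibre. The only difference is presentational: the paper phrases it as closedness of the bad locus inside $\ZZ\subset \ttt\times X$ and derives a contradiction at the point $(0,o)$, whereas you phrase it as openness of the good locus in $\he$ and pass to the limit directly.
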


\begin{proof}
We will denote by $\{H^t\}_{t\in\Cs}$ the one-parameter subgroup to which $h$ integrates.
Define $\ZZ \in \ttt \times X$ as the zero scheme of the total vector field restricted to $e+\ttt \cong \ttt$. In other words, for any $\w\in\ttt$, that vector field restricted to $\{\w\} \times X$ equals $V_{e+\w}$ (cf. Definition \ref{totvec}).
Consider also an action of $\Cs$ on $\ttt \times X$ which is defined on $\ttt$ by multiplication by $t^{-2}$  and on $X$ by the action of $H^t$. As $\Ad_{H^t}(e) = t^2 e$, for any $v\in\ttt$ we have 
$$\Ad_{H^t}(e+v) = t^2 e + v = t^2 (e + v^2/t).$$
Therefore by Lemma \ref{lemad} the action on $\ttt \times X$ preserves $\ZZ$.
 
Consider the map $\pi:\ZZ\to \ttt$ defined as the projection onto the first factor of $\ttt \times X$. As it is a morphism of schemes locally of finite type, by Chevalley’s semicontinuity theorem \cite[13.1.3]{EGA43}, the set 
 $$D = \{ (\w,x)\in \ZZ : \dim \pi_{\w} \ge 1 \}$$
 is closed. Here $$\pi_\w:=\pi^{-1}(\w)\subset \ZZ$$ denotes the fibre.  Suppose $D$ is nonempty. Hence we have some $\w\in\ttt$ such that $\dim \{x\in\ZZ : (\w+e)|_{x} = 0\} \ge 1$. Note that for any $t\in\Cs$ we have
$$t^2\w + e = t^2(\w + t^{-2}e) = t^2 \Ad_{H^t}^{-1}(\w + e).$$
Therefore the zero set of $t^2\w + e$ is the same as the zero set of $\Ad_{H^t}^{-1}(\w + e)$, which by Lemma \ref{lemad} is isomorphic -- via the action of $H^t$ -- to the zero set of $\w + e$. Hence for each $t\neq 0$ we have $(t\w,o)\in D$, where $o\in X$ is the unique zero of $e$. By closedness of $D$ we therefore get $(0,o)\in D$. Hence $\dim \pi_0 \ge 1$, which is impossible, as $\pi_0 = \{(0,o)\}$ by our regularity assumption.
\end{proof}

\begin{theorem}\label{isolsolv}
 Assume $\Hs$ and $X$ are as in Lemma \ref{regzer} and $e$ is a principal nilpotent. Then any regular element of $\he$ has isolated zeros on $X$.
\end{theorem}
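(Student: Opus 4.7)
The plan is to reduce the problem directly to Lemma \ref{regzer} by conjugating an arbitrary regular element into the slice $e+\ttt$.

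First I would invoke Lemma \ref{etkost}, which is proved precisely in the solvable setting: every regular element $v\in\he$ is conjugate (via the adjoint action of $\Hs$) to a unique element of the form $e+\w$ with $\w\in\ttt$. So there exists $g\in\Hs$ and $\w\in\ttt$ with $v=\Ad_g(e+\w)$. Here it is essential that $e$ is a principal nilpotent, because this is exactly what ensures $e+\ttt$ meets every regular orbit; without this assumption the Kostant-type slice argument in Lemma \ref{etkost} does not apply.

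Next I would transport the zero set by the action of $g$, using Lemma \ref{lemad}. That lemma says that for any $y\in\he$, any $g\in\Hs$ and any $x\in X$ we have $V_{\Ad_g(y)}|_{gx}=\D g(V_y|_x)$. Since $\D g$ at $x$ is an isomorphism of tangent spaces, the vanishing of $V_{e+\w}$ at $x$ is equivalent to the vanishing of $V_v$ at $gx$. Thus the action of $g$ on $X$ gives a scheme-theoretic isomorphism between the zero scheme of $V_{e+\w}$ and the zero scheme of $V_v$.

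Finally, by Lemma \ref{regzer}, the vector field $V_{e+\w}$ has only isolated zeros on $X$, since $e+\w\in e+\ttt$. Transporting this via $g$ shows $V_v$ also has only isolated zeros, completing the proof. There is no real obstacle here: once one has the conjugation-to-slice (Lemma \ref{etkost}) and the equivariance of the vector field under the adjoint action (Lemma \ref{lemad}), the statement follows by immediate translation. The only conceptual point worth emphasising is that the entire strength of the hypothesis that $e$ is a \emph{principal} nilpotent is used in the first step, in order to guarantee that the slice $e+\ttt$ actually parametrises all regular conjugacy classes in $\he$.
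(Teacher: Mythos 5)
Your proof is correct and is exactly the paper's argument: the paper's proof of Theorem \ref{isolsolv} simply states that the result "follows directly from Lemma \ref{regzer} and Lemma \ref{etkost}", and you have correctly spelled out the implicit transport of zero sets via Lemma \ref{lemad}. No differences worth noting.
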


\begin{proof}
It now follows directly from Lemma \ref{regzer} and Lemma \ref{etkost}. 
\end{proof}
In particular, regular semisimple elements have isolated zeros on $X$. Therefore we get
\begin{corollary}\label{fintor}
There are finitely many $\Ts$-fixed points on $X$.
\end{corollary}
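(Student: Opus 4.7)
The plan is to reduce the corollary to Theorem \ref{isolsolv} by exhibiting a regular semisimple element of $\he$ that lies in $\ttt$, and then observing that every $\Ts$-fixed point of $X$ is forced to be a zero of the associated vector field. Since $X$ is projective, a zero scheme with only isolated points is finite, so this immediately gives the desired finiteness of $X^\Ts$.

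First I would argue that $h$ itself is a regular element of $\he$, so that $\ttt^{\reg}=\ttt\cap\he^{\reg}$ is nonempty. Because $\Hs$ is solvable and principally paired, $\Hs$ is its own Borel subgroup, so $\uu=\he_n$ in the notation of Section \ref{kostgensec}. Lemma \ref{posint} then tells us that every weight of the one-parameter subgroup $\{H^t\}$ on $\he_n$ is a positive even integer, which means $\ad_h$ has no kernel on $\he_n$. Since $\he=\ttt\oplus\he_n$ and $[h,\ttt]=0$, it follows that $C_\he(h)=\ttt$, so $h$ is regular in $\he$. (Alternatively, one could simply note that the set $\ttt^{\reg}$ is used in Corollary \ref{corre} and must be nonempty for the discussion there to be meaningful; once one regular element is produced, the rest of $\ttt^{\reg}$ is open and dense.)

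Next I would apply Theorem \ref{isolsolv} to the regular element $h\in\he$ to conclude that the zero scheme of $V_h$ on $X$ consists of isolated points. Since $X$ is projective, this closed subscheme is necessarily finite. Finally, for any $x\in X^\Ts$, the orbit map $\Ts\to X$, $t\mapsto t\cdot x$, is constant, so its differential at the identity vanishes; in particular, evaluating this differential on $h\in\ttt$ yields $V_h|_x=0$. Thus $X^\Ts$ is contained in the finite zero set of $V_h$, and hence is itself finite.

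The only real subtlety is the first step, namely the existence of a regular element inside $\ttt$; this is exactly the point that fails for non-principally-paired solvable groups such as $\Gs_m\times\Gs_a$ discussed in Section \ref{secreg}. The principally paired hypothesis, through Lemma \ref{posint}, is precisely what rules out a zero-weight component of $\he_n$ under $h$ and thereby supplies a regular semisimple element in $\ttt$. Once that is in hand, the rest is a direct appeal to Theorem \ref{isolsolv} together with the trivial observation that $\Ts$-fixed points are zeros of every vector field coming from $\ttt$.
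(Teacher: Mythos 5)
Your proof is correct and follows essentially the same route as the paper: the paper deduces the corollary from Theorem \ref{isolsolv} by noting that regular semisimple elements (i.e.\ elements of $\ttt^{\reg}$, which is nonempty since $h$ is regular by Lemma \ref{posint}) have isolated, hence finitely many, zeros on the projective variety $X$, and that $X^\Ts$ is contained in that zero set. You merely spell out the existence of the regular element $h\in\ttt$ in more detail than the paper does, which is a harmless (and accurate) elaboration.
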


\subsection{General principally paired groups}
With the use of the results of Section \ref{kostgensec} we can also provide a version of Theorem \ref{isolsolv} for arbitrary principally paired groups.

\begin{lemma}\label{isoreg}
Let a principally paired group $\Hs$ act regularly on a smooth projective variety $X$. Then all the regular elements of $\he$ have isolated zeros on $X$.
\end{lemma}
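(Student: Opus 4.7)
The plan is to reduce to the solvable case, i.e.\ Theorem \ref{isolsolv}, by exhibiting a Borel subgroup $\Bs$ of $\Hs$ that already acts regularly on $X$, and then showing that every regular element of $\he$ can be $\Hs$-conjugated into its Borel subalgebra $\bb$.

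First I would fix the Borel subgroup $\Bs \subset \Hs$ constructed in Section \ref{kostgensec}, with Lie algebra $\bb = \nen \oplus \bb_l$, chosen so that the principal integrable $\bb(\ssl_2)$-pair $(e,h)$ lies in $\bb$. Since $\bb$ contains a maximal torus of $\Hs$, and $e \in \bb$ is regular in $\he$, the element $e$ is also regular in $\bb$ (cf.\ the final paragraph of Section \ref{secreg}); together with $h \in \ttt \subset \bb$, this makes $(e,h)$ a principal integrable $\bb(\ssl_2)$-pair in $\bb$, so $\Bs$ is principally paired. The restricted $\Bs$-action on $X$ is regular in the sense of Definition \ref{defreg}, since the unique zero $o$ of $V_e$ is unaltered. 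Applying Theorem \ref{isolsolv} to this $\Bs$-action on $X$ then gives that every regular element of $\bb$ has isolated zeros on $X$.

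Next, I would show that an arbitrary regular $v \in \he$ is $\Hs$-conjugate to an element of $\bb$. By Theorem \ref{kostarb}, $v$ is conjugate to some $s \in \Ss$. By Proposition \ref{isoquot} the map $\chi: \ttt \to \Ss$ is surjective, so $s = \chi(\w)$ for some $\w \in \ttt$; and by the defining identity \eqref{adchi} of $\chi$ we have $s = \Ad_{A(\w)}(e+\w)$ with $A(\w) \in \U_l^-$. Composing the two conjugations exhibits $v$ as $\Hs$-conjugate to $e + \w \in \bb$. Since $e + \w$ is conjugate to the regular element $v$, it is regular in $\he$, and hence regular in $\bb$ (again because $\bb$ contains a maximal torus).

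Finally I would transfer the conclusion back to $v$: by the previous paragraph $e + \w \in \bb$ is regular in $\bb$, so by the first step it has isolated zeros on $X$; and by Lemma \ref{lemad} conjugation by any $g \in \Hs$ induces a scheme isomorphism between the zero set of $V_v$ and that of $V_{\Ad_g v}$, so $v$ has isolated zeros as well. The main obstacle is the second step: the Kostant section $\Ss$ itself does not sit inside $\bb$ (it lies in $e + \bb_l^-$), so Theorem \ref{kostarb} alone does not land us in a solvable Lie algebra, and one genuinely needs the additional conjugation by $A(\w) \in \U_l^-$ provided by Lemma \ref{congen} to return to $\bb$ without leaving the regular locus. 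Once that is in place, Theorem \ref{isolsolv} does all the remaining work.
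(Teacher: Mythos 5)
Your proof is correct, but it takes a different route from the paper's. The paper reduces to elements of the Kostant section $\Ss = e + C_{\lel}(f_l)$ via Theorem \ref{kostarb} and then \emph{re-runs} the Chevalley-semicontinuity degeneration argument of Lemma \ref{regzer} directly over $\Ss$, using the contracting $\Cs$-action $t\cdot v = t^{-2}\Ad_{H^t}(v)$ constructed in Section \ref{kostgensec} to push any hypothetical positive-dimensional fibre down to one over $e$ itself. You instead avoid any new degeneration argument: you conjugate an arbitrary regular element all the way back into $e+\ttt\subset\bb$ (via $\Ss$, the surjectivity of $\chi$ on closed points from Proposition \ref{isoquot}/Theorem \ref{restkos}, and the identity $\chi(\w)=\Ad_{A(\w)}(e+\w)$ of Lemma \ref{congen}), check that the Borel subgroup $\Bs$ is itself a principally paired solvable group acting regularly, and then quote Theorem \ref{isolsolv}, transporting the conclusion by Lemma \ref{lemad}. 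All the ingredients you invoke are established in the paper before this point, and your observation that $e+\w$ remains regular in $\bb$ because $\bb$ contains a maximal torus of $\Hs$ is exactly the right justification. What each approach buys: the paper's version is self-contained once the contracting action on $\Ss$ is known and does not need $\chi$ to be surjective, while yours recycles the solvable case wholesale and makes the logical dependence on Lemma \ref{regzer} pass only through Theorem \ref{isolsolv}; in fact your second step could be shortened further, since the proof of Theorem \ref{kostarb} already shows that every regular element is conjugate into $\bb$ and hence, by Lemma \ref{etkost}, to some $e+v$ with $v\in\ttt$, without any detour through $\Ss$ and $\chi$.
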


\begin{proof}
 We know from Lemma \ref{kostarb} that every regular element of $\he$ is conjugate to an element of $\Ss$ from \eqref{ppS}. Therefore it is enough to prove the statement for the elements of $\Ss$. The argument is the same as in the proof of Lemma \ref{regzer}, using the contracting action from Section \ref{kostgensec}. Note that if $p\in X$ is a zero of $x+e$, then $H^t p$ is a zero of $\Ad_{H^t}(x)/t^2 + e$. Therefore if $(x+e,p) \in D$, we have $(\Ad_{H^t}(x)/t^2 + e,H^tp)\in D$ for any $t\in\Cs$ and then $\left(e,\lim_{t\to \infty} H^t p\right) \in D$.
\end{proof}

\chapter{Zero schemes and ordinary cohomology}\label{ordcoh}
In this chapter we review already existing results that relate topological invariants, in particular the cohomology ring, to local invariants of vector fields. A good review, with the focus on torus actions, can be also found in \cite{carrell}.

\section{Quantitative invariants}
The idea of extracting global topological data of manifolds from local data of a vector field has a long history. The first notable example is of course the Poincar\'{e}--Hopf theorem.

\begin{theorem}[Poincar\'{e}--Hopf]
 Let $X$ be a smooth compact manifold. Let $V$ be a vector field on $X$ with isolated zeros. Then the sum of indices of the zeros is equal to the Euler characteristic $\chi(X)$.
\end{theorem}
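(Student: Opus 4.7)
The plan is to separate the argument into two logically independent pieces: producing one vector field whose index sum is manifestly $\chi(X)$, and showing that the sum of local indices is independent of the chosen vector field.

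For the first piece I would use Morse theory. Choose a Morse function $f:X\to\R$ (which exists on any smooth compact manifold), fix a Riemannian metric, and set $V_0=\nabla f$. The zeros of $V_0$ are then the critical points of $f$, all nondegenerate, with local Poincar\'{e}--Hopf index $(-1)^{\lambda(p)}$ at a critical point of Morse index $\lambda(p)$. The Morse handle decomposition equips $X$ with a CW structure having one $\lambda$-cell for each critical point of index $\lambda$, so the cellular computation of the Euler characteristic gives
\[\chi(X)=\sum_{\lambda}(-1)^{\lambda}c_{\lambda}=\sum_{p}\mathrm{ind}_{p}(V_0).\]

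For the second piece I would argue via the Lefschetz fixed point theorem. On compact $X$ the flow $\varphi_{t}$ of $V$ is defined globally for small $|t|$ and is isotopic to $\id_{X}$, so the Lefschetz number satisfies $L(\varphi_{t})=L(\id_{X})=\chi(X)$. For $t\neq 0$ sufficiently small the fixed points of $\varphi_{t}$ coincide with the zeros of $V$, and the Lefschetz trace formula expresses $\chi(X)$ as a sum of local Lefschetz indices at these fixed points. Combining the two pieces reduces the theorem to matching, at each zero $p$ of $V$, the local Lefschetz index of $\varphi_{t}$ with the Poincar\'{e}--Hopf index $\mathrm{ind}_{p}(V)$.

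This local comparison is the main obstacle. At a nondegenerate zero one linearises using $D\varphi_{t,p}=e^{tDV_{p}}$ and checks, by a direct computation of $\det(I-e^{tDV_{p}})$ for small $t$ of the appropriate sign, that the Lefschetz sign at $p$ matches $\mathrm{sign}\det(DV_{p})=\mathrm{ind}_{p}(V)$. At a degenerate but isolated zero $p$ one perturbs $V$ inside a small ball around $p$ to a vector field $V'$ having only nondegenerate zeros there, and uses a degree-theoretic argument on the boundary sphere to show that $\mathrm{ind}_{p}(V)=\sum_{q}\mathrm{ind}_{q}(V')$, reducing the identification to the nondegenerate case already handled. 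Once the local comparison is settled, the two steps chain together to give $\sum_{p}\mathrm{ind}_{p}(V)=L(\varphi_{t})=\chi(X)$ for the originally given $V$.
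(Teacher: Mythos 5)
Your proposal is essentially correct, but note that the thesis does not actually prove this theorem: it is stated as classical background and the reader is referred to Milnor's \emph{Topology from the Differentiable Viewpoint}, whose argument runs through the degree of the Gauss map of the boundary of a tubular neighbourhood of $X$ embedded in Euclidean space. Your route (Morse theory for one vector field, Lefschetz for the general one) is a different and equally standard path, and it is sound modulo two points worth flagging. First, the two halves are logically redundant as written: once you know that $L(\varphi_t)=\chi(X)$ and that the local Lefschetz index of $\varphi_t$ at each zero equals the Poincar\'e--Hopf index (for $t$ of the appropriate sign, to absorb the factor $(-1)^{\dim X}$ coming from $\det(I-e^{tDV_p})\approx(-t)^{\dim X}\det DV_p$), the Lefschetz argument alone yields the theorem for every $V$, and the Morse computation is not needed; conversely, Morse theory plus an independence statement would suffice without Lefschetz. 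Second, the claim that for small $t\neq 0$ the fixed points of $\varphi_t$ coincide with the zeros of $V$ requires justification: a priori $\varphi_t$ could have extra fixed points on periodic orbits of period dividing $t$, and one needs the classical lower bound (in terms of the Lipschitz constant of $V$) on the minimal period of nonconstant periodic orbits of a smooth vector field on a compact manifold to exclude this for $|t|$ small. The reduction of the degenerate case to the nondegenerate one by a perturbation supported in a small ball, with the index preserved because it is the degree of $V/|V|$ on the boundary sphere, is correct and is the same device Milnor uses.
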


A reader interested in the proof should check one of the classical books on differential geometry, e.g. \cite{Milnor}. We will be interested in results that refine the Poincar\'{e}--Hopf theorem. This means that we would like to be able to determine more complicated topological invariants from the vector fields. One particular such result is Bott's residue formula \cite{bott}. First, for any endomorphism $f: W\to W$ of a vector space we denote $c_i(f) = \tr \Lambda^i f$. It is the coefficient of the characteristic polynomial, i.e.
$$\det (f - \lambda \id_W) = \sum_{i=0}^{\dim W} (-1)^i c_{\dim W-i}(f) \lambda^i. $$
Every vector field $V$ on $X$ defines the Lie derivative on the vector fields, which maps a vector field $Y$ to $[V,Y]$. If $V$ vanishes at a point $x\in X$, then for any vector field $Y$, $[V,Y]_x$ only depends on $Y_x$. This means that $V$ defines an endomorphism $L_x:T_{x,X}\to T_{x,X}$. Bott proves that we can compute the Chern numbers of complex manifolds with use of those operators.

\begin{theorem}[Bott, 1967]
 Assume that $X$ is a compact connected complex manifold of complex dimension $m$. Let $V$ be a \emph{nondegenerate} holomorphic vector field, i.e. a vector field with isolated zeros such that $c_m(L_x)$ is nonzero for any zero $x$ of $V$. Consider a polynomial $\Phi \in \Q[c_1,c_2,\dots,c_m]$. We consider $c_i$ to be of weight $i$ and assume that $\Phi$ is of weight at most $m$. We denote the corresponding Chern number by $\Phi(X)$. Then
 $$\sum_{V_x = 0} \Phi(L_x)/c_m(L_x) = \Phi(X).$$
\end{theorem}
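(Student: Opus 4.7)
The strategy, following Bott, combines Chern--Weil theory with a localisation argument. First I would choose any Hermitian metric on the holomorphic tangent bundle $TX$, let $\nabla$ denote the induced Chern connection with curvature $F\in\Omega^{1,1}(X,\End(TX))$, and recall that Chern--Weil theory identifies $\Phi(X)$ with $\int_X \Phi\!\left(\tfrac{i}{2\pi}F\right)$. The goal is to replace $F$ by a modified $\End(TX)$-valued form that vanishes away from the zero set $Z(V)$, thereby localising the integral to the zeros of $V$.

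The key construction is the \emph{moment} of the vector field: $\mu_V := L_V - \nabla_V \in \Gamma(X,\End(TX))$. At a zero $x$ of $V$ the term $\nabla_V$ drops out and $L_V$ depends only on the $1$-jet of $V$, so $\mu_V|_x$ coincides with the endomorphism $L_x: T_{x,X}\to T_{x,X}$. On the open set $U := X \setminus Z(V)$ the nowhere-vanishing $V$ trivialises a complex line in $TX$, and one uses this to construct via an explicit transgression a primitive $\eta$ on $U$ with $d\eta = \Phi\!\left(\tfrac{i}{2\pi}F\right) - \Phi\!\left(\tfrac{i}{2\pi}(F-\mu_V)\right)$. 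The second term on the right-hand side vanishes on $U$ for bidegree reasons once one invokes the assumption $\deg \Phi \le m$, so on $U$ the form $\Phi\!\left(\tfrac{i}{2\pi}F\right)$ is itself $d$-exact.

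Applying Stokes' theorem on $X\setminus\bigsqcup_x B_\epsilon(x)$ and taking $\epsilon\to 0$ reduces $\int_X \Phi\!\left(\tfrac{i}{2\pi}F\right)$ to a sum of boundary integrals of $\eta$ over small spheres around each zero. In a holomorphic chart adapted to the eigendecomposition of $L_x$ (which is invertible precisely because of the hypothesis $c_m(L_x)\neq 0$), the vector field $V$ agrees with the linear vector field $L_x$ modulo higher-order terms. A direct residue calculation on the linear model evaluates the local boundary contribution to $\Phi(L_x)/c_m(L_x)$, and summing over all zeros yields the claimed formula.

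The main obstacle is analytic: one must verify that the transgression primitive $\eta$ has sufficiently mild singularities along $Z(V)$ for the $\epsilon\to 0$ limit of the Stokes argument to be legitimate, and show that the higher-order perturbation of $V$ near each zero does not affect the local residue. The first point is handled by explicit estimates controlling $|\eta|$ in terms of $|V|^{-1}$; the second follows because both sides of the residue formula are continuous under small deformations of $V$ among nondegenerate holomorphic vector fields, so one may as well linearise $V$ near each zero before performing the computation.
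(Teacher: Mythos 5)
The paper itself contains no proof of this theorem: it is quoted as background and attributed to Bott's 1967 article \cite{bott}, so there is no internal argument to compare yours against. Your outline is the standard route --- essentially Bott's own proof, phrased in the language of equivariant localisation: Chern--Weil representation of $\Phi(X)$, the localisation endomorphism $\mu_V=L_V-\nabla_V$ restricting to $L_x$ at each zero, a transgression on $U=X\setminus Z(V)$ built from a $(1,0)$-form $\pi$ with $\pi(V)=1$, then Stokes and a local residue computation. The plan is sound, but two of its pivotal steps are not correct as literally written.

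First, the identity $d\eta=\Phi\bigl(\tfrac{i}{2\pi}F\bigr)-\Phi\bigl(\tfrac{i}{2\pi}(F-\mu_V)\bigr)$ together with the claim that the second term ``vanishes on $U$ for bidegree reasons'' cannot stand: $\Phi\bigl(\tfrac{i}{2\pi}(F-\mu_V)\bigr)$ is an inhomogeneous form whose degree-zero component is $\Phi\bigl(-\tfrac{i}{2\pi}\mu_V\bigr)$, a function with no reason to vanish where $V\neq 0$. What is actually true is either (a) the equivariant statement that $\Phi\bigl(\tfrac{i}{2\pi}(F+\mu_V)\bigr)$ is $(d+\iota_V)$-closed everywhere and $(d+\iota_V)$-exact on $U$, from which one extracts the top-degree component; or (b) Bott's original telescoping transgression, where $\eta$ is $\pi$ wedged with a polynomial in $\mu_V$, $F$ and $\bar\partial\pi$, and it is the \emph{remainder term} of the telescope --- a form of antiholomorphic degree exceeding $m$ --- that dies for type reasons; this is precisely where $\deg\Phi\le m$ and the $(1,1)$-type of the Chern curvature enter. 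You need to commit to one of these and verify the algebra; the bidegree argument applies to the error term of the transgression, not to $\Phi\bigl(\tfrac{i}{2\pi}(F-\mu_V)\bigr)$ itself.

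Second, reducing the local computation to the linear model by ``small deformations of $V$ among nondegenerate holomorphic vector fields'' is not available: $H^0(X,T_X)$ is finite dimensional and may consist only of scalar multiples of $V$, so there is in general no global holomorphic deformation linearising $V$ near a zero. Since the local contribution $\int_{\partial B_\eps(x)}\eta$ depends only on $V$ and the connection in a neighbourhood of the sphere, the correct fix is a local one --- homotope $V$ through nonvanishing smooth vector fields on an annulus around $x$, or evaluate the boundary integral directly as a Grothendieck residue, which for a nondegenerate zero yields $\Phi(L_x)/\det(L_x)=\Phi(L_x)/c_m(L_x)$. As written, this step rests on a continuity argument in a deformation space that may be zero-dimensional, so it needs to be replaced by an actual local computation.
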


Note that if $\Phi = c_m$, then $\Phi(X) = \chi(X)$. The requirement $c_m(L_x)\neq 0$ implies $\operatorname{ind}_x(V) = 1$ and therefore the formula agrees with Poincar\'{e}--Hopf theorem. However, both theorems only allow us to compute quantitative invariants, i.e. numbers associated with the topology of the manifold. Note that Morse theory, or its algebraic counterpart, i.e. Białynicki-Birula decomposition, also allows us to infer the Betti numbers of the manifold/variety by considering the isolated singularities of a vector field. However, this does not give a way of recovering the ring structure on cohomology. The first result of this kind, which allows the computation of the cohomology ring, comes from two papers of Carrell and Lieberman from the 1970s \cite{CL0, CL}.

\section{Carrell--Lieberman theorem for smooth varieties}
\label{seccarli}

In \cite[Main Theorem and Remark 2.7]{CL} the following theorem is proved. We give its formulation and sketch the proof. The paper talks about complex manifold, but in fact the assumptions imply \cite{CL} that the manifold is algebraic. As we are in general interested in algebraic situation, we will work under algebraicity assumption.

\begin{theorem}\label{clthm}
 Let $X$ be a smooth projective complex algebraic variety of dimension $n$. Consider an algebraic vector field $V\in\Vect(X)$ and assume that its zero set is isolated, but nonempty. Let $Z$ denote the zero scheme of $V$. Then the coordinate ring $\C[Z]$ admits an increasing filtration 
 $$F_\bullet: 0 = F_{-1} \subset F_0 \subset F_1 \subset \dots \subset F_n = \C[Z]$$
 such that
 $$H^*(X,\C) \simeq \Gr_F(\C[Z]).$$
The degree on the left is twice the degree on the right, and in particular $H^i(X,\C)$ vanishes for $i$ odd.
\end{theorem}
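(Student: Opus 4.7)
The plan is to study $\OO_Z$ via the Koszul resolution induced by $V$ and to read off both the filtration and its identification with $H^*(X,\C)$ from the associated hypercohomology spectral sequence. Since $V$ has finitely many (hence codimension $n$) zeros on the smooth $n$-dimensional $X$, Theorem~\ref{thmkosz} applied to $\Ee = T_X$ yields the locally free resolution
$$
0 \to \Omega^n_X \to \Omega^{n-1}_X \to \cdots \to \Omega^1_X \to \OO_X \to \OO_Z \to 0,
$$
whose differentials are the contractions $\iota_V$. By Corollary~\ref{corkosz} this gives a cohomological spectral sequence
$$
E_1^{p,q} = H^q(X, \Omega^{-p}_X) \ \Longrightarrow\ H^{p+q}(X, \OO_Z).
$$

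Since $Z$ is a $0$-dimensional closed subscheme of the projective $X$, it is finite and $\OO_Z$ is a skyscraper-type sheaf, so $H^0(X,\OO_Z) = \C[Z]$ and $H^i(X,\OO_Z) = 0$ for $i > 0$. Hence the spectral sequence abuts to $\C[Z]$ concentrated in total degree $0$, and I take the filtration $F_\bullet$ on $\C[Z]$ to be the one arising from this spectral sequence. Convergence then gives
$$
F_p/F_{p-1} \cong E_\infty^{-p,p} \subseteq E_1^{-p,p} = H^p(X,\Omega^p_X) = H^{p,p}(X),
$$
so $\operatorname{Gr}_F \C[Z]$ is a subquotient of $\bigoplus_p H^{p,p}(X)$. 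In particular the filtration has length at most $n$, matching the statement.

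For the identification with $H^*(X,\C)$, I would compare dimensions. Using the Koszul resolution and Hirzebruch--Riemann--Roch together with the Hodge decomposition gives
$$
\dim_\C \C[Z] = \chi(X,\OO_Z) = \sum_{p=0}^n (-1)^p \chi(X,\Omega^p_X) = \sum_{p,q} (-1)^{p+q} h^{p,q}(X) = \chi(X),
$$
so $\dim \C[Z] = \chi(X) \leq \sum_p h^{p,p}(X) \leq \sum_{p,q} h^{p,q}(X) = \sum_k \dim H^k(X,\C)$. The main task is to upgrade these inequalities to equalities, which is equivalent to the conjunction of two facts: that $h^{p,q}(X) = 0$ for $p \neq q$ (forcing odd cohomology to vanish and $H^{2p}(X,\C) = H^{p,p}(X)$ by Hodge decomposition), and that the spectral sequence degenerates at $E_1$ on the antidiagonal $p+q=0$ (so that $E_\infty^{-p,p} = H^{p,p}(X)$).

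This last step is the principal obstacle. Convergence of the spectral sequence to a module concentrated in total degree $0$ forces intricate cancellations by the differentials $d_r$ on every other antidiagonal, and the Hodge-theoretic dimension count above constrains the total slack available. The delicate point is to exploit the conjugate symmetry $h^{p,q}=h^{q,p}$ together with the Kähler structure on $X$ to conclude simultaneously the diagonal Hodge concentration and the collapse on the surviving antidiagonal, rather than just one or the other. Once these are established, $\operatorname{Gr}_F \C[Z] \cong \bigoplus_p H^{p,p}(X) \cong \bigoplus_p H^{2p}(X,\C) = H^*(X,\C)$ with degrees doubled, exactly as claimed, and the vanishing of odd cohomology is automatic.
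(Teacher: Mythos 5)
Your setup is exactly the paper's: the Koszul resolution of $\OO_Z$ by $\Omega^\bullet_X$ via $\iota_V$, the hypercohomology spectral sequence $E_1^{p,q}=H^q(X,\Omega^{-p})$ converging to $H^{p+q}(X,\OO_Z)=\C[Z]$ in degree $0$, and the filtration induced by convergence. But the heart of the theorem is the $E_1$-degeneration, and you explicitly leave it open ("the principal obstacle\dots the delicate point is to exploit the conjugate symmetry\dots"). The dimension count cannot close this gap: it gives $\dim\C[Z]=\chi(X,\OO_Z)=\chi(X)$ and, from the spectral sequence, $\dim\C[Z]=\sum_p\dim E_\infty^{-p,p}\le\sum_p h^{p,p}(X)$; but to conclude $\Gr_F\C[Z]\cong H^*(X,\C)$ you would need $\chi(X)=\sum_{p,q}h^{p,q}(X)$, i.e.\ the vanishing of all off-diagonal Hodge numbers --- which is part of the conclusion, not an input. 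Hodge symmetry alone gives no such inequality (for a K3 surface $\chi=24>20=\sum_p h^{p,p}$), so no purely numerical argument starting from the data you have assembled can force both the Hodge concentration and the collapse simultaneously.

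The missing idea is the multiplicative structure of the spectral sequence combined with the K\"ahler package. The wedge product makes $E_r^{\bullet,\bullet}$ a ring with $d_r$ a derivation. One first shows $d_1(\omega)=0$ for the K\"ahler class $\omega\in H^1(X,\Omega^1)$: by Serre duality it suffices that $\iota_V(\omega)\wedge\omega^{n-1}\wedge\eta=0$ for $\eta\in H^0(X,\Omega^1)$, which follows from the Leibniz rule $\iota_V(\omega^n\wedge\eta)=\iota_V(\omega^n)\wedge\eta+\omega^n\wedge\iota_V(\eta)$, where the left side vanishes for dimension reasons and $\iota_V(\eta)$ is a global function vanishing on the \emph{nonempty} zero set, hence identically zero. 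Then the Lefschetz primitive decomposition writes every class as $\sum_i\omega^i\alpha_i$ with $\alpha_i$ primitive, and for primitive $\alpha$ one has $\omega^{n-p-q+2r-1}d_r(\alpha)=\omega^{2r-2}d_r(\omega^{n-p-q+1}\alpha)=0$, with hard Lefschetz forcing $d_r(\alpha)=0$. This kills all differentials, and degeneration plus convergence to a module concentrated in total degree $0$ then yields both $H^q(X,\Omega^p)=0$ for $p\ne q$ and $F_p/F_{p-1}\cong H^p(X,\Omega^p)$ at once. Without this (or an equivalent) argument your proposal does not establish the theorem.
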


\begin{remark}
 One might obviously be tempted to ask what the relation between the cohomology ring and the ring of functions on the zero scheme is, if the zeros are not isolated. We will address this question in more detail in Chapter \ref{chapfur}. For now, let us note that in \cite{CL0} the authors prove that if the zero set $Z$ is nonempty, then
 $$H^q(X,\Omega^p) = 0$$
whenever $|p-q| > \dim_\C Z$. In particular, if $\dim Z \le 1$, then the cohomology of $X$ is Tate, i.e. $H^q(X,\Omega^p) = 0$ if $p\neq q$.
\end{remark}

\begin{proof}[Sketch of proof of Theorem \ref{clthm}]
 The vector field $V$ defines a section of the tangent bundle $TX$ of $X$. Consider the Koszul complex defined by that section:
 $$ 0 \to \Omega^n_X \xrightarrow{\iota_V} \Omega^{n-1}_X  \xrightarrow{\iota_V} \dots  \xrightarrow{\iota_V} \Omega^{1}_X  \xrightarrow{\iota_V} \OO_X \to 0. $$
 By Theorem \ref{thmkosz} it is a resolution of $\OO_Z$ and by Corollary \ref{corkosz} we then get the spectral sequence
 $$
	E_1^{pq} = H^q(X,\Omega^{-p})
 $$
convergent to $H^{p+q}(X,\OO_Z)$. We will prove that it degenerates. We inductively prove that all the differentials in the $E_k$ page are trivial for $k=1,2,\dots$. First note that the Koszul complex comes with a pairing $ \wedge: \Omega^i \otimes \Omega^j \to \Omega^{i+j}$. This in turn induces naturally the product structure on the spectral sequence, with the products
 $$ \wedge: E_r^{p_1,q_1} \otimes E_r^{p_2,q_2} \to E_r^{p_1+p_2,q_1+q_2}.$$
 They stay compatible with the differentials, so that
 $$d_r(\alpha\wedge\beta) = d_r(\alpha) \wedge \beta + (-1)^{p_1+q_1} \alpha\wedge d_r(\beta)$$
 whenever $\alpha\in E_r^{p_1,q_1}$, $\beta\in E_r^{p_2,q_2}$.
 
Let $\omega\in H^1(X,\Omega^1)$ be the class of the K\"{a}hler form. We prove first that $d_1(\omega)=0$ in $H^1(X,\OO_X)$. By Serre duality it is enough to prove that it wedges trivially with any element of $H^{n-1}(X,\Omega^n)$. An element of $H^{n-1}(X,\Omega^n)$ is always of the form $\omega^{n-1} \wedge \eta$ for $\eta\in H^0(X,\Omega^1)$. We have
$$d_1(\omega) \wedge \omega^{n-1} \wedge \eta = \iota_V(\omega) \wedge \omega^{n-1}\wedge\eta
=\frac{1}{n} \iota_V(\omega^{n}) \wedge \eta.
$$
Now by the Leibniz identity we have
$$\iota_V(\omega^n \wedge \eta) = \iota_V(\omega^n) \wedge \eta + \omega^n \wedge \iota_V(\eta).$$
The left-hand side vanishes as $\omega^n \wedge \eta=0$ for dimensional reasons. Moreover, $\iota_V(\eta)$ is a function in $H^0(X,\Omega^0)$ which vanishes in zeros of $V$. As the zero set is nonempty, the function is zero. Hence, we get $\iota_V(\omega^n) \wedge \eta = 0$, and therefore $d_1(\omega) = 0$. The higher differentials $d_r$ for $r>1$ then also vanish on the image of $\omega$ for dimensional reasons -- the target is $H^{-r}(X,\Omega^{-1+r}) = 0$.

Now we proceed with induction. For a given $r \ge 1$ we assume that the $r$-th page is
$$ E_r^{pq} = H^q(X,\Omega^{-p}).$$
We know that any element of $H^q(X,\Omega^{-p})$ admits the decomposition
$$\sum_i \omega^i \alpha_i,$$
where $\alpha_i \in H^{q-i}(X,\Omega^{-p-i})$ is primitive. As we have proved $d_r(\omega) = 0$, it remains to prove that $d_r(\alpha) = 0$ if $\alpha$ is primitive. This means that $\alpha\in H^q(X,\Omega^p)$ with $p+q\le n$ and $\omega^{n-p-q+1} \alpha = 0$. We have $d_r(\alpha) \in H^{q-r+1}(X,\Omega^{p-r})$ and hence that $d_r(\alpha)$ vanishes if and only if $\omega^{n-p-q+2r-1}d_r(\alpha)$ does. However, as $d_r(\omega) = 0$, we have
$$\omega^{n-p-q+2r-1}d_r(\alpha) =  \omega^{2r-2} d_r(\omega^{n-p-q+1}\alpha) = 0,$$
where the last equality is the primitivity of $\alpha$.

We proved that the spectral sequence degenerates at $E_1$. Moreover, it converges to the cohomology of $\OO_Z$. However, as $\dim Z = 0$, we have 
$$H^i(X,\OO_Z) =
\begin{cases}
\C[Z] \text{ for } i=0; \\
0 \text{ otherwise.}
\end{cases}
$$
Therefore the convergence means that there is an increasing filtration $F_i$ on the ring $\C[Z]$ such that $F_i/F_{i-1} = H^i(X,\Omega^i)$, and moreover $H^q(X,\Omega^p) = 0$ for $p\neq q$. This proves the claim.
\end{proof}

\begin{remark}
 One can notice that this refines the Poincar\'{e}--Hopf theorem. Indeed, $\dim \C[Z]$ is equal to the sum of indices of zeros of $V$. As the vector field is holomorphic, all the indices are positive. If the vector field $V$ is defined by a derivative of a torus action, then the scheme $Z$ is reduced (Theorem \ref{fixred}) and hence all the indices are equal to 1. The Euler characteristic is equal to their number. In that case, we actually see the additive structure of cohomology by Białynicki-Birula decomposition (Theorem \ref{cohobb}).
\end{remark}

The theorem provides some relation between the ring of functions on $Z$ and the cohomology ring of $X$. However, the filtration is usually not easy to determine. See for example \cite{Kaveh}, \cite{CarKavPup}, where the filtration is worked out e.g. for toric varieties.

There is however a situation, where the filtration is very easy. This is due to Akyildiz and Carrell \cite{AC} (announced in \cite{ACLS}) and requires an auxiliary torus action. We include here the result and its proof, as we will use it throughout.

\begin{theorem}[\cite{AC}, Theorem 1.1] \label{thmgrad}
 Assume that $X$ is a smooth projective variety of dimension $n$. Assume that an algebraic vector field $V\in\Vect(X)$ has isolated, but nonempty zero set. In addition, assume that we are given an action of $\Cs$ on $X$, $(t,x)\mapsto tx$, which satisfies the condition
 $$t_* V = t^k V $$
for some nonzero $k\in\Z$. In other words, $V$ is scaled under the pushforward action of $\Cs$, and the weight of the scaling is nonzero. Then $\Cs$ preserves the zero scheme $Z$ of $V$ and all the weights of its action on $\C[Z]$ are nonnegative multiples of $k$. This gives a grading $\C[Z] = \bigoplus_{i=0}^n \C[Z]_i$, where $\C[Z]_i$ is the part of $\C[Z]$ of weight $-ki$. Moreover, the filtration $F_\bullet$ in Theorem \ref{clthm} satisfies 
$$F_i(\C[Z]) = \bigoplus_{j<i} \C[Z]_j.$$
Hence $\Gr_F(\C[Z]) \simeq \C[Z]$ and therefore
 $$H^*(X,\C) \simeq \C[Z].$$
 as graded rings.
\end{theorem}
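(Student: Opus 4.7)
The plan is to refine the Carrell--Lieberman proof of Theorem~\ref{clthm} by tracking the $\Cs$-action throughout the associated Koszul resolution. First, $\Cs$ preserves $Z$: the hypothesis $t_*V=t^kV$ with $k\neq 0$ means the zero scheme of $V$ coincides with that of $t_*V$, and the latter is $t\cdot Z$; hence $t\cdot Z=Z$ for every $t\in\Cs$. Because $Z$ is finite and $\Cs$ is connected, every closed point of $Z$ is a $\Cs$-fixed point, so $\Cs$ acts by automorphisms on each local Artin ring $\OO_{Z,z}$, and hence on $\C[Z]$.

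Next I would upgrade the Koszul resolution
$$0 \to \Omega^n_X \xrightarrow{\iota_V} \Omega^{n-1}_X \xrightarrow{\iota_V} \cdots \xrightarrow{\iota_V} \OO_X \to 0$$
to a complex of $\Cs$-equivariant sheaves. With the canonical equivariant structure on $\Omega^p_X$, the identity $t_*(\iota_V\omega)=\iota_{t_*V}(t_*\omega)=t^k\,\iota_V(t_*\omega)$ shows that $\iota_V$ shifts the weight by $k$. Twisting the $p$-th term by the character $\chi_{pk}\colon t\mapsto t^{pk}$ turns the differential into an equivariant morphism, yielding an equivariant resolution of $\OO_Z$ with its canonical equivariant structure. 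The resulting Carrell--Lieberman spectral sequence
$$E_1^{p,q} = H^q(X, \Omega^{-p}_X)\otimes \chi_{-pk} \Longrightarrow H^{p+q}(X, \OO_Z)$$
is therefore $\Cs$-equivariant, and it degenerates at $E_1$ by the same argument given in the proof of Theorem~\ref{clthm}, which does not use the $\Cs$-action at all.

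On the antidiagonal $p+q=0$, the limit is $\C[Z]$, which thus inherits a $\Cs$-equivariant filtration $F_\bullet$ with $F_p/F_{p-1}\cong H^p(X,\Omega^p_X)\otimes\chi_{-pk}$. The canonical $\Cs$-action on $H^p(X,\Omega^p_X)$ is trivial, since by Hodge theory this sits as a summand of the singular cohomology $H^{2p}(X,\C)$ and the connected topological group $\Cs$ acts trivially on singular cohomology. Consequently $F_p/F_{p-1}$ is concentrated in the single $\Cs$-weight $-pk$. As the $n+1$ weights $0,-k,\ldots,-nk$ are distinct and $\Cs$ is reductive, $F_\bullet$ splits canonically into its weight spaces, yielding $\C[Z]=\bigoplus_{i=0}^n\C[Z]_i$ with $\C[Z]_i$ of weight $-ki$, together with a graded ring isomorphism $\C[Z]\cong \Gr_F(\C[Z])$. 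Compatibility with multiplication comes from the fact that $\iota_V$ is a graded derivation with respect to the wedge product, so the induced product on $\Gr_F$ is the Hodge cup product; combining with the Hodge decomposition $H^*(X,\C)=\bigoplus_i H^i(X,\Omega^i_X)$ (already forced by Theorem~\ref{clthm}) produces the claimed isomorphism of graded rings.

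The main obstacle is the triviality of the canonical $\Cs$-action on the Hodge pieces $H^q(X,\Omega^p_X)$; once this is pinned down by the connectedness argument on singular cohomology, everything else reduces to careful bookkeeping of $\Cs$-weights along the spectral sequence and the verification that the weight filtration so produced coincides with Carrell--Lieberman's filtration.
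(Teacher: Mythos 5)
Your proposal is correct and follows essentially the same route as the paper's proof: both lift the $\Cs$-action to the Koszul resolution by twisting $\Omega^p_X$ with the character $t^{pk}$, use connectedness of $\Cs$ to see that the induced action on each $H^q(X,\Omega^p)$ is trivial, and conclude that the Carrell--Lieberman filtration must be the weight filtration of the $\Cs$-action on $\C[Z]$. Your extra remarks (splitting the filtration via reductivity of $\Cs$ and distinctness of the weights, and checking ring-structure compatibility) are just more explicit versions of the paper's closing observation that there is only one $\Cs$-invariant filtration with pure weight $ki$ on the $i$-th graded piece.
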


\begin{proof}
First of all, notice that for any $t\in \Cs$, the zero scheme of $V$ is the same as the zero scheme of $t^k V$, hence $\Cs$ preserves $Z$. Now we will lift the action of $\Cs$ on $X$ to an action on the Koszul resolution from Theorem \ref{clthm}. First, for $t\in \Cs$, let $t_p:\Omega^p_X\to \Omega^p_X$ be the pullback of forms along the map $t:X\to X$. Consider the following.

$$
\begin{tikzcd}
0\arrow[r] &
\Omega^n_X \arrow[rr, "\iota_V"] \arrow[d, "t^{kn} t^{-1}_n"]&&
\Omega^{n-1}_X \arrow[rr, "\iota_V"] \arrow[d, "t^{k(n-1)} t^{-1}_{n-1}"] &&
\dots \arrow[r, "\iota_V"] \arrow[d]&
\Omega^{1}_X \arrow[rr, "\iota_V"] \arrow[d, "t^{k} t^{-1}_1"]  &&
\OO_X \arrow[r] \arrow[d,"(t^{-1})^*"] & 0\\
0\arrow[r] &
\Omega^n_X \arrow[rr, "\iota_V"] &&
\Omega^{n-1}_X \arrow[rr, "\iota_V"] &&
\dots \arrow[r, "\iota_V"] &
\Omega^{1}_X \arrow[rr, "\iota_V"] &&
\OO_X \arrow[r] & 0
\end{tikzcd}
$$

The vertical maps commute with the horizontal ones due to the condition
$$t_* V = t^k V.$$
Hence we get a group of automorphisms of the Koszul complex, parametrised by $\Cs$. The spectral sequences for hypercohomology are functorial and we use that property for the sequence 
$$
	E_1^{pq} = H^q(X,\Omega^{-p}) \Rightarrow H^{p+q}(X,\OO_Z)
$$
from the proof of Theorem \ref{clthm}. As the action on $\OO_X$, the zeroth term of the Koszul complex, is defined by the action of $\Cs$, on the right-hand side we see the action of $\Cs$ on $\OO_Z$ which descends from the action on $X$. We need to determine the action on the left-hand side. By definition, $t$ acts on $\Omega^p$ by $t^{kp} t^{-1}_p$. The map $t^{-1}_p$ is the pullback of $p$-forms via the action of $\Cs$ on $X$. As $\Cs$ is connected, for any $t$ the corresponding map is homotopic to the identity, and hence descends to the identity on the level of $H^q(X,\Omega^p)$. Therefore the action on the left-hand side is simply the multiplication by $t^{kp}$. Hence the $\Cs$-invariant filtration $F_\bullet$ on $\C[Z]$ satisfies the property that the action of $\Cs$ on $F_i/F_{i-1}$ is of pure weight $ki$. But there is only one such filtration, and it is the one described in the theorem, coming from the grading by weights.
\end{proof}

However unnatural the assumptions of Theorem \ref{thmgrad} might seem at the first glance, there is in fact a plenty of situations, where they are satisfied. Assume that $\SL_2$ acts on a smooth projective variety $X$. In fact, we only need to assume that its Borel $\Bs(\SL_2)$, acts on $X$. Remember that within $\SL_2$ we have the diagonal maximal torus, i.e. 
$$\left\{ H^t = \left. \begin{pmatrix}
t & 0 \\
0 & t^{-1}
\end{pmatrix} \right| t\in \Cs
\right\}.
$$

In the Lie algebra $\ssl_2$, we have the three distinguished elements $e$, $f$, $h$. Recall
$$
e = 
 \begin{pmatrix}
0 & 1 \\
0 & 0
\end{pmatrix}.
$$
Then $H^t e H^{-t} = t^2 e$, i.e. $\Ad_{H^t}(e) = t^2 e$. Now if we take $V = V_e$ to be the vector field defined by the infinitesimal action of $e$ on $X$, then by Lemma \ref{lemad} we get
$$t_* V = t^2 e.$$
Then if the action is regular, i.e. $e$ has a single zero on $X$, the assumptions of Theorem \ref{thmgrad} are satisfied with $k=2$.

\begin{example}
 This is a continuation of Example \ref{exsl2}. Recall that we consider the action of $\SL_2$ on $\PP(W)$, where $W$ is the $n+1$-dimensional irreducible representation of $\SL_2$. The element $e$ acts with the matrix
 $$\begin{pmatrix}
       0 & 1 & 0 & 0 & \dots & 0\\
       0 & 0 & 1 & 0 & \dots & 0\\
       0 & 0 & 0 & 1 & \dots & 0\\
       \vdots & \vdots & \vdots & \vdots & \ddots & \vdots \\
       0 & 0 & 0 & 0 & \dots & 1 \\
       0 & 0 & 0 & 0 & \dots & 0
      \end{pmatrix}.
 $$
 As we noted, the action is regular, hence the assumptions of Theorem \ref{thmgrad} are satisfied. Now let us determine the scheme structure on the zero scheme $Z$ of $e$ and the grading defined by the $\Cs$ action. Note that $h$ acts on $W$ with the matrix
 $$
 \begin{pmatrix}
       n & 0 & 0 & \dots & 0\\
       0 & n-2 & 0 & \dots & 0\\
       0 & 0 & n-4 & \dots & 0\\
       \vdots & \vdots & \vdots & \ddots & \vdots \\
       0 & 0 & 0 & \dots & -n
      \end{pmatrix},
 $$
 i.e. $H^t\in \SL_2$ acts with the matrix
 $$
 \begin{pmatrix}
       t^{n} & 0 & 0 & \dots & 0\\
       0 & t^{n-2} & 0 & \dots & 0\\
       0 & 0 & t^{n-4} & \dots & 0\\
       \vdots & \vdots & \vdots & \ddots & \vdots \\
       0 & 0 & 0 & \dots & t^{-n}
      \end{pmatrix}.
 $$
 The only zero of $e$ has coordinates $[1:0:0\dots:0]$, hence it lies in the $\Cs$-invariant affine patch $X_o$ consisting of elements $[1:x_1:x_2:\dots:x_n]$. Then we have
 $$
 \begin{pmatrix}
       t^{n} & 0 & 0 & \dots & 0\\
       0 & t^{n-2} & 0 & \dots & 0\\
       0 & 0 & t^{n-4} & \dots & 0\\
       \vdots & \vdots & \vdots & \ddots & \vdots \\
       0 & 0 & 0 & \dots & t^{-n}
 \end{pmatrix}
 \begin{pmatrix}
 1 \\
 x_1 \\
 x_2 \\
 \vdots \\
 x_n
 \end{pmatrix}
 =
 \begin{pmatrix}
 t^n \\
 t^{n-2} x_1 \\
 t^{n-4} x_2 \\
 \vdots \\
 t^{-n} x_n
 \end{pmatrix}
 $$
and $[t^n:t^{n-2}x_1:t^{n-4} x_2:\dots:t^{-n}x_n] = [1:t^{-2} x_1 : t^{-4} x_2:\dots:t^{-2n} x_n]$. Hence we see that the action of $H^t$ on $X_o$ scales the $x_i$ coordinate by $t^{-2i}$. Then when we consider the action on the ring of functions $\C[X_o] = \C[x_1,x_2,\dots,x_n]$, each $x_i$ variable is scaled by $t^{2i}$. This is because we act on the functions by $(t^{-1})^*$. This defines the grading on $\C[X_o]$. Now we need to find the ideal that cuts out $Z$ from $X_o$. Note that

$$\begin{pmatrix}
       0 & 1 & 0 & 0 & \dots & 0\\
       0 & 0 & 1 & 0 & \dots & 0\\
       0 & 0 & 0 & 1 & \dots & 0\\
       \vdots & \vdots & \vdots & \vdots & \ddots & \vdots \\
       0 & 0 & 0 & 0 & \dots & 1 \\
       0 & 0 & 0 & 0 & \dots & 0
\end{pmatrix}
\begin{pmatrix}
 1 \\
 x_1 \\
 x_2 \\
 \vdots \\
 x_{n-1} \\
 x_n
 \end{pmatrix}
 = \begin{pmatrix}
 x_1 \\
 x_2 \\
 x_3 \\
 \vdots \\
 x_{n} \\
 0
 \end{pmatrix}
$$
Now we have 
\begin{multline*}
(x_1,x_2,x_3,\dots,x_n,0) =
\\
 x_1 \cdot (1,x_1,x_2,\dots,x_{n-1},x_n) + (0,x_2-x_1^2,x_3-x_1x_2,\dots,x_n-x_1x_{n-1},-x_1x_n).
\end{multline*}
This is an equality on vectors tangent to $W$ in the point $(1,x_1,x_2,\dots,x_n)$. The first summand is mapped to 0 when we pass to the quotient $\PP(W)$. The second is tangent to the affine subspace with first coordinate 1. Therefore it maps to the vector $(x_2-x_1^2,x_3-x_1x_2,\dots,x_n-x_1x_{n-1},-x_1x_n)$ tangent to $X_o$. 

The ideal of $Z$ in $\C[X_o]$ is then generated by the coordinates $x_2-x_1^2$, $x_3-x_1x_2$, \dots, $x_n-x_1x_{n-1}$, $-x_1x_n$. This means that in $\C[Z]$ all the generators are determined by $x_1$. Explicitly $x_2 = x_1^2$, $x_3 = x_1^3$, \dots, $x_n = x_1^n$. The last equation then reduces to $x_1^{n+1} = 0$. Hence
$$\C[Z] = \C[x_1]/(x_1^{n+1})$$
and $x_1$ has degree $1$ in the grading defined by the action of $\Cs$, as in Theorem \ref{thmgrad}. Hence it corresponds to a degree $2$ element in $H^*(\PP^n,\C)$. We recover the standard equality
$H^*(\PP^n,\C) = \C[x_1]/(x_1^{n+1})$
with $x_1\in H^2(\PP^n,\C)$.
\end{example}

\section{Singular varieties}
So far, the results above have been formulated only for smooth varieties. A natural question arises, whether something similar can be said in a singular case. The situation is much trickier, as in general the tangent vectors do not form a bundle, and moreover the cohomology cannot be expressed as $\bigoplus H^q(X,\Omega^p)$. 

However, if we can embed the singular variety with a regular action of $\Bs_2$ in another, now smooth variety also with a regular action of $\Bs_2$, we can use the ambient variety to talk about the vector fields. We sketch in short the story outlined in more detail in \cite[\S 6.2]{carrell}. Note first the following easy observation.

\begin{proposition}
 Assume that $\Bs_2$ acts regularly on a smooth projective variety $X$. Suppose $Y\subset X$ is a smooth $\Bs_2$-invariant closed subvariety. Let $Z \into X$ be the zero scheme of $V_e$, the vector field defined by the infinitesimal action of $e$. Then the scheme-theoretic intersection $Z\cap Y$ is the zero scheme of the vector field defined by the infinitesimal action of $e$ on $Y$. The restriction $\C[Z]\to \C[Z\cap Y]$ fits into the commuting diagram
\begin{equation}
\begin{tikzcd}
\C[Z] \arrow[r] \arrow[dd]
& H^*(X,\C) \arrow[dd]
\\ \\
\C[Z\cap Y] \arrow[r]
& H^*(Y,\C),
\end{tikzcd}
\end{equation}
where the horizontal maps are the ones from Theorem \ref{thmgrad}.
\end{proposition}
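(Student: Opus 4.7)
The key observation is that $\Bs_2$-invariance of $Y$ forces $V_e$ to be tangent to $Y$: the additive one-parameter subgroup $\exp(te)\subset \Bs_2$ preserves $Y$, and differentiating this action yields tangency at every point of $Y$. Everything else should follow by functoriality. The plan splits naturally into (a) the scheme-theoretic identification and (b) the commutativity of the diagram.

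For (a), I would work with the defining ideals. The zero scheme $Z\subset X$ is cut out by the image of the contraction $\iota_{V_e}\colon \Omega^1_X\to \OO_X$ (Definition \ref{defzeros}), so the defining ideal of $Z\cap Y$ in $\OO_Y$ is, by definition of the scheme-theoretic intersection, the image of $\Omega^1_X|_Y\to \OO_Y$ obtained by restriction. Tangency of $V_e$ to $Y$ means that this restricted contraction annihilates the conormal sheaf $\mathcal N^*_{Y/X}\subset \Omega^1_X|_Y$, so it factors through the surjection $\Omega^1_X|_Y\twoheadrightarrow \Omega^1_Y$. The factored morphism is exactly $\iota_{V_e|_Y}\colon \Omega^1_Y\to \OO_Y$, and by surjectivity both maps have the same image. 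Hence $Z\cap Y$ coincides scheme-theoretically with the zero scheme of $V_e|_Y$ on $Y$.

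For (b), I would invoke functoriality of the Koszul complex and of its spectral sequence from the proofs of Theorems \ref{clthm} and \ref{thmgrad}. The factorisation above promotes to a morphism of Koszul complexes $\Lambda^\bullet \Omega^1_X\to i_*\Lambda^\bullet \Omega^1_Y$ intertwining contractions by $V_e$ and $V_e|_Y$, where $i\colon Y\hookrightarrow X$ is the inclusion. Passing to hypercohomology gives a morphism of spectral sequences which on $E_1$ is the pullback $H^q(X,\Omega^p_X)\to H^q(Y,\Omega^p_Y)$ and on the abutment is the restriction $\C[Z]\to \C[Z\cap Y]$ (the abutments are concentrated in total degree zero since the zero schemes are finite, which makes the abutment identification completely transparent and manifestly $i^*$-natural). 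Because $\Bs_2$ preserves $Y$, the auxiliary $\Cs$-action of Theorem \ref{thmgrad} is present on $Y$ and $i$ is equivariant, so both sequences degenerate on $E_1$ by the same argument and the resulting graded isomorphisms with $\C[Z]$ and $\C[Z\cap Y]$ are intertwined by $i^*$. Composing with the naturality of the Hodge-type identification $H^{2p}(X,\C)\simeq H^p(X,\Omega^p_X)$ under pullback, which is in force here because $H^q(X,\Omega^p_X)=0$ for $p\neq q$ by Theorem \ref{clthm}, produces the claimed commutative square.

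The main obstacle is the bookkeeping: the horizontal isomorphisms of Theorem \ref{thmgrad} are really composites $H^*(X,\C)\simeq \bigoplus_p H^p(X,\Omega^p_X)\simeq \mathrm{Gr}\,\C[Z]\simeq \C[Z]$, so one must check naturality under $i^*$ of each factor separately. The first factor is standard Hodge-theoretic naturality; the second is the Koszul spectral sequence functoriality sketched above; and the third is the splitting of the filtration by the $\Cs$-grading, which is functorial because $i$ is $\Cs$-equivariant. None of these steps introduces new ideas beyond those already developed in the excerpt, so I expect the proof to be short once the tangency of $V_e$ to $Y$ is noted.
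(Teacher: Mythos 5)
The paper states this proposition without proof, labelling it an ``easy observation'' and deferring to Carrell's survey, so there is no in-text argument to compare yours against; on its own terms your proposal is correct and essentially complete. The three ingredients you isolate --- tangency of $V_e$ to $Y$ from invariance under $\exp(te)$, the factorisation of the restricted contraction $\Omega^1_X|_Y\to\OO_Y$ through the conormal sequence to identify $Z\cap Y$ with the zero scheme of $V_e|_Y$, and the naturality of the Koszul hypercohomology spectral sequence together with $\Cs$-equivariance of the inclusion to split the filtration compatibly --- are exactly what is needed, and each step is carried out correctly. The one point you should make explicit is that the $\Bs_2$-action on $Y$ is itself regular, i.e.\ that $Z\cap Y$ is nonempty: this follows from the Borel fixed point theorem (Theorem \ref{borelfix}) applied to the additive subgroup generated by $e$ acting on the projective variety $Y$, which forces the unique zero $o$ of $V_e$ on $X$ to lie in $Y$. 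Without this observation, Theorems \ref{clthm} and \ref{thmgrad} do not apply to $Y$ and the bottom horizontal isomorphism in your diagram is not even defined, so it deserves a sentence rather than being left implicit.
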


Even if $Y$ is not smooth, we can define the intersection $Z\cap Y$. One would then hope that $\C[Z\cap Y]$ will be still isomorphic to $H^*(Y,\C)$, with the isomorphism fitting in the commuting diagram as above. Note that the map $\C[Z]\to \C[Z\cap Y]$ is the restriction of global functions from an affine scheme to a subscheme. Therefore it is surjective, and so if we want a singular $\Bs_2$-invariant $Y\subset X$ to satisfy the Proposition, the restriction map $H^*(X,\C) \to H^*(Y,\C)$ must also be surjective. Akyildiz--Carrell--Lieberman in \cite{ACL} prove the following.

\begin{theorem}
Let $\Bs_2$ act regularly on a smooth projective variety $X$. Assume that $Y\subset X$ is a $\Bs_2$-invariant closed subvariety such that $H^*(X,\C)\to H^*(Y,\C)$ is a surjection. Then there is a surjective homomorphism of graded $\C$-algebras $\C[Z\cap Y] \to H^*(Y,\C)$ such that the following diagram commutes:
 \begin{equation}
\begin{tikzcd}
\C[Z] \arrow[r] \arrow[dd]
& H^*(X,\C) \arrow[dd]
\\ \\
\C[Z\cap Y] \arrow[r]
& H^*(Y,\C).
\end{tikzcd}
\end{equation}
The homomorphism $\C[Z\cap Y]\to H^*(Y,\C)$ is an isomorphism if $\dim H^*(Y,\C) = \dim \C[Z\cap Y]$.
\end{theorem}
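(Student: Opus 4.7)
The plan is to first construct the desired homomorphism, then show it descends to the quotient $\C[Z \cap Y]$, and finally deduce the isomorphism statement from a dimension count. Compose the Akyildiz--Carrell isomorphism $\alpha \colon \C[Z] \xrightarrow{\cong} H^*(X,\C)$ of Theorem~\ref{thmgrad} with the assumed surjection $\rho \colon H^*(X,\C) \twoheadrightarrow H^*(Y,\C)$ to obtain a surjective graded $\C$-algebra map $\beta := \rho \circ \alpha \colon \C[Z] \twoheadrightarrow H^*(Y,\C)$. All that is then needed is to verify that $\beta$ annihilates the ideal $J := \ker(\C[Z] \twoheadrightarrow \C[Z \cap Y])$; once this is done, $\beta$ factors through a surjection $\gamma \colon \C[Z \cap Y] \twoheadrightarrow H^*(Y,\C)$ and the commuting square is built in by construction.

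To identify $J$ concretely, recall that regularity of the action forces $Z$ to be supported at the unique $e$-fixed point $o$, and the Borel fixed point theorem (Theorem~\ref{borelfix}) applied to $Y$ (complete, nonempty, $\Bs_2$-invariant) shows that $o \in Y$. In an affine $\Bs_2$-invariant neighbourhood $U$ of $o$ the ideal of $Y$ is generated by finitely many functions $f_1, \ldots, f_s$, and $J$ is the ideal of $\C[Z]$ generated by their restrictions $\bar{f}_i$. Thus I need to show that each $\alpha(\bar{f}_i)$ lies in the kernel of $\rho$. The natural way to do this is via the Koszul resolution of Theorem~\ref{thmkosz}: tensoring the short exact sequence
\begin{equation*}
0 \to \I_Y \to \OO_X \to \OO_Y \to 0
\end{equation*}
with the locally free Koszul complex $K^\bullet_{V_e}$ of $V_e$ yields a short exact sequence of complexes, which in turn induces a morphism between the Carrell--Lieberman hypercohomology spectral sequence for $K^\bullet_{V_e}$ (whose degeneration produces $\alpha$) and the analogous hypercohomology for $K^\bullet_{V_e} \otimes \OO_Y$, whose $0$-th hypercohomology is $\C[Z \cap Y]$. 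Because the $\Cs$-action arising from the Borel embedding preserves every piece of the picture and $Y$ is $\Bs_2$-invariant, the weight filtrations of Theorem~\ref{thmgrad} restrict compatibly. The hypothesis that $\rho$ is surjective forces $H^{\mathrm{odd}}(Y,\C) = 0$ (since $H^{\mathrm{odd}}(X,\C) = 0$ by Theorem~\ref{clthm}), so no parasitic classes obstruct the comparison, and chasing weights through the square gives $\beta(J) = 0$.

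Once $\gamma \colon \C[Z \cap Y] \twoheadrightarrow H^*(Y,\C)$ is in hand, the last sentence of the theorem is immediate: a surjective map of finite-dimensional graded $\C$-vector spaces of equal total dimension is an isomorphism, and grading is preserved by construction. The main obstacle is the middle step: one cannot apply the Carrell--Lieberman machine directly to the singular variety $Y$, so the strategy is to push everything onto the ambient smooth $X$ and exploit $\Bs_2$-invariance to transport the $\Cs$-weight filtration across the restriction $\rho$. The delicate point is to verify that the $\bar{f}_i$ really correspond, under $\alpha$, to classes that restrict trivially to $Y$ in each filtration level, rather than merely after passing to the associated graded; this is where the assumed surjectivity of $\rho$ is used crucially, together with the fact that $Z$ is a finite-length Artinian scheme supported at $o$ so that passing from local equations in $\OO_{X,o}$ to elements of $\C[Z]$ is well-controlled.
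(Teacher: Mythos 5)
Your reduction of the theorem to the single claim $\beta(J)=0$, where $\beta$ is the composite $\C[Z]\xrightarrow{\ \alpha\ }H^*(X,\C)\twoheadrightarrow H^*(Y,\C)$ and $J$ is the ideal of $Z\cap Y$ in $\C[Z]$, is correct, and so are the surrounding observations ($o\in Y$ by the Borel fixed point theorem, $J$ generated by the images in $\C[Z]$ of local equations of $Y$ at $o$, the final dimension count). But $\beta(J)=0$ is precisely the content of the theorem, and your argument for it is not a proof. Tensoring the Koszul complex with $\OO_Y$ does give a complex whose zeroth hypercohomology is $\C[Z\cap Y]$, but its first page is $E_1^{pq}=H^q(Y,\Lambda^{-p}\Omega^1_X|_Y)$, and for singular $Y$ there is no Hodge-theoretic identification of $\bigoplus_p H^p(Y,\Omega^p_X|_Y)$ with $H^*(Y,\C)$, no degeneration statement (the Carrell--Lieberman argument of Theorem \ref{clthm} uses Serre duality, the K\"ahler class and the primitive decomposition on the smooth projective $X$, none of which survive restriction to $Y$), and no identification of the edge map with the topological restriction $H^*(X,\C)\to H^*(Y,\C)$. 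The sentences ``the weight filtrations restrict compatibly'' and ``chasing weights through the square gives $\beta(J)=0$'' are where the mathematics would have to happen; surjectivity of $\rho$ is invoked only to kill odd cohomology of $Y$, which does not by itself relate the coherent data on $Y$ to $H^*(Y,\C)$.

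The route actually used (in \cite{ACL}, and in this thesis for the equivariant generalisations, Proposition \ref{sing} and Theorem \ref{singful}) goes in the opposite direction and avoids any Carrell--Lieberman-type spectral sequence on the singular $Y$: surjectivity of $H^*(X,\C)\to H^*(Y,\C)$ together with Lemma \ref{chern1} shows that $H^*(Y,\C)$ is generated by Chern classes of $\Bs_2$-linearised vector bundles, and to such a class one attaches an explicit function on the zero scheme by the trace formula of Lemma \ref{nicefun}, localising at the fixed points lying in $Y$; well-definedness, the commutativity of the square, and surjectivity (from restriction of functions on affine schemes to closed subschemes) are then checked directly. If you want to salvage your approach you must prove, not assert, that the filtration on $\C[Z]$ coming from the ambient spectral sequence carries the image of the ideal of $Y$ at $o$ into $\ker\bigl(H^*(X,\C)\to H^*(Y,\C)\bigr)$; as written, that step is missing.
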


The last line of the theorem follows immediately from surjectivity. It turns out that the condition $\dim H^*(Y,\C) = \dim \C[Z\cap Y]$ is not void, although the examples are not easy to produce. A typical situation, where the conditions of the theorem are satisfied, is $X = \Gs/\Ps$ being some generalised flag variety for a reductive group $\Gs$ and $Y\subset X$ being a Schubert subvariety, see \cite{ACL}, \cite{CarBru}. As proved in \cite{2Aky} and \cite{ALP}, the dimension equality is always satisfied if $\Gs = \SL_n$. However, for any simple $\Gs$ different from $\SL_n$, there exists a codimension one Schubert variety of $\Gs/\Ps$, for which $Z = Z\cap Y$. Hence, the theorem does not hold in that case.

This led Carrell in \cite{CarDef} to define the extended zero scheme. Remember in Section \ref{kostgensec} we defined the Kostant section, which for a solvable principally paired group $\Hs$ is equal to $e+\ttt$, where $\ttt$ is a maximal torus of $\Hs$ containing $h$ such that $(e,h)$ is a principal $\bb(\ssl_2)$-pair. In case $\Hs = \Bs_2$, this is just $\Ss = e + \C\cdot h = e + \ttt$, where $h = \diag(1,-1)$ and $\Ts$ is the standard diagonal torus of $\ssl_2$. Consider now the total vector field (Definition \ref{totvec}) on $\bb(\ssl_2)\times X$, restricted to $\Ss \times X$. In other words, this is a family of vector fields on $X$ parametrised by $t\in\C$ such that for each $t$, the vector field equals $V_{e+th}$. Let $\ZZ$ be the zero scheme of this vector field. Clearly, $\ZZ \cap (\{e\}\times X) = Z$ is equal to $\Spec H^*(X,\C)$ by Theorem \ref{thmgrad}. However, $Z$ was a thick point, but the deformation $\ZZ$ turns out to be reduced! Moreover, the reduced intersections turn out to provide the information on cohomology of subvarieties.

\begin{theorem}[\cite{CarDef}, Theorem 1] \label{thmsingcar}
Let $X$ be a smooth projective variety with regular action and let the zero scheme $\ZZ$ be defined as above. Then $\ZZ$ is reduced. Let $Y\subset X$ be a closed $\Bs_2$-invariant subvariety of $X$ such that the restriction on cohomology $H^*(X,\C)\to H^*(Y,\C)$ is a surjection. Define $\ZZ_Y$ to be the \emph{reduced} intersection $\ZZ\cap (\Ss\times Y)$. Then the scheme-theoretic intersection $Z_Y = \ZZ_Y\cap (\{e\}\times Y)$ is isomorphic to $\Spec H^*(Y,\C)$. Moreover, the isomorphism $\C[Z_Y]\to H^*(Y,\C)$ fits into the commutative diagram
 \begin{equation}
\begin{tikzcd}
\C[\ZZ] \arrow[dd] \arrow[r]&  \C[Z] \arrow[r, "\simeq"] \arrow[dd]
& H^*(X,\C) \arrow[dd]
\\ \\
\C[\ZZ_Y] \arrow[r] & \C[Z_Y] \arrow[r, "\simeq"]
& H^*(Y,\C).
\end{tikzcd}
\end{equation}
\end{theorem}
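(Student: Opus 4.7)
The plan is to reduce the statement to an instance of Theorem~\ref{main}, identify $\ZZ_Y$ as an explicit reduced subscheme via equivariant localisation, and conclude by a flatness-based dimension count. First, apply Theorem~\ref{main} to the principally paired group $\Hs=\Bs_2$ acting regularly on $X$: this is exactly the setup here, so the Kostant section is $\Ss=e+\ttt\cong\A^1$, the zero scheme $\ZZ$ is reduced and affine, and there is a graded isomorphism $\phi\colon\C[\ZZ]\xrightarrow{\cong}H^*_{\Bs_2}(X,\C)$ over $\C[\Ss]\cong H^*_{\Bs_2}\cong\C[t]$. Because $H^*(X,\C)\cong\C[\ZZ]/(t)$ is concentrated in even cohomological degrees, $X$ is equivariantly formal (Theorem~\ref{thmfor}); the assumed surjection $H^*(X,\C)\twoheadrightarrow H^*(Y,\C)$ therefore forces $H^{\mathrm{odd}}(Y,\C)=0$, so $Y$ too is equivariantly formal, $H^*_{\Bs_2}(Y,\C)\cong H^*_{\Bs_2}\otimes H^*(Y,\C)$ as modules, $\dim_\C H^*(Y,\C)=|Y^\Ts|$ by localisation to $\Ts\subset\Bs_2$, and the graded Nakayama Lemma (Corollary~\ref{cornak}) lifts the cohomological surjection to a surjection $\psi\colon H^*_{\Bs_2}(X,\C)\twoheadrightarrow H^*_{\Bs_2}(Y,\C)$.

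Next I would describe $\ZZ$ and $\ZZ_Y$ set-theoretically. For regular $s\in\Ss$, Lemma~\ref{etkost} produces $A(s)\in\Bs_2$ conjugating $s$ to an element of $\ttt$, so by Lemma~\ref{lemzer} the zero set of $V_s$ on $X$ is $A(s)\cdot X^\Ts$; thus $\ZZ$ decomposes over the regular locus into $|X^\Ts|$ local sections of $\pi$, one through each $x\in X^\Ts$. Letting $\mathfrak a\subset\C[\ZZ]$ be the ideal $\phi^{-1}(\ker\psi)$, so that $\C[\ZZ]/\mathfrak a\cong H^*_{\Bs_2}(Y,\C)$, the $\Bs_2$-invariance of $Y$ gives $A(s)\cdot X^\Ts\cap Y=A(s)\cdot Y^\Ts$, so that under equivariant localisation the vanishing locus of $\mathfrak a$ on the regular fibre is exactly $\{(s,x):x\in Y^\Ts\}$. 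Taking closures,
\[V(\mathfrak a)=\ZZ\cap(\Ss\times Y)\]
as subsets of $\ZZ$, whence $\ZZ_Y=\Spec(\C[\ZZ]/\sqrt{\mathfrak a})$. Each irreducible component of $\ZZ_Y$ is the closure of a section through some $y\in Y^\Ts$ and dominates $\Ss$, so as $\ZZ_Y$ is reduced and $\Ss$ is a smooth affine curve, the projection $\pi_Y\colon\ZZ_Y\to\Ss$ is torsion-free, hence flat.

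Finally, by flatness $\dim_\C\C[Z_Y]$ equals the length of a generic fibre of $\pi_Y$, which is $|Y^\Ts|=\dim_\C H^*(Y,\C)$. The inclusion $\mathfrak a\subseteq\sqrt{\mathfrak a}$, combined with $\C[Z_Y]=\C[\ZZ_Y]/(t)$, yields a graded surjection
\[H^*(Y,\C)\cong\C[\ZZ]/(\mathfrak a+(t))\twoheadrightarrow\C[\ZZ]/(\sqrt{\mathfrak a}+(t))=\C[Z_Y]\]
between finite-dimensional graded vector spaces of the same dimension, hence an isomorphism. The commutativity of the diagram is then immediate, since each square is an instance of the same sequence of quotient operations (killing $(t)$ horizontally and killing $\mathfrak a$, respectively $\sqrt{\mathfrak a}$, vertically).

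The principal obstacle is the set-theoretic identification $V(\mathfrak a)=\ZZ\cap(\Ss\times Y)$, which bridges the cohomological and geometric sides. It requires verifying that under Theorem~\ref{main}'s isomorphism $\C[\ZZ]\cong H^*_{\Bs_2}(X,\C)$, evaluation at a point $(s,x)\in\ZZ$ with $s$ regular and $x\in A(s)\cdot X^\Ts$ corresponds to equivariant localisation to the $\Ts$-fixed point $A(s)^{-1}x$, so that $\mathfrak a$ vanishes on exactly those sections whose associated $\Ts$-fixed point lies in $Y^\Ts$; once this compatibility is in place, the rest of the argument is formal.
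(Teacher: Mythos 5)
Your argument is essentially correct, but note that the thesis does not actually prove Theorem \ref{thmsingcar} itself: it is quoted from Carrell's paper \cite{CarDef}, and the closest in-text arguments are the proofs of Proposition \ref{sing} and Theorem \ref{singful}, which generalise it. Compared with those, your route is genuinely different. The paper's strategy is to build the map $H^*_\Ts(Y)\to\C[\ZZ_Y]$ directly by localisation to the torus-fixed points lying in $Y$ (formula \eqref{rho} restricted to $Y$), to check that the values assemble into regular functions via generation by Chern classes (Lemmas \ref{nicefun} and \ref{chern2} --- this is where the surjectivity hypothesis enters), to get injectivity from injectivity of localisation on equivariantly formal spaces, and to get surjectivity for free because $\C[\ZZ]\to\C[\ZZ_Y]$ is restriction to a closed subscheme of an affine scheme; the non-equivariant statement about $Z_Y$ then drops out modulo the augmentation ideal. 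You instead transport the kernel of the equivariant restriction through the isomorphism of Theorem \ref{main}, bound the ideal of $\ZZ_Y$ from below by it, and close the gap with a flatness and generic-fibre-length count over $\Ss\cong\A^1$. What your approach buys is that nothing about regularity of localisation functions on $Y$ needs to be re-verified (everything is inherited from the smooth ambient case); what it costs is the bookkeeping about components of $\ZZ_Y$ dominating $\Ss$. Two remarks. First, the set-theoretic identity $V(\mathfrak a)=\ZZ\cap(\Ss\times Y)$ that you single out as the principal obstacle is more than you need: only the containment $\mathfrak a\subseteq I(\ZZ_Y)$ enters your final surjection-plus-dimension-count, and that containment is immediate since elements of $\ker\psi$ localise to zero at every $\zeta_i\in Y^\Ts$ (the reverse inclusion would require exhibiting, for each fixed regular $s$ and each $\zeta_j\notin Y^\Ts$, an element of $\ker\psi$ not vanishing at the corresponding point, which is true but not free). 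Second, the compatibility of evaluation on $\ZZ$ with localisation that you flag as requiring verification is in fact the definition of the isomorphism in the paper, namely \eqref{rho}, so that step holds by construction.
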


The right column contains topological information, and the middle and left columns contain algebraic information. The theorem proves that the middle column corresponds to a topological object. One could ask whether this can be also said about the left column. This was answered by Brion and Carrell in \cite{BC}. The scheme $\ZZ$ turns out to be the spectrum of $\Cs$-equivariant cohomology of $X$. We recall and generalise this theorem in the next section. The generalisation is the main result of \cite{HR}, a joint work of the author and Tam\'{a}s Hausel.

\begin{remark}
One can also ask what we could say about singular subvarieties which do not satisfy the condition of surjective restriction on cohomology. We address this question later in Theorem \ref{singful}.
\end{remark}

\chapter{Zero scheme as the spectrum of equivariant cohomology} \label{chapequivzero}

In \cite{BC} the following theorem is proven.

\begin{theoremc} \label{thmbc}
 Assume that $\Bs_2$, the Borel subgroup upper triangular matrices in $\SL_2$, acts regularly on a smooth projective variety $X$. Let $\Ss = e + \ttt$ be the Kostant section in $\Bs_2$. Let $\ZZ$ be the zero scheme of the vector field defined by the Lie algebra action on $\Ss\times X$. Then $\ZZ \simeq \Spec H^*_{\Cs}(X)$, where $H^*_{\Cs}(X)$ is the cohomology ring of $X$ equivariant with respect to one-dimensional diagonal torus in $\Bs_2$. Moreover, the structure of algebra over $H^*_{\Cs}(\pt)\simeq \C[\ttt]$ is given by the projection map $\ZZ\to \Ss\simeq \ttt$.
\end{theoremc}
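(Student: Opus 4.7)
The plan is to analyze the projection $\pi:\ZZ\to \Ss\cong \A^1$ fiberwise, establish that it is finite flat of constant rank $|X^\Ts|$, prove reducedness via a Koszul resolution, and finally identify $\C[\ZZ]$ with $H^*_{\Cs}(X)$ by a simultaneous embedding into $\C[\ttt]^{|X^\Ts|}$.

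First I would describe the fibers of $\pi$. For $t\neq 0$ the element $e+th\in\bb_2$ is conjugate to the regular semisimple element $th$ by Corollary \ref{corre}, so by Lemma \ref{lemad} the vector field $V_{e+th}$ has zero scheme isomorphic to the fixed scheme of $th$, which coincides with $X^\Ts$ and is reduced by Theorem \ref{fixred}, of cardinality $|X^\Ts|$ (finite by Corollary \ref{fintor}). For $t=0$ the fiber is the Carrell--Lieberman zero scheme $Z$ of $V_e$, supported at the unique zero $o$ by the regularity assumption and of length $\dim_\C H^*(X,\C)$ by Theorem \ref{clthm}. Białynicki--Birula decomposition (Theorem \ref{cohobb}) applied to a generic cocharacter of $\Ts$ gives $\dim H^*(X,\C)=|X^\Ts|$, so all fibers have the same length; combined with Chevalley semicontinuity this forces $\pi$ to be finite and flat.

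Next I would apply Theorem \ref{thmkosz} to the section of $\pi_X^* T_X$ on $\Ss\times X$ defining $\ZZ$. Since $\ZZ$ has pure codimension $\dim X$ equal to the rank of this bundle, the Koszul complex is a locally free resolution of $\OO_\ZZ$; hence $\ZZ$ is a local complete intersection inside the smooth variety $\Ss\times X$, and therefore Cohen--Macaulay. Because the generic fiber of $\pi$ is reduced and $\ZZ$ is Cohen--Macaulay, $\ZZ$ itself is reduced. Finite-ness over the affine $\Ss$ gives affineness of $\ZZ$, and the Koszul complex computes $\C[\ZZ]$ as a free $\C[\Ss]$-module of rank $|X^\Ts|$.

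For the identification, Theorem \ref{clthm} gives $H^{\mathrm{odd}}(X,\C)=0$, so $X$ is $\Cs$-equivariantly formal by Theorem \ref{thmfor} and $H^*_{\Cs}(X)$ is free over $H^*_{\Cs}=\C[\ttt]$ of rank $|X^\Ts|$. Equivariant localisation yields an injection $H^*_{\Cs}(X)\hookrightarrow H^*_{\Cs}(X^\Ts)=\C[\ttt]^{|X^\Ts|}$. Symmetrically, restricting $\ZZ\to\Ss$ to the locus $t\neq 0$ identifies $\ZZ|_{\Ss\setminus\{0\}}$ with $(\Ss\setminus\{0\})\times X^\Ts$; taking scheme-theoretic closure in the flat, reduced $\ZZ$ embeds $\C[\ZZ]\hookrightarrow \C[\Ss\times X^\Ts]=\C[\ttt]^{|X^\Ts|}$. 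Both images are characterised inside $\C[\ttt]^{|X^\Ts|}$ by the same congruence conditions coming from one-dimensional orbit data on $X$: for cohomology via the GKM/Chang--Skjelbred description, and for $\C[\ZZ]$ via the local Koszul equations at each fixed point, which compute the order of contact of $V_{e+th}$ with invariant curves through $X^\Ts$. Matching these conditions gives equality of the two subalgebras of $\C[\ttt]^{|X^\Ts|}$, hence an isomorphism of $\C[\ttt]$-algebras $H^*_{\Cs}(X)\to \C[\ZZ]$. The $\Cs$-action scales $\ttt$ with weight $-2$, which matches the doubling between cohomological and polynomial degrees and recovers the graded isomorphism.

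The main obstacle I expect is the last matching step: showing that the image of $\C[\ZZ]$ inside $\C[\ttt]^{|X^\Ts|}$ satisfies exactly the GKM congruences cutting out $H^*_{\Cs}(X)$. An alternative, which sidesteps explicit GKM combinatorics, is to build the map $H^*_{\Cs}(X)\to \C[\ZZ]$ directly via the Cartan model: a closed equivariant form $\alpha=\sum u^k\alpha_k$ satisfies $\iota_V\alpha_k=d\alpha_{k+1}$, so the restriction of $\alpha_0$ to the zero locus of $V_{e+th}$ descends over the family $\Ss$ to a well-defined function on $\ZZ$. Verifying that this is a well-defined ring homomorphism (independent of the de Rham representative and compatible with products) and using rank-equality with the injectivity from localisation would then give the isomorphism without invoking the GKM description.
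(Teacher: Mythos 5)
The structural half of your argument (finite fibers of constant length, flatness, the Koszul resolution, complete intersection hence Cohen--Macaulay, reducedness from generic reducedness, affineness, and the rank count $|X^\Ts|$) is sound and runs parallel to Lemma \ref{lemcm} and the Poincar\'{e}-series bookkeeping in the paper's proof of Theorem \ref{finsolv}. The gap is in the identification step. Your route (a) hinges on cutting out the image of the localisation $H^*_{\Cs}(X)\hookrightarrow\C[\ttt]^{|X^\Ts|}$ by ``GKM/Chang--Skjelbred congruences from one-dimensional orbit data,'' but for the rank-one torus of $\Bs_2$ the GKM hypothesis fails badly: every non-fixed orbit is one-dimensional, so there are infinitely many of them unless $\dim X\le 1$, Theorem \ref{gkmres} does not apply, and the Chang--Skjelbred description degenerates to a tautology for a rank-one torus (the only proper subtorus is trivial). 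On the zero-scheme side the situation is equally non-combinatorial: since $\ttt\setminus\ttt^{\reg}=\{0\}$ here, all components of $\ZZ$ meet only over $0$, where they all pass through the single point $(0,o)$; the conditions cutting $\C[\ZZ]$ out of $\C[\ttt]^{|X^\Ts|}$ are therefore encoded in the full infinitesimal structure of $\ZZ$ at that one point, not in pairwise congruences along invariant curves. The ``matching of congruences'' you propose is thus not available, and it is in any case the entire content of the theorem rather than a verification.

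Route (b) is closer in spirit to what actually works, but the missing content is precisely regularity. The Cartan-model recipe at best produces, for each class $c$, a set-theoretic function on the closed points of $\ZZ$ (equivalently, by your reducedness result, a candidate element of $\C[\ZZ]$), namely $(\w,x)\mapsto c|_{\zeta_i}(\w)$ with $\zeta_i=\lim_{t\to 0}H^t\cdot x$; the crux is showing this function is regular, and closedness in the Cartan model only gives local constancy of $\alpha_0$ on each fibre. The paper resolves this by proving that $H^*_{\Cs}(X)$ is generated by equivariant Chern classes of $\Bs_2$-linearised bundles (Riemann--Roch applied to the structure sheaves of the $\Bs_2$-stable Bia{\l}ynicki-Birula cells plus graded Nakayama, Lemmas \ref{chern1} and \ref{chern2}) and by exhibiting the function attached to $c_k^\Ts(\Ee)$ as the visibly regular trace $\Tr_{\Lambda^k\Ee_x}\bigl(\Lambda^k(e+\w)_x\bigr)$ (Lemma \ref{nicefun}). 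Surjectivity then requires the graded Poincar\'{e} series comparison via $\C[\ZZ]/\I\C[\ZZ]\cong H^*(X)$ from Theorem \ref{thmgrad}; note that an inclusion of free $\C[\ttt]$-modules of equal rank need not be an equality, so ``rank-equality with injectivity from localisation'' is not by itself sufficient. I recommend replacing your matching step with this Chern-class/trace argument.
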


In \cite{HR} we prove a vast generalisation of this result. This section, based on \cite{HR}, presents this result, proven with use of the techniques presented in Chapter \ref{chapkos}. 

\section{Main theorem for solvable groups}
\label{solvsec}

We first consider a solvable group $\Hs$ acting on a variety $X$. We will prove that if the action is regular, then for maximal torus $\Ts\subset \Hs$ we can find $\Spec H_\Ts^*(X)$ as a particular subscheme of $\ttt\times X$. This will generalise the above-mentioned result of \cite{BC}, i.e. Theorem \ref{thmbc}. The goal of this section is to find necessary assumptions on $\Hs$ and construct the scheme $\ZZ = \Spec H_\Ts^*(X)$ inside $\ttt\times X$.

\subsection{Principally paired solvable groups} \label{secsolvreg}
Throughout this section we assume that $\Hs$ is a principally paired solvable group and $(e,h)$ the principal integrable $\bb(\ssl_2)$-pair within $\Hs$. By $\{H^t\}_{t\in\Cs}$ we denote the one-parameter subgroup to which $h$ integrates. Let $\Ts\subset \Hs$ be the maximal torus which contains it. From Theorem \ref{solv} we have $\Hs = \Ts \ltimes \Hs_u$, where $\Hs_u\subset \Hs$ is the subgroup of unipotent elements. We denote by $r$ the dimension of $\Ts$ (or $\ttt$), equal to the rank of $\Hs$. The torus $\Ts$ acts on the Lie algebra $\he$ by the adjoint action $\Ad$. It splits into two representations $\he = \ttt\oplus \he_n$, where $\he_n = \Lie(\Hs_u)$. The first one is trivial and the weights of the other, $\alpha_1, \alpha_2, \dots, \alpha_k \in\ttt^*$, will be called the \emph{roots} of $\Hs$, in analogy with the roots of a reductive group.
This means that if $v_1$, $v_2$, \dots, $v_k$ are the root vectors, then for any map $\phi:\Cs\to\Ts$ we have $$\Ad_{\phi(t)} (v_i) = t^{\alpha_i(\D \phi|_1(1))} v_i.$$
Recall from Section \ref{secreg} that an element $v\in\he$ is called \emph{regular} if its centraliser has dimension $r$. The (open and dense) set of regular elements in $\he$ is denoted $\he^\reg$. We denote by $\ttt^\reg = \ttt\cap\he^\reg$ the subset of $\ttt$ consisting of regular elements. As any element of $\ttt$ commutes with the whole $\ttt$, the condition of $v\in\ttt$ being regular means $C_\he(v) = \ttt$. This means that $[v,-]$ does not have zeros on $\he_n$, i.e. $\alpha_1(v)$, $\alpha_2(v)$, \dots, $\alpha_k(v)$ are all nonzero. Hence we see that the elements of $\ttt^\reg$ are those in $\ttt$ that are not annihilated by any root of $\Hs$. As $h\in\ttt$ is regular, all the roots are nonzero on $h$ -- by Lemma \ref{posint} they are even positive integers when evaluated on $h$ -- hence non-zero. Therefore $\ttt^\reg$ is a complement of a union of hyperplanes.

In our applications $\Hs$ will mostly be the Borel subgroup of some reductive group $\Gs$. Let us see an example below.

\begin{example}\label{exgr}
 A simple case of the above is $\Hs = \Bs_m =\Bs(\SL_m)$, the Borel subgroup of $\SL_m$ consisting of upper triangular matrices. Let $\bb_m$ be its Lie algebra. We have the torus $\Ts\subset \Bs_m$ consisting of diagonal matrices of determinant $1$ and its Lie algebra $\ttt\subset\bb_m$ consisting of traceless diagonal matrices.
 
For the sake of computations, in the examples we will identify $\ttt$ with $\C^{m-1}$ via the isomorphism
 $$(v_1,v_2,\dots,v_{m-1}) \mapsto 
  \diag(0,v_1,v_2,\dots,v_{m-1}) - \frac{v_1+v_2+\dots+v_{m-1}}{m} I_m,
 $$
  i.e. $(v_1,v_2,\dots,v_{m-1})$ corresponds to the unique matrix $A$ in $\ttt$ with $a_{ii}-a_{11} = v_{i-1}$ for $i=1,2,\dots,m-1$.
 Then we can take e.g.
 $$e = \begin{pmatrix}
       0 & 1 & 0 & 0 & \dots & 0\\
       0 & 0 & 1 & 0 & \dots & 0\\
       0 & 0 & 0 & 1 & \dots & 0\\
       \vdots & \vdots & \vdots & \vdots & \ddots & \vdots \\
       0 & 0 & 0 & 0 & \dots & 1 \\
       0 & 0 & 0 & 0 & \dots & 0
      \end{pmatrix}\in\geg
 $$
 and 
 $$
 h = \begin{pmatrix}
       m-1 & 0 & 0 & \dots & 0\\
       0 & m-3 & 0 & \dots & 0\\
       0 & 0 & m-5 & \dots & 0\\
       \vdots & \vdots & \vdots & \ddots & \vdots \\
       0 & 0 & 0 & \dots & 1-m
      \end{pmatrix},
 $$
 or equivalently $h = (-2,-4,\dots,2-2m)\in \C^{m-1}$. Such $e$ and $h$ form a $\bb(\ssl_2)$-pair. Then
 $$
 H^t = \diag(t^{m-1}, t^{m-3},t^{m-5},\dots, t^{3-m}, t^{1-m}).
 $$
 The regular elements of $\ttt$ are the diagonal traceless matrices with pairwise distinct diagonal entries.
 
 We can generalise this example by taking $\Hs$ to be a Borel subgroup of any reductive group $\Gs$. This choice defines the choice of positive roots (as those whose root vectors lie in $\he$). We can therefore take $e = x_1 + x_2 + \dots + x_s$, where $x_1$, $x_2$, \dots, $x_s$ are the root vectors of $\geg$ corresponding to the positive simple roots ($s=r - \dim Z(\Gs)$). Then $e$ is a regular nilpotent in $\geg$ and $\he$ (see Example \ref{exregnilp}). From the discussion in Section \ref{sl2p} we see that there exists $h$ that satisfies the conditions.
\end{example}

\subsection{Uniform diagonalisations}
We saw in Corollary \ref{corre} that $e+\w$ is always conjugate to $\w$ if $\w\in\ttt^\reg$. In the first case in Example \ref{exjor}, we have a closed formula for the conjugating matrix in case of $\Bs_3$. We generalise this observation here.

\begin{theorem}\label{unif}
There exists a morphism $M:\ttt^\reg\to \Hs$ denoted by $\w\mapsto M_\w$ that satisfies the equality
$$\Ad_{M_\w}(\w) = e+\w$$
for any $\w\in\ttt^\reg$.
\end{theorem}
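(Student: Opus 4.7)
The plan is to exhibit $M_\w$ inside the unipotent radical $\Hs_u$ as the (necessarily unique) element conjugating $\w$ to $\w + e$, and to show this assignment is algebraic by inverting a global morphism of varieties. Concretely, I would consider the morphism
\begin{align*}
\psi: \Hs_u \times \ttt^\reg \to \he_n \times \ttt^\reg, \qquad (u, \w) \mapsto (\Ad_u(\w) - \w, \, \w),
\end{align*}
which is well-defined because $\Ad_u(\w) - \w \in [\he, \he] \subset \he_n$ for $u \in \Hs_u$. Once $\psi$ is shown to be an isomorphism, defining $M : \ttt^\reg \to \Hs$ by $(M_\w, \w) := \psi^{-1}(e, \w)$ gives a morphism satisfying $\Ad_{M_\w}(\w) = \w + e$.

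To prove $\psi$ is an isomorphism, I would first check bijectivity on closed points fibre by fibre. For injectivity of the fibre $\psi_\w$, note that $\psi_\w(u_1) = \psi_\w(u_2)$ forces $u_2^{-1} u_1 \in C_{\Hs_u}(\w)$; the stabiliser Lie algebra is $C_\he(\w) \cap \he_n = \ttt \cap \he_n = 0$ by regularity of $\w$, so $C_{\Hs_u}(\w)$ has trivial Lie algebra and, as a closed subgroup of the connected unipotent group $\Hs_u$ (which in characteristic zero has no nontrivial finite subgroups, since $\exp : \he_n \to \Hs_u$ is an isomorphism of varieties), must be trivial. For surjectivity of $\psi_\w$, I would invoke Corollary \ref{corre}: any $\w + n$ with $n \in \he_n$ is conjugate to $\w$ in $\Hs$, and decomposing the conjugator as $g = ut$ with $u \in \Hs_u$, $t \in \Ts$, the identity $\Ad_t(\w) = \w$ lets one absorb $t$ and take the conjugator in $\Hs_u$.

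Next, I would verify that $\psi$ is étale by computing its differential at an arbitrary $(u_0, \w_0)$. A short calculation using $\Ad_{u_0 e^{sX}}(\w_0 + sV) = \Ad_{u_0}(\w_0 + sV + s[X,\w_0]) + O(s^2)$ yields
\begin{align*}
D\psi_{(u_0, \w_0)}(X, V) = \bigl(\Ad_{u_0}\bigl(V - \ad_{\w_0}(X)\bigr) - V, \, V\bigr),
\end{align*}
and surjectivity of this linear map reduces to the invertibility of $\ad_{\w_0} : \he_n \to \he_n$, which holds precisely because $\w_0$ is regular. Since source and target have the same dimension ($\dim \Hs_u + r = \dim \he_n + r$), the differential is an isomorphism at every point.

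The proof then concludes via Zariski's main theorem: an étale morphism that is injective on closed points is an open immersion, and the surjectivity established above upgrades this to an isomorphism, producing the desired inverse morphism $\psi^{-1}$. The main subtlety I foresee is ensuring the conjugator in Corollary \ref{corre} can genuinely be chosen inside $\Hs_u$, but this is immediate from the decomposition $\Hs = \Hs_u \rtimes \Ts$ combined with the triviality of the adjoint $\Ts$-action on $\ttt$; everything else is a routine assembly of étale, injective, and surjective on closed points over a smooth target.
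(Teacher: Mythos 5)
Your proposal is correct, and its skeleton coincides with the paper's: both consider the conjugation morphism $\Hs_u\times\ttt^\reg\to\he_n\times\ttt^\reg$, prove it is bijective on closed points (existence of a unipotent conjugator via Corollary~\ref{corre} and the decomposition $\Hs=\Hs_u\rtimes\Ts$, uniqueness from regularity of $\w$), and then upgrade the bijection to an isomorphism of varieties so that $M_\w$ can be read off from the inverse. The two arguments diverge in exactly two sub-steps. For uniqueness of the conjugator in $\Hs_u$, the paper shows that the centraliser $C_\Hs(\Aa(\w))$ of the subtorus generated by $\w$ is connected (citing Humphreys) and hence equals $\Ts$, whereas you observe that $C_{\Hs_u}(\w)$ has trivial Lie algebra and that a connected unipotent group in characteristic zero is torsion-free; both are valid, and yours is arguably more elementary. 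For the passage from bijection to isomorphism, the paper applies Grothendieck's form of Zariski's main theorem to the bijective (hence birational) morphism and concludes using finiteness plus normality of the target, with no smoothness computation; you instead compute the differential explicitly, reduce its invertibility to that of $\ad_{\w}\colon\he_n\to\he_n$ --- which is exactly the regularity of $\w$ in the solvable setting, where $[\w,\he]=\he_n$ --- and then invoke ``\'etale plus injective on closed points implies open immersion''. Your route costs a short computation but makes the role of regularity transparent at the infinitesimal level; the paper's route avoids the computation at the price of citing ZMT and normality. Either way the resulting morphism $M\colon\ttt^\reg\to\Hs_u$ is the same.
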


\begin{proof}
From Corollary \ref{corre} we know that for each $\w\in\ttt^\reg$ and $n\in\he_n$ there exists $A\in\Hs$ such that \begin{align}\label{conjugate}\Ad_{A}(\w) = n+\w.\end{align} We have to prove that for $n = e$ we can choose such matrices in a way that varies regularly when $\w$ varies.
 
We know by Theorem \ref{solv} that for any $A \in \Hs$ there exists $V\in\Ts$ such that $AV\in \Hs_u$. Any element of $\Ts$ clearly centralises $\w$. Hence if $A$ is chosen such that $\Ad_{A}(\w) = n+\w$, then also $\Ad_{AV}(\w) = n+\w.$  Hence we can assume that $A\in \Hs_u$. We first show that $A\in \Hs_u$ is unique with respect to \eqref{conjugate}. Indeed, assume on the contrary that $A$, $A'$ are both unipotent and $\Ad_{A}(\w) = \Ad_{A'}(\w) = n + \w$. Then
 $$\Ad_{A^{-1}A'}(\w) = \Ad_{A^{-1}}(n+\w) = \w.$$
 Thus $A^{-1}A'$ centralises $\w$. Hence it centralises $\Aa(\w)$, the smallest closed subgroup of $\Hs$ whose Lie algebra contains $\w$. The group $\Aa(\w)$ is contained in the torus $\Ts$, therefore by \cite[19.4]{Humph} its centraliser $C_{\Hs}(\Aa(\w))$ is connected. But $\Lie(C_{\Hs}(\Aa(\w)))$ has to commute with $\Lie(\Aa(\w))$, which contains $\w$. By the regularity assumption $C_\he(\w) = \ttt$, thus from connectivity we get $C_{\Hs}(\Aa(\w))\subset \Ts$. Therefore $A^{-1}A'\in\Ts$, but as $A^{-1}A'$ is unipotent we get $A^{-1}A' = 1$, hence $A = A'$.

Now consider the map
$$\phi: \Hs_u \times \ttt^\reg \to \he_n\oplus\ttt^\reg$$
$$\phi(A, \w) = \Ad_{A}(\w).$$
We have just proved that $\phi$ is a bijection. Now by Grothendieck's version of Zariski's main theorem (\cite[Theorem 4.4.3]{EGA31}) it can be factored as $\phi = \tilde{\phi}\circ\iota$, where $\iota:\Hs_u\times \ttt^\reg \to Y$ is an open embedding and $\tilde{\phi}:Y\to \he_n\oplus\ttt^\reg$ is finite. By restricting $Y$ to the closure of $\im\iota$, we can assume that $\im\iota$ is dense in $Y$. The map $\phi$ is clearly dominant, and its source is irreducible, hence by \cite[Proposition 7.16]{Harr} it is birational. Therefore $\tilde{\phi}$ is birational as well, but it is finite and its target is normal, hence $\tilde{\phi}$ is an isomorphism. Therefore $\phi$ is an open embedding, which has to be an isomorphism, as it is surjective.

Hence we get the desired map $M:\ttt^\reg\to \Hs_u$ by considering the first coordinate of $\phi^{-1}|_{\{e\}\times\ttt^\reg}$.
\end{proof}

\subsection{Regular actions}
From now on we will assume that our principally paired solvable group $\Hs$ acts on a smooth projective variety $X$ regularly (Definition \ref{defreg}). By Lemma \ref{lemzer} the unique zero $o\in X$ of $e$ is a zero of the whole $\he$. Let $n = \dim X$.

\begin{example}
In Example \ref{exgr2} we see regular actions of reductive group $\Gs$ on flag varieties. In Example \ref{exsl2} we constructed a regular action of $\SL_2$ on $\PP^n$. In both cases, when we restrict to a Borel subgroup, we get a solvable principally paired group (Example \ref{exgr}) acting regularly on smooth projective varieties.
\end{example}

By Corollary \ref{fintor} there are finitely many fixed points of the torus $\Ts$ acting on $X$. We will call them $\zeta_0 = o$, $\zeta_1$, $\dots$, $\zeta_s$. Moreover, combining Lemma \ref{regzer} with Lemma \ref{lemfix} we get that for any $\w\in\ttt^\reg$ the only zeros of $V_\w$ on $X$ are $\zeta_0$, $\zeta_1$, \dots, $\zeta_s$.

Now, following the idea of \cite{BC}, we define the scheme whose coordinate ring will turn out to be the $\Hs$-equivariant cohomology of $X$. This will be an analog of Theorem \ref{thmbc}. As $\Hs$ is homotopically equivalent to its maximal torus $\Ts$, this is the same as the $\Ts$-equivariant cohomology.

\begin{definition}\label{defz}
Let $\ZZ \subset \ttt\times X$ be defined as the zero scheme of the total vector field (Definition \ref{totvec}) restricted to $e+\ttt \cong \ttt$. We will denote that restricted vector field by $V_{e+\ttt}$. In other words, for any $\w\in\ttt$, the vector field $V_{e+\ttt}$ restricted to $\{\w\}\times X$ equals $V_{e+\w}$.
\end{definition}
We will also consider an action of $\Cs$ on $\ttt\times X$ which is defined on $\ttt$ by multiplication by $t^{-2}$ and on $X$ by the action of $H^t$. As $\Ad_{H^t}(e) = t^2 e$, for any $v\in \ttt$ we have $\Ad_{H^t}(e + v) = t^2(e+v/t^2)$. Hence from Lemma \ref{lemad} this $\Cs$-action preserves $\ZZ$.

Our goal will be to prove the following theorem.

\begin{theorem}\label{finsolv}
Let $\Hs$ be a principally paired solvable group acting regularly on a smooth complex projective variety $X$. Then there is a homomorphism
$$\rho: H_\Ts^*(X)\to \C[\ZZ]$$ to be defined in \eqref{rho},
which is an isomorphism of graded $\C[\ttt]$-algebras. Moreover, the zero scheme $\ZZ$ is affine, so that we have the following diagram with horizontal isomorphisms:
$$ \begin{tikzcd}
 \ZZ  \arrow{d}{\pi} \arrow{r}{\rho^*}  &
  \Spec H_\Ts^*(X;\C) \arrow{d} \\
   \ttt \arrow{r}{\cong}&
  \Spec H^*_\Ts.
\end{tikzcd}$$
\end{theorem}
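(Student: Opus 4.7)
The plan is to adapt the strategy of Brion--Carrell \cite{BC}, which established Theorem \ref{thmbc} for $\Hs = \Bs(\SL_2)$, to the broader setting of a principally paired solvable group. Before even writing down $\rho$, I would establish the geometric backbone of $\ZZ$. The projection $\pi: \ZZ \to \ttt$ is proper, since it factors through the proper projection $\ttt \times X \to \ttt$, and has finite fibres because Lemma \ref{regzer} ensures that every vector field $V_{e+\w}$ has only isolated zeros. Hence $\pi$ is a finite morphism, and since $\ttt$ is affine, $\ZZ$ is affine. Surjectivity of $\pi$ is immediate from Lemma \ref{lemzer}: the unique zero $o$ of $V_e$ is a zero of all of $\he$, so $(\w, o) \in \ZZ$ for every $\w$, giving $\dim \ZZ = r$. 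Viewing the vector field $V_{e+\ttt}$ as a section of the rank $n$ bundle $\pi_2^* T_X$ on the smooth variety $\ttt \times X$ of dimension $r + n$, Theorem \ref{thmkosz} then yields a Koszul resolution of $\OO_\ZZ$ and identifies $\ZZ$ as a local complete intersection, in particular Cohen--Macaulay.

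The map $\rho$ will be defined by localisation at the $\Ts$-fixed points $\zeta_0, \ldots, \zeta_s$. Over the open subset $\ttt^\reg \subset \ttt$, Theorem \ref{unif} provides a regular morphism $M: \ttt^\reg \to \Hs$ with $\Ad_{M_\w}(\w) = e + \w$, and Lemma \ref{lemad} identifies the zero set of $V_{e+\w}$ for $\w \in \ttt^\reg$ with $\{M_\w \zeta_0, \ldots, M_\w \zeta_s\}$. Consequently $\ZZ|_{\ttt^\reg}$ decomposes as the disjoint union of the graphs of the sections $\sigma_i: \ttt^\reg \to X$, $\sigma_i(\w) = M_\w \zeta_i$, each isomorphic to $\ttt^\reg$ via $\pi$. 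For $\alpha \in H^*_\Ts(X)$, its localisations $\alpha|_{\zeta_i} \in H^*_\Ts \cong \C[\ttt]$ assemble into a regular function on $\ZZ|_{\ttt^\reg}$ by the rule $(\w, \sigma_i(\w)) \mapsto \alpha|_{\zeta_i}(\w)$, and I would define $\rho(\alpha)$ as the unique extension across $\ttt \setminus \ttt^\reg$. To prove that $\rho$ is an isomorphism of graded $\C[\ttt]$-algebras, the strategy is module-theoretic comparison. The fibre $\pi^{-1}(0)$ is supported at $o$ and equals the zero scheme of $V_e$, whose coordinate ring has $\C$-dimension $\dim_\C H^*(X)$ by Theorem \ref{clthm}; Cohen--Macaulayness of $\ZZ$ combined with graded Nakayama (Corollary \ref{cornak}) then force $\C[\ZZ]$ to be a free graded $\C[\ttt]$-module of rank $\dim_\C H^*(X)$. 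On the topological side, Theorem \ref{clthm} also gives $H^{\mathrm{odd}}(X) = 0$, so $X$ is equivariantly formal by Theorem \ref{thmfor} and $H^*_\Ts(X)$ is a free $\C[\ttt]$-module of the same rank. Surjectivity of $\rho$ then follows from graded Nakayama applied to its specialisation modulo the augmentation ideal of $\C[\ttt]$, which by construction agrees with the Akyildiz--Carrell isomorphism of Theorem \ref{thmgrad}; equality of ranks upgrades surjectivity to an isomorphism.

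The main obstacle is the extension step in the construction of $\rho$: a priori the tuple $(\alpha|_{\zeta_i})_{i=0}^s$ defines only a regular function on $\ZZ|_{\ttt^\reg}$, and one must argue that it extends across the codimension one walls in $\ttt$ where two or more sections $\sigma_i$ collide. This is where the GKM-type compatibility among the localisations $\alpha|_{\zeta_i}$ at pairs of fixed points lying on a common one-dimensional $\Ts$-orbit enters, combined with the Cohen--Macaulay property of $\ZZ$, which forces the codimension one extensions to exist once the numerical matching at the codimension one strata holds. A secondary challenge is tracking the $\Cs$-action generated by $H^t$ carefully throughout the argument, so that $\rho$ respects the gradings on both sides and so that the identification $H^*_\Ts \cong \C[\ttt]$ over $\Spec H^*_\Ts$ is compatible with $\pi$.
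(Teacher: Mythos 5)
Your overall architecture matches the paper's: define $\rho$ by localisation to the torus-fixed points using the uniform conjugation $M_\w$ of Theorem \ref{unif}, establish that $\ZZ$ is an affine complete intersection finite over $\ttt$, and conclude by a rank/Poincar\'e-series comparison using equivariant formality and the Akyildiz--Carrell isomorphism on the central fibre. Your route to affineness (properness plus finite fibres, hence $\pi$ finite) is a legitimate alternative to the paper's argument, which instead shows $\ZZ\subset\ttt\times X_o$ by flowing along $H^t$ and then exhibits an explicit homogeneous regular sequence.

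The genuine gap is exactly the step you flag as ``the main obstacle'': showing that the tuple $(\alpha|_{\zeta_i})$ defines a regular function on all of $\ZZ$ and not merely on $\ZZ|_{\ttt^\reg}$. Your proposed resolution --- GKM compatibility at the walls plus Cohen--Macaulayness --- does not go through as stated. First, the hypotheses only yield finitely many $\Ts$-fixed points (Corollary \ref{fintor}); nothing guarantees finitely many one-dimensional orbits, so $X$ need not be a GKM space and Theorem \ref{gkmres} is unavailable. Second, even granting GKM relations, they constrain the localisations along the hyperplanes $\kek_j$ attached to one-dimensional orbits, whereas what you must control is the scheme structure of $\ZZ$ over the root hyperplanes $\ttt\setminus\ttt^\reg$ --- which components $\overline{A_i}$ meet there and to what order. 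Cohen--Macaulayness (Serre's $S_2$) only permits extension across closed subsets of codimension at least two, so the codimension-one walls genuinely require new input that your sketch does not supply. The paper's solution is entirely different: it proves (Lemmas \ref{chern1} and \ref{chern2}, via Riemann--Roch applied to the Białynicki-Birula cells and graded Nakayama) that $H^*_\Ts(X)$ is generated over $\C[\ttt]$ by equivariant Chern classes of $\Hs$-linearised bundles, and for such a class exhibits the localisation function globally as $(\w,x)\mapsto\Tr_{\Lambda^k\Ee_x}(\Lambda^k(e+\w)_x)$ (Lemma \ref{nicefun}), which is manifestly regular on all of $\ZZ$. Without this, or an equivalent device, your $\rho$ is not defined. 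Two smaller points: you never establish that $\ZZ$ is reduced, which you need for a function prescribed on closed points (or on a dense open) to determine a unique regular function; and your surjectivity step rests on the unproved claim that $\rho$ modulo the augmentation ideal coincides with the Akyildiz--Carrell isomorphism of Theorem \ref{thmgrad} --- the paper instead deduces the isomorphism from injectivity of $\rho$ (injectivity of localisation for equivariantly formal spaces) together with equality of Poincar\'e series, which avoids that identification entirely.
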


We will first study the structure of $\ZZ$ with connection to the torus-fixed points $\zeta_0$, \dots, $\zeta_s$. We will also prove that $\ZZ$ is reduced. This will allow us to define a map $\rho: H^*_\Ts(X)\to \C[\ZZ]$ by specifying $\rho(c)$ by its values. To show that $\rho(c)$ is a regular function on $\ZZ$, we will prove that $H^*_\Ts(X)$ is generated by Chern classes of $\Hs$-equivariant vector bundles.

\subsection{Equivariant cohomology and Białynicki-Birula decomposition}
We know that the $\Ts$-equivariant cohomology $H_{\Ts}^*(\pt) = \C[\ttt]$ of the point is the ring of polynomials on $\ttt$. As in Section \ref{seceqform}, by $\I$ we will denote the ideal of polynomials vanishing at $0$, equivalently $\I = \bigoplus_{n>0} H_{\Ts}^n(\pt)$. The multiplicative group $\Cs$ acts on $X$ by the means of the morphism $H:\Cs\to \Hs$, $t\mapsto H^t$. As noted above, this action has finitely many fixed points $\zeta_0$, $\zeta_1$, $\dots$, $\zeta_s$. As outlined in Section \ref{secbb}, we can consider its Białynicki-Birula plus- and minus- decomposition i.e.
$$W_i^+ = \{x\in X: \lim_{t\to 0} H^t\cdot x = \zeta_i \}, \qquad W_i^- = \{x\in X: \lim_{t\to \infty} H^t\cdot x = \zeta_i \}.$$

All those sets are locally closed varieties, isomorphic to affine spaces. By Theorem \ref{cohobb}, the space $X$ has cohomology only in even degrees. Therefore by Theorem \ref{thmfor}, the $\Ts$-space $X$ is equivariantly formal. In particular
\begin{align} \label{formal2}
H_{\Ts}^*(X) \cong H_{\Ts}^*(\pt)\otimes H^*(X)
\end{align} as $H_{\Ts}^*(\pt)$-modules and 
$$H^*(X) \cong H_{\Ts}^*(X)/\I H_{\Ts}^*(X)$$ as $\C$-algebras.

\begin{theorem}\label{ABBst}
Białynicki-Birula plus-decomposition $X = \bigcup_{i=0}^s W_i^+$ is $\Hs$-stable.
\end{theorem}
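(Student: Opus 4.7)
The plan is to decompose $\Hs = \Ts \ltimes \Hs_u$ via Theorem \ref{solv} and verify stability of each cell $W_i^+$ under $\Ts$ and $\Hs_u$ separately. The toric part is essentially formal: if $g \in \Ts$ and $x \in W_i^+$, then since $\Ts$ is commutative and contains $H^t$, the actions of $g$ and $H^t$ commute, and since $\zeta_i$ is $\Ts$-fixed one computes
$$\lim_{t\to 0} H^t(gx) = g\cdot \lim_{t\to 0} H^t x = g\cdot \zeta_i = \zeta_i,$$
so $gx \in W_i^+$.

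The substantive step is $\Hs_u$-stability, and the key ingredient is Lemma \ref{posint}: every root $\alpha$ of $\Hs$ satisfies $\alpha(h)>0$. First I would write, for $u \in \Hs_u$ and $x \in W_i^+$,
$$H^t(ux) = (H^t u H^{-t})\cdot (H^t x),$$
and argue that both factors admit algebraic extensions across $t=0$. Using the algebraic exponential $\exp:\he_n\to\Hs_u$ of Theorem \ref{algexp}, write $u = \exp(v)$ and decompose $v = \sum_\alpha v_\alpha$ into root vectors. Then
$$H^t u H^{-t} = \exp(\Ad_{H^t}(v)) = \exp\!\Bigl(\sum_\alpha t^{\alpha(h)} v_\alpha\Bigr),$$
and since each $\alpha(h)$ is a positive (even) integer by Lemma \ref{posint}, the morphism $\Cs\to\Hs$, $t \mapsto H^t u H^{-t}$, extends to a morphism $\A^1\to\Hs$ sending $0$ to $1\in\Hs$.

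Meanwhile, by the very definition of $W_i^+$ and since $X$ is projective, the orbit map $t\mapsto H^tx$ extends to a morphism $\A^1\to X$ sending $0$ to $\zeta_i$. Composing these two extensions with the action map $\Hs\times X\to X$, which is itself a morphism, gives a morphism $\A^1\to X$ whose value at $0$ is $1\cdot\zeta_i = \zeta_i$; this is precisely the extension of $t\mapsto H^t(ux)$. Thus $\lim_{t\to 0} H^t(ux)=\zeta_i$, so $ux\in W_i^+$, and combining with the toric case yields $\Hs$-stability.

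The main conceptual point — and the only place where anything beyond bookkeeping is used — is the passage $H^t u H^{-t}\to 1$, which is exactly where the hypothesis that $(e,h)$ is a principal integrable $\bb(\ssl_2)$-pair enters, via Lemma \ref{posint}; without positivity of the roots on $h$ one could not extend the conjugation morphism across $0$. I do not anticipate any further obstacle, since all remaining steps are continuity of the action and functoriality of extensions $\Cs\to\A^1$ between affine and projective targets.
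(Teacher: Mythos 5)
Your proof is correct and follows essentially the same route as the paper: the paper writes an arbitrary $M\in\Hs$ as $M=DU$ with $D\in\Ts$, $U\in\Hs_u$, and computes $H^tM(H^t)^{-1}=D\,H^tU(H^t)^{-1}$, using the algebraic exponential and Lemma \ref{posint} to conclude $H^tU(H^t)^{-1}\to 1$, exactly as in your unipotent step, and then passes to the limit. Your version merely separates the torus and unipotent cases and spells out the extension of the relevant morphisms over $t=0$ a bit more explicitly; there is no substantive difference.
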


\begin{proof}
Assume that $x\in W_i^+$, i.e. $\lim_{t\to 0} H^t\cdot x = \zeta_i$. Let $M\in \Hs$ and $x' = Mx$ and let $\zeta_j = \lim_{t\to 0} H^t\cdot x'$. Then
\begin{align}\label{bbeq1}
    H^t x' = H^tMx = (H^t M (H^t)^{-1}) H^t x.
\end{align}
Let $M = D \cdot U$, where $D\in\Ts$ and $U\in \Hs_u$. As $H^t\in \Ts$, it commutes with $D$, therefore
\begin{align}\label{bbeq2}
H^t M (H^t)^{-1} = D H^t U (H^t)^{-1}.
\end{align}
Now as $U \in \Hs_u$, we have $U = \exp(u)$ for some $u\in\he_n$. Here $\exp$ should be understood as the algebraic exponential for unipotent groups, as in Theorem \ref{algexp}. We then have
$$H^t U (H^t)^{-1} = H^t \exp(u) (H^t)^{-1} = \exp(\Ad_{H^t}(u)).$$
By Lemma \ref{posint}, the weights of the $H^t$-action on $\he_n$ are positive. Therefore 
$$\lim_{t\to 0} \Ad_{H^t}(u) = 0,$$
hence $\lim_{t\to 0} H^t U (H^t)^{-1} = 1.$ Combining \eqref{bbeq1} and \eqref{bbeq2} gives
$$H^t x' = D H^t U (H^t)^{-1} H^t x.$$
Passing to limit $t\to 0$ then yields
$$\zeta_j = D \zeta_i.$$
As $\zeta_i$ is fixed by $D\in \Ts$, we get $i=j$, hence $x'\in W_i^+$ as desired.
\end{proof}

\subsection{Structure of \texorpdfstring{$\ZZ$}{Z}}
\label{structure}

In order to prove $H^*_\Ts(X)\cong \C[\ZZ]$ we study the structure of $\ZZ$ and construct a map $H^*_\Ts(X)\to \C[\ZZ]$. Let $(\w,x)\in\ZZ$. This means that $e+\w$ vanishes on $x$ and by Lemma \ref{regzer} it is an isolated zero. From Theorem \ref{jordan}, there exists $M\in \Hs$ such that $e+\w = \Ad_M(\w + n')$, where $[\w,n'] = 0$ and $n'\in [\he,\he]$. Then by Lemma \ref{lemad} we have that $M^{-1} x$ is a zero of $\w+n'$ and from Lemma \ref{lemzer} it is a zero of $\ttt$. Hence we get $x = M\zeta_i$ for some $i\in\{0,1,\dots,s\}$. Moreover, not only is $\zeta_i$ a zero of $\ttt$, but also of $n'$.

\begin{example}
We continue Example \ref{exjor} and use the notation from Example \ref{exgr} for the elements of $\ttt$.
\begin{enumerate}
\item Let $\w\in\ttt\cong \C^2$ be of the form $\w = (v_1,v_2)$ with $v\neq 0$, $w\neq 0$, $v\neq w$. We know that $
 e+\w = 
M_{\w} \w M_{\w}^{-1}
$ and therefore any zero of $e+\w$ is of the form $x = M_\w \zeta_i$ and conversely, for any $i$, the point $M_\w \zeta_i$ is fixed by $\w+e$.
\item If $\w = (v_1,0)$ with $v_1\neq 0$, then we have a matrix $M_\w\in\Bs_3$ such that
$$(e+\w) = M_{\w}
\begin{pmatrix}
 -v_1/3 & 0 & 1 \\
 0 & 2v_1/3 & 0 \\
 0 & 0 & -v_1/3
\end{pmatrix} M_{\w}^{-1}.$$
Therefore every zero of $e+\w$ is of the form $x = M_\w \zeta_i$ for $i$ such that $\zeta_i$ is also a zero of $$E_{13}= \begin{pmatrix}
 0 & 0 & 1 \\
 0 & 0 & 0 \\
 0 & 0 & 0
\end{pmatrix}.$$ But conversely, if $\zeta_i$ is additionally a zero of $E_{13}$, then $M_\w\zeta_i$ is a zero of $e+\w$.
\end{enumerate}
\end{example}

\begin{remark}\label{remun}
By Theorem \ref{ABBst}, if $x = M\zeta_i$, then $\zeta_i$ is in the same plus-cell as $x$. But $\zeta_i$ itself is a torus-fixed point, hence $\zeta_i = \lim_{t\to 0} H^t\cdot x$. In particular, this means that regardless of the potential choice of $M$ we might make, we always get the same torus-fixed point, i.e. if $x = M_1\zeta_{i_1} = M_2\zeta_{i_2}$, then $i_1 = i_2$. The elements $M$ and $n'$ are however not unique.

Note that for $i=0,1,\dots,s$ and $\w\in\ttt$, there is at most one zero of $e+\w$ in the plus-cell of $\zeta_i$. Indeed, assume that there are two such points. By above, if we choose any $M$ such that $e+\w = \Ad_M(\w+n')$, then they are of the forms $x_1 = M\zeta_{i_1}$, $x_2 = M\zeta_{i_2}$. But as in the last paragraph, in fact we have $i_2 = i_1 = i$. Therefore $x_1 = x_2$.
\end{remark}

If $M^{-1}x$ is a fixed point of the torus and $w$ is not regular, then in general it does not follow that $e+w$ vanishes at $x$, it does so only for particular torus-fixed points. Assume that we are given $\w\in\ttt$ and $M_\w\in \Hs$, $n'\in\he_n$ such that $e+\w = \Ad_{M_\w}(\w+n')$ and $[w,n']=0$. In this case if $\zeta_i$ is a zero of $n'$, then $M_\w \zeta_i$ is a zero of $e+\w$. However, for given $\w$, the corresponding vector field $V_{n'}$ in general does not vanish in all the torus-fixed points.

\begin{example}
Let us consider the standard action of $\Bs_3$ on $\PP^2$, i.e. we define
$$
\begin{pmatrix}
a & b & c \\
0 & d & e \\
0 & 0 & f
\end{pmatrix}
\cdot [v_0:v_1:v_2]
= [u_0:u_1:u_2]
$$
for $u_0$, $u_1$, $u_2$ such that
$$\begin{pmatrix}
a & b & c \\
0 & d & e \\
0 & 0 & f
\end{pmatrix}
\begin{pmatrix}
v_0 \\ v_1 \\ v_2
\end{pmatrix}
= 
\begin{pmatrix}
u_0 \\ u_1 \\ u_2
\end{pmatrix}.
$$

We have three torus-fixed points $\zeta_1=o=[1:0:0]$, $\zeta_2 = [0:1:0]$, $\zeta_3= [0:0:1]$. For $\w = (v_1,v_2)\in\C^2\cong \ttt$ regular there exists a matrix $M_\w$ such that $e+\w = M_\w \w M_\w^{-1}$. Then $M_\w \zeta_i$ is a fixed point of $e+\w$ for $i=1,2,3$.

However, if $\w = (v_1,0)$ with $v_1\neq 0$, then there exists a matrix $M_\w$ such that $e+\w = M_\w (\w + e_{13}) M_\w^{-1}$. The vector field $V_{e_{13}}$ corresponding to $e_{13}$ vanishes at $\zeta_1$ and $\zeta_2$ (but not at $\zeta_3$), therefore the zeros of $e+\w$ are exactly of the forms $M_\w \zeta_1$ and $M_\w \zeta_2$.

Specializing even more, if we consider $\w = (0,0)$, then $e+\w = e$ is already a Jordan matrix (we can take $M_\w = I_3$). Its only zero is $\zeta_1=o$, so the only fixed point of $e+\w$ is $o$.
\end{example}

We will define a map $H_\Ts^*(X)\to \C[\ZZ]$ by constructing, for each element of $H_\Ts^*(X)$, a function in $\C[\ZZ]$ by its values. To ensure that it is well defined, we first show that $\ZZ$ is reduced.

Remember that we defined a $\Cs$-action on $X$ and $\ttt$ -- see the comment below Definition \ref{defz}. It turns out (\cite[Proposition 1]{CarDef}) that if we consider the Białynicki-Birula minus-decomposition on $X$, then the minus-cell $X_o:=W_0^-$ corresponding to $o$ is open. In other words, all the weights of the action around $o$ are negative. Remember from Section \ref{secgrpac} that $t\in \Cs$ acts on $\C[X_o]$ via $(t^{-1})^*$. Therefore we can choose on $X_o$ the affine coordinates $x_1$, $x_2$, \dots, $x_n$ that are weight vectors of $\Ts$ and the values of weights on $h$ are positive integers $a_1$, $a_2$, \dots, $a_n$. Using these coordinates we model $X_o$ as a vector space, thus we can identify the tangent spaces to its points with $X_o$ itself.

We also have the grading on $\C[\ttt]$ defined by the action of $\Cs$ on $\ttt$, which by definition is of weight $-2$. Therefore choosing coordinates $v_1,\dots,v_r$ on $\ttt$ we have
$$\C[\ttt\times X_o] = \C[v_1,v_2,\dots,v_{r},x_1,x_2,\dots,x_n]$$
with $\deg v_i = 2$ (for $i=1,2,\dots,r$), $\deg x_i = a_i$ (for $i=1,2,\dots,n$). The tangent bundle of $X_o$, as an affine space, is trivial, and the coordinates on $X_o$ define its trivialisation, hence we can speak of coordinates of $V_{e+\ttt}$, cf. Remark \ref{remtan}. The vector field $V_{e+\ttt}$ is vertical, i.e. only in the direction of $X$, hence it has $n$ coordinates. We now prove the following lemma, which for $\Hs = \Bs_2$ was proven in \cite[Theorem 4]{CarDef}.

\begin{lemma}\label{lemcm}
The scheme $\ZZ$ is complete intersection and reduced and contained in $\ttt\times X_o$, hence affine. The ideal of $\ZZ$ in $\C[\ttt\times X_o] = \C[v_1,v_2,\dots,v_{r},x_1,x_2,\dots,x_n]$ is then generated by the vertical coordinates of the vector field $V_{e+\ttt}$:
$$\left(V_{e+\ttt}\right)_1,\left(V_{e+\ttt}\right)_2,\dots, \left(V_{e+\ttt}\right)_n.$$
The degree of each $\left(V_{e+\ttt}\right)_i$ is equal to $a_i+2$ and together with $v_1$, $v_2$, \dots, $v_{r}$ they form a homogeneous regular sequence in $\C[v_1,v_2,\dots,v_{r},x_1,x_2,\dots,x_n]$.
\end{lemma}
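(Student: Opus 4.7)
The plan is first to establish the inclusion $\ZZ\subset \ttt\times X_o$ via the contracting $\Cs$-action introduced just before the lemma. For any $(\w,x)\in\ZZ$, the limit $\lim_{t\to\infty} t\cdot(\w,x) = (0,\,\lim_{t\to\infty} H^t x)$ exists in $\ttt\times X$ since $X$ is projective, and must lie in $\ZZ$ by closedness. The first coordinate tends to $0\in\ttt$, and the second is the limit of a $\Cs$-orbit, hence a torus-fixed point $\zeta_j$. Then $(0,\zeta_j)\in\ZZ$ forces $V_e(\zeta_j)=0$, and regularity of the action gives $\zeta_j=o$, so $x\in W_0^- = X_o$. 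Once this inclusion is known, $\ZZ$ is affine as a closed subscheme of the affine variety $\ttt\times X_o$, and the projection $\pi:\ZZ\to\ttt$ is both affine and proper (being the restriction of the projective morphism $\ttt\times X\to\ttt$), hence finite; in particular $\dim\ZZ\le r$.

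Next, on $X_o$ the tangent bundle is trivialised by $\partial/\partial x_1,\dots,\partial/\partial x_n$, so the vertical vector field $V_{e+\ttt}$ has $n$ coordinate components $(V_{e+\ttt})_1,\dots,(V_{e+\ttt})_n\in\C[v_1,\dots,v_r,x_1,\dots,x_n]$ which generate the ideal of $\ZZ$ in $\C[\ttt\times X_o]$ by the definition of the zero scheme of a section of a vector bundle. By Krull's Hauptidealsatz each equation drops the dimension by at most one, giving $\dim\ZZ\ge(r+n)-n=r$, so $\dim\ZZ=r$. Since the ambient polynomial ring is Cohen--Macaulay and the $n$ homogeneous elements cut the dimension by exactly $n$, by \cite[Theorem 2.1.2(c)]{BrHer} they form a regular sequence, and $\ZZ$ is a complete intersection. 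Adjoining $v_1,\dots,v_r$ then restricts to the fibre over $0\in\ttt$, which is the zero scheme of $V_e$ on $X_o$; its unique zero being $o$, this fibre is $0$-dimensional. Hence the full sequence $(v_1,\dots,v_r,(V_{e+\ttt})_1,\dots,(V_{e+\ttt})_n)$ of $r+n$ elements cuts the polynomial ring down to dimension $0$, and by the same Cohen--Macaulay criterion it is regular, independently of the order.

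The degrees of the components follow from $\Cs$-equivariance of $V_{e+\ttt}$. The identity $\Ad_{H^t}(e+\w)=t^2 e+\w=t^2(e+t^{-2}\w)$ together with Lemma~\ref{lemad} yields $(\psi_t)_* V_{e+\ttt} = t^2 V_{e+\ttt}$, where $\psi_t$ denotes the $\Cs$-action on $\ttt\times X$. Since $v_j$ has degree $2$ and $x_i$ has degree $a_i$ in our grading, and $\partial/\partial x_i$ transforms with the opposite weight to $x_i$ under pushforward, matching weights in the expansion $V_{e+\ttt}=\sum_i(V_{e+\ttt})_i\,\partial/\partial x_i$ forces $(V_{e+\ttt})_i$ to be homogeneous of degree $a_i+2$.

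The most delicate step is reducedness. The strategy is to exploit that $\ZZ$ is Cohen--Macaulay (as a complete intersection), hence equidimensional with no embedded primes, so it suffices to prove reducedness on a dense open subset. For $\w$ in a dense open subset of $\ttt$ (those $\w$ on which no $\Ts$-weight occurring in the tangent space at any $\Ts$-fixed point vanishes), Corollary~\ref{corre} gives $\Ad_M(\w)=e+\w$, and via Lemma~\ref{lemad} the zero scheme of $V_{e+\w}$ is identified with that of $V_\w$; the latter equals the fixed point scheme $X^\Ts$, which by Theorem~\ref{fixred} is smooth and of constant length. Since $\pi:\ZZ\to\ttt$ is finite and surjective (its image being closed of dimension $r=\dim\ttt$), every irreducible component of $\ZZ$ meets the preimage of this open locus, so generic reducedness together with the absence of embedded primes forces $\ZZ$ to be reduced globally.
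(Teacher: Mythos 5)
Your overall architecture matches the paper's: containment in $\ttt\times X_o$ via the contracting $\Cs$-action, the regular-sequence claim via a dimension count in the Cohen--Macaulay polynomial ring, the degree computation via equivariance of $V_{e+\ttt}$, and reducedness via ``generically reduced $+$ Cohen--Macaulay''. Two tactical choices are genuinely different and both work: you get $\dim\ZZ\le r$ from the observation that $\pi:\ZZ\to\ttt$ is simultaneously affine and proper, hence finite (the paper instead appeals to Chevalley semicontinuity through Lemma~\ref{regzer}, and to the fact that the $r+n$ homogeneous equations have an isolated common zero); and for reducedness you invoke only the pointwise conjugacy of Corollary~\ref{corre} plus an analysis of the closed fibres, where the paper uses the \emph{uniform} family $M:\ttt^\reg\to\Hs$ of Theorem~\ref{unif} to produce an honest scheme isomorphism $\pi^{-1}(\ttt^\reg)\cong\ttt^\reg\times X^\Ts$. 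Your insistence on avoiding the tangent-weight hyperplanes (not just the root hyperplanes) when choosing the dense open locus is in fact a point where you are more careful than the paper.

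The one step that does not follow as written is the final inference in the reducedness paragraph. What you establish is that the \emph{closed} fibres $\pi^{-1}(\w)$ are reduced for $\w$ in a dense open $U\subset\ttt$, and that every component of $\ZZ$ meets $\pi^{-1}(U)$ (this part is fine once you use equidimensionality: each component has dimension $r$ and is finite over $\ttt$, hence surjects onto $\ttt$). But ``generic reducedness of $\ZZ$'' is a statement about the local rings at the \emph{generic} points of $\ZZ$, which lie over the generic point of $\ttt$, not over closed points of $U$; reducedness of closed fibres does not formally transfer to reducedness of the scheme near those fibres without an extra ingredient. The gap is bridgeable with tools you already have on the table: since $\ZZ$ is Cohen--Macaulay, finite over the regular base $\ttt$ with zero-dimensional fibres, miracle flatness makes $\pi$ finite flat; over $U$ the fibres are reduced of constant length, so $\pi^{-1}(U)\to U$ is \'etale, hence $\pi^{-1}(U)$ is regular and in particular reduced, and then Cohen--Macaulayness upgrades generic reducedness to reducedness of $\ZZ$. (Alternatively, at a closed point $w\in\pi^{-1}(U)$ the maximal ideal of $\pi(w)$ is generated by a regular sequence in $\OO_{\ZZ,w}$, the associated graded is a polynomial ring over the reduced Artinian fibre ring, hence reduced, and Krull intersection kills any nilpotent.) Either argument, or simply replacing Corollary~\ref{corre} by Theorem~\ref{unif} as the paper does, should be written in to close the proof.
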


\begin{proof}
First, let us see that $\ZZ$ is contained in $\ttt\times X_o$. Let $(\w,x)\in \ZZ$. We then know that $x$ is a zero of the vector field $V_{e+\w}$. For any $t\in\Cs$ by Lemma \ref{lemad} we have that $H^t\cdot x$ is a zero of $V_{\Ad_{H^t}(e+\w)}$. As $\Ad_{H^t}(e+\w) = t^2e + \w$, this means that $H^t \cdot x$ is a zero of $e+t^{-2}\w$. When we take $t\to\infty$, this converges to $e$. Therefore $\lim_{t\to\infty} H^t\cdot x$ is a zero of $e$, hence equal to $o$. This means that $x\in X_o$.

Now we will prove that $\left(V_{e+\ttt}\right)_i$ is homogeneous of degree $a_i+2$. We have
$$\left(V_{e+\ttt}\right)_i |_{t\cdot(x,\w)} = 
\left(V_{e+\w/t^2}|_{H^t\cdot x}\right)_i =
\left(H^t_* (V_{\Ad_{H^{t^{-1}}}(e+\w/t^2)}|_x)\right)_i =
\left(H^t_* (V_{e/t^2+\w/t^2}|_x)\right)_i
$$
and $H^t$ acts on $i$-th coordinate of tangent space by multiplying it by $t^{-a_i}$, therefore
$$\left(H^t_* (V_{e/t^2+\w/t^2}|_x)\right)_i = t^{-a_i} \left(V_{e/t^2+\w/t^2}|_x\right)_i
=t^{-a_i-2} \left(V_{e+\ttt}\right)_i |_{(x,\w)},$$
hence the homogeneity follows. Since $v_1$, $v_2$, \dots, $v_{r}$ have degree $2$, we have that the sequence
$$\left(V_{e+\ttt}\right)_1,\left(V_{e+\ttt}\right)_2,\dots, \left(V_{e+\ttt}\right)_n, v_1, v_2, \dots, v_{r}$$
consists of homogeneous functions on the $(r+n)$-dimensional affine space $\ttt\times X_o$. There are $r+n$ of them and they have only one common zero. Therefore by \cite[Proposition 4.3.4]{Benson} they form a regular sequence. In particular, $\ZZ$ is the zero scheme of a regular sequence $\left(V_{e+\ttt}\right)_1$, $\left(V_{e+\ttt}\right)_2$, \dots, $\left(V_{e+\ttt}\right)_n$, therefore it is complete intersection and hence also Cohen--Macaulay, see Section \ref{seccm}.

Now we have to prove that $\ZZ$ is reduced. Let $\pi:\ZZ\to\ttt$ be the projection to the first factor of $\ttt\times X$. By Theorem \ref{unif} we get an isomorphism $\pi^{-1}(\ttt^\reg) \cong \ttt^\reg\times X^\Ts$. The first factor, as an open subscheme of affine space, is reduced. The fixed points of the torus are also reduced by Theorem \ref{fixred}, therefore $\pi^{-1}(\ttt^\reg)$ is reduced.

Now note that $\pi^{-1}(\ttt^\reg)$ is an open dense subset in $\ZZ$. It is open because $\ttt^\reg$ is open in $\ttt$. To prove that it is dense, assume on the contrary that there exists $x\in \ZZ\setminus\overline{\pi^{-1}(\ttt^\reg)}$. Let $Y$ be its irreducible component in $\ZZ$. As $\ZZ = \overline{\pi^{-1}(\ttt^\reg)} \cup \pi^{-1}(\ttt\setminus\ttt^\reg)$ and both sets are closed, by irreducibility $Y$ has to be contained in one of them. As $x$ is not contained in the former, $Y$ has to be contained in the latter, so that $\pi(Y)\subset \ttt\setminus\ttt^\reg$. As $\ttt\setminus\ttt^\reg$ is a union of hyperplanes in $\ttt$, the same argument shows that $\pi(Y)$ lies within one of them (of dimension $r-1$). Considering $\pi|_Y$ as mapping to $\overline{\pi(Y)}$ and reducing if needed, we get a dominant map between integral schemes. Note that as $\ZZ$ is Cohen--Macaulay, it is equidimensional by \cite[Theorem 17.6 and Theorem 6.5]{Matsu}. As $\ttt\times\{o\}$ is closed in $\ZZ$ and of dimension $r$, the dimension of $\ZZ$ is at least $r$. Therefore by the fiber dimension theorem (see \cite[Ex. 3.22(b)]{Hart}) the fibers of $\pi|_Y$ are at least one-dimensional. But they are finite by Lemma \ref{regzer}, so we get a contradiction.

Now as $\pi^{-1}(\ttt^\reg)$ is an open dense subset in $\ZZ$, it contains its generic points, hence $\ZZ$ is generically reduced. Using that $\ZZ$ is Cohen--Macaulay, by \cite[Proposition 14.124]{GW} we get that $\ZZ$ is reduced.
\end{proof}

\subsection{The homomorphism \texorpdfstring{$\rho$}{ρ}}\label{sectionmap}
Let $c \in H_\Ts^*(X)$. In Section~\ref{structure} we show that every element $(\w,x)$ of $\ZZ$ satisfies $x = M_\w \zeta_i$. Here $M_\w$ is some element of $\Hs$ depending on $\w$ and $\zeta_i$ is a uniquely determined fixed point of $\Ts$-action, which in fact can be determined as a limit of $\Cs$ action on $x$. The localisation $c|_{\zeta_i}$ of $c$ to the torus-fixed point can be now seen as a polynomial on $\ttt$, because $H_\Ts^*(\pt) = \C[\ttt]$.
We then define
\begin{align} \label{rho} \rho(c)(\w,x) = c|_{\zeta_i}(\w).\end{align}
This follows the idea of \cite{BC}, where $\rho$ is defined this way for $\Bs_2$.
For any $c\in H_\Ts^*(X)$ this defines a function $\rho(c)$ on the set of closed points $\ZZ$. This clearly gives a $\C[\ttt]$-homomorphism between $H_\Ts^*(X)$ and the algebra of all $\C$-valued functions on $\ZZ$. We have to first prove that for any $c\in H_\Ts^*(X)$ the image $\rho(c)$ defines a regular function, which will be unique by reducibility, proved in Lemma \ref{lemcm}. Thus we will get a $\C[\ttt]$-homomorphism
$$\rho: H_\Ts^*(X)\to \C[\ZZ].$$
In general, assume that we are given an algebraic group $\Hs$ and an $\Hs$-variety $A$. For any $\Hs$-linearised bundle $\Ee$ on $A$ we may consider its equivariant Chern classes $c^\Hs_k(\Ee)\in H^{2k}_\Hs(A)$. Let $p\in A$ be a fixed point of $\Hs$. From naturality of Chern classes we get $c^\Hs_k(\Ee)|_p = c^\Hs_k(\Ee_p)$, where $\Ee_p$ is the fiber of $\Ee$ over $p$. This belongs to $H_\Hs^*(\pt)\subset \C[\he]$ and for any $y\in \he$ we get
\begin{align}
\label{cherntr}
c^\Hs_k(\Ee)|_p(y) = \Tr_{\Lambda^k \Ee_p}(\Lambda^k y_p).
\end{align}
Here $y_p$ is the infinitesimal action of $y\in\he$ on $\Ee_p$, which is a representation of $\Hs$.

\begin{lemma}\label{nicefun}
 Let $\Ee$ be an $\Hs$-linearised vector bundle on $X$ and let $k$ be a nonnegative integer. Then for any $(\w,x)\in\ZZ$ we have
 $$\rho(c_k^\Ts(\Ee))(\w,x) = \Tr_{\Lambda^k \Ee_x}(\Lambda^k (e+\w)_x).$$
 In particular, $\rho(c_k^\Ts(\Ee))\in \C[\ZZ]$.
\end{lemma}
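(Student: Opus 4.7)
The strategy is to compute both sides independently using the structural description of $\ZZ$ from Section~\ref{structure} and match them via the $\Hs$-linearisation of $\Ee$.

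For the left-hand side, fix $(\w,x)\in\ZZ$ and let $\zeta_i = \lim_{t\to 0} H^t\cdot x$, the uniquely determined $\Ts$-fixed point in the plus-cell of $x$ (Remark~\ref{remun}). By the definition \eqref{rho} of $\rho$ and the trace formula \eqref{cherntr}, we have
$$\rho(c_k^\Ts(\Ee))(\w,x) \;=\; c_k^\Ts(\Ee)\big|_{\zeta_i}(\w) \;=\; \Tr_{\Lambda^k \Ee_{\zeta_i}}\!\bigl(\Lambda^k \w_{\zeta_i}\bigr).$$

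For the right-hand side, choose via Theorem~\ref{jordan} some $M\in\Hs$ and $n'\in\he_n$ with $[\w,n']=0$, $e+\w = \Ad_M(\w+n')$, and $x = M\zeta_i$; moreover $n'$ vanishes at $\zeta_i$ by Lemma~\ref{lemzer}. Because $\Ee$ is $\Hs$-linearised, $M$ induces a linear isomorphism $M_\Ee : \Ee_{\zeta_i} \to \Ee_x$, and by naturality of the infinitesimal action one has $(e+\w)_x = M_\Ee \circ (\w+n')_{\zeta_i} \circ M_\Ee^{-1}$ as endomorphisms of $\Ee_x$. Conjugation invariance of $\Tr\circ \Lambda^k$ gives
$$\Tr_{\Lambda^k \Ee_x}\!\bigl(\Lambda^k (e+\w)_x\bigr) \;=\; \Tr_{\Lambda^k \Ee_{\zeta_i}}\!\bigl(\Lambda^k (\w+n')_{\zeta_i}\bigr).$$
The final step is to replace $(\w+n')_{\zeta_i}$ by $\w_{\zeta_i}$ under the trace. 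The representation $\he \to \End(\Ee_{\zeta_i})$ arising from the linearisation is a Lie algebra homomorphism, so $[\w,n']=0$ implies that $\w_{\zeta_i}$ and $n'_{\zeta_i}$ commute. Since $\w\in\ttt$ integrates to a torus acting semisimply on the fiber, $\w_{\zeta_i}$ is semisimple; since $n'\in\he_n$ is nilpotent in $\he$ and Jordan decomposition is preserved by representations (Theorem~\ref{defjord}), $n'_{\zeta_i}$ is nilpotent. A standard linear algebra fact (commuting semisimple $A$ and nilpotent $N$ have the same characteristic polynomial as $A$ alone, by simultaneous triangulation) then gives $\Tr(\Lambda^k(\w_{\zeta_i}+n'_{\zeta_i})) = \Tr(\Lambda^k \w_{\zeta_i})$, which matches the left-hand side.

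To conclude $\rho(c_k^\Ts(\Ee))\in\C[\ZZ]$, observe that the assignment $(\w,x) \mapsto \Tr_{\Lambda^k\Ee_x}\!(\Lambda^k (e+\w)_x)$ is a regular function on the entire scheme $\ttt\times X$: on a trivialising cover of $\Ee$, the operator $(e+\w)_x$ depends polynomially on $\w$ and algebraically on $x$, and traces of wedge powers are polynomial operations on matrix entries. Restricting this global regular function to $\ZZ$ yields an element of $\C[\ZZ]$ whose values at closed points coincide with those of $\rho(c_k^\Ts(\Ee))$; by reducedness of $\ZZ$ (Lemma~\ref{lemcm}), a regular function is determined by its values on closed points, so $\rho(c_k^\Ts(\Ee))$ is indeed this regular function. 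The main subtlety is the third step — the semisimple-plus-commuting-nilpotent argument — which rests crucially on the compatibility of the abstract Jordan decomposition in $\he$ with the Jordan decomposition of its image under an algebraic representation.
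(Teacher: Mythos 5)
Your proof is correct and follows essentially the same route as the paper: decompose $e+\w = \Ad_M(\w+n')$ via Theorem \ref{jordan}, transport the trace from $\Ee_x$ to $\Ee_{\zeta_i}$ using the $\Hs$-linearisation, and then discard the commuting nilpotent part $n'_{\zeta_i}$ by compatibility of the Jordan decomposition with representations (Theorem \ref{defjord}). Your closing paragraph on regularity makes explicit a point the paper leaves implicit, but the argument is the same in substance.
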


\begin{proof}
We have $x = M_\w \zeta_i$ for some $\zeta_i\in X^\Ts$ and $M_\w\in\Hs$. Moreover,
$$e+\w = \Ad_{M_\w}(\w+e')$$
for some $e'\in \he_n$ that vanishes at $\zeta_i$ and commutes with $\w$. Note that, as $\Ee$ is $\Hs$-linearised, 
$$\Tr_{\Lambda^k \Ee_x}(\Lambda^k (e+\w)_x) = \Tr_{\Lambda^k \Ee_{M_\w^{-1}x}}(\Lambda^k \left( \Ad_{M_\w^{-1}}(e+\w) \right)_{M_\w^{-1}x})
= \Tr_{\Lambda^k \Ee_{\zeta_i}}(\Lambda^k(\w+e')_{\zeta_i}).
$$
From \eqref{rho} and \eqref{cherntr} we have
$$\rho(c_k^\Ts(\Ee))(\w,x) = c_k^\Ts(\Ee)|_{\zeta_i}(\w) = \Tr_{\Lambda^k \Ee_{\zeta_i}}(\Lambda^k w_{\zeta_i}).$$
Thus we have to prove that 
$$ \Tr_{\Lambda^k \Ee_{\zeta_i}}(\Lambda^k(\w+e')_{\zeta_i}) = \Tr_{\Lambda^k \Ee_{\zeta_i}}(\Lambda^k w_{\zeta_i}).$$
But by the assumptions that $[\w,e']=0$, $\w$ is semisimple and $e'$ is nilpotent, we get that the sum $\w + e'$ is the Jordan decomposition of $\Ad_{M_\w^{-1}}(e+\w)$ in the sense of Theorem \ref{defjord}. Then by the naturality of the Jordan decomposition the derivative of the representation $\Stab_\Hs(\zeta_i)\to \GL(\Ee_{\zeta_i})$ preserves it. Therefore $\w_{\zeta_i}$ seen as an element of $\gl(\Ee_{\zeta_i})$ is the semisimple part of $(\w+e')_{\zeta_i}$ seen as an element of $\gl(\Ee_{\zeta_i})$.

But for Jordan decomposition in the general linear group, the eigenvalues of the semisimple part are the same as the eigenvalues of the decomposed element. Because traces of external powers are polynomials in eigenvalues, this concludes the proof.
\end{proof}
{The following lemma is based on \cite[Proposition 3]{CarDef}, which proves it for $\Bs_2$.}
\begin{lemma}\label{chern1}
The cohomology ring $H^*(X)$ is generated, as a $\C$-algebra, by Chern classes of $\Hs$-linearised vector bundles on $X$.
\end{lemma}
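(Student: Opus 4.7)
The plan is to use the Białynicki-Birula decomposition to reduce the statement to expressing cell closures as polynomials in Chern classes. By Lemma \ref{lemzer} and the regularity assumption, the $\Cs$-action $\{H^t\}$ on $X$ has fixed-point set exactly $\{\zeta_0,\ldots,\zeta_s\}$, and the plus-decomposition $X = \bigsqcup_i W_i^+$ consists of affine cells which are each $\Hs$-stable by Theorem \ref{ABBst}. Theorem \ref{cohobb} then gives that $H^*(X,\C)$ is concentrated in even degrees and that the classes $[\overline{W_i^+}]$ form an additive $\C$-basis. It therefore suffices to show each $[\overline{W_i^+}]$ lies in the subring $R \subseteq H^*(X,\C)$ generated by Chern classes of $\Hs$-linearised bundles.

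I would proceed by descending induction on $d_i = \dim \overline{W_i^+}$. The top case is $[X] = 1 \in R$. The opposite extreme $d_i = 0$ is the point class $[o]$: the vector field $V_e$ is an $\Hs$-equivariant section of the $\Hs$-linearised tangent bundle $TX$ with unique reduced zero at $o$, so by the standard identification of top Chern classes with zero loci of sections one has $c_n(TX) = \operatorname{ind}_o(V_e)\cdot [o]$ with $\operatorname{ind}_o(V_e) > 0$, placing $[o]$ in $R$. For an intermediate cell of codimension $k$ I would seek an $\Hs$-linearised bundle $\Ee_i$ of rank $k$ with an $\Hs$-equivariant section whose scheme-theoretic zero locus has $\overline{W_i^+}$ as its unique top-dimensional component and all other components of strictly smaller dimension; then $c_k(\Ee_i) = [\overline{W_i^+}] + (\text{classes of smaller-dimensional cells})$ and the inductive hypothesis closes the argument.

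The hard part is constructing these intermediate bundles $\Ee_i$ in the required generality. A natural candidate is obtained by globalising, through an $\Hs$-equivariant framing near $o$, the $\Stab_\Hs(\zeta_i)$-representation given by the negative $h$-weight subspace of $T_{\zeta_i}X$ (which is the normal space to $W_i^+$ at $\zeta_i$), and then using a suitable perturbation of $V_e$ by an element of $e+\ttt$ as the defining section. Such a global $\Hs$-equivariant splitting of $TX$ may fail outside the familiar case of generalised flag varieties, and handling this is where the real work lies. If the direct bundle construction is not available, I would switch to the equivariant viewpoint: by equivariant formality (Theorem \ref{thmfor}) and the injectivity of the fixed-point restriction for torus-equivariantly formal spaces, $H^*_\Ts(X) \hookrightarrow \bigoplus_i \C[\ttt]$, and Lemma \ref{nicefun} identifies the restrictions of $\Hs$-equivariant Chern classes to the $\zeta_i$ with explicit traces on weight-spaces of $\Stab_\Hs(\zeta_i)$-representations. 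Thus the generation problem transfers to a question about realising enough fixed-point restriction data via polynomials in such traces, after which reducing modulo the ideal $\I$ of positive-degree elements of $H^*_\Ts = \C[\ttt]$ yields the non-equivariant statement via the isomorphism $H^*(X) \cong H^*_\Ts(X)/\I H^*_\Ts(X)$.
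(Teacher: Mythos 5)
Your reduction to the cell closures $[\overline{W_i^+}]$ via the Białynicki-Birula decomposition and Theorem \ref{ABBst} matches the paper's starting point, and your treatment of the two extreme cases ($[X]=1$ and the point class via $c_n(TX)$ and the section $V_e$) is sound. However, the heart of the argument — producing, for each intermediate cell of codimension $k$, an $\Hs$-linearised rank $k$ bundle $\Ee_i$ with a section whose zero locus is $\overline{W_i^+}$ plus lower-dimensional junk — is exactly the step you leave unconstructed, and you correctly suspect it may fail: there is no reason for an $\Hs$-equivariant splitting of $TX$ adapted to the cells to exist outside of flag-variety-type examples. Your fallback via equivariant formality and Lemma \ref{nicefun} does not repair this: knowing that restrictions of equivariant Chern classes to fixed points are traces of exterior powers merely rephrases the generation problem as ``do these traces span enough of the localisation image,'' which is the same question in different clothes. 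So the proposal has a genuine gap at its central step.

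The idea that closes the gap in the paper is to abandon vector bundles with sections and work instead with the structure sheaves $\OO_{\overline{W_i^+}}$, which are $\Hs$-equivariant \emph{coherent sheaves} because the cells are $\Hs$-stable. Baum--Fulton--MacPherson's Grothendieck--Riemann--Roch theorem gives $\bigl(\ch(\OO_{\overline{W_i^+}})\,\td(X)\bigr)\cap[X] = [\overline{W_i^+}] + (\text{lower-degree terms})$, so the Chern characters $\ch(\OO_{\overline{W_i^+}})$ are upper-triangular with respect to the dual basis and hence generate $H^*(X)$ as a $\C$-algebra — this replaces your descending induction and requires no bundle construction at all. Finally, since $X$ is smooth, Thomason's resolution theorem (\cite[Corollary 5.8]{Thomason}) says the Grothendieck group of $\Hs$-equivariant coherent sheaves is generated by classes of $\Hs$-equivariant vector bundles, so each $\ch(\OO_{\overline{W_i^+}})$ is a polynomial in Chern classes of $\Hs$-linearised bundles. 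That last step — resolving the equivariant structure sheaves by equivariant vector bundles rather than exhibiting bundles with prescribed zero loci — is the missing ingredient in your argument.
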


\begin{proof}
We know that the fundamental classes of the plus--cells form a basis of $H_*(X)$, hence their Poincar\'{e} duals form a basis of $H^*(X)$. Now we use Baum--Fulton--MacPherson's Grothendieck--Riemann--Roch theorem (see \cite[Theorem 18.3, (5)]{Ful}). We get that for any plus-cell $W_i\in X$ the homology class $(\ch(W_i) \td(X_i))\cap [X]$ is equal to the sum of $[W_i]$ and lower-degree terms. Therefore $\ch(W_i)$ is equal to the sum of the dual class of $[W_i]$ and higher-degree terms. Therefore Chern characters of the structure sheaves of plus--cells generate $H^*(X)$. 

As the plus--cells are $\Hs$-stable by Theorem \ref{ABBst}, we get that $\ch$ is surjective when restricted to the Grothendieck group of $\Hs$-equivariant coherent sheaves. By \cite[Corollary 5.8]{Thomason} it is generated by the classes of $\Hs$-equivariant vector bundles and the conclusion follows.
\end{proof}

\begin{remark} \label{remarkchern}
We did not use the regularity of the action in the proof. In fact, it was enough to know that the fixed points of $\Ts$ are isolated. One could also argue the following in such generality. By Theorem \ref{solvsplit} a linear solvable group over $\C$ is split, i.e. admits a composition series with quotients equal to $\Gs_a$. Then the restriction $K^0_\Hs(X) \to K^0_\Ts(X)$ is an isomorphism \cite[Corollary 2.16]{Merk} and the restriction $K^0_\Ts(X)\to K^0(X)$ is a surjection \cite[Proposition 3.1]{Merk}. The Chern character is an isomorphism from $K^0(X)\otimes \C$ to $A^*(X)\otimes \C$ \cite[Theorem 18.3]{Ful} and the cycle class map $A^*(X)\to H^*(X,\Z)$ is an isomorphism due to the paving given by Białynicki-Birula decomposition \cite[Example 19.1.11]{Ful}. Therefore the (non-equivariant) Chern character gives a surjection $K^0_\Hs(X)\to H^*(X,\C)$.
\end{remark}

\begin{lemma}\label{chern2}
The equivariant cohomology $H_\Ts^*(X)$ is generated, as a $\C[\ttt]$-algebra, by $\Ts$-equivariant Chern classes of $\Hs$-equivariant vector bundles on $X$.
\end{lemma}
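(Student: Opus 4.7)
The plan is to combine Lemma~\ref{chern1} with equivariant formality and the graded Nakayama lemma (Corollary~\ref{cornak}). By Lemma~\ref{chern1}, there exist finitely many $\Hs$-equivariant vector bundles $\Ee_1,\dots,\Ee_m$ on $X$ whose ordinary Chern classes $c_k(\Ee_j)$ generate $H^*(X)$ as a $\C$-algebra. I would lift these to the $\Ts$-equivariant Chern classes $c_k^\Ts(\Ee_j)\in H^*_\Ts(X)$ and let
$$A\subset H^*_\Ts(X)$$
be the graded $\C[\ttt]$-subalgebra they generate together with the image of the structure map $\C[\ttt]\to H^*_\Ts(X)$.

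Next, the key observation is that under the canonical map $H^*_\Ts(X)\to H^*(X)$ (which by equivariant formality, \eqref{formal2}, is identified with the quotient $H^*_\Ts(X)\to H^*_\Ts(X)/\I H^*_\Ts(X)$) the equivariant Chern class $c_k^\Ts(\Ee_j)$ restricts to $c_k(\Ee_j)$. Hence the image of $A$ in $H^*_\Ts(X)/\I H^*_\Ts(X)\cong H^*(X)$ contains a generating set of the $\C$-algebra $H^*(X)$ and therefore is all of $H^*(X)$. This means
$$A+\I H^*_\Ts(X)=H^*_\Ts(X).$$

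Finally, I would pass to the quotient $M:=H^*_\Ts(X)/A$. This is a $\Z_{\ge 0}$-graded module over the $\Z_{\ge 0}$-graded ring $\C[\ttt]$ whose irrelevant ideal is $\I$, and the previous display gives $\I M=M$. The graded Nakayama lemma (the statement preceding Corollary~\ref{cornak}, or Corollary~\ref{cornak} itself applied with $J=\emptyset$) then forces $M=0$, so $A=H^*_\Ts(X)$ as required.

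I do not anticipate a serious obstacle: the only two ingredients, namely that the non-equivariant Chern classes of $\Hs$-equivariant bundles generate $H^*(X)$ and that $X$ is equivariantly formal, are already in hand. The only point to verify carefully is the compatibility of equivariant and non-equivariant Chern classes with the restriction map, which is a standard property recorded in Section~\ref{seceqchern}.
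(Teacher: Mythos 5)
Your proof is correct and follows exactly the paper's own route: Lemma~\ref{chern1} for the non-equivariant statement, equivariant formality to identify $H^*(X)$ with $H^*_\Ts(X)/\I H^*_\Ts(X)$, and the graded Nakayama lemma to lift generation. Your version merely spells out the details (the subalgebra $A$ and the quotient module $M=H^*_\Ts(X)/A$) that the paper leaves implicit.
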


\begin{proof}
Recall that $\I$ denotes the maximal ideal of $\C[\ttt]$ cutting out the zero point. Since $X$ is equivariantly formal, we have an exact sequence
$$0\to \I H_\Ts^*(X)\to H_\Ts^*(X)\to H^*(X)\to 0.$$
By Lemma \ref{chern1} we get that the $\C$-algebra $H^*(X)$ is generated by Chern classes of $\Hs$-linearised vector bundles on $X$. Then from graded Nakayama lemma (see Corollary \ref{cornak}) the $\C[\ttt]$-algebra $H^*_\Ts(X)$ is generated by their equivariant Chern classes.
\end{proof}

\noindent
This together with Lemma \ref{nicefun} gives
\begin{corollary}
The map $\rho$ is a homomorphism of $\C[\ttt]$-algebras $H_\Ts^*(X)\to\C[\ZZ]$.
\end{corollary}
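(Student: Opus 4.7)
The plan is to assemble the corollary directly from the two preceding lemmas, with the only non-trivial issue being to ensure that a priori set-theoretic functions on $\ZZ$ coincide with elements of $\C[\ZZ]$. First I would observe that the construction in \eqref{rho} already exhibits $\rho$ as a $\C[\ttt]$-algebra homomorphism from $H^*_\Ts(X)$ to the $\C$-algebra $\mathrm{Fun}(\ZZ,\C)$ of set-theoretic functions on the closed points of $\ZZ$. Indeed, by Remark~\ref{remun} the torus-fixed point $\zeta_i = \lim_{t\to 0} H^t\cdot x$ is intrinsically associated to $(\w,x)\in\ZZ$, so evaluation $c\mapsto c|_{\zeta_i}(\w)$ is a well-defined ring homomorphism $H^*_\Ts(X) \to \C$ at each closed point of $\ZZ$; compatibility with the $\C[\ttt]$-action is immediate, since a class coming from $H^*_\Ts$ localises on each $\zeta_i$ to itself.

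Next I would appeal to Lemma~\ref{chern2}, which says that $H^*_\Ts(X)$ is generated as a $\C[\ttt]$-algebra by the $\Ts$-equivariant Chern classes $c^\Ts_k(\Ee)$ of $\Hs$-linearised vector bundles $\Ee$ on $X$. Lemma~\ref{nicefun} then supplies exactly what is needed on the generators: each $\rho(c^\Ts_k(\Ee))$ is the restriction to $\ZZ$ of the globally regular function $(\w,x)\mapsto \Tr_{\Lambda^k\Ee_x}(\Lambda^k (e+\w)_x)$ on $\ttt\times X$, hence lies in $\C[\ZZ]$.

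Finally I would close the argument by noting that $\C[\ZZ]$ sits as a $\C[\ttt]$-subalgebra of $\mathrm{Fun}(\ZZ,\C)$. This uses reducedness of $\ZZ$ from Lemma~\ref{lemcm}: on a reduced scheme a regular function is determined by its values on closed points, so the natural map $\C[\ZZ]\to \mathrm{Fun}(\ZZ,\C)$ is injective, and its image is obviously a $\C[\ttt]$-subalgebra. Since $\rho$ is a $\C[\ttt]$-algebra homomorphism into $\mathrm{Fun}(\ZZ,\C)$ which sends a set of $\C[\ttt]$-algebra generators into the subalgebra $\C[\ZZ]$, the entire image of $\rho$ lies in $\C[\ZZ]$, and the factorisation $\rho: H^*_\Ts(X)\to \C[\ZZ]$ is the desired $\C[\ttt]$-algebra homomorphism.

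Essentially all the real work has already been done in Lemmas~\ref{lemcm}, \ref{nicefun} and \ref{chern2}; the only conceptual point that could trip one up is the step of upgrading ``function in $\C[\ZZ]$ on generators'' to ``map of algebras landing in $\C[\ZZ]$'', and that step rests on reducedness of $\ZZ$. Everything else is formal.
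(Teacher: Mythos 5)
Your proposal is correct and follows exactly the paper's route: the paper defines $\rho$ in Section~\ref{sectionmap} as a $\C[\ttt]$-homomorphism into set-theoretic functions on the closed points of $\ZZ$ (using reducedness from Lemma~\ref{lemcm} to identify $\C[\ZZ]$ with a subalgebra of these), and then deduces the corollary immediately from Lemma~\ref{chern2} (generation by equivariant Chern classes) combined with Lemma~\ref{nicefun} (regularity of $\rho$ on those generators). Your write-up just makes explicit the formal bookkeeping that the paper leaves implicit.
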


\subsection{Proof of isomorphism}

\begin{proof}[Proof of Theorem~\ref{finsolv}]
Clearly $\rho$ preserves the grading. For injectivity, note that for any $c\in H_\Ts^*(X)$, we can extract from $\rho(c)$ the localisations $c|_{\zeta_i}$ for all $i$ -- as on the regular locus the function $\rho(c)$ is defined by all those localisations. Recall that $X$ is equivariantly formal \eqref{formal2}. Therefore we get injectivity of $\rho$ by injectivity of localisation on equivariantly formal spaces \cite[Theorem 1.6.2]{GKM}.

Hence to prove that the map is an isomorphism, it suffices to check that the Poincar\'{e} series of the two sides coincide. Since $X$ is equivariantly formal, $H_\Ts^*(X)$ is a free $\C[\ttt]$-module and
$$H_\Ts^*(X)/\I H_\Ts^*(X) \cong H^*(X).$$
Therefore
\begin{align}
\label{PHT}
    P_{H^*(X)}(t) = P_{H_\Ts^*(X)}(t)(1-t^2)^{r}.
\end{align}

On the other hand, from Lemma \ref{lemcm} we know that the generating set of $\I$ is a regular sequence in $\C[\ZZ]$, hence 

\begin{align}
\label{PCZ}
P_{\C[\ZZ]/\I\C[\ZZ]}(t) = P_{\C[\ZZ]}(t)(1-t^2)^{r}.
\end{align}

Now $\C[\ZZ]/\I\C[\ZZ]$ is the zero scheme of the vector field given by $e$. In addition, the action of the torus $H^t$ satisfies $\Ad_{H^t}(e) = t^2 e$. Therefore by Theorem \ref{thmgrad} we have $\C[\ZZ]/\I\C[\ZZ]\cong H^*(X)$ and in particular
$$P_{\C[\ZZ]/\I\C[\ZZ]}(t) = P_{H^*(X)}(t).$$
Therefore, from \eqref{PHT} and \eqref{PCZ} we get
$$P_{\C[\ZZ]}(t) = P_{H_\Ts^*(X)}(t).$$
\end{proof}

\begin{remark}
From Theorem~\ref{finsolv} we get that $\C[\ZZ]$ is a finitely generated free module over $\C[\ttt]$. Therefore the map $\pi:\ZZ\to\ttt$ is finite flat.
\end{remark}

\begin{remark}
The theorem can in fact be proved for a slightly larger class of solvable groups. We need $\Hs$ to be a connected linear algebraic solvable group, and as before $(e,h)$ to be an integrable $\bb(\ssl_2)$-pair, but it does not necessarily have to be principal. For the proof of Theorem \ref{ABBst} we need to assume $\alpha(h) > 0$ for any root $\alpha$ of $\Hs$. However, even this assumption can be made unnecessary as we can consider the subgroup $\Hs'$ generated by $\Ts$ and the additive group generated by $e$. By \cite[Theorem 7.6]{Borel} it is algebraic and its Lie algebra is generated by $\ttt$ and $e$. As Lie bracket of $h$-weight vectors adds the weights, we clearly see that all the weights on $\Hs'$ are nonnegative multiples of $2$.

Even if we assume that $\Hs$ is generated by $\Ts$ and the additive group generated by $e$, it does not follow that $e$ is regular. Take for example
 $$
 \Hs = \left\{\left.
 \begin{pmatrix}
 t/u^2 & * & * & * \\
 0 & t & * & * \\
 0 & 0 & u & * \\
 0 & 0 & 0 & u/t^2
 \end{pmatrix}\right| t,u\in \Cs
 \right\},
 $$
 where the asterisks are understood to stand for any complex numbers. We choose
 $$
 h = 
 \begin{pmatrix}
 3 & 0 & 0 & 0 \\
 0 & 1 & 0 & 0 \\
 0 & 0 & -1 & 0 \\
 0 & 0 & 0 & -3
 \end{pmatrix}, \qquad
 e = 
 \begin{pmatrix}
 0 & 1 & 0 & 0 \\
 0 & 0 & 1 & 0 \\
 0 & 0 & 0 & 1 \\
 0 & 0 & 0 & 0
 \end{pmatrix}.
 $$
 The maximal torus is two-dimensional and the centraliser of $e$ is three-dimensional, hence $e$ is not regular. However together with the diagonal matrices it generates $\he$ as a Lie algebra.
 
 In all our examples of regular actions, we only consider principally paired groups and this extension seems to only include very exotic cases. Therefore we formulate our results in terms of principally paired groups.
\end{remark}

\subsection{Functoriality}

We prove now that Theorem \ref{finsolv} is actually functorial, with respect to both the group and the variety. We prove the latter first.

\begin{proposition} \label{funcprop}
 Assume that $X$ and $Y$ are two $\Hs$-regular varieties and $\phi:X\to Y$ is an $\Hs$-equivariant morphism between them. Let $\ZZ_X \cong \Spec H^*_\Ts(X)$ and $\ZZ_Y \cong \Spec H^*_\Ts(Y)$ be the schemes constructed above for $X$ and $Y$, respectively. The map $(\id,\phi):\ttt\times X\to \ttt\times Y$ induces a morphism $\ZZ_X\to \ZZ_Y$ and the following diagram commutes:
$$
\begin{tikzcd}
H^*_\Ts(Y) \arrow[r, "\phi^*"] \arrow[dd, "\rho_Y"]
& H^*_\Ts(X) \arrow[dd, "\rho_X"]
\\ \\
\C[\ZZ_Y] \arrow[r, "{(\id,\phi)^*}"]
& \C[\ZZ_X]
\end{tikzcd}.
$$
In other words, $\rho$ is a natural isomorphism between the functors $H^*_\Ts$ and $\C[\ZZ]$ on the category of $\Hs$-regular varieties.
\end{proposition}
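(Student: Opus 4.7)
The plan is to verify the two assertions directly from the definitions, relying on the explicit description of $\rho$ in terms of localisation at torus-fixed points and on the naturality of the vector field construction. First I would check that the set-theoretic map $(\id, \phi)\colon \ttt \times X \to \ttt \times Y$ carries $\ZZ_X$ into $\ZZ_Y$. For any $v \in \he$, the $\Hs$-equivariance of $\phi$ gives $\D\phi(V_v|_x) = V_v|_{\phi(x)}$; thus if $V_{e+\w}$ vanishes at $x$ then it vanishes at $\phi(x)$. This says set-theoretically that $\ZZ_X$ maps into $\ZZ_Y$, and since $\ZZ_Y$ is reduced by Lemma~\ref{lemcm}, the scheme-theoretic morphism is well defined.

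Next I would show that the diagram commutes. Fix $c \in H_\Ts^*(Y)$ and $(\w,x) \in \ZZ_X$. From the structural analysis in Section~\ref{structure} and Remark~\ref{remun}, the torus-fixed point associated to $(\w,x)$ is $\zeta_i = \lim_{t\to 0} H^t \cdot x$, and by the analogous statement for $Y$, the torus-fixed point associated to $(\w,\phi(x))$ is $\lim_{t\to 0} H^t \cdot \phi(x)$. Since $\phi$ is $\Hs$-equivariant and hence in particular $\Cs$-equivariant with respect to the $H^t$-action, continuity gives
\[
\lim_{t\to 0} H^t \cdot \phi(x) = \phi\Bigl(\lim_{t\to 0} H^t \cdot x\Bigr) = \phi(\zeta_i).
\]
Note also that $\phi(\zeta_i)$ is a $\Ts$-fixed point of $Y$, so the formula \eqref{rho} applies.

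Applying the definition \eqref{rho} of $\rho$ on each side and using the naturality of localisation in equivariant cohomology (the restriction of $\phi^*c$ to $\zeta_i$ equals the restriction of $c$ to $\phi(\zeta_i)$, both viewed as polynomials in $\C[\ttt]$), I obtain
\[
\rho_X(\phi^*c)(\w,x) = (\phi^*c)\big|_{\zeta_i}(\w) = c\big|_{\phi(\zeta_i)}(\w) = \rho_Y(c)(\w,\phi(x)) = (\id,\phi)^*\rho_Y(c)(\w,x).
\]
This gives the commutativity of the diagram on closed points of $\ZZ_X$; reducedness of $\ZZ_X$ from Lemma~\ref{lemcm} then promotes this to an equality of regular functions. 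I do not expect any genuine obstacle here: the only subtle point is confirming that the torus-fixed point assigned to a pair in $\ZZ$ is compatible with equivariant maps, which is handled by the $\Cs$-limit characterisation coming from Remark~\ref{remun} and Theorem~\ref{ABBst}.
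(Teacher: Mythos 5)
Your proof is correct and follows essentially the same route as the paper: identify the torus-fixed point attached to $(\w,x)$, check it is carried to the fixed point attached to $(\w,\phi(x))$, and conclude by naturality of localisation together with reducedness of the zero schemes. The only cosmetic differences are that you use the limit characterisation of the fixed point from Remark~\ref{remun} where the paper uses the form $x = M_\w\zeta$ (the two are equivalent by that remark), and that you spell out the well-definedness of the induced morphism $\ZZ_X\to\ZZ_Y$, which the paper leaves implicit.
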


\begin{proof}
Consider a class $c\in H^*_\Ts(Y)$. We want to show that for any $(\w,x)\in\ZZ_X$ the functions $\rho_X(\phi^*(c))$ and $(\id,\phi)^*(\rho_Y(c))$ take the same value on $(\w,x)$. We know from Section \ref{structure} that $x = M_\w \zeta$, where $M_\w$ is some element of $\Hs$ depending on $\w$, and $\zeta$ is one of $\Ts$-fixed points of $X$. Obviously then $\phi(\zeta)$ is a $\Ts$-fixed point in $Y$ and $\phi(x) = M_\w \phi(\zeta)$. We have then
$$(\id,\phi)^*(\rho_Y(c))(\w,x) = \rho_Y(c)(\w,\phi(x)) = c|_{\phi(\zeta)}(\w).$$
On the other hand
$$\rho_X(\phi^*(c))(\w,x) = \phi^*(c)|_{\zeta}(\w).$$
Now the equality of the above follows from functoriality of $H^*_\Ts$ and commutativity of
$$
\begin{tikzcd}
\{\zeta\} \arrow[dd, "\iota_\zeta"] \arrow[r, "\phi"]
& \{\phi(\zeta)\} \arrow[dd, "\iota_{\phi(\zeta)}"]
\\ \\
X \arrow[r, "\phi"] 
& Y
\end{tikzcd}.
$$
\end{proof}

\begin{proposition}\label{funcgrp}
 Assume that $\Hs_1$, $\Hs_2$ are solvable principally paired groups. Let $\Ts_i\subset \Hs_i$ be the corresponding maximal tori and $e_i\in(\he_i)_n$ the corresponding nilpotent elements in their Lie algebras. Let $\psi:\Hs_1\to\Hs_2$ be a homomorphism of algebraic groups satisfying
 $$\psi(\Ts_1)\subset \Ts_2,\qquad \psi_*(e_1) = e_2.$$
 Assume that $\Hs_2$ acts regularly on a smooth projective variety $X$. Then the map $\psi$ together with the $\Hs_2$-action induce an action of $\Hs_1$ on $X$, which is also regular. In turn, the map $(\psi_*,\id)$ induces a morphism $\ZZ_{\Hs_1} \to \ZZ_{\Hs_2}$ and the following diagram commutes:
$$
\begin{tikzcd}
H^*_{\Ts_2}(X) \arrow[r, "\psi^*"] \arrow[dd, "\rho_{\Hs_2}"]
& H^*_{\Ts_1}(X) \arrow[dd, "\rho_{\Hs_1}"]
\\ \\
\C[\ZZ_{\Hs_2}] \arrow[r, "{(\psi_*,\id)^*}"]
& \C[\ZZ_{\Hs_1}]
\end{tikzcd}.
$$
\end{proposition}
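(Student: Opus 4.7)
My plan is to reduce everything to the fact, established in Lemma \ref{chern2}, that $H^*_{\Ts_2}(X)$ is generated as a $\C[\ttt_2]$-algebra by the $\Ts_2$-equivariant Chern classes of $\Hs_2$-linearised vector bundles, and then to exploit the trace formula of Lemma \ref{nicefun}. The starting observation is that, because the $\Hs_1$-action on $X$ is defined by $\psi$ composed with the $\Hs_2$-action, the associated infinitesimal vector fields satisfy $V^{\Hs_1}_v = V^{\Hs_2}_{\psi_*(v)}$ for every $v\in\he_1$. In particular $V^{\Hs_1}_{e_1}=V^{\Hs_2}_{e_2}$, which has the unique zero $o\in X$ by regularity of the $\Hs_2$-action; hence $\Hs_1$ also acts regularly on $X$, so the scheme $\ZZ_{\Hs_1}\subset \ttt_1\times X$ from Definition~\ref{defz} is defined.

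Next I would check that $(\psi_*,\id)$ carries $\ZZ_{\Hs_1}$ into $\ZZ_{\Hs_2}$. Since the defining vector field on $\{w\}\times X\subset \ttt_1\times X$ is $V^{\Hs_1}_{e_1+w}=V^{\Hs_2}_{e_2+\psi_*(w)}$, a point $(w,x)$ lies in $\ZZ_{\Hs_1}$ precisely when $(\psi_*(w),x)$ lies in $\ZZ_{\Hs_2}$. This immediately produces the scheme-theoretic morphism $\ZZ_{\Hs_1}\to\ZZ_{\Hs_2}$, and hence the ring map $(\psi_*,\id)^*:\C[\ZZ_{\Hs_2}]\to\C[\ZZ_{\Hs_1}]$. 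Because $\pi_{\Hs_2}\circ(\psi_*,\id)=\psi_*\circ\pi_{\Hs_1}$, both $\rho_{\Hs_1}\circ\psi^*$ and $(\psi_*,\id)^*\circ\rho_{\Hs_2}$ are $\C[\ttt_2]$-algebra homomorphisms $H^*_{\Ts_2}(X)\to \C[\ZZ_{\Hs_1}]$ with respect to the same structure map $\C[\ttt_2]\xrightarrow{\psi^*}\C[\ttt_1]\to \C[\ZZ_{\Hs_1}]$, so it suffices to check the equality on a $\C[\ttt_2]$-algebra generating set.

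By Lemma \ref{chern2}, such a set is provided by the classes $c_k^{\Ts_2}(\Ee)$ for $\Hs_2$-linearised bundles $\Ee$ on $X$. Any such $\Ee$ becomes $\Hs_1$-linearised by pulling back along $\psi$, and naturality of equivariant Chern classes yields $\psi^*(c_k^{\Ts_2}(\Ee))=c_k^{\Ts_1}(\Ee)$. Applying Lemma~\ref{nicefun} to each side of the square at a point $(w,x)\in \ZZ_{\Hs_1}$ gives
$$\rho_{\Hs_1}(\psi^*(c_k^{\Ts_2}(\Ee)))(w,x)=\Tr_{\Lambda^k\Ee_x}\bigl(\Lambda^k(e_1+w)_x\bigr),$$
$$(\psi_*,\id)^*\rho_{\Hs_2}(c_k^{\Ts_2}(\Ee))(w,x)=\Tr_{\Lambda^k\Ee_x}\bigl(\Lambda^k(e_2+\psi_*(w))_x\bigr),$$
where the subscript $x$ denotes the infinitesimal action on the fibre. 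Since the $\Hs_1$-linearisation is the pullback of the $\Hs_2$-linearisation, we have $v_x=\psi_*(v)_x$ for all $v\in\he_1$, so the two endomorphisms of $\Ee_x$ coincide and the two traces agree. The only mildly delicate point is the step identifying $\psi^*c_k^{\Ts_2}(\Ee)$ with $c_k^{\Ts_1}(\Ee)$, but this is the standard naturality of equivariant Chern classes under a change-of-group map $B\Ts_1\to B\Ts_2$ combined with the pullback of linearisations; no further obstacle arises, and reducedness of $\ZZ_{\Hs_1}$ from Lemma \ref{lemcm} ensures that agreement on these generators forces the ring maps to coincide.
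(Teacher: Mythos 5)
Your proof is correct, but it verifies the commutativity of the square by a different route than the paper. The paper argues directly on arbitrary classes $c\in H^*_{\Ts_2}(X)$: it uses the structural description of closed points of $\ZZ_{\Hs_1}$ as $(\w, M_\w\zeta)$ with $\zeta$ a $\Ts_1$-fixed point, invokes Lemma \ref{lemfix} to see that such an isolated $\Ts_1$-fixed point is automatically $\Ts_2$-fixed, and then unwinds both composites to the single localisation $c|_{\zeta}$ evaluated at $\w$ versus $\psi_*(\w)$, so that the claim reduces to the commutativity of the square relating $H^*_{\Ts_i}(\pt)$ and $\C[\ttt_i]$. You instead exploit that both composites are ring homomorphisms over $\C[\ttt_2]$ and check them only on the Chern-class generators supplied by Lemma \ref{chern2}, where the trace formula of Lemma \ref{nicefun} makes the two sides literally the same endomorphism trace because $\psi_*(e_1+\w)=e_2+\psi_*(\w)$ and the $\Hs_1$-linearisation is pulled back along $\psi$. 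What your route buys is that you never need to know the fine structure of points of $\ZZ_{\Hs_1}$ nor compare the fixed points attached to the two group actions; what it costs is reliance on the generation statement (Lemma \ref{chern2}) and the naturality of equivariant Chern classes under the change of group $B\Ts_1\to B\Ts_2$, which you correctly flag as the one point needing care. Both arguments rest on the same underlying machinery (the definition of $\rho$ by localisation and its regularity via Chern classes), so the difference is one of packaging rather than substance; your deduction of regularity of the $\Hs_1$-action and of the morphism $\ZZ_{\Hs_1}\to\ZZ_{\Hs_2}$, with reducedness from Lemma \ref{lemcm} closing the argument, matches the paper.
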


\begin{proof}
 As $\psi_*(e_1) = e_2$, the group $\Hs_1$ clearly acts on $X$ regularly. Obviously if $(\w,x)\in\ZZ_{\Hs_1}$, then $e_1+\w$ vanishes at $x$, and therefore $\psi_*(e_1+\w) = e_2 + \psi(\w)$ vanishes at $x$, hence $(\psi_*,\id)$ maps $\ZZ_{\Hs_1}$ to $\ZZ_{\Hs_2}$.
 \\
 Now let $c\in H^*_{\Ts_2}(X)$ and $(\w,x)\in \ZZ_{\Hs_1}$. We want to prove that
 $$(\psi_*,\id)^*(\rho_{\Hs_2}(c))(\w,x) = \rho_{\Hs_1}(\psi(c))(\w,x).$$
 We know that $x = M_\w \zeta$ for some $M_\w\in \Hs_1$ depending on $\w$ and an isolated $\Ts_1$-fixed point $\zeta$. Then by Lemma \ref{lemfix} the point $\zeta$ is fixed by $\Ts_2$. Therefore (cf. Remark \ref{remun}) we have 
 $$(\psi_*,\id)^*(\rho_{\Hs_2}(c))(\w,x) = \rho_{\Hs_2}(c)(\psi_*(\w),x) = c|_{\zeta}(\psi_*(\w))$$
 and
 $$\rho_{\Hs_1}(\psi(c))(\w,x) = \psi(c)|_{\zeta}(\w).$$
 Now the equality follows from commutativity of
$$
\begin{tikzcd}
H^*_{\Ts_2}(\pt) \arrow[r, "\psi^*"] \arrow[dd, "\cong"]
& H^*_{\Ts_1}(\pt) \arrow[dd, "\cong"]
\\ \\
\C[\ttt_2] \arrow[r, "(\psi_*)^*"]
& \C[\ttt_1]
\end{tikzcd}.
$$
\end{proof}

\subsection{Examples and comments}
We illustrate Theorem \ref{finsolv} with a few examples.

\begin{figure}[ht!]
\begin{center}
 \includegraphics[width=10cm]{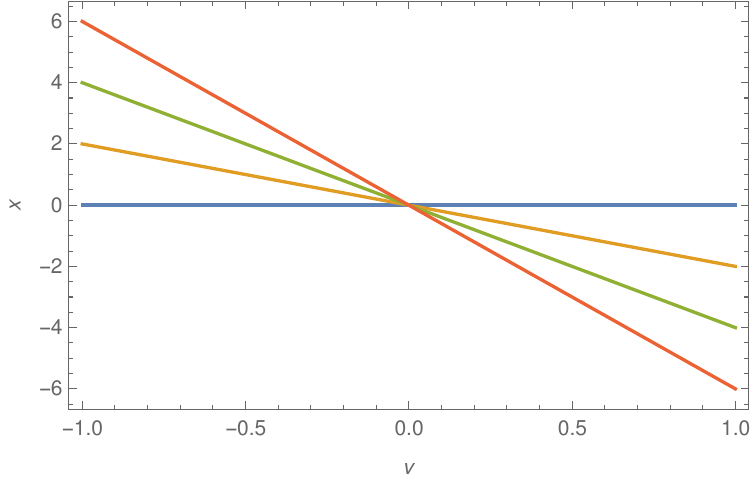}
\end{center}
\caption[Spectrum of $\Cs$-equivariant cohomology of $\PP^3$]{$\Spec H_{\Cs}^*(\PP^3)$.}
\label{hcp4}
\end{figure}

\begin{example}\label{exsl22}
 We continue Example \ref{exsl2} which already appears in \cite{BC}. The point $o = [1:0:\dots:0]$ is the unique zero of $e$. 
 If $[z_0:z_1:\dots:z_n]$ are the homogeneous coordinates of $\PP^n$, then the scheme $\ZZ$ lies completely in the affine chart $X_o$ of $o$, with affine coordinates $x_i = z_i/z_0$, for $i=1,2,\dots,n$. We have
 $$V_h|_{x_1,\dots,x_n} = (-2x_1,-4x_2,\dots,-2nx_n)$$
 and
 $$V_e|_{x_1,\dots,x_n} = (x_2-x_1x_1, x_3 - x_1x_2, x_4 - x_1x_3,\dots,x_n-x_1x_{n-1},-x_1x_n).$$
 Then
 \begin{multline*}
 V_{e+vh}|_{x_1,\dots,x_n} =\\
  (x_2-x_1(x_1+2v), x_3 - x_2(x_1+4v), \dots, x_n-x_{n-1}(x_1+2(n-1)v), -x_n(x_1+2n)).
 \end{multline*}
 If we consider the zero scheme $\ZZ$ of $e+vh$ within $\ttt\times X_o$, then the coordinates $x_2,\dots, x_n$ are clearly determined by $x_1$ and $v$ and we can identify $\ZZ$ with the subscheme of $\Spec \C[v,x_1]$ cut out by the equation
 $$x_1(x_1+2v)(x_1+4v)\dots(x_1+2nv) = 0.$$
 In other words, $H_{\Cs}^*(\PP^n) = \C[v,x]/\big(x(x+2v)(x+4v)\dots(x+2nv)\big)$ with $\deg v = \deg x = 2$. See Figure \ref{hcp4}.
\end{example}

\begin{remark}
Clearly a product $X\times Y$ of two varieties with a regular $\Hs$-action is also regular and its equivariant cohomology scheme can be represented as a fiber product, i.e. $H_{\Ts}^*(X,Y) = H_{\Ts}^*(X) \otimes_{H_{\Ts}^*} H_{\Ts}^*(Y)$.

In particular the product $\PP^1 \times \PP^1$ is regular under the action of $\SL_2$, hence also of $\Bs_2$. It embeds in $\PP^3$ via the Segre embedding. The action of $\SL_2$ on $\PP^3$ from Example \ref{exsl2} is also regular. However the Segre embedding cannot be $\SL_2$- or even $\Bs_2$-equivariant with respect to those two actions. In fact, using Theorem \ref{finsolv} we can prove a more general statement:
\end{remark}

\begin{corollary} \label{surj}
Let a principally paired solvable group $\Hs$ act regularly on a smooth projective variety $X$. Assume that $Z$ is its closed, smooth, $\Hs$-invariant subvariety. Then the induced map on cohomology rings
$$f^*: H^*(X,\C) \to H^*(Z,\C)$$
is surjective.
\end{corollary}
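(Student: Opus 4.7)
The plan is to deduce Corollary~\ref{surj} from Theorem~\ref{finsolv} together with its naturality statement Proposition~\ref{funcprop}. The first step is to verify that $\Hs$ acts \emph{regularly} on the $\Hs$-invariant subvariety $Z$. Since the additive group $\exp(\C\cdot e)\subset \Hs$ is unipotent, hence solvable, it acts on the nonempty projective variety $Z$ and therefore has a fixed point on $Z$ by the Borel fixed point theorem (Theorem~\ref{borelfix}). That fixed point is a zero of $e$ on $X$, hence must be the unique such zero $o$; in particular $o\in Z$, so $e$ has the single zero $o$ on $Z$ and the induced action of $\Hs$ on $Z$ is regular. Consequently Theorem~\ref{finsolv} applies to $Z$ as well and produces an affine zero scheme $\ZZ_Z\subset\ttt\times Z$ with $\C[\ZZ_Z]\cong H^*_\Ts(Z)$.

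Next I would invoke the functoriality of $\rho$ (Proposition~\ref{funcprop}) applied to the $\Hs$-equivariant closed immersion $f:Z\into X$. This yields a commutative square
$$
\begin{tikzcd}
H^*_\Ts(X) \arrow[r,"f^*_\Ts"] \arrow[d,"\rho_X"',"\cong"] & H^*_\Ts(Z) \arrow[d,"\rho_Z"',"\cong"] \\
\C[\ZZ_X] \arrow[r] & \C[\ZZ_Z]
\end{tikzcd}
$$
in which the bottom row is the restriction of functions induced by $(\id,f):\ttt\times Z\into \ttt\times X$. The crucial identification is that, because $Z$ is $\Hs$-invariant, the total vector field $V_{e+\ttt}$ on $\ttt\times X$ is tangent to $\ttt\times Z$ and restricts there to the analogous vector field for the $\Hs$-action on $Z$; this forces $\ZZ_Z$ to equal the scheme-theoretic intersection $\ZZ_X\cap(\ttt\times Z)$. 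Thus $\ZZ_Z\into\ZZ_X$ is a closed immersion of affine schemes (by Lemma~\ref{lemcm}), so the bottom arrow, and therefore $f^*_\Ts$, is surjective.

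The last step is to pass to ordinary cohomology. Because $h\in\ttt^\reg$, Lemma~\ref{regzer} shows that the $\Cs$-subgroup $\{H^t\}$ has only isolated fixed points on both $X$ and $Z$. By Białynicki-Birula (Theorem~\ref{cohobb}) both $X$ and $Z$ then admit pavings by affine cells, so their cohomology vanishes in odd degrees, and both spaces are equivariantly formal by Theorem~\ref{thmfor}. Reducing the surjection $f^*_\Ts$ modulo the augmentation ideal $\I\subset \C[\ttt]$ therefore yields the desired surjection $f^*:H^*(X,\C)\twoheadrightarrow H^*(Z,\C)$.

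The main obstacle I foresee is the scheme-theoretic identification $\ZZ_Z=\ZZ_X\cap(\ttt\times Z)$: set-theoretic equality is immediate, but one must check that the defining ideals agree after restriction, which relies on the fact that $V_{e+\ttt}$ is tangent to the closed subscheme $\ttt\times Z$. Everything else is a routine bookkeeping of functoriality and equivariant formality.
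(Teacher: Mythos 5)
Your proposal is correct and follows essentially the same route as the paper's own proof: identify $\ZZ_Z$ with the intersection $\ZZ_X\cap(\ttt\times Z)$ inside the affine scheme $\ZZ_X$, deduce surjectivity of $\C[\ZZ_X]\to\C[\ZZ_Z]$ and hence of $f^*_\Ts$ via Proposition~\ref{funcprop}, and pass to ordinary cohomology by equivariant formality and right-exactness of reduction modulo $\I$. Your Borel-fixed-point argument for regularity of the induced action on $Z$ just spells out what the paper dismisses as ``clearly'', and your worry about the scheme structure of the intersection is harmless since both zero schemes are reduced by Lemma~\ref{lemcm}, so a closed immersion $\ZZ_Z\into\ZZ_X$ is all that is needed.
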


\begin{proof}
Clearly $Z$ is also an $\Hs$-regular variety. From Theorem \ref{finsolv} we have $H^*_\Ts(X) = \C[\ZZ_X]$ and $H^*_\Ts(Z) = \C[\ZZ_Z]$ where $\ZZ_X$ and $\ZZ_Z$ are the zero schemes constructed for $X$ and $Z$ according to Definition \ref{defz}. But clearly from the definition we see that $\ZZ_Z$ is the (reduced) intersection $\ZZ_X\cap Z$, hence a closed subvariety of $\ZZ_X$. This means that the induced map $\C[\ZZ_X]\to \C[\ZZ_Z]$ is surjective. By Proposition \ref{funcprop} this is the same as the map induced on equivariant cohomology. By equivariant formality we get the non-equivariant cohomology by tensoring with $\C$ over $H^*_\Ts$, and this operation is right-exact, hence it preserves surjectivity.
\end{proof}

In particular, as $h^2(\PP^1\times\PP^1) = 2$, the product $\PP^1\times\PP^1$ cannot be embedded $\Bs_2$-equivariantly in any $\PP^m$ with regular action.

\begin{figure}[ht!]
\begin{center}
\subfloat{
  \includegraphics[width=7cm]{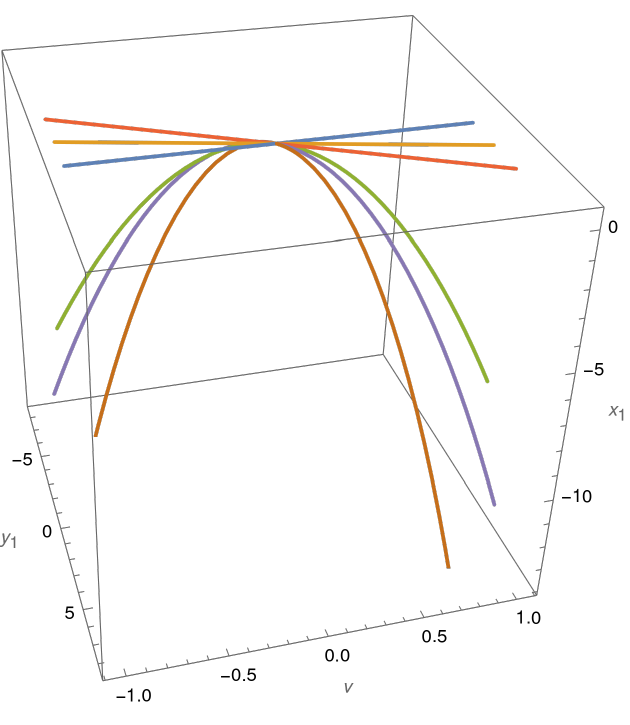}}
  \hfill
\subfloat{
  \includegraphics[width=8.2cm]{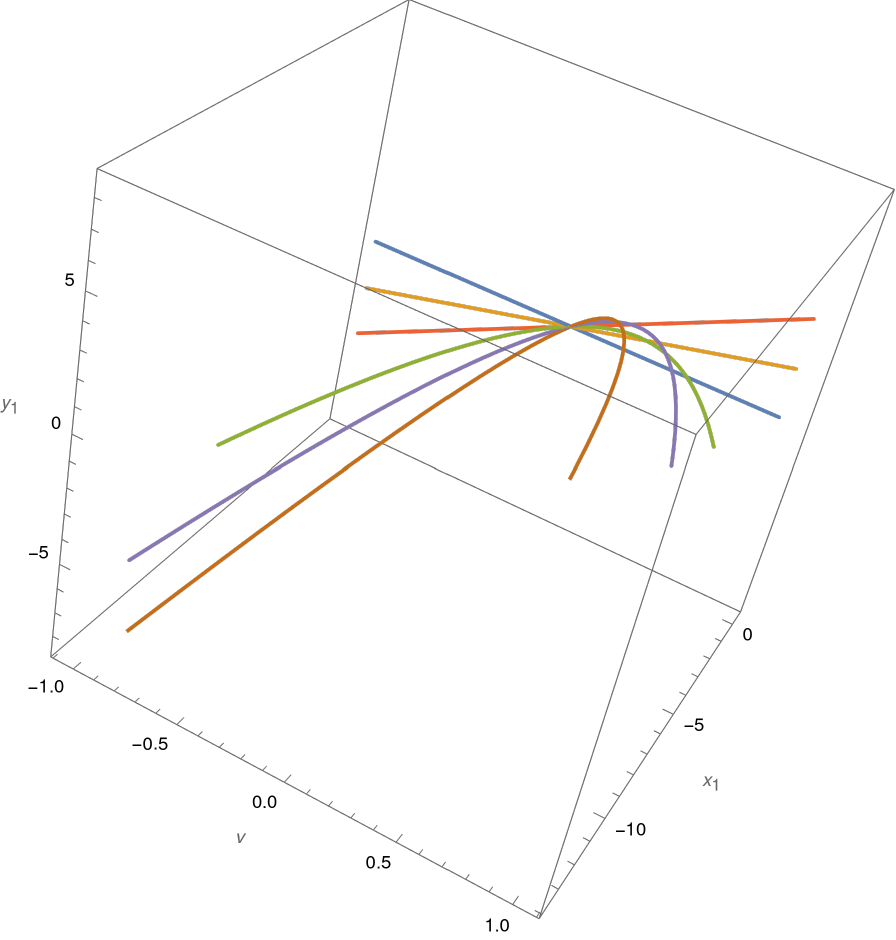}}
\end{center}
\caption[Spectrum of $\Cs$-equivariant cohomology of $\Gr(2,4)$]{Two different views of $\Spec H_{\Cs}^*(\Gr(2,4))$. Note that all the components project bijectively to the $v$ axis.}
\label{hcgr}
\end{figure}

\begin{example}\label{csgr}
In Example \ref{exsl2} we considered an action of $\SL_2(\C)$ on any $\C^n$. We can also use it to define actions on partial or full flag varieties. Let us consider the action of the upper Borel subgroup of $\SL_2$ on $\C^4$ and the induced action on the Grassmannian $\Gr(2,4)$ of two-planes in $\C^2$. We can identify it with $\SL_4(\C)/\Ps$, where $\Ps$ is the parabolic group of matrices of the form
$$\begin{pmatrix}
    * & * & * & * \\
    * & * & * & * \\
    0 & 0 & * & * \\
    0 & 0 & * & * \\
   \end{pmatrix}.
$$
The only zero of $e$ is $o=\Span(e_1,e_2)$ and in the representation above $X_o$ can be thought of as the set of classes of matrices of the form
$$\begin{pmatrix}
    1 & 0 & * & * \\
    0 & 1 & * & * \\
    x_1 & y_1 & * & * \\
    x_2 & y_2 & * & * \\
   \end{pmatrix}.
$$
Then if we write down the coordinates $x_1$, $y_1$, $x_2$, $y_2$ in this order, one checks that
$$V_e|_{x_1,y_1,x_2,y_2} = (x_2-x_1y_1,-x_1-y_1^2+y_2,-x_1y_2,-x_2-y_1y_2)$$
and
$$V_h|_{x_1,y_1,x_2,y_2} = (4x_1,2y_1,6x_2,4y_2).$$
Therefore the equations of $\ZZ$ in $\C[v,x_1,y_1,x_2,y_2]$ are
$$4vx_1 + x_2-x_1y_1 = 0,\quad
2vy_1-x_1-y_1^2+y_2 = 0, \quad
6vx_2-x_1y_2 = 0,\quad
4vy_2-x_2-y_1y_2 = 0.$$
We can determine $x_2$ and $y_2$ from the first two equations and plugging in to the other two, we get
$$
x_1 (x_1 + 24 v^2 - 8 v y_1 + y_1^2) = 0,\quad
(y_1-4v) (2 x1 - 2 vy_1 + y_1^2) = 0.
$$
This gives six one-parameter families of solutions (one for each torus-fixed point):
\begin{align*}
&(x_1 = 0, y_1 = 0);&\quad 
&(x_1 = 0, y_1 = 2v);&\quad 
&(x_1 = -8v^2, y_1 = 4v);\\
&(x_1 = 0, y_1 = 4v);&\quad 
&(x_1 = -12v^2, y_1 = 6v);&\quad
&(x_1 = -24v^2, y_1 = 8v);
\end{align*}
see Figure \ref{hcgr}.
\end{example}

\begin{figure}[ht!]
\begin{center}
 \includegraphics[width=7cm]{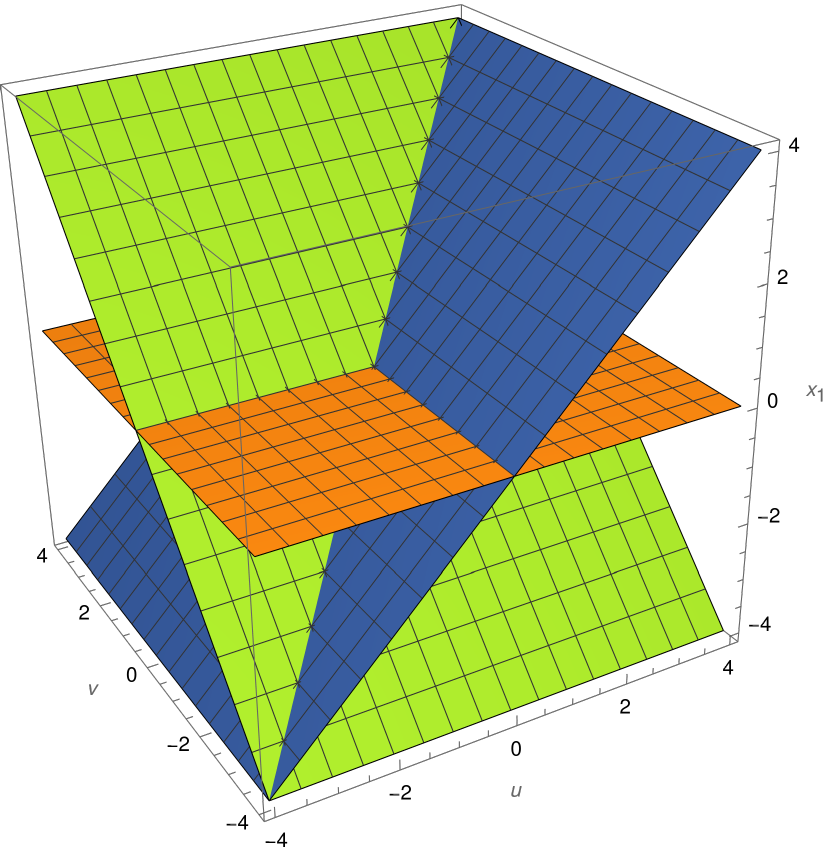}
\end{center}
\caption[Spectrum of torus-equivariant cohomology of $\Gr(2,4)$]{$\Spec H_{\Ts}^*(\PP^2)$.}
\label{htpn}
\end{figure}

\begin{example}\label{exsl3p2}
 Let us now switch to groups of higher rank. As in Example \ref{exgr2}, we can consider the regular nilpotent
 $$e = \begin{pmatrix}
       0 & 1 & 0 & 0 & \dots & 0\\
       0 & 0 & 1 & 0 & \dots & 0\\
       0 & 0 & 0 & 1 & \dots & 0\\
       \vdots & \vdots & \vdots & \vdots & \ddots & \vdots \\
       0 & 0 & 0 & 0 & \dots & 1 \\
       0 & 0 & 0 & 0 & \dots & 0
      \end{pmatrix}
 $$
 in $\SL_{n+1}$. We have the regular action of $\SL_{n+1}$ on $\PP^n$, which in particular restricts to a regular action of its upper Borel subgroup. We continue using notation from Example \ref{exgr} for the elements of $\ttt$. As in Example \ref{exsl22}, we have 
 $$V_e|_{x_1,\dots,x_n} = (x_2-x_1x_1, x_3 - x_1x_2, x_4 - x_1x_3,\dots,x_n-x_1x_{n-1},-x_1x_n).$$
 For the element $(v_1,v_2,\dots,v_{n})\in\C^n$, which corresponds to the diagonal matrix
 $$\diag(0,v_1,v_2,\dots,v_{n}) - \frac{v_1+v_2+\dots+v_{n}}{n+1} I_{n+1},$$ the associated vector field at $(x_1,x_2,\dots,x_n)$ has coordinates equal to
 $(v_1x_1,v_2x_2,\dots,v_nx_n)$. Hence 
 \begin{multline*}
 V_{e+(v_1,v_2,\dots,v_n)}|_{x_1,\dots,x_n} = \\
 (x_2-x_1(x_1-v_1), x_3 - x_2(x_1-v_2), \dots, x_n-x_{n-1}(x_1-v_{n-1}), -x_n(x_1-v_n)).
 \end{multline*}
 Thus we can determine $x_2$, $x_3$, \dots, $x_n$ from $x_1$ and $v_1$, $v_2$, \dots, $v_n$. The scheme $\ZZ$ can be then realised within $\Spec\C[v_1,v_2,\dots,v_n,x_1]$ and cut out by one equation
 $$x_1(x_1-v_1)(x_1-v_2)\dots(x_1-v_n) = 0.$$
 This scheme consists of $n+1$ hyperplanes. Their intersections, when projected on the $(v_1,\dots,v_n)$-plane, form the toric fan of $\PP^{n}$. The functions on the scheme consist of $n+1$ polynomials, one for each component, that agree on the intersections. This agrees with the classical description of equivariant cohomology of toric variety as piecewise polynomials on the fan, as e.g. in \cite[section 2.2]{Briontor}.
 For $n=2$ the scheme is depicted in Figure \ref{htpn}.
\end{example}

 \begin{example}
 \label{exflag}
  We can extend the previous example to full flag varieties. Take for example the variety $F_3 = \SL_3/\Bs$ of full flags in $\C^3$. The only zero of $e$ is the flag $\Span(e_1)\subset\Span(e_1,e_2)$ and the cell $X_o$ in this case consists of the flags represented by matrices of the form
 $$
 \begin{pmatrix}
  1 & 0 & * \\
  a & 1 & * \\
  b & c & *
 \end{pmatrix} \in \SL_3(\C).
 $$
 One finds that
 $$V_e|_{a,b,c} = (-a^2+b,-ab,-b+ac-c^2).$$
 If, as before, we consider a pair $\w = (v_1,v_2)\in\C^2$ as an element of $\ttt$, then we have
 $$V_\w|_{a,b,c} = 
 (v_1a,v_2b,(v_2-v_1)c).$$
 Hence the equations for $V_{e+\w} = 0$ are
 $$-a^2+b+v_1a = 0,\qquad -ab+v_2b,\qquad -b+ac-c^2+(v_2-v_1)c.$$
 Plugging $b$ from the first one into the others yields two equations
 $$a(a-v_1)(a-v_2) = 0,\qquad -a^2+av_1+ac-c^2-cv_1+cv_2 =0.$$
 By splitting the first equation into cases, we easily get the six families of solutions (one for each coordinate flag):
 \begin{align*}
 &(a = 0, c = 0);&\quad 
 &(a = v_1, c = 0);&\quad 
 &(a = v_1, c = v_2);\\
 &(a = v_2, c = v_2);&\quad 
 &(a = 0, c = -v_1+v_2);&\quad
 &(a = v_2, c = -v_1+v_2).
 \end{align*}
 \end{example}
 
\begin{remark}
One can note in the above examples that the scheme $\ZZ$ always consists of finitely many copies of the base $\ttt$. The essential information is then contained not in the shape of the components, but in the way they intersect. In fact, we can prove the general statement:
\end{remark}

\begin{lemma} \label{solvcomps}
Assume that a solvable principally paired group $\Hs$ acts regularly on a smooth projective variety $X$. Then every irreducible component of $\ZZ$ is mapped isomorphically to $\ttt$. Moreover, the irreducible components are indexed by the $\Ts$-fixed points in $X$.
\end{lemma}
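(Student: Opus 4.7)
The plan is to exhibit the irreducible components of $\ZZ$ as closures of explicit sections of $\pi: \ZZ \to \ttt$ defined over the regular locus $\ttt^{\reg}$. Using the morphism $M: \ttt^{\reg} \to \Hs$ from Theorem \ref{unif}, which satisfies $\Ad_{M_\w}(\w) = e+\w$, I define for each $\Ts$-fixed point $\zeta_i \in X^\Ts$ the section
$$\sigma_i: \ttt^{\reg} \to \pi^{-1}(\ttt^{\reg}), \qquad \sigma_i(\w) = (\w, M_\w\zeta_i).$$
By Lemma \ref{lemad}, $V_{e+\w}|_{M_\w \zeta_i} = \D M_\w(V_\w|_{\zeta_i}) = 0$, so $\sigma_i$ indeed lands in $\ZZ$. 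By Lemma \ref{regzer} and the analysis in Section \ref{structure}, every zero of $V_{e+\w}$ with $\w \in \ttt^{\reg}$ is of the form $M_\w\zeta_i$ for a unique index $i$ (the one determined by $\lim_{t\to 0} H^t\cdot x = \zeta_i$ via Remark \ref{remun}). Thus $\pi^{-1}(\ttt^{\reg}) = \bigsqcup_{i=0}^{s} \sigma_i(\ttt^{\reg})$ is a disjoint union of $s+1$ locally closed irreducible subvarieties, each mapped isomorphically onto $\ttt^{\reg}$ by $\pi$.

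Let $C_i \subset \ZZ$ be the closure of $\sigma_i(\ttt^{\reg})$; since $\ZZ$ is reduced by Lemma \ref{lemcm}, each $C_i$ is an integral closed subscheme of dimension $r = \dim\ttt$. The restriction $\pi|_{C_i}: C_i \to \ttt$ is finite (as a restriction of the finite map $\pi$, see the remark following Theorem \ref{finsolv}), surjective (its image is closed and contains the dense open $\ttt^{\reg}$), and birational (an isomorphism on the dense open $\sigma_i(\ttt^{\reg}) \subset C_i$). Because $\ttt \cong \A^r$ is normal, any finite birational morphism from an integral scheme to $\ttt$ is an isomorphism: the coordinate ring of the source sits between $\C[\ttt]$ and its fraction field as an integral extension, hence equals $\C[\ttt]$. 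Therefore $\pi|_{C_i}: C_i \xrightarrow{\sim} \ttt$.

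Finally, each $C_i$ is irreducible of dimension $r$, and since $\ZZ$ is Cohen--Macaulay by Lemma \ref{lemcm} hence equidimensional of dimension $r$, the $C_i$ are maximal among irreducible closed subsets of $\ZZ$, i.e.\ irreducible components. They are pairwise distinct because their intersections with $\pi^{-1}(\ttt^{\reg})$ are the disjoint pieces $\sigma_i(\ttt^{\reg})$. Conversely, any irreducible component of $\ZZ$ has dimension $r$ and therefore must meet the dense open $\pi^{-1}(\ttt^{\reg})$, so it coincides with the closure of some $\sigma_i(\ttt^{\reg})$, i.e.\ with some $C_i$. The assignment $\zeta_i \mapsto C_i$ then yields the stated bijection with $X^\Ts$. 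No step poses a serious obstacle, since the structure of $\ZZ$ over $\ttt^{\reg}$ (Theorem \ref{unif}, Remark \ref{remun}) and its reducedness and finiteness over $\ttt$ (Lemma \ref{lemcm}, Theorem \ref{finsolv}) are already in hand; the cleanest technical input is the ``finite $+$ birational $+$ normal target $\Rightarrow$ isomorphism'' step used to upgrade each $\pi|_{C_i}$ from a finite birational cover to an isomorphism.
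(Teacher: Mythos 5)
Your proposal is correct and follows essentially the same route as the paper: both decompose $\pi^{-1}(\ttt^{\reg})$ into the sections $\w\mapsto(\w,M_\w\zeta_i)$ coming from Theorem \ref{unif}, take closures, and upgrade the resulting birational finite maps onto $\ttt$ to isomorphisms. The only (immaterial) difference is the final step, where the paper invokes Zariski's main theorem for a birational morphism with finite fibers onto the normal target $\ttt$, while you use finiteness of $\pi$ from the remark after Theorem \ref{finsolv} together with integral closedness of $\C[\ttt]$.
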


\begin{proof}
We proved in Lemma \ref{lemcm} that $\ZZ\cap \pi^{-1}(\ttt^\reg) \simeq \ttt^\reg\times X^\Ts$ and it is dense in $\ZZ$. Therefore for each $\zeta_i\in \Ts$ we have the subset
$$A_i = \{ (w,M_w\zeta_i) | w\in\ttt^\reg  \}  \subset \ZZ\cap \pi^{-1}(\ttt^\reg)$$
and $\ZZ = \bigcup \overline{A_i}$. Here $M_w$ are the elements defined by Theorem \ref{unif}.

We will prove that each $\overline{A_i}$ maps isomorphically to $\ttt$. Note that it is a closed subscheme of $\ZZ$, which is projective over $\ttt$. Therefore its image in $\ttt$ is closed. As it contains $\ttt^\reg$, it has to be the whole $\ttt$. The projection $\overline{A_i} \to \ttt$ is a birational morphism with finite fibers. Hence by Zariski's main theorem it is an isomorphism to an open subscheme of $\ttt$. But we have seen that its image is the whole $\ttt$, hence $\overline{A_i} \to \ttt$ is an isomorphism.

We showed that $\ZZ$ is a finite union of irreducible subspaces $\overline{A_i}$ isomorphic to $\ttt$  -- therefore they are its irreducible components.  
\end{proof}

\begin{example}
Another natural family of examples are the Bott--Samelson resolutions of Schubert varieties (\cite{BS},\cite{Hansen},\cite{Demazure}). We first recall their construction here. Let $\Gs$ be a semisimple group of rank $r$, with simple roots $\alpha_1$, $\alpha_2$, \dots, $\alpha_r$. The reflections $s_1$, $s_2$, \dots, $s_r$ in the simple roots generate the Weyl group $\W$ of $\Gs$. Let $(e_i,f_i,h_i)$ be an $\ssl_2$-triple corresponding to $\alpha_i$. For any sequence $\underline{\omega}=(\alpha_{i_1},\alpha_{i_2},\dots,\alpha_{i_l})$ of simple roots we can construct the \emph{Bott--Samelson variety} as follows:

$$X_{\underline{\omega}}
=\Ps_{i_1}\times^{\Bs}\Ps_{i_2}\times^{\Bs}\dots \times^{\Bs} \Ps_{i_l}/\Bs,
$$
where $\Bs$ is the Borel subgroup of $\Gs$ and $\Ps_i$ is the minimal (non-Borel) parabolic subgroup corresponding to the root $\alpha_i$. Here $\Bs$ acts on $\Ps_i$ both on left and right, hence we can define the mixed quotients as above, and the last quotient is by the right $\Bs$-action on $\Ps_{i_l}$. The variety admits the multiplication map $X_{\underline{\omega}}\to \Gs/\Bs$. If $\underline{\omega}$ is a reduced word representing an element $\omega\in\W$, then this map is a resolution of the Schubert variety $X_\omega = \ov{\Bs\omega \Bs/\Bs}$. The Borel subgroup $\Bs$ acts on the Bott--Samelson variety on the left.

\begin{lemma}\label{botsamreg}
 The Bott--Samelson resolutions are regular $\Bs$-varieties.
\end{lemma}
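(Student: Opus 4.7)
I would proceed by induction on the length $l$ of the word $\underline{\omega}$. First, $X_{\underline{\omega}}$ is smooth and projective: it is built as an iterated $\PP^1$-bundle over a point, since the forgetful map
$$\pi_l:X_{\underline{\omega}}=\Ps_{i_1}\times^{\Bs}\cdots\times^{\Bs}\Ps_{i_l}/\Bs\;\longrightarrow\;X_{\underline{\omega}''}=\Ps_{i_1}\times^{\Bs}\cdots\times^{\Bs}\Ps_{i_{l-1}}/\Bs,$$
$[(p_1,\dots,p_l)]\mapsto[(p_1,\dots,p_{l-1})]$, is a Zariski-locally trivial $\Ps_{i_l}/\Bs\cong\PP^1$-bundle. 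Moreover, since $\Bs$ acts only on the first factor, the map $\pi_l$ is $\Bs$-equivariant. The base case $l=0$ is a point, trivially regular.

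For the inductive step, assume the regularity of $X_{\underline{\omega}''}$, so that $e\in\bb$ has a unique zero there, namely $o''=[(1,\dots,1)]$, which is even a $\Bs$-fixed point. By $\Bs$-equivariance of $\pi_l$, every zero of $V_e$ on $X_{\underline{\omega}}$ maps to a zero of $V_e$ on $X_{\underline{\omega}''}$, hence lies in the fiber $F:=\pi_l^{-1}(o'')$. Since $o''$ is $\Bs$-fixed, $F$ is $\Bs$-invariant, and unwinding the equivalence relation
$$(p_1,\dots,p_{l-1},p_l)\sim(p_1b_1',b_1'^{-1}p_2b_2',\dots,b_{l-1}'^{-1}p_l)$$
one checks that the identification $F\cong\Ps_{i_l}/\Bs$ via $[(1,\dots,1,p_l)]\leftrightarrow p_l\Bs$ is $\Bs$-equivariant for the left multiplication action of $\Bs$ on $\Ps_{i_l}/\Bs$. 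The problem thus reduces to showing that $e$ has a unique zero on $\Ps_{i_l}/\Bs\cong\PP^1$.

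The key remaining point is the following: $\Bs$ contains the unipotent radical $\Rad_u(\Ps_{i_l})$ (a positive root subgroup lies in $\Bs$), and this radical is normal in $\Ps_{i_l}$, so it acts trivially on $\Ps_{i_l}/\Bs$. Writing $e=x_1+\cdots+x_r$ as a sum of simple root vectors, every $x_j$ with $j\neq i_l$ lies in $\Rad_u(\Ps_{i_l})$ (since its root $\alpha_j$ is not a root of the Levi $\Ls_{i_l}$), hence contributes trivially to $V_e|_F$. Therefore $V_e$ on $F\cong\Ps_{i_l}/\Bs$ agrees with $V_{x_{i_l}}$, and after quotienting by $\Rad_u(\Ps_{i_l})$ this is the standard action of the principal nilpotent of $\ssl_2^{\alpha_{i_l}}$ on $\SL_2/\Bs_2\cong\PP^1$, which has the unique zero $[1]$. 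Thus the only $e$-zero on $X_{\underline{\omega}}$ is $[(1,1,\dots,1)]$, completing the induction.

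The main obstacle is the bookkeeping in step three: correctly identifying the $\Bs$-action on the fiber $F$ with left multiplication on $\Ps_{i_l}/\Bs$ requires carefully propagating an element of $\Bs$ through all $l-1$ intermediate slots using the defining equivalence relation; once this is done, the reduction to the rank-one computation on $\PP^1$ is immediate from the fact that the unipotent radical of $\Ps_{i_l}$ is contained in $\Bs$ and acts trivially on the homogeneous space.
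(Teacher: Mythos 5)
Your proof is correct, but it takes a genuinely different route from the paper's. The paper argues directly and ``from the front'': a zero $[(g_1,\dots,g_l)]$ of $V_e$ is fixed by the regular unipotent $u=\exp(e)$, so $u\in g_1\Bs g_1^{-1}$; since a regular unipotent lies in a unique Borel subgroup and $N_\Gs(\Bs)=\Bs$, this forces $g_1\in\Bs$, and the conjugate $g_1^{-1}ug_1$ is again regular unipotent, so the argument propagates through the remaining slots. You instead peel off the \emph{last} factor, inducting on $l$ via the $\Bs$-equivariant $\PP^1$-bundle $\pi_l$, locating all zeros in the fibre over $[(1,\dots,1)]$, identifying that fibre equivariantly with $\Ps_{i_l}/\Bs$, and reducing to the rank-one computation by observing that $\Rad_u(\Ps_{i_l})\subset\Bs$ is normal in $\Ps_{i_l}$ and therefore kills the contributions of all simple root vectors $x_j$ with $j\neq i_l$. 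Both arguments exploit the iterated structure, but the key inputs differ: the paper leans on the uniqueness of the Borel containing a regular unipotent (a group-level fact), while you lean on the bundle geometry and the $\ssl_2$-on-$\PP^1$ computation (a Lie-algebra-level fact). Your version has the mild advantage of reducing everything to an explicit rank-one model, at the cost of the bookkeeping with the equivalence relation that you acknowledge; the paper's is shorter once the unique-Borel fact is granted. One small point to make explicit: your step writing $e=x_1+\cdots+x_r$ as a sum of simple root vectors uses the particular choice of principal nilpotent from the paper's Example \ref{exregnilp}; this is harmless since regularity of the action is independent of the choice of principal pair (Lemma \ref{isoreg}), but it is worth saying.
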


\begin{proof}
Assume that an element $x\in X_{\underline{\omega}}$ represented by $(g_1,g_2,\dots,g_l)$ is a zero of the vector field defined by the regular nilpotent $e$. As $e$ generates an additive subgroup $\exp(te)$ inside $\Bs$, every zero of $e$ is fixed by this subgroup, and in particular by $b_1 = u = \exp(e)$. This means that in the Bott--Samelson variety
\begin{align}\label{bs1}
[(b_1g_1,g_2,g_3,\dots,g_l)] = [(g_1,g_2,g_3,\dots,g_l)]
\end{align}
First, this means that $b_1 g_1 = g_1 b_2$ for some $b_2\in \Bs$, hence $b_1\in g_1 \Bs g_1^{-1}$. As $b_1$ is a regular unipotent element, there is only one Borel subgroup, namely $\Bs$, which contains $b_1$ (see the discussion in Example \ref{exgr2}). As $N_{\Gs}(\Bs) = \Bs$, we have $g_1\in \Bs$. From $b_1 g_1 = g_1 b_2$ we have that $b_2$ is conjugate to $b_1$, hence it is also a regular unipotent in $\Bs$. From \eqref{bs1} we have
$$[(b_2g_2,g_3,\dots,g_l)] = [(g_2,g_3,\dots,g_l)]$$
in the Bott--Samelson variety corresponding to the sequence $(\alpha_{i_2},\dots,\alpha_{i_l})$. Applying the same reasoning, we get inductively that $g_1,g_2,\dots,g_l\in\Bs$, hence $[(g_1,g_2,g_3,\dots,g_l)] = [(1,1,\dots,1)]$ in $X_{\underline{\omega}}$.\footnote{The author is grateful to Jakub L\"{o}wit for this argument.}
\end{proof}

\noindent This means that using Theorem \ref{finsolv} above we can determine $H_{\Ts}^*(X_{\underline{\omega}})$, where $\Ts$ is the maximal torus inside $\Bs$. The open Białynicki-Birula cell $X_o$ consists of the classes
$$[(\exp(x_1f_{i_1}),\exp(x_2f_{i_2}),\dots,\exp(x_lf_{i_l}))]_{x_1,x_2,\dots,x_l\in\C}$$
and we would like to find the scheme $\ZZ$ inside $X_o\times \ttt$. We need to determine the infinitesimal action of $\Bs$ on that cell. We will proceed coordinate by coordinate. Note that for $i\in\{1,2,\dots,l\}$ the group $\Ps_{i}$ contains $\{\exp(t \cdot f_i)| t\in\C\} \cdot \Bs$ as an open dense subset. Therefore for any $x\in\C$ there exists an open neighbourhood $U\subset \Bs$ of $1_B$ such that for all $g\in U$ we have
$$g \cdot \exp(x f_i) = \exp(b(g)f_i) h(g)$$
for some maps $b:U\to \C$ and $h:U\to \Bs$ with $b(1) = x$ and $h(1) = 1$. The two sides of the equality are functions of $g$. Let us differentiate them at $g=1$ in the direction of $y\in \bb$. We get
$$y \cdot \exp(x f_i) = \exp(x f_i) \cdot (Db|_1(y) f_i + Dh|_1(y)),$$
where on the left hand side the dot denotes the right translation by $\exp(x f_i)$ and on the right hand side it analogously denotes the left translation. Therefore
$$Db|_1(y) f_i + Dh|_1(y) = \Ad_{\exp(-x f_i)}(y).$$
Now let $y = e + w$, where $w\in\Ts$. Then
\begin{multline*}
\Ad_{\exp(-x f_i)}(y) = \Ad_{\exp(-x f_i)}(e) + \Ad_{\exp(-x f_i)}(w) = (e+xh_i-x^2f_i) + (w-\alpha_i(w)x f_i) \\
= (-\alpha_i(w)x-x^2)f_i + (e + w + xh_i).
\end{multline*}
Thus we get $Db|_1(y) = -\alpha_i(w)x-x^2$ and $Dh|_1(y) = e + w + xh_i$. Hence the infinitesimal action on $X_{\underline{\omega}}$ in direction $e+w$ yields the vector of first coordinate $-\alpha_{i_1}(w)x_1-x_1^2$ and induces the infinitesimal action of $e + w + x_1h_{i_1}$ on the second coordinate. We can apply this procedure inductively and get that the $j$-th coordinate is acted upon by
$$e+w+\sum_{k=1}^{j-1} x_k h_{i_k}$$
and the corresponding coordinate of the vector field $V_{e+w}$ is
$$-\sum_{k=1}^{j-1} \alpha_{i_j}(h_{i_k})x_k x_j -\alpha_{i_j}(w)x_j-x_j^2.$$
Therefore if we define the numbers $b_{jk} = \alpha_{i_j}(h_{i_k})$, we obtain the following presentation of the equivariant cohomology ring:
$$H_{\Ts}^*(X_{\underline{\omega}}) = 
\C[\ttt][x_1,x_2,\dots,x_l]/\left( x_j^2 = -\sum_{k<j} b_{jk} x_kx_j -\alpha_{i_j}(w)x_j \right),
$$
where $w$ denotes the $\ttt$ coordinate. Note that e.g. for $\alpha_1, \dots, \alpha_r$ being the standard simple roots of $\SL_n$, those numbers vanish whenever $|i_j-i_k|>1$.

The variety has $2^l$ torus-fixed points and hence the equivariant cohomology ring is a free module over $\C[\ttt]$ of rank $2^l$. An additive basis consists of all the square-free monomials in $x_1,x_2,\dots,x_l$. We recover then the results obtained e.g. in \cite[Proposition 4.2]{BS} or \cite[Proposition 3.7]{Willems}. 
\end{example}

\section{Reductive and arbitrary principally paired algebraic groups}
\label{redarbsec}

\subsection{Reductive groups}
In this section, we will make a transition from solvable groups to reductive groups. We do that by restricting to Borel subgroups and utilizing Theorem \ref{finsolv}.

Let then $\Gs$ be a complex reductive algebraic group of rank $r$. We assume that $e\in\geg = \Lie(\Gs)$ is a regular nilpotent element. Let $f, h\in\geg$ denote the remaining elements of an $\ssl_2$-triple $(e,f,h)$ (see the discussion in Section \ref{sl2p}). In fact, all the regular nilpotents are conjugate by \cite[Section 3, Theorem 1]{Kostsec}. Hence, we can actually assume $e = x_1 + x_2 + \dots + x_s$, as in Example \ref{exgr}. In particular, $h$ is semisimple and contained in the unique Borel subalgebra $\bb$ of $\geg$ containing $e$. It integrates to a map $H^t:\Cs\to \Gs$ with finite kernel. We denote by $\Ss = e+C_\geg(f)$ the corresponding Kostant section (cf. Theorem \ref{kostant2}). We have $\C[\Ss] = \C[\geg]^\Gs = \C[\ttt]^\Ws = H^*_\Gs(\pt)$. The goal will be to prove the following result.

\begin{theorem}
\label{semisimp}
Let $\Gs$ be as above and assume that $\Gs$ acts on a connected smooth projective variety $X$ regularly. Let $\ZZ_\Gs$ be the closed subscheme of $\Ss\times X$ defined as the zero set of the total vector field (Definition \ref{totvec}) restricted to $\Ss\times X$.
Then $\ZZ_\Gs$ is an affine, reduced scheme and $H^*_\Gs(X) \cong \C[\ZZ_\Gs]$ as graded $\C[\Ss]$-algebras, where the grading on right-hand side is defined by the action of $\Cs$ on $\Ss$ via $\frac{1}{t^2}\Ad_{H^t}$ and on $X$ via $H^t$. In other words, $\ZZ_\Gs = \Spec H^*_\Gs(X)$, $\Ss = \Spec H^*_\Gs$ and the pullback of functions along the projection $\ZZ_\Gs\to \Ss$ yields the structure map $H^*_\Gs\to H^*_\Gs(X)$, so we have the following diagram.

$$ \begin{tikzcd}
 \ZZ_\Gs  \arrow{d}{\pi} \arrow{r}{\cong}  &
  \Spec H^*_\Gs(X;\C) \arrow{d} \\
   \Ss \arrow{r}{\cong}&
  \Spec H^*_\Gs.
\end{tikzcd}$$

Moreover, the isomorphism $H^*_\Gs(X) \cong \C[\ZZ_\Gs]$ of graded $\C[S]$-algebras is functorial both in $X$ and $\Gs$. The admissible morphisms are those that map a $\Gs_1$-regular variety $X$ to a $\Gs_2$-regular variety $Y$ in a $\Gs_1$-equivariant way, where $\Gs_1\to\Gs_2$ is a homomorphism between two reductive algebraic groups which maps the fixed principal $\ssl_2$-triple to the other fixed principal $\ssl_2$-triple.

$$ \begin{tikzcd}
 \ZZ_{\Gs_1}(X)  \arrow{d}{} \arrow{r}{\cong}  &
  \Spec H^*_{\Gs_1}(X;\C) \arrow{d} \\
 \ZZ_{\Gs_2}(Y)  \arrow{r}{\cong}&
  \Spec H^*_{\Gs_2}(Y;\C).
\end{tikzcd}$$

\end{theorem}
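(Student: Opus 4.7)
The plan is to reduce to the solvable case by restricting to a Borel subgroup $\Bs\subset\Gs$ whose Lie algebra contains both $e$ and $h$. Such $\Bs$ is principally paired by Lemma \ref{parabolic}, the restricted action of $\Bs$ on $X$ is again regular since $e\in\bb$ still has $o$ as its unique zero, and if $\Ts\subset\Bs$ denotes the maximal torus containing $H^t$ then Theorem \ref{finsolv} produces an affine reduced zero scheme $\ZZ_\Bs\subset\ttt\times X$ with $\C[\ZZ_\Bs]\cong H^*_\Ts(X)$ as graded $\C[\ttt]$-algebras. Since $H^*_\Gs(X) = H^*_\Ts(X)^\Ws$ by Theorem \ref{cohoweyl}, and $\C[\Ss]\cong\C[\ttt]^\Ws$ by Proposition \ref{isoquot}, the theorem will follow if I can exhibit $\ZZ_\Gs$ as the geometric $\Ws$-quotient of $\ZZ_\Bs$ in a way compatible with the map $\chi:\ttt\to\Ss$.

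To construct a morphism $\Phi:\ZZ_\Bs\to \ZZ_\Gs$, I would first work on the dense open locus $\pi^{-1}(\ttt^{\reg})\subset\ZZ_\Bs$. There the conjugation section from Lemma \ref{lembal} provides $A(w)\in\U^-$ with $\Ad_{A(w)}(e+w)=\chi(w)$, and I set $\Phi(w,x)=(\chi(w),A(w)\cdot x)$; by Lemma \ref{lemad} this sends zeros of $V_{e+w}$ to zeros of $V_{\chi(w)}$, so it lands in $\ZZ_\Gs$. To extend $\Phi$ over the non-regular locus I would invoke Lemma \ref{solvcomps}: each irreducible component of $\ZZ_\Bs$ projects isomorphically to $\ttt$, hence $\ZZ_\Bs$ is normal and any rational map to the separated scheme $\Ss\times X$ defined on $\pi^{-1}(\ttt^{\reg})$ extends uniquely, and the image still lies in the closed subscheme $\ZZ_\Gs$. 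The natural $\Ws=N(\Ts)/\Ts$ action on $\ttt^{\reg}\times X^\Ts$ (by the standard action on $\ttt$ and the induced permutation of $\Ts$-fixed points) extends to an action on $\ZZ_\Bs$ permuting its components, and $\Phi$ is $\Ws$-invariant, because two lifts $w,w'=\sigma w$ of the same $s\in\Ss$ give conjugate $e+w$ and $e+w'$, and a representative $\dot\sigma\in N(\Ts)$ together with $A(w')$ and $A(w)$ assemble into the same conjugation into $\Ss$.

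The next step is to verify that $\Phi$ realizes $\ZZ_\Gs$ as $\ZZ_\Bs/\!\!/\Ws$, i.e. that $\Phi^*:\C[\ZZ_\Gs]\to\C[\ZZ_\Bs]^\Ws$ is an isomorphism. Since $\chi:\ttt\to\Ss$ is finite flat of degree $|\Ws|$, pulling $\ZZ_\Gs$ back along $\chi$ gives a closed subscheme of $\ttt\times X$ containing $\ZZ_\Bs$, and by construction its fiber over any $w\in\ttt^{\reg}$ coincides with the fiber of $\ZZ_\Bs$ over $w$ (both equal $\{A(w)\cdot\zeta_i\}$). Equidimensionality from Lemma \ref{lemcm} and the fact that $\ZZ_\Bs$ is Cohen--Macaulay and reduced then forces $\ZZ_\Bs=\ttt\times_\Ss \ZZ_\Gs$, so $\C[\ZZ_\Bs]=\C[\ZZ_\Gs]\otimes_{\C[\Ss]}\C[\ttt]$ and taking $\Ws$-invariants recovers $\C[\ZZ_\Gs]$. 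In particular $\ZZ_\Gs=\Spec(\C[\ZZ_\Bs]^\Ws)$ is affine, and reducedness is preserved since in characteristic zero the invariant subring of a reduced ring is reduced. Combining with $\C[\ZZ_\Bs]^\Ws\cong H^*_\Ts(X)^\Ws=H^*_\Gs(X)$ yields the desired $\C[\Ss]$-algebra isomorphism.

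The main obstacle is matching the geometric $\Ws$-action on $\ZZ_\Bs$ sketched above with the algebraic $\Ws$-action on $H^*_\Ts(X)$ under the isomorphism $\rho$ of Theorem \ref{finsolv}; this requires checking that the permutation of components in $\ZZ_\Bs$ induced by $\sigma\in\Ws$ corresponds, via localisation to $X^\Ts$, to the Weyl action of $N(\Ts)$ on $H^*_\Ts(X^\Ts)=\bigoplus_i\C[\ttt]$, which amounts to tracking that the $\Ts$-fixed point $\zeta_i$ associated to a component is twisted by a chosen representative $\dot\sigma$. Compatibility of the gradings follows because $\chi$ intertwines the weight $-2$ action of $\Cs$ on $\ttt$ with the $\frac{1}{t^2}\Ad_{H^t}$-action on $\Ss$ (verified on $\ttt^{\reg}$ and extended by density), and $\Phi$ is equivariant for these actions. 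Functoriality in $X$ is immediate from the functoriality of $\Phi$ and Proposition \ref{funcprop}; functoriality in $\Gs$ follows by choosing compatible Borel subgroups, applying Proposition \ref{funcgrp} at the Borel level, and passing to $\Ws$-invariants under the induced map of Weyl groups.
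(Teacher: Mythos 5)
Your overall strategy --- restrict to the Borel, transport $\ZZ_\Bs$ into the fibre of the Kostant section via the conjugating element $A(\w)$, and take the $\Ws$-quotient --- is the same as the paper's. But the step you yourself flag as ``the main obstacle'' is not an obstacle to be deferred: it is the mathematical core of the proof, and you leave it unproved. To conclude $\C[\ZZ_\Bs]^\Ws \cong H^*_\Ts(X)^\Ws = H^*_\Gs(X)$ you must show that the \emph{geometric} action of $\eta\in\Ws$ on $\ZZ_\Bs$ (the one given by $(\w,M_\w\zeta_i)\mapsto(\eta\w,M_{\eta\w}\,\eta\zeta_i)$, i.e.\ the one dual to $\eta^*$ on $H^*_\Ts(X)$ under $\rho$) coincides with the action $\psi_\eta(\w,x)=(\eta\w,A(\eta\w)^{-1}A(\w)x)$ that becomes $(\w,x)\mapsto(\eta\w,x)$ after conjugating by $\phi$. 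This is the paper's Lemma \ref{lemweyl}, and it requires a genuine computation: one checks that $C_{\eta,\w}=M_{\eta\w}^{-1}A(\eta\w)^{-1}A(\w)M_\w$ conjugates $\w$ to $\eta\w$, and then uses that $C_\Gs(\w)=\Ts$ for $\w$ regular (connectedness of the centraliser, via Steinberg) to conclude $C_{\eta,\w}$ is a representative of $\eta$ in $N_\Gs(\Ts)/\Ts$, hence sends $\zeta_i$ to $\eta\zeta_i$. Without this, nothing identifies the ring of invariants of your geometric action with $H^*_\Gs(X)$, and the whole reduction collapses.

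Two further technical claims in your write-up are incorrect as stated, though repairable. First, there is no need to define $\Phi$ only on $\pi^{-1}(\ttt^{\reg})$ and then extend: Lemma \ref{lembal} gives the isomorphism $\U^-\times\Ss\to e+\bb^-$ globally, so $A(\w)$ and $\chi(\w)$ are regular on all of $\ttt$, not just on $\ttt^{\reg}$. Your proposed extension argument would in any case fail as written, because $\ZZ_\Bs$ is \emph{not} normal --- it is a union of several copies of $\ttt$ meeting along the non-regular locus --- so you cannot invoke normality of the total space; you would have to extend component by component. Second, the pullback of $\ZZ_\Gs$ along $\chi\times\id_X$ is $\ZZ'=\{(\w,x):V_{\chi(\w)}|_x=0\}$, which does \emph{not} contain $\ZZ_\Bs=\{(\w,x):V_{e+\w}|_x=0\}$, and its fibre over a regular $\w$ is $\{A(\w)M_\w\zeta_i\}$ rather than $\{M_\w\zeta_i\}$; so the asserted equality $\ZZ_\Bs=\ttt\times_\Ss\ZZ_\Gs$ is false. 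The correct statement is $\ZZ'=\ttt\times_\Ss\ZZ_\Gs$ together with the isomorphism $\phi:\ZZ_\Bs\to\ZZ'$, $(\w,x)\mapsto(\w,A(\w)x)$, which follows directly from Lemma \ref{lemad} and needs no Cohen--Macaulay or equidimensionality input.
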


Note that from Theorem \ref{cohoweyl} $H^*_\Gs(X) = H^*_\Ts(X)^\W$, where $\Ts$ is the maximal torus and $\W = N_\Gs(\Ts)/\Ts$ is the Weyl group of $\Gs$. Therefore, we will be able to make use of the result for solvable groups, i.e. Theorem \ref{finsolv}.

\subsection{Motivating example: \texorpdfstring{$\Gs = \SL_2(\C)$}{G=SL2(C)}}

For $\Gs = \SL_2(\C)$ we can choose the canonical $e$, $f$, $h$:

$$e = \begin{pmatrix}
0 & 1 \\
0 & 0
\end{pmatrix}, \qquad
f = \begin{pmatrix}
0 & 0 \\
1 & 0
\end{pmatrix}, \qquad
h = \begin{pmatrix}
1 & 0 \\
0 & -1
\end{pmatrix}.
$$
Then we get $\Ss = \{e+vf| v\in\C \}$. Again, let us adapt convention from Example \ref{exgr} for the basis of $\ttt$, i.e. a number $v\in\C$ will denote $-vh/2$. We know that $H_\Ts^*(X) = \C[\ZZ_{\Bs_2}]$, where $\ZZ_{\Bs_2}$ is defined as in Definition \ref{defz} for solvable (Borel) subgroup $\Bs_2$ of $\SL_2(\C)$ consisting of upper triangular matrices. 
Let us now see how the Weyl group -- in this case $\Sigma_2 = \{1, \epsilon\}$ -- acts on $H^*_\Ts(X)$. For any $\zeta_i\in X^\Ts$ we have the following commutative diagram
$$
\begin{tikzcd}
H^*_\Ts(X) \arrow[r, "\epsilon^*"] \arrow[dd, "\iota_{\epsilon\zeta_i}^*"]
& H^*_\Ts(X) \arrow[dd, "\iota_{\zeta_i}^*"]
\\ \\
H^*_\Ts(\epsilon\zeta_i) \arrow[r, "\epsilon^*"]
& H^*_\Ts(\zeta_i)
\end{tikzcd}.
$$
Note that in the bottom line we have the (contravariant) action of $\W$ on $H^*_\Ts(\pt) \cong \C[\ttt]$, which is defined by the (covariant) adjoint action of $\W$ on $\ttt$. In the case of $\SL_2$ the element $\epsilon$ acts on $\ttt$ by $v\mapsto -v$.

\noindent
Therefore we get that for any $c\in H^*_\Ts(X)$ and any $\Ts$-fixed point $\zeta_i$ we have
$$(\epsilon^*c)|_{\zeta_i} = (c|_{\epsilon\zeta_i}) \circ \epsilon,$$
where $\epsilon$ is here seen as a map $\ttt\to\ttt$. This determines $\epsilon^*c$ completely, as the restriction $H^*_\Ts(X)\to \bigoplus H^*_\Ts(\zeta_i)$ is injective.
Hence when we apply the isomorphism $\rho:H^*_\Ts(X)\to \C[\ZZ_{\Bs_2}]$, we will get
$$\rho(\epsilon^* c)(\w,M_{\w}\zeta_i) = \rho(c)(\epsilon \w,M_{\epsilon\w}\epsilon\zeta_i).$$
We get an algebra homomorphism $\C[\ZZ_{\Bs_2}]\to\C[\ZZ_{\Bs_2}]$, which has to come from a morphism $\ZZ_{\Bs_2}\to\ZZ_{\Bs_2}$. This morphism sends $(\w,M_{\w}\zeta_i)$ to $(\epsilon \w,M_{\epsilon\w}\epsilon\zeta_i)$.
\\
We will now look at the adjoint action of elements of the form
$$\exp(sf) = \begin{pmatrix}
1 & 0 \\
s & 1
\end{pmatrix}
\in \SL_2.
$$
We have
\begin{align}
\label{conquo}
\Ad_{\exp(tf)}(e+th) &= e + t^2 f \\
\label{conmin}
\Ad_{\exp(2tf)}(e+th) &= e - th.
\end{align}
From \eqref{conmin} and Lemma \ref{lemad} we infer that the map
$$\psi_\epsilon:(v,x) \mapsto (-v,\exp(-vf) x)$$
is an isomorphism of $\ZZ_{\Bs_2}$ (note that in our choice of basis the number $v$ denotes $-vh/2$). We claim that it is equal to the above (i.e. it is dual to $\rho\circ \epsilon^*\circ \rho^{-1}$). Clearly the action on the first factor agrees. Now we have
$$\exp(-vf)(M_v \zeta_i) = 
\begin{pmatrix}
1 & 0 \\
-v & 1
\end{pmatrix}
\begin{pmatrix}
1 & 1/v \\
0 & 1
\end{pmatrix}
\zeta_i
$$
and we get
$$M_{-v}^{-1}\exp(-vf)(M_v \zeta_i)=
\begin{pmatrix}
1 & 1/v \\
0 & 1
\end{pmatrix}
\begin{pmatrix}
1 & 0 \\
-v & 1
\end{pmatrix}
\begin{pmatrix}
1 & 1/v \\
0 & 1
\end{pmatrix}
\zeta_i
=
\begin{pmatrix}
0 & 1/v \\
-v & 0
\end{pmatrix}
\zeta_i = \epsilon\zeta_i.
$$
Therefore
$$\psi_\epsilon(v,M_v\zeta_i)= (-v,\exp(-vf)(M_v \zeta_i)) = (-v,M_{-v}\epsilon\zeta_i)$$
and indeed
$$\rho(\epsilon^*c)(v,x) = \rho(c)(\psi_\epsilon(v,x)).$$
Thus $\Spec H^*_{\SL_2(\C)}(X)$ is the GIT quotient of $\ZZ_{\Bs_2}$ over this action.

\noindent
Now from \eqref{conquo} we get that the map $\phi: (v,x)\mapsto (v,\exp(-vf/2)x)$ is an isomorphism between $\ZZ_{\Bs_2}$ and $\ZZ' = \{(v,x)\in \C\times X: (V_{e+v^2/4 f})|_{x} = 0\}$. Therefore we might as well look for the GIT quotient of $\ZZ'$ by $\phi \circ \psi_\epsilon\circ\phi^{-1}$.
We get
$$\phi \circ \psi_\epsilon\circ\phi^{-1}(v,x) = \phi\circ\psi_\epsilon(v,\exp(vf/2)x) = \phi(-v,\exp(-vf/2)x) = (-v,x).$$
The GIT quotient of $\ZZ' = \{(v,x): (V_{e+v^2/4 f})|_{x} = 0\}$ by this action is clearly isomorphic to $\ZZ_\Gs = \{(t,x)\in \C\times X: (V_{e+tf})|_{x} = 0\}$.

\subsection{General case}
\label{secred}
We will want to mimic the proof for $\SL_2$ in the general reductive case. Let $\ZZ_\Bs$ be the scheme defined in Definition \ref{defz}, for the Borel subgroup $\Bs$ of $\Gs$. We need the following:
\begin{itemize}
 \item Regular maps $A:\ttt\to \Gs$ and $\chi:\ttt\to \Ss$ that satisfy
 $$\Ad_{A(\w)}(e+\w) = \chi(\w),$$
 so that $(\id_{\ttt},A(\w))$ maps $\ZZ_\Bs$ to $\ZZ'$, where
 $$ \ZZ' = \{(\w,x)\in \ttt\times X: V_{\chi(\w)}|_{x} = 0\}.$$
 \item Moreover we want $\chi$ to be $\W$-invariant and induce an isomorphism $\ttt/\!\!/\W\to \Ss$, so that we can construct 
 $\ZZ_{\Gs}$ as a quotient of $\ZZ'$.
 \item We want to realise the Weyl group action on $\ZZ_{\Bs}$ by action on the second factor, i.e. for each $\eta\in\W$ we want to define a map $B_{\eta}:\ttt\to \Gs$ such that
 $$(\w,x) \mapsto (\eta(\w), B_\eta(\w) \cdot x)$$
 is the action of the Weyl group.
 \item If we conjugate above with the isomorphism $\ZZ_{\Bs}\to\ZZ'$, we want to get a map that fixes the $X$-coordinate. In other words,
 $$A(\eta\w) B_\eta(\w) A^{-1}(\w) = 1,$$
 i.e. $B_\eta(\w) = A(\eta\w)^{-1} A(\w)$.
\end{itemize}
We will now formalise those ideas. First, let $\Bs$ be the unique Borel subgroup of $\Gs$ containing the regular nilpotent $e$ (cf. Section \ref{sl2p}). We denote by $\U$ the corresponding maximal unipotent subgroup and by $\Bs^-$, $\U^-$ the opposite Borel and unipotent subgroup. Let $\bb$, $\uu$, $\bb^-$, $\uu^-$ denote the corresponding Lie algebras. As above, by $\ZZ_\Bs\subset \ttt\times X$ we denote the zero scheme defined by the action of $\Bs$, which by Theorem~\ref{finsolv} is isomorphic to $\Spec H^*_\Ts(X)$. First, we get the elements $A(\w)$ and $\chi(w)$ as in \eqref{adchi}. In other words, from Lemma \ref{lembal} we have that the map
$$ \Ad_{-}(-): \U^- \times \Ss \to e+\bb^-$$
is an isomorphism. We consider the preimage of $e+\ttt$ and denote by $A(\w)\in \U^-$, $\chi(\w)\in \Ss$ the elements such that
 \begin{equation} \label{adchi2}
 \Ad_{A(\w)}(e+\w) = \chi(\w).
 \end{equation}
 We know then from Lemma \ref{lemad} and \eqref{adchi2} that the map $\phi$ defined as
 $$\phi(\w,x) = (\w,A(\w)x)$$
 is an isomorphism from
 $$\ZZ_\Bs = \{(\w,x)\in \ttt\times X: V_{e+\w}|_{x} = 0\}.$$
 to 
 $$\ZZ' = \{(\w,x)\in \ttt\times X: V_{\chi(\w)}|_{x} = 0\}.$$
 Moreover, let 
 $$B_\eta(\w) = A(\eta\w)^{-1} A(\w)$$
 for any $\eta\in\W$, $\w\in \Ts$. Then by Proposition \ref{isoquot} the map $\psi_\eta$ defined as
 $$\psi_\eta = \phi^{-1} \circ (\eta,\id) \circ\phi,$$
 i.e. $\psi_\eta(\w,x) = (\eta\w, B_\eta(\w)x)$, is an automorphism of $\ZZ_\Bs$. Here $\eta$ is seen as a map $\ttt\to\ttt$.

\begin{lemma}
\label{lemweyl}
 The map $\psi_\eta$ defines the action of $\W$ on $\ZZ_\Bs$. In other words, $\W$ acts on the right on $H^*_\Ts(X)$ and the dual left action on $\ZZ_\Bs = \Spec H^*_\Ts(X)$ is defined by $\psi$.
\end{lemma}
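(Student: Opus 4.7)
The plan is to verify the lemma in two steps: first, check that $\psi$ does define a $W$-action on $\ZZ_\Bs$; second, show that this action corresponds under $\rho$ from Theorem~\ref{finsolv} to the natural $W$-action on $\Spec H^*_\Ts(X)$. The first step is essentially formal: since $\chi$ is $W$-invariant by Proposition~\ref{isoquot}, the map $(\eta,\id_X)$ on $\ttt\times X$ restricts to an automorphism of $\ZZ'$; therefore $\psi_\eta=\phi^{-1}\circ(\eta,\id)\circ\phi$ is an automorphism of $\ZZ_\Bs$, and $\psi_{\eta_1\eta_2}=\psi_{\eta_1}\circ\psi_{\eta_2}$ follows at once from the corresponding relation on $\ZZ'$.

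For the second step, the strategy is to exploit reducedness. By Lemma~\ref{lemcm} the scheme $\ZZ_\Bs$ is reduced and $\pi^{-1}(\ttt^\reg)$ is open and dense in it, so it suffices to check agreement of the two automorphisms on closed points of $\pi^{-1}(\ttt^\reg)$. Under the identification $\pi^{-1}(\ttt^\reg)\cong \ttt^\reg\times X^\Ts$, $(\w,\zeta_i)\mapsto(\w,M_\w\zeta_i)$, coming from Theorem~\ref{unif}, one computes
\begin{equation*}
\psi_\eta(\w,M_\w\zeta_i)=\bigl(\eta\w,\; A(\eta\w)^{-1}A(\w)\,M_\w\zeta_i\bigr).
\end{equation*}
On the other hand, by naturality of localisation and the fact that the natural $W$-action on $H_\Ts^*(\pt)=\C[\ttt]$ is dual to the adjoint $W$-action on $\ttt$, for $c\in H_\Ts^*(X)$ one has $(\eta^* c)|_{\zeta_i}(\w)=c|_{n_\eta\zeta_i}(\eta\w)$, where $n_\eta\in N_\Gs(\Ts)$ is any lift of $\eta$. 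Via $\rho$ this translates the natural $W$-action on $\pi^{-1}(\ttt^\reg)$ into $(\w,M_\w\zeta_i)\mapsto (\eta\w, M_{\eta\w}n_\eta\zeta_i)$. Thus the lemma reduces to the identity
\begin{equation*}
A(\eta\w)^{-1}A(\w)\,M_\w\,\zeta_i \;=\; M_{\eta\w}\,n_\eta\,\zeta_i
\end{equation*}
for all $\w\in\ttt^\reg$ and every $\Ts$-fixed point $\zeta_i\in X^\Ts$.

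To prove this identity, combine the defining relations of $A$, $M_\w$, and $\chi$. From $\Ad_{A(\w)}(e+\w)=\chi(\w)$ together with $\Ad_{M_\w}(\w)=e+\w$ (Theorem~\ref{unif}) one obtains $\Ad_{A(\w)M_\w}(\w)=\chi(\w)$, and symmetrically $\Ad_{A(\eta\w)M_{\eta\w}}(\eta\w)=\chi(\eta\w)$. By the $W$-invariance of $\chi$ (Proposition~\ref{isoquot}) these two elements of $\Ss$ coincide, so setting $g=M_{\eta\w}^{-1}A(\eta\w)^{-1}A(\w)M_\w$ gives $\Ad_g(\w)=\eta\w=\Ad_{n_\eta}(\w)$, whence $n_\eta^{-1}g\in C_\Gs(\w)$. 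For regular $\w\in\ttt^\reg$ one has $C_\Gs(\w)=\Ts$: the equality $C_\geg(\w)=\ttt$ gives $C_\Gs(\w)^\circ=\Ts$, and any element of $N_\Gs(\Ts)$ centralising $\w$ would project to a $W$-stabiliser of $\w$, which is trivial since no root vanishes on $\ttt^\reg$. Hence $g=n_\eta t$ for some $t\in\Ts$, and evaluating on the $\Ts$-fixed point $\zeta_i$ gives $g\zeta_i=n_\eta\zeta_i$, as required. The main technical obstacle is precisely this identification $C_\Gs(\w)=\Ts$ for $\w\in\ttt^\reg$; once it is in hand, the rest is direct manipulation of the defining relations of $A$, $M$, and $\chi$ together with the density argument.
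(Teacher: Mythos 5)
Your proof is correct and follows essentially the same route as the paper: restrict to the dense regular locus of the reduced scheme $\ZZ_\Bs$, identify the Weyl action via localisation to the fixed points, and show that $g=M_{\eta\w}^{-1}A(\eta\w)^{-1}A(\w)M_\w$ conjugates $\w$ to $\eta\w$ and hence lies in the coset $n_\eta\Ts$. The only cosmetic difference is the final step, where the paper invokes connectedness of $C_\Gs(\w)$ (citing Steinberg) to conclude $C_\Gs(\w)=\Ts$ directly, whereas you combine $C_\Gs(\w)^\circ=\Ts$ with triviality of the $\W$-stabiliser of a regular element of $\ttt$; both are valid.
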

\begin{proof}
 For any $\eta\in\W$ we have the commutative diagram
 $$
\begin{tikzcd}
H^*_\Ts(X) \arrow[r, "\eta^*"] \arrow[dd, "\iota_{\eta\zeta_i}^*"]
& H^*_\Ts(X) \arrow[dd, "\iota_{\zeta_i}^*"]
\\ \\
H^*_\Ts(\eta\zeta_i) \arrow[r, "\eta^*"]
& H^*_\Ts(\zeta_i)
\end{tikzcd}.
$$
 In the bottom row both entries are isomorphic to $\C[\ttt]$ and the map is precomposition with $\eta:\ttt\to\ttt$.
 Therefore for any $c\in H^*_\Ts(X)$ and any $\Ts$-fixed point $\zeta_i$ we get
 $$(\eta^*c)|_{\zeta_i} = (c|_{\eta\zeta_i}) \circ \eta.$$
 This determines $\eta^*c$ completely, as the restriction $H^*_\Ts(X)\to \bigoplus H^*_\Ts(\zeta_i)$ is injective. We want to determine what this action of $\W$ induces on $\ZZ_\Bs$. The action of $\Ws$ on $\ZZ_\Bs$, which we will denote by $\eta\mapsto \eta_*$, has to satisfy the equality
 $$\rho(c)(\eta_* (\w,x)) = \rho(\eta^*(c))(\w,x).$$
 From the proof of Lemma \ref{lemcm} we know that $\ZZ_\Bs\cap (\ttt^\reg\times X)$ is dense in $\ZZ_\Bs$. Therefore to determine $\eta^*$, it is enough to determine its values $\eta_*(\w,x)$ for $\w$ regular. In this case if $(\w,x)\in\ZZ_\Bs$, then, by Section \ref{structure}, we have that $\w = M_\w \zeta_i$, where $M_\w\in \Bs$ is such that $\Ad_{M_\w}(\w) = e+\w$. Then $\rho(c)(\w,x) = c|_{\zeta_i}(\w)$. Hence 
 $$\rho(\eta^*(c))(\w,x) = \eta^*(c)|_{\zeta_i}(\w) = (c|_{\eta\zeta_i}) (\eta\w)
 = \rho(c)(\eta\w,M_{\eta\w}\eta\zeta_i).$$
 Thus 
 $$\eta_*(\w,M_{\w}\zeta_i) = (\eta\w,M_{\eta\w}\eta\zeta_i).$$
 We claim that $\eta_* = \psi_\eta$, i.e. $B_\eta(\w) M_{\w}\zeta_i = M_{\eta\w}\eta\zeta_i$. We have to prove that $C_{\eta,\w} = M_{\eta\w}^{-1} B_\eta(\w) M_{\w}$ sends $\zeta_i$ to $\eta\zeta_i$. 

  Note that
 \begin{multline*}
 \Ad_{C_{\eta,\w}}(\w) = \Ad_{M_{\eta\w}^{-1} B_\eta(\w) M_{\w}}(\w) = 
 \Ad_{M_{\eta\w}^{-1} A_{\eta\w}^{-1}A_{\w}M_{\w}}(\w) = 
 \Ad_{M_{\eta\w}^{-1} A_{\eta\w}^{-1}A_{\w}}(e+\w)= \\
 =
 \Ad_{M_{\eta\w}^{-1} A_{\eta\w}^{-1}}(\chi(\w))= 
 \Ad_{M_{\eta\w}^{-1} A_{\eta\w}^{-1}}(\chi(\eta\w))= 
 \Ad_{M_{\eta\w}^{-1}} (e + \eta\w)= \eta\w.
 \end{multline*}
 Therefore for any representative $\tilde{\eta} \in N_{\Gs}(\Ts)$ of $\eta$ we have
 $$\tilde{\eta}^{-1}C_{\eta,\w} \in C_{\Hs}(\w).$$
 As $\w$ is regular, its centraliser within $\he$ is just $\ttt$. It is the Lie algebra of $C_{\Gs}(\w)$, which is connected by \cite[Corollary 3.11]{Steinberg}, hence equal to $\Ts$. Therefore $\tilde{\eta}^{-1}C_{\eta,\w}\in \Ts$, hence $C_{\eta,\w}$ represents the class of $\eta$ in $N_\Gs(\Ts)/\Ts$. Thus $C_{\eta,\w}$ sends $\zeta_i$ to $\eta\zeta_i$, as we wanted to prove.
\end{proof}

\begin{proof}[Proof of Theorem \ref{semisimp}]
 We saw above that the map $\phi$ defined as $\phi(\w,x) = (\w,A(\w)x)$ is an isomorphism from $\ZZ_\Bs$ to $\ZZ' = \{(\w,x)\in \ttt\times X: V_{\chi(\w)}|_{x} = 0\}$. Then we can conjugate the maps $\psi_\eta$ with this isomorphism, hence getting maps $\phi\circ\psi_\eta\circ\phi^{-1}:\ZZ'\to \ZZ'$. We have
 \begin{multline}
 \label{actw}
 \phi\circ\psi_\eta\circ\phi^{-1}(\w,x) = \phi\circ\psi_\eta(\w,A(\w)^{-1}x)
 = \phi(\eta\w,B_\eta(\w)A(\w)^{-1}x) \\ 
 = (\eta\w,A(\eta\w)B_\eta(\w)A(\w)^{-1}x) = (\eta\w,x).
 \end{multline}
The last equality follows from the definition $B_\eta(\w) = A(\eta\w)^{-1}A(\w)$. By Lemma \ref{lemweyl} the map $\phi\circ\psi_\eta\circ\phi^{-1}$ gives the action of $\W$ on $\ZZ'\cong \ZZ_\Bs \cong \Spec H^*_\Ts(X)$.
 
 We have $H^*_\Gs(X) = H^*_\Ts(X)^\W$ and therefore $\Spec H^*_\Gs(X) = \Spec H^*_\Ts(X)/\!\!/\W =\ZZ'/\!\!/\W$. But we know from \eqref{actw} that $\W$ acts only on the $\ttt$-coordinate of $\ZZ'$ and moreover from Proposition \ref{kostiso} the map $\chi$ induces an isomorphism $\ttt/\!\!/\W\to\Ss$. Therefore
 \begin{multline*}\Spec H^*_\Gs(X) = \ZZ'/\!\!/W = 
 \{(\w,x)\in \ttt\times X: V_{\chi(\w)}|_{x} = 0\}/\!\!/\W \\
 = \{(v,x)\in \Ss\times X: V_{v}|_{x} = 0\} = \ZZ_\Gs.
 \end{multline*}
 The zero scheme $\ZZ_\Gs$ is reduced because $\ZZ' \cong \ZZ_{\Bs}$ is reduced from Lemma~\ref{lemcm}. The agreement of $\C[\Ss]$-algebra structures follows from commutativity of the diagram
 $$
\begin{tikzcd}
\Spec H^*_\Ts(X) = \ZZ_{\Bs} \arrow[r, "\cong"]  \arrow[dd, "\pi_{\Bs}"] & \ZZ' \arrow[r, "/\!\!/\W"]
& \Spec H^*_\Gs(X) = \ZZ_\Gs \arrow[dd, "\pi_\Gs"]
\\ \\
\Spec H^*_\Ts = \ttt \arrow[rr, "/\!\!/\W"]
& & \Spec H^*_\Gs = \ttt/\!\!/\W
\end{tikzcd}
$$
and the analogous statement for $\Bs$ in Theorem~\ref{finsolv}.

It remains to show that the grading agrees on the two sides. We know from Theorem~\ref{finsolv} that the grading in the solvable case is defined by the weights of the torus acting on $\ttt\times X$ by $\left(\frac{1}{t^2},H^t\right)$. We have to prove that it descends to the action by $\left(\frac{1}{t^2}\Ad_{H^t},H^t\right)$. But we have
$$\Ad_{A(\w)}(e+\w) = \chi(\w)$$
and thus
$$\Ad_{H^tA(\w)H^{-t}}(\Ad_{H^t}(e+\w)) = \Ad_{H^t}(\chi(\w)) $$
and dividing both sides by $t^2$ gives
$$\Ad_{H^tA(\w)H^{-t}}\left(e+\frac{\w}{t^2}\right) = \frac{1}{t^2}\Ad_{H^t}(\chi(\w)).$$
However,
$$H^tA(\w)H^{-t}\in \U^-,\qquad \frac{1}{t^2}\Ad_{H^t}(\chi(\w))\in \Ss.$$ 
The latter follows from $\frac{1}{t^2}\Ad_{H^t}(e) = e$ and $\Ad_{H^t}$ preserving the centraliser of $f$, as $\Ad_{H^t}(f) = \frac{1}{t^2}f$. Therefore by uniqueness we have
$$H^tA(\w)H^{-t} = A\left(\frac{\w}{t^2}\right), \qquad \frac{1}{t^2}\Ad_{H^t}(\chi(\w)) = \chi\left(\frac{\w}{t^2}\right).$$
The quotient map $\ZZ_\Bs\to \ZZ_\Gs$ sends $(\w,x)$ to $(\chi(\w),A(\w)x)$. And by the above, it sends $t\cdot(\w,x) = \left(\frac{\w}{t^2},H^t x\right)$ to 
\begin{multline*}
\left(\chi\left(\frac{\w}{t^2}\right),A\left(\frac{\w}{t^2}\right)H^t x\right) = \left(\frac{1}{t^2}\Ad_{H^t}\left(\chi(\w)\right),H^tA(\w)H^{-t} H^t x\right) \\
 = \left(\frac{1}{t^2}\Ad_{H^t}\left(\chi(\w)\right),H^t A(\w) x\right),
\end{multline*}
which proves that the action of $\Cs$ on $\ZZ_{\Bs}$ descends to the action by $\left(\frac{1}{t^2}\Ad_{H^t},H^t\right)$ on $\ZZ_{\Gs}$.

The functoriality follows immediately from functoriality for $\Bs$ (cf. Propositions \ref{funcprop} and \ref{funcgrp}).
\end{proof}

\begin{remark}\label{affred}
 We know $C_\geg(f)\subset \bb^-$ (Section \ref{secreg}) and all the weights of the $\Hs^t$ action on $\bb^-$ are nonpositive (Lemma \ref{posint}). Therefore the argument as in Lemma \ref{lemcm} shows that $\ZZ_\Gs$ lies in $\Ss\times X_o$. This means that for any computations we have to consider only an affine part $X_o$ of $X$.
\end{remark}

\begin{remark}\label{chernred}
 In the spirit of Lemma \ref{nicefun} we can determine in the reductive case too what functions on $\ZZ_\Gs$ the particular Chern classes are mapped to. Assume that $\Ee$ is a $\Gs$-linearised vector bundle on $X$. Let $k$ be a nonnegative integer and consider $c_k^\Gs(\Ee)\in H^*_\Gs(X) = H^*_\Ts(X)^\Ws$. If we first consider the map $\rho:H^*_\Ts(X)\to \C[\ZZ_\Bs]$ from Section \ref{sectionmap}, then from Lemma \ref{nicefun} we know for any $(\w,x)\in \ZZ_\Bs$ that 
 $$\rho(c_k^\Ts(\Ee))(\w,x) = \Tr_{\Lambda^k \Ee_x}(\Lambda^k (e+\w)_x).$$
 The map $\phi$ defined as
 $$\phi(\w,x) = (\w,A(\w)x)$$
 maps $\ZZ_\Bs$ isomorphically to $\ZZ'$. Then $c_k^\Ts(\Ee)$ defines on $\ZZ'$ the function $\rho(c_k^\Ts(\Ee))\circ\phi^{-1}$ which satisfies
 $$\rho(c_k^\Ts(\Ee))\circ\phi^{-1}(\w,y) = \rho(c_k^\Ts(\Ee))(\w,A(\w)^{-1}y)
 = \Tr_{\Lambda^k \Ee_{A(\w)^{-1}y}}(\Lambda^k (e+\w)_{A(\w)^{-1}y}).
 $$
 As $\Ee$ is $\Gs$-invariant, this is equal to
 $$\Tr_{\Lambda^k \Ee_{y}}(\Lambda^k \Ad_{A(\w)}(e+\w)_{y}) = 
 \Tr_{\Lambda^k \Ee_{y}}(\Lambda^k \chi(\w)_{y})
 $$
 This means that on the quotient $\ZZ_\Gs$ the function $\rho_\Gs(c_k^\Gs(\Ee))$ corresponding to $c_k^\Gs(\Ee)$ satisfies
 $$\rho_\Gs(c_k^\Gs(\Ee))(v,x) = \Tr_{\Lambda^k \Ee_{x}}(\Lambda^k v_{x}).$$
\end{remark}
\noindent Let us also note that the $\Gs$-equivariant Chern classes generate the equivariant cohomology ring.

\begin{lemma}\label{genred}
 In the setting above, the $\Gs$-equivariant cohomology $H^*_\Gs(X)$ is generated as a $\C[\ttt]^\Ws$-algebra by equivariant Chern classes of $\Gs$-equivariant vector bundles.
\end{lemma}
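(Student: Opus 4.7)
The plan is to mimic the proof of Lemma \ref{chern2}, but with $\Gs$ in place of the torus. The structural input is the graded Nakayama Lemma (Corollary \ref{cornak}): once I have the identification
$$H^*_\Gs(X)/\I_\Gs H^*_\Gs(X) \;\cong\; H^*(X),$$
where $\I_\Gs \subset H^*_\Gs = \C[\ttt]^\Ws$ denotes the ideal of elements of positive degree, the claim reduces to showing that the ordinary (non-equivariant) Chern classes of $\Gs$-equivariant vector bundles generate $H^*(X)$ as a $\C$-algebra.

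To establish this equivariant formality for $\Gs$, I would examine the Serre spectral sequence of the Borel fibration $X_\Gs \to B\Gs$, whose $E_2$-page is $H^p(B\Gs) \otimes H^q(X)$. Since $H^*(B\Gs) = \C[\ttt]^\Ws$ is a polynomial ring in even-degree generators and since $H^q(X) = 0$ for $q$ odd by the Bia\l ynicki--Birula decomposition (Theorem \ref{cohobb}, applicable thanks to Corollary \ref{fintor}), every non-zero $E_2$-entry sits in even bidegree. All differentials are therefore forced to vanish, the sequence degenerates at $E_2$, $H^*_\Gs(X)$ becomes a free $H^*_\Gs$-module, and the quotient by $\I_\Gs$ is precisely $H^*(X)$ as desired.

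The main step, and the one I expect to be the principal obstacle, is the promotion of Chern-class generators from the solvable to the reductive setting. I would combine Remark \ref{remarkchern} with a $\Ws$-averaging argument. By Theorem \ref{thmlevik} the forgetful map $K^0_\Bs(X) \to K^0_\Ts(X)$ is an isomorphism, and by Remark \ref{remarkchern} the ordinary Chern character $K^0_\Ts(X) \otimes \C \twoheadrightarrow H^*(X,\C)$ is surjective. Given any $\Ts$-equivariant vector bundle $E$ on $X$ and representatives $\dot w \in N_\Gs(\Ts)$ of the elements $w \in \Ws$, the class $\sum_{w \in \Ws} [\dot w^* E] \in K^0_\Ts(X)$ is $\Ws$-invariant, so by the isomorphism $K^0_\Gs(X) \cong K^0_\Ts(X)^\Ws$ cited in the excerpt (\cite[Corollary 6.7]{HLS}) it lifts to some $\xi = [F_1] - [F_2] \in K^0_\Gs(X)$ with $F_1$, $F_2$ $\Gs$-equivariant bundles. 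Taking ordinary Chern characters and using that $\Gs$ is connected---so every $\dot w$ is isotopic to the identity inside $\Gs$ and hence acts trivially on $H^*(X,\C)$---I obtain
$$\ch(F_1) - \ch(F_2) \;=\; \sum_{w\in\Ws} \ch(\dot w^* E) \;=\; |\Ws|\cdot \ch(E)$$
in $H^*(X,\C)$. Therefore the $\C$-subspace of $H^*(X,\C)$ spanned by ordinary Chern characters of $\Gs$-equivariant bundles contains the corresponding span for $\Ts$-equivariant bundles, hence equals all of $H^*(X,\C)$. Since Chern characters and Chern classes of a vector bundle generate the same subring of cohomology (they are related by universal polynomials with invertible transition), the ordinary Chern classes of $\Gs$-equivariant bundles generate $H^*(X)$ as a $\C$-algebra, and the Nakayama step then gives the lemma.
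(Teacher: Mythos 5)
Your proposal is correct and follows essentially the same route as the paper: reduce to ordinary cohomology via the graded Nakayama lemma and equivariant formality, then average over the Weyl group in equivariant K-theory, invoke $K^0_\Gs(X)\cong K^0_\Ts(X)^\Ws$, and use connectedness of $\Gs$ to see that the non-equivariant Chern character is unchanged by the averaging. The only cosmetic difference is that you average pullbacks of vector bundles directly, while the paper averages pushforwards of coherent sheaves and then returns to vector bundles via Thomason's resolution theorem; on a smooth variety these are interchangeable.
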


\begin{proof}
 By the Nakayama lemma it is enough to prove (see the proof of Lemma \ref{chern2}) that the non-equivariant cohomology $H^*(X)$ is generated by Chern classes of $\Gs$-equivariant vector bundles.
 
 We know from the proof of Lemma \ref{chern1} that $H^*(X)$ is generated by Chern characters of $\Ts$-equivariant coherent sheaves. For any such sheaf $\F$, we can consider the ``averaged'' sheaf $\F_\Ws = \frac{1}{|\Ws|} \bigoplus_{\eta\in \Ws} \eta_* \Ws$. As the group $\Gs$ is connected, for any $g\in \Gs$ we have $\ch(g_* \F) = \ch(\F)$, hence $\ch(\F_\Ws) = \ch(\F)$. Therefore $H^*(X)$ is generated by Chern characters of $N_\Gs(\Ts)$-equivariant coherent sheaves. Then again by \cite[Corollary 5.8]{Thomason} it is generated by Chern characters of $N_\Gs(\Ts)$-equivariant vector bundles. Every $N_\Gs(\Ts)$-equivariant vector bundle is a $\Ws$-invariant element of $K_\Ts(X)$. However we know by \cite[Corollary 6.7]{HLS} that $K_\Ts(X)^\Ws = K_\Gs(X)$, hence $H^*(X)$ is generated by Chern classes of $\Gs$-equivariant vector bundles.
\end{proof}

\subsection{Examples} \label{examples}
We finish this section by providing examples for Theorem \ref{semisimp}. These are extensions of the examples above for Theorem \ref{finsolv}.

\begin{figure}[ht!]
\begin{center}
 \subfloat{
  \includegraphics[width=7cm]{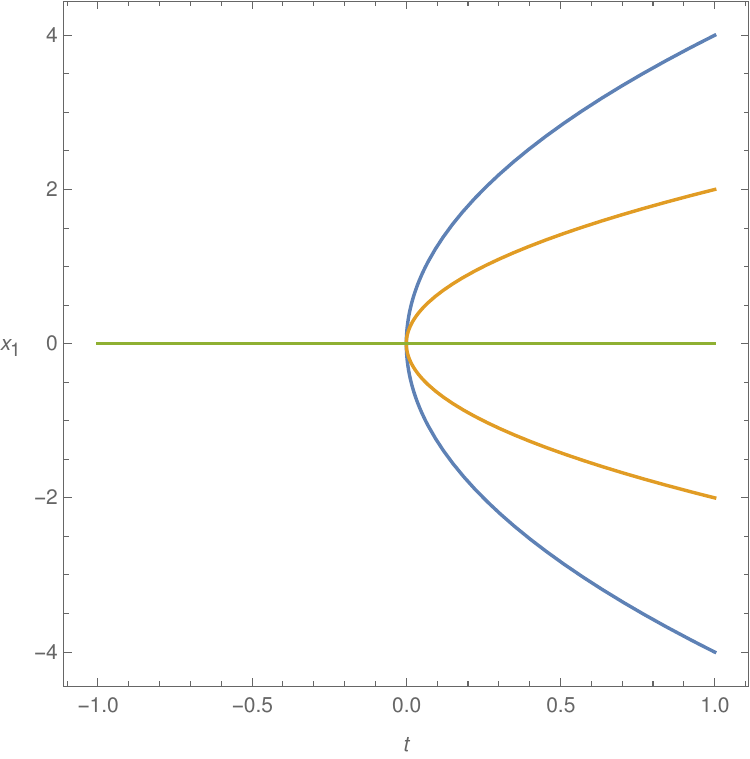}}
  \hfill
\subfloat{
  \includegraphics[width=7cm]{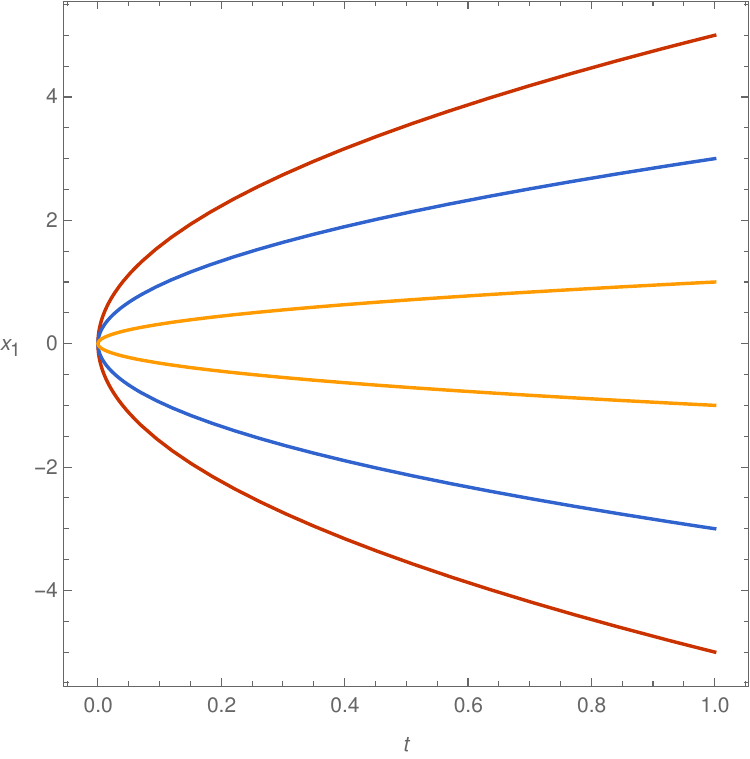}}
\end{center}
\caption[Spectrum of $\SL_2$-equivariant cohomology of $\PP^4$ and $\PP^5$]{$\Spec H_{\SL_2(\C)}^*(\PP^4)$ and $\Spec H_{\SL_2(\C)}^*(\PP^5)$.}
\label{sl2pn}
\end{figure}

\begin{example}\label{exslpn}
 We continue Example \ref{exsl22}. There, we found the $\Cs$-equivariant cohomology of $\PP^n$. Now, using the tools above, we can also find $\Spec H_{\SL_2(\C)}(\PP^n)$. We know that the map $(v,x)\mapsto (v,(I+vf)x)$ maps the zeros of $V_{e+vh}$ isomorphically to the zeros of $V_{e+v^2 f}$. The former form the subscheme cut out by $x_1(x_1+2v)(x_1+4v)\dots(x_1+2nv) = 0$ in the $(v,x_1)$-plane. Note that 
 $$f = \begin{pmatrix}
       0 & 0 & 0 & 0 & \dots & 0\\
       1\cdot n & 0 & 0 & 0 & \dots & 0\\
       0 & 2\cdot (n-1) & 0 & 0 & \dots & 0\\
       \vdots & \vdots & \vdots & \vdots & \ddots & \vdots \\
       0 & 0 & (n-1)\cdot 2 & 0 & \dots & 0 \\
       0 & 0 & 0 & n\cdot 1 & \dots & 0
      \end{pmatrix},
 $$
 hence the map $I+vf$ acts on the $x_1$ coordinate by adding $nv$. This means that the zeros of $V_{e+v^2 f}$ are defined by 
 $$(x_1-nv)(x_1-(n-2)v)(x_1-(n-4)v)\dots(x_1+(n-2)v)(x_1+nv) = 0.$$
 Bringing the symmetric factors together, we get
 $$
 \begin{cases}
  (x_1^2-n^2v^2)(x_1^2-(n-2)^2v^2)\dots(x_1^2-4v^2)x_1 = 0 
  &\text{ for $n$ even;}\\
  (x_1^2-n^2v^2)(x_1^2-(n-2)^2v^2)\dots(x_1^2-9v^2)(x_1^2-v^2)= 0 
  &\text{ for $n$ odd.}\\
 \end{cases}
 $$
 Therefore
 $$
 H_{\SL_2(\C)}^*(\PP^n)=
 \begin{cases}
  \C[t,x_1]/\big((x_1^2-n^2t)(x_1^2-(n-2)^2t)\dots(x_1^2-4t)x_1\big) &\text{ for $n$ even;}\\
  \C[t,x_1]/\big((x_1^2-n^2t)(x_1^2-(n-2)^2t)\dots(x_1^2-9t)(x_1^2-t)\big) &\text{ for $n$ odd.}
 \end{cases}
 $$
 The scheme has $\lceil\frac{n+1}{2}\rceil$ components, one for each orbit of the action of $\W=\Z/2\Z$ on $(\PP^n)^{\Cs}$. The parabolas correspond to two-element orbits and the line (for even $n$) corresponds to unique fixed point of $\Cs$ fixed by $\W$. It is equal to $[\underbrace{0:0:\dots:0}_{n/2}:1:\underbrace{0:\dots:0:0}_{n/2}]$. Examples of the scheme for $n=4$ and $n=5$ are depicted in Figure \ref{sl2pn}.
\end{example}

\begin{remark}
Note that, contrary to Lemma \ref{solvcomps}, if the group is not solvable, then the components of $\ZZ$ will not, in general, project isomorphically to $\Ss$. In fact, they are quotients of unions of copies of $\ttt$ by the action of $\Ws$. The components then correspond to the orbits of $\Ws$ on $X^\Ts$. A component will project isomorphically to $\Ss$ if and only if it corresponds to a one-element orbit, i.e. a fixed point of $\Ns(\Ts)$ on $X$. This is the case for example for the ``middle $\Ts$-fixed point'' in $\PP^{2n}$ under the $\SL_n$ action. One then sees a single line in the left part of Figure \ref{sl2pn}. 
\end{remark}

\begin{figure}[ht!]
\begin{center}
\subfloat{
  \includegraphics[width=7cm]{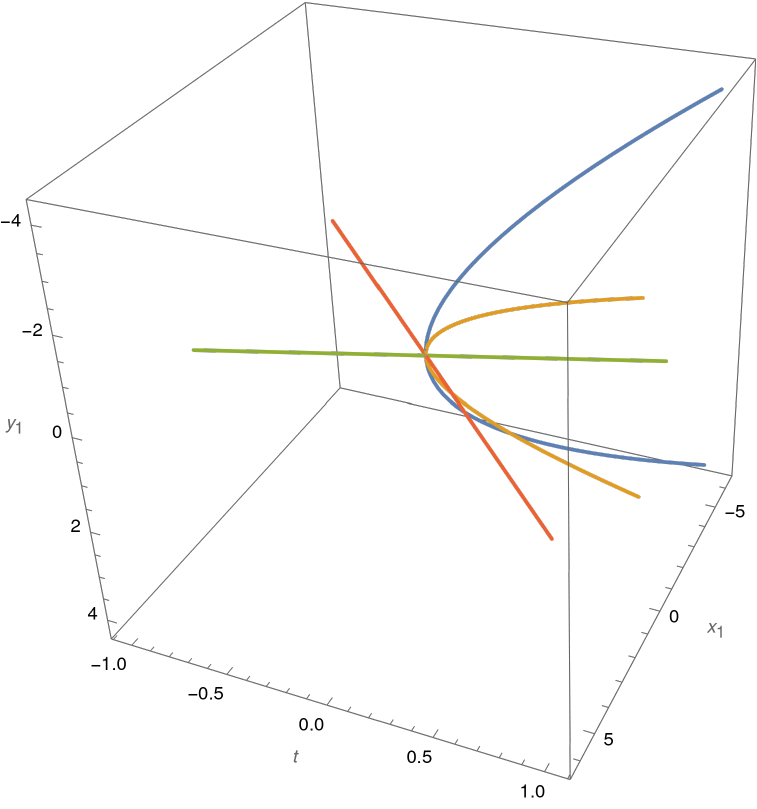}}
  \hfill
\subfloat{
  \includegraphics[width=7cm]{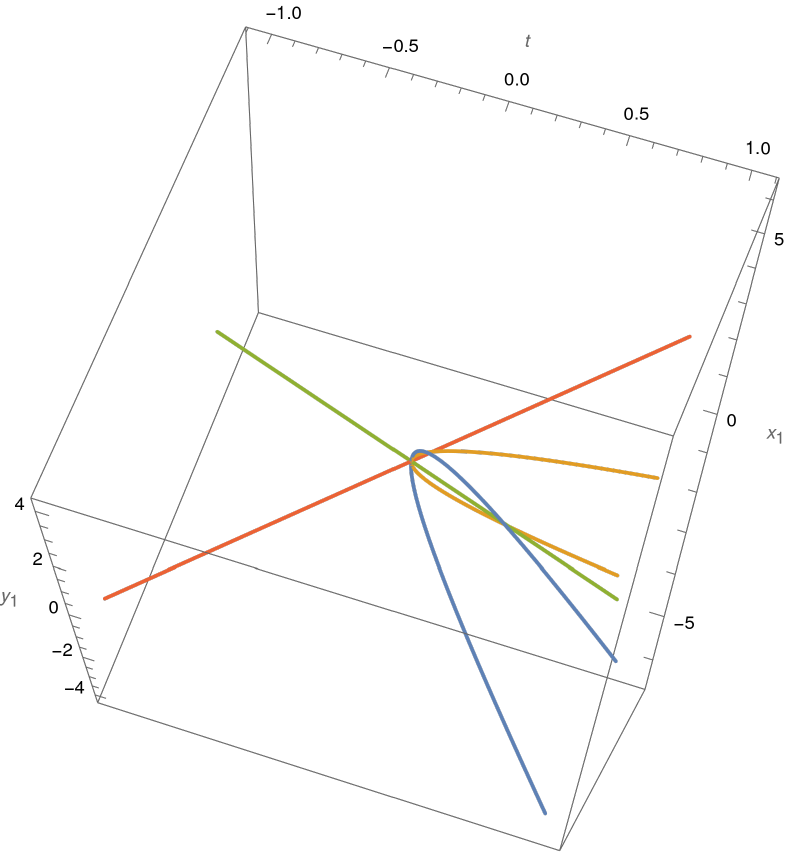}}
\end{center}
\caption[$\SL_2$-equivariant cohomology of $\Gr(2,4)$]{Two different views of $\Spec H_{SL_2(\C)}^*(\Gr(2,4))$.}
\label{sl2gr}
\end{figure}

\begin{example}
 We continue Example \ref{csgr}. The principal $\SL_2(\C)\subset \SL_4(\C)$ subgroup acts on $\Gr(2,4)$. One can check that
 $$V_f|_{x_1,y_1,x_2,y_2}
 = (-3y_1,4,3x_1-3y_2,3y_1).$$
 Then
 $$V_{e+tf}|_{x_1,y_1,x_2,y_2}
 =(x_2-x_1y_1 -3ty_1,
 -x_1-y_1^2+y_2 +4t,
 -x_1y_2 +3tx_1 -3ty_2,
 -x_2-y_1y_2 +3ty_1).
 $$
 As before, from the first two equations of $V_{e+tf} = 0$, we can determine $x_2$ and $y_2$, so $\Spec H_{\SL_2(\C)}^*(\Gr(2,4))$ can be embedded in $\C[t,x_1,y_1]$. Its equations are
 $$12t^2+4tx_1-x_1^2-3ty_1^2-x_1y_1^2 = 0,\quad
 y_1(4t-2x_1-y_1^2) = 0.$$
 By considering two possibilities in the latter, one easily arrives at four possibilities:
 $$
 (x_1 = -2t, y_1 = 0),
 \quad
 (x_1 = 6t, y_1 = 0),
 \quad
 (x_1 = -6t, y_1^2 = 16t),
 \quad
 (x_1 = 0, y_1^2 = 4t).
 $$
 As in the previous example, the components correspond to orbits of $\W$ acting on $\Gr(2,4)^{\Cs}$. The former two correspond to one-element orbits, i.e. $\{\Span(e_2,e_3)\}$ and $\{\Span(e_1,e_4)\}$, and the latter come from two two-element orbits. The scheme embedded in $t,x_1,y_1$-space is presented in Figure \ref{sl2gr}.
\end{example}

\begin{example}
 We consider an example for a group of higher rank, $\SL_3(\C)$, that we can still draw. Let it act on $\PP^2$ in the standard way. In Example \ref{exsl3p2} we calculated the equivariant cohomology of $\PP^2$ with respect to (two-dimensional) torus. Now we will compute the $\SL_3$-equivariant cohomology. The Kostant section is
 $$\Ss = \left\{
 \begin{pmatrix}
 0 & 1 & 0 \\
 c_2 & 0 & 1 \\
 c_3 & c_2 & 0
 \end{pmatrix}: c_2,c_3\in\C.
 \right\}
 $$
The coordinates $c_2,c_3\in \C[\Ss]\cong H^*(B\SL_3(\C))$ are (up to scalar multiples) the universal Chern classes of principal $\SL_3(\C)$-bundles, or equivalently, of rank 3 vector bundles with trivial determinant.
 We have already computed that $V_e|_{x_1,x_2} = (x_2-x_1^2,-x_1x_2)$. Then it is easy to see that for 
 $$M = \begin{pmatrix}
 0 & 1 & 0 \\
 c_2 & 0 & 1 \\
 c_3 & c_2 & 0
 \end{pmatrix}$$
 we have
 $V_M|_{x_1,x_2} = (x_2-x_1^2+c_2,-x_1x_2+c_2x_1+c_3)$. As before, we can eliminate $x_2$ by substituting from the first equation and we get the equation $x_1^3-2c_2x_1-c_3 = 0$. The corresponding scheme $\Spec H^*_{\SL_3(\C)}(\PP^2)$ in coordinates $c_2$, $c_3$, $x_1$ is illustrated in Figure \ref{hsl3p2}. It is irreducible, as all three torus-fixed points lie in one orbit of the Weyl group. The projection to the $(c_2,c_3)$-plane is generically a $3-1$ map.
 
 On the right hand side of Figure \ref{hsl3p2} the slice $c_3 = 0$ is marked in red. The elements of $\Ss$ that satisfy $c_3 = 0$ form the Kostant section of the principal $\SL_2$ subgroup -- which acts as in Example \ref{exslpn}. Therefore the red scheme is equal to $\Spec H^*_{\SL_2}(\PP^2)$. Additionally, the functoriality of Theorem \ref{semisimp} implies that restriction to $c_3 = 0$ yields the base restriction map
 $$H^*_{\SL_3}(\PP^2)\to H^*_{\SL_2}(\PP^2).$$
\end{example}

\begin{figure}[ht!]
\begin{center}
\subfloat{
  \includegraphics[width=7.5cm]{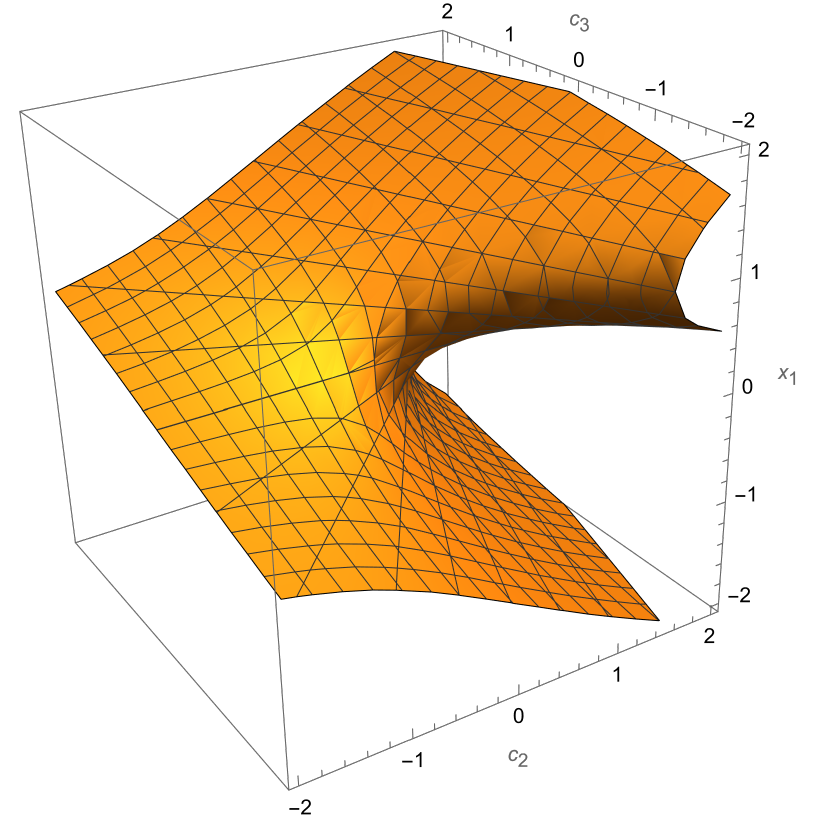}}
  \hfill
\subfloat{
  \includegraphics[width=7.5cm]{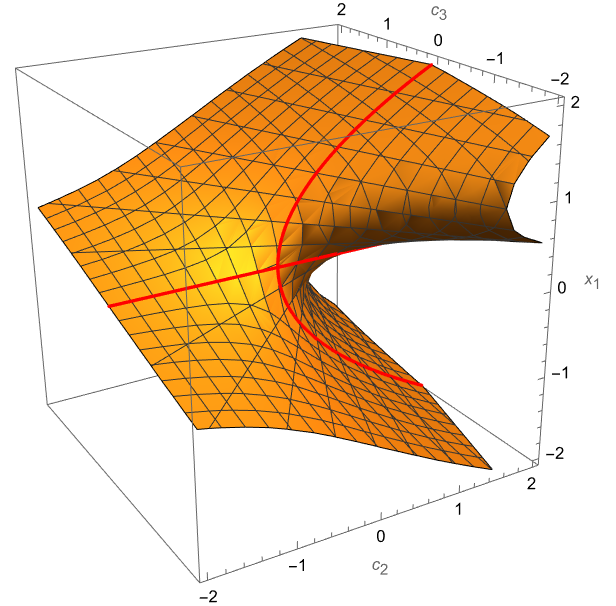}}
\end{center}
\caption[$\SL_2$- and $\SL_3$-equivariant cohomology of $\PP^2$]{$\Spec H_{\SL_3(\C)}^*(\PP^2)$. On the right the subscheme $\Spec H_{\SL_2(\C)}^*(\PP^2)$ is marked. Compare with Figure \ref{sl2pn}.}
\label{hsl3p2}
\end{figure}

\begin{example}
 As in Example \ref{exflag}, we now consider the action of $\SL_3(\C)$ on the variety $F_3$ of full flags in $\C^3$. We determined $V_e$ in Example \ref{exflag}. We can analogously determine the vector fields corresponding to lower triangular matrices. Then for 
 $$M = \begin{pmatrix}
 0 & 1 & 0 \\
 c_2 & 0 & 1 \\
 c_3 & c_2 & 0
 \end{pmatrix}$$ we easily get
 $$V_M|_{a,b,c} =
 (-a^2+b+c_2,-ab+ac_2+c_3,-b+ac-c^2+c_2).$$
 Plugging in $b$ from the first equation, we obtain
 $$a^3-2c_2a+c_3 = 0,\qquad
 a^2-ac+c^2 = 2c_2.$$
 The first equation for $a$ clearly coincides with the equation for $x_1$ from the previous example. One can easily see that the equations mean that $a$ and $-c$ are two of the three roots of the polynomial $x^3-2c_2x+c_3=0$. The map to the $(c_2,c_3)$-plane is generically $6-1$. As all the torus-fixed points, i.e. coordinate flags, lie in one orbit of the Weyl group, in the GIT quotient of $\Spec(H_\Ts^*(F_3))$ they are joined together and the scheme is irreducible.
\end{example}

\subsection{Principally paired algebraic groups}\label{secarb}
In fact, we can prove the equivalent of Theorem \ref{semisimp} for a principally paired, but not necessarily reductive algebraic group. This version will yield a common generalisation to Theorem \ref{semisimp} and Theorem \ref{finsolv}. Note that for any principally paired group $\Hs$ with maximal torus $\Ts$ and Weyl group $\Ws$ we have $H_\Hs^* = \C[\ttt]^\Ws = \C[\Ss]$ -- see the comment above Theorem \ref{restkos}. We will prove the following.

\begin{theorem}\label{general}
 Assume that $\Hs$ is a principally paired algebraic group which acts on a smooth projective variety $X$ regularly. Let $\ZZ_\Hs$ be the closed subscheme of $\Ss\times X$, defined as the zero set of the total vector field (Definition \ref{totvec}) restricted to $\Ss\times X$.
 
 Then $\ZZ_\Hs$ is an affine reduced scheme and $H_{\Hs}^*(X)\cong \C[\ZZ_\Hs]$ as graded $\C[\Ss]$-algebras, where the grading on the right-hand side is defined on $\Ss$ via $\frac{1}{t^2}\Ad_{H^t}$ and on $X$ by the action of $\Cs$ via $H^t$. In other words, $\ZZ_\Hs = \Spec H_\Hs^*(X)$, $\Ss = \Spec H_\Hs^*$ and the projection $\ZZ_\Hs\to \Ss$ yields the structure map $H_\Hs^*\to H_\Hs^*(X)$. This isomorphism is functorial as in Theorem \ref{semisimp}.
$$ \begin{tikzcd}
 \ZZ_\Hs  \arrow{d}{\pi} \arrow{r}{\cong}  &
  \Spec H^*_\Hs(X;\C) \arrow{d} \\
   \Ss \arrow{r}{\cong}&
  \Spec H^*_\Hs.
\end{tikzcd}$$
\end{theorem}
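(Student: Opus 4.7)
The plan is to combine the reductive case of Theorem~\ref{semisimp} with the Levi decomposition, paralleling the passage from Theorem~\ref{finsolv} to Theorem~\ref{semisimp}. Write $\Hs=\Ns\rtimes\Ls$ as in Theorem~\ref{thmlevi} with $h\in\lel$ as in Section~\ref{kostgensec}, and pick a Borel subgroup $\Bs_l\subset\Ls$ containing $e_l$ and $h$; then $\Bs:=\Ns\rtimes\Bs_l$ is a Borel subgroup of $\Hs$, solvable principally paired with the same integrable principal pair $(e,h)$. Since $e$ still has $o$ as its unique zero, $\Bs$ acts regularly on $X$, and Theorem~\ref{finsolv} yields a reduced affine scheme $\ZZ_\Bs\subset\ttt\times X$ together with an isomorphism $\rho:H^*_\Ts(X)\xrightarrow{\cong}\C[\ZZ_\Bs]$ of graded $\C[\ttt]$-algebras.

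Next I invoke Lemma~\ref{congen} to produce morphisms $A:\ttt\to\U_l^-$ and $\chi:\ttt\to\Ss$ with $\Ad_{A(\w)}(e+\w)=\chi(\w)$, so that, exactly as in Section~\ref{secred}, the map $\phi(\w,x)=(\w,A(\w)x)$ is an isomorphism from $\ZZ_\Bs$ onto $\ZZ':=\{(\w,x)\in\ttt\times X:V_{\chi(\w)}|_x=0\}$. To introduce the Weyl group $\Ws=N_\Ls(\Ts)/\Ts$, I note that the inclusion $\Ls\hookrightarrow\Hs$ is a homotopy equivalence (since $\Ns\cong\nen$ is contractible by Theorem~\ref{algexp}), whence $H^*_\Hs(X)=H^*_\Ls(X)=H^*_\Ts(X)^\Ws$ by Theorem~\ref{cohoweyl}. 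Repeating the argument of Lemma~\ref{lemweyl} almost verbatim, the maps $\psi_\eta(\w,x)=(\eta\w,A(\eta\w)^{-1}A(\w)x)$ for $\eta\in\Ws$ induce on $\ZZ_\Bs$ precisely the $\Ws$-action that $\rho$ transports from $H^*_\Ts(X)$; conjugating by $\phi$, this action becomes $(\w,x)\mapsto(\eta\w,x)$ on the first factor of $\ZZ'$, as in \eqref{actw}.

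The GIT quotient $\ZZ'/\!\!/\Ws$ is then identified with $\ZZ_\Hs$ using Theorem~\ref{restkos} and Lemma~\ref{genrest}, which guarantee that $\chi$ descends to an isomorphism $\ttt/\!\!/\Ws\xrightarrow{\cong}\Ss$. Consequently $\C[\ZZ_\Hs]=\C[\ZZ']^\Ws\cong\C[\ZZ_\Bs]^\Ws\cong H^*_\Ts(X)^\Ws=H^*_\Hs(X)$ as graded $\C[\Ss]$-algebras, the compatibility of gradings being checked via the $\Cs$-action on $\Ss$ by $\tfrac{1}{t^2}\Ad_{H^t}$ exactly as at the end of the proof of Theorem~\ref{semisimp}. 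Reducedness and affineness of $\ZZ_\Hs$ pass from $\ZZ_\Bs$ (Lemma~\ref{lemcm}) through the finite quotient by $\Ws$, and functoriality in both $X$ and $\Hs$ is inherited from Propositions~\ref{funcprop} and~\ref{funcgrp}.

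The main obstacle will be the Weyl-realisation step. In Lemma~\ref{lemweyl}, the identity $C_\Hs(\w)=\Ts$ for $\w\in\ttt^\reg$ was needed, and was obtained from Steinberg's connectedness theorem, which applies to the reductive $\Ls$ but not directly to $\Hs$. The required fix is that for regular $\w\in\he$ the operator $\ad(\w)$ is invertible on $\nen$ (since $\nen\cap C_\he(\w)\subset\nen\cap\ttt=0$), so $C_\Ns(\w)$ has trivial Lie algebra and hence is trivial as a connected unipotent group; combined with the fact that $\nen$ is an ideal in $\he$, a short computation then shows $C_\Hs(\w)=C_\Ls(\w)=\Ts$, and the remainder of Lemma~\ref{lemweyl}'s argument goes through.
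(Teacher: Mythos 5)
Your proposal is correct and takes essentially the same route as the paper's own proof: reduce to the Borel subgroup via Theorem~\ref{finsolv}, conjugate $e+\ttt$ into $\Ss$ via Lemma~\ref{congen}, realise the $\Ws$-action geometrically as in Lemma~\ref{lemweyl}, and pass to the quotient; your closing argument that $C_\Hs(\w)=\Ts$ for $\w\in\ttt^\reg$ (via triviality of $C_\Ns(\w)$ and Steinberg applied to $\Ls$) supplies a detail the paper leaves implicit. The one step you treat too quickly is the grading compatibility, where the reductive argument does not transfer verbatim because $H^t$ is not a priori known to preserve $C_{\lel}(f_l)$; the paper fixes this by invoking Lemma~\ref{semcen} to get $h-h_l\in Z(\lel)$, hence $\Ad_{H^t}=\Ad_{H_l^t}$ on $\lel$.
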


\begin{remark}
 As $\Ns$ is contractible, the Levi subgroup $\Ls\subset \Hs$ is a homotopy retract of $\Hs$, and for any $\Hs$-space $X$ we have $H_\Hs^*(X) = H_\Ls^*(X)$. In particular, if $\Hs$ is solvable, we have $H_\Hs^*(X) = H_\Ts^*(X)$, where $\Ts$ is a maximal torus within $\Hs$. This explains how the theorem above generalises Theorem \ref{finsolv}.
\end{remark}

\begin{proof}[Proof of Theorem~\ref{general}.]
We will proceed analogously to the proof in Section \ref{secred}. We follow the notation from Section \ref{kostgensec}. In particular, $\Bs$ is the Borel subgroup of $\Hs$ such that its Lie algebra $\bb$ contains $e$. We first consider the scheme $\ZZ_\Bs\subset \ttt\times X$, defined as in Section \ref{solvsec}, i.e. the zero scheme of the total vector field on $\geg\times X$, restricted to $(\ttt+e)\times X$.
Then from Lemma \ref{congen} we get morphisms $A:\ttt\to\U^-$, $\chi:\ttt\to\Ss$ such that
$$\Ad_{A(\w)}(e+\w) = \chi(\w),$$
so that $(\id,A(\w))$ maps $\ZZ_\Bs$ to $\ZZ'$, where
$$\ZZ' = \{(\w,x)\in \ttt\times X: V_{\chi(\w)}|_{x} = 0\}.$$
In fact, $A$ and $\chi$ are exactly the same as in Section \ref{secred} (see the proof of Lemma \ref{congen}). In particular, $\chi$ induces the isomorphism $\ttt/\!\!/\Ws\to\Ss$.
For any regular $\w\in\ttt$ we have the element $M_w\in \Bs$ such that $\Ad_{M_w}(w) = e+w$. Just like in Lemma \ref{lemweyl}, for any $\eta\in \Ws$ we get that for any regular $\w$ the element
$C_{\eta,\w} = M_{\eta(\w)}^{-1} A(\eta\w)^{-1}A(\w) M_{\w}$ is in the class of $\eta$ in $N_\Ls(\Ts)/\Ts$. Note that here $M_{\eta(\w)}\in \Bs_l$.

This then proves, similarly as in Section \ref{secred}, that the Weyl group action on $\ZZ_\Bs$, when transported to $\ZZ'$, is defined by $\eta\mapsto (\eta,\id)$. And then as $\chi:\ttt/\!\!/\Ws \to \Ss$ is an isomorphism, we get that $$\ZZ_\Hs \cong \ZZ_{\Bs}/\!\!/\Ws = \Spec H^*_\Ts(X)/\!\!/\Ws = \Spec H^*_\Hs(X).$$ 

We have to prove that the grading on $\C[\ZZ_\Hs]$ defined by the grading on $H^*_\Hs(X)$ agrees with the one described in the theorem. We know that in the solvable case the grading is defined by the action of $\Cs$ on $\ZZ_\Bs$ via $\left(\frac{1}{t^2},H^t\right)$ (Definition \ref{defz}). Just like in the reductive case, we need to prove that under quotient by $\Ws$ it descends to the action by $\left(\frac{1}{t^2}\Ad_{H^t},H^t\right)$. The argument for reductive groups does not translate exactly, as a priori we do not know whether $H^t$ preserves the centraliser of $f$. However we know that $H_l^t$, the one-parameter subgroup generated by $h_l$, does.

On the other hand, as $[h,e] = [h_l,e] = 2e$, from Lemma \ref{semcen} we infer $h-h_l\in Z(\lel)$. As in the proof of Theorem \ref{semisimp}, we have
$$\Ad_{H^tA(\w)H^{-t}}\left(e+\frac{\w}{t^2}\right) = \frac{1}{t^2}\Ad_{H^t}(\chi(\w))$$
and
$$H^tA(\w)H^{-t}\in \U^-,\qquad \frac{1}{t^2}\Ad_{H^t}(\chi(\w))\in \Ss,$$ 
where now the latter follows from $\frac{1}{t^2}\Ad_{H^t}(e) = e$ and $\Ad_{H^t} = \Ad_{H_l^t}$ preserving the centraliser of $f$, as $\Ad_{H_l^t}(f) = \frac{1}{t^2}f$. Therefore we have
$$H^tA(\w)H^{-t} = A\left(\frac{\w}{t^2}\right), \qquad \frac{1}{t^2}\Ad_{H^t}(\chi(\w)) = \chi\left(\frac{\w}{t^2}\right).$$
and the same reasoning follows. This proves Theorem \ref{general}.
\end{proof}

\begin{example}
Basic examples of non-reductive, non-solvable linear groups are parabolic subgroups of reductive groups. Let us consider such a group $\Ps\subset \Gs$, where $\Gs$ is reductive and assume that $\Bs\subset \Ps$ is a Borel subgroup of $\Gs$ contained in $\Ps$. Then we can consider a principal $\bb(\ssl_2)$-triple $(e,f,h)$ in $\geg$ such that $e,h\in \bb$. This makes $\Ps$ into a principally paired group and we can make use of Theorem \ref{general}.

Suppose that $X$ is a Schubert variety in some partial flag variety $\Gs/\Qs$. Its stabiliser $\Ps$ in $\Gs$ contains $\Bs$, hence it is a parabolic subgroup. In general it is larger than $\Bs$ (see more in \cite[Section 2]{SanVan}). Remember that $\Bs$ acts on $\Gs/\Qs$ regularly (Example \ref{exgr2}). Therefore if $X$ is smooth, Theorem \ref{general} gives the $\Ps$-equivariant cohomology of $X$.
\end{example}

\begin{example}
As in the previous example, assume that $X$ is a Schubert variety -- possibly non-smooth -- in $\Gs/\Qs$ fixed by $\Ps$. One can then construct a Bott--Samelson resolution of $X$ \cite[Section 2, p. 446]{SanVan} which is $\Ps$-equivariant. As in Lemma \ref{botsamreg}, such a resolution will be a smooth regular $\Ps$-variety. Hence we can use Theorem \ref{general} to compute its $\Ps$-equivariant cohomology.
\end{example}

We also extend Lemma \ref{genred} to principally paired groups.

\begin{lemma}\label{genprinc}
 Assume that a principally paired group $\Hs$ acts regularly on a smooth projective variety $X$. Then the $\Hs$-equivariant cohomology $H^*_\Hs(X)$ is generated as a $\C[\ttt]^\Ws$-algebra by equivariant Chern classes of $\Hs$-equivariant vector bundles.
\end{lemma}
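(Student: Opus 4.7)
The plan is to reduce via graded Nakayama to a non-equivariant statement, apply Lemma~\ref{chern1} to a Borel subgroup of $\Hs$, and then upgrade the resulting Chern classes to $\Hs$-equivariant ones using Theorem~\ref{thmlevik} and a Weyl-group averaging argument. First, because $h$ is regular in $\he$ and $\Hs$ acts regularly, Lemma~\ref{isoreg} shows that $V_h$ has isolated zeros on $X$; hence the Bia{\l}ynicki--Birula decomposition for the $\Cs$ generated by $h$ paves $X$ by affine cells, so $H^{\mathrm{odd}}(X,\C) = 0$. Since $B\Hs \simeq B\Ls$ (as $\Ns$ is contractible) and $B\Ls$ also has cohomology concentrated in even degrees, the Leray--Serre spectral sequence of $X \to X_\Hs \to B\Hs$ degenerates at $E_2$; in particular $H^*_\Hs(X)$ is a free $\C[\ttt]^\Ws$-module and the augmentation identifies $H^*_\Hs(X)/\I H^*_\Hs(X) \cong H^*(X)$ as graded $\C$-algebras, where $\I$ is the positive-degree ideal of $\C[\ttt]^\Ws$. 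By Corollary~\ref{cornak} it then suffices to show that the image of the composite $\ch \circ \mathrm{res} \colon K^0_\Hs(X) \to K^0(X) \to H^*(X,\C)$ generates $H^*(X,\C)$ as a $\C$-algebra.

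Let $\Bs \subset \Hs$ be the Borel subgroup whose Lie algebra contains the principal pair $(e,h)$; then $\Bs$ is solvable, principally paired, and acts regularly on $X$, so Lemma~\ref{chern1} shows that the analogous map $K^0_\Bs(X) \to H^*(X,\C)$ already has image generating $H^*(X,\C)$ as a ring. Writing $\Hs = \Ns\rtimes\Ls$ for a Levi decomposition, $\Bs = \Ns\rtimes\Bs_l$ with $\Bs_l = \Bs\cap\Ls$, and letting $\Ts \subset \Bs_l$ be the maximal torus, three applications of Theorem~\ref{thmlevik} to the Levi pairs $(\Hs,\Ls)$, $(\Bs,\Bs_l)$ and $(\Bs_l,\Ts)$, combined with \cite[Corollary~6.7]{HLS}, assemble into a commutative square identifying the restriction $K^0_\Hs(X) \to K^0_\Bs(X)$ with the inclusion $K^0_\Ts(X)^\Ws \hookrightarrow K^0_\Ts(X)$.

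It then remains to compare the two images in $H^*(X,\C)$. Any lift $n \in N_\Hs(\Ts)$ of a Weyl-group element lies in the connected group $\Hs$, so the self-map $n\colon X\to X$ is homotopic to the identity and the induced $\Ws$-action on $H^*(X,\C)$ is trivial. Hence, for any $\xi \in K^0_\Bs(X)$, the average $\bar\xi = |\Ws|^{-1}\sum_{w\in\Ws} w\cdot\xi$ lies in $K^0_\Bs(X)^\Ws = K^0_\Hs(X)$ and satisfies $\ch(\bar\xi) = \ch(\xi)$ in $H^*(X,\C)$; therefore the images of $K^0_\Hs(X)$ and $K^0_\Bs(X)$ in $H^*(X,\C)$ coincide, and the previous paragraph shows that this image generates $H^*(X,\C)$ as a $\C$-algebra. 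Since Chern characters and Chern classes generate the same $\C$-subalgebra from any given finite family of bundles, $H^*(X,\C)$ is generated by ordinary Chern classes of $\Hs$-equivariant bundles, and Nakayama finishes the proof. The main technical point I expect to be delicate is the verification that the three Theorem~\ref{thmlevik} isomorphisms and the HLS isomorphism really fit into a commutative square so that $K^0_\Hs(X)\hookrightarrow K^0_\Bs(X)$ corresponds to the $\Ws$-invariant inclusion; this should however be routine from naturality of restriction in equivariant K-theory along the towers $\Ts\subset\Bs_l\subset\Ls$ and $\Bs \subset \Hs$.
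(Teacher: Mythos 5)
Your proposal is correct and follows essentially the same route as the paper's proof: reduce to the non-equivariant statement by graded Nakayama, invoke Lemma~\ref{chern1} for the Borel subgroup, average over the Weyl group (using that a connected group acts trivially on $H^*(X,\C)$ so Chern characters are unchanged) together with \cite[Corollary~6.7]{HLS} to land in $K^0_\Ls(X)$, and finally pass from the Levi to $\Hs$ via Theorem~\ref{thmlevik}. The only differences are presentational (you spell out the equivariant formality and the compatibility of the K-theoretic restriction maps, which the paper delegates to Lemmas~\ref{chern2} and~\ref{genred}), so there is nothing substantive to add.
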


\begin{proof}
As the Levi subgroup is a deformation retract of $\Hs$, the restriction $H^*_\Hs(X)\to H^*_\Ls(X)$ is an isomorphism. From Lemma \ref{chern1}, $H^*(X)$ is generated by the Chern characters of vector bundles equivariant with respect to the Borel subgroup of $\Ls$. Then the proof analogous to Lemma \ref{genred} shows that $H^*(X)$ is generated by the Chern characters of $\Ls$-equivariant vector bundles. Then by Theorem \ref{thmlevik} it is generated by the Chern characters of $\Hs$-equivariant vector bundles. Then by graded Nakayama lemma the same is true for $H^*_\Hs(X)$.
\end{proof}

\section{Extensions: singular varieties and total zero schemes}

In this section we discuss two directions to extend our results. 
First we discuss generalisations to singular varieties.

\subsection{Singular varieties}
\label{secsing}

Our main Theorem~\ref{general} may be generalised to singular varieties, in the spirit of \cite[Section 7]{BC}. There the singular case is considered for $\Bs_2$-equivariant cohomology, extending Theorem \ref{thmsingcar}. A sufficient condition will be an embedding in a smooth regular variety such that the corresponding map on ordinary cohomology is surjective. Compare this with Corollary \ref{surj}.

\begin{proposition} \label{sing}
 Assume that $\Hs$ is a principally paired algebraic group and let $\Ss$ be the Kostant section within $\Hs$, as defined in Section \ref{secarb}. Let $\Bs$ be a Borel subgroup of $\Hs$. Assume that $\Hs$ acts regularly on a smooth projective variety $X$ and let $\ZZ_\Hs^X$ be the zero scheme defined in Theorem \ref{general} for the $\Hs$-action on $X$.
 
 Assume $Y$ is a closed $\Hs$-invariant subvariety whose cohomology is generated by Chern classes of $\Bs$-linearised vector bundles. Then analogously to Section \ref{secarb} we can define an isomorphism of graded $\C[\Ss]$-algebras $H_\Hs^*(Y)\to\C[\ZZ_\Hs^Y]$, where $\ZZ_\Hs^Y$ is the reduced intersection $\ZZ_\Hs^Y = \ZZ_\Hs^X\cap (\Ss\times Y)$. The isomorphism makes the diagram
\begin{equation}\label{diagsing}
\begin{tikzcd}
H^*_\Hs(X) \arrow[r] \arrow[dd]
& H^*_\Hs(Y) \arrow[dd]
\\ \\
\C[\ZZ_\Hs^X] \arrow[r]
& \C[\ZZ_\Hs^Y]
\end{tikzcd}
\end{equation}
 commutative.
 The assumption on the cohomology of $Y$ holds in particular if the inclusion $Y\to X$ induces a surjective map $H^*(X)\to H^*(Y)$ on ordinary cohomology.
\end{proposition}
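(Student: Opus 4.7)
The plan is to adapt the proof of Theorem~\ref{general} directly to $Y$. First, as in the passage from Theorem~\ref{finsolv} to Theorem~\ref{general}, I would reduce to establishing, $\Ws$-equivariantly and compatibly with $X$, an isomorphism $\rho_Y^{\Bs}\colon H^*_\Ts(Y) \to \C[\ZZ_\Bs^Y]$ of graded $\C[\ttt]$-algebras; taking $\Ws$-invariants and applying the change of variables $\chi\colon\ttt\to\Ss$ from Lemma~\ref{congen} would then yield the desired isomorphism at the level of $\Hs$. The final sentence of the proposition is immediate from Lemma~\ref{chern1}: the cohomology $H^*(X)$ is generated by Chern classes of $\Bs$-linearised bundles on $X$, and if $H^*(X)\to H^*(Y)$ is surjective their restrictions generate $H^*(Y)$.

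The first substantive step is to upgrade the generation hypothesis to the equivariant setting. Since $H^*(Y)$ is generated by Chern classes of vector bundles, it lives in even degrees, so by Theorem~\ref{thmfor} $Y$ is $\Ts$-equivariantly formal. Then Corollary~\ref{cornak} (graded Nakayama) implies that $H^*_\Ts(Y)$ is generated as a $\C[\ttt]$-algebra by the $\Ts$-equivariant Chern classes $c_k^\Ts(\Ee)$ of the generating $\Bs$-linearised bundles $\Ee$ on $Y$. Because $Y$ is closed and $\Hs$-invariant and the plus-decomposition of $X$ is $\Hs$-stable (Theorem~\ref{ABBst}), $Y$ is a union of BB plus-cells; in particular $Y^\Ts = Y\cap X^\Ts$, and any $(\w,x)\in \ZZ_\Bs^Y$ has the form $x = M_\w\zeta$ with $\zeta=\lim_{t\to 0}H^tx\in Y^\Ts$, paralleling Section~\ref{structure}.

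Next I would define $\rho_Y^\Bs$ intrinsically on generators via the trace formula of Lemma~\ref{nicefun},
$$\rho_Y^\Bs(c_k^\Ts(\Ee))(\w,x) \;=\; \Tr_{\Lambda^k \Ee_x}\bigl(\Lambda^k(e+\w)_x\bigr),$$
which is manifestly regular on $\ZZ_\Bs^Y\subset \ttt\times Y$. The proof of Lemma~\ref{nicefun} is local at the fibre $\Ee_\zeta$ and carries over verbatim: at $(\w, M_\w\zeta)$ this trace equals $c_k^\Ts(\Ee)|_\zeta(\w)$, so $\rho_Y^\Bs$ is precisely the composition of localisation to $Y^\Ts$ with evaluation on $\ZZ_\Bs^Y$. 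This description forces any polynomial relation among the generators in $H^*_\Ts(Y)$ to hold between their images (verified fibrewise over $Y^\Ts$), so $\rho_Y^\Bs$ extends uniquely to a well-defined ring homomorphism $H^*_\Ts(Y)\to\C[\ZZ_\Bs^Y]$. Commutativity of diagram~\eqref{diagsing} at the $\Bs$-level is then immediate, since restricting a $\Bs$-linearised bundle from $X$ to $Y$ preserves the trace formula; taking $\Ws$-invariants gives the $\Hs$-version.

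Injectivity of $\rho_Y^\Bs$ follows from equivariant formality: the localisation $H^*_\Ts(Y)\hookrightarrow H^*_\Ts(Y^\Ts)$ is injective by \cite[Theorem 1.6.2]{GKM}, and $\rho_Y^\Bs$ factors through it. The main obstacle is surjectivity, which I would attack by a Poincar\'e-series comparison mirroring the end of the proof of Theorem~\ref{finsolv}. Equivariant formality gives
$$P_{H^*_\Ts(Y)}(t)=P_{H^*(Y)}(t)\cdot (1-t^2)^{-r},$$
so it suffices to identify $\C[\ZZ_\Bs^Y]/\I\,\C[\ZZ_\Bs^Y]$, with its induced grading, with $H^*(Y)$, and to establish $\C[\ttt]$-flatness of $\C[\ZZ_\Bs^Y]$ so that Proposition~\ref{poinreg} applies. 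The ambient $X$-version is Theorem~\ref{thmgrad}, but unlike $\ZZ_\Bs^X$, which is complete intersection by Lemma~\ref{lemcm}, the reduced intersection $\ZZ_\Bs^Y$ need not be, so this flatness/Poincar\'e-series step is where the argument is most delicate and where the Chern-class generation hypothesis on $Y$ is genuinely used. In the special case when $H^*(X)\to H^*(Y)$ is surjective, one recovers the classical argument of \cite{ACL,BC}; the proposition asserts that the weaker Chern-class generation hypothesis suffices to push through the same conclusion.
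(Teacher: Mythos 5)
Your setup — reducing to the solvable case, defining $\rho_Y$ by localisation to the $\Ts$-fixed points of $Y$, using the Chern-class generation hypothesis together with the trace formula of Lemma~\ref{nicefun} to see that $\rho_Y(c)$ is a regular function on $\ZZ_\Bs^Y$, and deducing injectivity from injectivity of localisation on equivariantly formal spaces — matches the paper's argument. (One small inaccuracy along the way: a closed $\Hs$-invariant $Y$ need not be a union of plus-cells; what you actually use, and what is true, is only that $\zeta=\lim_{t\to 0}H^t x$ lies in $Y$ whenever $x\in Y$, since $Y$ is closed and $H^t$-invariant.)

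The genuine gap is surjectivity, which you correctly flag as the delicate point but do not resolve. The Poincar\'e-series route you sketch requires knowing a priori that $\C[\ZZ_\Bs^Y]$ is flat (indeed free) over $\C[\ttt]$ and that its fibre over $e$ computes $H^*(Y)$; neither is available. The reduced intersection $\ZZ_\Bs^Y$ is a union of components each mapping isomorphically to $\ttt$, but such a union need not be flat over $\ttt$ (compare $\C[x,y]/(xy)$ over $\C[x]$), and identifying the special fibre with $H^*(Y)$ is essentially the content of Carrell's deformation theorem~\ref{thmsingcar}, which needs its own proof. The paper avoids all of this: surjectivity is immediate from the commutative diagram~\eqref{diagsing}. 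Since $\ZZ_\Hs^Y$ is a closed subscheme of the \emph{affine} scheme $\ZZ_\Hs^X$, the restriction $\C[\ZZ_\Hs^X]\to\C[\ZZ_\Hs^Y]$ is surjective; composing with the isomorphism $\rho_X\colon H^*_\Hs(X)\xrightarrow{\;\cong\;}\C[\ZZ_\Hs^X]$ of Theorem~\ref{general} and using commutativity, the map $\rho_Y\circ\iota^*$ is surjective, hence so is $\rho_Y$. You should replace your Poincar\'e-series step with this one-line argument; with that substitution the rest of your proposal goes through.
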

\begin{proof}
The proof is essentially the same as in \cite[Section 7]{BC}. We only sketch it here. Assume first that $\Hs$ is solvable. Every point of the variety $\ZZ_\Hs^Y$ is of the form $(\w,M_\w \zeta)$, where $M_\w\in\Hs$ depends on $\w$ and $\zeta$ is a $\Ts$-fixed point contained in $Y$. Therefore, for any $c\in H^*_\Ts(Y)$, we can define $\rho_Y(c)$ (we only localise to points in $Y$). The condition on cohomology of $Y$ allows us to use Lemma \ref{nicefun} to show that $\rho_Y$ actually maps $H^*_\Ts(Y)$ to $\C[\ZZ_\Hs^Y]$. The injectivity of $\rho_Y$ follows again from injectivity of localisation on equivariantly formal spaces (\cite[Theorem 1.2.2]{GKM}). The diagram is obviously commutative and surjectivity of $\rho_Y$ follows then from the surjectivity of restriction $\C[\ZZ_\Hs^X]\to\C[\ZZ_\Hs^Y]$ to closed subvariety.

Now assume that $\Hs$ is arbitrary principally paired group. Let $\Bs$ be its Borel subgroup and by $\ZZ_\Bs^Y$ denote the appropriate zero scheme defined for $\Bs$ acting on $Y$. As $Y$ is $\Hs$-invariant, the arguments from the proof of Theorem \ref{general} show that $\C[\ZZ_\Hs^Y] = \C[\ZZ_\Bs^Y]^{\Ws}$ and the conclusion follows.
The last line of the proposition is implied by Lemma \ref{chern2}.
\end{proof}

\begin{example}
  Let $\Hs = \Bs$, the Borel subgroup of a reductive group $\Gs$. Natural examples of singular regular $\Bs$-varieties are Schubert varieties in flag variety $\Gs/\Bs$ or any other subvarieties that are unions of Bruhat cells, see \cite[Theorem 5 with remarks]{ACL}.
  In general, Schubert varieties are stabilised by parabolic subgroups (see in \cite[Section 2]{SanVan}). Those are therefore singular $\Ps$-regular varieties for parabolic groups $\Ps$.
\end{example}

\begin{example}
Assume that $X = \Gs/\Bs$ is the flag variety of type A, i.e. $\Gs = \SL_m(\C)$. Then if $Y$ is any Springer fiber within $X$, the restriction on cohomology $H^*(X)\to H^*(Y)$ is surjective \cite{KumPro}, hence Proposition \ref{sing} also holds in that case.
\end{example}
However, there exist $\Gs$-invariant subvarieties for which the restriction map on cohomology is not surjective.

\begin{remark}\label{exdiscri}
 The assumption on surjectivity on cohomology of $Y$ is necessary in the proposition above. Consider the following. Let $\SL_2$ act on $\PP^3$ as in Example \ref{exslpn}. It comes from a representation $\Sym^3 V$, where $V$ is the fundamental representation of $\SL_2$. It has two extreme (highest and lowest) weights and two ``middle'' weights. The point $o$ of $\PP^3$ which represents the highest weight space is fixed by the Borel subgroup of upper triangular matrices and hence one sees that its orbit is isomorphic to the full flag variety $\SL_2/\Bs_2\cong \PP^1$. However, if we consider a point $p\in \PP^3$ representing a non-highest weight space, its stabiliser is a torus, i.e. $\Stab_{\SL_2}(p)\cong\Ts$. Hence its $\SL_2$-orbit is not closed. We denote its closure by $Y:=\overline{\SL_2\cdot p}$. We claim that $Y$ is not smooth. We can see this directly, by noticing that it is the projectivised variety of polynomials $a_0 x^3 + a_1 x^2y + a_2 xy^2 + y^3$ with at least two roots (vanishing lines) equal, and writing down the discriminant equation. We can also see this using our results. If $Y$ were smooth, by Corollary \ref{surj}, the map $H^*(\PP^3)\to H^*(Y)$ would be surjective, but both varieties admit an action of $\Ts$, with the same set of fixed points, therefore the map would have to be an isomorphism. It is impossible for dimensional reasons ($H^6(Y) = 0$).
 
 Moreover, not only is $Y$ singular, but in any case the map $H^*(\PP^3)\to H^*(Y)$ cannot be surjective. Otherwise, this would mean that Proposition \ref{sing} applies. However, as all the $\Ts$-fixed points are already in $Y$, one sees immediately that $\ZZ$ is already in $Y$. Then again, we would have $H^*(Y) = H^*(\PP^3)$, which is impossible for the same reason as above. Thus $H^*(\PP^3)\to H^*(Y)$ is not surjective, and moreover $H^*(\PP^3)$ is not generated by Chern classes of $\Bs_2$-equivariant bundles. This shows that the assumption is necessary in the proposition.
\end{remark}

\begin{remark}
 Assume we are given an $\Hs$-invariant subvariety $Y$ of a regular smooth $\Hs$-variety $X$. By Proposition \ref{sing} and Corollary \ref{surj} the surjectivity of the restriction $H^*(X)\to H^*(Y)$ is necessary and sufficient for the existence of an isomorphism $H^*_\Hs(Y)\to \C[\ZZ_\Hs^Y]$ which makes \eqref{diagsing} commutative. Carrell and Kaveh prove in \cite[Theorem 2]{CarKav}, for the case of $\Hs = \Bs_2$, that it is equivalent to $H_\Ts^*(Y)$ being generated by Chern classes of $\Bs_2$-equivariant bundles.
\end{remark}

\subsection{Total zero scheme}
\label{sectot}

Assume that $\Gs$ is a principally paired algebraic group, e.g. $\Gs$ reductive. We showed in Theorem \ref{general} how to see geometrically the spectrum of $\Gs$-equivariant cohomology of $X$ for $\Gs$ acting regularly on a projective variety $X$. However, this needed a choice -- of a concrete $\bb(\ssl_2)$-pair $(e,h)$ and the associated Kostant section. A natural challenge would be to try to find equivariant cohomology as global functions on a scheme that does not depend on choices. 

\begin{definition}\label{totzer}
Let an algebraic group $\Gs$ act on a smooth projective variety $X$. Consider the total vector field on $\geg\times X$ (Definition \ref{totvec}). We call its reduced zero scheme
$$\ZZ_{\tot}\subset \geg\times X$$
the {\em total zero scheme}.
\end{definition}
Now we are ready to show the following.

\begin{theorem}\label{wholeg}
 Assume that $\Gs$ is principally paired. Let it act on a smooth projective variety regularly. Consider the action of $\Cs$ on the total zero scheme $\ZZ_{\tot}$ by $t\cdot(v,x) = \left(\frac{1}{t^2} v,x\right)$ and the action of $\Gs$ by $g\cdot (v,x) = (\Ad_g(v),g\cdot x)$. Then the ring $\C[\ZZ_{\tot}]^\Gs$ of $\Gs$-invariant functions on $\ZZ_{\tot}$ is a graded algebra over $\C[\geg]^\Gs\cong H^*_{\Gs}(\pt)$ isomorphic to $H^*_{\Gs}(X)$, where the grading comes from the weights of the $\Cs$-action on $\C[\ZZ_{\tot}]^\Gs$:
 $$ \begin{tikzcd}
  \C[\ZZ_{\geg}]^\Gs \arrow{r}{\cong} &
  H_\Gs^*(X;\C)  \\
  \C[\geg]^\Gs \arrow{r}{\cong} \arrow{u}&
  H^*_\Gs \arrow{u}.
\end{tikzcd}$$
 \end{theorem}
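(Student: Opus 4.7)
The strategy is to compare $\C[\ZZ_{\tot}]^\Gs$ directly with $\C[\ZZ_\Ss]\cong H^*_\Gs(X)$ via Theorem~\ref{general}, using the Kostant identification $\C[\geg]^\Gs=\C[\Ss]$ from Theorem~\ref{restkos} and the orbit parametrisation from Corollary~\ref{kostprincpair}. The goal is to construct mutually inverse $\C[\Ss]$-algebra maps
\[ r\colon \C[\ZZ_{\tot}]^\Gs \longleftrightarrow H^*_\Gs(X)\colon s. \]

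First I would define $r$. Since $V_\Ss=V_\geg|_{\Ss\times X}$ by construction, we have $\ZZ_\Ss=\ZZ_{\tot}\cap(\Ss\times X)$ as reduced subschemes, giving a closed immersion $\ZZ_\Ss\hookrightarrow\ZZ_{\tot}$ and a restriction homomorphism $r\colon\C[\ZZ_{\tot}]^\Gs\to\C[\ZZ_\Ss]\cong H^*_\Gs(X)$. The $\C[\Ss]$-module structure on the source arises from the composition $\ZZ_{\tot}\to\geg\to\geg/\!\!/\Gs\cong\Ss$, which is $\Gs$-invariant.

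Next I would build the candidate inverse $s$ using Lemma~\ref{genprinc}, which says $H^*_\Gs(X)$ is generated as a $\C[\Ss]$-algebra by equivariant Chern classes $c_k^\Gs(\Ee)$ of $\Gs$-equivariant vector bundles $\Ee$. For each such class, set
\[ \tilde{s}\bigl(c_k^\Gs(\Ee)\bigr)(v,x) := \Tr_{\Lambda^k \Ee_x}\bigl(\Lambda^k v_x\bigr), \qquad (v,x)\in\ZZ_{\tot}, \]
where $v_x$ denotes the endomorphism of $\Ee_x$ induced by the infinitesimal action. This is a regular function (polynomial in $v$, algebraic in $x$) and $\Gs$-invariant because the $\Gs$-action on $\Ee$ intertwines $v_x$ with $(\Ad_g v)_{gx}$ through the linear isomorphism $\Ee_x\cong\Ee_{gx}$. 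By Remark~\ref{chernred} it satisfies $r\bigl(\tilde{s}(c_k^\Gs(\Ee))\bigr)=\rho(c_k^\Gs(\Ee))$, where $\rho$ is the isomorphism of Theorem~\ref{general}. To lift $\tilde{s}$ to a well-defined algebra morphism $s$, one verifies that every relation among Chern classes in $H^*_\Gs(X)$ yields an identity of trace functions on $\ZZ_{\tot}$: such a relation holds on $\ZZ_\Ss$ via $\rho$, hence by $\Gs$-invariance on the $\Gs$-saturation $\Gs\cdot\ZZ_\Ss=\ZZ_{\tot}^{\reg}:=\ZZ_{\tot}\cap(\geg^{\reg}\times X)$ (using Corollary~\ref{kostprincpair} that every regular orbit meets $\Ss$ exactly once), and extends from there to all of $\ZZ_{\tot}$ by a density argument. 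Once $s$ is defined, $r\circ s=\rho$ is an isomorphism, so $s$ is injective and $r$ is surjective; injectivity of $r$ (equivalently surjectivity of $s$) reduces again to the same density claim. The grading compatibility follows as in Theorem~\ref{general}: the $\Cs$-action $v\mapsto t^{-2}v$ on $\geg$ is $\Gs$-equivariant and descends through $\geg\to\geg/\!\!/\Gs\cong\Ss$ to the action $v\mapsto t^{-2}\Ad_{H^t}(v)$ used there.

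\emph{Main obstacle.} The density of $\ZZ_{\tot}^{\reg}$ in $\ZZ_{\tot}$ modulo $\Gs$-invariants. The scheme $\ZZ_{\tot}$ generally has irreducible components living entirely over $\geg\setminus\geg^{\reg}$---most conspicuously $\{0\}\times X$, since $V_0\equiv 0$ forces the fibre of $\ZZ_{\tot}\to\geg$ over $0$ to be all of $X$. Such components need not lie in the closure of $\ZZ_{\tot}^{\reg}$, so a naive density argument fails. The resolution is that $\Gs$-invariance eliminates these extra components: the restriction of any $\Gs$-invariant regular function to $\{0\}\times X$ is a $\Gs$-invariant regular function on the projective connected variety $X$, hence constant, and is therefore determined by its value at a single $\Ts$-fixed point $\zeta\in X^\Ts$, which is already in $\overline{\ZZ_{\tot}^{\reg}}$ via the $\Cs$-contraction $t\cdot(v,x)=(t^{-2}v,x)$ applied to any $(v,\zeta)\in\ZZ_\Ss$. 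More generally, for a non-regular semisimple $v\in\geg$ the restriction of a $\Gs$-invariant function to the fibre $\{v\}\times Z(V_v)$ is $C_\Gs(v)$-invariant; applying Corollary~\ref{kostprincpair} inside the reductive centraliser $C_\Gs(v)$ and iterating reduces the value to the regular locus. Consequently $r$ is injective, and combined with $r\circ s=\rho$ this yields that $r$ and $s$ are mutually inverse isomorphisms of graded $\C[\Ss]$-algebras.
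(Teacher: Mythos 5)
Your overall architecture coincides with the paper's: restrict $\Gs$-invariant functions from $\ZZ_{\tot}$ to the zero scheme over the Kostant section, produce enough invariant functions via the trace functions $(v,x)\mapsto \Tr_{\Lambda^k\Ee_x}(\Lambda^k v_x)$ to get surjectivity of the restriction, and reduce injectivity to the claim that a function on $\ZZ_{\tot}$ is determined by its values over the regular locus, where Corollary~\ref{kostprincpair} applies. The surjectivity half and the reduction to $\ZZ_\reg$ are exactly the paper's Lemmas~\ref{surjtot} and~\ref{inj1}, and you correctly isolate the remaining difficulty: $\ZZ_{\tot}$ has components lying entirely over $\geg\setminus\geg^{\reg}$.

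The gap is in how you dispose of those components. You treat only $v=0$ and $v$ semisimple, and for the latter the proposed mechanism --- ``$C_\Gs(v)$-invariance plus Corollary~\ref{kostprincpair} inside the reductive centraliser, iterated'' --- is not a proof: it is not explained how invariance under $C_\Gs(v)$ connects an arbitrary point of a positive-dimensional fibre component to a point lying over regular elements, and for a general principally paired $\Gs$ (the theorem is not restricted to reductive groups) the centraliser of a semisimple element need not be reductive. More seriously, fibres over non-regular elements with nontrivial nilpotent part --- e.g.\ Springer fibres over subregular nilpotents --- are not addressed at all, and these are exactly the hard case. The paper's resolution (Lemmas~\ref{projinj}, \ref{grpfix}, \ref{grpzer} and~\ref{inj2}) needs no invariance: given $(v,p)\in\ZZ_{\tot}$, write $v=d+n$ for its Jordan decomposition inside a Borel subalgebra $\bb$ with $d\in\ttt$; the point $p$ lies on a projective irreducible component $P$ of the zero scheme of $V_v$, on which every regular function of $\ZZ_{\tot}$ is constant; and $P$ contains a simultaneous zero $x$ of $\ttt$ (Lemma~\ref{grpzer}, proved by running the normaliser iteration of Lemma~\ref{lemzer} inside $\bb$), so that $(\ttt+\C\cdot v)\times\{x\}\subset\ZZ_{\tot}$ and hence $(v,x)\in\overline{\ZZ_\reg}$, since the regular elements are dense in $\ttt+\C\cdot v$. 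An argument of this kind is needed before the injectivity of your $r$ --- and with it the well-definedness of your $s$ --- is established; as written, the ``density argument'' is asserted rather than proved precisely where the components over the non-regular locus make it nontrivial.
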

 
Following the notation from Theorem \ref{general}, we show that the restriction $\C[\ZZ_\tot]^\Gs \to \C[\ZZ_\Gs]$ is an isomorphism, so that we get the following diagram
$$
\begin{tikzcd}
\C[\ZZ_\tot]^\Gs \arrow[r] 
& \C[\ZZ_\Gs] \arrow[r]
& H^*_{\Gs}(X,\C) 
\\ \\
\C[\geg]^\Gs \arrow[r]\arrow[uu]
& \C[\ttt]^\Ws \arrow[r]\arrow[uu]
& H^*_{\Gs}(\pt,\C)\arrow[uu]
\end{tikzcd}
$$
with all horizontal arrows being isomorphisms. The statement for the bottom line follows from Lemma \ref{genrest}. First we prove that the restriction $\C[\ZZ_\tot]^\Gs \to \C[\ZZ_\Gs]$ is an epimorphism.

\begin{lemma}\label{surjtot}
 Under the assumptions of Theorem \ref{wholeg}, the restriction $\C[\ZZ_\tot]^\Gs \to \C[\ZZ_\Gs]$ is surjective.
\end{lemma}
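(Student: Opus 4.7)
The plan is to exhibit explicit $\Gs$-invariant lifts to $\C[\ZZ_{\tot}]^{\Gs}$ of a set of $\C[\Ss]$-algebra generators of $\C[\ZZ_\Gs]$, and then invoke Lemma~\ref{genprinc}. By that lemma together with the isomorphism $\C[\ZZ_\Gs]\cong H_\Gs^*(X;\C)$ of Theorem~\ref{general}, the ring $\C[\ZZ_\Gs]$ is generated as a $\C[\Ss]$-algebra by images of equivariant Chern classes $c_k^{\Gs}(\Ee)$ as $\Ee$ ranges over $\Gs$-linearised vector bundles on $X$ and $k\ge 0$. It therefore suffices to lift each such generator, together with the generators of the base $\C[\Ss]$.

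For the generator $c_k^{\Gs}(\Ee)$, the key observation is that the formula of Remark~\ref{chernred}, namely
$$ \widetilde{c}_{k,\Ee}(v,x) := \Tr_{\Lambda^k \Ee_x}\!\bigl(\Lambda^k v_x\bigr), $$
where $v_x\in\End(\Ee_x)$ is the infinitesimal action of $v\in\geg$ on the fibre, is manifestly a regular function on the entire $\geg\times X$, not just on $\Ss\times X$. Its $\Gs$-invariance is straightforward: the linearisation of $\Ee$ furnishes an isomorphism $\Ee_x\xrightarrow{\sim}\Ee_{gx}$ which, by equivariance, conjugates $v_x$ to $(\Ad_g v)_{gx}$ (cf.\ Lemma~\ref{lemad}), and trace and exterior powers are conjugation-invariant. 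Because $\ZZ_{\tot}$ is itself $\Gs$-stable (Lemma~\ref{lemad} shows that if $V_v$ vanishes at $x$ then $V_{\Ad_g v}$ vanishes at $gx$), restricting $\widetilde{c}_{k,\Ee}$ to $\ZZ_{\tot}$ produces an element of $\C[\ZZ_{\tot}]^{\Gs}$ whose further restriction to $\ZZ_\Gs$ is precisely the function associated to $c_k^{\Gs}(\Ee)$ in Remark~\ref{chernred}.

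For the base, Theorem~\ref{restkos} identifies $\C[\Ss]\cong\C[\geg]^{\Gs}$ via the restriction map, so any element of $\C[\Ss]$ comes from a $\Gs$-invariant polynomial on $\geg$, which pulls back along $\pi_1:\geg\times X\to\geg$ to a $\Gs$-invariant function and then restricts to a $\Gs$-invariant function on $\ZZ_{\tot}$ lifting the given base element. Combining both families, the image of $\C[\ZZ_{\tot}]^{\Gs}\to\C[\ZZ_\Gs]$ contains a $\C[\Ss]$-algebra generating set of $\C[\ZZ_\Gs]$ together with all of $\C[\Ss]$, hence equals $\C[\ZZ_\Gs]$. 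The only delicate point is confirming that Remark~\ref{chernred} carries over verbatim from the reductive case to the principally paired case, but its derivation uses only fixed-point localisation, $\Gs$-equivariance of the bundle, and the change of variables furnished by Lemma~\ref{congen}, all of which remain in force here; this is the main (and rather mild) obstacle.
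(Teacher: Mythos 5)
Your proposal is correct and follows essentially the same route as the paper: both lift the generators $\rho_\Gs(c_k^\Gs(\Ee))$ supplied by Lemma~\ref{genprinc} and Remark~\ref{chernred} to the manifestly $\Gs$-invariant regular functions $(v,x)\mapsto\Tr_{\Lambda^k \Ee_x}(\Lambda^k v_x)$ on $\ZZ_\tot$ and conclude by surjectivity onto a generating set. Your additional care about lifting the base ring $\C[\Ss]\cong\C[\geg]^\Gs$ and about transporting Remark~\ref{chernred} from the reductive to the principally paired setting only makes explicit what the paper leaves implicit.
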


\begin{proof}
By Lemma \ref{genprinc} we know
that $\C[\ZZ_\Gs]$ is generated over $\C[\ttt]^\Ws \cong \C[\geg]^\Gs$ by functions $\rho_\Gs(c_k^\Gs(\Ee))$ for positive integers $k$ and $\Gs$-equivariant vector bundles $\Ee$. Those functions are defined by
$$\rho_\Gs(c_k^\Gs(\Ee))(v,x) = \Tr_{\Lambda^k \Ee_{x}}(\Lambda^k v_{x}),$$
see Remark \ref{chernred}. For each such function, we can consider the regular function $f_{k,\Ee}$ defined on $\ZZ_\tot$ by its values:
$$f_{k,\Ee}(v,x) = \Tr_{\Lambda^k \Ee_{x}}(\Lambda^k v_{x}).$$
It is clearly $\Gs$-invariant and restricts to $\rho_\Gs(c_k^\Gs(\Ee))$ on $\ZZ_\Gs$. As $\C[\ZZ_\Gs]$ is generated by such functions, the conclusion follows.
\end{proof}

For the injectivity, let us start with an easy intermediate step. Let $\ZZ_\reg$ be the open subscheme of $\ZZ_\tot$ consisting of the part over $\geg^\reg\subset \geg$ (hence a closed subscheme of $\geg^\reg\times X$). Then we have

\begin{lemma}\label{inj1}
 Let $\Gs$ be a principally paired algebraic group. Assume it acts on a connected smooth projective variety, not necessarily regularly. The restriction $\C[\ZZ_\reg]^\Gs\to \C[\ZZ_\Gs]$ is injective, where $\ZZ_\reg$ and $\ZZ_\Gs$ are defined as above, as zero schemes over $\geg^\reg$ and $\Ss$.
\end{lemma}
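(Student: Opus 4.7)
The plan is to exploit the fact that, by Corollary \ref{kostprincpair}, every regular adjoint orbit in $\geg$ meets the Kostant section $\Ss$ in exactly one point. This, together with reducedness of $\ZZ_\reg$, will reduce the injectivity to a pointwise statement. Note that $\Ss\subset\geg^\reg$ by Theorem \ref{kostarb}, so $\ZZ_\Gs\subset \ZZ_\reg$ and the restriction map is well defined.

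First I would observe that $\ZZ_\tot$ is reduced by Definition \ref{totzer}, hence so is its open subscheme $\ZZ_\reg$. Since $\ZZ_\reg$ is a locally closed subscheme of $\geg\times X$ with $X$ projective, it is of finite type over $\C$, so a regular function vanishing at every closed point is zero. Therefore it suffices to show that every closed point $(v,x)\in\ZZ_\reg$ lies in the $\Gs$-orbit of a closed point of $\ZZ_\Gs$: for then any $\Gs$-invariant $f\in\C[\ZZ_\reg]^\Gs$ which vanishes on $\ZZ_\Gs$ also vanishes on $(v,x)$.

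To establish this, let $(v,x)\in\ZZ_\reg$, so $v\in\geg^\reg$ and $V_v|_x=0$. By Corollary \ref{kostprincpair}, there exists $g\in\Gs$ with $\Ad_g(v)\in\Ss$. By Lemma \ref{lemad} we have $V_{\Ad_g(v)}|_{gx} = \D g(V_v|_x) = 0$, so the point $g\cdot(v,x) = (\Ad_g(v),gx)$ is a closed point of $\ZZ_\tot$ lying over $\Ss$, and thus a closed point of $\ZZ_\Gs$ (recall that $\ZZ_\Gs$ as defined in Theorem \ref{general} coincides with $\ZZ_\tot\cap(\Ss\times X)$ as reduced schemes, since both are the reduction of the zero scheme of the total vector field restricted to $\Ss\times X$). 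Then $f(v,x)=f(g\cdot(v,x))=0$ by $\Gs$-invariance, which gives the desired vanishing.

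There is no real obstacle here beyond verifying the two book-keeping points above (reducedness of $\ZZ_\reg$ and the agreement of $\ZZ_\Gs$ with $\ZZ_\tot\cap(\Ss\times X)$ as reduced subschemes), both of which follow immediately from conventions already set in the text. The substantive content of the argument is Corollary \ref{kostprincpair}, which guarantees that $\Ss$ meets every regular orbit, so that $\ZZ_\Gs$ is a genuine ``slice'' of $\ZZ_\reg$ under the $\Gs$-action.
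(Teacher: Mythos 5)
Your proof is correct and follows essentially the same route as the paper: reducedness of $\ZZ_\reg$ reduces the claim to closed points, and the fact that every regular orbit meets the Kostant section (the paper cites Theorem \ref{kostarb}, you cite the slightly stronger Corollary \ref{kostprincpair}) shows every closed point of $\ZZ_\reg$ is $\Gs$-conjugate into $\ZZ_\Gs$. The extra book-keeping you supply (Lemma \ref{lemad} to move zeros along the orbit, and the identification of $\ZZ_\Gs$ with $\ZZ_\tot\cap(\Ss\times X)$) is exactly what the paper leaves implicit.
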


\begin{proof}
As $\ZZ_\reg$ is reduced, a function is determined by its values on closed points. 
By Lemma \ref{kostarb} every $\Gs$-orbit in $\geg^\reg$ intersects $\Ss$, thus the $\Gs$-orbit of any closed point in $\ZZ_\reg$ intersects $\ZZ_\Gs$. It is therefore enough to specify a $\Gs$-invariant function on $\ZZ_\reg$ on closed points of $\ZZ_\Gs$. The result follows.
\end{proof}

To finish the proof, we are only left with the proof of injectivity of the restriction $\C[\ZZ_\tot]^\Gs\to \C[\ZZ_\reg]^\Gs$. We will utilise the following Lemma to prove that the restriction $\C[\ZZ_\tot]\to\C[\ZZ_\reg]$ is injective.

\begin{lemma}\label{projinj}
 Let $Y$ be a reduced scheme over a field $k$. Assume $Z$ is a closed subvariety and every closed point $p\in Y$ is contained in a projective closed subvariety that intersects $Z$. Then the restriction map on regular functions $k[Y]\to k[Z]$ is injective.
\end{lemma}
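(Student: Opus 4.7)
The plan is to show that any regular function $f \in k[Y]$ which restricts to zero on $Z$ must be the zero function. Since $Y$ is reduced and (in our applications) of finite type over $k$, hence Jacobson, it is enough to check that $f$ vanishes at every closed point of $Y$: the vanishing locus of $f$ is a closed subscheme whose underlying set contains all closed points, so it must be all of $Y$, and reducedness then forces $f=0$.

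Given a closed point $p\in Y$, I invoke the hypothesis to produce a projective closed subvariety $W\subset Y$ with $p\in W$ and $W\cap Z\neq\emptyset$. The key input is the standard fact that a regular function on an integral projective scheme over $k$ is ``constant'': more precisely $\Gamma(W,\mathcal{O}_W)$ is a finite field extension of $k$, so $f|_W$ takes a single value throughout $W$. Picking any point $q\in W\cap Z$ and using $f|_Z=0$ shows this value is $0$, hence $f(p)=0$.

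The main delicate point is the passage from ``vanishes at closed points'' to ``vanishes as a regular function,'' which is where the reducedness of $Y$ is used; in our applications $Y$ is of finite type over $\C$, so closed points are dense in every closed subset and the argument is immediate. A secondary point is that the subvariety $W$ must genuinely be integral and projective over $k$ for the ring of global sections to collapse to a field; this is guaranteed by the paper's convention that a variety is an integral separated scheme of finite type. Both points will be routine to verify in the applications of this lemma (namely, to $Y=\mathcal{Z}_{\mathrm{tot}}$ with $Z=\mathcal{Z}_{\mathrm{reg}}$, where the projective subvarieties through a given closed point will be produced from the $G$-orbit structure and projectivity of $X$).
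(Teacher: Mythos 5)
Your proof is correct and follows essentially the same route as the paper's: reduce to vanishing at closed points, use that a regular function on a projective variety over $k$ is constant (so that meeting $Z$ forces the value $0$), and then invoke reducedness together with finite type over $k$ to conclude $f=0$. The paper's own proof likewise tacitly assumes $Y$ is of finite type over $k$ at the last step, so your explicit flagging of that hypothesis is a fair and accurate reading rather than a deviation.
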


\begin{proof}
Let us assume that $f\in k[Y]$ vanishes on $Z$. Consider any closed point $p\in Y$. Let $A_p$ be a projective closed subvariety that contains $p$ and intersects $Z$ in a closed point $q$. Then $f|_{A_p}$ is a regular function on a projective variety over $k$, hence it is has constant value on all closed points of $A_p$. As $f(q) = 0$, this means that it takes the value 0. Therefore $f(p)=0$. Hence $f$ vanishes on every closed point.

As $Y$ is reduced and of finite type over a field, we know that regular functions are uniquely determined by their values on closed points. Therefore $f = 0$.
\end{proof}
To arrive at the lemma's assumptions, we first prove slightly stronger versions of Lemmas \ref{lemfix} and \ref{lemzer}, under the condition that the action of the Lie algebra is integrable.

\begin{lemma}\label{grpfix}
 Assume that a solvable algebraic group $\Hs$ acts on a smooth complex variety $X$. Let $P\subset X$ be a projective irreducible component of the reduced zero scheme of a linear subspace $\V\subset \he$. Then $P$ contains a simultaneous zero of $N_\he(\V)$.
\end{lemma}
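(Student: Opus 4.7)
The plan is to exhibit a connected solvable algebraic subgroup $\Ks\subset\Hs$ acting on $P$ whose Lie algebra is exactly $N_\he(\V)$; then the Borel fixed point theorem (Theorem~\ref{borelfix}) produces a point of $P$ that is fixed by $\Ks$, and differentiating that equality gives a simultaneous zero of every $V_v$ with $v\in N_\he(\V)$.

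First, I would take
$$ N_\Hs(\V)=\{g\in \Hs : \Ad_g(\V)=\V\}. $$
Since the adjoint action of $\Hs$ on $\he$ is an algebraic representation and $\V\subset\he$ is a linear subspace, $N_\Hs(\V)$ is a closed algebraic subgroup of $\Hs$. Its Lie algebra is obtained by differentiating the stabiliser condition under $\Ad$; this yields precisely
$$ \{v\in\he : \ad_v(\V)\subset\V\}=N_\he(\V). $$
Set $\Ks:=N_\Hs(\V)^\circ$, the identity component; it is connected, algebraic, has the same Lie algebra $N_\he(\V)$, and is solvable as a closed subgroup of the solvable group~$\Hs$.

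Second, I would show that $\Ks$ preserves $P$. By Lemma~\ref{lemad}, for any $g\in\Hs$ and $v\in\he$ the vector field $V_v$ vanishes at $x$ iff $V_{\Ad_g(v)}$ vanishes at $gx$; applied to all $v\in\V$ this means that $g$ sends the (reduced) zero scheme $Z$ of $\V$ to the (reduced) zero scheme of $\Ad_g(\V)$. Hence every $g\in N_\Hs(\V)$ preserves $Z$ setwise. The connected group $\Ks$ then acts on the finite set of irreducible components of $Z$, and this action must be trivial by connectedness, so $\Ks\cdot P=P$.

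Third, since $P$ is a nonempty projective variety acted on by the connected solvable algebraic group $\Ks$, Theorem~\ref{borelfix} gives a $\Ks$-fixed point $p\in P$. Differentiating $g\cdot p=p$ in $g\in\Ks$ at the identity shows $V_v|_p=0$ for every $v\in\Lie(\Ks)=N_\he(\V)$, completing the proof. The only nontrivial verification in this outline is the identification $\Lie(N_\Hs(\V))=N_\he(\V)$, but this is a standard consequence of the fact that the Lie algebra of the stabiliser of a subspace under an algebraic representation is the normaliser of that subspace under the differentiated action; all other steps are direct applications of Lemma~\ref{lemad} and Theorem~\ref{borelfix}.
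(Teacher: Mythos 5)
Your proof is correct and follows essentially the same route as the paper's: identify $N_\he(\V)$ as the Lie algebra of the algebraic normaliser $N_\Hs(\V)$ (the paper cites Borel, Lemma 7.4 for this), use Lemma~\ref{lemad} and connectedness to see that the identity component preserves $P$, and conclude with the Borel fixed point theorem. No gaps.
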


\begin{proof}
By \cite[Lemma 7.4]{Borel} we have $N_{\he}(\V) = \Lie(N_\Hs(\V))$. Let $N_\Hs(\V)^{o}$ be the connected component of the identity within $N_\Hs(\V)$. We know from Lemma \ref{lemad} that $N_\Hs(\V)$ preserves the zero set of $\V$. Thus $N_\Hs(\V)^{o}$ preserves its irreducible components, in particular $P$. By the Borel fixed point theorem \cite[Corollary 17.3]{Milne}, $N_\Hs(\V)^{o}$ it must have a fixed point $p\in P$. Then its Lie algebra $ \Lie(N_\Hs(\V)^{o}) = \Lie(N_\Hs(\V)) = N_{\he}(\V)$ vanishes on $p$.
\end{proof}

\begin{lemma}\label{grpzer}
 Assume that an algebraic group $\Hs$ acts on a smooth variety $X$. Let $d,n \in \he$ commute and assume that the Lie subalgebra generated by $[\he,\he]$ and $n$ is nilpotent. Let $P$ be a projective irreducible component of the reduced zero scheme of $j = d+n$. Then $P$ contains a simultaneous zero of $C_{\he}(d)$, in particular, a zero of any abelian subalgebra of $\he$ containing $d$.
\end{lemma}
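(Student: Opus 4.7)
The plan is to mirror the proof of Lemma \ref{lemzer}, replacing ``isolated zero at $x$'' by ``projective subvariety contains a zero'', and replacing Lemma \ref{lemfix} by Lemma \ref{grpfix} at each step. First, I observe that the hypothesis that $\kek$ (containing $[\he,\he]$) is nilpotent forces $[\he,\he]$ to be solvable, hence $\he$ itself is solvable; so I may replace $\Hs$ by its identity component and assume $\Hs$ is connected solvable (which is the setting required by Lemma \ref{grpfix}).

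The warm-up step is to apply Lemma \ref{grpfix} directly with $\V_0 = \C\cdot j$. Since $d,n\in C_\he(j)\subseteq N_\he(\C j)$, the component $P$ contains a simultaneous zero of $d$, $n$, and indeed the whole centraliser $C_\he(j)$. In particular $P_1 := P\cap Z(\C d+\C n)^{\mathrm{red}}$ is a non-empty closed, hence projective, subvariety of $P$.

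Next I imitate the inductive argument from Lemma \ref{lemzer}. Let $C'(d) = C_\he(d)\cap \kek$; this is nilpotent (as a subalgebra of the nilpotent $\kek$), and I set $\mathfrak s_0 = \C n$, $\mathfrak s_{i+1} = N_{C'(d)}(\mathfrak s_i)$. By the Bourbaki-type fact that a proper subalgebra of a nilpotent Lie algebra is strictly contained in its normaliser, this chain stabilises at $C'(d)$. I propose to build, by induction on $i$, non-empty projective subvarieties $P_i\subseteq P$ consisting of simultaneous zeros of $\C j+\C d+\mathfrak s_i$. The inductive step is to pick an irreducible component $Q_i$ of $P_i$ and apply Lemma \ref{grpfix} to it: $Q_i$ is a projective irreducible subset of $Z(\C j+\C d+\mathfrak s_i)^{\mathrm{red}}$, and any connected algebraic subgroup $\Gs_i\subseteq \Hs$ whose Lie algebra contains $\mathfrak s_{i+1}$ and which preserves $Q_i$ must produce, by Borel's fixed point theorem, a fixed point in $Q_i$, i.e.\ a zero of $\mathfrak s_{i+1}$. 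One then obtains $P_{i+1} \subseteq P_i$ by intersecting with $Z(\mathfrak s_{i+1})^{\mathrm{red}}$. At the terminal stage, $P$ contains a zero of $C'(d)+\C j+\C d$. A final application of Lemma \ref{grpfix}, using the identity $[C_\he(d),C_\he(d)]\subseteq C_\he(d)\cap[\he,\he]\subseteq C'(d)$ (which says $C_\he(d)$ normalises $C'(d)$) to include $C_\he(d)$ in the normaliser, upgrades this to a zero of the full $C_\he(d)$ inside $P$.

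The main obstacle lies in the inductive step: one must verify that at each stage there is a \emph{connected solvable algebraic subgroup} of $\Hs$ whose Lie algebra contains $\mathfrak s_{i+1}$ (or the relevant enlargement) and which genuinely preserves the chosen irreducible component $Q_i$. Preservation of $Q_i$ reduces, via Lemma \ref{grpfix}, to the normaliser relations $\mathfrak s_{i+1}\subseteq N_\he(\C d+\mathfrak s_i)$ together with tangency to $Q_i$; algebraicity of the relevant subgroups requires care because $\kek$ and its subalgebras need not themselves be algebraic in $\he$, but this can be handled by replacing $\mathfrak s_{i+1}$ with its algebraic hull inside $C_\Hs(d)^o$, which is still solvable and still normalises the relevant subspaces. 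Tracking these containments, and being careful that the shrinking projective sets $Q_i$ genuinely support applications of the Borel fixed point theorem, is the most technical part of the argument.
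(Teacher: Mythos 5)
Your proposal follows essentially the same route as the paper's proof: reduce to $\Hs$ connected solvable, iterate Lemma \ref{grpfix} along the chain $N^i_{C'(d)}(\C\cdot n)$, which stabilises at $C'(d)$ by nilpotency, and finish using $[C_{\he}(d),C_{\he}(d)]\subset C'(d)$ so that $C_{\he}(d)$ normalises $C'(d)$. The algebraicity worry in your last paragraph is already absorbed by Lemma \ref{grpfix} itself, which applies to an arbitrary linear subspace $\V$ and yields a zero of $N_{\he}(\V)=\Lie(N_{\Hs}(\V))$ via the algebraic subgroup $N_{\Hs}(\V)$, so no algebraic hulls are needed.
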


\begin{proof}
 By restricting to the connected component of the identity, we can assume that $\Hs$ is connected. As $[\he,\he]$ is nilpotent, $\he$ must be solvable, hence $\Hs$ is solvable too.
 
 Let $\kek$ be the Lie subalgebra generated by $[\he,\he]$ and $n$. By Lemma \ref{grpfix} we get that inside $P$ there is a zero $p$ of $N_{\he}(\C\cdot j)$, which in particular contains $d$ and $n$. As $P$ is irreducible, any irreducible component of the simultaneous zero set of $d$ and $n$ which contains $p$ is completely contained in $P$. Let $P_1\subset P$ be one such irreducible component. As it is closed inside $P$, it also has a structure of a projective scheme.
 
 We will first show that $P_1$ contains a simultaneous zero of $C'(d) = C_{\he}(d)\cap \kek$. As $\kek$ is nilpotent, $C'(d)$ is as well.  By Lemma \ref{grpfix}, $P_1$ contains a simultaneous zero of $N_{\he}(\Span_\C(d,n))$, hence in particular of $N_{C'(d)}(\C\cdot n)$. Note that by definition everything in $C'(d)$ centralises $d$. As $P_1$ consists of zeros of $d$, it will contain an irreducible component $P_2$ of the common zero locus of $d$ and $N_{C'(d)}(\C\cdot n)$. As a closed subscheme of $P_1$, $P_2$ is also projective. By the same argument, $P_2$ contains an projective irreducible component $P_3$ of the common zero locus of $d$ and $N^2_{C'(d)}(\C\cdot n)$. As in the proof of Lemma \ref{lemzer}, there exists a positive integer $k$ such that $N^k_{C'(d)}(\C\cdot n) = C'(d)$, hence we get a projective irreducible component $P_{k+1}$ of the common zero locus of $d$ and $C'(d)$. But again as in Lemma \ref{lemzer}, $C'(d)$, as well as $d$, is normalised by $C_{\he}(d)$. Hence inside $P_{k+1}$ there is a zero of $C_{\he}(d)$.
\end{proof}

\begin{lemma}\label{inj2}
Let $\Gs$ be a principally paired algebraic group. Assume that it acts on a connected smooth projective variety $X$, not necessarily regularly. The restriction $\C[\ZZ_\tot]^\Gs\to \C[\ZZ_\Gs]$ is injective, where $\ZZ_\tot$ and $\ZZ_\Gs$ are defined as before, as the zero schemes over $\geg$ and $\Ss$.
\end{lemma}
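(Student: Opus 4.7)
The plan is to apply Lemma \ref{projinj} with $Y = \ZZ_\tot$ and $Z = \overline{\ZZ_\reg}$, the closure of the regular locus inside $\ZZ_\tot$, after first reducing the hypothesis $f|_{\ZZ_\Gs} = 0$ to $f|_{\overline{\ZZ_\reg}} = 0$. This reduction is immediate from Corollary \ref{kostprincpair}: every regular $\Gs$-orbit in $\geg$ meets $\Ss$, hence $\ZZ_\reg = \Gs \cdot \ZZ_\Gs$, and any $\Gs$-invariant $f \in \C[\ZZ_\tot]^\Gs$ vanishing on $\ZZ_\Gs$ also vanishes on $\ZZ_\reg$. Since $\ZZ_\tot$ is reduced by Definition \ref{totzer} and $\ZZ_\reg$ is dense in its closure there, $f$ vanishes on the closed subscheme $\overline{\ZZ_\reg} \subset \ZZ_\tot$.

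The heart of the argument is then to produce, for each closed point $(v,x) \in \ZZ_\tot$, a connected projective closed subvariety of $\ZZ_\tot$ containing $(v,x)$ and meeting $\overline{\ZZ_\reg}$. Let $v = v_s + v_n$ be the Jordan decomposition, and choose a maximal torus $\Ts \subset \Gs$ with $v_s \in \ttt$ together with a Borel subalgebra $\bb \subset \geg$ containing both $\ttt$ and $v_n$. Such $\bb$ exists because $v_n$ is a nilpotent element of the reductive subalgebra $C_\geg(v_s) \supset \ttt$, and a Borel of $C_\geg(v_s)$ containing $v_n$ extends compatibly to a Borel of $\geg$ containing $\ttt$. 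Take $P$ to be an irreducible component of $(X^v)_\red$ through $x$; as a closed irreducible subvariety of the projective $X$ it is projective and connected. The candidate projective subvariety is then $A := \{v\} \times P \subset \ZZ_\tot$, which manifestly contains $(v,x)$.

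To produce a point of $A \cap \overline{\ZZ_\reg}$, I invoke Lemma \ref{grpzer} with $\he = \bb$, $d = v_s$, $n = v_n$: the commutation $[v_s, v_n] = 0$ is part of the Jordan decomposition, and both $[\bb, \bb]$ and $v_n$ lie in the nilpotent ideal $\bb_n$ by Theorem \ref{solv}, so the Lie subalgebra they generate is nilpotent. The lemma then yields $y \in P$ annihilated by $C_\bb(v_s) \supset \ttt$, i.e., $y \in X^\Ts$. Now pick any regular $w \in \ttt^\reg$: since $V_v|_y = 0$ (as $y \in X^v$) and $V_w|_y = 0$ (as $y \in X^\Ts$), the affine line $s \mapsto ((1-s)v + sw, y)$ lies entirely inside $\ZZ_\tot$, and for $s$ in the nonempty open locus where $(1-s)v + sw$ is regular (in particular at $s = 1$) the point lies in $\ZZ_\reg$. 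Letting $s \to 0$ gives $(v, y) \in \overline{\ZZ_\reg}$, so $A \cap \overline{\ZZ_\reg} \ni (v, y)$. Lemma \ref{projinj} then forces $f(v, x) = 0$.

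The main technical obstacle is the construction of a Borel subalgebra $\bb$ adapted to both $v_s$ and $v_n$, which is what allows the $\geg$-level problem on reductive $\Gs$ to be handled by Lemma \ref{grpzer}, whose nilpotency hypothesis requires restricting to a solvable Lie algebra; the rest of the argument is then essentially a projective-subvariety manipulation followed by a one-parameter deformation toward a regular element of $\ttt$.
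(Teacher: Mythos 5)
Your proof is correct and follows essentially the same route as the paper's: reduce to density of the regular locus, then for each $(v,x)\in\ZZ_\tot$ use the Jordan decomposition, a Borel subalgebra containing both parts, and Lemma \ref{grpzer} to find a $\Ts$-fixed point $y$ in the component $P$ of $X^v$ through $x$, and finally deform $v$ linearly toward a regular element of $\ttt$ (the paper uses the whole family $\ttt+\C\cdot v$ where you use a single line, an immaterial difference) to place $(v,y)\in\overline{\ZZ_\reg}$ before invoking Lemma \ref{projinj}. The only blemish is your justification of the adapted Borel via reductivity of $C_\geg(v_s)$, which is not available for a general principally paired group; it suffices to note that $\Span(v_s,v_n)$ is abelian, hence solvable, hence contained in a Borel subalgebra, as the paper implicitly does.
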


\begin{proof}
We have the sequence of restrictions $\C[\ZZ_\tot]^\Gs \to \C[\ZZ_\reg]^\Gs\to \C[\ZZ_\Gs]$. By Lemma \ref{inj1} we only need to prove that the first map is injective. Obviously the restriction $\C[\overline{\ZZ_\reg}] \to \C[\ZZ_\reg]$ is injective, where we take the closure of $\ZZ_\reg$ in $\ZZ_\tot$. We will prove that $\C[\ZZ_\tot]\to \C[\overline{\ZZ_\reg}]$ is injective and this will prove the theorem.

We employ Lemma \ref{projinj} for that. We have to prove that every point of $\ZZ_\tot$ is contained in a projective subvariety which intersects $\overline{\ZZ_\reg}$. Let $(v,p) \in \ZZ_\tot \subset \geg\times X$. Then $p$ is contained in the zero scheme of the vector field $V_v$, hence in some irreducible component $P$ thereof. It is a closed subscheme of $X$, hence it is projective. Then we have $\{v\}\times P \subset \ZZ_\tot$ as a projective closed subvariety. Let $v = d + n$ be the Jordan decomposition of $v$, as in Theorem \ref{defjord}. As $d$ and $n$ commute, they are contained in a Lie algebra $\bb$ of some Borel subgroup $\Bs\subset \Gs$. Let $\Ts$ be a maximal torus within $\Bs$ such that $d\in\ttt = \Lie(\Ts)$. Then from Lemma \ref{grpzer}, for $\Hs = \Bs$, we get that $P$ contains a simultaneous zero $x$ of $\ttt$. It is also a zero of $v$, hence we have $(\ttt+\C\cdot v)\times \{x\} \subset \ZZ_\tot$. Note that $\ttt$ contains a regular element, and as the regular elements within $\geg$ form an open subset, the regular elements of $\ttt+\C\cdot v$ form an open nonempty subset, hence they are dense. This means that $(\ttt+\C\cdot v)\times \{x\}\subset \overline{\ZZ_\reg}$, hence in particular $(v,x)\in\overline{\ZZ_\reg}$, and $(v,x)\in \{v\}\times P$, where $\{v\}\times P$ is a projective subvariety of $\ZZ_\tot$, therefore we are done.
\end{proof}

\begin{proof}[Proof of Theorem \ref{wholeg}]
The isomorphism follows from Lemmas \ref{surjtot} and \ref{inj2}.

For the grading, we just have to show that the defined action of $\Cs$ descends under the restriction $\C[\ZZ_\tot]^\Gs \to \C[\ZZ_\Gs]$ to the action defined in Theorem \ref{general}. Let $f$ be a $\Gs$-invariant function on $\ZZ_\tot$. Then for any $t\in\Cs$ the pullback $t^*f$ of $f$ by $t$ is defined by
$$t^*f(v,x) = f\left(\frac{1}{t^2} v,x\right).$$
As $f$ is $\Gs$-invariant, this means
$$t^*f(v,x) = f\left(\frac{1}{t^2} \Ad_{H^t}v,H^t x\right).$$
When we restrict to $\ZZ_\Gs$, the group $\Cs$ acts precisely by $\left(\frac{1}{t^2} \Ad_{H^t},H^t\right)$, by Theorem \ref{general}. Therefore the actions agree.
\end{proof}

\begin{example}
Assume that $\Gs$ is a reductive group acting on a partial flag variety $X = \Gs/\Ps$. Then the zero scheme is 
$$\tilde{\geg}_\Ps:=\{(x,\p^\prime)\in \geg\times\Gs/\Ps|x\in\p^\prime\},$$
which agrees with the partial Grothendieck--Springer resolution. Thus we get that as a $\C[\geg]^\Gs\cong H^*_\Gs$-module, the ring of invariant functions $\C[\tilde{\geg}_\Ps]^\Gs$ is equal to $H^*_\Gs(\Gs/\Ps) = H^*_\Ps = \C[\ttt]^{\Ws_\Ps}$.
\end{example}

\begin{figure}[ht!]
\begin{center}
 \subfloat{
  \includegraphics[width=7cm]{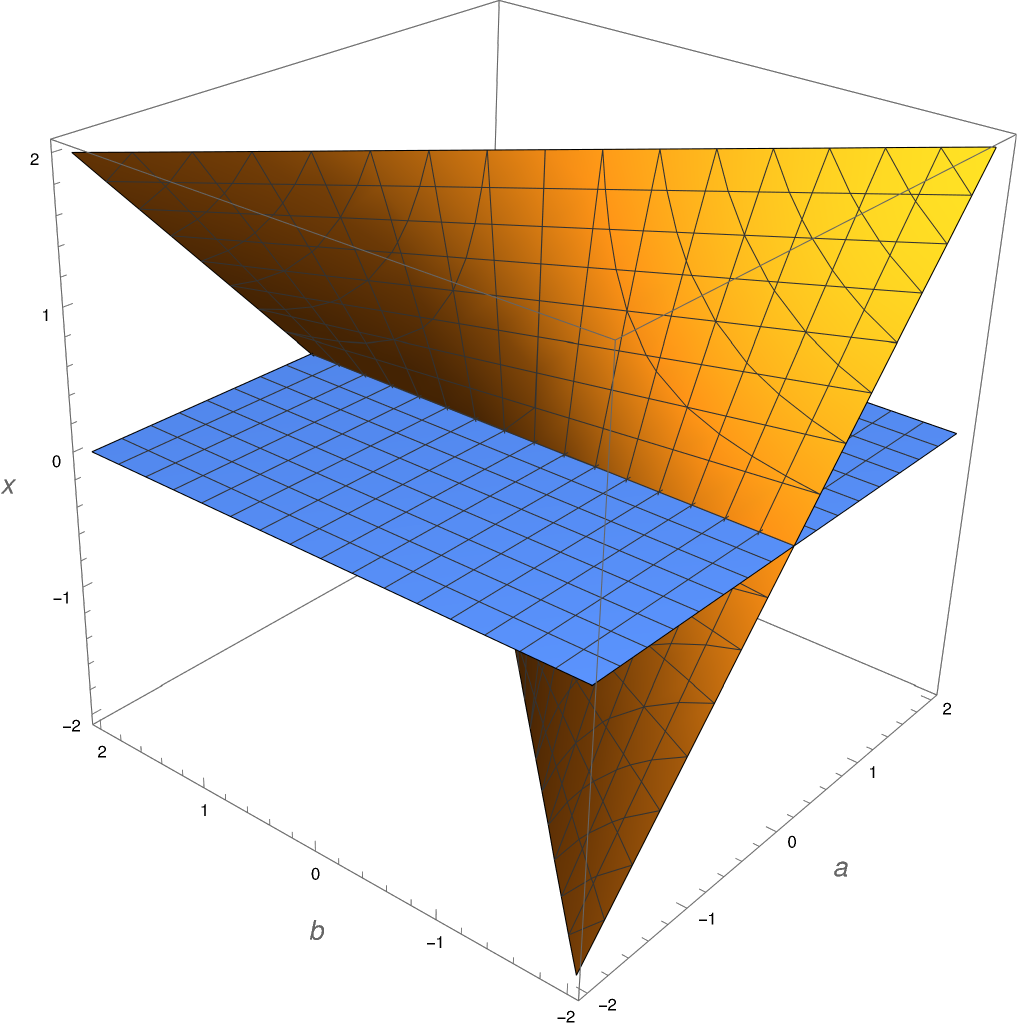}}
  \hfill
\subfloat{
  \includegraphics[width=7cm]{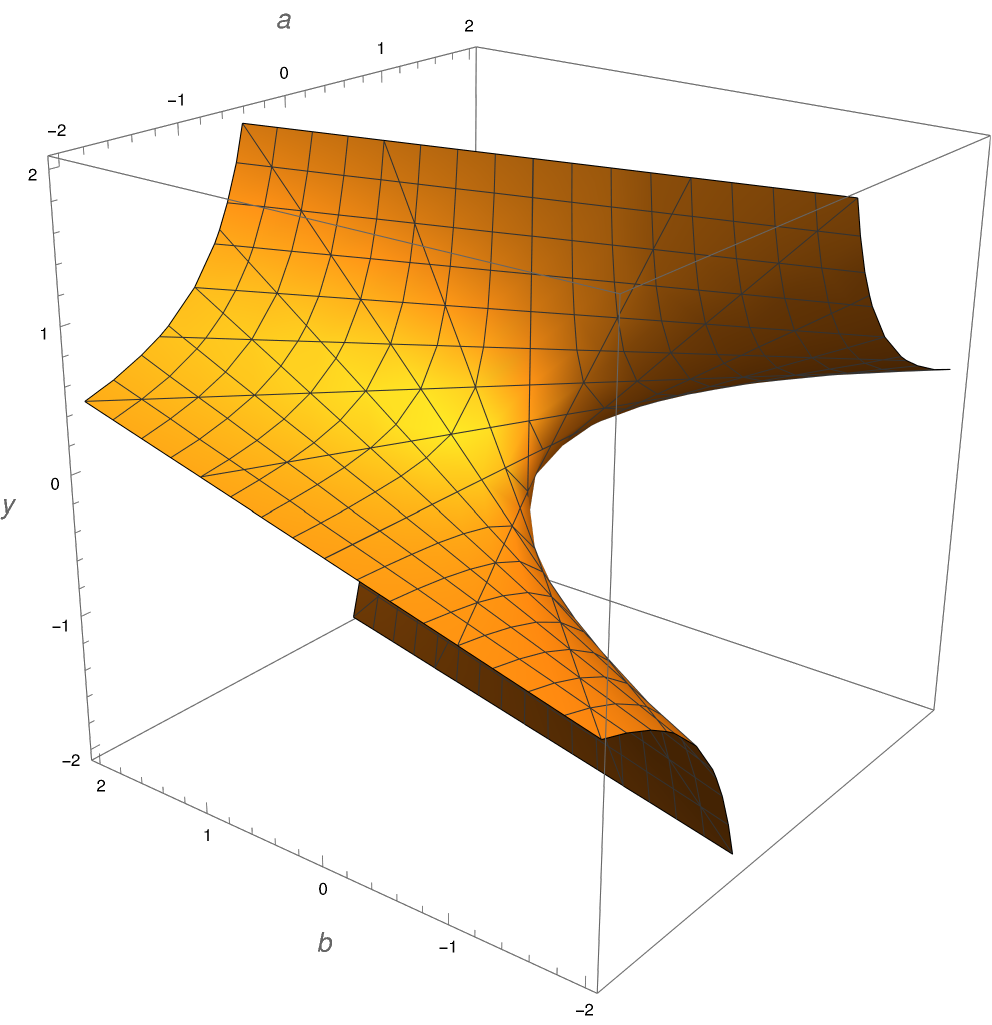}}
\end{center}
\caption[Total zero scheme for the action of $\Bs_2$ on $\PP^1$]{Affine parts of the total zero scheme for the action of $\Bs_2$ on $\PP^1$. The left part misses a line (over $b=0$), the right part misses the blue component.}
\label{totalp1}
\end{figure}
\begin{example}
There is one example that we are able to draw. It is the action of $\Bs_2$ on $\PP^1$ (see Example \ref{exsl22}). The total zero scheme is not affine, but we can cover it with two affine pieces, coming from affine cover of $\PP^1$. The first one will be the part contained in $\bb_2 \times \{[1:x]| x\in \C\}$ and the other will be the part contained in $\bb_2 \times \{[y:1] | y\in \C\}$. If we consider coordinates $(a,b)$ on $\bb_2$ that correspond to matrices
$$\begin{pmatrix}
a & b\\
0 & -a
\end{pmatrix} \in \bb_2,
$$
then the surface has the equations $2ax + bx^2 = 0$ in $(a,b,x)$-plane (the first piece) and $2ay+b = 0$ in $(a,b,y)$-plane (the second piece). The scheme has two irreducible components and the pieces are drawn in Figure \ref{totalp1}.

One sees that the $\Bs_2$-invariant functions on the blue part depend only on $a$, hence they form $\C[a]$. Analogously on the orange part the $\Bs_2$-invariant functions only depend on $a = -\frac{bx}{2}$, as for $a\neq 0$ any two points with the same $a$ are conjugate, and for $a = 0$ we get $b = 0$ or $x = 0$. The former line is a projective line on which an invariant function must attain the same value, and the latter lies in the blue part. This leaves us with two functions from $\C[a]$ with the same constant term. One easily sees that this ring is isomorphic to e.g. $\C[a,x]/x(x+2a)$.
\end{example}

\subsection{Equivariant cohomology of GKM spaces via total zero scheme}
\label{secgkm}

We suspect that the description of equivariant cohomology as the ring of regular functions on the total zero scheme might still hold in a larger generality. For example, one could presume that a sufficiently regular torus action might lead to such a description, even without embedding torus in a larger solvable group (as in Section \ref{solvsec}). Here we prove this equality for GKM spaces, whose equivariant cohomology we know.

\begin{theorem}\label{gkmthm}
Let a torus $\Ts \cong (\Cs)^r$ act on a smooth projective complex variety $X$ with finitely many zero and one-dimensional orbits. In other words, the $\Ts$-action makes $X$ a GKM space. Let $\ZZ=\ZZ_{\tot} \subset \ttt\times X$ be the reduced total zero scheme of this action (Definition \ref{totzer}). Then $\C[\ZZ] \cong H^*_\Ts(X)$ as graded algebras over $\C[\ttt]\simeq H^*_\Ts$:
$$ \begin{tikzcd}
  \C[\ZZ] \arrow{r}{\cong}  &
  H_\Ts^*(X;\C)   \\
   \C[\ttt] \arrow{r}{\cong} \arrow{u}&
  H^*_\Ts \arrow{u}.
\end{tikzcd}$$
\end{theorem}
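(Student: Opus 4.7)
My plan is to identify $\C[\ZZ]$ with the combinatorial description of $H_\Ts^*(X,\C)$ provided by Theorem \ref{gkmres}. Concretely, I will study the restriction map $\rho : \C[\ZZ] \to \C[\ttt \times X^\Ts] = \C[\ttt]^s$ induced by the closed embedding $\ttt \times X^\Ts \hookrightarrow \ZZ$ and show that $\rho$ is injective with image exactly the GKM subring.

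First I would analyse the irreducible components of $\ZZ$. A closed point $(v,x)$ lies in $\ZZ$ precisely when $v$ belongs to the Lie algebra $\ttt_x$ of the stabilizer of $x$. Each irreducible component of $\ZZ$ is then a product $K \times Y$, where $K \subset \ttt$ is a linear subspace and $Y \subset X$ is a closed $\Ts$-invariant irreducible projective subvariety on which the connected subtorus $\Ks \subset \Ts$ integrating $K$ acts trivially (so $Y \subseteq X^\Ks$). By Noetherianity there are only finitely many such components, and the list includes the fixed-point components $\ttt \times \{\zeta_j\}$ (for $j=1,\dots,s$) as well as the 1-dimensional orbit components $\kek_i \times \overline{E_i}$ (for $i=1,\dots,\ell$). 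Because $Y$ is projective and irreducible, $H^0(Y,\OO) = \C$, so the global sections of each component form $\C[K]$, determined by restriction to $K \times \{\zeta\}$ for any fixed point $\zeta \in Y^\Ts$ (which exists by the Borel fixed-point theorem \ref{borelfix}, since $Y$ is projective $\Ts$-invariant).

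Injectivity of $\rho$ is then immediate: if $f \in \C[\ZZ]$ vanishes on every $\ttt \times \{\zeta_j\}$, then on each component $K \times Y$ the function $f \in \C[K]$ vanishes on $K \times \{\zeta\}$ for some $\zeta \in Y^\Ts$, so $f = 0$ on that component and hence on all of $\ZZ$. The containment $\im(\rho) \subseteq$ GKM subring follows from the 1-dimensional orbit components: restricting $f \in \C[\ZZ]$ to $\kek_i \times \overline{E_i}$ gives a single element of $\C[\kek_i]$, and its further restrictions to $\kek_i \times \{\zeta_{i_0}\}$ and $\kek_i \times \{\zeta_{i_\infty}\}$ produce the relation $p_{i_0}|_{\kek_i} = p_{i_\infty}|_{\kek_i}$ of Theorem \ref{gkmres}.

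The main obstacle is surjectivity of $\rho$ onto the GKM subring. Given $(p_j) \in \C[\ttt]^s$ satisfying the GKM relations, I propose to define $f$ on each component $K \times Y$ by $f = p_\zeta|_K$ for some $\zeta \in Y^\Ts$, and to prove that this is independent of $\zeta$. For this I would use a Bia\l ynicki--Birula decomposition of $Y$ with respect to a generic 1-parameter subgroup $\lambda : \Cs \to \Ts$ (chosen so that $X^\lambda = X^\Ts$): the irreducibility of $Y$ forces the 1-skeleton of $Y$, whose edges are the finitely many 1-dimensional $\Ts$-orbits $E \subset Y$, to be connected, so any two fixed points of $Y$ are joined by a chain of such edges. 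For any such edge $E \subset Y$, since $\Ks$ acts trivially on $Y$ and hence on $E$, we have $K \subseteq \kek_E$; consequently the GKM relation $p_{E_0} \equiv p_{E_\infty} \pmod{I(\kek_E)}$ specializes to $p_{E_0}|_K = p_{E_\infty}|_K$, and propagation along the connecting chain yields the required independence. Compatibility on overlaps of different components reduces to the same specialization principle. Once $\C[\ZZ]$ is identified with the GKM subring and therefore with $H_\Ts^*(X,\C)$ via Theorem \ref{gkmres}, the grading matches because the $\Cs$-action $v \mapsto v/t^2$ on $\ttt$ doubles degrees in accord with $\C[\ttt] \cong H_\Ts^*(\pt,\C)$, and the subring inherits this grading through the fixed-point embedding.
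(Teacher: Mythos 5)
Your description of the irreducible components of $\ZZ$ as products $K\times Y$, your proof that the restriction $r:\C[\ZZ]\to\C[\ttt]^s$ is injective, and your verification that the image of $r$ lands in the GKM ring $H$ of Theorem \ref{gkmres} are all correct, and they run parallel to the paper's argument (which gets injectivity from Lemma \ref{projinj} and the Borel fixed point theorem rather than from an explicit component list). The gap is in the surjectivity step. Having defined a candidate $f$ separately on each irreducible component $K\times Y$ and checked that the definitions agree on set-theoretic overlaps, you conclude $f\in\C[\ZZ]$. For a general reduced scheme this inference is false: for the three concurrent lines $V\bigl(xy(x-y)\bigr)\subset\A^2$, the triple $(x,0,0)$ is regular on each line and the three functions agree at the common point, yet it is not the restriction of any polynomial, because restricting $ax+by+(\text{higher order})$ forces the linear coefficient along the diagonal to equal the sum of the coefficients along the axes. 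Nothing in your argument excludes analogous higher-order compatibility conditions among the components of $\ZZ$ at points where three or more of them meet, for instance at $(v,\zeta)$ with $v$ lying in the intersection of several of the hyperplanes $\kek_i$ attached to edges through a fixed point $\zeta$. The assertion that overlap-compatibility on components suffices is precisely a scheme-theoretic analogue of the GKM theorem (``only the codimension-one conditions matter'') for $\ZZ$ itself, and it is the hard content of the surjectivity claim; it is left unproved.

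The paper circumvents the gluing problem by never gluing. It uses that $H^*_\Ts(X)$ is generated over $\C[\ttt]$ by equivariant Chern classes of $\Ts$-linearised vector bundles (Lemma \ref{chern1} together with the graded Nakayama lemma; by Remark \ref{remarkchern} this needs only that $X^\Ts$ is finite), and for such a class the candidate function is
$$\rho\bigl(c_k(\Ee)\bigr)(v,x)=\Tr_{\Lambda^k\Ee_x}\bigl(\Lambda^k v_x\bigr),$$
which is manifestly the restriction to $\ZZ$ of a single globally defined regular function, with no componentwise construction needed. Then $r\circ\rho$ is the localisation map, which is surjective onto $H$ by Theorem \ref{gkmres}, so $r$ is onto $H$ and both $r$ and $\rho$ are isomorphisms. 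To repair your proof you must either establish the gluing statement for $\ZZ$ directly or replace your componentwise construction of preimages by this global one.
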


Let us denote the $\Ts$-fixed points by $\zeta_1$, $\zeta_2$, \dots, $\zeta_s$ and the one-dimensional orbits by $E_1$, $E_2$, \dots, $E_{\ell}$. Recall that the closure of any $E_i$ is an embedding of $\PP^1$ and contains two fixed points $\zeta_{i_0}$ and $\zeta_{i_\infty}$, which for any $x\in E_i$ are equal to the limits $\lim_{t\to 0} t x$ and $\lim_{t\to\infty} tx$. The action of $\Ts$ on $E_i$ has a kernel of codimension $1$, which is uniquely determined by its Lie algebra $\kek_i$.
Then Theorem \ref{gkmres} says that the restriction $H_\Ts^*(X,\C)\to H_\Ts^*(X^T,\C) \cong \C[\ttt]^s$ is injective and its image is
$$
H = \left\{
(f_1,f_2,\dots,f_s)\in \C[\ttt]^s  \, \bigg| \, f_{i_0}|_{\kek_i} = f_{i_\infty}|_{\kek_i} \text{ for } j=1,2,\dots,\ell
\right\}.
$$

Using this description, we will proceed by finding an injective map $\rho:H^*_\Ts(X)\to \C[\ZZ]$ and an injective left inverse $r:\C[\ZZ]\to H \cong H^*_\Ts(X)$. We will use Lemma \ref{projinj} with $Y = \ZZ$ as defined above and $Z = \ttt \times X^\Ts$. Take any $(v,p)\in \ZZ$. The point $p$ lies in the zero scheme $\ZZ_v$ of the vector field on $X$ corresponding to $v$. As $\Ts$ is a commutative group, and hence acts trivially on its Lie algebra, it preserves zeros of $v\in\ttt$. Therefore $\{v\}\times \Ts\cdot p\subset \ZZ$ and $\{v\}\times \overline{\Ts\cdot p}$ is a closed projective subvariety of $\ZZ$. As $\Ts$ acts on it, by the Borel fixed point theorem it contains a fixed point of $\Ts$, hence intersects $Z$ nontrivially. Therefore this choice of $Y$ and $Z$ satisfies the conditions of the Lemma.

We know that there are finitely many distinct types of orbits of the $\Ts$-action on $X$.  This can be seen by embedding $X$ equivariantly in a projective space with a linear action of $\Ts$, see \cite[Theorem 7.3]{Dolgachev}. Therefore there exists a one-parameter subgroup $\{H^t\}_{t\in\Cs}\subset \Ts$ that is not contained in any proper centraliser. Then the fixed points of $H^t$ are automatically the fixed points of $\Ts$. Consider the Białynicki-Birula minus--decomposition, consisting of the cells
$$W_i^{-} = \{x\in X: \lim_{t\to\infty} H^t \cdot x = \zeta_i\}$$
for $\zeta_1$, $\zeta_2$, \dots, $\zeta_s$ being the fixed points of $\Ts$.

We first define the map $\rho:H^*_\Ts(X)\to \C[\ZZ]$. We will define it on closed points, using reducedness of $\ZZ$. Let $c\in H^*_\Ts(X)$. Assume that $(v,x)\in \ZZ$, i.e. the vector field $v$ is zero at $x$. We know that $x \in W_i^{-}$ for some $i\in\{1,2,\dots,s\}$. The restriction $c|_{\zeta_i}$ is an element of $H_\Ts^*(\pt) \cong \C[\ttt]$. Define then
$$\rho(c)(v,x) = c|_{\zeta_i} (v).$$
We first have to prove that this defines a regular function for each $c$.

\begin{lemma}
Let $\Ee$ be a $\Ts$-equivariant bundle on $X$. Then
$$\rho(c_k(\Ee))(v,x) = \Tr_{\Lambda^k(\Ee_x)}(\Lambda^k(v)).$$
In particular, $\rho(c_k(\Ee))$ is a regular function on $\ZZ$.
\end{lemma}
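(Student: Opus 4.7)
For each closed point $(v,x)\in\ZZ$, set $f_{k,\Ee}(v,x) := \Tr_{\Lambda^k \Ee_x}(\Lambda^k v_x)$. This makes sense because $V_v$ vanishing at $x$ together with $v\in\ttt$ being semisimple implies that the one-parameter subgroup $\overline{\exp(\C v)}\subset \Ts$ fixes $x$, so via the $\Ts$-equivariant structure on $\Ee$ the element $v$ acts linearly on the fibre $\Ee_x$. In a local trivialisation of $\Ee$ around $x$ these matrix entries are polynomial in the coordinates of $v\in\ttt$ with coefficients regular in $x$, so $f_{k,\Ee}\in\C[\ZZ]$.

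The plan is to verify $f_{k,\Ee}=\rho(c_k(\Ee))$ as elements of $\C[\ZZ]$; since $\ZZ$ is reduced it suffices to check equality on closed points. Fix $(v,x)\in\ZZ$ and let $\zeta_i$ be the unique fixed point with $x\in W_i^-$, so that $\lim_{t\to\infty} H^t x =\zeta_i$. Consider the morphism $\mu:\Cs\to \ZZ$, $\mu(t)=(v,H^t x)$; this lands in $\ZZ$ because $\Ts$ is abelian and hence $H^t$ preserves the zero scheme of $V_v$ by Lemma~\ref{lemad}. By $\Ts$-equivariance of $\Ee$, the action of $H^t$ identifies $\Ee_x$ with $\Ee_{H^t x}$, and this identification intertwines $v_x$ with $v_{H^t x}$ because $H^t$ and $\exp(\C v)$ commute inside the abelian group $\Ts$. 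Hence $f_{k,\Ee}\circ\mu$ is constant on $\Cs$.

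Since $X$ is projective and $\lim_{t\to\infty}H^t x = \zeta_i$ exists in $X$, the morphism $\mu$ extends to $\tilde{\mu}:\A^1\to\ZZ$ (in the coordinate $s=t^{-1}$) with $\tilde{\mu}(0)=(v,\zeta_i)$; this lands in $\ZZ$ because $\zeta_i$ is $\Ts$-fixed and hence is a zero of $V_v$. The pullback $f_{k,\Ee}\circ\tilde{\mu}\in\C[\A^1]$ is a regular function constant on the dense open $\Cs$, hence constant on $\A^1$. Evaluating at $s=0$ and using formula~\eqref{cherntr} for the fixed point $\zeta_i$ of $\Ts$ gives
\[
f_{k,\Ee}(v,x)=f_{k,\Ee}(v,\zeta_i)=\Tr_{\Lambda^k \Ee_{\zeta_i}}(\Lambda^k v_{\zeta_i})=c_k(\Ee)|_{\zeta_i}(v)=\rho(c_k(\Ee))(v,x),
\]
as required.

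The main obstacle is justifying that $f_{k,\Ee}$ is a \emph{regular} function on all of $\ZZ$, not just a function on closed points. The key observation is that on $\ZZ$ the vector field $V_v$ vanishes fibrewise in the $X$-direction, so its infinitesimal action on $\Ee$ defines a section of $\pi_X^*\mathrm{End}(\Ee)$ over $\ZZ$, where $\pi_X:\ZZ\to X$ is the projection; passing to the $k$-th exterior power and taking trace then yields a regular function, after which the limit argument above is straightforward since it reduces to showing a regular function on $\A^1$ that is constant on a dense open subset is constant everywhere.
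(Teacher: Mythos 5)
Your proof is correct and follows essentially the same route as the paper: both arguments use commutativity of $\Ts$ to see that the trace function $(v,x)\mapsto \Tr_{\Lambda^k(\Ee_x)}(\Lambda^k v)$ is constant along the orbit $t\mapsto H^t\cdot x$ and then pass to the limit $\zeta_i=\lim_{t\to\infty}H^t\cdot x$, where formula \eqref{cherntr} identifies the trace with the localisation $c_k(\Ee)|_{\zeta_i}(v)=\rho(c_k(\Ee))(v,x)$. The only difference is that you spell out the extension of the orbit map to $\A^1$ and the regularity of the trace function in more detail than the paper, which simply asserts that the equality persists in the limit.
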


\begin{proof}
Let $c = c_k(\Ee)$.
Consider the curve $C = \overline{H^t \cdot x}$. In particular let $\zeta_i = \lim_{t\to\infty} H^t \cdot x\in C$. We then defined $\rho(c)(v,x) = c|_{\zeta_i}(v)$. But we know that this is equal to
$$c_k(\Ee)|_{\zeta_i}(v) = \Tr_{\Lambda^k(\Ee_{\zeta_i})}(\Lambda^k v).$$
However, as $\Ts$ is commutative, the action of any of its elements, in particular of $H^t$, on $X$ is $\Ts$-equivariant, therefore
$$\Tr_{\Lambda^k(\Ee_{x})}(\Lambda^k v) = \Tr_{\Lambda^k(\Ee_{H^t\cdot x})}(\Lambda^k v)$$
for any $t\in \Cs$. Therefore the equality stays true also in the limit, hence
$$\Tr_{\Lambda^k(\Ee_{x})}(\Lambda^k v) = \Tr_{\Lambda^k(\Ee_{\zeta_i})}(\Lambda^k v) = \rho(c)(v,x).$$
\end{proof}

\begin{proof}[Proof of Theorem \ref{gkmthm}]
We have defined the map $\rho$, we just have to prove that it is an isomorphism. For injectivity, note that $\ttt\times X^\Ts$ is contained in $\ZZ$. By definition, if $\rho(c)$ is zero on this subspace, then all localisations to $\Ts$-fixed points vanish. But by theorem \ref{gkmres} the localisation is injective, hence $c=0$.

The set $\ttt\times X^{\Ts}\subset \ZZ$ is closed and considering it as a reduced subvariety, by Lemma \ref{projinj} we get that the restriction map
$$r:\C[\ZZ] \to \C[\ttt\times X^{\Ts}] \cong \C[\ttt]^s$$
is injective. We need to prove that the image lies in $H$ and that $r\circ \rho$ is the localisation map $H^*_{\Ts}(X)\to H$. The latter comes directly from the definition, as $\rho(c)(v,\zeta_i) = c|_{\zeta_i}(v)$.

Now we need to prove that for any $E_i$ and $v\in\kek_i$ and $f\in \C[\ZZ]$ we have $f(v,\zeta_{i_0}) = f(v,\zeta_{i_\infty})$. Note that as the infinitesimal action of $\kek_i$ is trivial on $E_i$, we have $\kek_i \times \overline{E_i} \subset \ZZ$. This means that over each $v\in\kek_i$ there is a closed subset $\{v\}\times \overline{E_i}\subset \ZZ$. As the reduced subscheme structure makes this a projective variety ($\PP^1$, precisely), every global function on $\ZZ$ needs to be constant along this subvariety. As it contains $(v,\zeta_{i_0})$ and $(v,\zeta_{i_\infty})$, we get $f(v,\zeta_{i_0}) = f(v,\zeta_{i_\infty})$.
\end{proof}

\begin{remark}
Thus the ring of regular functions on the total scheme is isomorphic to the equivariant cohomology for regular actions of principally paired group on smooth projective varieties, by Theorem~\ref{wholeg}, as well as for GKM spaces by Theorem~\ref{gkmthm}. A similar result can be also proved e.g. for spherical varieties, see Theorem \ref{nonreg}.

In the above, we used the fact that the torus-fixed points are isolated, but we also needed the GKM cohomology result, i.e. Theorem \ref{gkmres}. This way we know that any function on the zero scheme will be a cohomology class, as it will determine an element that already lies in $H$. Note that for arbitrary torus actions, every 1-orbit defines a similar condition on the image of localisation, but the image of localisation will be in general strictly smaller than similarly defined $H$.

We can see that it is not enough to assume for the torus to act with isolated fixed points. Indeed, let us consider $X = \PP^2$, but we restrict the standard action of the two-dimensional torus to one-dimensional $\Cs$. Take e.g. the action $t\cdot [x:y:z] = [x:t^2y:t^4z]$. The only fixed points are $[1:0:0]$ and $[0:1:0]$ and $[0:0:1]$ and hence if we consider any nonzero $v\in \C \cong \Lie(\Cs)$, the associated vector field only has those three zeros. On the other hand, for $v = 0$ the zero scheme is the whole $\PP^2$. Therefore $\ZZ_\tot \subset \C\times X$ will consist of a vertical $\PP^2$ and three horizontal lines. The action of $t\in \Cs$ multiplies by $t^{-2}$ on each of those lines.

Any global function on $\ZZ_\tot$ determines polynomials $f_1$, $f_2$, $f_3$ on those lines. Then $\C[\ZZ_\tot] = \{(f_1,f_2,f_3)\in \C[x]^3 | f_1(0) = f_2(0) = f_3(0)\}$. There is an injective map $H^*_{\Cs}(\PP^2) \to \C[\ZZ_\tot]$, but it is not surjective. From Example \ref{exsl22} we have $H_{\Cs}^*(\PP^2) = \C[x,v]/\big(x(x+2v)(x+4v))$. Geometrically, we see the map $\ZZ_\tot\to \Spec H_{\Cs}^*(\PP^2)$ which contracts $\PP^2$ to the point. We see that $h_{\Cs}^2(\PP^2) = 2$, but $\C[\ZZ_\tot]^2 = \{(ax,bx,cx) | a,b,c\in \C\}$ is three-dimensional.

\end{remark}

\chapter{Further directions and open problems}\label{chapfur}

The previous chapter presented the results of \cite{HR}. The article opens a lot of interesting possibilities and motivates many questions, extending the original results. We discuss here the newer developments by the author, and sketch potential new directions and conjectures.

\section{General singularities}

In Proposition \ref{sing} we prove how to recover the spectrum of equivariant cohomology of possibly singular projective variety in a specific case. Namely, we require it to be embedded in a smooth variety with a regular action such that the restriction map on cohomology is surjective. For a completely general singular variety with a regular action, the situation might be much harder. There is in general no reason to expect that the variety will even only have even cohomology. However, even without the assumption on cohomology, we can still infer something about a subring of the equivariant cohomology ring. Namely, we need to restrict our attention to the subring generated by the equivariant Chern classes of equivariant vector bundles. This is in some sense the subring of those elements that come from equivariant geometry. In the last 30 years, similar subrings have been considered for moduli spaces, and are know as \emph{tautological rings}. They have been introduced by Mumford in \cite{Mumford} and it rose to importance with Kontsevich's work \cite{Kontsevich}. See also the expository article by Ravi Vakil \cite{Vakil}.

Let $\Hs$ be an algebraic group. For any $\Hs$-variety $X$, we define $\widetilde{H}^*_\Hs(X)$ as the subring of $H^*_\Hs(X)$ generated by the equivariant Chern classes of $\Hs$-linearised vector bundles. Notice that if $X$ is a smooth variety with regular $\Hs$-action, then by Lemma \ref{genprinc} we have $\widetilde{H^*}_\Hs(X) = H^*_\Hs(X)$.

\begin{theorem}\label{singful}
Assume that $X$ is a smooth projective variety with a regular action of a principally paired group $\Hs$. Let $Y\subset X$ be a closed $\Hs$-invariant subvariety, possibly singular. Let $\Ss$ be a Kostant section of $\Hs$. Consider the zero scheme $\ZZ^X \subset \Ss\times X$ of the total vector field restricted to $\Ss\times X$, as in Theorem \ref{general}. Let $\ZZ^Y = \ZZ^X\cap (\Ss\times Y)$ be the reduced intersection. Then the isomorphism $\rho:H^*_\Hs(X)\to \C[\ZZ^X]$ descends to an isomorphism $\widetilde{\rho}:\widetilde{H}^*_\Hs(Y) \to \C[\ZZ^Y]$, so that $\ZZ^Y\simeq \Spec \widetilde{H}^*_\Hs(Y)$. This makes the following diagram commutative.
\begin{equation}
\begin{tikzcd}
H^*_\Hs(X) \arrow[r, "\iota^*"] \arrow[dd,"\rho"]
& \widetilde{H}^*_\Hs(Y)  \arrow[dd,"\widetilde{\rho}"]
\\ \\
\C[\ZZ^X] \arrow[r]
& \C[\ZZ^Y]
\end{tikzcd}
\end{equation}
\end{theorem}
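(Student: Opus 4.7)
The plan is to define $\widetilde{\rho}$ by the trace formula of Remark~\ref{chernred}, identify it on the regular locus of $\ZZ^Y$ with the usual equivariant localisation to $\Ts$-fixed points in $Y$, and then deduce bijectivity from the commutative square together with reducedness of $\ZZ^Y$.

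First I would define $\widetilde{\rho}$ on generators. For an $\Hs$-linearised vector bundle $\Ee$ on $Y$ and a closed point $(v,y)\in\ZZ^Y$, the vector field $V_v$ vanishes at $y$, so $v$ lies in the Lie algebra of $\Stab_\Hs(y)$ and acts on the fibre $\Ee_y$ as an endomorphism $v_y$; set
$$\widetilde{\rho}(c_k^\Hs(\Ee))(v,y):=\Tr_{\Lambda^k\Ee_y}(\Lambda^k v_y).$$
Any local trivialisation of $\Ee$ over an affine open of $Y$ presents this as a polynomial in the matrix entries of $v_y$ depending regularly on $(v,y)$, since the $\he$-action on sections of $\Ee$ is linear in $\he$, so $\widetilde{\rho}(c_k^\Hs(\Ee))\in\C[\ZZ^Y]$. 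The diagram commutes on the generating subring $\iota^*(H^*_\Hs(X))$: for $\F$ an $\Hs$-linearised bundle on $X$ one has $\iota^*c_k^\Hs(\F)=c_k^\Hs(\iota^*\F)$, and the trace formula for $\rho$ from Remark~\ref{chernred} restricted to $(v,y)\in\ZZ^Y\subset\ZZ^X$ agrees with $\widetilde{\rho}(c_k^\Hs(\iota^*\F))(v,y)$.

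Next I would identify $\widetilde{\rho}$ on the dense open $\ZZ^Y_{\reg}:=\ZZ^Y\cap(\chi(\ttt^{\reg})\times Y)$ with standard $\Ts$-equivariant localisation to $Y^\Ts$. For $(v,y)\in\ZZ^Y_{\reg}$, combining Theorem~\ref{unif} with Corollary~\ref{kostprincpair} produces $g\in\Hs$ and $\w\in\ttt^{\reg}$ with $\Ad_g(v)=\w$; Lemma~\ref{lemad} then makes $gy$ a zero of $V_\w$, which by Lemma~\ref{lemzer} (taking $d=\w$, $n=0$) is a $\Ts$-fixed point, and $\Hs$-invariance together with closedness of $Y$ give $gy\in Y^\Ts$. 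The trace identity becomes
$$\widetilde{\rho}(c_k^\Hs(\Ee))(v,y)=c_k^\Ts(\Ee)|_{gy}(\w),$$
so on $\ZZ^Y_{\reg}$ the map $\widetilde{\rho}$ coincides with the composition of the usual restriction $H^*_\Ts(Y)\to H^*_\Ts(Y^\Ts)$ with evaluation, which is a ring homomorphism. Density of $\ZZ^Y_{\reg}$ in $\ZZ^Y$ follows as in Lemma~\ref{solvcomps}: the irreducible components of the Borel zero scheme over $Y$ are indexed by $\Ts$-fixed points lying in $Y$ and each projects isomorphically to $\ttt$, with the general case descending through the $\Ws$-quotient of Section~\ref{secred}. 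By reducedness of $\ZZ^Y$ the equality extends globally, so $\widetilde{\rho}$ is a well-defined graded $\C[\Ss]$-algebra homomorphism and the whole diagram commutes.

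The main obstacle is bijectivity. Surjectivity is immediate: the bottom arrow $\C[\ZZ^X]\twoheadrightarrow\C[\ZZ^Y]$ is surjective as restriction to a closed subscheme of an affine variety, and $\widetilde{\rho}\circ\iota^*$ is already surjective because $\rho$ is an isomorphism. For injectivity, my plan is to reduce every $\Hs$-linearised bundle on $Y$ to virtual pullbacks from $X$: by smoothness of $X$ and Theorem~\ref{ksmcoh}, the $\Hs$-equivariant coherent sheaf $\iota_*\Ee$ admits a finite $\Hs$-equivariant locally free resolution $\F_\bullet\to\iota_*\Ee$ on $X$. Additivity of the trace formula under exact sequences, combined with Newton's identities expressing Chern classes as polynomials in Chern characters, realises $\widetilde{\rho}(c_k^\Hs(\Ee))$ as a polynomial in restrictions of $\rho(c_l^\Hs(\F_i))$ to $\ZZ^Y$. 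This identifies $\widetilde{H}^*_\Hs(Y)$ with a quotient of $\iota^*(H^*_\Hs(X))$ on the trace side, and presents $\widetilde{\rho}$ as the induced map $\C[\ZZ^X]/\rho(\ker(\iota^*))\to\C[\ZZ^Y]$ of coordinate rings of the same reduced closed subscheme of $\Ss\times X$, hence an isomorphism; grading and functoriality then transport from $\rho$ along this identification. The delicate point to be worked out carefully is controlling the Koszul-type corrections coming from $L\iota^*$ when $Y$ is singular, so that the identification is well-defined independently of the chosen resolution.
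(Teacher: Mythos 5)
Your definition of $\widetilde{\rho}$ by the trace formula, its identification with localisation to $\Ts$-fixed points on the dense regular locus, and the surjectivity argument via $\C[\ZZ^X]\twoheadrightarrow\C[\ZZ^Y]$ all agree with the paper's proof. The problem is injectivity, where your argument has a genuine gap in two places. First, the reduction of $\Ee$ to ``virtual pullbacks from $X$'' via an equivariant resolution $\F_\bullet\to\iota_*\Ee$ does not recover $[\Ee]$: what it produces is $[L\iota^*\iota_*\Ee]=\sum_i(-1)^i[\Tor_i^{\OO_X}(\iota_*\Ee,\OO_Y)]$, and the higher Tor terms do not vanish (already for $Y$ smooth of positive codimension this class is $[\Ee]\cdot\sum_i(-1)^i[\Lambda^iN^*_{Y/X}]$, i.e.\ $[\Ee]$ multiplied by an Euler-class-type factor that is typically a zero divisor). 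So the ``Koszul-type corrections'' you flag are not a technicality to be controlled later; they obstruct this route to identifying $\widetilde{H}^*_\Hs(Y)$ with $\iota^*(H^*_\Hs(X))$. Second, and more fundamentally, even granting that identification, your final step asserts that $\C[\ZZ^X]/\rho(\ker\iota^*)$ and $\C[\ZZ^Y]$ are ``coordinate rings of the same reduced closed subscheme''. The containment $\rho(\ker\iota^*)\subseteq I(\ZZ^Y)$ is easy, but the reverse containment $I(\ZZ^Y)\subseteq\rho(\ker\iota^*)$ is precisely the statement that a class in $\widetilde{H}^*_\Hs(Y)$ whose localisations to all $\Ts$-fixed points of $Y$ vanish must be zero --- which is exactly the injectivity you are trying to prove. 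Since $Y$ may fail to be equivariantly formal, this cannot be quoted from the smooth theory; the argument is circular at this point.

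The paper closes the gap differently. Given $c\in\widetilde{H}^*_\Hs(Y)$, one uses the pushforward $\iota_*$ of the relevant bundles (meaningful in $K$-theory because $X$ is smooth) to produce a lift $\hat{c}\in H^*_\Hs(X)$ with $\iota^*\hat{c}=c$ and with the additional property that $\hat{c}$ localises to zero at every $\Ts$-fixed point \emph{outside} $Y$, because $\iota_*\Ee$ is supported on $Y$. If now $\widetilde{\rho}(c)=0$, then the localisations of $\hat{c}$ at the fixed points \emph{inside} $Y$ also vanish (they are recoverable from $\widetilde{\rho}(c)$ on the regular semisimple locus), so $\hat{c}$ localises to zero everywhere on $X$; injectivity of localisation on the smooth, equivariantly formal $X$ gives $\hat{c}=0$, hence $c=\iota^*\hat{c}=0$. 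The idea you are missing is that injectivity must be transported to $X$, where equivariant formality is available, via a lift supported on $Y$, rather than established on $Y$ itself by a kernel computation.
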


\begin{proof}
 As in the proof of Theorem \ref{semisimp}, the scheme $\ZZ^Y$ for a reductive or general principally paired group is the quotient of the scheme for its Borel by the Weyl group action. Similarly, if $\Ts$ is the maximal torus of $\Hs$, $\widetilde{H}^*_\Hs(X)$ is the Weyl-invariant part of $\widetilde{H}^*_\Ts(X)$, as in Lemma \ref{genred}. Hence we can assume that $\Hs$ is solvable. Then we actually have $H^*_\Hs(Y) = H^*_\Ts(Y)$ and $\widetilde{H}^*_\Hs(Y) = \widetilde{H}^*_\Ts(Y)$. Therefore we can in fact view $\widetilde{H}^*_\Hs(Y)$ as generated by $\Ts$-equivariant Chern classes in $H^*_\Ts(Y)$ of $\Hs$-equivariant vector bundles.
 
As $X$ is smooth, from Lemma \ref{genprinc} we have that $H^*_\Hs(X)$ is already generated by the Chern classes of $\Hs$-equivariant vector bundles. Therefore the restriction $H^*_\Hs(X) \to \widetilde{H}^*_\Hs(Y)$ is well defined. For any $c\in H^*_\Hs(X)$ the function $\rho(c_k^\Ts(\Ee))$ is defined on $(v,p)$ by localisation to a torus-fixed point in the same $\Hs$-orbit as $p$. Hence the values of $\iota^*(c)$ on the points of $\ZZ^Y$ are defined by localisation only to torus-fixed points in $Y$. Hence also $\widetilde{\rho}$ is well-defined. The diagram is then obviously commutative. We need to prove that $\widetilde{\rho}$ is an isomorphism. Surjectivity follows from surjectivity of the restriction $\C[\ZZ^X]\to \C[\ZZ^Y]$, as $\ZZ^Y$ is a closed subscheme of an affine scheme $\ZZ^X$.

We now only need to prove injectivity of $\widetilde{\rho}$. Note that we can push forward any $\Hs$-linearised vector bundle $\Ee$ on $Y$ to a coherent sheaf $\iota_*\Ee$ on $X$. Then for any $k$, the Chern class $c_i^\Ts(\iota_*\Ee) \in H^*_\Ts(X)$ is well-defined, as $X$ is smooth. Its pullback to $H^*_\Hs(X)$ is $c_i^\Ts(\Ee)$, as $\iota^*\iota_* = \id$ for $\iota$ being a closed embedding. As $\iota_*\Ee$ is trivial outside of $Y$, it localises trivially to any torus-fixed point not in $Y$. Hence we have shown that any element $c\in\widetilde{H}^*_\Gs(Y)$ has a lift to $\hat{c}\in H^*_\Hs(X)$ which localises trivially to torus-fixed points outside of $Y$. Assume then that $\widetilde{\rho}(c) = 0$. This means that $\hat{c}$ also localises trivially to all torus-fixed points in $Y$, as $\widetilde{\rho}(c)$ is computed by localisation to those points and we can recover them from $\widetilde{\rho}(c)$ on the regular semisimple locus. Therefore $\hat{c} = 0$, as the localisation on $X$ is injective by equivariant formality and \cite[Theorem 1.6.2]{GKM}. Hence $c=0$. This means that $\widetilde{\rho}$ is injective.
\end{proof}

\begin{example}
We extend Remark \ref{exdiscri}. We described there the discriminant variety $Y\subset X = \PP^3$. Consider the action of $\Gs = \SL_2$ on it. If we view the elements of $\C^4$ as the polynomials $ax^3 + bx^2y+ cxy^2 + dy^3$, then $Y$ is defined by the equation
$$27a^2d^2 + 4ac^3 + 4b^3d - b^2c^2 - 18abcd = 0.$$
It parametrises polynomials with multiple roots, or rather -- in homogeneous language -- polynomials that contain a square in the decomposition into linear terms. The variety consists of two $\SL_2$ orbits. The first one is smooth and dense in $Y$ and it consists of polynomials with two distinct factors:
$$U = \{f^2g | f,g\in \Span_\C[x,y]  f\neq g\}.$$
The second one is closed in $Y$ and it consists of cubes of linear factors:
$$D = \{f^3 | f\in\Span_\C[x,y] \}.$$
The variety $Y$ is singular along the latter. As $\dim Y = 2$ and $\dim D = 1$, the variety is not even normal. In fact, on the slice $b=0$ we see the classical cuspidal curve, as then, assuming $a=1$, the discriminant classically degenerates to $27d^2 + 4c^3$.

However, as we mentioned in Remark \ref{exdiscri}, $Y$ contains all the $\Ts$-fixed points of $X$ and so $\ZZ^X = \ZZ^Y$. Topologically $Y$ is homeomorphic to $\PP^1\times \PP^1$. The isomorphism $\PP^1\times\PP^1\to Y$ maps
$$(f,g) \mapsto f^2g,$$
where we view elements of $\PP^1$ as homogeneous linear polynomials. We have
$$H^*(\PP^1\times \PP^1) = \C[x,y]/(x^2,y^2)$$
with $\deg x = \deg y = 2$. Hence the $P_{H^*(Y)}(t) = (1+t^2)^2$. From equivariant formality we then have
$$P_{H^*_{\Gs}(Y)}(t) = P_{H^*(Y)}(t) \cdot P_{H^*_\Gs}(t) = \frac{(1+t^2)^2}{(1-t^4)} = 1 + 2(t^2 + t^4 + t^6 + \dots).$$
On the other hand, 
$$P_{\widetilde{H}^*_{\Gs}(Y)}(t) = P_{H^*_{\Gs}(X)}(t) = \frac{1+t^2+t^4+t^6}{(1-t^4)} = 1 + t^2 + 2(t^4 + t^6 + \dots).$$

Notice that unlike in Proposition \ref{sing}, where the restriction on equivariant cohomology was surjective, here the restriction is not surjective, but injective -- however, only in equivariant cohomology. As we can see from the Poincar\'{e} series, in $\widetilde{H}^*_{\Gs}(Y)$ there is one missing dimension in $H^2$. In fact, by the description of the isomorphism $\PP^1\times\PP^1\to Y$, one sees that $\widetilde{H}^2_\Gs(Y)$ is spanned by $2\hat{x} + \hat{y}$, where $\hat{x}$ and $\hat{y}$ are the generators of the two copies of $H^*_\Gs(\PP^1)$.
\end{example}

\section{Non-regular actions}

Most of the theorems we have proved so far on equivariant cohomology, e.g. Theorem \ref{semisimp}, Theorem \ref{sing}, Theorem \ref{singful}, relied on the assumption that the linear group $\Hs$ acts regularly, i.e. with a single zero of the regular nilpotent $e\in\he$. A particular exception is Theorem \ref{gkmthm}, where we work under the assumption a torus acts on $X$, making it a GKM space. Note that for a torus $\Ts$, the only regular nilpotent in $\ttt$ is $0$ -- and in fact, that is the only nilpotent. Therefore no torus action on a nontrivial variety can be regular, as the zero set of $0$ is the whole variety. However, as in Theorem \ref{gkmthm}, we can recover the equivariant cohomology as the ring of functions on the zero scheme also in some of those cases. This therefore motivates the following question: is there some joint set of assumptions, generalising the GKM and regular case, under which we can prove that the ring of functions on the zero scheme equals the equivariant cohomology ring? In this section we provide one particular weakening of the regularity assumption, where some of the results still hold. This covers many interesting examples, such as spherical varieties.

\begin{theorem}\label{nonreg}
Assume that a principally paired group $\Hs$ acts on a smooth projective variety $X$. Let $\Ss \subset \he$ be the Kostant section. Let $\ZZ \subset \Ss\times X$ be the zero scheme of the vector field restricted from the total vector field on $\he\times X$. Assume that $\dim \ZZ = \dim \Ss$. Then there is an isomorphism
$$ H^*_\Hs(X,\C) \to \C[\ZZ] $$
of graded $\C[\Ss]\simeq H^*_\Hs(\pt,\C)$-algebras. In other words, the following diagram is commutative.
$$
\begin{tikzcd}
H^*_\Hs(X,\C) \arrow[r, "\rho"] 
& \C[\ZZ]
\\ \\
H^*_\Hs(\pt,\C) \arrow[r, "\simeq"]\arrow[uu]
& \C[\Ss] \arrow[uu]
\end{tikzcd}
$$
The grading on $\C[\ZZ]$ is defined by the action of $\Cs$ given by
$$t\cdot (v,x) = \left(\frac{1}{t^2}\Ad_{H^t}(v), H^t x\right).$$
Moreover, $H^i(\ZZ,\OO_\ZZ) = 0$ for any $i>0$. 
\end{theorem}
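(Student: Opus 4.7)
Since $V_\Ss$ is a section of the rank $n$ bundle $\pi_2^\ast T_X$ on the smooth $(\dim\Ss+n)$-dimensional scheme $\Ss\times X$, the hypothesis $\dim\ZZ=\dim\Ss$ places $\ZZ$ in the expected codimension $n$. By Theorem~\ref{thmkosz} the Koszul complex of $\iota_{V_\Ss}$ is a locally free resolution of $\OO_\ZZ$, and Corollary~\ref{corkosz} together with flat base change along the affine projection $\pi_1\colon\Ss\times X\to\Ss$ produces the hypercohomology spectral sequence
\begin{equation*}
E_1^{p,q}=\C[\Ss]\otimes_\C H^q(X,\Omega^{-p}_X)\;\Longrightarrow\;H^{p+q}(\ZZ,\OO_\ZZ).
\end{equation*}

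\textbf{Step 2 (Hodge--Tate purity from the dimension hypothesis).} I would extract Hodge--Tate purity of $X$ from the hypothesis. A generic $v\in\Ss$ is regular semisimple (Theorem~\ref{kostarb} together with density of regular semisimple elements in $\he$), hence $\Hs$-conjugate to a regular $w\in\ttt$; the local computation of $V_w$ at any $\Ts$-fixed point combined with Theorem~\ref{fixred} shows that the zero scheme of $V_w$ agrees with the reduced $\Ts$-fixed scheme $X^\Ts$. The generic fibre of $\pi\colon\ZZ\to\Ss$ is therefore $\Hs$-conjugate to $X^\Ts$, and the hypothesis forces $\dim X^\Ts=\dim\ZZ-\dim\Ss=0$, so $X^\Ts$ is finite. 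A generic one-parameter subgroup of $\Ts$ inherits the same finite fixed locus, and Theorem~\ref{cohobb} supplies an affine cell decomposition of $X$ whose cell classes lie in Hodge bidegree $(d_i,d_i)$; consequently $H^q(X,\Omega^p_X)=0$ whenever $p\neq q$.

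\textbf{Step 3 (Degeneration and isomorphism).} The Hodge--Tate vanishing concentrates $E_1$ on the antidiagonal $p+q=0$; every higher differential $d_r$ shifts bidegree by $(r,1-r)$ and hence lands off the antidiagonal, so $d_r=0$ and the spectral sequence degenerates trivially at $E_1$. This gives
\begin{equation*}
H^k(\ZZ,\OO_\ZZ)=\begin{cases}\bigoplus_j\C[\Ss]\otimes_\C H^j(X,\Omega^j_X) & k=0,\\ 0 & k>0,\end{cases}
\end{equation*}
simultaneously proving $H^i(\ZZ,\OO_\ZZ)=0$ for $i>0$ and identifying $\C[\ZZ]$ with $\C[\Ss]\otimes_\C H^\ast(X)$ as a graded $\C[\Ss]$-module. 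Equivariant formality of $X$ (Theorem~\ref{thmfor} and the following remark) yields the same expression for $H^\ast_\Hs(X)$, so the two sides have matching Poincar\'e series. The algebra isomorphism $\rho\colon H^\ast_\Hs(X)\xrightarrow{\cong}\C[\ZZ]$ is then built by adapting Lemma~\ref{nicefun} and Remark~\ref{chernred}: on generators the formula $\rho(c_k^\Hs(\Ee))(v,x)=\Tr_{\Lambda^k\Ee_x}(\Lambda^k v_x)$ defines a regular function on $\ZZ$ with the correct $\Cs$-weight, and since $X^\Ts$ is finite by Step~2 the Chern class generation argument of Lemma~\ref{genprinc} supplies enough generators; Poincar\'e-series matching then forces $\rho$ to be a $\C[\Ss]$-algebra isomorphism.

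\textbf{Main obstacle.} The heart of the argument is Step~2, where the dimension hypothesis is converted into finiteness of $X^\Ts$ and hence into Hodge--Tate purity of $X$; without this conversion the Koszul spectral sequence would have off-antidiagonal entries which need not vanish, and the higher cohomology $H^i(\ZZ,\OO_\ZZ)$ would fail to vanish. The most delicate technical points are the schemewise identification of the generic fibre of $\pi$ with $X^\Ts$ (which uses the full regularity of $\Ss\subset\he^\reg$ from Theorem~\ref{kostarb} and the reducedness of torus-fixed schemes on smooth varieties from Theorem~\ref{fixred}), and the upgrade from the Białynicki--Birula paving to genuine Hodge--Tate purity rather than mere evenness of cohomology.
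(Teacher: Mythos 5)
Your overall strategy coincides with the paper's: Koszul resolution of $\OO_\ZZ$ from the codimension hypothesis, the hypercohomology spectral sequence over $\Ss$, finiteness of $X^\Ts$ extracted from finiteness of the generic fibre of $\pi$, Hodge--Tate purity, degeneration on the antidiagonal, and a Poincar\'e-series comparison against $H^*_\Hs(X)$ via equivariant formality. (The paper gets Tate purity by citing the Carrell--Lieberman vanishing $H^q(X,\Omega^p)=0$ for $|p-q|>\dim Z$ applied to an isolated-zero vector field, rather than your Bia\l{}ynicki--Birula route; both work, though note that for your Step~2 a $w\in\ttt$ regular in $\he$ need not have zero scheme equal to $X^\Ts$ --- one needs $w$ generic with respect to the isotropy weights of the action on $X$; the containment $X^\Ts\subseteq Z(V_w)$ already gives the finiteness you need.)

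The genuine gap is in the last step. Declaring $\rho$ ``on generators'' by $\rho(c_k^\Hs(\Ee))(v,x)=\Tr_{\Lambda^k\Ee_x}(\Lambda^k v_x)$ does not yield a well-defined algebra homomorphism on $H^*_\Hs(X)$ until you check that relations among equivariant Chern classes map to relations among trace functions; and ``Poincar\'e-series matching forces $\rho$ to be an isomorphism'' is false without first knowing $\rho$ is injective (or surjective) --- a graded map between spaces with equal finite-dimensional graded pieces can still have a kernel. The paper resolves both points at once by defining $\rho(c)$ for \emph{every} class pointwise as $c|_{\zeta_i}(w)$, where a $\Ts$-fixed point $\zeta_i$ is attached to each $(e+w,x)\in\ZZ$ through the Jordan decomposition and Lemma \ref{grpzer} (after first reducing to the solvable case by Weyl-invariance, since that fixed-point bookkeeping is set up for solvable groups); this is automatically a ring map, the trace formula then shows its values come from regular functions, and injectivity follows because $\rho(c)$ on the dense regular-semisimple locus recovers all localizations $c|_{\zeta_i}$, which determine $c$ by equivariant formality. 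A related subtlety you omit: $\ZZ$ is not known to be reduced in advance, so this map a priori lands only in $\C[\ZZ]^{\red}$; the paper compares the Poincar\'e series of $H^*_\Ts(X)$ with that of $\C[\ZZ]$ itself (computed from the $\Cs$-equivariant spectral sequence) and uses the injection into $\C[\ZZ]^{\red}$ to conclude reducedness a posteriori. Your outline is completable, but these three points --- well-definedness, injectivity, and reducedness --- must be supplied.
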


\begin{remark}
Note first that any regular action satisfies the $\dim \ZZ = \dim \Ss$ condition. Indeed, by Lemma \ref{isoreg}, in that case all the fibers of the map $\ZZ \to \Ss$ are finite. In addition, all the spherical varieties, and in general all the varieties with finitely many orbits of $\Hs$, satisfy the condition. However, the special fibers of $\ZZ\to \Ss$ might have positive dimension. See the details in Example \ref{exsph}.
\end{remark}

\begin{proof}[Proof of Theorem \ref{nonreg}]
First, by the methods of Section \ref{secred}, we know that $\C[\ZZ]$ is isomorphic to the ring of $\Ws$-invariant functions on the analogous zero scheme for a solvable group. As $H^*_\Hs(X) = H^*_\Ts(X)^\Ws$, we can restrict to $\Hs$ being solvable itself. Therefore we assume $\Hs$ is solvable from now on. In particular, $\Ss = e +\ttt$, where $e$ is the chosen regular nilpotent, and $\ttt = \Lie(\Ts)$ for a maximal torus $\Ts$ containing $h$.

The scheme $\ZZ$ is defined as a zero scheme of the vector field $V_\Ss$, coming from restriction of the total vector field to $\Ss\times X$. The vector field $V_\Ss$ is vertical, i.e. at any point $(v,x)$ it is tangent to $\{v\}\times X$. Hence, in fact, it is a section of the \emph{vertical tangent bundle} $T_v$, i.e. the pullback of the tangent bundle of $X$ via the projection $\Ss\times X\to X$.

Note that $T_v$ is a vector bundle of rank equal to $n = \dim X$. Let $\Omega_v^p = \Lambda^p T_v^*$. Then Lemma \ref{thmkosz} and the condition $\dim \ZZ = \dim \Ss$ imply that the Koszul complex
$$0\to \Omega_v^n \xrightarrow{\iota_{V_{\Ss}}} \Omega_v^{n-1} \xrightarrow{\iota_{V_{\Ss}}} \dots \xrightarrow{\iota_{V_{\Ss}}}  \Omega_v^1  \xrightarrow{\iota_{V_{\Ss}}}  \Omega_v^0 \to 0$$
is a resolution of $\OO_\ZZ$. Therefore its hypercohomology equals the cohomology of $\OO_\ZZ$ on $\Ss\times X$. But note that
$$H^*(\Ss\times X,\OO_\ZZ) = H^*(\ZZ,\OO_\ZZ).$$
Indeed, $\OO_\ZZ$ on $\Ss\times X$ is the pushforward of $\OO_\ZZ$ on $\ZZ$, and pushforward along a closed embedding is exact.

Hence $H^*(\ZZ,\OO_\ZZ)$ is isomorphic to the hypercohomology of the Koszul complex. We denote the Koszul complex by $K^\bullet$, with cohomological grading, where the index ranges from $-n$ to $0$.
We know by Corollary \ref{corkosz} that there is a spectral sequence with the first page
$$
	E_1^{pq} = H^q(\Ss\times X,\Omega_v^{-p})
$$
convergent to $H^{p+q}(X,\OO_\ZZ)$. In $\Ss = e + \ttt$, the semisimple elements are dense. Indeed, the regular locus in $\ttt$ is a complement of hyperplanes, cf. Section \ref{secsolvreg}. By Corollary \ref{corre} for a regular element $v\in \ttt$, also $e+v$ is semisimple. The assumption $\dim \ZZ = \dim \Ss$ implies that the generic fibers of the projection $\ZZ\to \Ss$ are finite, and hence a generic regular semisimple element has finitely many zeros. As the $\Ts$-fixed points are zeros of semisimple elements, there are only finitely many of them.

In that case, the cohomology of $X$ is Tate, meaning $H^q(X,\Omega^p) = 0$ if $p\neq q$. Note that for any vector bundle $\Ee$ on $X$ we have
$$H^q(\Ss\times X,\pi_2^*(\Ee)) = H^q(X,\Ee) \otimes \C[\ttt].$$
The equality clearly holds for global sections and $\C[\ttt]$ is flat over $\C$. Therefore in the spectral sequence above, the only potentially nonzero entries are
$$E_1^{-p,p} = H^p(\Ss\times X,\Omega_v^{p}).$$

First, this means that $H^i(X,\OO_\ZZ)$ may only be nonzero if $i=0$. It remains to show that $H^0(X,\OO_\ZZ) = H^*_\Ts(X,\C)$. We first show that the Poincar\'{e} series are equal on both sides.

From the spectral sequence, there is a filtration $F_*$ on $H^0(\OO_\ZZ)$ such that $F_p/F_{p-1} = H^p(X,\Omega^p) \otimes \C[\ttt]$.  We follow the idea from the proof of Theorem \ref{thmgrad}. We find a $\Cs$-action on the Koszul complex.  For $t\in \Cs$, let $t_p:\Omega^p_v\to \Omega^p_v$ be the pullback of forms along the map $t:X\to X$.  On $\Ss\times X$ the torus $\Cs$ acts by
$$t\cdot (e+v,x) = (e + t^{-2} v, H^t\cdot x)$$
and this satisfies the property $t_* V_{\Ss} = t^2 V_{\Ss}$.
As in the proof of Theorem \ref{thmgrad}, the following is commutative.

$$
\begin{tikzcd}
0\arrow[r] &
\Omega^n_v \arrow[rr, "\iota_{V_{\Ss}}"] \arrow[d, "t^{2n} t^{-1}_n"]&&
\Omega^{n-1}_v \arrow[rr, "\iota_{V_{\Ss}}"] \arrow[d, "t^{2(n-1)} t^{-1}_{n-1}"] &&
\dots \arrow[r, "\iota_{V_{\Ss}}"] \arrow[d]&
\Omega^{1}_v \arrow[rr, "\iota_{V_{\Ss}}"] \arrow[d, "t^{2} t^{-1}_1"]  &&
\OO_{\Ss\times X} \arrow[r] \arrow[d,"(t^{-1})^*"] & 0\\
0\arrow[r] &
\Omega^n_v \arrow[rr, "\iota_{V_{\Ss}}"] &&
\Omega^{n-1}_v \arrow[rr, "\iota_{V_{\Ss}}"] &&
\dots \arrow[r, "\iota_{V_{\Ss}}"] &
\Omega^{1}_v \arrow[rr, "\iota_{V_{\Ss}}"] &&
\OO_{\Ss\times X} \arrow[r] & 0
\end{tikzcd}
$$

Hence we have lifted the action of $\Cs$ on $\OO_{\Ss\times X}$ to the action on the whole Koszul complex. On the level of the spectral sequence, $t\in\Cs$ acts on 
$H^p(\Ss\times X,\Omega_v^{p}) = H^p(X,\Omega^p) \otimes \C[\ttt]$ by $t^{2p} t^{-1}_{p}$. As multiplication by $t$ on $X$ is homotopic to the identity, it induces the identity on $H^p(X,\Omega^p)$. On $\C[\ttt]$, however, the action of $t$ is nontrivial, and the Poincar\'{e} series of $\C[\ttt]$ is $\frac{1}{(1-t^2)^r}$. As the spectral sequence is $\Cs$-equivariant, we have

$$P_{\C[\ZZ]}(t) = \frac{\sum \dim H^p(X,\Omega^p) t^{2p}}{(1-t^2)^r} = \frac{P_{H^*(X,\C)}(t)}{(1-t^2)^r} = 
P_{H^*_\Ts(X,\C)}(t),$$
where the last part follows from equivariant formality.

Now to finish the proof, it would be enough to construct an injective graded map $H^*_\Ts(X,\C) \to \C[\ZZ]$. We will show an injective graded map 
\begin{equation}\label{nonregmap}
\rho:H^*_\Ts(X,\C)\to \C[\ZZ]^{\red}
\end{equation}
from the equivariant cohomology to the reduction of the ring of functions on $\ZZ$. Note that a priori, the scheme $\ZZ$ could be non-reduced, and so could be $\C[\ZZ]$. However, $\C[\ZZ]^{\red}$ is also a graded ring, and it is a quotient of $\C[\ZZ]$, hence every term of $P_{\C[\ZZ]^{\red}}(t)$ is less than or equal to the corresponding term in $P_{\C[\ZZ]}(t)$. Hence an existence of an injective map \eqref{nonregmap} will also prove that $\C[\ZZ]$ is in fact a reduced ring, and hence provide an isomorphism $H^*_\Ts(X,\C)\to \C[\ZZ]$.

As the scheme $\ZZ$ is Noetherian, the ring $\C[\ZZ]^{\red}$ is the ring of set-theoretic functions from the closed points of $\ZZ$ to $\C$, which are given by regular functions in $\C[\ZZ]$. Let $c\in H^*_\Ts(X,\C)$. We need to define $\rho(c)$ as a function on the closed points of $\ZZ$. Let $(e+w,x)\in \ZZ$, so that the vector field $V_{e+w}$ vanishes at $x$. 

From Theorem \ref{jordan}, there exists $M\in \Hs$ such that $e+w = \Ad_M(w+n)$ with $[w,n] = 0$ and $n$ nilpotent. Then by Lemma \ref{lemad} $x = M y$ for some $y$, which is a zero of $w+n$. Let $P$ be an irreducible component of the zero scheme of $w+n$, which contains $y$. Then by Lemma \ref{grpzer} $P$ contains a fixed point $\zeta_i$ of $\Ts$. Both $P$ and $\zeta_i$ can be non-unique. However, let us make a choice for every point $(e+w,x)\in \ZZ$ and define
$$\rho(c)(e+w,x) = c|_{\zeta_i}(w).$$
Here $c|_{\zeta_i}$ is an element of $H^*_\Ts(\pt) = \C[\ttt]$ and hence a function on $\ttt$, which we apply to $w$. We will show that such defined $\rho(c)$ is in $\C[\ZZ]^{\red}$ -- it will follow that the definition does not depend on the choices. It will also be clear that the map $\rho$ is a homomorphism of rings, as restriction to the $\Ts$-fixed points is.

By Lemma \ref{chern1} and the remark following it, the algebra $H^*_\Ts(X)$ is generated by the Chern classes of $\Hs$-linearised vector bundles. Therefore it is enough to prove that for $c = c_k^\Ts(\Ee)$ the function $\rho(c)$ comes from a regular function. We will prove it by showing that
$$\rho(c)(e+w,x) = \Tr_{\Lambda^k \Ee_x}(\Lambda^k (e+\w)_x),$$
as the latter comes from a regular function on $\ZZ$.

The regularity of the right-hand side implies that the function is constant on any projective subvariety. Let us consider a fixed $(w,x)\in\ZZ$ and $M$ as above, so that $e+w = \Ad_M(w+n)$ with $[w,n] = 0$ and $n$ nilpotent, and $P$ an irreducible component of the zero scheme of $w+n$, which contains $y = M^{-1}x$, and $\zeta_i \in X^\Ts\cap P$. As $P$, and hence also $M\cdot P$, is projective, we have
$$\Tr_{\Lambda^k \Ee_x}(\Lambda^k (e+\w)_x) = 
\Tr_{\Lambda^k \Ee_{My}}(\Lambda^k (e+\w)_{My}) =
\Tr_{\Lambda^k \Ee_{M\zeta_i}}(\Lambda^k (e+\w)_{My})
$$
and as $\Ee$ is $\Hs$-linearised, this is equal to
$$\Tr_{\Lambda^k \Ee_{\zeta_i}}(\Lambda^k (e+\w)_{y}).$$
Further, as $e+\w$ is a Jordan decomposition, this equals
$$\Tr_{\Lambda^k \Ee_{\zeta_i}}(\Lambda^k (\w)_{y}).$$
By \eqref{cherntr} this is exactly $c|_{\zeta_i}(w)$.

Therefore we have defined the map $\rho$, we only have to show that it is injective. But for $w\in\ttt^{\reg}$ there exists $M\in\Hs$ such that $\Ad_M(w) = w+e$. Then for any $\zeta_i\in X^\Ts$ we have
$$\rho(c)(e+w,M^{-1}\zeta_i) = c|_{\zeta_i}(w).$$
As the regular elements are dense in $\ttt$, we see that from $\rho(c)$ we can recover the localisations of $c$ to all the $\Ts$-fixed points. By injectivity of localisation for equivariantly formal spaces, from that one can recover $c$. Hence the map $\rho$ is injective, which finishes the proof.
\end{proof}

\begin{example} \label{exsph}
Assume that $X$ is a smooth projective \emph{spherical} variety for a reductive group $\Gs$ \cite{BrionSph}. This means that $\Gs$ acts on $X$ and the Borel subgroup $\Bs\subset \Gs$ has an open dense orbit in $X$. If $\Gs$ is a torus, then $\Bs = \Gs$ and $X$ is simply a toric variety. In general, all the wonderful compactifications of $G/H$ for $H = G^\sigma$, where $\sigma:\Gs\to\Gs$ is an involution, are spherical \cite{deConcProc}. Classical examples include the \emph{variety of complete collineations} \cite{Thadd,VainsencherLinn,Tyrrell}, which compactifies $\PGL_n$, and the \emph{variety of complete quadrics} \cite{deConcQuad,Tyrrell,VainsencherQuad}, which compactifies $\SL_n/SO_n$. The renewed interest in spherical varieties in recent years \cite{sph,lect} comes from their connections to mirror symmetry.

All the spherical varieties satisfy the conditions of Theorem \ref{nonreg}. In fact, any smooth projective $\Gs$-variety with finitely many orbits of $\Gs$ will satisfy the conditions. Spherical $\Gs$-varieties have only finitely many $\Gs$ orbits by \cite[1.5]{BrionSph}. Now let us first consider the single orbit $\OO = \Gs/\Ks$ for $\Ks \subset \Gs$ a subgroup. Let us try to find $\ZZ^\reg\subset \geg^\reg\times \OO$, the zero scheme of the vector field defined by the action, restricted to the regular elements of $\geg$. By homogeneity, the dimensions of all the fibers over the elements of $\OO$ are the same. Hence $\dim \ZZ^\reg = \dim \OO + \dim \ZZ^\reg_1$, where $\ZZ^\reg_1$ is the fiber over $[1] = [\Ks]\in \Gs/\Ks$. But that fiber is simply equal to $\geg^\reg\cap \kek$. It is either empty, or of dimension equal to $\dim K$. Hence $\ZZ^\reg$ is either empty, or of dimension equal to the dimension of $\Gs$. The same will then be true if instead of a single orbit we consider a variety with finitely many orbits. Now, by the properties of the Kostant section, i.e. Corollary \ref{kostprincpair}, 
$$\dim \ZZ = \dim \ZZ^\reg - (\dim \Gs - \rk \Gs) = \rk \Gs,$$
hence the assumptions are satisfied.

However, the computations are very tedious, as the scheme $\ZZ$ is in general not affine. So far, we have not been able to compute any concrete ring of functions on $\ZZ$ for a nontrivial example, other than a projective space or a toric variety.
\end{example}

\begin{remark}
In case $\Gs = \Ts$ is a torus and the action of $\Ts$ is is faithful, the condition of the lemma can only be satisfied for $X$ being a toric variety. Indeed, in a torus we have $e = 0$, hence $\ZZ$ contains $\{0\}\times X$ as a subscheme. Therefore $\dim X \le \dim \Ts$. For a faithful action, this only holds when $\dim X = \dim \Ts$ and $X$ is toric. In that case, the variety is also a GKM space, so one can apply Theorem \ref{gkmthm}. That theorem holds for a larger class of spaces, however it does not say anything about higher cohomology $H^i(\ZZ,\OO_\ZZ)$. In fact, if $X$ is not a toric variety, then $H^i(\ZZ,\OO_\ZZ)$ might be nonzero even for $i>0$. For example, if $X = \Gr(4,2)$ is the Grassmannian of 2-planes in $\C^4$ and $\Ts \subset \SL_4$ is a maximal torus, of dimension 3, then $H^2(\ZZ,\OO_\ZZ) = \C$.
\end{remark}

One therefore notices that for a principally paired group $\Gs$ and a smooth projective $\Gs$-variety $X$, there are different levels of correspondence between the ring of functions and coherent cohomology on $\ZZ$ and the equivariant cohomology of $X$.

\begin{enumerate}
\item If the action of $\Gs$ is regular, then $\rho:H^*_\Gs(X)\to\C[\ZZ]$ is an isomorphism. Moreover, the variety $\ZZ$ is affine. Conversely, if $\ZZ$ is affine, then so is any fiber over an element of $\Ss$. In particular the zero scheme of $e\in\geg$ is affine. However, it is projective, as $X$ is projective, hence it is of dimension $0$. As $e$ generates an additive subgroup of $\Gs$, by Theorem \ref{horrfix} it has to be connected. Therefore it is just a single, potentially non-reduced, point. In other words, the action is regular.
\item If the action of $\Gs$ satisfies the condition of Theorem \ref{nonreg}, i.e. $\dim \ZZ = \rk \Gs$, then $H^*_\Gs(X)\to \C[\ZZ]$ is still an isomorphism, and the higher coherent cohomology of $\OO_\ZZ$ is trivial. However, $\ZZ$ does not need to be affine anymore.
\item There are some more situations where the action does not satisfy $\dim \ZZ = \rk \Gs$, but still $\C[\ZZ]\simeq H^*_\Gs(X)$. This covers in addition the case of a torus acting on a GKM space. It is not clear what are the exact conditions under which the isomorphism holds. Note that at least when the torus-fixed points are isolated, we can still define the map $\rho: H^*_\Gs(X) \to \C[\ZZ]^{\red}$.
\end{enumerate}

We then have the following diagram of implications. In the first column we have conditions on the geometry of $\Gs$-action, and in the other two the relations between $\ZZ$ and equivariant cohomology.

$$
\begin{tikzcd}
\text{Regular action} \arrow[d, Rightarrow, thick] \arrow[r, Leftrightarrow, thick] & \ZZ\simeq \Spec H^*_\Gs(X,\C) \arrow[d, Rightarrow, thick] \arrow[r, Leftrightarrow, thick] & \ZZ \text{affine}  \arrow[d, Rightarrow, thick] \\
\dim \ZZ = \rk \Gs \arrow[r,Rightarrow, thick] \arrow[d, Rightarrow, thick]
& H^i(\ZZ,\OO_{\ZZ}) = \begin{cases}
H^*_\Gs(X,\C) \text{ for } i = 0; \\
0 \text { otherwise}
\end{cases}  \arrow[r,Rightarrow, thick] \arrow[d, Rightarrow, thick] &
\begin{array}{c}
H^i(\ZZ,\OO_{\ZZ}) = 0\\
\text{ for } i>0
\end{array} \\
?   \arrow[r,Rightarrow, thick] & H^0(\ZZ,\OO_\ZZ) = H^*_\Gs(X,\C)
\end{tikzcd}
$$

It is not clear what should replace the question mark and a work is ongoing to determine under what assumptions this result holds. Apart from the above results, for many affine Bott--Samelson varieties L\"owit \cite{jakub} proves $H^*_G(X,\C)\cong \C[\ZZ_\tot]$.

We can however clearly state the following problem.

\begin{problem}
Determine whether the implications in the second row are equivalences. That is:
\begin{enumerate}
\item does the vanishing of $H^i(\ZZ,\OO_\ZZ)$ for $i>0$ imply $H^0(\ZZ,\OO_\ZZ) = H^*_\Gs(X,\C)$;
\item does the vanishing of $H^i(\ZZ,\OO_\ZZ)$ for $i>0$ together with $H^0(\ZZ,\OO_\ZZ) = H^*_\Gs(X,\C)$ imply that $\dim \ZZ = \rk \Gs$.
\end{enumerate}
\end{problem}

\section{Fixed point schemes and equivariant K-theory}

First question one might ask is about extending the results to other equivariant cohomology theories. The first non-trivial one, different from equivariant cohomology, is equivariant K-theory. As noted in Section \ref{seceqcoh}, the equivariant cohomology $H^*_\Gs(\pt,\C)$ of the point is equal to $\C[\ttt]^\Ws = \C[\geg]^\Gs$. On the other hand, by Section \ref{seceqk}, the equivariant algebraic K-theory $K^0_\Gs(\pt) \otimes \C$ of the point with complex coefficients is equal to the representation ring $R(\Gs)$ of $\Gs$. That equals $\C[\Ts]^\Ws = \C[\Gs]^\Gs$ \cite[Theorem 6.1.4]{ChGi}.

Hence, to change from equivariant cohomology to equivariant K-theory, we switch from the $\Gs$-invariant functions on the Lie algebra, to $\Gs$-invariant functions on the group. This suggests that one could recover K-theory by considering the action of the group instead of the Lie algebra. To this end, for an action of $\Gs$ on a scheme $X$, define the \emph{fixed point scheme} $\Fix_\Gs(X)$\footnote{This definition was provided by Jakub L\"{o}wit} by the following pullback diagram.

$$ \begin{tikzcd}
  \Fix_\Gs(X) \arrow{r}\arrow{d} &
  \Gs\times X\arrow{d}{\rho\times \pi_2}  \\
  X \arrow{r}{\Delta} &
  X\times X.
\end{tikzcd}$$

Here $X$ maps to $X\times X$ diagonally and the map $\Gs\times X$ maps $(g,x)$ to $(gx,x)$. Therefore, the scheme $\Fix_\Gs(X)$ parametrises the pairs $(g,x)\in \Gs\times X$, where $gx = x$. With this, Tam\'{a}s Hausel \cite{Tamas} has stated the following conjecture, based on Theorem \ref{wholeg}.

\begin{conjecture}\label{kthe}
 Assume that a principally paired group $\Gs$ acts on a smooth projective variety regularly. Let $\Gs$ act on $\Fix_\Gs(X)$ by $g\cdot (h,x) = (ghg^{-1},gx)$. Then the ring $\C[\Fix_\Gs(X)]^\Gs$ of $\Gs$-invariant functions on $\Fix_\Gs(X)$ is an algebra over $\C[\Gs]^\Gs\cong K^0_{\Gs}(\pt)\otimes \C$ isomorphic to the equivariant algebraic K-theory $K^0_{\Gs}(X)\otimes \C$.
 $$ \begin{tikzcd}
  \C[\Fix_\Gs(X)]^\Gs \arrow{r}{\cong} &
  K_\Gs^0(X)\otimes \C  \\
  \C[\Gs]^\Gs \arrow{r}{\cong} \arrow{u}&
  K_\Gs^0(\pt)\otimes \C \arrow{u}.
\end{tikzcd}$$
\end{conjecture}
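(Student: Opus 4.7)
The plan is to transpose the proof of Theorem~\ref{wholeg} into the multiplicative setting. The additive Kostant section $\Ss = e+\ttt \subset \geg$ is replaced by a multiplicative analog $\Sigma = u\cdot \Ts \subset \Gs$, where $u = \exp(e)$ is a regular unipotent; the additive Jordan decomposition is replaced by the multiplicative one from Theorem~\ref{defjord}; and the total vector field $V_\geg$ is replaced by the fixed-point condition $g\cdot x = x$, whose zero locus is $\Fix_\Gs(X)$. A Steinberg-type statement should identify $\C[\Sigma] \cong \C[\Gs]^\Gs \cong \C[\Ts]^\Ws = R(\Ts)^\Ws$, paralleling Theorem~\ref{restkos}, so that a Weyl-invariance argument as in Section~\ref{secred}, combined with $(K^0_\Ts(X)\otimes\C)^\Ws \cong K^0_\Gs(X)\otimes\C$ from \cite[Corollary 6.7]{HLS}, reduces the conjecture to the case of a solvable principally paired $\Hs$.

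For such $\Hs$, I would first establish the $\Sigma$-version, $\C[\Fix_\Sigma(X)] \cong K^0_\Ts(X)\otimes\C$, following the pattern of Theorem~\ref{finsolv}. Every element of $\Sigma$ is regular in $\Hs$ and the conjugation map $\Hs_u\times \Sigma \to u\cdot \Hs_u$ should be an isomorphism (the multiplicative version of Theorem~\ref{unif}, via Zariski's main theorem). The contracting $\Cs$-action $t\cdot g = H^t g H^{-t}$ restricted to $\Sigma$ rescales the unipotent part toward $u$ because $H^t u H^{-t} = \exp(t^2 e)$, forcing $\Fix_\Sigma(X) \subset \Sigma\times X_o$ to be affine (analog of Remark~\ref{affred}) and cut out locally by a regular sequence of length $\dim X$, hence Cohen--Macaulay; reducedness follows because $\pi^{-1}(\Sigma^{\mathrm{rs}}) \cong \Sigma^{\mathrm{rs}}\times X^\Ts$ is open and dense, by Theorem~\ref{fixred}. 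The localization map $\rho_K: K^0_\Ts(X)\otimes \C \to \C[\Fix_\Sigma(X)]$ is defined on a $\Bs$-linearized bundle $\Ee$ by $[\Ee] \mapsto ((g,x)\mapsto \tr_{\Ee_x}(g))$; by Theorem~\ref{thmlevik} and Thomason, $K^0_\Ts(X)\otimes\C$ is generated over $R(\Ts)\otimes\C$ by classes of such bundles. That $\rho_K(\Ee)$ is regular on $\Fix_\Sigma(X)$ is the multiplicative analog of Lemma~\ref{nicefun}: at $(g, M\zeta_i)\in\Fix_\Sigma$, $\tr_{\Ee_{M\zeta_i}}(g) = \tr_{\Ee_{\zeta_i}}(M^{-1}gM)$ depends only on the semisimple part of $M^{-1}gM$, whose trace on any linear representation is a character. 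Injectivity of $\rho_K$ is K-theoretic localization for equivariantly formal actions, and surjectivity is a Hilbert-series comparison over $R(\Ts)\otimes\C$.

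Passage from $\Fix_\Sigma$ to the conjectured $\Fix_\Gs$ mirrors Section~\ref{sectot}. Every $\rho_K(\Ee)$ extends to a $\Gs$-invariant regular function on all of $\Fix_\Gs(X)$ by the same formula $(g,x)\mapsto \tr_{\Ee_x}(g)$, giving surjectivity of the restriction $\C[\Fix_\Gs(X)]^\Gs \to \C[\Fix_\Sigma(X)]$ (analog of Lemma~\ref{surjtot}). Injectivity uses the multiplicative counterparts of Lemmas~\ref{grpzer}--\ref{inj2}: for $(g,x) \in \Fix_\Gs(X)$, the projective variety $\{g\}\times P$ with $P$ a projective component of $X^g$ containing $x$ meets $\{g\}\times X^{g_s}$ by Borel's fixed-point theorem applied to the connected solvable group generated by $g_s$ and a component; one then propagates values into $\Fix_\Sigma$ via Corollary~\ref{kostprincpair} adapted to the multiplicative Steinberg section and Lemma~\ref{projinj}.

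The main obstacle is the K-theoretic analog of Theorem~\ref{thmgrad}, which is what drives the Hilbert-series comparison: one needs the identification $K^0(X)\otimes\C \cong \C[X^u]$, where $X^u$ is the scheme-theoretic fixed locus of a regular unipotent $u\in\Hs_u$. In cohomology this reduces, via $\C[X^e] \cong H^*(X,\C)$ and equivariant formality, to the Akyildiz--Carrell theorem. In algebraic $K$-theory the analog should be an Atiyah--Segal-type localization for unipotent group actions, together with compatibility with the $\Cs$-grading induced by the principal $\bb(\ssl_2)$-pair. I would attempt to prove it by resolving $\OO_{X^u}$ by Koszul-type virtual $\Bs$-equivariant complexes and applying an equivariant Hirzebruch--Riemann--Roch argument cell-by-cell on the Białynicki--Birula paving, in the spirit of Remark~\ref{remarkchern}. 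This formality statement for algebraic equivariant $K$-theory, rather than the multiplicative cross-section construction itself, is where I expect the genuinely new input will be required.
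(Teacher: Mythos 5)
The statement you are addressing is a \emph{conjecture} in the paper, not a theorem: the paper offers no proof of it, only evidence (the torus/GKM analogue, Theorem \ref{gkmk}, which is not a special case since torus actions are never regular; and cited unpublished work of Holmes for $\Gs/\Ps$ over the Steinberg section and of L\"owit for affine Bott--Samelson varieties). Your proposal is therefore correctly read as a research programme rather than a proof, and to your credit you say so explicitly in the last paragraph. The route you sketch --- Steinberg section in place of the Kostant section, multiplicative Jordan decomposition, reduction to a solvable group via Weyl invariance and $(K^0_\Ts(X)\otimes\C)^\Ws\cong K^0_\Gs(X)\otimes\C$, and the map $[\Ee]\mapsto((g,x)\mapsto\tr_{\Ee_x}(g))$ --- is exactly the transposition the paper itself envisages, so there is no divergence of approach to report; there is simply no complete argument on either side.

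The genuine gaps are worth naming more sharply than you do. First, the obstacle you isolate is real and central: there is no known K-theoretic analogue of Theorem \ref{thmgrad}, i.e.\ no identification $\C[X^u]\cong K^0(X)\otimes\C$ for a regular unipotent $u$, and without it your ``Hilbert-series comparison'' has no base case. Second, and you understate this, the comparison itself does not transpose: the cohomological argument rests on the contracting $\Cs$-action $t\cdot v=t^{-2}\Ad_{H^t}(v)$, which makes $\C[\Ss]$ a positively graded polynomial ring, makes the defining equations of $\ZZ$ a \emph{homogeneous} regular sequence (Lemma \ref{lemcm}), and feeds the graded Nakayama lemma and the Poincar\'e-series count in the proof of Theorem \ref{finsolv}. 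The rescaling $v\mapsto v/t^2$ has no multiplicative counterpart on $u\cdot\Ts$; $R(\Ts)\otimes\C=\C[\Ts]$ is not positively graded, $\Sigma$ does not contract to $u$, and so the affineness of $\Fix_\Sigma(X)$, its Cohen--Macaulayness via a homogeneous regular sequence, the generation statement via graded Nakayama, and the surjectivity of $\rho_K$ all need genuinely new arguments rather than the ``same pattern.'' Until both of these are supplied, the statement remains, as in the paper, a conjecture.
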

 
One can then additionally ask three questions. First, whether we can also formulate and prove an analog of Theorem \ref{general}, where the total zero scheme is replaced with the zero scheme over the Kostant section. It seems an appropriate equivalent of the Kostant section is the Steinberg section \cite[4.15]{HumConj}, therefore we would like to prove also the analog of Theorem \ref{general}, with the Steinberg section replacing the Kostant section. One would expect that the potential proof, as in \cite{HR}, should start from considering the solvable case. Therefore one additional challenge is to find an appropriate notion of Steinberg section for solvable, or arbitrary principally paired groups, as we do in Section \ref{kostsecsec} for the Kostant section. If $\Gs$ a semisimple simply connected group, Holmes \cite{Holmes} has proved that the fixed point scheme for any partial flag variety $\Gs/\Ps$ over the Steinberg section is the spectrum of the equivariant K-theory.

Additionally, there exists the equivariant Chern character map, reviewed in Section \ref{seceqchern}, from equivariant K-theory to cohomology. This means that on the scheme level, we should get an analogous map in the other direction. Note that as the Chern character maps to the completion of cohomology, it will not give rise to an algebraic map. It has to be rather considered as a map from a formal scheme or a complex analytic map. In fact, we expect that the Chern character yields the $\Gs$-equivariant map

$$(v,x) \mapsto (\exp(v), x)$$

from $\ZZ_\tot$ to $\Fix_\Gs(X)$. Therefore along with proving Conjecture \ref{kthe}, we would like to show how the Chern character can be seen geometrically. Note that this may require taking total fixed point and zero schemes into account, as the exponential of the Kostant section is usually not in the Steinberg section. Alternatively, one might use a different section in the group.

Third, one could ask again how important the regularity assumption is. In fact, L\"{o}wit \cite{jakub} proves the claim for affine Bott--Samelson varieties.

Finally, we prove a K-theory analog of the GKM theorem for cohomology \ref{gkmthm}.

\begin{theorem}\label{gkmk}
Let a torus $\Ts \cong (\Cs)^r$ act on a smooth projective complex variety $X$ with finitely many zero and one-dimensional orbits. Let $\Fix'_\Ts(X)$ be the subscheme of $\Fix_\Ts(X)$, given by reduction. Then $\C[\Fix'_\Ts(X)] \cong K^0_\Ts(X)\otimes \C$ as algebras over $\C[\Ts]\simeq K^0_\Ts(\pt)\otimes \C$:
$$ \begin{tikzcd}
  \C[\Fix'_\Ts(X)] \arrow{r}{\cong}  &
  K^0_\Ts(X)\otimes \C   \\
   \C[\Ts] \arrow{r}{\cong} \arrow{u}&
  K^0_\Ts(\pt)\otimes \C \arrow{u}.
\end{tikzcd}$$
\end{theorem}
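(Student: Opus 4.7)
The plan is to mirror the argument of Theorem \ref{gkmthm}, replacing the Lie algebra $\ttt$ by the torus $\Ts$ and the characteristic classes of equivariant bundles by their equivariant traces. First I would define a ring homomorphism
$$\rho : K^0_\Ts(X) \otimes \C \to \C[\Fix'_\Ts(X)]$$
by sending the class of a $\Ts$-linearised vector bundle $\Ee$ to the function $\rho([\Ee])(g,x) = \Tr_{\Ee_x}(g)$, where the $\Ts$-linearisation equips $\Ee_x$ with an action of its stabiliser in $\Ts$, and in particular of $g$. Globally, the pullback $\pi_2^*\Ee$ to $\Ts \times X$ acquires a tautological automorphism over $\Fix_\Ts(X)$ induced by the linearisation, and its trace is a regular function on $\Fix_\Ts(X)$; since the assignment is additive on short exact sequences and multiplicative on tensor products, it descends to a ring map on the Grothendieck group and extends $\C$-linearly.

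For injectivity I would observe that $\Ts \times X^\Ts \subset \Fix'_\Ts(X)$ is a closed subscheme and that the composition
$$K^0_\Ts(X)\otimes\C \xrightarrow{\rho} \C[\Fix'_\Ts(X)] \xrightarrow{r} \C[\Ts \times X^\Ts] \cong \C[\Ts]^s$$
is exactly the K-theoretic localisation map to the $\Ts$-fixed locus. That map is known to be injective on GKM spaces, forcing $\rho$ to be injective. To prove $r$ itself is injective I would apply Lemma \ref{projinj}: for any closed point $(g,x) \in \Fix_\Ts(X)$ the subscheme $\{g\} \times \overline{\Ts \cdot x}$ lies inside $\Fix_\Ts(X)$ (since $\Ts$ is commutative, every element of $\Ts$ fixes $\Ts\cdot x$ pointwise), is projective, and by the Borel fixed point theorem applied to the $\Ts$-action on $\overline{\Ts \cdot x}$ meets $\Ts \times X^\Ts$.

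Next I would show that the image of $r \circ \rho$ lies in the K-theoretic GKM subring
$$H_K := \big\{(f_1,\dots,f_s) \in \C[\Ts]^s \ \big| \ f_{i_0}|_{S_i} = f_{i_\infty}|_{S_i} \text{ for each } i=1,\dots,\ell\big\},$$
where $S_i \subset \Ts$ is the codimension-one subtorus acting trivially on the one-dimensional orbit $E_i$. For each such $E_i$, every element of $S_i$ fixes $\overline{E_i}$ pointwise, so $S_i \times \overline{E_i} \subset \Fix_\Ts(X)$. For fixed $g \in S_i$, any function in $\C[\Fix'_\Ts(X)]$ restricts to a regular function on the projective line $\{g\} \times \overline{E_i}$, hence is constant there, forcing $f_{i_0}(g) = f_{i_\infty}(g)$. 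Combining this inclusion with the K-theoretic GKM localisation theorem—which identifies $K^0_\Ts(X) \otimes \C$ with $H_K$ via localisation—we see that $r \circ \rho$ is an isomorphism onto $H_K$; since $\rho$ is injective and $r$ is injective on the image of $\rho$, we conclude that $\rho$ itself is an isomorphism with $r$ providing its inverse identification.

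The hardest ingredient is invoking the K-theoretic GKM localisation theorem in the form needed, both the injectivity on $K^0_\Ts(X)\otimes\C$ and the precise description of the image as $H_K$. In cohomology this is Theorem \ref{gkmres}; its K-theoretic counterpart, due to Rosu and to Vezzosi--Vistoli, is the natural replacement, but verifying that the conditions cut out by the one-dimensional orbits match those of $H_K$ (rather than a stricter set of conditions arising from the finer structure of $R(\Ts) \otimes \C = \C[\Ts]$ compared with $\C[\ttt]$) requires care. Everything else—the regularity of $\rho([\Ee])$, the projective-subvariety argument, and the constancy of functions along the $\PP^1$-closures of one-dimensional orbits—transports essentially verbatim from the cohomological case.
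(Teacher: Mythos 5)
Your proposal is correct and follows essentially the same route as the paper's proof: the same trace function $\rho([\Ee])(g,x)=\Tr_{\Ee_x}(g)$, the same use of Lemma \ref{projinj} with orbit closures and the Borel fixed point theorem to get injectivity of restriction to $\Ts\times X^\Ts$, the same $\PP^1$-constancy argument placing the image inside the GKM subring, and the same appeal to Rosu's K-theoretic localisation theorem to close the loop. No substantive differences to report.
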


\begin{proof}
We proceed as in the proof of Theorem \ref{gkmthm}. We construct first a map
$$\rho: K^0_\Ts(X)\otimes \C \to \C[\Fix_\Ts(X)]$$
and then an injective left inverse.

We can define $\rho$ by its values, as $\Fix'_\Ts(X)$ is reduced. For any $\Ts$-linearised vector bundle $\Ee$ on $X$ let
$$\rho(\Ee)(t,p) = \Tr_{\Ee_p}(t).$$
Note that if $0\to \Ee \to \F \to \Geg \to 0$ is an exact sequence of $\Ts$-linearised vector bundles, then for any $(t,p)\in \Fix'_\Ts(X)$ we have $\Tr_{\F_p}(t) = \Tr_{\Ee_p}(t) + \Tr_{\Geg_p}(t)$. Therefore the function $\rho$ is well defined on $K^0_\Ts(X)$, and by definition it always gives a regular function on $\Fix_\Ts(X)$.

As in Theorem \ref{gkmthm}, let us denote the $\Ts$-fixed points by $\zeta_1$, $\zeta_2$, \dots, $\zeta_s$ and the one-dimensional orbits by $E_1$, $E_2$, \dots, $E_{\ell}$. The closure of any $E_i$ is an embedding of $\PP^1$ and contains two fixed points $\zeta_{i_0}$ and $\zeta_{i_\infty}$, which for any $x\in E_i$ are equal to the limits $\lim_{t\to 0} t x$ and $\lim_{t\to\infty} tx$. The action of $\Ts$ on $E_i$ has a kernel $K_i$ of codimension $1$. Then by \cite[Corollary A.5]{Rosu} the localisation $K_\Ts(X)\to K_\Ts(X^\Ts)$ is injective and its image is

$$
H = \left\{
(f_1,f_2,\dots,f_s)\in \C[\Ts]^s  \, \bigg| \, f_{i_0}|_{K_i} = f_{i_\infty}|_{K_i} \text{ for } j=1,2,\dots,\ell
\right\}.
$$

We will use Lemma \ref{surjtot} again. Take any $(t,p)\in \Fix'_\Ts(X)$. Then $p$ lies in the fixed point scheme of $t\in\Ts$. As $\Ts$ is commutative, any translate of $p$ is also a fixed point of $t$. Therefore we have $\{t\}\times \overline{T\cdot p}\subset \Fix'_\Ts(X)$ as a closed subvariety. Then by the Borel fixed point theorem it contains a fixed point of the whole torus. Hence the conditions of Lemma \ref{surjtot} are satisfied for the inclusion $\Ts\times X^\Ts \subset \Fix_\Ts(X)$, so the restriction of global functions $\C[\Fix'_\Ts(X)]\to\C[\Ts\times X^\Ts]$ is injective.

From this, we construct an injective left inverse of $\rho$. First, note that $\C[\Ts\times X^\Ts] = \C[\Ts]^s$. Then notice that the restriction $\tau:\C[\Fix'_\Ts(X)]\to\C[\Ts\times X^\Ts] = \C[\Ts]^s$ actually maps into $H$. Indeed, take $i\in\{1,2,\dots,\ell\}$. We want to prove that for any $f\in\C[\Fix'_\Ts(X)]$ the functions $\tau(f)|_{\Ts\times \zeta_{i_0}}$ and $\tau(f)|_{\Ts\times \zeta_{i_\infty}}$ are equal as functions on $\Ts$.

Indeed, in the fixed point scheme, for any $t\in K_i$ both points $(t,\zeta_{i_0})$ and $(t,\zeta_{i_\infty})$ lie in the same connected projective subvariety $t\times \overline{E_i}$. Therefore the values of $f$ on those points are the same.

We proved that $\tau$ is injective, we only need to prove that $\tau\circ\rho = \id$. Take a class $[\Ee]\in K^0_\Ts(X)$, where $\Ee$ is a $\Ts$-linearised vector bundle on $X$. Then $\tau\circ\rho([\Ee])$ is a function on $\Ts\times X^Ts$ which at the point $(t,\zeta_i)$ attains the value $\Tr_{\Ee_{\zeta_i}}(t)$. When we fix $\zeta_i$, this means that we restrict $\Ee$ to $\zeta_i$ to get a representation of $\Ts$, and then we check what the trace of $t$ on it is. But this is exactly how we define the isomorphism $K_0^*(\pt) \to \C[\Ts]$.
\end{proof}

\begin{remark}
The zeros of a torus action on a smooth variety are reduced, cf. Theorem \ref{fixred}. Therefore the author also expects the scheme $\Fix_\Ts(X)$ to actually be reduced in this situation, so that $\Fix_\Ts(X) = \Fix'_\Ts(X)$. However, the exact argument is missing.
\end{remark}

\pagestyle{plain}


%

\backmatter

\end{document}